\documentclass[leqno]{amsart}
\setcounter{tocdepth}{3}
\setcounter{section}{-1}
\usepackage[left=1.2in, right=1.2in, top=1in, bottom=1in]{geometry}
\usepackage{boilerplate}
\usepackage{mathrsfs}
\usepackage{tikz-cd}
\usepackage{mathdots}
\usetikzlibrary{arrows,arrows.meta}
\usepackage[mathscr]{euscript}
\usepackage{algorithm}
\usepackage{algpseudocode}
\usepackage{hyperref}
\hypersetup{
    colorlinks,
    citecolor=black,
    filecolor=black,
    linkcolor=black,
    urlcolor=black
}
\usepackage{enumitem}
\usepackage{subcaption}

\hyphenation{mon-oid-al}

\pagestyle{plain}

\makeatletter
\renewcommand\subsection{\leftskip 0pt\@startsection{subsection}{2}{\z@}%
                                     {-3.25ex\@plus -1ex \@minus -.2ex}%
                                     {1.5ex \@plus .2ex}%
                                     {\normalfont\normalsize\bfseries}}
\renewcommand\subsubsection{\@startsection{subsubsection}{3}{\z@}%
                                     {-3.25ex\@plus -1ex \@minus -.2ex}%
                                     {1.5ex \@plus .2ex}%
                                     {\normalfont\normalsize\bfseries\leftskip 3ex}}
\makeatother

\def\mfp{\underline{\mathbb{F}_p}}

\setlist[enumerate]{label*=\arabic*.}

\title{On the parametrized Tate construction and two theories of real $p$-cyclotomic spectra}

\author{J.D. Quigley}
\address{
Dept. of Mathematics \\
Cornell University \\
Ithaca, NY, U.S.A.
}
\author{Jay Shah}
\address{
Dept. of Mathematics \\
University of Notre Dame \\
Notre Dame, IN, U.S.A.
}

\begin{document}

\tikzcdset{arrow style=tikz, diagrams={>=stealth}}

\begin{abstract} We give a new formula for $p$-typical real topological cyclic homology that refines the fiber sequence formula discovered by Nikolaus and Scholze for $p$-typical topological cyclic homology to one involving genuine $C_2$-spectra. To accomplish this, we give a new definition of the $\infty$-category of real $p$-cyclotomic spectra that replaces the usage of genuinely equivariant dihedral spectra with the parametrized Tate construction $(-)^{t_{C_2} \mu_p}$ associated to the dihedral group $D_{2p} = \mu_p \rtimes C_2$. We then define a $p$-typical and $\infty$-categorical version of H{\o}genhaven's $O(2)$-orthogonal cyclotomic spectra, construct a forgetful functor relating the two theories, and show that this functor restricts to an equivalence between full subcategories of appropriately bounded below objects.
\end{abstract}

\date{\today}
\maketitle

\tableofcontents

\section{Introduction}

The main contribution of this work is to establish a new formula for computing $p$-typical real topological cyclic homology, assuming a certain bounded below hypothesis is satisfied. In fact, inspired by Nikolaus and Scholze's re-imagining of the theory of cyclotomic spectra \cite{NS18}, we give a new definition of the $\infty$-category of real $p$-cyclotomic spectra, and relate this to a $p$-typical and $\infty$-categorical version of H{\o}genhaven's $O(2)$-orthogonal cyclotomic spectra \cite{Hog16}. To contextualize our work, we begin by recalling the theory of $p$-cyclotomic spectra \cite{BHM93} \cite{HM97} \cite{BM16} \cite{BG16} \cite{NS18}. Let $\mu_{p^n} \subset S^1$ be the subgroup of $p^n$th roots of unity, and let $\mu_{p^{\infty}} = \bigcup_n \mu_{p^n}$ be the Pr\"ufer $p$-group.

\begin{dfn}[{\cite[Def.~II.3.1]{NS18}}] \label{dfn:GenuineCycSp} A \emph{genuine $p$-cyclotomic spectrum} is a genuine $\mu_{p^\infty}$-spectrum $X$, together with an equivalence $\Phi^{\mu_p} X \overset{\simeq}{\to} X$ in $\Sp^{\mu_{p^\infty}}$.\footnote{Here and throughout this paper, we implicitly use the $p$th power isomorphism $\mu_{p^{\infty}}/\mu_p \xto{\cong} \mu_{p^{\infty}}$.} The $\infty$-category of \emph{genuine $p$-cyclotomic spectra} is then the equalizer
\[
\begin{tikzcd}
\CycSp^{\mr{gen}}_p = \Eq(\Sp^{\mu_{p^\infty}} \arrow[r,shift left,"\Phi^{\mu_p}"] \arrow[r,shift right,swap,"id"] & \Sp^{\mu_{p^\infty}} ).
\end{tikzcd}
\]
\end{dfn}

We refer the reader to \cite[Thm.~II.3.7]{NS18} for a comparison of Def.~\ref{dfn:GenuineCycSp} with Hesselholt and Madsen's definition of cyclotomic spectra \cite[Def.~1.2]{HM97} and Blumberg and Mandell's definition \cite{BM16}. 

If $A$ is an $E_1$-ring spectrum, then its \emph{topological Hochschild homology} $\THH(A)$ obtains the structure of a genuine $p$-cyclotomic spectrum. This structure induces maps
$$R,F : \THH(A)^{\mu_{p^{n}}} \to \THH(A)^{\mu_{p^{n-1}}}$$
which may be used to define \emph{$p$-typical topological cyclic homology}
$$\TC^{\mr{gen}}(A,p) = \lim_{R,F} \THH(A)^{\mu_{p^n}}.$$
$\TC^{\mr{gen}}(A,p)$ is the receptacle of a \emph{trace map} from $p$-typical algebraic K-theory $\mr{K}(A,p)$. Moreover, by work of Dundas, Goodwillie, and McCarthy \cite{DGM12}, if $f : A \to B$ is a map of $E_1$-ring spectra such that $\ker \pi_0(f)$ is nilpotent, then the commutative square
\[
\begin{tikzcd}
\mr{K}(A,p) \arrow{d} \arrow{r} & \mr{K}(B,p) \arrow{d} \\
\TC^{\mr{gen}}(A,p) \arrow{r} & \TC^{\mr{gen}}(B,p)
\end{tikzcd}
\]
is homotopy cartesian. Cyclotomic structures and the attendant mechanism of the trace thereby furnish a powerful methodology for accessing the $p$-typical algebraic K-theory of ring spectra, which has spawned numerous computations -- for example, see \cite{BHM93} \cite{HM97} \cite{HM03} \cite{Rog00} \cite{AR02} \cite{Rog03}. 

Even though topological cyclic homology is a more computable theory than algebraic K-theory, the complexity of its definition is nonetheless formidable. In \cite{NS18}, Nikolaus and Scholze give a new and simpler definition of cyclotomic spectra that replaces the use of genuinely equivariant spectra with Borel equivariant spectra and the $\mu_p$-Tate construction. Let $\Sp^{h\mu_{p^\infty}} = \Fun(B \mu_{p^\infty} , \Sp)$ be the $\infty$-category of spectra with $\mu_{p^{\infty}}$-action.

\begin{dfn}[{\cite[Def.~II.1.1]{NS18}}] \label{dfn:NaiveCycSp} A \emph{$p$-cyclotomic spectrum} is a spectrum $X$ with $\mu_{p^\infty}$-action, together with a $\mu_{p^\infty}$-equivariant map $\varphi : X \to X^{t\mu_p}$. The $\infty$-category of \emph{$p$-cyclotomic spectra} is then the lax equalizer
\[
\begin{tikzcd}
\CycSp_p = \LEq (\Sp^{h\mu_{p^\infty}} \arrow[r,shift left,"id"] \arrow[r,shift right,swap,"(-)^{t\mu_p}"] & \Sp^{h\mu_{p^\infty}} ).
\end{tikzcd}
\]
\end{dfn}

Def.~\ref{dfn:NaiveCycSp} gives rise to a new definition $\TC(-,p)$ of $p$-typical topological cyclic homology \cite[Def.~II.1.8]{NS18}, which is computed by the fiber sequence \cite[Prop.~II.1.9]{NS18}
\[ \begin{tikzcd}
\TC(X,p) \simeq \fib(X^{h\mu_{p^\infty}} \arrow{rr}{\varphi^{h\mu_{p^\infty}}-\can} &  & (X^{t\mu_p})^{h\mu_{p^\infty}}),
\end{tikzcd} \]
and is used in \cite[\S IV]{NS18} to recover and expand on several fundamental results in trace methods.

The structure map $\Phi^{\mu_p} X \to X^{t\mu_p}$ induces a `forgetful' functor $$\sU : \CycSp_p^{\mr{gen}} \to \CycSp_p.$$ Nikolaus and Scholze proved the following remarkable result:

\begin{thm}[{\cite[Thm.~II.4.10]{NS18} and \cite[Thm.~II.6.3]{NS18}}] \label{thm:MainTheoremNikolausScholze} If $X$ is a genuine $p$-cyclotomic spectrum whose underlying non-equivariant spectrum is bounded below, then there is a canonical equivalence $$\TC^{\mr{gen}}(X,p) \simeq \TC(\sU(X),p).$$ More generally, $\sU$ restricts to an equivalence between the full subcategories of bounded below objects.
\end{thm}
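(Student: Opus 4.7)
The plan is to bootstrap the theorem from a single technical vanishing result --- the \emph{Tate orbit lemma} --- by showing that on bounded-below objects the genuine $\infty$-category $\Sp^{\mu_{p^\infty}}$ is equivalent to a purely Borel model in which $\Phi^{\mu_p}$ is replaced by $(-)^{t\mu_p}$, so that the comparison descends formally to the cyclotomic equalizers and to $\TC$.

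The first step is to prove the Tate orbit lemma: for $X \in \Sp^{B\mu_{p^2}}$ bounded below, $(X_{h\mu_p})^{t\mu_p} \simeq 0$. I would argue by Postnikov descent, exploiting that both $(-)_{h\mu_p}$ and the Tate construction commute with sufficiently connective limits, to reduce to the case $X = \mathrm{H} A$ for an abelian group $A$ carrying a $\mu_{p^2}$-action. The Eilenberg--MacLane case is then settled by a direct spectral-sequence computation: the $E_2$-page of the Tate spectral sequence for $(X_{h\mu_p})^{t\mu_p}$ is the Tate cohomology of $\mu_p$ with coefficients in $\pi_* X_{h\mu_p}$, and a careful analysis of the multiplicative structure (splitting the analysis at torsion prime to $p$ and at $p$ separately) forces vanishing.

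Next I would use the Tate orbit lemma inductively in $n$ to show that, for bounded-below $X \in \Sp^{\mu_{p^n}}$, the natural square
\[
\begin{tikzcd}
X^{\mu_{p^n}} \arrow{r} \arrow{d} & (\Phi^{\mu_p} X)^{\mu_{p^{n-1}}} \arrow{d} \\
X^{h\mu_{p^n}} \arrow{r} & (X^{t\mu_p})^{h\mu_{p^{n-1}}}
\end{tikzcd}
\]
is cartesian. This square arises from the standard isotropy separation cofiber sequence $X_{h\mu_p} \to X^{\mu_p} \to \Phi^{\mu_p} X$ together with the comparison $\Phi^{\mu_p} \to (-)^{t\mu_p}$; cartesianness reduces after a diagram chase to the vanishing of iterated Tate constructions on orbits, which is precisely the content of the Tate orbit lemma applied termwise. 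From this I can prove by induction on $n$ that the forgetful functor from bounded-below genuine $\mu_{p^n}$-spectra to the \emph{Borel model} --- the $\infty$-category of tuples $(X_0, \ldots, X_n)$ with $X_i \in \Sp^{B\mu_{p^{n-i}}}$ and structure maps $X_{i+1} \to X_i^{t\mu_p}$ --- is fully faithful and essentially surjective, and then pass to the inverse limit over $n$ to obtain the analogous equivalence for $\mu_{p^\infty}$-spectra.

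Finally, the cyclotomic structure of Def.~\ref{dfn:GenuineCycSp} is an equivalence $\Phi^{\mu_p} X \simeq X$ in $\Sp^{\mu_{p^\infty}}$, which under the above equivalence corresponds (via composition with the comparison $\Phi^{\mu_p} X \to X^{t\mu_p}$) exactly to Frobenius data $\varphi : X \to X^{t\mu_p}$ of Def.~\ref{dfn:NaiveCycSp}. Since both cyclotomic categories are (lax) equalizers of compatible endofunctors, this identification propagates automatically to yield $\CycSp^{\mr{gen}}_p \simeq \CycSp_p$ on bounded-below objects, and the formula for $\TC$ then follows by recognizing $\TC^{\mr{gen}}$ and $\TC$ as the same limit of Borel fixed points and Frobenius comparisons. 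The main obstacle --- and the sole genuinely computational input --- is the Tate orbit lemma; the bounded-below hypothesis is used exclusively there, and everything else is structural unpacking built on top of that vanishing.
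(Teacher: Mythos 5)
Your proposal is correct in outline, and it is essentially the original Nikolaus--Scholze argument for this theorem; but it is not the route this paper takes when it reproves the statement (as part of Thm.~\ref{thm:MainTheoremRestated}). You establish the fracture square for categorical fixed points directly from isotropy separation plus the Tate orbit lemma, and then induct on $n$ to identify bounded-below genuine $\mu_{p^n}$-spectra with the iterated pullback of Borel categories along $(-)^{t\mu_p}$. The paper instead first proves an \emph{unconditional} reconstruction theorem (Thm.~\ref{thm:GeometricFixedPointsDescriptionOfGSpectra}, after Ayala--Mazel-Gee--Rozenblyum): every genuine $G$-spectrum is a coherent diagram over the barycentric subdivision of the subconjugacy poset with generalized Tate gluing functors $\tau^K_H$. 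The Tate orbit lemma then enters only through Cor.~\ref{cor:CanonicalFunctorsEquivalences}, which says that on bounded-below objects the canonical maps $X^{\tau C_{p^k}} \to X^{tC_p \tau C_{p^{k-1}}}$ are equivalences, i.e.\ such diagrams are ``$1$-generated''; the purely formal Thm.~\ref{thm:OneGenerationAndExtension} then collapses the full subdivision to its spine, recovering your tuple description (Prop.~\ref{prp:EquivalenceOnBoundedBelowAtFiniteLevel}). What your route buys is directness and fewer categorical prerequisites; what the paper's route buys is that all the boundedness-independent structure is isolated in reusable recollement lemmas that transfer verbatim to the dihedral/parametrized setting, which is the paper's actual target. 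One step you should not call ``automatic'': after identifying the genuine category with the category of towers, $\Phi^{\mu_p}$ becomes the shift endofunctor, and exchanging the \emph{equalizer} of $(\mathrm{id},\mathrm{shift})$ on towers for the \emph{lax equalizer} of $(\mathrm{id},(-)^{t\mu_p})$ on the bottom Borel category requires a genuine (if formal) argument --- this is exactly Lem.~\ref{lem:LaxEqualizerGenericEquivalence} --- and your sketch should at least acknowledge that the two universal properties differ before asserting they agree.
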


In particular, the \emph{a priori} more intricate data of a genuine $p$-cyclotomic spectrum can be extracted from the data of a $p$-cyclotomic spectrum in the bounded below case.

\subsection{Real \texorpdfstring{$p$}{p}-cyclotomic spectra}

We now turn to the theory of real $p$-cyclotomic spectra \cite{HM13} \cite{Hog16} \cite{DMPR17}. To motivate this discussion, we begin by recalling some facts about real algebraic K-theory.

Let $\mathcal{C}$ be an exact category with weak equivalences equipped with a duality structure $(D,\eta)$. Hesselholt and Madsen defined \emph{real algebraic K-theory} $\mr{KR}(\mathcal{C},D,\eta)$ in \cite{HM13}. This is a genuine $C_2$-spectrum whose underlying spectrum $\mr{KR}(\mathcal{C},D,\eta)^e$ is equivalent to the algebraic K-theory of $\mathcal{C}$. Its categorical $C_2$-fixed points $\mr{KR}(\mathcal{C},D,\eta)^{C_2}$ recover Schlichting's higher Grothendieck-Witt groups (or hermitian algebraic K-groups) $\mr{GW}(\mathcal{C},D,\eta)$, and its geometric $C_2$-fixed points $\Phi^{C_2}\mr{KR}(\mathcal{C},D,\eta)$ are `genuine L-theory' \cite{Dotto2019}, which agrees rationally with quadratic L-theory. 

\begin{rem} Let us mention a few ways in which these fixed points arise in mathematics. The Grothendieck-Witt groups $\mr{GW}_i(k)$ of a field form the target of the degree map in $\mathbb{A}^1$-homotopy theory \cite{Mor12} that has be used to extend results in enumerative geometry to more general base fields -- see \cite{WW19} for a survey of results. The hermitian K-groups of a nice scheme $X$ participate in an exact sequence relating Milnor K-groups and motivic stable homotopy groups as predicted by Morel's $\pi_1$-conjecture \cite{RSO19}. The motivic slice spectral sequence for hermitian K-theory was used by R{\"o}ndigs and {\O}stv{\ae}r to reprove Milnor's conjecture on quadratic forms \cite{RO16}. Finally, genuine L-theory plays a central role in Dotto and Ogle's approach to the Novikov conjecture \cite{Dotto2019}.
\end{rem}

\begin{rem} Forthcoming work of Calm\`{e}s et al. \cite{CDH+19} constructs real algebraic K-theory for the more general input of a stable $\infty$-category equipped with a Poincar\'e structure.
\end{rem}

Real algebraic K-theory is expected to be computable via trace methods. Let $A$ be an $E_\sigma$-algebra for $\sigma$ the $C_2$-sign representation. In \cite{Dotto2019}, Dotto and Ogle constructed a trace map
$$\mr{KR}(A) \to \THR(A)$$
where $\THR(A)$ is \emph{real topological Hochschild homology} \cite{Dot12}. $\THR(A)$ is the motivating example of an \emph{$O(2)$-cyclotomic spectrum} \cite[Def.~2.6]{Hog16}. In this paper, we will restrict ourselves to a $p$-typical version of this notion by instead considering the dihedral Pr\"ufer group $D_{2p^{\infty}} = \mu_{p^{\infty}} \rtimes C_2 \subset O(2)$.

\begin{dfn}[See Def.~\ref{dfn:GenRealCycSp}] A \emph{genuine real $p$-cyclotomic spectrum} is a genuine $D_{2p^\infty}$-spectrum $X$, together with an equivalence $\Phi^{\mu_p} X \overset{\simeq}{\to} X$ in $\Sp^{D_{2p^\infty}}$. The $\infty$-category of \emph{genuine real $p$-cyclotomic spectra} is then the equalizer
\[ \begin{tikzcd}
\RCycSp^{\mr{gen}}_p = \Eq ( \Sp^{D_{2p^\infty}} \arrow[r,shift left,"\Phi^{\mu_p}"] \arrow[r,shift right,swap,"\id"] & \Sp^{D_{2p^\infty}} ).
\end{tikzcd} \]
\end{dfn}

A genuine real $p$-cyclotomic structure induces maps of genuine $C_2$-spectra\footnote{See \S\ref{section:EquivariantConventions} for our conventions and notation regarding fixed point functors.}
$$R, F : \Psi^{\mu_{p^{n}}}X \to \Psi^{\mu_{p^{n-1}}}X$$
which may be used to define \emph{$p$-typical real topological cyclic homology}
$$\TCR^{\mr{gen}}(X,p) = \lim_{R, F} \Psi^{\mu_{p^n}}X.$$

It is believed that $\TCR^{\mr{gen}}$ is a good approximation to $\mr{KR}$ in the same way that $\TC$ is a good approximation to $\mr{K}$ via the Dundas-Goodwillie-McCarthy theorem. Computations of $\TCR^{\mr{gen}}$ have been made by H{\o}genhaven \cite{Hog16} and in forthcoming work of Dotto-Moi-Patchkoria.

As with the passage from Def.~\ref{dfn:GenuineCycSp} to Def.~\ref{dfn:NaiveCycSp}, we wish to reformulate Def.~\ref{dfn:GenRealCycSp} by discarding most of the genuinely equivariant structure. However, we cannot discard all genuineness, as $\TCR^{\mr{gen}}(-,p)$ is valued in genuine $C_2$-spectra. Therefore, in order to accomplish this, we need a \emph{parametrized} version of Borel equivariant homotopy and the Tate construction. To explain, we first recall two distinct perspectives on the usual Tate construction $X^{tG}$ for $G$ a finite group:

\begin{enumerate}
\item $X^{tG}$ is the cofiber of an additive \emph{norm} map $\Nm: X_{h G} \to X^{h G}$, which is a homotopical version of the homomorphism $\overline{x} \mapsto \Sigma_{g \in G} g x$ from $G$-coinvariants to $G$-invariants.
\item Let $\Sp^{h G} = \Fun(BG, \Sp)$. The functor $(-)^{t G}: \Sp^{h G} \to \Sp$ is the composite
\[ \begin{tikzcd}[row sep=4ex, column sep=6ex, text height=1.5ex, text depth=0.25ex]
\Sp^{h G} \ar[hookrightarrow]{r}{j_{\ast}} & \Sp^G \ar{r}{- \wedge \widetilde{E G}} & \Sp^G \ar{r}{\Psi^G} & \Sp
\end{tikzcd} \]
where $j_{\ast}$ is the embedding of spectra with $G$-action as Borel complete objects, and
\[ EG_+ \to S^0 \to \widetilde{EG} \]
is the cofiber sequence of pointed $G$-spaces as in Constr.~\ref{cnstr:GspaceFromGfamily} for the trivial $G$-family. If $X$ is the underlying spectrum of a genuine $G$-spectrum $Y$, then we obtain the formula
$$X^{t G} \simeq (F(E G_+,Y) \wedge \widetilde{E G})^G.$$
\end{enumerate}

These two perspectives are connected by the \emph{Adams isomorphism}: there is a canonical equivalence
$$ X_{h G} \simeq (Y \wedge EG_+)^G. $$

More generally, for an extension $\psi=[N \to G \to G/N]$, we can consider the \emph{$N$-free} $G$-family $\Gamma_N$ and seek to identify $\Gamma_N$-complete objects in terms of $G/N$-spectra with `$\psi$-twisted' $N$-action. To make rigorous sense of this notion, we use the formalism of $G$-$\infty$-categories as developed by Barwick, Dotto, Glasman, Nardin, and the second author \cite{Exp0}.

\begin{dfn}[See Def.~\ref{dfn:BorelGSpectraRelativeToNormalSubgroup}] A \emph{$G/N$-spectrum $X$ with $\psi$-twisted $N$-action} is a $G/N$-functor
\[ X: B^{\psi}_{G/N} N \to \ul{\Sp}^{G/N} \]
where $B^{\psi}_{G/N} N$ is the $G/N$-space of $N$-free $G$-orbits (Def.~\ref{dfn:TwistedClassifyingSpace}) and $\ul{\Sp}^{G/N}$ is the $G/N$-$\infty$-category of $G/N$-spectra (Def.~\ref{dfn:GCategoryGSpectra}).
\end{dfn}

By Prop.~\ref{prp:BorelSpectraAsCompleteObjects}, the $\infty$-category $\Fun_{G/N}(B^{\psi}_{G/N} N, \ul{\Sp}^{G/N})$ canonically embeds as $\Gamma_N$-complete objects in $\Sp^G$, so we can make the following definition.

\begin{dfn}[See Rmk.~\ref{rem:PointSetModels}] \label{dfn:PointSetModelDef} The \emph{parametrized Tate construction} $(-)^{t[\psi]}$ is the composite
\[ \begin{tikzcd}[row sep=4ex, column sep=4ex, text height=1.5ex, text depth=0.25ex]
\Fun_{G/N}(B^{\psi}_{G/N} N, \ul{\Sp}^{G/N}) \ar[hookrightarrow]{r}{j_{\ast}} & \Sp^G \ar{rr}{- \wedge \widetilde{E \Gamma_N}} &  & \Sp^G \ar{r}{\Psi^N} & \Sp^{G/N}.
\end{tikzcd} \]
\end{dfn}

In fact, in \S\ref{section:NormMaps} we generalize the Hopkins-Lurie ambidexterity theory for local systems \cite[\S4.3]{hopkins2013ambidexterity} in order to construct \emph{parametrized norm maps}
\[ \Nm: X_{h[\psi]} \to X^{h[\psi]}, \]
where $X_{h[\psi]}$ is the $G/N$-colimit of $X$, i.e., the \emph{parametrized orbits}, and $X^{h[\psi]}$ is the $G/N$-limit of $X$, i.e., the \emph{parametrized fixed points}. We then define the parametrized Tate construction $X^{t[\psi]}$ to be the cofiber of $\Nm$ (Def.~\ref{dfn:ParamTateCnstr}), and prove the equivalence of Def.~\ref{dfn:ParamTateCnstr} with Def.~\ref{dfn:PointSetModelDef} by way of the Adams isomorphism for the normal subgroup $N$ of $G$ (Prop.~\ref{prp:EquivalentTateConstructions}).

\begin{rem} The parametrized Tate construction as written in Def.~\ref{dfn:PointSetModelDef} is a special case of the generalized Tate construction associated to an arbitrary $G$-family that was studied by Greenlees and May \cite[\S 17]{GreenleesMay}. However, the identification of $\Gamma_N$-complete spectra as a parametrized functor $\infty$-category, and the resulting connection with parametrized norm maps, appears to be new.
\end{rem}

For the semidirect product extension that defines $D_{2p^n}$, let us instead write $B^t_{C_2} \mu_{p^n}$ and $$X_{h_{C_2} \mu_{p^n}} \to X^{h_{C_2} \mu_{p^n}} \to X^{t_{C_2} \mu_{p^n}}$$ for $C_2$-spectra $X$ with twisted $\mu_{p^n}$-action. Extending the above picture to $D_{2p^{\infty}}$ (c.f. Setup~\ref{setup:Dihedral}), we may now formulate the main definition of this work.

\begin{dfn}[See Def.~\ref{dfn:RealCycSp}] A \emph{real $p$-cyclotomic spectrum} is a $C_2$-spectrum $X$ with a twisted $\mu_{p^{\infty}}$-action, together with a twisted $\mu_{p^{\infty}}$-equivariant map $\varphi: X \to X^{t_{C_2} \mu_p}$. The $\infty$-category of \emph{real $p$-cyclotomic spectra} is then the lax equalizer
\[ \begin{tikzcd}
\RCycSp_p := \LEq (\Fun_{C_2}(B^t_{C_2} \mu_{p^{\infty}}, \ul{\Sp}^{C_2}) \arrow[rr,shift left,"id"] \arrow[rr,shift right,swap,"(-)^{t_{C_2}\mu_p}"] & & \Fun_{C_2}(B^t_{C_2} \mu_{p^{\infty}}, \ul{\Sp}^{C_2}) ).
\end{tikzcd} \]
\end{dfn}

This gives rise to a new definition of $p$-typical real topological cyclic homology (Def.~\ref{dfn:newTCR}), which is computed by the fiber sequence (Prop.~\ref{prp:TCRfiberSequence})
\[ \begin{tikzcd}
\TCR(X,p) \simeq \fib(X^{h_{C_2}\mu_{p^\infty}} \arrow{rrr}{\varphi^{h_{C_2}\mu_{p^{\infty}}}-\can} & & & (X^{t_{C_2}\mu_p})^{h_{C_2}\mu_{p^\infty}}).
\end{tikzcd} \]

As with $p$-cyclotomic spectra, there is a forgetful functor $$\sU_{\RR} : \RCycSp_p^{\mr{gen}} \to \RCycSp_p.$$

Here is the main theorem of this work.

\begin{thm}[See Thm.~\ref{thm:MainTheoremEquivalenceBddBelow} and Cor.~\ref{cor:TCRFormulasEquivalent}] If $X$ is a genuine real $p$-cyclotomic spectrum whose underlying spectrum is bounded below,\footnote{We emphasize that the bounded below condition concerns the underlying spectrum and not the underlying $C_2$-spectrum.} then there is a canonical equivalence $$\TCR^{\mr{gen}}(X,p) \simeq \TCR(\sU_{\RR}(X),p).$$ More generally, $\sU_{\RR}$ restricts to an equivalence between the full subcategories of underlying bounded below objects.
\end{thm}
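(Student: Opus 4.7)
The plan is to mimic the proof strategy of Nikolaus and Scholze for Thm.~\ref{thm:MainTheoremNikolausScholze}, systematically replacing the ordinary Tate construction $(-)^{t\mu_p}$ and Borel category $\Sp^{h\mu_{p^\infty}}$ by their parametrized dihedral analogues $(-)^{t_{C_2}\mu_p}$ and $\Fun_{C_2}(B^t_{C_2}\mu_{p^\infty}, \ul{\Sp}^{C_2})$. As a preliminary, I would filter $\sU_{\RR}$ through forgetful functors at each finite level $\mu_{p^n}$, reducing the statement to showing at each stage that the category of $n$-truncated genuine real $p$-cyclotomic spectra with underlying bounded below spectrum is glued from $C_2$-spectra with twisted $\mu_{p^n}$-action along a parametrized Tate construction; one then passes to the inverse limit at the end, the bounded below hypothesis guaranteeing convergence.

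The crucial technical input, and the main obstacle, is a \emph{dihedral Tate orbit lemma}: for any $C_2$-spectrum $X$ with twisted $\mu_{p^2}$-action whose underlying \emph{non-equivariant} spectrum is bounded below, one must prove
\[ (X_{h_{C_2}\mu_p})^{t_{C_2}\mu_p} \simeq 0. \]
My plan would be to unwind $(-)^{t_{C_2}\mu_p}$ via the point-set formula of Def.~\ref{dfn:PointSetModelDef}, expressing this vanishing in terms of genuine $\mu_p$-fixed points of a $D_{2p^2}$-spectrum smashed with the cofiber of $E\Gamma_{\mu_p,+} \to S^0$. Via the isotropy separation square for the ambient $C_2$-action, the claim should split into a non-equivariant contribution — recovering the classical Tate orbit lemma of \cite{NS18} — and a geometric $C_2$-fixed point contribution that is controlled by the free dihedral orbit structure on $B^t_{C_2}\mu_{p^2}$ and must be shown to vanish \emph{without} any bounded-below hypothesis on the $C_2$-fixed points. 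Extracting the latter is the delicate step: one must rewrite the geometric fixed point piece in a form where bounded belowness of the underlying spectrum alone suffices, presumably by exploiting the fact that the twisted action realizes $\mu_p$-orbits as free $D_{2p}$-orbits.

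Granted the dihedral Tate orbit lemma, fully faithfulness of $\sU_{\RR}$ on underlying bounded below objects should follow the outline of \cite[\S II.4]{NS18}: mapping spaces in the (lax) equalizers defining $\RCycSp_p^{\mr{gen}}$ and $\RCycSp_p$ are computed from mapping spaces in $\Sp^{D_{2p^\infty}}$ and $\Fun_{C_2}(B^t_{C_2}\mu_{p^\infty},\ul{\Sp}^{C_2})$ equipped with coherence data against $\Phi^{\mu_p}$ or $(-)^{t_{C_2}\mu_p}$, and the parametrized Tate orbit vanishing collapses the relevant comparison square once we restrict to bounded below objects. For essential surjectivity, given a real $p$-cyclotomic spectrum $Y$ I would inductively construct $\Psi^{\mu_{p^n}}X$ as a pullback in genuine $C_2$-spectra involving $\Psi^{\mu_{p^{n-1}}}X$, the underlying twisted $\mu_{p^{n-1}}$-structure of $Y$, and the cyclotomic structure map $\varphi$, with well-posedness again guaranteed by the dihedral Tate orbit lemma. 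Finally, the equivalence $\TCR^{\mr{gen}}(X,p) \simeq \TCR(\sU_{\RR}(X),p)$ is immediate from the resulting categorical equivalence: both sides rewrite as the same fiber of $F - R$ (equivalently $\varphi^{h_{C_2}\mu_{p^\infty}} - \mr{can}$) in genuine $C_2$-spectra, via Prop.~\ref{prp:TCRfiberSequence} on one side and the pullback presentation of the $\Psi^{\mu_{p^n}}$ tower on the other.
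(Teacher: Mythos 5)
Your proposal correctly pinpoints the dihedral Tate orbit lemma as the essential input, and your sketch of how it decomposes along the $C_2$-isotropy separation (the underlying spectrum recovering the classical Tate orbit lemma, plus a geometric $C_2$-fixed-point piece that must be killed using only bounded-belowness of the underlying spectrum) is exactly the structure of the paper's Lem.~\ref{lem:dihedralTOLEven} and Lem.~\ref{lem:dihedralTOLOdd}. But your route to the categorical equivalence differs substantially from the paper's: you propose to transcribe \cite[\S II.5--6]{NS18} — compute mapping spaces directly in the two lax equalizers, then establish essential surjectivity by inductively building $\Psi^{\mu_{p^n}} X$ as a pullback against the structure map — whereas the paper explicitly avoids this. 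Instead it reproves the Ayala--Mazel-Gee--Rozenblyum reconstruction theorem (Thm.~\ref{thm:GeometricFixedPointsDescriptionOfGSpectra}) to exhibit $\Sp^{D_{2p^n}}$ as a right-lax limit over the subconjugacy preorder, shows that the $1$-generated objects of that lax limit coincide with the extendable ones once the dihedral Tate orbit lemma is in hand (Thm.~\ref{thm:OneGenerationAndExtension} and Prop.~\ref{prp:EquivalenceOnBoundedBelowAtFiniteLevel}), and then passes to the $\mu_{p^\infty}$-level by a generic lax-equalizer exchange (Lem.~\ref{lem:LaxEqualizerGenericEquivalence}). Both routes convert the same computational input into the same conclusion; the paper's approach trades the explicit mapping-space computations of \cite[\S II.5--6]{NS18} for a more uniform recollement-theoretic argument that also reproves the AMGR theorem and yields the limit formula of Cor.~\ref{cor:FormulaForGeomFixedPointsOfCompleteSpectrum} as byproducts, while yours stays closer to the original Nikolaus--Scholze argument. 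One caveat to flag: your opening step of ``filtering $\sU_{\RR}$ through forgetful functors at each finite level'' is under-specified, since $\RCycSp^{\mr{gen}}_p$ is not simply an inverse limit of finite-level cyclotomic categories ($\Phi^{\mu_p}$ changes the ambient group), so making the reduction to $\mu_{p^n}$ precise requires an explicit device for interleaving the equalizer of $\Phi^{\mu_p}$ with the tower in $n$ — exactly what the paper's Constr.~\ref{cnstr:FamilyOfLocallyCocartesianFibrations} and the $\ZZ_{\geq 0}$-indexed machinery of Lem.~\ref{lem:LaxEqualizerGenericEquivalence} supply, and what you would need an analogue of.
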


We also give a simple application of Cor.~\ref{cor:TCRFormulasEquivalent} to computing the $C_2$-equivariant homotopy groups of $\TCR^{\mr{gen}}(H \mfp,p)$ for $p$ an odd prime (Thm.~\ref{thm:TCROddPrimeComputation}), deferring more sophisticated computations (in particular, $\TCR^{\mr{gen}}(H \ul{\FF_2},2)$) to a future work.

\begin{wrn} In contrast to \cite{NS18}, we do not construct the real topological Hochschild homology of an $E_{\sigma}$-algebra as a real $p$-cyclotomic spectrum in our sense, deferring such a construction to a future work. Nonetheless, the `decategorified' version of our main theorem in the form of the fiber sequence formula for $\TCR^{\mr{gen}}(-,p)$ (Cor.~\ref{cor:decategorifiedEasy}) already suffices for making calculations involving homotopy groups.
\end{wrn}

\subsection{Methods of proof}

The key computational input for Nikolaus and Scholze's proof of Thm.~\ref{thm:MainTheoremNikolausScholze} is the \emph{Tate orbit lemma}:

\begin{lem}[{\cite[Lem.~I.2.1]{NS18}}] Let $X$ be a spectrum with $\mu_{p^2}$-action that is bounded below. Then
$$(X_{h\mu_p})^{t(\mu_{p^2}/\mu_p)} \simeq 0.$$
\end{lem}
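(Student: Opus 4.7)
The plan is to reduce to the case where $X$ is an Eilenberg--MacLane spectrum via a Postnikov tower argument, then perform an explicit computation. Throughout, write $C_p := \mu_{p^2}/\mu_p$ for the quotient and $F := ((-)_{h\mu_p})^{t C_p}$ for the composite functor in question, regarded as a functor $\Sp^{B \mu_{p^2}} \to \Sp$.

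First, I would show that the vanishing reduces to the case $X = HM$ for $M$ a $\mu_{p^2}$-module. The functor $F$ preserves cofiber sequences, being the composition of an exact colimit functor and the Tate construction (the cofiber of the norm). Given the Postnikov tower $X \to \cdots \to \tau_{\leq n} X \to \cdots$ in $\Sp^{B \mu_{p^2}}$, with fibers $\Sigma^k H\pi_k X$, the bounded-below hypothesis on $X$ implies that $X_{h\mu_p}$ is bounded below, so the Tate spectral sequence for $F(X)$ converges with only finitely many nontrivial contributions to each bidegree. A standard convergence argument then shows $F(X) \simeq \lim_n F(\tau_{\leq n} X)$, and by induction on the Postnikov stages and stability under suspension, the problem reduces to showing $F(HM) \simeq 0$ for an arbitrary $\mu_{p^2}$-module $M$.

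Second, I would analyze the Tate spectral sequence
\[ \widehat{E}_2^{s,t} = \widehat{H}^s(C_p;\, H_t(\mu_p; M)) \Longrightarrow \pi_{t-s}\bigl(((HM)_{h\mu_p})^{t C_p}\bigr). \]
For $t > 0$, the abelian group $H_t(\mu_p; M)$ is annihilated by $p$, so we work with $\mathbb{F}_p$-coefficients; the crucial algebraic input is the identity $\sigma^p = \tau$ (where $\sigma \in \mu_{p^2}$ is a generator and $\tau$ generates $\mu_p$), which constrains the residual $C_p$-action on $H_t(\mu_p; M)$ so that its Tate cohomology vanishes. For $t = 0$, one shows that the norm map $N \colon (M_{\mu_p})_{C_p} \to (M_{\mu_p})^{C_p}$ is an isomorphism, and similarly that the higher Tate cohomology of $M_{\mu_p}$ vanishes, again using the same identity.

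The main obstacle will be the algebraic manipulation at $t = 0$: the norm being an isomorphism uses the genuine $\mu_{p^2}$-extension structure in an essential way, and would fail if $\mu_{p^2}$ were replaced by the split group $\mu_p \times C_p$. The identity $(1 + \sigma + \cdots + \sigma^{p-1})(\sigma - 1) = \sigma^p - 1 = \tau - 1$, which becomes zero in $\mathrm{End}(M_{\mu_p})$, should provide the key relation: it forces the image of the $C_p$-norm on $M_{\mu_p}$ to exhaust $(M_{\mu_p})^{C_p}$, and yields the vanishing of $\widehat{H}^*(C_p; M_{\mu_p})$ in all degrees.
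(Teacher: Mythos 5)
Your skeleton --- reduce to $X = HM$ by Postnikov convergence and then exploit the non-split extension $\mu_p \to \mu_{p^2} \to \mu_{p^2}/\mu_p$ --- is the right one, and the first step is essentially \cite[Lem.~I.2.6]{NS18}. (One caveat there: the justification via ``finitely many contributions to each bidegree'' is not accurate, since $\widehat{H}^s(C_p;-)$ is nonzero for infinitely many $s$; the convergence instead comes from splitting $(-)^{tC_p}$ into orbits, which gain connectivity along the tower, and fixed points, which commute with the inverse limit. This is repairable and standard.)

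The second step, however, has a fatal gap. The relation $(1+\sigma+\cdots+\sigma^{p-1})(\sigma-1)=\sigma^p-1=\tau-1=0$ on $M_{\mu_p}$ is a tautology: for \emph{any} $C_p$-module $V$ with generator $\bar\sigma$ one has $N_{\bar\sigma}(\bar\sigma-1)=\bar\sigma^p-1=0$, so this identity carries no information about the extension and cannot force $\widehat{H}^*(C_p;M_{\mu_p})=0$. It is in fact false: for $M=\mathbb{F}_p$ with trivial $\mu_{p^2}$-action, $M_{\mu_p}=\mathbb{F}_p$ with trivial residual action, the norm $(\mathbb{F}_p)_{C_p}\to(\mathbb{F}_p)^{C_p}$ is multiplication by $p=0$, and $H_t(\mu_p;\mathbb{F}_p)=\mathbb{F}_p$ with trivial residual action for all $t\geq 0$, so your $\widehat{E}_2$-page is nonzero in every bidegree while the abutment vanishes. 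The Tate orbit lemma is carried entirely by differentials, and no argument at the level of $E_2$ can succeed. The extension actually enters (in the proof of \cite[Lem.~I.2.1]{NS18}, which the present paper only cites, but whose strategy it reproduces for the dihedral analogue in Lem.~\ref{lem:dihedralTOLEven}) through the $\mu_{p^2}$-norm element $N_\sigma=\sum_{i=0}^{p^2-1}\sigma^i$ appearing in the standard $2$-periodic free resolution of $M$ over $\mathbb{F}_p[\mu_{p^2}]$: since $N_\sigma=N_\tau\cdot(1+\sigma+\cdots+\sigma^{p-1})$ and $N_\tau$ acts by $p=0$ on $\mu_p$-coinvariants, applying $(-)_{\mu_p}$ to the resolution kills every other differential, so the complex computing $M_{h\mu_p}$ splits into two-term complexes of \emph{induced} $\mathbb{F}_p[C_p]$-modules whose connectivity tends to infinity. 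The Tate construction annihilates each induced piece and commutes with this particular colimit, and that is where the vanishing happens --- at the level of the filtered object, not of Tate cohomology groups.
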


Correspondingly, in order to prove Thm.~\ref{thm:MainTheoremEquivalenceBddBelow}, we will need the \emph{dihedral Tate orbit lemma}:

\begin{lem}[See Lem.~\ref{lem:dihedralTOLEven} and Lem.~\ref{lem:dihedralTOLOdd}] Let $X$ be a $C_2$-spectrum with twisted $\mu_{p^2}$-action whose underlying spectrum is bounded below. Then
$$(X_{h_{C_2} \mu_p})^{t_{C_2}(\mu_{p^2}/\mu_p)} \simeq 0.$$
\end{lem}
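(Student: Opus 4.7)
The plan is to reduce vanishing of the $C_2$-spectrum $Y := (X_{h_{C_2} \mu_p})^{t_{C_2}(\mu_{p^2}/\mu_p)}$ to vanishing of its underlying non-equivariant spectrum and of its geometric $C_2$-fixed points $\Phi^{C_2} Y$. For the underlying spectrum, I expect that the forgetful functor $\Sp^{C_2} \to \Sp$ should intertwine parametrized orbits with classical orbits and the parametrized Tate with the classical Tate (by construction of the norm, since restricting along $\{e\} \to C_2$ commutes with all the ingredients of Def.~\ref{dfn:PointSetModelDef} and Def.~\ref{dfn:ParamTateCnstr}). This identifies the underlying spectrum of $Y$ with $(X^e_{h\mu_p})^{t(\mu_{p^2}/\mu_p)}$, which vanishes by the Nikolaus–Scholze Tate orbit lemma applied to the bounded-below spectrum $X^e$.

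The main work is therefore the geometric fixed point computation $\Phi^{C_2} Y \simeq 0$, and this is where I would split into the cases $p$ odd and $p=2$. In both cases the first step is to establish a parametrized analogue of the formula $\Phi^H(Z^{tG}) \simeq (\Phi^H Z)^{t(G^H)}$, by combining the point-set model of Def.~\ref{dfn:PointSetModelDef} with the standard compatibility of geometric fixed points with smashing against $\widetilde{E\Gamma_N}$ and with change-of-group. For $p$ odd, the $C_2$-fixed subgroup of $\mu_p$ (under inversion) is trivial, and I expect $\Phi^{C_2}$ to convert the parametrized Tate into a classical Tate with respect to the full residual $\mu_p$-action on a geometric fixed point spectrum built from $X$; similarly $\Phi^{C_2}$ on the parametrized $\mu_p$-orbits should produce classical $\mu_p$-orbits on a suitable spectrum derived from $X$. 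One then reduces to a second application of the Nikolaus–Scholze Tate orbit lemma on the non-equivariant side, provided we can verify the relevant input is bounded below.

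For $p = 2$, the $C_2$-action on $\mu_2$ is trivial, so $D_{2\cdot 4}$ has a richer abelian quotient structure, and the geometric fixed-point formula will land in a Tate construction for an actual quotient group rather than collapsing. The resulting expression should again be of Tate-orbit type on a spectrum built from $X$, and the key point is that $X^e$ being bounded below forces the entire input (including the $\Phi^{C_2}$ ingredients, viewed with their residual $\mu_4$-action) to satisfy a boundedness condition adequate to invoke the classical TOL after unwinding. In both cases, after the fixed-point formula is established, I would mimic the Nikolaus–Scholze strategy: filter by a Postnikov tower of $X^e$ with its $\mu_{p^2}$-action, check that both parametrized orbits and parametrized Tate commute with the relevant inverse/direct limits on bounded-below inputs (using that the Tate construction's bounded-below hypothesis propagates through the parametrized norm cofiber), and reduce to an Eilenberg–MacLane computation that becomes a finite calculation in group (co)homology.

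The main obstacle I anticipate is the parametrized geometric fixed point formula for $(-)^{t_{C_2} \mu_p}$: untangling how $\Phi^{C_2}$ interacts with $-\wedge \widetilde{E\Gamma_{\mu_p}}$ and $\Psi^{\mu_p}$ inside the semidirect product $D_{2p^2}$ is delicate, especially since the hypothesis only controls the underlying non-equivariant spectrum of $X$. A secondary obstacle is that for $p=2$ the classical TOL for $\mu_2$ needs to be invoked in a situation where the input comes from geometric fixed points of a $C_2$-spectrum, so one must carefully track boundedness through the $\Phi^{C_2}$ operation (which can destroy connectivity in general) — this is likely why the two cases are separated in Lem.~\ref{lem:dihedralTOLEven} and Lem.~\ref{lem:dihedralTOLOdd}.
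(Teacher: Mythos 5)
Your overall skeleton (vanishing of the underlying spectrum via the Nikolaus--Scholze Tate orbit lemma, plus vanishing of $\Phi^{C_2}$) matches the paper, and the underlying-spectrum step is exactly right. But the geometric fixed point half, which is the entire content of the lemma, rests on two expectations that do not hold, so there is a genuine gap. First, the formula you posit, $\Phi^{C_2}\bigl(Z^{t_{C_2}\mu_p}\bigr) \simeq (\Phi^{C_2}Z)^{t(\mu_p^{C_2})}$, is not what one gets. The paper computes $(Y^{t_{C_2}\mu_p})^{\phi C_2}$ via the reconstruction theorem (Cor.~\ref{cor:FormulaForGeomFixedPointsOfCompleteSpectrum}, Exm.~\ref{exm:DihedralEven}, Exm.~\ref{exm:DihedralOdd}) as a finite limit of \emph{generalized} Tate constructions $\tau^{D_{2p}}$, $\tau^{D_{2p}}_{C_2}$ (resp.\ $\tau^{D_4}$ and iterated $t\langle\sigma\rangle t C_2$ terms for $p=2$), not as a single classical Tate construction. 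For $p$ odd there is no ``residual $\mu_p$-action'' after $\Phi^{C_2}$ at all, since $W_{D_{2p}}\langle\sigma\rangle$ is trivial; the vanishing is instead \emph{unconditional}, coming from $\tau^{D_{2p}}\simeq 0$ because $D_{2p}$ is not a $p$-group (Lem.~\ref{lem:GenTateVanishingMultiprime}, which relies on the derived induction theory of Mathew--Naumann--Noel) together with $\tau^{D_{2p}}_{C_2}\simeq 0$. No second application of the classical Tate orbit lemma occurs or would suffice here.

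Second, for $p=2$ the reduction to ``the classical TOL after unwinding'' does not go through. After observing that $\Phi^{C_2}$ of the parametrized orbits is \emph{induced} (Lem.~\ref{lem:GeometricFixedPointsPreservesParametrizedColimits}), so that the $(-)^{tC_2}$ terms on $X^{\phi\langle\sigma\rangle}$, $X^{\phi\langle\sigma x\rangle}$ drop out, one is left with showing
\[
\bigl(X^1_{h\mu_2}\bigr)^{\tau D_4}\simeq 0,\qquad \bigl(X^1_{h\mu_2}\bigr)^{t\langle\sigma\rangle tC_2}\simeq 0,\qquad \bigl(X^1_{h\mu_2}\bigr)^{t\langle\sigma x\rangle tC_2}\simeq 0 .
\]
These involve the generalized Tate construction for the Klein four-group and iterated Tate constructions, which are not consequences of the cyclic-group Tate orbit lemma. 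The paper's proof requires a new argument: reduce to $X^1=HM$ for an $\FF_2[D_8]$-module $M$ via Postnikov towers and $2$-completeness, then use an explicit $\FF_2[D_8]$-free bicomplex resolution whose $\mu_2$-quotient filters $M_{h\mu_2}$ with filtered quotients induced from proper subgroups of $D_4$, hence annihilated by all three functors. Incidentally, your worry about tracking boundedness through $\Phi^{C_2}X$ is moot: because the $X^{\phi\langle\sigma\rangle}$-terms are killed by induction, only the underlying Borel $D_8$-spectrum $X^1$ enters the computation, which is exactly why the hypothesis on the underlying spectrum alone suffices.
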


On the other hand, instead of \cite[\S II.5]{NS18}, the key categorical input for us will be a theorem of Ayala, Mazel-Gee, and Rozenblyum \cite[Thm.~A]{AMGR-NaiveApproach}, extending work of Glasman \cite{Glasman17}, that reconstructs $\Sp^G$ from the $\infty$-categories $\Fun(B W_G H, \Sp)$ ranging over subgroups $H \leq G$, along with the data of `generalized Tate' functors interpolating between them.\footnote{The left-lax functoriality of the generalized Tate construction makes a precise statement slightly complicated to state: see Def.~\ref{dfn:GeometricLocus}.} To make effective use of this theorem in our context, we will reprove it in a slightly different form (Thm.~\ref{thm:GeometricFixedPointsDescriptionOfGSpectra}), which also makes no explicit use of $(\infty,2)$-category theory (in contrast to the proof in \cite{AMGR-NaiveApproach}).\footnote{However, we note that our proof only pertains to the situation where $G$ is a finite group, whereas \cite{AMGR-NaiveApproach} consider the more general case of a compact Lie group.} For this, and also more generally, we will need an elaborate understanding of the formalism of \emph{recollements} \cite[\S A.8]{HA} \cite{BarwickGlasmanNoteRecoll}, which plays a fundamental role in equivariant stable homotopy theory via the recollement on $\Sp^G$ defined by a $G$-family \cite[Part~IV]{GreenleesMay}. Given the dihedral Tate orbit lemma and a variant of Thm.~\ref{thm:GeometricFixedPointsDescriptionOfGSpectra} (Var.~\ref{vrn:ReconstructionEquivalence}), we may then deduce Thm.~\ref{thm:MainTheoremEquivalenceBddBelow} from generic theorems regarding recollements (for instance, compare Thm.~\ref{thm:OneGenerationAndExtension} and Prop.~\ref{prp:EquivalenceOnBoundedBelowAtFiniteLevel}).

\begin{rem} For greater logical clarity, we have separated out our study of recollements so as to comprise the first two sections of this paper. We encourage the reader primarily interested in real $p$-cyclotomic spectra to begin with \S\ref{section:FirstEquivariantSection} and refer to these sections as needed.
\end{rem}

\subsection{Conventions and notation}

We now state a few miscellaneous conventions that are used throughout the paper. Our conventions on equivariant stable homotopy theory are indicated in \S\ref{section:EquivariantConventions}, and our terminology concerning parametrized $\infty$-categories is recalled in \S\ref{section:ParamTerminology}.

\begin{itemize}[leftmargin=4ex] \item Throughout, we work with the formalism of $\infty$-categories. For us, an \emph{$\infty$-category} is a quasi-category, i.e., a simplicial set that satisfies the inner horn filling condition. We implicitly identify categories as $\infty$-categories via the nerve construction.
\item For an $\infty$-category $C$, we let $\sO(C) = \Fun(\Delta^1,C)$ denote the $\infty$-category of arrows in $C$.
\item For a simplicial set $S$, let $S^{\sharp}$ be the marked simplicial set with all edges marked, and let $S^{\flat}$ be the marked simplicial set with only the degenerate edges marked. For a (locally) cocartesian fibration $p: C \to S$, let $\leftnat{C}$ be the marked simplicial set with the (locally) $p$-cocartesian edges marked, and let $\leftnat{\Lambda^n_0}$, $\leftnat{\Delta^n}$ indicate that the edge $\{0,1\}$ is marked. Dually, we may consider $\rightnat{C}$ if $p$ is a cartesian fibration, and $\rightnat{\Lambda^n_n}$, $\rightnat{\Delta^n}$ with the edge $\{n-1,n\}$ marked.
\item Constructions made internal to an $\infty$-category, such as limits and colimits, are necessarily homotopy invariant, so we will typically suppress the adjective `homotopy' in our discussion. We also suppress routine arguments that concern the homotopy invariance of constructions involving $\infty$-categories that are made in simplicial sets or marked simplicial sets.
\item For an $\infty$-category $C$, we denote its mapping spaces by $\Map_C(-,-)$ or $\Map(-,-)$ if $C$ is understood. Likewise, if $C$ is a stable $\infty$-category, then we denote its mapping spectra by $\map_C(-,-)$ or $\map(-,-)$.
\item Let $C = (C, \otimes, 1)$ be a symmetric monoidal $\infty$-category. If $C$ is stable, then we require that the tensor product on $C$ is exact separately in each variable, and if $C$ is presentable, then we require that the tensor product on $C$ commutes with colimits separately in each variable.
\item If $C$ is a closed symmetric monoidal $\infty$-category (e.g., $C$ is presentably symmetric monoidal), then we typically denote its internal hom by $F_C(-,-)$ or just $F(-,-)$.
\item Let $F: C \to D$ be a functor between two symmetric monoidal $\infty$-categories. Then we say that $F$ is \emph{lax monoidal} if $F$ lifts to the structure of a functor $F^{\otimes}: C^{\otimes} \to D^{\otimes}$ of $\infty$-operads (so $F^{\otimes}$ is a functor over $\Fin_{\ast}$ that preserves inert edges). If $F^{\otimes}$ moreover preserves all cocartesian edges, then we say that $F$ is \emph{symmetric monoidal} or simply \emph{monoidal}.
\item Let $C$ and $D$ be symmetric monoidal $\infty$-categories and let $\adjunct{L}{C}{D}{R}$ be an adjunction. Then $L \dashv R$ is \emph{monoidal} if $L$ is symmetric monoidal (so then $R$ is necessarily lax monoidal).
\item In contrast to the introduction, we will typically denote the smash product of $G$-spectra (and related $\infty$-categories) by the symbol $\otimes$ instead of $\wedge$.
\item For a finite group $G$, we let $\FF_G$ be the category of finite $G$-sets and $\sO_G \subset \FF_G$ be the full subcategory on the nonempty transitive $G$-sets.
\end{itemize}

\begin{rem} The full subcategory of $\sO_G$ spanned by the orbits $\{G/H : H \leq G \}$ constitutes a skeleton of $\sO_G$, where given a finite nonempty transitive $G$-set $U$, a choice of basepoint $b \in U$ specifies an isomorphism $U \cong G/H$ with $H = \{ h \in G : h \cdot b = b \}$ and $b \mapsto 1 H$. To avoid some basepoint technicalities, we opt for the basepoint-free definition of $\sO_G$. Note that we may always pass to a skeleton of $\sO_G$ when checking \emph{conditions} that involve $\sO_G$ in some way -- e.g., to check if a natural transformation of presheaves on $\sO_G$ is an equivalence, it suffices to check on orbits.
\end{rem}

\subsection{Acknowledgments}

The authors thank Mark Behrens, Andrew Blumberg, Emanuele Dotto, Jeremy Hahn, Kristian Moi, Irakli Patchkoria, Dylan Wilson, and Mingcong Zeng for helpful discussions. The authors were partially supported by NSF grant DMS-1547292.

\section{Recollements}

In this section, we establish the basic theory of recollements, expanding upon \cite[\S A.8]{HA} and \cite{BarwickGlasmanNoteRecoll}. After setting up the definitions and summarizing Lurie's results on recollements, we give a treatment of the monoidal structure on a recollement,\footnote{Although our results on monoidal recollements are presumably well-known, we do not know of an alternative reference.} connect the theory of stable monoidal recollements to that of smashing localizations, and record some useful projection formulas. We conclude by proving a few necessary lemmas concerning families of recollements.

\begin{dfn} \label{dfn:recollement} Let $\cX$ be an $\infty$-category that admits finite limits and let $\cU, \cZ \subset \cX$ be full subcategories that are stable under equivalences. Then ($\cU$, $\cZ$) is a \emph{recollement} of $\cX$ if the inclusion functors $j_{\ast}: \cU \subset \cX$ and $i_{\ast}: \cZ \subset \cX$ admit left exact left adjoints $j^{\ast}$ and $i^{\ast}$ such that
\begin{enumerate} \item $j^{\ast} i_{\ast}$ is equivalent to the constant functor at the terminal object $0$ of $\cU$.
\item $j^{\ast}$ and $i^{\ast}$ are jointly conservative, i.e., if $f: x \to y$ is a morphism in $\cX$ such that $j^{\ast} f$ and $i^{\ast} f$ are equivalences, then $f$ is an equivalence.
\end{enumerate}
We will call $\cU$ the \emph{open} part of the recollement, $\cZ$ the \emph{closed} part of the recollement, and $i^{\ast} j_{\ast}$ the \emph{gluing functor}.\footnote{Our convention on which subcategory is open and which is closed matches that for constructible sheaves, whereas other authors (e.g., \cite{BarwickGlasmanNoteRecoll}) use the opposite convention, which matches that for quasi-coherent sheaves. Our convention is also consistent with viewing sieves as closed subsets and cosieves as open subsets of a poset, and thus seems more appropriate for applications in equivariant homotopy theory -- for instance, see Def.~\ref{dfn:GeometricLocus}.} \footnote{In \cite[Def.~A.8.1]{HA}, Lurie calls the open part $C_1$ and the closed part $C_0$.}

Now suppose that $(\cU_1, \cZ_1)$ and $(\cU_2, \cZ_2)$ are recollements on $\cX_1$ and $\cX_2$. Then a functor $F: \cX_1 \to \cX_2$ is a \emph{morphism of recollements} if $F$ sends $j^{\ast}_1$-equivalences to $j^{\ast}_2$-equivalences and $i^{\ast}_1$-equivalences to $i^{\ast}_2$-equivalences. Let $\Recoll$ denote the resulting $\infty$-category of recollements, and let $\Recoll^{\lex}$ be the full subcategory on those morphisms of recollements that are also left-exact.
\end{dfn}

\begin{nul}[\textbf{Fracture square}] \label{recollementFractureSquare} Let $(\cU, \cZ)$ be a recollement of $\cX$ and let $\eta_j: \id \to j_{\ast} j^{\ast}$, $\eta_i: \id \to i_{\ast} i^{\ast}$ denote the unit transformations. Then we have a pullback square of functors
\[ \begin{tikzcd}[row sep=4ex, column sep=6ex, text height=1.5ex, text depth=0.25ex]
\id \ar{r}{\eta_i} \ar{d}[swap]{\eta_j} & i_{\ast} i^{\ast} \ar{d}{i_{\ast} i^{\ast} \eta_j} \\
j_{\ast} j^{\ast} \ar{r}{\eta_i j_{\ast} j^{\ast}} & i_{\ast} i^{\ast} j_{\ast} j^{\ast}.
\end{tikzcd} \]
\end{nul}

\begin{nul} \label{LaxVsStrictMorphismsOfRecollements} Suppose that $F:\cX_1 \to \cX_2$ is a morphism of recollements $(\cU_1, \cZ_1) \to (\cU_2, \cZ_2)$. Then we may define $F_U = j_2^{\ast} F {j_1}_{\ast}: \cU_1 \to \cU_2$ and $F_Z = i_2^{\ast} F {i_1}_{\ast}$ so that we have a commutative diagram
\[ \begin{tikzcd}[row sep=4ex, column sep=4ex, text height=1.5ex, text depth=0.25ex]
\cU_1 \ar{d}{F_U} & \cX_1 \ar{d}{F} \ar{l}[swap]{j_1^{\ast}} \ar{r}{i_2^{\ast}} & \cZ_1 \ar{d}{F_Z} \\
\cU_2 & \cX_2 \ar{l}[swap]{j_2^{\ast}} \ar{r}{i_2^{\ast}} & \cZ_2,
\end{tikzcd} \]
such that $F$ is left-exact if and only if $F_U$ and $F_Z$ are left-exact. By adjunction, we get natural transformations $\nu: F {j_1}_{\ast} \to {j_2}_{\ast} F_U$ and $\nu': F {i_1}_{\ast} \to {i_2}_{\ast} F_Z$. If $F$ preserves the terminal object, then $\nu'$ is an equivalence -- indeed, for all $z \in \cZ_1$ we then have
\[ {j_2}^{\ast} F {i_1}_{\ast}(z) \simeq F_U {j_1}^{\ast} {i_1}_{\ast}(z) \simeq F_U (0) \simeq 0, \]
so the unit map $F {i_1}_{\ast} (z) \to {i_2}_{\ast} i_2^{\ast} F {i_1}_{\ast} (z) = {i_2}_{\ast} F_Z (z)$ is an equivalence. In particular, if $F$ is left exact, then $\nu'$ is an equivalence \cite[Rmk.~A.8.10]{HA}. On the other hand, $\nu$ is an equivalence if and only if $$\nu'': F_Z {i_1}^{\ast} {j_1}_{\ast} \to i_2^{\ast} {j_2}_{\ast} F_U$$ is an equivalence -- indeed, the `only if' direction is obvious, and for the `if' direction we may readily check that ${j_2}^{\ast} \nu$ and ${i_2}^{\ast} \nu$ are equivalences and then invoke the joint conservativity of ${j_2}^{\ast}$ and ${i_2}^{\ast}$.
\end{nul}

\begin{dfn} If $\nu''$ in \ref{LaxVsStrictMorphismsOfRecollements} is an equivalence, then we call $F$ a \emph{strict} morphism of recollements. Let $\Recoll_0 \subset \Recoll$ and $\Recoll^{\lex}_0 \subset \Recoll^{\lex}$ be the wide subcategories on the strict morphisms.
\end{dfn}

\begin{rem} \label{rem:TwoOutOfThreePropertyEquivalencesStrictMorphismRecoll} If $F: \cX_1 \to \cX_2$ is a strict left-exact morphism of recollements, then $F$ is an equivalence if and only if $F_U$ and $F_Z$ are equivalences \cite[Prop.~A.8.14]{HA}.
\end{rem}

\begin{dfn} Let $\pi: \cM \to \Delta^1$ be a functor of $\infty$-categories with fibers $\cM_0 = \cZ$ and $\cM_1 = \cU$. Then $\pi$ is a \emph{left-exact correspondence} \cite[Def.~A.8.6]{HA} if
\begin{enumerate} \item $\pi$ is a cartesian fibration, so determines a functor $\phi: \cU \to \cZ$.
\item $\phi$ is left-exact, i.e., the $\infty$-categories $\cU$ and $\cZ$ admit finite limits and $\phi$ preserves finite limits.
\end{enumerate}
A morphism of left-exact correspondences is a functor $F: \cM_1 \to \cM_2$ over $\Delta^1$. In terms of the left-exact functors $\phi_1$ and $\phi_2$, this corresponds to a right-lax commutative diagram
\[ \begin{tikzcd}[row sep=4ex, column sep=4ex, text height=1.5ex, text depth=0.25ex]
\cU_1 \ar{r}{\phi_1} \ar{d}[swap]{F_U} \ar[phantom]{rd}{\SWarrow} & \cZ_1 \ar{d}{F_Z} \\
\cU_2 \ar{r}[swap]{\phi_2} & \cZ_2.
\end{tikzcd} \]
Let $\sO^{\rlax}_{\lex}(\Cat_{\infty})$ denote the resulting $\infty$-category of left-exact correspondences as a full subcategory of $(\Cat_{\infty})_{/\Delta^1}$, and let $\sO_{\lex}(\Cat_{\infty})$ be the wide subcategory on those morphisms that preserve cartesian edges, so that the right-lax commutativity is actually strict. Note that under the straightening correspondence, $\sO_{\lex}(\Cat_{\infty})$ is the full subcategory of $\sO(\Cat_{\infty})$ on left-exact functors $\phi: \cU \to \cZ$.

If $F_U$ and $F_Z$ are also left-exact, we say that the morphism $F$ of left-exact correspondences is \emph{left-exact}. We may then view (lax) commutative squares as residing inside $\Cat_{\infty}^{\lex}$ itself. Let $\sO^{\rlax}(\Cat^{\lex}_{\infty}) \subset \sO^{\rlax}_{\lex}(\Cat_{\infty})$ and $\sO(\Cat^{\lex}_{\infty}) \subset \sO_{\lex}(\Cat_{\infty})$ denote the resulting wide subcategories.
\end{dfn}

\begin{nul} \label{recollEquivalenceToOplaxLim} Let $\cM \to \Delta^1$ be a left-exact correspondence and let $\cX = \Fun_{/\Delta^1}(\Delta^1, \cM)$ be its $\infty$-category of sections. Let $\cU \subset \cX$ be the full subcategory on the cartesian sections and let $\cZ \subset \cX$ be the full subcategory on those sections $\sigma$ such that $\sigma(1)$ is a terminal object of $\cU$. Then $(\cU, \cZ)$ is a recollement of $\cX$ \cite[Prop.~A.8.7]{HA}. Moreover, the formation of sections
\[ \goesto{\cM}{\Fun_{/\Delta^1}(\Delta^1, \cM)} \]
carries morphisms of left-exact correspondences to morphisms of recollements, and thereby defines a functor\footnote{We denote this by $\Rlax\lim$ in view of the interpretation of the sections of a cartesian fibration as defining the right-lax limit of the corresponding functor.}
\[ \Rlax\lim : \sO^{\rlax}_{\lex}(\Cat_{\infty}) \xto{\simeq} \Recoll, \]
which is an equivalence of $\infty$-categories by \cite[Prop.~A.8.8]{HA} (for full faithfulness) and \cite[Prop.~A.8.11]{HA} (which shows that if $(\cU,\cZ)$ is a recollement of $\cX$, then $\cX$ is equivalent to the lax limit of $i^{\ast} j_{\ast}: \cU \to \cZ$). Furthermore, in view of the discussion in \ref{LaxVsStrictMorphismsOfRecollements}, $\Rlax \lim$ restricts to equivalences of subcategories
\begin{align*} \sO_{\lex}(\Cat_{\infty}) \xto{\simeq} \Recoll_0, \; \sO^{\rlax}(\Cat^{\lex}_{\infty}) \xto{\simeq} \Recoll^{\lex}, \; \sO(\Cat^{\lex}_{\infty}) \xto{\simeq} \Recoll^{\lex}_0.
\end{align*}
\end{nul}


\begin{nul} Let $\pi: \cM \to \Delta^1$ be a cartesian fibration. By the dual of \cite[Lem.~2.22]{Exp2}, we have a trivial fibration $\sO^{\cart}(\cM) \to \sO(\Delta^1) \times_{\ev_1, \Delta^1, \pi} \cM$, which restricts to a trivial fibration $\ev_1: \Fun^{\cart}_{/\Delta^1}(\Delta^1,\cM) \to \cM_1$. Let $\chi$ be a section of $\ev_1$. 

Because $i: \rightnat{\Lambda^2_2} \to \rightnat{\Delta^2}$ is right marked anodyne, with the structure map $\sigma^0: \Delta^2 \to \Delta^1$, $(\sigma^0)^{-1}(0)= \{0, 1\}$ and $(\sigma^0)^{-1}(1)= \{2\}$, we have a trivial fibration
\[ i^\ast: \Fun_{/\Delta^1}(\rightnat{\Delta^2},\rightnat{\cM}) \to \Fun_{/\Delta^1}(\rightnat{\Lambda_2^2},\rightnat{\cM}) \cong \Fun_{/\Delta^1}(\Delta^1,\cM) \times_{\ev_1, X_1, \ev_1} \Fun^{\cart}_{/\Delta^1}(\Delta^1,\cM). \]
Let $\kappa$ be a section of $i^\ast$. The section $\chi$ yields a functor
\[ f = (\id,\chi \circ \ev_1): \Fun_{/\Delta^1}(\Delta^1,\cM) \to \Fun_{/\Delta^1}(\Delta^1,\cM) \times_{X_1} \Fun^{\cart}_{/\Delta^1}(\Delta^1,\cM).\]
Let $g = \kappa \circ f$. Then the various maps fit into the commutative diagram
\[ \begin{tikzcd}[row sep=4ex, column sep=4ex, text height=1.5ex, text depth=0.25ex]
\Fun_{/\Delta^1}(\Delta^1,\cM) \ar{r}{g} \ar{d}{\ev_1} & \Fun_{/\Delta^1}(\rightnat{\Delta^2},\rightnat{\cM}) \ar{r}{\ev_{01}} \ar{d}{\ev_{12}} & \Fun(\Delta^1,\cM_0) \ar{d}{\ev_1} \\
\cM_1 \ar{r}{\chi} &  \Fun^{\cart}_{/\Delta^1}(\Delta^1,\cM) \ar{r}{\ev_0} & \cM_0.
\end{tikzcd} \]
\end{nul}

\begin{lem} \label{lm:sectionsPullbackSquare} The natural map $\Fun_{/\Delta^1}(\Delta^1,\cM) \to \sO(\cM_0) \times_{\cM_0} \cM_1$ is an equivalence, so the outer square is a homotopy pullback square of $\infty$-categories.
\end{lem}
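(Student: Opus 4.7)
The plan is to factor the natural map through the intermediate $\infty$-category $\Fun_{/\Delta^1}(\rightnat{\Delta^2},\rightnat{\cM})$ and exhibit both halves as categorical equivalences. Geometrically, a section of $\pi$ is an edge $Z \to U$ with $Z \in \cM_0$ and $U \in \cM_1$; the cartesian edge $\chi(U): \phi(U) \to U$ yields a factorization $Z \to \phi(U) \to U$ whose interior edge lies in $\cM_0$, and this factorization is unique up to contractible choice by the universal property of cartesian edges. This motivates building an equivalence $\Fun_{/\Delta^1}(\Delta^1, \cM) \simeq \sO(\cM_0) \times_{\cM_0} \cM_1$ via the intermediate $\infty$-category of such $2$-simplices.

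First, I would check that $g$ is a categorical equivalence. By construction $g = \kappa \circ f$, where $\kappa$ is a section of the trivial fibration $i^*$ and hence itself an equivalence, while $f = (\id, \chi \circ \ev_1)$ is a section of the first projection
\[
\pi_1: \Fun_{/\Delta^1}(\Delta^1, \cM) \times_{\cM_1} \Fun^{\cart}_{/\Delta^1}(\Delta^1, \cM) \to \Fun_{/\Delta^1}(\Delta^1, \cM).
\]
Since $\pi_1$ is the pullback of the trivial fibration $\ev_1: \Fun^{\cart}_{/\Delta^1}(\Delta^1, \cM) \to \cM_1$ along $\ev_1: \Fun_{/\Delta^1}(\Delta^1, \cM) \to \cM_1$, it is itself a trivial fibration, and so $f$ is a section of a trivial fibration, hence an equivalence.

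Next, I would establish the identification $\Fun_{/\Delta^1}(\rightnat{\Delta^2}, \rightnat{\cM}) \simeq \sO(\cM_0) \times_{\cM_0} \cM_1$. Since $\pi$ is in particular an inner fibration, restriction along the inner horn inclusion $\Lambda^2_1 \hookrightarrow \Delta^2$ induces a trivial fibration on mapping simplicial sets over $\Delta^1$; the cartesian marking on the $\{1,2\}$-edge is preserved under this restriction, so it descends to a trivial fibration
\[
\Fun_{/\Delta^1}(\rightnat{\Delta^2}, \rightnat{\cM}) \xto{\sim} \Fun(\Delta^1, \cM_0) \times_{\ev_1, \cM_0, \ev_0} \Fun^{\cart}_{/\Delta^1}(\Delta^1, \cM).
\]
Combining with the trivial fibration $\ev_1: \Fun^{\cart}_{/\Delta^1}(\Delta^1, \cM) \xto{\sim} \cM_1$ (with section $\chi$, under which $\ev_0$ becomes the transition functor $\phi = \ev_0 \circ \chi$) identifies the right-hand side with $\sO(\cM_0) \times_{\cM_0} \cM_1$. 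Concatenating with $g$ yields the equivalence of the statement, and the outer square is then a homotopy pullback tautologically, since its legs factor through this identification.

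The main subtlety will be bookkeeping with the markings: one must verify that each restriction map continues to be a trivial fibration once attention is restricted to the marked subcomplexes. This reduces to checking that the cartesian condition on a single edge is preserved under the relevant simplicial operations, which is routine because the $\{1,2\}$-edge is literally shared by $\Delta^2$ and $\Lambda^2_1$ (and by $\Lambda^2_2$ for the $\kappa$ argument already in the setup).
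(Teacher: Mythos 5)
Your proof is correct and follows essentially the same route as the paper: both identify $g$ as a composite of sections of trivial fibrations (hence an equivalence), and both use the inner-anodyne inclusion $\Lambda^2_1 \hookrightarrow \Delta^2$ together with the trivial fibration $\ev_1 : \Fun^{\cart}_{/\Delta^1}(\Delta^1,\cM) \to \cM_1$ to identify $\Fun_{/\Delta^1}(\rightnat{\Delta^2},\rightnat{\cM})$ with $\sO(\cM_0)\times_{\cM_0}\cM_1$. The one place you wave your hands — that the trivial fibration survives passage to the marked subcomplexes — is easily made rigorous by noting that $\rightnat{\Lambda^2_1}\to\rightnat{\Delta^2}$ is a pushout of $(\Lambda^2_1)^\flat\to(\Delta^2)^\flat$ (the marked edge $\{1,2\}$ already lies in $\Lambda^2_1$), hence itself marked anodyne; equivalently, both marked $\Fun$-spaces are full subcategories cut out by the same condition on the $\{1,2\}$-edge, so the trivial fibration restricts.
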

\begin{proof} Because the sections $\chi$ and $\kappa$ are equivalences, the map $g$ is an equivalence. Moreover, because the map $\Lambda^2_1 \to \Delta^2$ is inner anodyne, the rightmost square is a homotopy pullback square. The claim follows.
\end{proof}

\begin{cor} \label{cor:RecollementAsPullbackSquare} Suppose that $(\cU,\cZ)$ is a recollement of $\cX$ and consider the commutative\footnote{We can obtain a commutative diagram of simplicial sets using standard techniques in quasi-category theory.} diagram
\[ \begin{tikzcd}[row sep=4ex, column sep=6ex, text height=1.5ex, text depth=0.25ex]
\cX \ar{r}{i^{\ast} \eta_j} \ar{d}[swap]{j^{\ast}} & \sO(\cZ) \ar{d}{\ev_1} \\
\cU \ar{r}{\phi = i^{\ast} j_{\ast}} & \cZ
\end{tikzcd} \]
where $\eta_j: \cX \to \sO(\cX)$ is the functor that sends $x$ to the unit map $x \to j_{\ast} j^{\ast} x$. Then the induced map
\[ \cX \xto{\simeq} \sO(\cZ) \times_{\ev_1, \cZ, \phi} \cU \]
is an equivalence of $\infty$-categories.
\end{cor}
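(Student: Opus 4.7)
The plan is to deduce this corollary from Lemma \ref{lm:sectionsPullbackSquare} combined with the equivalence $\Rlax\lim: \sO^{\rlax}_{\lex}(\Cat_{\infty}) \xto{\simeq} \Recoll$ recorded in \ref{recollEquivalenceToOplaxLim}. Given the recollement $(\cU, \cZ)$ of $\cX$, let $\cM \to \Delta^1$ be the left-exact correspondence associated to the gluing functor $\phi = i^{\ast} j_{\ast}: \cU \to \cZ$ (so $\cM_1 = \cU$, $\cM_0 = \cZ$, and the cartesian pushforward is $\phi$). Then $\Rlax\lim$ yields an equivalence $\cX \simeq \Fun_{/\Delta^1}(\Delta^1, \cM)$ of recollements.

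Next, I would apply Lemma \ref{lm:sectionsPullbackSquare} to this $\cM$ to obtain an equivalence
\[ \Fun_{/\Delta^1}(\Delta^1, \cM) \xto{\simeq} \sO(\cZ) \times_{\ev_1, \cZ, \phi} \cU. \]
Composing with the previous equivalence then produces the desired equivalence $\cX \xto{\simeq} \sO(\cZ) \times_{\cZ} \cU$.

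The remaining work is to identify this composite with the functor $(i^{\ast} \eta_j, j^{\ast})$ induced by the commutative square in the statement. For this, I would unwind the construction of $\Rlax\lim$ in \ref{recollEquivalenceToOplaxLim}: given $x \in \cX$, the associated section $\sigma_x: \Delta^1 \to \cM$ has $\sigma_x(1) = j^{\ast}(x) \in \cU$ and $\sigma_x(0) = i^{\ast}(x) \in \cZ$, so the second projection after applying Lemma \ref{lm:sectionsPullbackSquare} clearly recovers $j^{\ast}$. For the first projection, a section $\sigma$ of a cartesian fibration over $\Delta^1$ factors (essentially uniquely) as $\sigma(0) \to \phi(\sigma(1)) \to \sigma(1)$ through a cartesian lift, so the $\sO(\cZ)$-component of the equivalence of Lemma \ref{lm:sectionsPullbackSquare} sends $x$ to the arrow $i^{\ast}(x) \to \phi(j^{\ast}(x)) = i^{\ast} j_{\ast} j^{\ast}(x)$, which is precisely $i^{\ast} \eta_j(x)$.

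The main obstacle is the bookkeeping in this last step — making the identification of the $\sO(\cZ)$-component with $i^{\ast} \eta_j$ precise at the level of quasi-categories, rather than just on objects. This amounts to producing a compatible system of cartesian lifts inside the explicit model for $\cM$ built from the recollement, which is the content of \cite[Prop.~A.8.11]{HA} (already invoked in \ref{recollEquivalenceToOplaxLim}); alternatively one can sidestep this by noting that both functors $\cX \to \sO(\cZ)$ fit in a commutative square over $\cU \xto{\phi} \cZ$ and agree after composition with $\ev_0$ and $\ev_1$, so the joint conservativity half of the recollement axioms together with the universal property of $\sO(\cZ)$ forces them to agree.
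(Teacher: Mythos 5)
Your main line of argument exactly reproduces the paper's proof, which consists of the single instruction to combine Lemma \ref{lm:sectionsPullbackSquare} with the equivalence $\Rlax\lim \colon \sO^{\rlax}_{\lex}(\Cat_{\infty}) \xto{\simeq} \Recoll$ from \ref{recollEquivalenceToOplaxLim}. Your further unwinding of the composite — checking that the $\cU$-factor is $j^{\ast}$ and the $\sO(\cZ)$-factor sends $x$ to the factorization $i^{\ast}(x) \to \phi(j^{\ast}(x)) \to j^{\ast}(x)$, whose $\cZ$-part is $i^{\ast}\eta_j(x)$ — is correct and fills in the identification that the paper leaves implicit in the phrase ``the induced map.''

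The only issue is the alternative you propose at the end: it is not true that two functors $\cX \to \sO(\cZ)$ that agree after composition with $\ev_0$ and $\ev_1$ must coincide, even up to homotopy. An object of $\sO(\cZ)$ is a morphism in $\cZ$, and a morphism is not determined by its endpoints (think of the identity versus a nontrivial automorphism). The universal property of $\sO(\cZ)$ says that functors $K \to \sO(\cZ)$ correspond to functors $K \times \Delta^1 \to \cZ$, and knowing the two restrictions to $K \times \{0\}$ and $K \times \{1\}$ does not pin down the map on $K \times \Delta^1$. Joint conservativity is also not the relevant axiom: it concerns detecting equivalences of objects in $\cX$, not determining functors out of $\cX$. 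So the sidestep fails; you should rely on your primary route through \cite[Prop.~A.8.11]{HA}, which is what \ref{recollEquivalenceToOplaxLim} already invokes.
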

\begin{proof} Combine Lem.~\ref{lm:sectionsPullbackSquare} with the equivalence $\rlax \lim: \sO^{\rlax}_{\lex}(\Cat_{\infty}) \xto{\simeq} \Recoll$ of \ref{recollEquivalenceToOplaxLim}.
\end{proof}

\begin{rem} In view of Cor.~\ref{cor:RecollementAsPullbackSquare}, given a recollement $(\cU,\cZ)$ of $\cX$ we will sometimes write objects $x \in \cX$ as $[u,z,\alpha:z \to \phi(u)]$ for $\phi = i^{\ast} j_{\ast}$.
\end{rem}

Given a left-exact functor $\phi: \cU \to \cZ$, we may also extract the resulting recollement from the \emph{cocartesian} fibration classified by $\phi$, even though it is difficult to encode the right-lax functoriality when working with cocartesian fibrations.

\begin{nul} Let $S$ be an $\infty$-category and $C \to S$ a cocartesian fibration. Recall from \cite{BGN} or \cite[Rec.~5.15]{Exp2} that the \emph{dual cartesian fibration} $C^\vee \to S^{\op}$ is defined to have $n$-simplices\footnote{Here, $\widetilde{\sO}(-)$ is the \emph{twisted arrow $\infty$-category}. We use the directionality convention of \cite{M1} instead of \cite[\S 5.2.1]{HA}, so twisted arrows are contravariant in the source and covariant in the target.}
\[ \begin{tikzcd}[row sep=4ex, column sep=4ex, text height=1.5ex, text depth=0.25ex]
\leftnat{\widetilde{\sO}((\Delta^n)^{\op})} \ar{r} \ar{d}[swap]{\ev_1}  & \leftnat{C} \ar{d} \\
((\Delta^n)^{\op})^\sharp \ar{r} & S^\sharp.
\end{tikzcd} \]
In fact, because the functor $\widetilde{\sO}'(-): s\Set^+_{/S} \to s\Set^+_{/S}$ of \cite[Prop.~5.16]{Exp2} preserves colimits, it follows that for all simplicial sets $A$ over $S^{\op}$
\[ \Hom_{/S^{\op}}(A,C^{\vee}) \cong \Hom_{/S}(\widetilde{\sO}'(A^{\op}),\leftnat{C}). \]
Consequently, we obtain an equivalence
\[ \Fun_{/S^{\op}}(S^{\op},C^{\vee}) \simeq \Fun^{\cocart}_{/S}(\widetilde{\sO}(S),C). \]
\end{nul}

\begin{nul} \label{dualizingOneSimplex} The barycentric subdivision $\sd(\Delta^1) = [0 \rightarrow 01 \leftarrow 1]$ is isomorphic to the twisted arrow category $\widetilde{\sO}(\Delta^1)$. Therefore, for a cocartesian fibration $C \to \Delta^1$, we deduce that
\[ \Fun^{\cocart}_{/\Delta^1}(\sd(\Delta^1),C) \simeq \Fun_{/\Delta^1}(\Delta^1,C^{\vee}) \]
and hence by Lem.~\ref{lm:sectionsPullbackSquare} we can decompose $\Fun^{\cocart}_{/\Delta^1}(\sd(\Delta^1),C)$ as a pullback square $\sO(\cZ) \times_{\ev_1, \cZ, \phi} \cU$ for a choice of pushforward functor $\phi: \cU \to \cZ$ (where $\cU \simeq C_0$ and $\cZ \simeq C_1$). This observation will be important for us when we discuss recollements on right-lax limits in the sequel.
\end{nul}

\subsection{Stable recollements}

\begin{dfn} Let $\cX$ be a stable $\infty$-category and let ($\cU$, $\cZ$) be a recollement of $\cX$. Then this recollement is \emph{stable} if $\cU$ and $\cZ$ are stable subcategories. Let $\Recoll^{\st}$, resp. $\Recoll^{\st}_0$ be the full subcategory of $\Recoll^{\lex}$, resp. $\Recoll_0^{\lex}$ whose objects are the stable recollements.
\end{dfn}

\begin{dfn} If $\cM \to \Delta^1$ is a left-exact correspondence, then $\cM$ is \emph{exact} if the functor $\phi: \cM_1 \to \cM_0$ is an exact functor of stable $\infty$-categories. Let $\sO^{\rlax}(\Cat^{\st}_{\infty})$, resp. $\sO(\Cat^{\st}_{\infty})$ be the full subcategory of $\sO^{\rlax}(\Cat^{\lex}_{\infty})$, resp. $\sO(\Cat^{\lex}_{\infty})$ on the exact correspondences.
\end{dfn}

\begin{rem} The functor $\Rlax\lim$ of \ref{recollEquivalenceToOplaxLim} restricts to equivalences
\begin{align*} \sO^{\rlax}(\Cat^{\st}_{\infty}) \xto{\simeq} \Recoll^{\st}, \quad \sO(\Cat^{\st}_{\infty}) \xto{\simeq} \Recoll^{\st}_0.
\end{align*}
\end{rem}

\begin{nul} Let $(\cU,\cZ)$ be a stable recollement of $\cX$. Then $j^{\ast}: \cX \to \cU$ admits a fully faithful left adjoint\footnote{For the existence of $j_!$, we only need that $\cZ$ admits an initial object $\emptyset$ \cite[Cor.~A.8.13]{HA}. Then $j_!$ is defined by the formula $j_!(u) = [u,\emptyset, \emptyset \to \phi(u)]$.} $j_!$, $i_{\ast}$ admits a right adjoint $i^!$, and we have norm maps $\Nm: j_! \to j_{\ast}$ and $\Nm': i^! \to i^{\ast}$ that fit into fiber sequences
\begin{align*} j_! \to j_{\ast} \to i_{\ast} i^{\ast} j_{\ast}  \quad \text{and} \quad
i^! \to i^{\ast} \to i^{\ast} j_{\ast} j^{\ast} \:,
\end{align*}
where the other maps are induced by the unit transformations for $j^{\ast} \dashv j_{\ast}$ and $i^{\ast} \dashv i_{\ast}$. On objects $x=[u,z,\alpha] \in \cX$, these amount to the fiber sequences
\begin{align*} [u,0,0] \to [u,\phi u, \id] \to [0,\phi u, 0] \quad \text{and} \quad \fib(\alpha) \to z \to \phi u \: .
\end{align*}
Considering the various unit and counit transformations and the norm maps, we may extend the pullback square of \ref{recollementFractureSquare} to a commutative diagram 
\[ \begin{tikzcd}[row sep=4ex, column sep=4ex, text height=1.5ex, text depth=0.25ex]
 & i_{\ast} i^! \ar{r}{\simeq} \ar{d} & i_{\ast} i^! \ar{d}{i_{\ast} \Nm'} \\
j_! j^{\ast} \ar{r} \ar{d}{\simeq} & \id \ar{r} \ar{d} & i_{\ast} i^{\ast} \ar{d} \\
j_! j^{\ast} \ar{r}[swap]{\Nm j^{\ast}} & j_{\ast} j^{\ast} \ar{r} & i_{\ast} i^{\ast} j_{\ast} j^{\ast}
\end{tikzcd} \]
in which every row and column is a fiber sequence.
\end{nul}

\begin{nul} \label{stableRecollementComment} In the stable case, the datum of the closed part of a recollement determines the entire recollement. More precisely, if $\cZ \subset \cX$ is a stable reflective and coreflective subcategory of $\cX$ and we define $\cU$ to be the full subcategory on those objects $u \in \cX$ such that $\Map_{\cX}(z,u) \simeq \ast$ for all $z \in \cZ$, then ($\cU$, $\cZ$) is a stable recollement of $\cX$ \cite[Prop.~A.8.20]{HA}, and conversely, if $(\cU,\cZ)$ is a stable recollement of $\cX$ then $j_{\ast}: \cU \subset \cX$ is defined as above from $\cZ$. We may also identify $j_!(\cU)$ as given by those objects $u \in \cX$ such that $\Map_{\cX}(u,z) \simeq \ast$ for all $z \in \cZ$. 

Moreover, $F: \cX_1 \to \cX_2$ is a morphism of stable recollements $(\cU_1, \cZ_1) \to (\cU_2, \cZ_2)$ if and only if $F|_{\cZ_1} \subset \cZ_2$ and $F|_{j_!(\cU_1)} \subset j_!(\cU_2)$ (in particular, we then have ${j_2}_! F_U \simeq F {j_1}_!$). This is because $\cZ$ coincides with the $j^{\ast}$-null objects and $j_!(\cU)$ with the $i^{\ast}$-null objects. Given this, $F$ is then a strict morphism of stable recollements if and only if we also have that $F|_{j_{\ast}(\cU_1)} \subset j_{\ast}(\cU_2)$.
\end{nul}


\subsection{Monoidal recollements}

We now extend the theory of recollements to the situation where $\cX$ admits a symmetric monoidal structure $(\cX, \otimes, 1)$.

\begin{dfn} \label{dfn:monoidalRecollement} Let $\cX$ be a symmetric monoidal $\infty$-category that admits finite limits. Then a recollement $(\cU,\cZ)$ of $\cX$ is \emph{monoidal} if the localization functors $j_{\ast} j^{\ast}$ and $i_{\ast} i^{\ast}$ are compatible with the symmetric monoidal structure in the sense of \cite[Def.~2.2.1.6]{HA}, i.e., for every $j^{\ast}$, resp. $i^{\ast}$-equivalence $f: x \to x'$ and any $y \in \cX$, $f \otimes \id: x \otimes y \to x' \otimes y$ is a $j^{\ast}$, resp. $i^{\ast}$-equivalence.

A morphism $F:(\cU, \cZ) \to (\cU',\cZ')$ of recollements on $\cX$ and $\cX'$ is \emph{monoidal} if the functor $F: \cX \to \cX'$ is symmetric monoidal. Let $\Recoll^{\otimes}$ denote the $\infty$-category of monoidal recollements and morphisms thereof.
\end{dfn}

\begin{nul} In the situation of Def.~\ref{dfn:monoidalRecollement}, by \cite[Prop.~2.2.1.9]{HA} $\cU$ and $\cZ$ obtain symmetric monoidal structures such that the adjunctions $j^{\ast} \dashv j_{\ast}$ and $i^{\ast} \dashv i_{\ast}$ are monoidal. In particular, the gluing functor $i^{\ast} j_{\ast}$ is lax monoidal. Furthermore, if $F$ is a morphism of monoidal recollements, then the induced functors $F_U$ and $F_Z$ of \ref{LaxVsStrictMorphismsOfRecollements} are also symmetric monoidal.
\end{nul}

We first show that given a lax monoidal functor $\phi: \cU \to \cZ$, the recollement $\Rlax \lim \phi$ is monoidal. Recall that the arrow $\infty$-category $\sO(C) = C^{\Delta^1}$ admits a pointwise monoidal structure $(C^{\otimes})^{\Delta^1}$ (\ref{pointwiseMonoidalStructure}).

\begin{dfn} \label{dfn:canonicalSMConOplaxLimit} Suppose $\phi^\otimes: \cU^\otimes \to \cZ^\otimes$ is a lax monoidal functor of symmetric monoidal $\infty$-categories (i.e., a map of $\infty$-operads). Consider the pullback square of $\infty$-operads
\[ \begin{tikzcd}[row sep=4ex, column sep=4ex, text height=1.5ex, text depth=0.25ex]
(\cZ^\otimes)^{\Delta^1} \times_{\cZ^\otimes} \cU^\otimes \ar{r} \ar{d} & (\cZ^\otimes)^{\Delta^1} \ar{d}{\ev_1} \\ 
\cU^\otimes \ar{r}{\phi^\otimes} & \cZ^\otimes. 
\end{tikzcd} \]
By Lem.~\ref{lm:evaluationCocartesianMonoidal}, $\ev_1$ is a cocartesian fibration, so $(\cZ^\otimes)^{\Delta^1} \times_{\cZ^\otimes} \cU^\otimes \to \cU^\otimes \to \Fin_{\ast}$ is a cocartesian fibration and therefore a symmetric monoidal $\infty$-category. This defines the \emph{canonical} symmetric monoidal structure on the right-lax limit of $\phi$.
\end{dfn}

\begin{rem} In Def.~\ref{dfn:canonicalSMConOplaxLimit}, at the level of objects the tensor product on $\sO(\cZ) \times_{\cZ} \cU$ is defined in the following way: suppose given two objects $x=[u,z,\alpha:z \to \phi(u)]$ and $x' = [u',z',\alpha':z' \to \phi(u')]$. Then $x \otimes x' = [u \otimes u', z \otimes z', \gamma]$, where $\gamma$ is given by the composite map
\[ z \otimes z' \xto{\alpha \otimes \alpha'} \phi(u) \otimes \phi(u') \to \phi(u \otimes u') \]
using the lax monoidality of $\phi$ for the second map. 
\end{rem}

\begin{lem} If $\phi: \cU \to \cZ$ is a lax monoidal left-exact functor, then $\Rlax \lim \phi$ is a monoidal recollement with respect to the canonical symmetric monoidal structure on $\sO(\cZ) \times_{\cZ} \cU$.
\end{lem}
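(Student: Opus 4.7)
The plan is to apply Lurie's criterion \cite[Def.~2.2.1.6]{HA} for both localization functors $j_\ast j^\ast$ and $i_\ast i^\ast$ on $\cX = \sO(\cZ) \times_\cZ \cU$: for any $j^\ast$- or $i^\ast$-equivalence $f$ and any $y \in \cX$, one must show $f \otimes \id_y$ is again a $j^\ast$- or $i^\ast$-equivalence, respectively. The underlying recollement structure on $\Rlax\lim \phi$ is already provided by \ref{recollEquivalenceToOplaxLim} using only the left-exactness of $\phi$, so only this monoidal compatibility remains.

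The key intermediate step is to establish that the two functors $j^\ast: \cX \to \cU$ and $i^\ast: \cX \to \cZ$ are each symmetric monoidal with respect to the canonical structure of Def.~\ref{dfn:canonicalSMConOplaxLimit}. By Lem.~\ref{lm:evaluationCocartesianMonoidal}, $\ev_1: (\cZ^\otimes)^{\Delta^1} \to \cZ^\otimes$ is a cocartesian fibration, so its pullback along $\phi^\otimes$ yields a cocartesian fibration $\cX^\otimes \to \cU^\otimes$; composing with $\cU^\otimes \to \Fin_\ast$ exhibits $\cX^\otimes$ as a symmetric monoidal $\infty$-category. The projection $\cX^\otimes \to \cU^\otimes$ tautologically preserves cocartesian edges over $\Fin_\ast$, making $j^\ast$ symmetric monoidal. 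For $i^\ast$, I would observe that the other pullback projection $\cX^\otimes \to (\cZ^\otimes)^{\Delta^1}$ also preserves cocartesian edges over $\Fin_\ast$, and that $\ev_0: (\cZ^\otimes)^{\Delta^1} \to \cZ^\otimes$ is symmetric monoidal for the pointwise monoidal structure on $\sO(\cZ)$; the composite is $i^\ast$.

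Given symmetric monoidality of the two projections, the verification is immediate. For a $j^\ast$-equivalence $f$ and any $y \in \cX$, we have $j^\ast(f \otimes \id_y) \simeq j^\ast(f) \otimes \id_{j^\ast(y)}$, which is an equivalence in $\cU$ since the endofunctor $- \otimes \id_{j^\ast(y)}$ of $\cU$ preserves equivalences and $j^\ast(f)$ is one; the identical argument handles $i^\ast$. The only point requiring genuine input is Lem.~\ref{lm:evaluationCocartesianMonoidal}, which ensures the pullback construction yields a symmetric monoidal $\infty$-category rather than merely an $\infty$-operad. Once that is available, the remainder of the argument is a formal consequence of the symmetric monoidality of the two projections, so the main obstacle — and really the only nontrivial content — lies in the cocartesian-fibration property of $\ev_1$ packaged by that lemma.
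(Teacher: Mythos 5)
Your strategy matches the paper's brief proof, which observes that $j^\ast$ and $i^\ast$ are symmetric monoidal and notes this suffices (a formal deduction your final paragraph executes correctly). The argument for $j^\ast$ is also sound: the second projection $\cX^\otimes \to \cU^\otimes$ is a pullback of the cocartesian fibration $\ev_1$, and since the composite with $\cU^\otimes \to \Fin_{\ast}$ is the structure map, cocartesian edges of the composite project to cocartesian edges of $\cU^\otimes$.

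The argument for $i^\ast$, however, has a gap. You claim that $\pr_1: \cX^\otimes \to (\cZ^\otimes)^{\Delta^1}$ preserves cocartesian edges over $\Fin_{\ast}$, which would make $\pr_1$ symmetric monoidal. This is false when $\phi$ is only lax monoidal. A cocartesian edge of $\cX^\otimes$ over $\Fin_{\ast}$ is a pair $(e_1,e_2)$ with $e_2$ cocartesian in $\cU^\otimes$ and $e_1$ an $\ev_1$-cocartesian edge, so that $\ev_1(e_1)=\phi^\otimes(e_2)$. But an edge of $(\cZ^\otimes)^{\Delta^1}$ is cocartesian over $\Fin_{\ast}$ precisely when \emph{both} its $\ev_0$- and $\ev_1$-images are cocartesian in $\cZ^\otimes$, and $\ev_1(e_1)=\phi^\otimes(e_2)$ generally fails to be cocartesian since $\phi^\otimes$ is lax monoidal rather than monoidal. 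Thus $\pr_1(e_1,e_2)=e_1$ is not cocartesian, and $\pr_1$ is not symmetric monoidal. What is true — and all you actually need — is the weaker assertion that the $\ev_0$-image of any $\ev_1$-cocartesian edge is cocartesian in $\cZ^\otimes$: concretely, the $\ev_1$-cocartesian lift of an edge $g$ covering $\alpha$ in $\Fin_{\ast}$ at a source object $[a \to z]$ is the square whose top edge is the cocartesian lift $a \to \alpha_! a$ and whose bottom edge is $g$. Your proof should appeal to this directly rather than to monoidality of the intermediate projection, which fails.
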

\begin{proof} We only need to observe that in Def.~\ref{dfn:canonicalSMConOplaxLimit}, the two evaluation maps $j^{\ast}: \sO(\cZ) \times_{\cZ} \cU \to \cU$ and $i^{\ast}: \sO(\cZ) \times_Z U \to \sO(\cZ) \xto{\ev_0} \cZ$ are symmetric monoidal.
\end{proof}

We next wish to show that given a monoidal recollement $(\cU, \cZ)$ of $\cX$, the symmetric monoidal structure on $\cX$ is the canonical one of Def.~\ref{dfn:canonicalSMConOplaxLimit}. We first observe that the unit transformation of a monoidal adjunction is itself a lax monoidal functor.

\begin{lem} \label{lem:unitLaxMonoidal} Let $C^\otimes$ and $D^\otimes$ be symmetric monoidal $\infty$-categories and let $\adjunct{F}{C}{D}{G}$ be a monoidal adjunction. Then the unit transformation $\eta: C \to \sO(C)$ lifts to a lax monoidal functor $\eta^{\otimes}: C^{\otimes} \to (C^{\otimes})^{\Delta^1}$ such that $\ev_1 \eta^{\otimes} \simeq G^{\otimes} F^{\otimes}$ and $\ev_0 \eta^{\otimes} \simeq \id$.
 \end{lem}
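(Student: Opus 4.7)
The plan is to obtain $\eta^\otimes$ by lifting the monoidal adjunction to the $\infty$-operadic level and transposing the resulting unit. Since $F \dashv G$ is a monoidal adjunction with $F$ symmetric monoidal, $G$ acquires a canonical lax monoidal refinement, and the adjunction lifts to a relative adjunction $F^\otimes \dashv G^\otimes$ of $\infty$-operads over $\Fin_{\ast}$ (as in \cite[\S 7.3.2]{HA}). The unit of this relative adjunction is a natural transformation $\id_{C^\otimes} \Rightarrow G^\otimes F^\otimes$ of functors over $\Fin_{\ast}$, which unstraightens to a functor $H: C^\otimes \times \Delta^1 \to C^\otimes$ over $\Fin_{\ast}$ (with $\Delta^1$ mapping to the basepoint) whose restrictions to $C^\otimes \times \{0\}$ and $C^\otimes \times \{1\}$ are $\id_{C^\otimes}$ and $G^\otimes F^\otimes$, respectively. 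Currying $H$ produces a functor $\eta^\otimes: C^\otimes \to (C^\otimes)^{\Delta^1}$.

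Because $H$ is a map over $\Fin_{\ast}$, the functor $\eta^\otimes$ sends every object of $C^\otimes$ over $\langle n \rangle$ to an arrow whose source and target both lie over $\langle n \rangle$, and similarly for higher simplices; equivalently, $\eta^\otimes$ factors through the pullback $(C^\otimes)^{\Delta^1} \times_{(\Fin_{\ast})^{\Delta^1}} \Fin_{\ast}$ along the diagonal. By Lem.~\ref{lm:evaluationCocartesianMonoidal} this pullback is the total space of the pointwise symmetric monoidal structure on $\sO(C)$ as built in Def.~\ref{dfn:canonicalSMConOplaxLimit} (taking $\phi^\otimes = \id_{C^\otimes}$). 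The required identities $\ev_0 \eta^\otimes \simeq \id$ and $\ev_1 \eta^\otimes \simeq G^\otimes F^\otimes$ are then immediate from the endpoints of $H$.

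The substantive verification is that $\eta^\otimes$ preserves inert edges, so that it is a map of $\infty$-operads. An edge of the pointwise monoidal $\sO(C)$ lying over $\alpha: \langle m \rangle \to \langle n \rangle$ in $\Fin_{\ast}$ is a commutative square in $C^\otimes$ with both horizontal edges over $\alpha$, and such a square is inert precisely when both horizontal edges are inert in $C^\otimes$. Applied to an inert edge $\beta: x \to y$ of $C^\otimes$, the image $\eta^\otimes(\beta)$ is the naturality square of $\eta$ at $\beta$, whose top edge is $\beta$ (inert by hypothesis) and whose bottom edge is $G^\otimes F^\otimes(\beta)$; the latter is inert because $F^\otimes$ preserves all cocartesian edges (being strong monoidal) and $G^\otimes$ preserves inert edges (being lax monoidal). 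The only real obstacle in the plan is the initial operadic lift of the adjunction, but this is a standard application of Lurie's theory of relative adjunctions between $\infty$-operads.
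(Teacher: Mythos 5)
Your proof is correct, but it follows a genuinely different route from the paper's. You lift the adjunction to a relative adjunction $F^{\otimes} \dashv G^{\otimes}$ over $\Fin_{\ast}$ (via \cite[\S 7.3.2]{HA}, using that $F$ is symmetric monoidal and admits a fiberwise right adjoint), curry the unit into the relative cotensor $(C^{\otimes})^{\Delta^1}$, and then check directly that the resulting map preserves inert edges — using that inert edges of the cotensor are detected pointwise at the two vertices of the exponent, that $F^{\otimes}$ preserves all cocartesian edges, and that $G^{\otimes}$ preserves inert ones. The paper instead works with the bicartesian fibration $\cM \to \Delta^1$ classified by the adjunction: it identifies $\Fun_{/\Delta^1}(\Delta^1,\cM) \simeq \sO(C) \times_C D$ equipped with its canonical symmetric monoidal structure (Def.~\ref{dfn:canonicalSMConOplaxLimit}), notes that the cocartesian sections — equivalent to $C$ under $\ev_0$ — are closed under the tensor product precisely because $F$ is symmetric monoidal, and defines $\eta^{\otimes}$ as the composite
\[ C^{\otimes} \simeq \Fun^{\cocart}_{/\Delta^1}(\Delta^1,\cM)^{\otimes} \subset \Fun_{/\Delta^1}(\Delta^1,\cM)^{\otimes} \to (C^{\otimes})^{\Delta^1}, \]
each factor of which is a map of $\infty$-operads by construction, so no inert-edge check is needed. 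Your argument is the more direct one and makes explicit the verification the paper hides inside its factorization; the paper's buys that verification at the cost of first setting up the canonical monoidal structure on the right-lax limit. Two cosmetic remarks: the identification of the target of your curried map with the pointwise monoidal structure on $\sO(C)$ is just the definition of the relative cotensor in \ref{pointwiseMonoidalStructure} (Lem.~\ref{lm:evaluationCocartesianMonoidal} is not the relevant citation there), and the unit of a relative adjunction is already a functor $C^{\otimes} \times \Delta^1 \to C^{\otimes}$ over $\Fin_{\ast}$ by definition, so no unstraightening is involved.
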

\begin{proof} Let $\cM \to \Delta^1$ be the bicartesian fibration classified by the adjunction. We may factor (or define) $\eta$ as the composition
\[ C \simeq \Fun^{\cocart}_{/ \Delta^1}(\Delta^1,\cM) \subset \Fun_{/\Delta^1}(\Delta^1,\cM) \simeq \sO(C) \times_{C} D  \to \sO(C) \]
where we use Lem.~\ref{lm:sectionsPullbackSquare} for the identification of the sections of $\cM$. Let $\Fun_{/\Delta^1}(\Delta^1,\cM)$ be equipped with its canonical symmetric monoidal structure. Because $F$ is symmetric monoidal, the inclusion $\Fun^{\cocart}_{/ \Delta^1}(\Delta^1,\cM) \subset \Fun_{/\Delta^1}(\Delta^1,\cM)$ defines a symmetric monoidal structure on $\Fun^{\cocart}_{/ \Delta^1}(\Delta^1,\cM)$ by restriction such that the equivalence $\ev_0: \Fun^{\cocart}_{/ \Delta^1}(\Delta^1,\cM) \xto{\simeq} C$ is an equivalence of symmetric monoidal $\infty$-categories. Also, the projection $\Fun_{/\Delta^1}(\Delta^1,\cM) \to \sO(C)$ is lax monoidal by definition. We deduce that $\eta$ lifts to a lax monoidal functor $\eta^{\otimes}$ with the indicated properties.
\end{proof}

\begin{cor} Let ($\cU$, $\cZ$) be a monoidal recollement of $\cX$. Then the functor $\cX \to \Fun(\Delta^1 \times \Delta^1,\cX)$ realizing the pullback square of functors
\[ \begin{tikzcd}[row sep=4ex, column sep=4ex, text height=1.5ex, text depth=0.25ex]
\id \ar{r} \ar{d} & i_\ast i^\ast \ar{d} \\
j_\ast j^\ast \ar{r} & i_\ast i^\ast j_\ast j^\ast
\end{tikzcd} \]
lifts to a lax monoidal functor $\cX^{\otimes} \to (\cX^{\otimes})^{\Delta^1 \times \Delta^1}$. Consequently, if $A \in \cX$ is an algebra object, then we have an equivalence of algebras
\[ A \simeq (j_\ast j^\ast)(A) \times_{(i_\ast i^\ast j_\ast j^\ast)(A)} (i_\ast i^\ast)(A). \]
\end{cor}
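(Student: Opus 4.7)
The plan is to obtain the desired square-valued functor by iterating Lem.~\ref{lem:unitLaxMonoidal}. The monoidal recollement hypothesis gives us two monoidal adjunctions $j^{\ast} \dashv j_{\ast}$ and $i^{\ast} \dashv i_{\ast}$. Feeding each into Lem.~\ref{lem:unitLaxMonoidal} produces lax monoidal lifts
\[ \eta_j^{\otimes}: \cX^{\otimes} \to (\cX^{\otimes})^{\Delta^1}, \quad \eta_i^{\otimes}: \cX^{\otimes} \to (\cX^{\otimes})^{\Delta^1} \]
of the unit transformations, where the targets are equipped with the pointwise symmetric monoidal structure of \ref{pointwiseMonoidalStructure}.

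Next, I would use that $\Fun(\Delta^1,-)$ preserves maps of $\infty$-operads when both sides carry their pointwise structures, so applying it to $\eta_i^{\otimes}$ yields a lax monoidal functor $(\eta_i^{\otimes})^{\Delta^1}: (\cX^{\otimes})^{\Delta^1} \to (\cX^{\otimes})^{\Delta^1 \times \Delta^1}$, using the canonical identification $\Fun(\Delta^1, \Fun(\Delta^1, \cX)) \simeq \Fun(\Delta^1 \times \Delta^1, \cX)$. The composite
\[ \cX^{\otimes} \xto{\eta_j^{\otimes}} (\cX^{\otimes})^{\Delta^1} \xto{(\eta_i^{\otimes})^{\Delta^1}} (\cX^{\otimes})^{\Delta^1 \times \Delta^1} \]
is then the desired lax monoidal lift. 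By inspection of the four evaluations (each being a composite of the evaluations for $\eta_j^{\otimes}$ and $\eta_i^{\otimes}$), this sends $x \in \cX$ to precisely the fracture square of \ref{recollementFractureSquare}.

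For the algebra consequence, the lax monoidal functor constructed above induces a functor on algebra objects $\Alg(\cX) \to \Alg((\cX^{\otimes})^{\Delta^1 \times \Delta^1}) \simeq \Fun(\Delta^1 \times \Delta^1, \Alg(\cX))$, so for any algebra $A$ the fracture square lifts canonically to a square in $\Alg(\cX)$. Since limits in $\Alg(\cX)$ are created by the forgetful functor to $\cX$ by \cite[Cor.~3.2.2.5]{HA}, the fact that the underlying square is a pullback (by \ref{recollementFractureSquare}) upgrades to the claimed equivalence of algebras. The only nonroutine step is the first, checking that iterating the unit lift actually produces the right square; I expect the main obstacle to be organizing the bookkeeping so that the evaluation of the composite at each of the four vertices of $\Delta^1 \times \Delta^1$ matches the fracture square correctly, which amounts to a careful application of the compatibilities in Lem.~\ref{lem:unitLaxMonoidal}.
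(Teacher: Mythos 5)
Your proposal is correct and follows essentially the same route as the paper: the paper also composes the lax monoidal unit $(\eta_j)^{\otimes}:\cX^{\otimes}\to(\cX^{\otimes})^{\Delta^1}$ from Lem.~\ref{lem:unitLaxMonoidal} with a lax monoidal functor $(\cX^{\otimes})^{\Delta^1}\to(\cX^{\otimes})^{\Delta^1\times\Delta^1}$, the only cosmetic difference being that the paper obtains the second map by applying Lem.~\ref{lem:unitLaxMonoidal} to the induced monoidal adjunction $\widehat{i}^{\ast}\dashv\widehat{i}_{\ast}$ on arrow categories rather than by cotensoring $\eta_i^{\otimes}$ with $\Delta^1$, which yields the same functor. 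Your added justification of the algebra statement via creation of limits in $\Alg(\cX)$ is the intended (implicit) final step.
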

\begin{proof} By Lem.~\ref{lem:unitLaxMonoidal}, the monoidal adjunction $j^{\ast} \dashv j_{\ast}$ yields a lax monoidal functor $$(\eta_j)^{\otimes}: \cX^{\otimes} \to (\cX^{\otimes})^{\Delta^1}.$$ We also have the induced monoidal adjunction $\adjunct{\widehat{i}^\ast}{\sO(X)}{\sO(Z)}{\widehat{i}_{\ast}}$ which yields a lax monoidal functor $$(\eta_{\widehat{i}})^{\otimes}: (\cX^{\otimes})^{\Delta^1} \to (\cX^{\otimes})^{\Delta^1 \times \Delta^1}.$$ The composite $(\eta_{\widehat{i}})^{\otimes} \circ (\eta_j)^{\otimes}$ then defines the desired functor.
\end{proof}

\begin{prp}\label{prp:CanonicalMonoidalStructureOnMonoidalRecollement} Suppose $(\cU, \cZ)$ is a monoidal recollement of $\cX$. Then the equivalence $$\cX \xto{\simeq} \sO(\cZ) \times_{\cZ} \cU$$ of Cor.~\ref{cor:RecollementAsPullbackSquare} refines to an equivalence of symmetric monoidal $\infty$-categories, where we equip $\sO(\cZ) \times_{\cZ} \cU$ with the canonical symmetric monoidal structure of Def.~\ref{dfn:canonicalSMConOplaxLimit}.
\end{prp}
\begin{proof} By Lem.~\ref{lem:unitLaxMonoidal} and Lem.~\ref{lem:strictificationOperadSquare}, we have a commutative diagram of $\infty$-operads
\[ \begin{tikzcd}[row sep=4ex, column sep=8ex, text height=1.5ex, text depth=0.5ex]
\cX^{\otimes} \ar{r}{(i^{\ast})^{\otimes} (\eta_j)^{\otimes}} \ar{d}[swap]{(j^{\ast})^{\otimes}} & (\cZ^{\otimes})^{\Delta^1} \ar{d}{\ev_1} \\
\cU^{\otimes} \ar{r}{(i^{\ast})^{\otimes} (j_{\ast})^{\otimes} } & Z^{\otimes}
\end{tikzcd} \]
such that the induced functor $\theta^{\otimes}: \cX^{\otimes} \to (\cZ^{\otimes})^{\Delta^1} \times_{\cZ^{\otimes}} \cU^{\otimes}$ covers the map $\theta$ of Cor.~\ref{cor:RecollementAsPullbackSquare}. Since $\theta$ is an equivalence, $\theta^{\otimes}$ is an equivalence.
\end{proof}

We include the following simple strictification result for completeness.

\begin{lem} \label{lem:strictificationOperadSquare} Suppose we have a homotopy commutative square of $\infty$-operads
\[ \begin{tikzcd}[row sep=4ex, column sep=4ex, text height=1.5ex, text depth=0.25ex]
A^\otimes \ar{r}{F'} \ar{d}{G'} & B^\otimes \ar{d}{G} \\
C^\otimes \ar{r}{F} & D^\otimes
\end{tikzcd} \]
in the sense that there is the data of a homotopy $\theta: G \circ F' \overset{\simeq}{\Rightarrow} F \circ G'$ over $\Fin_{\ast}$
\[ \begin{tikzcd}[row sep=4ex, column sep=4ex, text height=1.5ex, text depth=0.25ex]
A^\otimes \times \{0\} \ar{r}{F'} \ar{d} & B^\otimes \ar{d}{G} \\
A^\otimes \times \Delta^1 \ar{r}{\theta} & D^\otimes \\
A^\otimes \times \{1\} \ar{r}{G'} \ar{u} & C^\otimes \ar{u}[swap]{F}
\end{tikzcd} \]
such that $\theta$ sends every edge $(a,0) \to (a,1)$ to an equivalence. Suppose also that $G$ is a fibration of $\infty$-operads, i.e., a categorical fibration \cite[2.1.2.10]{HA}. Then there exists a functor $F'': A^\otimes \to B^\otimes$ homotopic to $F'$ as a map of $\infty$-operads such that the square
\[ \begin{tikzcd}[row sep=4ex, column sep=4ex, text height=1.5ex, text depth=0.25ex]
A^\otimes \ar{r}{F''} \ar{d}{G'} & B^\otimes \ar{d}{G} \\
C^\otimes \ar{r}{F} & D^\otimes
\end{tikzcd} \]
strictly commutes.
\end{lem}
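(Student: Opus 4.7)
The plan is to reformulate the strictification as a path-lifting problem inside the functor $\infty$-category $\Fun_{/\Fin_{\ast}}(A^\otimes, B^\otimes)$, and then apply the standard fact that a categorical fibration of quasi-categories has the right lifting property against the inclusion $\{0\} \hookrightarrow J$, where $J$ is the nerve of the walking isomorphism.

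First I would form the postcomposition functor
\[ G_{\ast} : \Fun_{/\Fin_{\ast}}(A^\otimes, B^\otimes) \to \Fun_{/\Fin_{\ast}}(A^\otimes, D^\otimes) \]
and observe that it is a categorical fibration: $G$ is a categorical fibration over $\Fin_{\ast}$, and $\Fun_{/\Fin_{\ast}}(A^\otimes, -)$, being a cotensor followed by a pullback over $\Fin_{\ast}$, preserves categorical fibrations. The hypothesis that $\theta$ sends each edge $(a,0) \to (a,1)$ to an equivalence in $D^\otimes$ says precisely that the adjoint edge $\theta: G \circ F' \to F \circ G'$ is an equivalence in $\Fun_{/\Fin_{\ast}}(A^\otimes, D^\otimes)$ (equivalences in a functor category being detected pointwise).

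Next I would extend $\theta$ to a functor $\widetilde{\theta}: J \to \Fun_{/\Fin_{\ast}}(A^\otimes, D^\otimes)$ whose restriction to the non-degenerate edge $0 \to 1$ recovers $\theta$; this is possible by the characterization of equivalences in a quasi-category as those edges that extend along $\Delta^1 \hookrightarrow J$. Then I would lift $\widetilde{\theta}$ along the categorical fibration $G_{\ast}$, starting from the object $F'$ sitting over the vertex $0$, via the right lifting property of $G_{\ast}$ against the trivial cofibration $\{0\} \hookrightarrow J$. Letting $F''$ be the value of this lift at the vertex $1$ yields a functor $F'': A^\otimes \to B^\otimes$ over $\Fin_{\ast}$ satisfying $G \circ F'' = F \circ G'$ strictly, and the lift itself witnesses an equivalence $F' \simeq F''$ in $\Fun_{/\Fin_{\ast}}(A^\otimes, B^\otimes)$.

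Finally I would verify that $F''$ is a map of $\infty$-operads. It is a functor over $\Fin_{\ast}$ by construction, so only the preservation of inert edges remains; but the collection of inert edges in $B^\otimes$ is closed under equivalence in the arrow $\infty$-category $\sO(B^\otimes)$, and $F''(e) \simeq F'(e)$ for every edge $e$ of $A^\otimes$, so $F''$ preserves inerts because $F'$ does. The one subtlety, which I expect to be the main point rather than a serious obstacle, is making sure the entire lifting argument is carried out inside the slice $\Fun_{/\Fin_{\ast}}(A^\otimes, -)$ rather than the unsliced functor category; this is exactly where the hypothesis that $G$ is a fibration of $\infty$-operads (and not only a categorical fibration of underlying $\infty$-categories, nor only a map of $\infty$-operads) is used.
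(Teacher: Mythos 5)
Your proof is correct, and it takes a genuinely different route from the paper's. The paper works entirely in the model structure on $\infty$-preoperads: marking inert morphisms to form $A^{\otimes,\natural}$ and $B^{\otimes,\natural}$, observing that $A^{\otimes,\natural}\times\{0\}\hookrightarrow A^{\otimes,\natural}\times(\Delta^1)^{\sharp}$ is a trivial cofibration there, and using that fibrations of $\infty$-operads are fibrations in that model structure [HA, 2.1.4.6]. The lift $\overline{\theta}$ is then automatically a map of marked simplicial sets, so $F''=\overline{\theta}|_{A^\otimes\times\{1\}}$ preserves inert edges for free. You instead do the lifting purely in the Joyal model structure, inside the quasi-category $\Fun_{/\Fin_\ast}(A^\otimes,B^\otimes)$, against $\{0\}\hookrightarrow J$, and then have to supply the final verification that $F''$ is a map of $\infty$-operads by hand. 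That verification is sound: the natural equivalence $F'\simeq F''$ in $\Fun_{/\Fin_\ast}(A^\otimes,B^\otimes)$ lies over identities in $\Fin_\ast$, so $F'(e)$ and $F''(e)$ are equivalent edges of $B^\otimes$ over the \emph{same} inert edge of $\Fin_\ast$, and cocartesian edges over a fixed edge of the base are stable under equivalence. The tradeoff is clear: your approach uses more elementary technology (cartesian closedness of the Joyal model structure, pullback-stability of categorical fibrations, $\{0\}\hookrightarrow J$) at the cost of the extra inert-preservation argument, while the paper's preoperad argument encodes the inert-preservation into the marking and avoids that step, at the cost of invoking the more specialized model structure. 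One small terminological caveat: in [HA, 2.1.2.10] ``fibration of $\infty$-operads'' is by definition a map of $\infty$-operads that is a categorical fibration on total spaces, so it is not a strictly stronger condition than ``categorical fibration'' as your closing remark might be read to suggest — what you actually use (and correctly so) is that it is a categorical fibration of the total simplicial sets $B^\otimes\to D^\otimes$, not just of the underlying $\infty$-categories over $\langle 1\rangle$, together with the fact that $F'$ is a map of $\infty$-operads.
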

\begin{proof} Given an $\infty$-operad $O^\otimes$, let $O^{\otimes,\natural}$ denote the marked simplicial set $(O^\otimes,\cE)$ where $\cE$ is the collection of inert morphisms in $O^\otimes$ \cite[2.1.4.5]{HA}. Consider the lifting problem in marked simplicial sets
\[ \begin{tikzcd}[row sep=4ex, column sep=4ex, text height=1.5ex, text depth=0.25ex]
A^{\otimes,\natural} \times \{0\} \ar{r}{F'} \ar{d} & B^{\otimes,\natural} \ar{d}{G} \\
A^{\otimes,\natural} \times (\Delta^1)^{\sharp} \ar{r}{\theta} \ar[dotted]{ru}{\overline{\theta}} & D^{\otimes,\natural}.
\end{tikzcd} \]
Because $G$ is assumed to be a fibration of $\infty$-operads, $G$ is a fibration in the model structure on $\infty$-preoperads \cite[2.1.4.6]{HA}. Therefore, the dotted lift $\overline{\theta}$ exists. If we then let $F'' = \overline{\theta}|_{A^\otimes \times \{1\}}$, the claim follows.
\end{proof}

\begin{nul} Suppose we have a commutative diagram of symmetric monoidal $\infty$-categories and lax monoidal functors
\[ \begin{tikzcd}[row sep=4ex, column sep=4ex, text height=1.5ex, text depth=0.25ex]
{\cU}^{\otimes} \ar{r}{{\phi}^{\otimes}} \ar{d}[swap]{{F_U}^{\otimes}} & {\cZ}^{\otimes} \ar{d}{{F_Z}^{\otimes}} \\
{\cU'}^{\otimes} \ar{r}{{\phi'}^{\otimes}} & {\cZ'}^{\otimes}.
\end{tikzcd} \]
Then by way of the commutative diagram
\[ \begin{tikzcd}[row sep=4ex, column sep=4ex, text height=1.5ex, text depth=0.25ex]
(\cZ^\otimes)^{\Delta^1} \times_{\cZ^{\otimes}} \cU^{\otimes} \ar{r} \ar{d} & (\cZ^\otimes)^{\Delta^1} \ar{r}{F_Z^{\otimes}} \ar{d}{\ev_1} & (\cZ'^{\otimes})^{\Delta^1} \ar{d}{\ev_1} \\
\cU^{\otimes} \ar{r}{\phi^{\otimes}} \ar{rd}[swap]{F_U^{\otimes}} & \cZ^{\otimes} \ar{r}{F_Z^{\otimes}} & \cZ'^{\otimes} \\
& \cU'^{\otimes} \ar{ru}[swap]{\phi'^{\otimes}}
\end{tikzcd} \]
we obtain a lax monoidal functor $F^{\otimes}: (\cZ^\otimes)^{\Delta^1} \times_{\cZ^{\otimes}} \cU^{\otimes} \to (\cZ'^\otimes)^{\Delta^1} \times_{\cZ'^{\otimes}} \cU'^{\otimes}$, which is symmetric monoidal if $F_U^{\otimes}$ and $F_Z^{\otimes}$ are symmetric monoidal.

Let $\sO_{\lex}(\Cat^{\otimes, \lax}_{\infty}) \subset \sO(\Cat^{\otimes,\lax}_{\infty})$ be the subcategory whose objects are left-exact lax monoidal functors and whose morphisms are through symmetric monoidal functors. Then by the above construction\footnote{Technically, to make a rigorous construction we may work at the level of preoperads and then pass to the underlying $\infty$-categories.} we may lift the functor $\Rlax \lim: \sO_{\lex}(\Cat_{\infty}) \to \Recoll_0$ to
\[ {\Rlax \lim}^{\otimes}: \sO_{\lex}(\Cat^{\otimes, \lax}_{\infty}) \to \Recoll_0^{\otimes}. \]
An elaboration of Prop.~\ref{prp:CanonicalMonoidalStructureOnMonoidalRecollement} shows that ${\Rlax \lim}^{\otimes}$ is an equivalence -- we leave the details to the reader.

One also has a lift of $\Rlax \lim: \sO^{\rlax}_{\lex}(\Cat_{\infty}) \to \Recoll$ if one considers right-lax commutative squares of $\infty$-operads. Since the details in this case are more involved, we leave a precise formulation to the reader.
\end{nul}

\begin{nul}[\textbf{Closed monoidal structure}] Suppose now that $\cX$ is also closed monoidal and let $F(-,-)$ denote the internal hom. If $(\cU,\cZ)$ is a monoidal recollement of $\cX$, then we define
$$F_{\cU}(u,u') = j^{\ast} F(j_{\ast} u, j_{\ast} u') \: \text{ and } \: F_{\cZ}(z,z') = i^{\ast} F(i_{\ast} z, i_{\ast} z')$$
to be internal homs for $\cU$ and $\cZ$, so that $\cU$ and $\cZ$ are closed monoidal. Indeed, since $j^{\ast} \dashv j_{\ast}$ is monoidal, we have
\begin{align*} \Map_{\cU}(w,j^{\ast} F(j_{\ast} u, j_{\ast} v)) &\simeq \Map_{\cX}(j_{\ast} w,  F(j_{\ast} u, j_{\ast} v)) \simeq \Map_{\cX}(j_{\ast} w \otimes j_{\ast}v, j_{\ast}v) \\
& \Map_{\cU}(j^{\ast}(j_{\ast} w \otimes j_{\ast} u), v) \simeq \Map_{\cU}(w \otimes u,v),
\end{align*}
and similarly for $F_{\cZ}(-,-)$. Moreover we have natural equivalences
\begin{align*} F(x,j_{\ast} u) \simeq j_{\ast} F_{\cU}(j^{\ast} x,u), \quad F(x,i_{\ast} z) \simeq i_{\ast} F_{\cZ}(i^{\ast} x,z).
\end{align*}
For example, we may check
\begin{align*}
\Map_{\cX}(x,F(y,j_{\ast}u)) & \simeq \Map_{\cX}(x \otimes y, j_{\ast} u) \simeq \Map_{\cU}(j^{\ast} x \otimes j^{\ast} y,u) \\
& \simeq \Map_{\cU}(j^{\ast} x, F_{\cU}(j^{\ast} y ,u)) \simeq \Map_{\cX}(x,j_{\ast} F_{\cU}(j^{\ast} y ,u)).
\end{align*}
This implies that the unit maps
\begin{align*} F(j_{\ast} u , j_{\ast} u') & \to j_{\ast} j^{\ast} F(j_{\ast} u, j_{\ast} u') = j_{\ast} F_{\cU}(u,u') \\
F(i_{\ast} z, i_{\ast} z') & \to i_{\ast} i^{\ast} F(i_{\ast} z, i_{\ast} z') = i_{\ast} F_{\cZ}(z,z')
\end{align*}
are equivalences.
\end{nul}


\begin{prp}[Projection formulas] \label{prp:ProjectionFormulasMonoidalRecollement} Let $(\cU, \cZ)$ be a stable\footnote{We do not require stability for the $i^{\ast} \dashv i_{\ast}$ projection formula. For the assertions that only involve $j_!$, we only need that $\cX$ be pointed.} monoidal recollement of $\cX$.
\begin{enumerate} 
\item The natural maps $\alpha: i_{\ast}(z) \otimes x \to i_{\ast}(z \otimes i^\ast x)$ and $\beta: j_!(u \otimes j^\ast x) \to j_!(u) \otimes x$ are equivalences.
\item The fiber sequence $j_! j^{\ast} x \to x \to i_{\ast} i^{\ast} x$ is equivalent to
\[ j_!(1_U) \otimes x \to x \to i_{\ast}(1_Z) \otimes x. \]
\end{enumerate}
Now suppose also that $\cX$ is closed monoidal.
\begin{enumerate}
    \setcounter{enumi}{2}
    \item We have natural equivalences $F(j_! u, x) \simeq j_{\ast} F_{\cU}(u,j^{\ast} x)$ and $F(i_{\ast} z,x) \simeq i_{\ast} F_{\cZ}(z,i^! x)$.
    \item The fiber sequence $i_{\ast} i^! x \to x \to j_{\ast} j^{\ast} x$ is equivalent to
    \[ F(i_{\ast} 1_Z,x) \to x \to F(j_! 1_U,x). \]
    \item  We have natural equivalences $j^{\ast} F(x,y) \simeq F_{\cU}(j^{\ast} x, j^{\ast} y)$ and $F_{\cZ}(i^{\ast} x, i^! y) \simeq i^! F(x,y)$.    
\end{enumerate}
\end{prp}
\begin{proof} For (1), it's easily checked that $i^{\ast} \alpha$, $j^{\ast} \alpha$ and $i^{\ast} \beta$, $j^{\ast} \beta$ are equivalences, hence $\alpha$ and $\beta$ are equivalences. (2) then follows as a corollary. For (3), we have sequences of equivalences
\begin{align*} \Map_{\cX}(y,F(j_!u,x)) & \simeq \Map_{\cX}(y \otimes j_! u, x) \simeq \Map_{\cX}(j_!(j^{\ast} y \otimes u),x) \simeq \Map_{\cU}(j^{\ast} y \otimes u,j^{\ast} x) \\
& \simeq \Map_{\cU}(j^{\ast} y , F_{\cU}(u,j^{\ast} x)) \simeq \Map_{\cX}(y,j_{\ast} F_{\cU}(u,j^{\ast} x)), \: \text{and} \\
\Map_{\cX}(y, F(i_{\ast} z, x)) & \simeq \Map_{\cX}(y \otimes i_{\ast} z, x) \simeq \Map_{\cX}(i_{\ast}(i^{\ast} y \otimes z), x) \simeq \Map_{\cZ}(i^{\ast} y \otimes z, i^! x) \\
& \simeq \Map_{\cZ}(i^{\ast} y, F_{\cZ}(z,i^! x)) \simeq \Map_{\cZ}(y, i_{\ast} F_{\cZ}(z,i^! x)).
\end{align*}
If we let $u = 1_U$, then $F_{\cU}(1_U, v) \simeq v$, hence $F(j_! 1_U, x) \simeq j_{\ast} F_{\cU}(1_U,j^{\ast} x) \simeq j_{\ast} j^{\ast} x$. (4) then follows as a corollary. For (5), we have sequences of equivalences
\begin{align*} \Map_{\cU}(u,j^{\ast} F(x,y)) & \simeq \Map_{\cX}(j_! u, F(x,y)) \simeq \Map_{\cX}(j_! u \otimes x,y) \simeq \Map_{\cX}(j_!(u \otimes j^{\ast} x),y) \\
& \simeq \Map_{\cU}(u \otimes j^{\ast} x, j^{\ast} y) \simeq \Map_{\cU}(u,F_{\cU}(j^{\ast} x, j^{\ast} y)), \: \text{and} \\
\Map_{\cZ}(z,F_{\cZ}(i^{\ast} x, i^! y)) & \simeq \Map_{\cZ}(z \otimes i^{\ast} x, i^! y) \simeq \Map_{\cX}(i_{\ast}(z \otimes i^{\ast} x),y) \simeq \Map_{\cX}(i_{\ast} z \otimes x, y) \\
 & \simeq \Map_{\cX}(i_{\ast} z, F(x,y)) \simeq \Map_{\cZ}(z,i^! F(x,y)).
\end{align*}
\end{proof}

\begin{cor} \label{cor:FractureSquareMonoidal} Suppose that $(\cU, \cZ)$ is a stable monoidal recollement of a closed symmetric monoidal stable $\infty$-category $\cX$. Then for all $x \in \cX$, we have a commutative diagram
\[ \begin{tikzcd}[row sep=4ex, column sep=4ex, text height=1.5ex, text depth=0.25ex]
x \otimes j_! (1_U) \ar{r} \ar{d}{\simeq} & x \ar{r} \ar{d} & x \otimes i_{\ast}(1_Z) \ar{d} \\
F(j_!(1_U), x) \otimes j_!(1_U) \ar{r} & F(j_!(1_U), x) \ar{r} & F(j_!(1_U), x) \otimes i_{\ast}(1_Z)
\end{tikzcd} \]
in which the righthand square is a pullback square.
\end{cor}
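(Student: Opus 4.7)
The plan is to view the diagram as the fiber sequence of Prop.~\ref{prp:ProjectionFormulasMonoidalRecollement}(2) applied naturally in $x$ to the unit map $\eta_j : x \to j_{\ast} j^{\ast} x$, after first identifying $F(j_!(1_U), x)$ with $j_{\ast} j^{\ast} x$. Indeed, Prop.~\ref{prp:ProjectionFormulasMonoidalRecollement}(3) applied with $u = 1_U$ gives $F(j_!(1_U), x) \simeq j_{\ast} F_{\cU}(1_U, j^{\ast} x) \simeq j_{\ast} j^{\ast} x$, and it is straightforward to check that under this equivalence the canonical map $x \to F(j_!(1_U), x)$ corresponds to the unit $\eta_j$. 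With this identification in hand, the top and bottom rows of the diagram are precisely the fiber sequences from Prop.~\ref{prp:ProjectionFormulasMonoidalRecollement}(2) applied to $x$ and $j_{\ast} j^{\ast} x$ respectively, and the vertical maps are induced by $\eta_j$ via the functoriality of these sequences.

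To see that the rightmost square is a pullback, apply Prop.~\ref{prp:ProjectionFormulasMonoidalRecollement}(1) with $z = 1_Z$ to rewrite $x \otimes i_{\ast}(1_Z) \simeq i_{\ast} i^{\ast} x$ and $F(j_!(1_U), x) \otimes i_{\ast}(1_Z) \simeq i_{\ast} i^{\ast} j_{\ast} j^{\ast} x$. Under these equivalences and the identification $F(j_!(1_U), x) \simeq j_{\ast} j^{\ast} x$ above, the rightmost square becomes precisely the fracture square
\[ \begin{tikzcd}[row sep=3ex, column sep=5ex, text height=1.5ex, text depth=0.25ex]
x \ar{r}{\eta_i} \ar{d}[swap]{\eta_j} & i_{\ast} i^{\ast} x \ar{d} \\
j_{\ast} j^{\ast} x \ar{r} & i_{\ast} i^{\ast} j_{\ast} j^{\ast} x
\end{tikzcd} \]
of \ref{recollementFractureSquare}, which is a pullback by definition of a recollement.

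For the left vertical map, I would apply the projection formula of Prop.~\ref{prp:ProjectionFormulasMonoidalRecollement}(1) twice to compute
\[ x \otimes j_!(1_U) \simeq j_!(j^{\ast} x \otimes 1_U) \simeq j_!(j^{\ast} x), \qquad F(j_!(1_U), x) \otimes j_!(1_U) \simeq j_!(j^{\ast} j_{\ast} j^{\ast} x) \simeq j_!(j^{\ast} x), \]
using that $j^{\ast} j_{\ast} \simeq \id$ since $j_{\ast}$ is fully faithful. Tracing through these equivalences shows that the left vertical map is identified with $\id_{j_!(j^{\ast} x)}$, hence is an equivalence.

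The main obstacle is formulating the naturality needed to assert that the diagram commutes coherently at the level of $\infty$-categories. One clean way to handle this is to observe that both rows of the diagram are instances of the cofiber sequence of functors $j_!(1_U) \otimes (-) \to \id \to i_{\ast}(1_Z) \otimes (-)$ (from Prop.~\ref{prp:ProjectionFormulasMonoidalRecollement}(2)) evaluated on the morphism $\eta_j : x \to F(j_!(1_U), x) \simeq j_{\ast} j^{\ast} x$; functoriality then yields the required $\Delta^1 \times \Delta^1 \times \Delta^1$-shaped diagram, and the assertions about the rightmost square and left vertical map follow as above.
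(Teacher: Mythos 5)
Your proposal is correct and is essentially the argument the paper intends: the corollary is stated without proof as an immediate consequence of Prop.~\ref{prp:ProjectionFormulasMonoidalRecollement} (items (1)--(4) identify the rows with the localization fiber sequences and the middle map with $\eta_j$), after which the right-hand square is the fracture square of \ref{recollementFractureSquare}. Your handling of the left vertical map and of the naturality of the cofiber sequence of functors is exactly what is needed.
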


\begin{nul}[\textbf{Relation to smashing localizations}] \label{SmashingLocalizationsAreStableMonoidalRecollements} Suppose $\cX$ is a symmetric monoidal stable $\infty$-category and $\cZ \subset \cX$ is a reflective and coreflective subcategory that determines a stable recollement $(\cU, \cZ)$ on $\cX$. Then this recollement is monoidal if and only if $i_{\ast} i^{\ast}$ is compatible with the symmetric monoidal structure on $\cX$ and the resulting projection formula for $i^{\ast} \dashv i_{\ast}$ holds, i.e., the natural map $i_{\ast} z \otimes x \to i_{\ast} ( z \otimes i^{\ast} x)$ is an equivalence for all $x \in \cX$ and $z \in \cZ$. Indeed, the `only if' direction hold by Prop.~\ref{prp:ProjectionFormulasMonoidalRecollement}, and for the `if' direction, we only need to show that for every $x \in \cX$ such that $j^{\ast} x \simeq 0$, $j^{\ast} (x \otimes y) \simeq 0$ for every $y \in \cX$. But $j^{\ast} x \simeq 0$ if and only if $x \simeq i_{\ast} i^{\ast} x$, and then
\[ j^{\ast}( x \otimes y) \simeq j^{\ast}( i_{\ast} i^{\ast} x \otimes y) \simeq j^{\ast} (i_{\ast} (i^{\ast} x \otimes i^{\ast} y)) \simeq 0. \]

In view of \cite[Prop.~5.29]{MATHEW2017994}, $\cZ$ is a \emph{smashing localization} of $\cX$ in the sense that $\cZ \simeq \Mod_{\cX}(A)$ for $A = i_{\ast}i^{\ast}1$ an idempotent $E_{\infty}$-algebra in $\cX$. We deduce that smashing localizations of $\cX$ are in bijective correspondence with stable monoidal recollements of $\cX$. Moreover, if $F: \cX \to \cX'$ is a morphism of monoidal recollements $(\cU,\cZ) \to (\cU', \cZ')$, then
\[ F i_{\ast} i^{\ast} 1 \simeq i'_{\ast} i'^{\ast} F(1) \simeq i'_{\ast} i'^{\ast} 1, \]
so $F$ preserves the defining idempotent $E_{\infty}$-algebras.
\end{nul}


\subsection{Families of recollements}

We conclude this section with a few extensions of recollement theory to the parametrized setting. Let $S$ be an $\infty$-category, let $\cX_{\bullet}: S \to \Recoll_0^{\lex}$ be a functor, and let $\cX, \cU, \cZ \to S$ be the cocartesian fibrations obtained via the Grothendieck construction. Then in view of \ref{LaxVsStrictMorphismsOfRecollements} and the strictness assumption, we have $S$-adjunctions \cite[Def.~8.1]{Exp2}
\[ \begin{tikzcd}[row sep=4ex, column sep=4ex, text height=1.5ex, text depth=0.25ex]
\cU \ar[shift right=1,right hook->]{r}[swap]{j_{\ast}} & \cX \ar[shift right=2]{l}[swap]{j^{\ast}} \ar[shift left=2]{r}{i^{\ast}} & \cZ \ar[shift left=1,left hook->]{l}{i_{\ast}}.
\end{tikzcd} \]

We first show that the procedure of taking $S$-functor categories yields a recollement.

\begin{lem} \label{lem:FunctorCategoryRecollement} For any $S$-$\infty$-category $K$, $(\Fun_S(K,\cU), \Fun_S(K, \cZ))$ is a recollement of $\Fun_S(K,\cX)$.
\end{lem}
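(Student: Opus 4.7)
The plan is to verify the axioms of Definition \ref{dfn:recollement} for the pair $(\Fun_S(K,\cU), \Fun_S(K,\cZ))$ inside $\Fun_S(K,\cX)$ by transporting the corresponding properties of $(\cU, \cZ) \subset \cX$ through postcomposition.

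First, because $j_{\ast} \dashv j^{\ast}$ and $i^{\ast} \dashv i_{\ast}$ are $S$-adjunctions in the sense of \cite[Def.~8.1]{Exp2}, postcomposition produces honest $\infty$-categorical adjunctions
\[ \adjunct{j^{\ast} \circ -}{\Fun_S(K,\cX)}{\Fun_S(K,\cU)}{j_{\ast} \circ -} \quad \text{and} \quad \adjunct{i^{\ast} \circ -}{\Fun_S(K,\cX)}{\Fun_S(K,\cZ)}{i_{\ast} \circ -} \]
on $S$-functor categories. Moreover, since $j_\ast, i_\ast: \cX_s \to \cX_s$ are fully faithful on each fiber, so are the cocartesian section functors $j_\ast, i_\ast$ as $S$-functors; postcomposition with a fully faithful functor is fully faithful, so $\Fun_S(K,\cU)$ and $\Fun_S(K,\cZ)$ embed as full subcategories of $\Fun_S(K,\cX)$ that are stable under equivalences.

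Second, I would check that the two recollement axioms reduce to fiberwise statements. Finite limits in $\Fun_S(K,\cX)$ are computed pointwise in $\cX$ at each $k \in K$ lying over some $s \in S$, where they are further computed in the fiber $\cX_s$ (since cocartesian pushforward preserves finite limits for a cocartesian fibration classifying a functor to $\Cat_\infty^{\lex}$, as in our setup). Because $j^{\ast}_s$ and $i^{\ast}_s$ are left-exact for every $s$, the functors $j^{\ast} \circ -$ and $i^{\ast} \circ -$ are left-exact. For axiom~(1), the composite $(j^{\ast} \circ -) \circ (i_{\ast} \circ -) \simeq (j^{\ast} i_{\ast}) \circ -$ is constant at the composite $K \to \cZ \to \cU$ that factors through the fiberwise terminal objects $0 \in \cU_s$; this is the terminal object of $\Fun_S(K,\cU)$ since terminal objects in $S$-functor categories are likewise computed pointwise.

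Third, for axiom~(2) (joint conservativity), suppose $\alpha: F \to G$ is a morphism in $\Fun_S(K,\cX)$ such that $(j^{\ast} \circ -)(\alpha)$ and $(i^{\ast} \circ -)(\alpha)$ are equivalences. A natural transformation between $S$-functors is an equivalence if and only if its components $\alpha_k: F(k) \to G(k)$ are equivalences in the fiber $\cX_s$ for every $k \in K$ lying over every $s \in S$. On each such component, our hypothesis yields that $j^{\ast}_s \alpha_k$ and $i^{\ast}_s \alpha_k$ are equivalences, so $\alpha_k$ is an equivalence by joint conservativity of the fiberwise recollement $(\cU_s, \cZ_s)$. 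Hence $\alpha$ is an equivalence in $\Fun_S(K,\cX)$.

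The main obstacle is a purely foundational one: confirming that $S$-adjunctions, pointwise limits, and pointwise detection of equivalences in $\Fun_S(K,-)$ are all in place with sufficient naturality. Once these parametrized analogues of standard functor category facts are invoked (via \cite{Exp2}), every axiom of a recollement becomes a fiberwise statement and transports immediately from $\cX_\bullet$.
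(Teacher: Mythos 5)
Your proposal is correct and follows essentially the same route as the paper's proof: set up the postcomposition adjunctions from the $S$-adjunctions (the paper cites \cite[Prop.~8.2]{Exp2} for exactly this), verify left-exactness and the terminal object condition via the fiberwise/pointwise computation of finite limits (\cite[Prop.~5.4.7.11]{HTT}), and deduce joint conservativity from pointwise detection of equivalences. No gaps.
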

\begin{proof} By \cite[Prop.~8.2]{Exp2}, we have induced adjunctions given by postcomposition
\[ \begin{tikzcd}[row sep=4ex, column sep=4ex, text height=1.5ex, text depth=0.25ex]
\Fun_S(K,\cU) \ar[shift right=1,right hook->]{r}[swap]{\overline{j}_{\ast}} & \Fun_S(K,\cX) \ar[shift right=2]{l}[swap]{\overline{j}^{\ast}} \ar[shift left=2]{r}{\overline{i}^{\ast}} & \Fun_S(K,\cZ) \ar[shift left=1,left hook->]{l}{\overline{i}_{\ast}},
\end{tikzcd} \]
where it is clear that $\overline{j}^{\ast} \overline{j}_{\ast} \simeq \id$ and $\overline{i}^{\ast} \overline{i}_{\ast} \simeq \id$, hence $\overline{j}_{\ast}$ and $\overline{i}_{\ast}$ are fully faithful. By \cite[Prop.~5.4.7.11]{HTT}, the hypothesis that for all $f: s \to t$ the restriction functors $f^{\ast}: \cX_t \to \cX_s$ preserve finite limits ensures that $\Fun_S(K, \cX)$ admits finite limits (which are computed fiberwise), and similarly the induced restriction functors $f_U^{\ast}$ and $f_Z^{\ast}$ preserve finite limits, so $\Fun_S(K, \cU)$, $\Fun_S(K, \cZ)$ admit finite limits and $\overline{j}^{\ast}, \overline{i}^{\ast}$ preserve finite limits. Since $j^{\ast} i_{\ast} \simeq 0$ and the terminal object $0 \in \Fun_S(K, \cU)$ is given by $K \to S \xto{\underline{0}} \cU$ for the cocartesian section $\underline{0}: S \to \cU$ that selects the terminal object in each fiber, we get that $\overline{j}^{\ast} \overline{i}_{\ast} \simeq 0$. Finally, since a morphism $f$ in $\Fun_S(K,\cX)$ is an equivalence if and only if $f(k)$ is an equivalence for all $k \in K$, we deduce that $\overline{j}^{\ast}$ and $\overline{i}^{\ast}$ are jointly conservative using the joint conservativity of $j^{\ast}$ and $i^{\ast}$.
\end{proof}

\begin{cor} \label{cor:LimitsOfRecollements} The forgetful functors $\Recoll_0^{\lex} \to \Cat_{\infty}$ and $\Recoll_0^{\st} \to \Cat_{\infty}^{\st}$ create limits.
\end{cor}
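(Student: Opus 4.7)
The plan is to apply Lem.~\ref{lem:FunctorCategoryRecollement} with $S = K = I$, where $I$ is the indexing $\infty$-category of a prospective limit. Suppose given a diagram $\cX_\bullet : I \to \Recoll_0^{\lex}$, and let $\cU, \cX, \cZ \to I$ denote the cocartesian fibrations associated to the composites of $\cX_\bullet$ with the three forgetful functors from $\Recoll_0^{\lex}$ to $\Cat_\infty$. Strictness of the morphisms in the diagram means that not only $j^\ast$ and $i^\ast$, but also $j_\ast$ and $i_\ast$, are natural in $I$, so they assemble into $I$-functors that comprise honest $I$-adjunctions $j^\ast \dashv j_\ast$ and $i^\ast \dashv i_\ast$. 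By Lem.~\ref{lem:FunctorCategoryRecollement}, the pair $(\Fun_I(I, \cU), \Fun_I(I, \cZ))$ is then a recollement of $\Fun_I(I, \cX)$. These $I$-functor $\infty$-categories compute the respective limits $\lim_i \cU_i$, $\lim_i \cZ_i$, $\lim_i \cX_i$ in $\Cat_\infty$, so the limit in $\Cat_\infty$ of the underlying diagram naturally acquires a recollement structure.

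I would next verify that evaluation at each $j \in I$ yields a strict left-exact morphism of recollements $\lim_i \cX_i \to \cX_j$: this is immediate because the inherited functors $j_\ast, i_\ast$ on $\lim_i \cX_i$ are defined objectwise, so they intertwine with evaluation on the nose. To confirm the universal property, let $\cY \to \cX_\bullet$ be any cone in $\Recoll_0^{\lex}$. The universal property of the limit in $\Cat_\infty$ produces a canonical functor $\cY \to \lim_i \cX_i$, which is itself strict left-exact as a morphism of recollements since the relevant preservation conditions (compatibility with $j^\ast, i^\ast, j_\ast, i_\ast$ plus left-exactness) are detected componentwise and hold for each composite $\cY \to \cX_j$ by hypothesis. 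Hence $\lim_i \cX_i$ together with its constructed recollement structure is the limit in $\Recoll_0^{\lex}$, and the forgetful functor thereby creates it.

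The stable version follows by the same argument, with $\Cat^{\st}_\infty$ replacing $\Cat^{\lex}_\infty$ throughout: the forgetful $\Cat^{\st}_\infty \to \Cat_\infty$ creates limits, and the inherited recollement structure on $\lim_i \cX_i$ is stable because each $\cU_i, \cZ_i$ is stable and limits are computed fiberwise.

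The main obstacle is essentially bookkeeping: verifying that the strictness hypothesis really does promote $j_\ast, i_\ast$ to $I$-functors, and that the $I$-functor category appearing in Lem.~\ref{lem:FunctorCategoryRecollement} genuinely computes the limit in $\Cat_\infty$ of the underlying diagram. Once these parametrized identifications are in hand, the remainder of the argument is a formal unwinding of the universal property.
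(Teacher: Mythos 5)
Your argument is correct and follows the paper's own proof essentially verbatim: apply Lem.~\ref{lem:FunctorCategoryRecollement} with $K=S$ equal to the indexing $\infty$-category, identify the $\infty$-category of cocartesian sections with the limit in $\Cat_\infty$, note that the evaluation functors are strict left-exact morphisms of recollements, and deduce the stable case from the fact that limits in $\Cat_\infty^{\st}$ are created in $\Cat_\infty$. Your additional explicit verification of the universal property via cones is a reasonable elaboration of what the paper leaves implicit in the word ``creates.''
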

\begin{proof} The first statement follows from Lem.~\ref{lem:FunctorCategoryRecollement} by taking $K=S$ and using that the $\infty$-category of cocartesian sections computes the limit of a diagram of $\infty$-categories \cite[\S 3.3.3]{HTT}. We note that the proof of Lem.~\ref{lem:FunctorCategoryRecollement} shows that the evaluation functors at any $s \in S$ are left-exact and strict morphisms of recollements, so the limit resides in $\Recoll_0^{\lex}$. Finally, because limits in $\Cat_{\infty}^{\st}$ are created in $\Cat_{\infty}$, the second statement follows.
\end{proof}

We can also use Lem.~\ref{lem:FunctorCategoryRecollement} to compute $S$-colimits in $\cX$. For clarity, let us temporarily revert to the non-parametrized case $S = \ast$ for the next two results; the $S$-analogues will also hold by the same reasoning.

\begin{lem} \label{lem:ColimitExistenceInRecollement} Let $(\cU, \cZ)$ be a recollement of $\cX$ and suppose that $\cU$ and $\cZ$ admit $K$-indexed colimits. Then $\cX$ admits $K$-indexed colimits.
\end{lem}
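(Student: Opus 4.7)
The plan is to exploit the equivalence $\cX \simeq \sO(\cZ) \times_{\cZ} \cU$ provided by Cor.~\ref{cor:RecollementAsPullbackSquare}, where the maps to $\cZ$ are $\ev_1: \sO(\cZ) \to \cZ$ and $\phi = i^{\ast} j_{\ast}: \cU \to \cZ$. The apparent difficulty is that $\phi$ is only assumed to be left-exact and need not preserve $K$-indexed colimits, so one cannot simply compute colimits componentwise in the pullback. I would sidestep this by leveraging the fact that $\ev_1$ is a cocartesian fibration.

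Indeed, $\ev_1: \sO(\cZ) \to \cZ$ is a cocartesian fibration whose fiber over $z$ is the slice $\cZ_{/z}$ and whose cocartesian pushforward along $f: z \to z'$ is post-composition with $f$. Since cocartesian fibrations are stable under base change, the projection $q: \cX \to \cU$ is a cocartesian fibration with fiber $\cZ_{/\phi(u)}$ over $u \in \cU$, and whose pushforward along $f: u \to u'$ is post-composition with $\phi(f)$.

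Now, the hypothesis that $\cZ$ admits $K$-indexed colimits implies that each slice $\cZ_{/\phi(u)}$ does as well, computed via the forgetful functor to $\cZ$. Moreover, the pushforward functors $\cZ_{/\phi(u)} \to \cZ_{/\phi(u')}$ preserve these colimits, since they act as the identity on underlying objects of $\cZ$. Combined with the assumption that $\cU$ admits $K$-indexed colimits, the standard existence criterion for colimits in the total space of a cocartesian fibration (cf.~\cite[\S 4.3.1]{HTT}) then yields that $\cX$ admits $K$-indexed colimits.

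The main conceptual obstacle is precisely the potential failure of $\phi$ to preserve colimits; this is what forces one to abandon the naive pullback-of-cocontinuous-functors approach and pass to the cocartesian fibration picture. The underlying reason the argument succeeds is that the resulting colimit of a diagram $F(k) = [u_k, z_k, \alpha_k]$ can be written concretely as $[\colim_k u_k, \colim_k z_k, \alpha]$, where $\alpha: \colim_k z_k \to \phi(\colim_k u_k)$ is induced from the $\alpha_k$ via the cocone on $(u_k)$ and functoriality of $\phi$, and hence requires only that $\phi$ be a functor, not cocontinuous. This concrete description will also be useful in the sequel for the parametrized extension (\emph{loc.~cit.}).
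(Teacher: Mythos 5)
Your proof is correct, but it takes a genuinely different route from the paper's. The paper never passes through the pullback presentation of Cor.~\ref{cor:RecollementAsPullbackSquare}: instead it puts the recollement of Lem.~\ref{lem:FunctorCategoryRecollement} on $\Fun(K,\cX)$, observes that the constant-diagram functor $\delta: \cX \to \Fun(K,\cX)$ is a morphism of recollements, passes to left adjoints on the open and closed parts (which exist by hypothesis) to get a right-lax commutative square over the gluing functors and hence a morphism of recollements $\colim: \Fun(K,\cX) \to \cX$, and then verifies $\colim \dashv \delta$ by encoding both recollements as sections of cartesian fibrations over $\Delta^1$ and invoking relative adjunctions \cite[Prop.~7.3.2.6]{HA}. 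You instead exploit that the projection $\cX \simeq \sO(\cZ)\times_{\ev_1,\cZ,\phi}\cU \to \cU$ is a cocartesian fibration with fibers the slices $\cZ_{/\phi(u)}$ and pushforwards given by postcomposition, and apply the standard criterion for colimits in the total space of a cocartesian fibration; the hypotheses do check out (slices inherit $K$-indexed colimits from $\cZ$, computed on underlying objects, so the postcomposition functors preserve them), and you correctly isolate the real obstruction — the failure of $\phi$ to be cocontinuous — which is exactly what the paper absorbs into the right-lax (rather than strict) commutativity of its square. As for what each buys: the paper's argument produces $\colim$ directly as a morphism of recollements (so $j^{\ast}$ and $i^{\ast}$ of the colimit are the colimits of $j^{\ast}F$ and $i^{\ast}F$) and transports verbatim to the parametrized setting, which is how it is used later; yours is more elementary, avoids the relative-adjunction machinery, and yields the explicit object-level formula $[\colim_k u_k, \colim_k z_k, \alpha]$ with $\alpha$ induced — and the two answers agree, since the paper's $\colim$ has the same open and closed components.
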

\begin{proof} With respect to the recollement of $\Fun(K, \cX)$ of Lem.~\ref{lem:FunctorCategoryRecollement}, the constant diagram functor $\delta: \cX \to \Fun(K, \cX)$ is obviously a morphism of recollements. Passing to left adjoints, we obtain a right-lax commutative diagram
\[ \begin{tikzcd}[row sep=4ex, column sep=4ex, text height=1.5ex, text depth=0.25ex]
\Fun(K, \cU) \ar{r}{\overline{i}^{\ast} \overline{j}_{\ast}} \ar{d}[swap]{\colim} \ar[phantom]{rd}{\SWarrow} & \Fun(K, \cZ) \ar{d}{\colim} \\
\cU \ar{r}[swap]{i^{\ast} j_{\ast}} & \cZ,
\end{tikzcd} \]
which induces a morphism of recollements $\colim: \Fun(K, \cX) \to \cX$. We claim that $\colim$ is left adjoint to $\delta$. In fact, if $\cM, \cM^K \to \Delta^1$ are the cartesian fibrations classified by $i^{\ast} j_{\ast}$ and $\overline{i}^{\ast} \overline{j}_{\ast}$ respectively, then we have a map $\delta: \cM^K \to \cM$ of cartesian fibrations and by \cite[Prop.~7.3.2.6]{HA} a relative left adjoint $\colim: \cM^K \to \cM$. The formation of sections sends relative adjunctions to adjunctions, which proves the claim. We deduce that $\cX$ admits $K$-indexed colimits.
\end{proof}

\begin{cor} Suppose $\cU$ and $\cZ$ are presentable $\infty$-categories and $\phi: \cU \to \cZ$ is a left-exact accessible functor. Then $\cX = \rlax\lim \phi$ is a presentable $\infty$-category.
\end{cor}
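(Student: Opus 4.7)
The plan is to combine the pullback description $\cX \simeq \sO(\cZ) \times_{\cZ} \cU$ from Cor.~\ref{cor:RecollementAsPullbackSquare} with the two defining properties of presentability: cocompleteness and accessibility. Cocompleteness comes immediately from Lem.~\ref{lem:ColimitExistenceInRecollement}; accessibility will follow from the standard stability of accessible $\infty$-categories under pullback along accessible functors.

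First I would verify cocompleteness. Since $\cU$ and $\cZ$ are presentable, they admit all small colimits, so Lem.~\ref{lem:ColimitExistenceInRecollement} applied with $K$ any small simplicial set shows that $\cX$ admits all small colimits.

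Next I would verify accessibility via the pullback square. The $\infty$-category $\sO(\cZ) = \Fun(\Delta^1, \cZ)$ is presentable by \cite[Prop.~5.5.3.6]{HTT}, hence in particular accessible; the evaluation functor $\ev_1: \sO(\cZ) \to \cZ$ preserves all small colimits (they are computed pointwise), so it is accessible; and $\phi: \cU \to \cZ$ is accessible by hypothesis. By \cite[Prop.~5.4.6.6]{HTT}, a pullback of accessible $\infty$-categories along accessible functors is accessible, so $\cX \simeq \sO(\cZ) \times_{\cZ} \cU$ is accessible.

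Combining cocompleteness with accessibility yields presentability of $\cX$ by the definition \cite[Def.~5.5.0.1]{HTT}. There is no real obstacle here; the content of the proof is already bundled into Cor.~\ref{cor:RecollementAsPullbackSquare}, Lem.~\ref{lem:ColimitExistenceInRecollement}, and the cited results from \cite{HTT}. One small point worth flagging is that one should note left-exactness of $\phi$ is only needed insofar as it ensures $\rlax\lim \phi$ is a recollement in the first place (so that Lem.~\ref{lem:ColimitExistenceInRecollement} applies); the presentability conclusion itself only uses that $\phi$ is accessible.
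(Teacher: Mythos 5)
Your proof is correct and follows essentially the same route as the paper: cocompleteness via Lem.~\ref{lem:ColimitExistenceInRecollement}, then a standard accessibility-stability result from \cite{HTT}, where the paper cites \cite[Cor.~5.4.7.17]{HTT} for the section $\infty$-category directly while you route through the pullback presentation of Cor.~\ref{cor:RecollementAsPullbackSquare} and \cite[Prop.~5.4.6.6]{HTT}. Both are valid and the difference is only in which form of the accessibility input is invoked.
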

\begin{proof} By Lem.~\ref{lem:ColimitExistenceInRecollement}, $\cX$ admits all small colimits. By \cite[Cor.~5.4.7.17]{HTT}, $\cX$ is accessible. We conclude that $\cX$ is presentable.
\end{proof}


Finally, we describe how recollements interact with an ambidextrous adjunction (e.g., the adjunction between restriction and induction for equivariant spectra). 

\begin{lem} \label{lem:AmbidexterityRecollement} Let $(\cU,\cZ)$ and $(\cU',\cZ')$ be stable recollements on $\cX$ and $\cX'$ and let $f^{\ast}: \cX \to \cX'$ be an exact functor such that $f^{\ast}|_{i_{\ast}(\cZ)} \subset i_{\ast}(\cZ')$ (so $f^{\ast}$ is not necessarily a morphism of recollements, but we still may define ${f_U}^{\ast} \coloneq j'^{\ast} f^{\ast} j_{\ast}$, ${f_Z}^{\ast} \coloneq i'^{\ast} f^{\ast} i_{\ast}$, and have ${f_U}^{\ast} j^{\ast} \simeq j'^{\ast} {f_U}^{\ast}$).
\begin{enumerate}
    \item Suppose that $f^{\ast}|_{j_!( \cU)} \subset j'_!(\cU')$ and $f^{\ast}$ admits a right adjoint $f_{\ast}$. Then
    \begin{enumerate}
        \item The essential image of $f_{\ast} j'_{\ast}$ lies in $j_{\ast}(\cU)$, so $f^{\ast} \dashv f_{\ast}$ restricts to an adjunction
        \[ \adjunct{{f_U}^{\ast}}{\cU}{\cU'}{{f_U}_{\ast}} \]
        with $j_{\ast} {f_U}_{\ast} \simeq f_{\ast} j'_{\ast}$.
        \item The natural map $j^{\ast} f_{\ast} \to {f_U}_{\ast} j'^{\ast}$ is an equivalence. 
        \item The essential image of $f_{\ast} i'_{\ast}$ lies in $i_{\ast}(\cZ)$, so $f^{\ast} \dashv f_{\ast}$ restricts to an adjunction
        \[ \adjunct{{f_Z}^{\ast}}{\cZ}{\cZ'}{{f_Z}_{\ast}} \]
        with $i_{\ast} {f_Z}_{\ast} \simeq f_{\ast} i'_{\ast}$.
    \end{enumerate}
    \item Suppose that $f^{\ast}|_{j_{\ast}( \cU)} \subset j'_{\ast}(\cU')$ and $f^{\ast}$ admits a left adjoint $f_!$. Then
    \begin{enumerate}
        \item The essential image of $f_{\ast} j'_!$ lies in $j_!(\cU)$, so $f_! \dashv f^{\ast}$ restricts to an adjunction
        \[ \adjunct{{f_U}_!}{\cU'}{\cU}{{f_U}^{\ast}} \]
        with $j_! {f_U}_! \simeq f_! j'_!$.
        \item The natural map ${f_U}_! j^{\ast} \to j'^{\ast} f_!$ is an equivalence. 
        \item The essential image of $f_! i'_{\ast}$ lies in $i_{\ast}(\cZ)$, so $f_! \dashv f^{\ast}$ restricts to an adjunction
        \[ \adjunct{{f_Z}_!}{\cZ'}{\cZ}{{f_Z}^{\ast}} \]
        with $i_{\ast} {f_Z}_! \simeq f_! i'_{\ast}$.
        \item The natural map $i^{\ast} {f_Z}_! \to {f_Z}_! i'^{\ast}$ is an equivalence.
    \end{enumerate}
    \item Suppose that $f^{\ast} \in \Recoll^{\st}_0$, $f^{\ast}$ admits left and right adjoints $f_!$ and $f_{\ast}$, and we have the ambidexterity equivalence $f_! \simeq f_{\ast}$. Then $f_{\ast} \in \Recoll^{\st}_0$ and we additionally have ambidexterity equivalences ${f_U}_! \simeq {f_U}_{\ast}$ and ${f_Z}_! \simeq {f_Z}_{\ast}$.
\end{enumerate}
\end{lem}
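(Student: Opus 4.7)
The plan is to handle (1) directly using the orthogonality characterizations of $j_\ast(\cU)$, $j_!(\cU)$, and $i_\ast(\cZ)$ from \ref{stableRecollementComment}, deduce (2) by a formal dualization, and then derive (3) by combining (1), (2), and the ambidexterity hypothesis via the basic fiber sequence $j_! \to j_\ast \to i_\ast \phi$ of the recollement.

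For (1)(a), I will use that in the stable case $j_\ast(\cU) = (i_\ast\cZ)^\perp$. To show $f_\ast j'_\ast u' \in j_\ast(\cU)$, compute $\Map(i_\ast z, f_\ast j'_\ast u') \simeq \Map(f^\ast i_\ast z, j'_\ast u')$, which vanishes since $f^\ast i_\ast z \in i'_\ast(\cZ')$ by hypothesis and $\Map_{\cX'}(i'_\ast(-), j'_\ast(-)) \simeq 0$. Defining $f_{U\ast} := j^\ast f_\ast j'_\ast$, the equivalence $j_\ast f_{U\ast} \simeq f_\ast j'_\ast$ then follows from the fully faithfulness of $j_\ast$; that $f_{U\ast}$ is right adjoint to $f_U^\ast$ is a direct adjunction calculation using $j^\ast \dashv j_\ast$, $f^\ast \dashv f_\ast$, and $j'^\ast \dashv j'_\ast$. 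For (1)(c), the dual orthogonality $i_\ast(\cZ) = {}^\perp(j_!\cU)$ together with the hypothesis $f^\ast(j_!\cU) \subset j'_!(\cU')$ gives $\Map(j_! u, f_\ast i'_\ast z') \simeq \Map(f^\ast j_! u, i'_\ast z') \simeq 0$, showing $f_\ast i'_\ast z' \in i_\ast(\cZ)$. For (1)(b), the natural map $j^\ast f_\ast \to f_{U\ast} j'^\ast = j^\ast f_\ast j'_\ast j'^\ast$ is induced by the unit $\id \to j'_\ast j'^\ast$; its fiber on $x'$ is $j^\ast f_\ast(i'_\ast i'^! x')$, which vanishes by (1)(c) since $j^\ast i_\ast \simeq 0$.

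Part (2) will be deduced by dualizing the above: swap ``right adjoint'' for ``left adjoint,'' swap $j'_\ast$ for $j'_!$, and interchange the two orthogonality characterizations. For (3), I first verify that $f_\ast: \cX' \to \cX$ satisfies the three preservation conditions of \ref{stableRecollementComment} needed to be a strict morphism of stable recollements: preservation of the closed part is (1)(c); preservation of $j'_!(\cU')$ follows from the ambidexterity $f_\ast \simeq f_!$ combined with (2)(a); and preservation of $j'_\ast(\cU')$ for strictness is (1)(a). The $Z$-component ambidexterity ${f_Z}_! \simeq {f_Z}_\ast$ is immediate since both are computed as $i^\ast f_\ast i'_\ast = i^\ast f_! i'_\ast$ using (1)(c) and (2)(c). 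For the $U$-component, apply the exact functor $j^\ast f_\ast$ to the defining fiber sequence $j'_! u' \to j'_\ast u' \to i'_\ast i'^\ast j'_\ast u'$; the third term vanishes after $j^\ast f_\ast$ because $f_\ast i'_\ast \simeq i_\ast f_{Z\ast}$ by (1)(c) and $j^\ast i_\ast \simeq 0$. Hence $f_{U\ast} \simeq j^\ast f_\ast j'_\ast \simeq j^\ast f_\ast j'_!$, and ambidexterity together with (2)(a) then yields $j^\ast f_\ast j'_! \simeq j^\ast f_! j'_! \simeq j^\ast j_! f_{U!} \simeq f_{U!}$.

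The main obstacle is the bookkeeping in parts (1) and (2): although each sub-assertion has a short proof, one must consistently identify the correct orthogonality characterization and exhibit the claimed natural transformation as the right unit- or counit-induced Beck--Chevalley map. Once (1) and (2) are established, (3) follows almost formally from the ambidexterity hypothesis and the cofiber-sequence comparison between $j'_!$ and $j'_\ast$.
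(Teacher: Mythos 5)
Your proof is correct in substance and follows the same overall strategy as the paper's: the essential-image claims in (1) are verified via the orthogonality characterizations of $j_{\ast}(\cU)$, $j_!(\cU)$, and $i_{\ast}(\cZ)$ from \ref{stableRecollementComment}, part (2) is obtained by duality, and part (3) is assembled from (1) and (2). The local arguments differ in a few harmless ways: the paper proves (1.2) by observing that the map in question is adjoint to the equivalence $f^{\ast} j_! \simeq j'_!\, {f_U}^{\ast}$ and then deduces (1.3) from (1.2), whereas you prove (1)(c) directly from orthogonality and feed it into (1)(b) via the fiber sequence $i'_{\ast} i'^! \to \id \to j'_{\ast} j'^{\ast}$; and for ${f_U}_! \simeq {f_U}_{\ast}$ in (3) the paper gives a mapping-space computation showing ${f_U}_!$ is right adjoint to ${f_U}^{\ast}$, while you compare $j^{\ast} f_{\ast} j'_!$ with $j^{\ast} f_{\ast} j'_{\ast}$ using the cofiber sequence $j'_! \to j'_{\ast} \to i'_{\ast} i'^{\ast} j'_{\ast}$. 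Both routes are valid. The one genuine omission is (2.4): it is not the dual of any assertion in (1), so ``dualize part (1)'' does not produce it, and you never address it. It needs its own short argument: apply $i^{\ast} f_!$ to the fiber sequence $j'_! j'^{\ast} \to \id \to i'_{\ast} i'^{\ast}$, use $f_! j'_! \simeq j_!\, {f_U}_!$ from (2)(a) together with $i^{\ast} j_! \simeq 0$ to kill the first term, and identify the surviving term with ${f_Z}_!\, i'^{\ast}$ via (2)(c). This is precisely the point the paper flags when it remarks that (2.4) holds because $f_!$ now commutes with $j_!$ rather than with $j_{\ast}$.
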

\begin{proof} We first prove the assertions of (1). For (1.1), for any $u' \in \cU'$ because we have for all $z \in \cZ$ that
\begin{align*} \Map_{\cX}(i_{\ast} z,f_{\ast} j'_{\ast} u') \simeq \Map_{\cU'}(j'^{\ast} f^{\ast} i_{\ast} z, u') \simeq \Map_{\cU'}(f_U^{\ast} j'^{\ast} i_{\ast} z, u') \simeq \ast,
\end{align*}
we get $f_{\ast} j'_{\ast} u' \in j_{\ast}(\cU)$. For (1.2), the assertion holds because the map is adjoint to the equivalence $f^{\ast} j_! \to j'_! {f_U}^{\ast}$. For (1.3), for any $z' \in \cZ'$ we have
\[ j^{\ast} f_{\ast} i'_{\ast} z' \simeq {f_U}_{\ast} j^{\ast} i'_{\ast} z' \simeq {f_U}_{\ast} 0 \simeq 0, \]
hence $f_{\ast} i'_{\ast} z' \in i_{\ast}(\cZ)$. Next, the assertions of (2) hold by a dual argument; we note that the extra assertion (2.4) holds because $f_!$ now commutes with $j_!$ instead of $j_{\ast}$. Finally, for (3) the functor $f_! \simeq f_{\ast}$ is in $\Recoll^{\st}_0$ by combining (1.1), (1.3), and (2.1). For the ambidexterity assertions, the equivalence ${f_Z}_! \simeq {f_Z}_{\ast}$ is clear because the embedding $i_{\ast}: \cZ \subset \cX$ is unambiguous, whereas for ${f_U}_! \simeq {f_U}_{\ast}$ we note that the sequence of equivalences
\begin{align*} \Map_{\cU}(u, {f_U}_! u') & \simeq \Map_{\cX}(j_! u, f_! j'_! u') \simeq \Map_{\cX} (j_! u, f_{\ast} j'_! u') \simeq \Map_{\cX'}(f^{\ast} j_! u, j'_! u') \\
& \simeq \Map_{\cX'} (j'_! {f_U}^{\ast} u, j'_! u') \simeq \Map_{\cU'} ({f_U}^{\ast} u, u')
\end{align*}
demonstrates that ${f_U}_!$ is right adjoint to ${f_U}^{\ast}$ and hence ${f_U}_! \simeq {f_U}_{\ast}$.
\end{proof}


\begin{cor} \label{cor:RecollementGivesParamStableSubcategories} Let $G$ be a finite group. Suppose that $\cX_{\bullet}: \sO^{\op}_G \to \Recoll^{\st}_0$ is a functor such that the underlying $G$-$\infty$-category $\cX$ is $G$-stable \cite[Def.~7.1]{Exp4}. Then $\cU$ and $\cZ$ are $G$-stable and all of the functors appearing in the diagram of $G$-adjunctions
\[ \begin{tikzcd}[row sep=4ex, column sep=4ex, text height=1.5ex, text depth=0.25ex]
\cU \ar[shift right=1,right hook->]{r}[swap]{j_{\ast}} & \cX \ar[shift right=2]{l}[swap]{j^{\ast}} \ar[shift left=2]{r}{i^{\ast}} & \cZ \ar[shift left=1,left hook->]{l}{i_{\ast}}
\end{tikzcd} \]
are $G$-exact.
\end{cor}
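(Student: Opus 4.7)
The plan is to apply Lemma~\ref{lem:AmbidexterityRecollement}(3) pointwise to every restriction functor in the cocartesian fibration $\cX \to \sO^{\op}_G$. Concretely, for each morphism $f\colon V \to W$ in $\sO_G$, the associated restriction functor $f^{\ast}\colon \cX_W \to \cX_V$ is a strict left-exact morphism of stable recollements by the hypothesis that $\cX_{\bullet}$ factors through $\Recoll^{\st}_0$, and it admits both a left adjoint $f_!$ and a right adjoint $f_{\ast}$ with an ambidexterity equivalence $f_! \simeq f_{\ast}$ coming from $G$-stability of $\cX$. The hypotheses of Lemma~\ref{lem:AmbidexterityRecollement}(3) are thus satisfied, producing induced adjunctions ${f_U}_! \dashv f_U^{\ast} \dashv {f_U}_{\ast}$ and ${f_Z}_! \dashv f_Z^{\ast} \dashv {f_Z}_{\ast}$ together with ambidexterity equivalences ${f_U}_! \simeq {f_U}_{\ast}$ and ${f_Z}_! \simeq {f_Z}_{\ast}$.

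Granted this, verifying that $\cU$ and $\cZ$ are $G$-stable amounts to checking the defining conditions of \cite[Def.~7.1]{Exp4}: each fiber $\cU_V, \cZ_V$ is pointed and stable by definition of a stable recollement; each restriction functor $f_U^{\ast}, f_Z^{\ast}$ is exact as the open, resp.\ closed part of the morphism of stable recollements $f^{\ast}$; and fiberwise finite indexed coproducts along each $f$ exist and are computed by ${f_U}_!$, which agree with the corresponding indexed products ${f_U}_{\ast}$ by the ambidexterity just obtained (and likewise for $\cZ$).

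To show next that $j^{\ast}, j_{\ast}, i^{\ast}, i_{\ast}$ are $G$-functors assembling into $G$-adjunctions, I would invoke the strictness hypothesis, which directly gives the Beck--Chevalley identifications $f_U^{\ast} j^{\ast} \simeq j^{\ast} f^{\ast}$, $f^{\ast} j_{\ast} \simeq j_{\ast} f_U^{\ast}$, and the analogous identities for $i^{\ast}, i_{\ast}$, so all four functors preserve cocartesian edges over $\sO^{\op}_G$. Fiberwise exactness is automatic since each is part of a fiberwise stable recollement datum. For full $G$-exactness in this $G$-stable setting, it remains to check that each functor preserves parametrized coproducts: for $j^{\ast}$ and $i^{\ast}$ this is exactly the Beck--Chevalley statements Lemma~\ref{lem:AmbidexterityRecollement}(2.2) and~(2.4); for the right adjoints $j_{\ast}$ and $i_{\ast}$, one has the automatic Beck--Chevalley equivalences $j_{\ast} {f_U}_{\ast} \simeq f_{\ast} j_{\ast}$ and $i_{\ast} {f_Z}_{\ast} \simeq f_{\ast} i_{\ast}$, which translate through the three ambidexterity equivalences $f_!\simeq f_{\ast}$, ${f_U}_! \simeq {f_U}_{\ast}$, ${f_Z}_! \simeq {f_Z}_{\ast}$ into $j_{\ast} {f_U}_! \simeq f_! j_{\ast}$ and $i_{\ast} {f_Z}_! \simeq f_! i_{\ast}$.

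The substantive content is thus packaged in the pointwise application of Lemma~\ref{lem:AmbidexterityRecollement}(3); the remaining step is the bookkeeping of Beck--Chevalley transformations and the observation that, once ambidexterity is in place on both sides, preservation of parametrized coproducts for the right adjoints follows from preservation of parametrized products for free. I do not anticipate any genuine obstacle beyond this organizational care.
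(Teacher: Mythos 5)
Your approach is the same as the paper's: apply Lemma~\ref{lem:AmbidexterityRecollement}(3) pointwise to each restriction functor $f^{\ast}$, which is legitimate since $\cX_\bullet$ lands in $\Recoll^{\st}_0$ and $G$-stability of $\cX$ supplies $f_! \simeq f_{\ast}$. The bookkeeping in your third paragraph (the various Beck--Chevalley identifications relating $j^{\ast}, j_{\ast}, i^{\ast}, i_{\ast}$ to $f^{\ast}, f_!, f_{\ast}$) is also correct and is exactly what parts (1) and (2) of that lemma deliver.

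There is, however, one genuine omission, and it happens to be the only point the paper's proof actually argues. Having a left adjoint ${f_U}_!$ to each restriction functor ${f_U}^{\ast}$, together with fiberwise finite coproducts, is \emph{not} yet the statement that $\cU$ admits finite $G$-coproducts (equivalently, by ambidexterity, finite $G$-products): one must also verify the Beck--Chevalley condition for pullback squares of finite $G$-sets, i.e.\ that $(\alpha')_!(\beta')^{\ast} \to \beta^{\ast}\alpha_!$ is an equivalence for the induced functors on $\cU$ and on $\cZ$. Your sentence ``fiberwise finite indexed coproducts along each $f$ exist and are computed by ${f_U}_!$'' silently presupposes this; the Beck--Chevalley squares you do discuss are the ones mixing recollement functors with restriction/induction, which is a different condition. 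The missing step is short but should be said: since the fully faithful inclusions $j_!$ (equivalently $j_{\ast}$, after ambidexterity) and $i_{\ast}$ commute with both restriction and induction by Lemma~\ref{lem:AmbidexterityRecollement}(1.1), (1.3), (2.1), (2.3), the Beck--Chevalley transformation for $\cU$ (resp.\ $\cZ$) is identified with the restriction along these inclusions of the Beck--Chevalley transformation for $\cX$, which is an equivalence by the $G$-stability of $\cX$. With that sentence added, your argument matches the paper's.
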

\begin{proof} By Lem.~\ref{lem:AmbidexterityRecollement}, it only remains to check the Beck-Chevalley condition for $\cU$ and $\cZ$ to show the existence of finite $G$-products. But this follows from the same condition on $\cX$, since the restriction and induction functors $(f_{-})^{\ast}, (f_{-})_{\ast}$ commute with the inclusion functors $(j_{\bullet})_{\ast}$, $(j_{\bullet})_!$, and $(i_{\bullet})_{\ast}$.
\end{proof}

\begin{dfn} \label{dfn:ParamStableRecollement} In the situation of Cor.~\ref{cor:RecollementGivesParamStableSubcategories}, we say that $(\cU,\cZ)$ is a \emph{$G$-stable $G$-recollement} of $\cX$.
\end{dfn}

\section{Recollements on lax limits of \texorpdfstring{$\infty$}{infinity}-categories}

Suppose $p: C \to S$ is a locally cocartesian fibration classified by a $2$-functor $f: \fC[S] \to \Cat_{\infty}$ (\cite[Def.~1.1.5.1]{HTT} and \cite[\S 3]{G}), so for every $2$-simplex $\Delta^2 \to S$, we have a lax commutative diagram of $\infty$-categories
\[ \begin{tikzcd}[row sep=4ex, column sep=4ex, text height=1.5ex, text depth=0.25ex]
C_{0} \ar{rd}[swap]{F_{01}} \ar{rr}{F_{02}} & \ar[phantom]{d}{\Downarrow} & C_{2}, \\
& C_{1} \ar{ru}[swap]{F_{12}} &
\end{tikzcd} \]
and the higher-dimensional simplices of $S$ supply coherence data. Then the $2$-functoriality of $f$ yields two notions of lax limit corresponding to choosing two possible orientations for morphisms -- informally, the \emph{left-lax} limit of $f$ has objects given by tuples $(x_i \in C_i, \alpha_{ij}: F_{ij}(x_i) \to x_j)$, whereas the \emph{right-lax} limit of $f$ has objects given by tuples $(x_i \in C_i, \alpha_{ij}: x_j \to F_{ij}(x_i))$. To give rigorous meaning to these notions, we may circumvent giving a precise formulation of the lax universal property (for instance, as carried out in \cite{GHN}) and instead \emph{define} the left-lax limit to be the $\infty$-category of sections
\[ \llax\lim(f) = \Fun_{/S}(S,C) \]
and the right-lax limit to be the $\infty$-category
\[ \rlax\lim(f) = \Fun^{\cocart}_{/S}(\sd(S),C), \]
where $\sd(S)$ is the \emph{barycentric subdivision} of $S$ (Def.~\ref{dfn:barycentricSubdivision}) that is locally cocartesian over $S$ via the $\mathit{max}$ functor (Constr.~\ref{cnstr:MaxFunctorSubdivision}), and we let $\Fun^{\cocart}_{/S}(-,-)$ be the full subcategory on those functors over $S$ that preserve \emph{locally cocartesian} edges. Viewing $f$ itself as a \emph{left-lax diagram} in $\Cat_{\infty}$, we may thereby speak of left-lax and right-lax limits of left-lax diagrams of $\infty$-categories; dually, we may also speak of left-lax and right-lax limits of right-lax diagrams of $\infty$-categories encoded as locally cartesian fibrations. We refer to \cite[\S 1]{AMGR-NaiveApproach} for a more detailed discussion.\footnote{We follow \cite[\S 1]{AMGR-NaiveApproach} in referring to these two types of lax limits as `left' and `right', even though lax and oplax are more standard nomenclature. The terminology is consistent with the usage of left for cocartesian-type constructions and right for cartesian-type constructions (e.g., left and right fibrations).}

\begin{dfn} Let $S' \subset S$ be a full subcategory. Then $S'$ is a \emph{sieve} if for every morphism $x \to y$ in $S$, if $y \in S'$, then $x \in S'$. Dually, $S'$ is a \emph{cosieve} if $(S')^{\op}$ is a sieve in $S^{\op}$.

Given a sieve $S_0 \subset S$ and cosieve $S_1 \subset S$, we say that $S_0$ and $S_1$ form a \emph{sieve-cosieve decomposition} of $S$ if $S_0$ and $S_1$ are disjoint and any object $x \in S$ lies either in $S_0$ or $S_1$.
\end{dfn}

\begin{rem} Note that sieves and cosieves are necessarily stable under equivalences. Given a sieve-cosieve decomposition $(S_0,S_1)$ of $S$, we may define a functor $\pi: S \to \Delta^1$ that sends each object $x \in S$ to the integer $i \in \{0,1\}$ such that $x \in S_i$. Conversely, any functor $\pi: S \to \Delta^1$ determines a sieve-cosieve decomposition of $S$ by taking its fibers over $0$ and $1$.
\end{rem}

Our main goal in this section is to describe how a sieve-cosieve decomposition of $S$ produces recollements on right-lax limits of left-lax diagrams of $\infty$-categories.

\begin{rem} As we saw in \ref{recollEquivalenceToOplaxLim}, a recollement itself is an example of a right-lax limit over $\Delta^1$. Given a working theory of (pointwise) right-lax Kan extensions, our results should follow from the usual transitivity property of Kan extensions applied to the factorization $S \xto{\pi} \Delta^1 \to \ast$. However, we are not aware of such a theory that also affords the explicit description of the gluing functor given in Prop.~\ref{prp:ExistenceLaxRightKanExtension}.
\end{rem}

\subsection{Recollements on right-lax limits of strict diagrams}

Before entering into our study of left-lax diagrams, let us consider the simpler case of strict diagrams $f: S \to \Cat_{\infty}$. For this case, right-lax limits are modeled by sections of the \emph{cartesian} fibration that classifies $f$. Thus suppose that $p: C \to S$ is a cartesian fibration, $\pi: S \to \Delta^1$ is a functor, and let $p_0: C_0 \to S_0$, $p_1: C_1 \to S_1$ denote the pullbacks of $p$ to the fibers $S_0$, $S_1$ of $\pi$. Given a section $F: S \to C$ of $p$, let $j^{\ast} F: S_1 \to C_1$ be its restriction over $S_1$ and let $i^{\ast} F: S_0 \to C_0$ be its restriction over $S_0$. We obtain functors
\[ j^{\ast}: \Fun_{/S}(S,C) \to \Fun_{/S_1}(S_1,C_1), \quad i^{\ast}: \Fun_{/S}(S,C) \to \Fun_{/S_0}(S_0, C_0). \]
We first explain when $j^{\ast}$ and $i^{\ast}$ admit right adjoints. Suppose $G: S_1 \to C_1$ is a section of $p_1$. For every $x \in S$, let
\[ G_x: (S_1)_{x/} \coloneq S_1 \times_S S_{x/} \to S_1 \xto{G} C_1 \subset C \] 
be the composite functor and consider the commutative diagram
\[ \begin{tikzcd}[row sep=4ex, column sep=6ex, text height=1.5ex, text depth=0.25ex]
(S_1)_{x/} \ar{r}{G_x} \ar{d} & C \ar{d}{p} \\
(S_1)_{x/}^{\lhd} \ar{r} \ar[dotted]{ru}[swap]{\overline{G_x}} & S
\end{tikzcd} \]
where the cone point is sent to $x$. By \cite[Cor.~4.3.1.11]{HTT}, if for every $s \in S$, $C_s$ admits $(S_1)_{x/}$-indexed limits, and for every $f: s \to t$, the pullback functor $f^{\ast}: C_t \to C_s$ preserves $(S_1)_{x/}$-indexed limits, then there exists a dotted lift $\overline{G_x}$ which is a $p$-limit of $G_x$. If this holds for all $x \in S$,  then by the dual of \cite[Lem.~4.3.2.13]{HTT}, the $p$-right Kan extension $j_{\ast} G$ exists and is computed pointwise by these $p$-limits. Moreover, by \cite[Prop.~4.3.2.17]{HTT}, the right adjoint $j_{\ast}$ then exists and is computed objectwise by $j_{\ast} G$.

Now let $H: S_0 \to C_0$ be a section of $p_0$. The same results hold for computing $i_{\ast} H$. However, the slice $\infty$-categories $(S_0)_{x/}$ are empty when $x \in S_1$. Therefore, the hypotheses above amount to supposing that for all $s \in S$, $C_s$ admits a terminal object, and for all $f: s \to t$, the pullback functor $f^{\ast}$ preserves this terminal object.

Finally, let $\cK = \{K_{\alpha}\}_{\alpha \in A}$ be a class of simplicial sets and suppose that for all $K \in \cK$ and $s \in S$, the fiber $C_s$ admits $K$-indexed limits, and for all $f: s \to t$, the pullback functor $f^{\ast}$ preserves $K$-indexed limits. Then by the dual of \cite[Prop.~5.4.7.11]{HTT} and \cite[Rmk.~5.4.7.13]{HTT}, $\Fun_{/S}(S,C)$ admits $K$-indexed limits such that the evaluation functors $\ev_s: \Fun_{/S}(S,C) \to C_s$ preserve $K$-indexed limits -- in other words, the $K$-indexed limits in $\Fun_{/S}(S,C)$ are computed fiberwise. 

Let us now suppose that $p$ satisfies this condition for $\cK$ the class of finite simplicial sets and also satisfies the existence hypotheses for $j_{\ast}$.

\begin{prp} \label{prp:RecollementSectionCategoryCartesianFibration} The adjunctions 
\[ \begin{tikzcd}[row sep=4ex, column sep=4ex, text height=1.5ex, text depth=0.25ex]
\Fun_{/S_1}(S_1,C_1) \ar[shift right=1,right hook->]{r}[swap]{j_{\ast}} & \Fun_{/S}(S,C) \ar[shift right=2]{l}[swap]{j^{\ast}} \ar[shift left=2]{r}{i^{\ast}} & \Fun_{/S_0}(S_0,C_0) \ar[shift left=1,left hook->]{l}{i_{\ast}}
\end{tikzcd} \]
together exhibit $\Fun_{/S}(S,C)$ as a recollement of $\Fun_{/S_1}(S_1,C_1)$ and $\Fun_{/S_0}(S_0,C_0)$.
\end{prp}
\begin{proof} Note the functors $j^{\ast}$ and $i^{\ast}$ are left exact by the fiberwise computation of limits in section $\infty$-categories. Because $(S_0)_{x/} = \emptyset$ for all $x \in S_1$, we get that $j^{\ast} i_{\ast}$ is the constant functor at the terminal object of $\Fun_{/S_1}(S_1,C_1)$. Finally, $i^{\ast}$ and $j^{\ast}$ are jointly conservative because equivalences are detected objectwise in $\Fun_{/S}(S,C)$.
\end{proof}

\begin{rem} If the fibers of $p$ are moreover stable $\infty$-categories, then the left-exact pullback functors $f^{\ast}$ are necessarily exact and the recollement of Prop.~\ref{prp:RecollementSectionCategoryCartesianFibration} is stable.
\end{rem}

\begin{exm} \label{exm:SieveCosieveRecollementOnFunctorCategory} Let $C \simeq D \times S$ and $p$ be the projection to $S$. Then the recollement of Prop.~\ref{prp:RecollementSectionCategoryCartesianFibration} simplifies to 
\[ \begin{tikzcd}[row sep=4ex, column sep=4ex, text height=1.5ex, text depth=0.25ex]
\Fun(S_1,D) \ar[shift right=1,right hook->]{r}[swap]{j_{\ast}} & \Fun(S,D) \ar[shift right=2]{l}[swap]{j^{\ast}} \ar[shift left=2]{r}{i^{\ast}} & \Fun(S_0,D) \ar[shift left=1,left hook->]{l}{i_{\ast}}
\end{tikzcd} \]
where $j: S_1 \to S$ and $i: S_0 \to S$ now denote the inclusions. Recollement theory then gives a calculational technique for computing the right Kan extension $\phi_{\ast} F$ of a functor $F: S \to D$ along $\phi: S \to T$. Namely, if we let $\phi_0 = \phi \circ i$, $\phi_1 = \phi \circ j$, $F_0 = F|_{S_0}$, and $F_1 = F|_{S_1}$, the pullback square \ref{recollementFractureSquare} yields a pullback square
\[ \begin{tikzcd}[row sep=4ex, column sep=4ex, text height=1.5ex, text depth=0.25ex]
\phi_{\ast} F \ar{r} \ar{d} & (\phi_0)_{\ast} F_0 \ar{d} \\
(\phi_1)_{\ast} F_1 \ar{r} & (\phi_0)_{\ast} \left( \left(j_{\ast} F_1 \right)|_{S_0} \right).
\end{tikzcd} \]
\end{exm}

\subsection{Recollements on right-lax limits of left-lax diagrams} \label{subsection:recollRLaxLLax}

We now seek to establish the analogue of Prop.~\ref{prp:RecollementSectionCategoryCartesianFibration} for right-lax limits of locally cocartesian fibrations. Although the ideas are straightforward, the categorical details turn out to be considerably more involved. We begin by proving some needed extensions to the theory of relative right Kan extensions initiated in \cite[\S 4.1-3]{HTT}, which play a technical role in our construction of the recollement adjunctions. We then give an explicit construction of the barycentric subdivision $\sd(S)$ as a quasi-category (Def.~\ref{dfn:barycentricSubdivision}, but also see Rmk.~\ref{rem:barycentricSubdivisionOrdinaryCategory}), and extend the cocartesian pushforward of \cite[Lem.~2.22]{Exp2} to the locally cocartesian situation (Prop.~\ref{prp:LocallyCocartesianPushforward} and Prop.~\ref{prp:subdivisionExtension}). Finally, given a sieve-cosieve decomposition of $S$ and suitable hypotheses on $p: C \to S$, we establish localizations in Prop.~\ref{prp:ExistenceLaxRightKanExtension}, Cor.~\ref{cor:openPartOfRecollement}, and Prop.~\ref{prp:closedPartOfRecollement}, and show that these together constitute a recollement of the right-lax limit of $p$ in Thm.~\ref{thm:RecollementRlaxLimitOfLlaxFunctor}.

\subsubsection{Relative right Kan extension}

In \cite[Prop.~4.3.1.10]{HTT}, Lurie gives a criterion for when a colimit diagram in a fiber of a locally cocartesian fibration is a relative colimit. In contrast, we will also need a separate understanding of when a \emph{limit} diagram in a fiber is a relative limit. As indicated in Lem.~\ref{lem:LimitIsRelativeLimit}, in this situation we can give an unconditional statement.

\begin{lem} \label{lem:LimitIsRelativeLimit} Let $S$ be an $\infty$-category and let $f: C \to S$ be a locally cocartesian fibration. Let $s \in S$ be an object and $\overline{p}: K^{\lhd} \to C_s$ a limit diagram that extends $p$. Then, viewed as a diagram in $C$, $\overline{p}$ is a $f$-limit diagram \cite[4.3.1.1]{HTT}, i.e., the commutative square
\[ \begin{tikzcd}[row sep=4ex, column sep=4ex, text height=1.5ex, text depth=0.25ex]
C_{/\overline{p}} \ar{r} \ar{d} & C_{/p} \ar{d} \\
S_{/f \overline{p}} \ar{r} & S_{/f p}
\end{tikzcd} \]
is a homotopy pullback square.
\end{lem}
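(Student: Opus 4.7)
The plan is to verify directly that the natural comparison map
\[
\theta\colon C_{/\overline{p}} \to C_{/p} \times_{S_{/fp}} S_{/f\overline{p}}
\]
is a trivial Kan fibration, which implies the homotopy pullback square claim. The key simplification is that $f\overline{p}$ is the constant diagram at $s$, since $\overline{p}$ factors through the fiber $C_s \subseteq C$.

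I would proceed fiberwise. Fix $x \in C$ with $f(x) = s'$; the fiber of $\theta$ over $x$ sits over the space of morphisms $f(x) \to s$ in $S$ (via the apex edge of a cone), and it suffices to check that the induced map on fibers over each $\phi\colon s' \to s$ is an equivalence. Using the locally cocartesian hypothesis, choose a locally $f$-cocartesian lift $\widetilde{\phi}\colon x \to x_\phi$ with $x_\phi \in C_s$. By the defining property of locally cocartesian edges, postcomposition with $\widetilde{\phi}$ identifies the space of cones from $x$ on $\overline{p}$ (resp.\ $p$) in $C$ above $\phi$ with the space of cones from $x_\phi$ on $\overline{p}$ (resp.\ $p$) in $C_s$. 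Under these identifications, the map of $\phi$-fibers becomes the canonical comparison from $\Map_{C_s}(x_\phi, \overline{p}(v))$ to the space of cones on $p$ from $x_\phi$ in $C_s$, which is an equivalence by the hypothesis that $\overline{p}$ is a limit in $C_s$.

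The main technical obstacle I anticipate is the careful bookkeeping required to implement this fiberwise reduction rigorously: in the $\infty$-categorical setting, cones on a constant diagram in $S$ are not literally parametrized by single morphisms, so threading the simplicial coherence data through requires care. A cleaner route is to check the right lifting property of $\theta$ against the boundary inclusions $\partial \Delta^n \hookrightarrow \Delta^n$ directly. Unwinding the slice definitions, this amounts to extension problems of the form $\partial \Delta^n \star K^\lhd \to C$ to $\Delta^n \star K^\lhd \to C$ lifting a compatible map to $S$, which I would solve by first using a locally $f$-cocartesian lift of the ``new'' edge from the apex to $s$ in order to reduce to an extension problem entirely within the fiber $C_s$, and then invoking the limit property of $\overline{p}$ there.
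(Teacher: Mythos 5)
Your high-level plan matches the paper's: transpose the lifting problem against $\partial\Delta^n\hookrightarrow\Delta^n$ to an extension problem over $S$, push the cone data into the fiber $C_s$ via the locally cocartesian structure, solve there using the limit hypothesis on $\overline{p}$, and transport back. But your ``cleaner route'' as written has a gap at exactly the point where the real work happens. First, a minor imprecision: the domain of the transposed extension problem is $\partial\Delta^n\star K^{\lhd}\cup_{\partial\Delta^n\star K}\Delta^n\star K$ (both legs of the pullback contribute data), not merely $\partial\Delta^n\star K^{\lhd}$. More seriously, for $n>0$ there is no single ``new edge from the apex to $s$'' — the entire $\Delta^n\star K$ portion of the join, not one edge, must be pushed coherently into $C_s$. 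A single locally cocartesian lift cannot supply this; you need a simultaneous, compatible family of pushforwards indexed by all of $B\star K$.

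The paper produces such a family by first building a homotopy $h^{\alpha}\colon (B\star K^{\lhd})\times\Delta^1\to S$ from $\alpha$ to the constant map at $s$ (using the retraction $(B\star\Delta^0)\times\Delta^1\to B\star\Delta^0$ onto the cone point), and then lifting $h^{\alpha}$ across $f$ via a $\mathfrak{P}$-anodyne argument in the locally cocartesian model structure on $s\Set^{+}_{/S}$; the time-$1$ face of the resulting lift $h^{\beta}$ lands in $C_s$, and only then does the trivial fibration $(C_s)_{/\overline{p}}\to(C_s)_{/p}$ apply. There is also a second nontrivial step your proposal omits: after solving the problem in $C_s$ at time $1$, one must transport the solution back to time $0$ of the homotopy to recover a lift of the original datum, which the paper does with a further inner-anodyne extension argument (the $\theta$, $\lambda$, $h^{\gamma}$ maneuvers at the end of the proof). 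Both of these anodyne steps are where the content of the lemma lies, and neither is captured by a one-edge cocartesian lift; your first paragraph's acknowledgement of ``careful bookkeeping'' is pointing at exactly this.
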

\begin{proof} It suffices to show that $C_{/\overline{p}} \to C_{/p} \times_{S_{/f p}} S_{/f \overline{p}}$ is a trivial Kan fibration. To this end, let $A \to B$ be a monomorphism of simplicial sets and consider the lifting problem
\[ \begin{tikzcd}[row sep=4ex, column sep=4ex, text height=1.5ex, text depth=0.25ex]
A \ar{r} \ar{d} & C_{/\overline{p}} \ar{d} \\
B \ar{r} \ar[dotted]{ru} &  C_{/p} \times_{S_{/f p}} S_{/f \overline{p}}.
\end{tikzcd} \]
This transposes to the lifting problem
\[ \begin{tikzcd}[row sep=4ex, column sep=4ex, text height=1.5ex, text depth=0.25ex]
A \star K^{\lhd} \bigcup_{A \star K} B \star K \ar{r}{\beta} \ar{d} & C \ar{d}{f} \\
B \star K^{\lhd} \ar{r}[swap]{\alpha} \ar[dotted]{ru}[swap]{\gamma} & S.
\end{tikzcd} \]
Our approach will be to first pushforward to the fiber $C_s$ using that $f$ is a locally cocartesian fibration and then solve the lifting problem in $C_s$ using that $\overline{p}$ is a limit diagram.

To begin, because $\overline{p}$ is a diagram in the fiber $C_s$, the map $\alpha$ factors as $B \star K^\rhd \to B \star \Delta^0 \xto{\alpha'} S$ with $\alpha'|_{\Delta^0} = \{s\}$. We may define a map $r: (B \star \Delta^0) \times \Delta^1 \to B \star \Delta^0$ such that $r_0 = \id$ and $r_1$ is constant at $\Delta^0$ in the following way: let $\pi: B \star \Delta^0 \to \Delta^1$ be the structure map of the join which sends $B$ to $\{0\}$ and $\Delta^0$ to $\{1\}$, and let $\rho$ be the composite $(B \star \Delta^0) \times \Delta^1 \xto{\pi \times \id} \Delta^1 \times \Delta^1 \xto{\text{max}} \Delta^1$, so the fiber of $\rho$ over $\{0\}$ is $B \times \{0\}$. Then, recalling that maps $L \to X \star Y$ of simplicial sets over $\Delta^1$ are equivalently specified by pairs of maps $(f_0:L_0 \to X, f_1: L_1 \to Y)$, $r$ is the map over $\Delta^1$ with respect to $\rho$ and $\pi$ given by $B \subset B \star \Delta^0$ and the constant map to $\Delta^0$. Now let
\[ h^\alpha: (B \star K^{\lhd}) \times \Delta^1 \to (B \star \Delta^0) \times \Delta^1 \xto{r} B \star \Delta^0 \xto{\alpha'} S,  \]
so $h^\alpha_0 = \alpha$ and $h^\alpha_1$ is constant at $\{s\}$. Also denote by $h^\alpha$ the restrictions of $h^\alpha$ to $(B \star K) \times \Delta^1$, $(A \star K^{\lhd}) \times \Delta^1$, and $(A \star K) \times \Delta^1$.

Let $\mathfrak{P} = (M_S,T,\emptyset)$ be the categorical pattern on $s\Set^+_{/S}$ that yields the locally cocartesian model structure, so $M_S$ consists of all the edges in $S$, $T$ consists of all the degenerate $2$-simplices in $S$, and the fibrant objects are the locally cocartesian fibrations. By the criterion of \cite[Lem.~B.1.10]{HA} applied to $K \to B \star K$ (with the degenerate edges marked) and $\{0\} \to (\Delta^1)^\sharp$, the inclusion map of marked simplicial sets
\[ (B \star K) \times \{0\} \cup_{(K \times \{0\})} K \times (\Delta^1)^\sharp \to (B \star K) \times (\Delta^1)^\sharp \]
is $\mathfrak{P}$-anodyne, and likewise replacing $K \to B \star K$ with $K^\lhd \to A \star K^\lhd$ and $K \to A \star K$. Using left properness of the locally cocartesian model structure, we deduce that the morphism
\[ \begin{tikzcd}[row sep=4ex, column sep=4ex, text height=1.5ex, text depth=0.25ex]
(A \star K^\lhd \cup_{A \star K} B \star K) \times \{0\} \cup_{K^\lhd \times \{0\}} K^\lhd \times (\Delta^1)^\sharp \ar{d} \\
(A \star K^\lhd \cup_{A \star K} B \star K) \times (\Delta^1)^\sharp
\end{tikzcd} \] 
is $\mathfrak{P}$-anodyne. Consider the commutative square
\[ \begin{tikzcd}[row sep=4ex, column sep=4ex, text height=1.5ex, text depth=0.25ex]
(A \star K^\lhd \cup_{A \star K} B \star K) \times \{0\} \cup_{K^\lhd \times \{0\}} K^\lhd \times (\Delta^1)^\sharp \ar{d} \ar{r} & \leftnat{C} \ar{d}{f} \\
(A \star K^\lhd \cup_{A \star K} B \star K) \times (\Delta^1)^\sharp \ar{r}[swap]{h^{\alpha}} \ar[dotted]{ru}[swap]{h^\beta} & S^\sharp
\end{tikzcd} \]
where the top horizontal map restricted to the first factor is $\beta$ and to the second factor $K^\lhd \times (\Delta^1)^\sharp$ is the constant homotopy $K^{\lhd} \times \Delta^1 \xto{\pr} K^{\lhd} \xto{\overline{p}} C$. Then the dotted lift $h^\beta$ exists, and the image of $h^\beta_1$ is contained in the fiber $C_s$.

Now consider the commutative triangle
\[ \begin{tikzcd}[row sep=4ex, column sep=4ex, text height=1.5ex, text depth=0.25ex]
A \star K^\lhd \cup_{A \star K} B \star K \ar{r}{h^\beta_1} \ar{d} & C_s \\
B \star K^{\lhd} \ar[dotted]{ru}[swap]{\gamma_1}
\end{tikzcd} \]
Because $\overline{p}: K^\lhd \to C_s$ is a limit diagram, the map $(C_s)_{/\overline{p}} \to (C_s)_{/p}$ is a trivial Kan fibration. Therefore, the dotted lift $\gamma_1$ exists. 

Next, define a map
\[ \theta = (\theta', \theta''): (B \times \Delta^1) \star K^{\lhd} \to (B \star K^{\lhd}) \times \Delta^1 \]
by its factors
\begin{align*} \theta' &: (B \times \Delta^1) \star K^{\lhd} \xto{\pr \star \id} B \star K^{\lhd} \\
\theta'' &: (B \times \Delta^1) \star K^{\lhd} \xto{\pr \star \id} \Delta^1 \star K^\lhd \to \Delta^1 \star \Delta^0 \cong \Delta^2 \xto{\sigma^1} \Delta^1.
\end{align*}
Here $\sigma^1: \Delta^2 \to \Delta^1$ is the standard degeneracy map, so $\sigma^1(0) = 0$, $\sigma^1(1) = 1$, and $\sigma^1(2) = 1$. Also denote by $\theta$ the restriction to $(A \times \Delta^1) \star K^{\lhd}$, etc. Let
 \[ X = (A \times \Delta^1) \star K^{\lhd} \cup_{(A \times \Delta^1) \star K} (B \times \Delta^1) \star K  \bigcup\limits_{ (A \times \{1\}) \star K^{\lhd} \cup_{(A \times \{1\}) \star K} (B \times \{1 \}) \star K} B \star K^{\lhd} \]
 and consider the commutative diagram
\[ \begin{tikzcd}[row sep=4ex, column sep=4ex, text height=1.5ex, text depth=0.25ex]
 X \ar{r}{(h^\beta \circ \theta) \cup \gamma_1} \ar{d}[swap]{\lambda} & C \ar{d}{f} \\
 (B \times \Delta^1) \star K^{\lhd} \ar{r}[swap]{h^{\alpha} \circ \theta} \ar[dotted]{ru}[swap]{h^\gamma} & S
\end{tikzcd} \]
(where for commutativity, we use that $\theta_1: (B \times \{1\}) \star K^{\lhd} \to (B \star K^{\lhd}) \times \{1\}$ is an isomorphism). By the dual of \cite[Lem.~2.1.2.4]{HTT} applied to $A \to B$ and the right anodyne map $\{1\} \to \Delta^1$, the map
\[ \lambda': A \times \Delta^1 \cup_{A \times \{1\}} B \times \{1\} \to B \times \Delta^1 \]
is right anodyne. Then by \cite[Lem.~2.1.2.3]{HTT} applied to $\lambda'$ and the map $K \to K^{\lhd}$, $\lambda$ is inner anodyne. Thus the dotted lift $h^\gamma$ exists. Finally, let $\gamma = h^\gamma_0$ and observe that $\gamma$ is a solution to the original lifting problem of interest.
\end{proof}

We briefly digress to complete the theory of Kan extensions by constructing relative Kan extensions along general functors (c.f. Lurie's remark at the beginning of \cite[\S 4.3.3]{HTT}). Recall the relative join construction $- \star_{-} -$ of \cite[Def.~4.1]{Exp2} along with its bifibration property \cite[Lem.~4.8]{Exp2}.

\begin{dfn} \label{Dfn:RelativeKanExtension} Consider the commutative diagram of $\infty$-categories
\[ \begin{tikzcd}[row sep=4ex, column sep=4ex, text height=1.5ex, text depth=0.25ex]
X \ar{r}{F} \ar{d}{\phi} & C \ar{d}{p} \\
Y \ar{r}{\alpha} & S
\end{tikzcd} \]
where $p: C \to S$ is a categorical fibration. Suppose given the data of a functor $G: Y \to C$ over $S$ and a homotopy $h: X \times \Delta^1 \to C$ over $S$ with $h_0 = G \circ \phi$ and $h_1 = F$. Let $\pi: Y \star_Y X \to Y$ be the structure map and let $\overline{G}: Y \star_Y X \xto{\pi} Y \xto{G} C$. Since $\Fun(Y \star_Y X, C) \to \Fun(Y, C) \times \Fun(X, C)$ is a bifibration, we may select an edge $\overline{G} \to \overline{F}$ that is cocartesian over $h: G \circ \phi \to F$ in $\Fun(X,C)$ with degenerate image $\id_{G}$ in $\Fun(Y,C)$. Then we say that $G$ is a \emph{$p$-right Kan extension of $F$} along $\phi$ (exhibited via $h$) if the commutative diagram
\[ \begin{tikzcd}[row sep=4ex, column sep=4ex, text height=1.5ex, text depth=0.25ex]
X \ar{r}{F} \ar[hookrightarrow]{d}{\iota_X} & C \ar{d}{p} \\
Y \star_Y X \ar{r}{\alpha \circ \pi} \ar{ru}{\overline{F}} & S
\end{tikzcd} \]
exhibits $\overline{F}$ as a $p$-right Kan extension of $F$ in the sense of \cite[Def.~4.3.2.2]{HTT}.
\end{dfn}

\begin{rem} \label{rem:ExistenceOfRelativeRKE} In the initial setup of Def.~\ref{Dfn:RelativeKanExtension}, given $\overline{F}: Y \star_Y X \to C$ a map over $S$ extending $F: X \to C$, let $G = \overline{F}|_Y: Y \to C$ and let $h: X \times \Delta^1 \xto{h'} Y \star_Y X \xto{\overline{F}} C$ with $h'$ specified by the pair $(\phi, \id_Y)$ (c.f. the definition \cite[Def.~4.1]{Exp2} of $- \star_Y -$ as $j_\ast: s\Set_{/Y \times \partial \Delta^1} \to s\Set_{/Y \times \Delta^1}$ for the inclusion $j: Y \times \partial \Delta^1 \to Y \times \Delta^1$). Then $\overline{F}$ is a $p$-right Kan extension in the sense of \cite[Def.~4.3.2.2]{HTT} if and only if $G$ is a $p$-right Kan extension along $\phi$ in the sense of Def.~\ref{Dfn:RelativeKanExtension}. Moreover, we have an equivalence of $\infty$-categories $X \times_{Y \star_Y X} (Y \star_Y X)_{y/} \simeq X \times_Y Y_{y/}$ implemented by pulling back the functors $\iota_Y: Y \subset Y \star_Y X$ and $\pi: Y \star_Y X \to Y$ and the respective induced functors on the slice categories via $X \subset Y \star_Y X$. Because of this, Lurie's existence and uniqueness theorem \cite[Prop.~4.3.2.15]{HTT} for $p$-right Kan extensions applies to show that the $p$-right Kan extension $G$ of $F$ along $\phi$ exists if and only if for every $y \in Y$, the diagram $X \times_Y Y_{y/} \to X \xto{F} C$ extends to a $p$-limit diagram (which then computes the value of $G$ on $y$). Moreover, there is then a contractible space of choices for $G$.
\end{rem}

\begin{rem} \label{rem:AdjunctionForRKE} The situation of Def.~\ref{Dfn:RelativeKanExtension} globalizes in the following manner. Suppose every functor $F: X \to C$ admits a $p$-right Kan extension to $\overline{F}: Y \star_Y X \to C$. By \cite[Prop.~4.3.2.17]{HTT}, the restriction functor $(\iota_X)^\ast: \Fun_{/S}(Y \star_Y X, C) \to \Fun_{/S}(X,C)$ then admits a right adjoint $(\iota_X)_\ast$ which is computed on objects as $F \mapsto \overline{F}$. We also have a relative adjunction (\cite[Def.~7.3.2.2]{HA}) $$\adjunct{\iota_Y}{Y}{Y \star_Y X}{\pi}$$ over $Y$ (hence over $S$) where $\iota_Y$ is left adjoint to $\pi$. From this, we obtain an adjunction $$\adjunct{\pi^\ast}{\Fun_{/S}(Y,C)}{\Fun_{/S}(Y \star_Y X,C)}{(\iota_Y)^\ast}$$ where $\pi^\ast$ is left adjoint to $(\iota_Y)^\ast$. Composing these two adjunctions, we obtain the adjunction
\[ \adjunct{\phi^\ast}{\Fun_{/S}(Y,C)}{\Fun_{/S}(X,C)}{\phi_\ast} \]
where $\phi_{\ast}$ is given on objects by sending $F$ to its $p$-right Kan extension along $\phi$.
\end{rem}

\begin{cor} \label{cor:RightKanExtensionComputedInFiber} Suppose we have a commutative diagram of $\infty$-categories
\[ \begin{tikzcd}[row sep=4ex, column sep=4ex, text height=1.5ex, text depth=0.25ex]
X \ar{r}{F} \ar{d}{\phi} & C \ar{d}{p} \\
Y \ar{r}{\alpha} & S
\end{tikzcd} \]
where $p$ is a locally cocartesian fibration and $\phi$ is a cartesian fibration. Suppose that for every $y \in Y$, the limit of $F|_{X_y}: X_y \to C_{\alpha(y)}$ exists. Then the $p$-right Kan extension $G: Y \to C$ of $F$ along $\phi$ exists and $G(y) \simeq \lim\limits_{\ot} F|_{X_y}$. If $G$ exists for all $F$, then we have an adjunction
\[ \adjunct{\phi^\ast}{\Fun_{/S}(Y,C)}{\Fun_{/S}(X,C)}{\phi_\ast} \]
where $\phi_{\ast}(F) \simeq G$.
\end{cor}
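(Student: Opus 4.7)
The plan is to invoke the pointwise criterion for relative right Kan extensions recorded in Remark~\ref{rem:ExistenceOfRelativeRKE}: to construct $G$, it suffices to show that for each $y \in Y$, the composite $F_y : X \times_Y Y_{y/} \to X \xto{F} C$ extends to a $p$-limit diagram whose cone point lies over $\alpha(y) \in S$. The value of $G$ on $y$ will then be read off from the cone point. The strategy is to reduce this $p$-limit over the comma category $X \times_Y Y_{y/}$ to the limit over the fiber $X_y$ that is available by hypothesis, via a cofinality argument powered by the cartesian fibration structure on $\phi$ together with Lem.~\ref{lem:LimitIsRelativeLimit}.

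First I would observe that, since $\phi$ is a cartesian fibration, the base change $\phi' : X \times_Y Y_{y/} \to Y_{y/}$ is also a cartesian fibration, and its fiber over the initial object $\id_y \in Y_{y/}$ is exactly $X_y$. The crucial step is to check that the inclusion $\iota_y : X_y \hookrightarrow X \times_Y Y_{y/}$ is coinitial. By the dual of \cite[Thm.~4.1.3.1]{HTT}, this amounts to verifying, for every object $(x, \alpha : y \to y')$ of $X \times_Y Y_{y/}$, that the slice
\[ X_y \times_{X \times_Y Y_{y/}} (X \times_Y Y_{y/})_{/(x,\alpha)} \]
is weakly contractible. I would exhibit the $\phi$-cartesian lift $\overline{\alpha} : \alpha^{\ast} x \to x$, viewed as a morphism $(\alpha^{\ast} x, \id_y) \to (x, \alpha)$ in $X \times_Y Y_{y/}$, as a terminal object of this slice; here one unpacks a morphism in the slice as a commuting triangle under $y$ together with a lift to $X$, and the defining property of a $\phi$-cartesian edge forces the unique factorization through $\overline{\alpha}$.

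Once coinitiality of $\iota_y$ is established, the assumed limit of $F|_{X_y}$ in the fiber $C_{\alpha(y)}$ defines a $p$-limit diagram in $C$ by Lem.~\ref{lem:LimitIsRelativeLimit}; applying the dual of \cite[Prop.~4.3.1.7]{HTT} to the coinitial inclusion $\iota_y$ then upgrades this to a $p$-limit diagram over $X \times_Y Y_{y/}$ with the same cone point. This simultaneously gives existence of the $p$-right Kan extension $G$ and the formula $G(y) \simeq \lim\limits_{\ot} F|_{X_y}$. When this construction is available for every $F$, the adjunction $\phi^{\ast} \dashv \phi_{\ast}$ is then a formal consequence of Remark~\ref{rem:AdjunctionForRKE}. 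I expect the coinitiality verification, with its bookkeeping between morphisms in the pullback $X \times_Y Y_{y/}$ and pairs of morphisms under $y$ with lifts in $X$, to be the main technical hurdle; the cartesian lift $\overline{\alpha}$ essentially writes down the required terminal object by itself, but some care is needed to reassemble it as an honest terminal object inside the slice category that appears in the cofinality criterion.
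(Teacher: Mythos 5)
Your proposal is correct and follows essentially the same route as the paper: reduce the $p$-limit over $X \times_Y Y_{y/}$ to the fiberwise limit over $X_y$ via Lem.~\ref{lem:LimitIsRelativeLimit} together with right cofinality of the inclusion, then conclude by the pointwise criterion of Rmk.~\ref{rem:ExistenceOfRelativeRKE} and the adjunction of Rmk.~\ref{rem:AdjunctionForRKE}. The only cosmetic difference is how cofinality of $X_y \hookrightarrow X \times_Y Y_{y/}$ is established: the paper exhibits the inclusion as a left adjoint to the cartesian-pullback retraction $r$ and cites the fact that left adjoints are right cofinal, whereas you verify Quillen's Theorem~A directly --- the terminal object $(\alpha^{\ast}x, \overline{\alpha})$ you produce in each slice is exactly the counit of that adjunction, so the two verifications carry identical content.
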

\begin{proof} We need to show that for every $y \in Y$, the $p$-limit of $F^y: X \times_Y Y_{y/} \to X \xto{F} C$ exists. By Lem.~\ref{lem:LimitIsRelativeLimit}, the $p$-limit of $F|_{X_y}$ exists and is computed as the limit of $F|_{X_y}$ viewed as a diagram in $C_{\alpha(y)}$. Because $\phi$ is a cartesian fibration, we have a retraction $r: X \times_Y Y_{y/} \to X_y$ to the inclusion $i: X_y \to X \times_Y Y_{y/}$ such that $r$ is right adjoint to $i$ (on objects, $r$ is given by the formula $r(x,y \xto{e} \phi(x)) = e^\ast(x)$, where $e^\ast: X_{\phi(x)} \to X_y$ is the pullback functor encoded by the lifting property of the cartesian fibration $\phi$). As a left adjoint, $i$ is right cofinal.\footnote{We adopt Lurie's terminology in \cite{HA}: recall that a map $q: K \to L$ is right cofinal if and only if $q^{\op}$ is cofinal.} However, since $r \circ i = \id$, we moreover have that $r$ is right cofinal by the right cancellative property of right cofinal maps \cite[Prop.~4.1.1.3(2)]{HTT}. Hence, by \cite[Prop.~4.3.1.7]{HTT} applied to $r$ and a $p$-limit diagram $(X_y)^{\lhd} \to C$, the $p$-limit of $F^y$ exists and is computed as the limit of $F|_{X_y}$ in $C_{\alpha(y)}$. The claim now follows from Rmk.~\ref{rem:ExistenceOfRelativeRKE}.
\end{proof}

\subsubsection{Barycentric subdivision and locally cocartesian pushforward}

Let $\Delta$ be the category with objects the finite ordinals $\{[n] = \{0<1< ... < n\} : n \in \NN \}$ and morphisms the order-preserving maps. Let $\xi: \cE \Delta \to \Delta$ denote the relative nerve \cite[Def.~3.2.5.2]{HTT} of the canonical inclusion $i: \Delta \to s\Set$. Then $\xi$ is a cocartesian fibration classified by $i$, which is an explicit model for the tautological cocartesian fibration over $\Delta$. Explicitly, an $n$-simplex $\Delta^n \to \cE \Delta$ is given by a sequence $[a_0] \xto{\alpha_0} [a_1] \xto{\alpha_1} ... \xto{g_{n-1}} [\alpha_n]$ of order-preserving maps in $\Delta$ together with morphisms $\kappa_i: \Delta^{\{0,...,i\}} \cong \Delta^i \to \Delta^{a_i}$ which fit into a commutative diagram
\[ \begin{tikzcd}[row sep=4ex, column sep=4ex, text height=1.5ex, text depth=0.25ex]
\Delta^{\{0\}} \ar[hookrightarrow]{r} \ar{d}{\kappa_0} & \Delta^{\{0,1\}} \ar[hookrightarrow]{r} \ar{d}{\kappa_1} & \cdots \ar[hookrightarrow]{r} & \Delta^{\{0,...,n-1\}} \ar[hookrightarrow]{r} \ar{d}{\kappa_{n-1}} & \Delta^n \ar{d}{\kappa_n} \\
\Delta^{a_0} \ar{r}{\alpha_0} & \Delta^{a_1} \ar{r}{\alpha_1} & \cdots \ar{r} & \Delta^{a_{n-1}} \ar{r}{\alpha_{n-1}} & \Delta^{a_n}.
\end{tikzcd} \]
Let $\cE \Delta^{\inj} \subset \cE \Delta$ denote the pullback over the subcategory $\Delta^{\inj} \subset \Delta$ of injective order-preserving maps and also denote the structure map of $\cE \Delta^{\inj}$ by $\xi$. Consider the span of marked simplicial sets
\[ \begin{tikzcd}[row sep=4ex, column sep=4ex, text height=1.5ex, text depth=0.25ex]
(\Delta^{\inj})^{\sharp} &  \leftnat{(\cE \Delta^{\inj})} \ar{r}{\xi} \ar{l}[swap]{\xi} & (\Delta^{\inj})^{\sharp}
\end{tikzcd} \]
where we mark the $\xi$-cocartesian edges in $\cE \Delta^{\inj}$. Similar to the definition in \cite[Exm.~2.24]{Exp2} (which considers the source input to be instead a cartesian fibration), let $$\widetilde{\Fun}_{\Delta^{\inj}}(\cE \Delta^{\inj},-) \coloneq \xi_{\ast} \xi^{\ast}(-): s\Set^+_{/\Delta^{\inj}} \to s\Set^+_{/\Delta^{\inj}}.$$

Note that with $\xi$ a cocartesian fibration, $\xi_{\ast} \xi^{\ast}$ is right Quillen with respect to the \emph{cartesian} model structure on $s\Set^+_{/ \Delta^{\inj}}$ by the dual of \cite[Thm.~2.23]{Exp2}.

\begin{dfn} The $\infty$-category of \emph{paths}\footnote{For us, a path in $C$ is any $n$-simplex $\Delta^n \to C$. In contrast, we reserve the term `string' for objects of the barycentric subdivision $\sd(C)$ (c.f. Def.~\ref{dfn:barycentricSubdivision}).} in an $\infty$-category $C$ is
$$\widehat{\sO}(C) = \widetilde{\Fun}_{\Delta^{\inj}}(\cE \Delta^{\inj}, C \times \Delta^{\inj}).$$
Let $\xi_C: \widehat{\sO}(C) \to \Delta^{\inj}$ denote the structure map of the cartesian fibration and note that its fiber over $[n] \in \Delta^{\inj}$ is $\Fun(\Delta^n,C)$.

In addition, let $\widehat{\sO}^{\simeq}(S) \subset \widehat{\sO}(S)$ be the wide subcategory on the $\xi_S$-cartesian edges over $\Delta^{\inj}$ (so the fiber of $\widehat{\sO}^{\simeq}(S)$ over $[n]$ is $\Map(\Delta^n,S)$), and for a functor $p: C \to S$, let $$\widehat{\sO}^{\simeq}_S(C) = \widehat{\sO}^{\simeq}(S) \times_{\widehat{\sO}(S)} \widehat{\sO}(C).$$
\end{dfn}

Intuitively, under the straightening correspondence $\xi_C$ is classified by the functor $(\Delta^{\inj})^{\op} \to \Cat_{\infty}$ that sends $[n]$ to $\Fun(\Delta^n,C)$ and is functorial with respect to precomposition in the first variable; we will not need a precise articulation of this fact.

\begin{rem} If $C \to S$ is a categorical fibration, then $\widehat{\sO}(C) \to \widehat{\sO}(S)$ is also a categorical fibration by \cite[Thm.~2.23]{Exp2} and \cite[B.2.7]{HA}.
\end{rem}

\begin{cnstr}[Variants associated to a sieve] \label{cnstr:sieveVariantsPathCategories} Let $\pi: S \to \Delta^1$ be a functor and $S_0$ the fiber over $0$. Let $\widehat{\sO}(S)_0 \subset \widehat{\sO}(S)$ be the full subcategory on those objects $\sigma: \Delta^n \to S$ such that $\pi \sigma(0) = 0$, and let $\widehat{\sO}^\simeq(S)_0 = \widehat{\sO}(S)_0 \cap \widehat{\sO}^{\simeq}(S)$. Define the `initial segment' functor $$\lambda_S: \widehat{\sO}(S)_0 \to \widehat{\sO}(S_0)$$ by the following rule: 

\begin{itemize} \item[($\ast$)] Suppose $\sigma: \Delta^n \to \widehat{\sO}(S)_0$ is a $n$-simplex, which corresponds to a sequence of inclusions
 \[ \begin{tikzcd}[row sep=4ex, column sep=4ex, text height=1.5ex, text depth=0.25ex]
\Delta^{a_0} \ar[hookrightarrow]{r}{\alpha_1} & \Delta^{a_1} \ar[hookrightarrow]{r}{\alpha_2} & \cdots \ar[hookrightarrow]{r}{\alpha_n} & \Delta^{a_n}
\end{tikzcd} \]
determining a map $a: \Delta^n \to \Delta^{\inj}$ and a functor $f: \Delta^n \times_{a, \Delta^{\inj}} \cE \Delta^{\inj} \to S$ such that for every $0 \leq i \leq n$, the restriction $f_i: \Delta^{a_i} \to S$ has $f_i(0) \in S_0$. Let $b_i \in \Delta^{a_i}$ be the maximum element such that $f_i(b_i) \in S_0$, and note that $a$ restricts to yield a sequence of inclusions  
\[ \begin{tikzcd}[row sep=4ex, column sep=4ex, text height=1.5ex, text depth=0.25ex]
\Delta^{b_0} \ar[hookrightarrow]{r}{\beta_1} \ar[hookrightarrow]{d} & \Delta^{b_1} \ar[hookrightarrow]{r}{\beta_2} \ar[hookrightarrow]{d} & \cdots \ar[hookrightarrow]{r}{\beta_n} & \Delta^{b_n} \ar[hookrightarrow]{d} \\
\Delta^{a_0} \ar[hookrightarrow]{r}{\alpha_1} & \Delta^{a_1} \ar[hookrightarrow]{r}{\alpha_2} & \cdots \ar[hookrightarrow]{r}{\alpha_n} & \Delta^{a_n}
\end{tikzcd} \]
because we always have that $\alpha_i(b_{i-1}) \leq b_i$ as $S_0$ is a sieve in $S$ stable under equivalences. Let $b: \Delta^n \to \Delta^{\inj}$ be the map determined by the sequence of upper horizontal inclusions. $f$ then restricts to yield a map $f_0$:
\[ \begin{tikzcd}[row sep=4ex, column sep=4ex, text height=1.5ex, text depth=0.25ex]
\Delta^n \times_{b, \Delta^{\inj}} \cE \Delta^{\inj} \ar{r}{f_0} \ar[hookrightarrow]{d} & C_0 \ar[hookrightarrow]{d} \\
\Delta^n \times_{a, \Delta^{\inj}} \cE \Delta^{\inj} \ar{r}{f} & C.
\end{tikzcd} \]
Define $\lambda_S(\sigma): \Delta^n \to \widehat{\sO}(S_0)$ to be the $n$-simplex determined by $f_0$. Now observe that this assignment is natural in $\Delta^n$, hence defines a map of simplicial sets.
\end{itemize}

Observe that $\lambda_S$ is a retraction of the inclusion $\widehat{\sO}(S_0) \to \widehat{\sO}(S)_0$ induced by $S_0 \to S$.

An edge $e: \Delta^1 \to \widehat{\sO}(S)_0$ is $\xi_S$-cartesian if and only if the corresponding functor $f: \Delta^1 \times_{a, \Delta^{\inj}} \cE \Delta^{\inj} \to S$ sends every edge $(i \in [a_0]) \to (\alpha_1(i) \in [a_1])$ to an equivalence, and similarly for $\xi_{S_0}$-cartesian edges in $\widehat{\sO}(S_0)$. Therefore, $\lambda_S$ preserves cartesian edges and restricts to a map $$\lambda_S: \widehat{\sO}^{\simeq}(S)_0 \to \widehat{\sO}^{\simeq}(S_0).$$

Now let $p: C \to S$ be a locally cocartesian fibration and let $p_0: C_0 \to S_0$ be its fiber over $0$. Let $$\widehat{\sO}^{\simeq}_S(C)_0 = \widehat{\sO}^{\simeq}(S)_0 \times_{\widehat{\sO}(S)_0} \widehat{\sO}(C)_0,$$ so $\widehat{\sO}^{\simeq}_S(C)_0 \subset \widehat{\sO}^{\simeq}_S(C)$ is the full subcategory on objects $c: \Delta^n \to C$ with $c(0) \in C_0$.  The initial segment functor $\lambda_{(-)}$ fits into a commutative diagram
\[ \begin{tikzcd}[row sep=4ex, column sep=4ex, text height=1.5ex, text depth=0.25ex]
\widehat{\sO}^{\simeq}(S)_0 \ar[hookrightarrow]{r} \ar{d}{\lambda_S} & \widehat{\sO}(S)_0 \ar{d}{\lambda_S} & \widehat{\sO}(C)_0 \ar{l}[swap]{p} \ar{d}{\lambda_C} \\
\widehat{\sO}^{\simeq}(S_0) \ar[hookrightarrow]{r} & \widehat{\sO}(S_0) & \widehat{\sO}(C_0) \ar{l}[swap]{p_0}
\end{tikzcd} \]

and therefore defines a functor $\lambda_{p}: \widehat{\sO}_S^{\simeq}(C)_0 \to \widehat{\sO}_{S_0}^{\simeq}(C_0)$.

Finally, let $\widehat{\sO}^{\simeq}_S(C)_0^{\cocart} \subset \widehat{\sO}^{\simeq}_S(C)_0$ be the full subcategory on those objects $c: \Delta^n \to C$ such that if $i \in \Delta^n$ is the maximum element with $c(i) \in C_0$, then $c$ sends every edge $\{j, j+1 \}$, $j \geq i$ to a locally-$p$ cocartesian edge (i.e., a cocartesian edge over $\Delta^1$ in the pullback $\Delta^1 \times_S C$).
\end{cnstr}

The subsequent proposition indicates that we can construct a `locally cocartesian pushforward' extending from $C_0$ to $C$ along paths in the base $S$ that originate in $S_0$.

\begin{prp} \label{prp:LocallyCocartesianPushforward} The map $(\lambda_{p}, p): \widehat{\sO}^{\simeq}_S(C)_0^{\cocart} \to \widehat{\sO}^{\simeq}_{S_0}(C_0) \times_{p_0, \widehat{\sO}^{\simeq}(S_0), \lambda_S} \widehat{\sO}^{\simeq}(S)_0$ is a trivial fibration of simplicial sets.
\end{prp}
\begin{proof} We need to solve the lifting problem
\[ \begin{tikzcd}[row sep=4ex, column sep=4ex, text height=1.5ex, text depth=0.25ex]
\partial \Delta^n \ar[hookrightarrow]{d} \ar{r} & \widehat{\sO}^{\simeq}_S(C)_0^{\cocart} \ar{d}{(\lambda_p,p)} \\
\Delta^n \ar{r} \ar[dotted]{ru} &  \widehat{\sO}^{\simeq}_{S_0}(C_0) \times_{\widehat{\sO}^{\simeq}(S_0)} \widehat{\sO}^{\simeq}(S)_0.
\end{tikzcd} \]
Let $a: \Delta^n \to \widehat{\sO}^{\simeq}(S)_0 \to \Delta^{\inj}$ and $b: \Delta^n \to \widehat{\sO}^{\simeq}_{S_0}(C_0) \to \Delta^{\inj}$ be as discussed in the definition of $\lambda$. This lifting problem transposes to
\[ \begin{tikzcd}[row sep=4ex, column sep=4ex, text height=1.5ex, text depth=1ex]
\Delta^n \times_{b, \Delta^{\inj}} \cE \Delta^{\inj} \bigcup_{\partial \Delta^n \times_{b, \Delta^{\inj}} \cE \Delta^{\inj}} \partial \Delta^n \times_{a, \Delta^{\inj}} \cE \Delta^{\inj} \ar{r} \ar[hookrightarrow]{d}{f} & C \ar{d}{p} \\
\Delta^n \times_{a, \Delta^{\inj}} \cE \Delta^{\inj} \ar{r} \ar[dotted]{ru} & S.
\end{tikzcd} \]
Consider $\Delta^n \times_{a, \Delta^{\inj}} \cE \Delta^{\inj}$ as a marked simplicial set where an edge $(i \in \Delta^{a_k}) \to (j \in \Delta^{a_l})$, $\alpha: \Delta^{a_k} \to \Delta^{a_l}$, $\alpha(i) \leq j$ is marked if and only if $k = l$ (so $\alpha = \id$), $b_k \leq i$ and $j = i+1$, and let the domain of $f$ also inherit this marking. Then it suffices to show that $f$ is a trivial cofibration in the locally cocartesian model structure on $s\Set^+_{/S}$, defined by the categorical pattern $\mathfrak{P} = (M_S,T,\emptyset)$ with $M_S$ all of the edges in $S$ and $T$ consisting of the $2$-simplices $\tau$ in $S$ with the edge $\tau(\{1,2\})$ an equivalence. Proceeding by induction on $n$, by a two-out-of-three argument it suffices to show that the inclusion $f': \Delta^n \times_{b, \Delta^{\inj}} \cE \Delta^{\inj} \to \Delta^n \times_{a, \Delta^{\inj}} \cE \Delta^{\inj}$ is a trivial cofibration. We define a filtration of the poset inclusion $f'$ as follows:
\begin{itemize}
    \item[($\ast$)] Let $a_n - b_n = t$. For $0 \leq k \leq n$, let $\alpha_k: \Delta^{a_k} \to \Delta^{a_n}$ denote the inclusion. Let $P_r \subset \Delta^n \times_{a, \Delta^{\inj}} \cE \Delta^{\inj}$ be the subposet on those objects $(i \in \Delta^{a_k})$ such that $\alpha_k(i)-b_n \leq r$. Note that $P_0 = \Delta^n \times_{b, \Delta^{\inj}} \cE \Delta^{\inj}$, because if $(i \in \Delta^{a_k})$ is such that $i > b_k$, then necessarily $\alpha_k(i) > b_n$, and likewise if $i \leq b_k$, then $\alpha_k(i) \leq b_n$ (this follows from the definitions of the $b_i$ and that $S_0$ is a sieve stable under equivalences). Then we have that $f'$ factors as a sequence of poset sieve inclusions $\Delta^n \times_{b, \Delta^{\inj}} \cE \Delta^{\inj} = P_0 \subset P_1 \subset \cdots \subset P_t = \Delta^n \times_{a, \Delta^{\inj}} \cE \Delta^{\inj}$.
\end{itemize}
It now suffices to show that $P_i \subset P_{i+1}$ is a trivial cofibration for all $0 \leq i < t$. For simplicity, let us suppose $i=0$ (and $t>0$ for non-triviality), the other cases being proved similarly. Let $k \in [n]$ be the smallest element such that $b_n + 1 \in \Delta^{a_n}$ is in the image of $\alpha_k: \Delta^{a_k} \to \Delta^{a_n}$. Note then that for all $k \leq l \leq n$, $\alpha_l(b_l+1) = b_n+1$. View the poset $\Delta^{\{k,...,n\}} \times \Delta^1$ as a cosieve $U$ in $P_1$ via the inclusion which sends $(l,0)$ to $(b_l \in \Delta^{a_l})$ and $(l,1)$ to $(b_l + 1 \in \Delta^{a_l})$. Then as a marked simplicial set, we have $U = (\Delta^{\{k,...,n\}})^{\flat} \times (\Delta^1)^{\sharp}$. By \cite[B.1.10]{HA}, the inclusion
\[ U \cap P_0 = (\Delta^{\{k,...,n\}})^{\flat} \times \{0\} \to U = (\Delta^{\{k,...,n\}})^{\flat} \times (\Delta^1)^{\sharp} \]
is $\mathfrak{P}$-anodyne. Noting that $P_0$ and $U$ together cover $P_1$, it thus suffices to show that we have a homotopy pushout square of $\infty$-categories
\[ \begin{tikzcd}[row sep=4ex, column sep=4ex, text height=1.5ex, text depth=0.25ex]
U \cap P_0 \ar{r} \ar{d} & U \ar{d} \\
P_0 \ar{r} & P_1
\end{tikzcd} \]
as we would then deduce the lower horizontal map to be $\mathfrak{P}$-anodyne. For this, the criterion of Lem.~\ref{lem:posetPushoutViaFlatness} is easily verified.
\end{proof}

\begin{lem} \label{lem:posetPushoutViaFlatness} Suppose $P$ is a poset, $Z \subset P$ is a sieve and $U \subset P$ is a cosieve such that $P = Z \cup U$. Then the commutative square
\[ \begin{tikzcd}[row sep=4ex, column sep=4ex, text height=1.5ex, text depth=0.25ex]
U \cap Z \ar{r} \ar{d} & U \ar{d} \\
Z \ar{r} & P
\end{tikzcd} \]
is a homotopy pushout square of $\infty$-categories if and only if for every $a \notin U$ and $c \notin Z$ such that $a \leq c$, the subposet $P_{a//c} = \{ b \in U \cap Z : a \leq b \leq c\}$ is weakly contractible.
\end{lem}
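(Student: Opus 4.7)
The plan is to identify this square as a homotopy pushout in $\Cat_\infty$ exactly when the canonical comparison map $\pi \colon Q := N(Z) \sqcup_{N(U \cap Z)} N(U) \to N(P)$ is a Joyal weak equivalence. Both inclusions out of $N(U \cap Z)$ are monomorphisms and hence cofibrations in the Joyal model structure; since Joyal is left proper, the ordinary simplicial pushout $Q$ already models the homotopy pushout. The map $\pi$ is a bijection on $0$-simplices, so it is a categorical equivalence if and only if it is fully faithful, i.e., induces equivalences on all mapping spaces.

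Most pairs are handled by the sieve/cosieve structure. If $a, c$ both lie in $Z$ or both in $U$, the inclusions $N(Z), N(U) \hookrightarrow Q$ are fully faithful (using that $U \cap Z$ is a cosieve in $Z$ and a sieve in $U$, so that any zigzag morphism in $Q$ between two $Z$-objects reduces to a morphism within $Z$, and similarly for $U$), whence $\Map_Q(a,c)$ agrees with $\Map_{N(P)}(a,c)$. If $a \in U \setminus Z$ and $c \in Z \setminus U$, then the cosieve property of $U$ forces $a \not\leq c$ in $P$, so $\Map_{N(P)}(a,c) = \emptyset$; a short zigzag argument (any step out of $a$ must be a $U$-step, forcing the next vertex into $U \cap Z$, and the same holds inductively for all intermediate vertices, rendering $c$ unreachable) shows $\Map_Q(a,c) = \emptyset$ as well.

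The critical case is $a \in Z \setminus U$ and $c \in U \setminus Z$. Here $\Map_{N(P)}(a,c)$ is contractible if $a \leq c$ in $P$ and empty otherwise. The key technical claim is that
\[ \Map_Q(a,c) \simeq |N(P_{a//c})|, \]
the geometric realisation of the nerve of $P_{a//c}$. I plan to establish this via a filtration of $N(P)$ built up from $Q$: for each such pair $(a,c)$ with $a \leq c$ in $P$, the simplices of $N(P)$ whose maximal $Z \setminus U$ vertex is $a$ and whose minimal $U \setminus Z$ vertex is $c$ assemble into a sub-simplicial set of join form $A_a \star N(P_{a//c}) \star B_c$, where $A_a \subseteq N(Z)$ consists of chains in $Z$ ending at $a$ and $B_c \subseteq N(U)$ of chains in $U$ beginning at $c$. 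Attaching each such piece along the boundary $A_a \star \partial N(P_{a//c}) \star B_c$ builds $N(P)$ from $Q$, and by standard inner anodyne arguments for joins, each attachment is a Joyal weak equivalence precisely when $N(P_{a//c})$ is weakly contractible. When $a \not\leq c$, no step is added, consistent with $P_{a//c} = \emptyset$.

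The principal obstacle is the careful bookkeeping of this filtration and the verification that each attachment step reduces to a Joyal anodyne extension under the contractibility hypothesis on $P_{a//c}$. This relies on standard technology for handling joins in the Joyal model structure, combined with a well-ordering of the pairs $(a,c)$ compatible with the stratification of $P$ by the sieves $\{y \in Z : y \leq a\}$ and cosieves $\{y \in U : y \geq c\}$, ensuring that successive attachments glue compatibly with those already performed.
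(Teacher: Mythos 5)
Your strategy is sound, but you are in effect reproving a result that the paper simply cites. The paper's proof is two lines: define $\pi\colon P \to \Delta^2$ by sending $P\setminus U$ to $0$, $U\cap Z$ to $1$, and $P\setminus Z$ to $2$; observe that $N(Z)\cup_{N(U\cap Z)}N(U) = \Lambda^2_1\times_{\Delta^2}N(P)$, so that the square being a homotopy pushout is essentially by definition the assertion that $\pi$ is a flat inner fibration; and then apply the flatness criterion of \cite[B.3.2]{HA}, whose hypothesis is precisely the weak contractibility of the factorization categories $N(P)_{a/}\times_{N(P)}N(P)_{/c}\times_{\Delta^2}\{1\} = P_{a//c}$. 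Your filtration of $N(P)$ relative to $Q$ by the join-shaped pieces $A_a\star N(P_{a//c})\star B_c$, attached along $(A'_a\star N(P_{a//c})\star B_c)\cup(A_a\star N(P_{a//c})\star B'_c)$, is essentially the content of the proof of that criterion specialized to posets; it buys self-containedness at the cost of redoing a substantial piece of \cite[App.~B.3]{HA}. If you do pursue it, note that after stripping off the outer joins (which preserve categorical equivalences and monomorphisms) each attachment reduces to the inclusion $(\{a\}\star M)\cup_{M}(M\star\{c\})\hookrightarrow \{a\}\star M\star\{c\}$ for $M = N(P_{a//c})$, which is a categorical equivalence if and only if $M$ is weakly contractible.

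One logical wrinkle deserves attention. The filtration, as described, proves only the ``if'' direction: when every $P_{a//c}$ is weakly contractible, each attachment is a trivial cofibration and the transfinite composite $Q\to N(P)$ is an equivalence. It does not by itself give the ``only if'' direction, since a composite of non-equivalences could in principle still be an equivalence. For the converse you genuinely need your asserted identification $\Map_Q(a,c)\simeq |N(P_{a//c})|$ computed \emph{in the pushout} $Q$ itself (e.g.\ via the necklace model or the pushout of simplicial categories $\mathfrak{C}[Q]$), not merely the cell structure of $N(P)$ relative to $Q$; comparing with $\Map_{N(P)}(a,c)\simeq \ast$ then forces contractibility. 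This is exactly the step you flag as the principal obstacle, and it is the real crux of your route — one more reason the reduction to \cite[B.3.2]{HA} is the economical choice here.
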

\begin{proof}  Define a map $\pi: P \to \Delta^2$ by
\begin{equation*} \pi(x) = \begin{cases} 0 & x \notin U \\
2 & x \notin Z \\
1 & x \in U \cap Z 
\end{cases}
\end{equation*}
Observe that $P \times_{\Delta^2} \Delta^{\{0,1\}} = Z$,  $P \times_{\Delta^2} \Delta^{\{1,2\}} = U$, and $P \times_{\Delta^2} \{1\} = U \cap Z$. We may therefore apply the flatness criterion of \cite[B.3.2]{HA} to $\pi$ in order to deduce the criterion in question.
\end{proof}

We now introduce our quasi-categorical model of the barycentric subdivision $\sd(S)$.

\begin{dfn} \label{dfn:barycentricSubdivision} An $n$-simplex $\sigma: \Delta^n \to S$ is a \emph{string} if for every $0 \leq i < n$, $\sigma(\{i,i+1\})$ is not an equivalence in $S$. The \emph{barycentric subdivision} (or \emph{subdivision})
\[ \sd(S) \subset \widehat{\sO}^{\simeq}(S) \]
is the full subcategory of $\widehat{\sO}^{\simeq}(S)$ on the strings in $S$.\footnote{Note that given a string $\sigma: \Delta^n \to S$, we may still have that `longer' edges in $\Delta^n$ are sent to equivalences in $S$ by $\sigma$, so $\sd(S)$ may fail to be a cartesian fibration over $\Delta^{\inj}$. However, if every retract in $S$ is an equivalence, then this possibility is excluded, and $\sd(S) \subset \widehat{\sO}^{\simeq}(S) \to \Delta^{\inj}$ remains a cartesian fibration. Our definition of the barycentric subdivision only seems reasonable under this hypothesis, although we do not need to demand it for our theorems.} Given a functor $C \to S$, the \emph{$S$-relative subdivision} $\sd_S(C)$ is the pullback $$\sd(S) \times_{\widehat{\sO}^{\simeq}(S)} \widehat{\sO}^{\simeq}_S(C) \cong \sd(S) \times_{\widehat{\sO}(S)} \widehat{\sO}(C).$$ Similarly, parallel to Constr.~\ref{cnstr:sieveVariantsPathCategories} we may define $\sd(S)_0$, $\sd_S(C)_0$, and $\sd_S(C)_0^{\cocart}$ for a locally cocartesian fibration $C \to S$ and a functor $S \to \Delta^1$.
\end{dfn}

\begin{rem} \label{rem:barycentricSubdivisionOrdinaryCategory} Suppose that $S$ is the nerve of a category, which we also denote as $S$. Then $\sd(S)$ is the nerve of the category whose objects are functors $\sigma: \Delta^n \to S$ such that $\sigma(\{i, i+1 \})$ is not an equivalence in $S$, and where a morphism $[\sigma: \Delta^n \to S] \to [\tau: \Delta^m \to S]$ is given by the data of a map $\alpha: [n] \to [m]$ in $\Delta^{\inj}$ and a natural transformation $\sigma \to \alpha^{\ast} \tau$ through equivalences. In particular, if $S$ is the nerve of a poset $P$, then $\sd(P)$ is the nerve of the usual barycentric subdivision of $P$.

On the other hand, the usual definition of the subdivision of an $\infty$-category \cite[Def.~1.15]{AMGR-NaiveApproach} is as the left Kan extension of the functor $\sd: \Delta \to \Cat_{\infty}$ along the restricted Yoneda embedding $\Delta \subset \Cat_{\infty}$. Although we expect our quasi-categorical definition of the subdivision to recover this more abstract definition, we will not prove this here.
\end{rem}

\begin{cnstr}[Maximum functor] \label{cnstr:MaxFunctorSubdivision} Define a `last vertex' map $\max_S: \widehat{\sO}(S) \to S$ by the following rule:
\begin{itemize} \item[($\ast$)] Suppose $\sigma: \Delta^n \to \widehat{\sO}(S)$ is a $n$-simplex, which corresponds to a sequence of inclusions \[ \begin{tikzcd}[row sep=4ex, column sep=4ex, text height=1.5ex, text depth=0.25ex]
\Delta^{a_0} \ar[hookrightarrow]{r}{\alpha_1} & \Delta^{a_1} \ar[hookrightarrow]{r}{\alpha_2} & \cdots \ar[hookrightarrow]{r}{\alpha_n} & \Delta^{a_n}
\end{tikzcd} \]
determining a map $a: \Delta^n \to \Delta^{\inj}$ and a functor $f: \Delta^n \times_{a, \Delta^{\inj}} \cE \Delta^{\inj} \to S$. Define a functor $\chi: \Delta^n \to \Delta^n \times_{a, \Delta^{\inj}} \cE \Delta^{\inj}$ to be the identity on the first component and the unique $n$-simplex of $\cE \Delta^{\inj}$
\[ \begin{tikzcd}[row sep=4ex, column sep=4ex, text height=1.5ex, text depth=0.25ex]
\Delta^{\{0\}} \ar[hookrightarrow]{r} \ar{d}{\kappa_0} & \Delta^{\{0,1\}} \ar[hookrightarrow]{r} \ar{d}{\kappa_1} & \cdots \ar[hookrightarrow]{r} & \Delta^n \ar{d}{\kappa_n} \\
\Delta^{a_0} \ar[hookrightarrow]{r}{\alpha_1} & \Delta^{a_1} \ar[hookrightarrow]{r}{\alpha_2} & \cdots \ar[hookrightarrow]{r}{\alpha_n} & \Delta^{a_n}.
\end{tikzcd} \]
specified by $\kappa_i(i) = a_i$ on the second component. Then $\max_S(\sigma) = f \circ \chi: \Delta^n \to S$.
\end{itemize}

In other words, $\max_S$ is the functor induced by precomposing by the section $\Delta^{\inj} \to \cE \Delta^{\inj}$ which selects the maximal vertex in every fiber.
\end{cnstr}

\begin{lem} \label{lm:subdivisionLocallyCocartesianByMaxFunctor} \begin{enumerate}[leftmargin=*] \item The functor $\max_S: \widehat{\sO}(S) \to S$ is a categorical fibration.
\end{enumerate}
\begin{enumerate}
\setcounter{enumi}{1}
\item The restricted functor $\max_S: \widehat{\sO}^{\simeq}(S) \to S$ is a locally cocartesian fibration.
\item The restricted functor $\max_S: \sd(S) \to S$ is a locally cocartesian fibration.
\end{enumerate}
\end{lem}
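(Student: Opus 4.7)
The plan is to establish the three claims in order, with (2) and (3) both amounting to an explicit construction of locally cocartesian lifts followed by a universal-property verification.

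For (1), I realize $\max_S$ as the composite $\widehat{\sO}(S) \xto{s_{\max}^\ast} S \times \Delta^{\inj} \to S$, where $s_{\max}\colon \Delta^{\inj} \to \cE\Delta^{\inj}$ is the section of $\xi$ selecting the maximal vertex of each fiber. Since $s_{\max}$ is a monomorphism (being a split section) and the functor $\widetilde{\Fun}_{\Delta^{\inj}}(\cE\Delta^{\inj},-) = \xi_{\ast}\xi^{\ast}$ is right Quillen for the cartesian model structure (dual to \cite[Thm.~2.23]{Exp2}), pullback along $s_{\max}$ is a categorical fibration. Composing with the evident projection from a product, which is obviously a categorical fibration, gives (1).

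For (2) and (3), I construct the locally cocartesian lifts explicitly. Fix $\sigma\colon \Delta^n \to S$ in $\widehat{\sO}^{\simeq}(S)$ with $\sigma(n) = s$ and an edge $e\colon s \to s'$ of $S$. Let $\tilde\sigma\colon \Delta^{n+1} \to S$ be the concatenation of $\sigma$ with $e$ along the max vertex. Because $\tilde\sigma|_{\Delta^n} = \sigma$, the cartesian fibration $\xi_S$ supplies a $\xi_S$-cartesian edge $\bar e\colon \sigma \to \tilde\sigma$ covering the initial-segment inclusion $[n] \hookrightarrow [n+1]$, and by construction $\max_S(\bar e) = e$. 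This yields the candidate lift for (2). For (3), the same candidate works whenever $e$ is not an equivalence, since then $\tilde\sigma$ is still a string; when $e$ is an equivalence, I instead take the in-fiber edge $\sigma \to \sigma'$ in $\Fun(\Delta^n, S)$ with $\sigma'$ obtained from $\sigma$ by replacing the last vertex $s$ by $s'$ via $e$. This is an equivalence in $\Fun(\Delta^n, S)$ (hence $\xi_S$-cartesian within the fiber), it covers $e$ under $\max_S$, and both $\sigma$ and $\sigma'$ remain strings since only the last vertex is altered.

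The main obstacle will be verifying that the candidate edges satisfy the cocartesian universal property for $\max_S$. I would do this by pulling back $\max_S$ along $e\colon \Delta^1 \to S$ to obtain $P \to \Delta^1$, and checking directly that the candidate is cocartesian in $P$ via the usual mapping-space criterion. Unwinding this criterion in terms of the explicit description of $n$-simplices of $\widehat{\sO}^{\simeq}(S)$ (resp.\ $\sd(S)$) as functors $\Delta^n \times_{\Delta^{\inj}} \cE\Delta^{\inj} \to S$ satisfying the cartesian-edge constraints, the required equivalence of mapping spaces becomes a lifting problem of the same kind appearing in the proof of Proposition \ref{prp:LocallyCocartesianPushforward}, and can be handled by an analogous filtration using Lemma \ref{lem:posetPushoutViaFlatness} together with the $\mathfrak{P}$-anodyne extension lemma \cite[B.1.10]{HA}. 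Morally, the verification succeeds because extending a path by $e$ at the max vertex is the universal way to shift the max along $e$, and this universality is built into the cocartesian fibration $\xi\colon \cE\Delta^{\inj} \to \Delta^{\inj}$.
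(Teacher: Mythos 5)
Your identification of the locally $\max_S$-cocartesian edges as "append $e$ at the max vertex" is exactly right and matches the paper, and your observation that these candidate lifts are also $\xi_S$-cartesian (covering the initial-segment inclusion) is a nice way to phrase the construction. However, there are two places where the argument is not yet a proof.

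First, for part (1), the deduction "since $s_{\max}$ is a monomorphism and $\xi_\ast\xi^\ast$ is right Quillen, pullback along $s_{\max}$ is a categorical fibration" does not follow as stated. Right Quillenness of $\xi_\ast\xi^\ast$ tells you that this \emph{functor} preserves (trivial) fibrations; it says nothing directly about a particular \emph{natural transformation} such as the restriction-along-$s_{\max}$ map $\xi_\ast\xi^\ast(S^\sharp \times (\Delta^{\inj})^\sharp) \to S^\sharp \times (\Delta^{\inj})^\sharp$. To make your argument go through you would need a pullback-power (SM7-type) statement: that for a trivial cofibration $A \to B$ in $s\Set^+_{/\Delta^{\inj}}$ and the cofibration $s_{\max}\colon (\Delta^{\inj})^\sharp \to \leftnat{(\cE\Delta^{\inj})}$, the pushout-product $A \times_{\Delta^{\inj}} \cE\Delta^{\inj} \cup_A B \to B \times_{\Delta^{\inj}} \cE\Delta^{\inj}$ is a trivial cofibration, and then that fibrations between fibrant objects in this model structure are categorical fibrations on underlying simplicial sets. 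Neither of these is cited, and the first is the substantive point. The paper instead verifies inner-fibrancy directly (by transposing horn-filling to a flatness statement for $\cE\Delta^{\inj} \to \Delta^{\inj}$) and then lifts equivalences, which avoids the pushout-product machinery.

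Second, for (2) the universal-property verification is deferred to "an analogous filtration" à la Proposition \ref{prp:LocallyCocartesianPushforward}, but that argument is set up to prove a map is a \emph{trivial fibration} by lifting against boundary inclusions, which is not the same shape of problem as verifying the cocartesian condition for a single candidate edge. The paper's actual argument is more concrete and different in kind: one uses that $\xi_S\colon \widehat{\sO}^\simeq(S) \to \Delta^{\inj}$ is a right fibration, so that for $x$ over $[a_0]$ and $z$ over $[a_2]$ one has a decomposition $\Map_{\widehat{\sO}^\simeq(S)}(x,z) \simeq \bigsqcup_{\gamma\colon [a_0]\subset[a_2]}\Map_{\Fun(\Delta^{a_0},S)}(x,\gamma^\ast z)$; after fixing $\gamma$ the cocartesian criterion reduces to the known fact that $\ev_{a_0+1}\colon \Fun(\Delta^{a_0+1},S)\to S$ is a cocartesian fibration. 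If you want your approach to stand on its own you would need to actually carry out the filtration, and I don't think it will reduce cleanly to Lemma \ref{lem:posetPushoutViaFlatness}; I'd recommend the mapping-space decomposition instead. For (3), your explicit treatment of the equivalence case is not wrong but is unnecessary: once $\max_S$ is known to be a categorical fibration, the pullback over any equivalence $e$ is automatically a cocartesian fibration, so one only needs to handle non-equivalences, for which your appended string remains a string and the full-subcategory inclusion $\sd(S)\subset\widehat{\sO}^\simeq(S)$ transports the cocartesian property.
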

\begin{proof} (1) We first verify that $\max_S$ is an inner fibration. For this, let $n \geq 2$, $0 < k < n$, and consider the lifting problem
\[ \begin{tikzcd}[row sep=4ex, column sep=4ex, text height=1.5ex, text depth=0.25ex]
\Lambda^n_k \ar{r} \ar{d} & \widehat{\sO}(S) \ar{d}{\max_S} \\
\Delta^n \ar{r} \ar[dotted]{ru} & S. 
\end{tikzcd} \]
Let $a: \Delta^n \to \Delta^{\inj}$ be the unique extension of the given $\Lambda^n_k \to \Delta^{\inj}$. The lifting problem then transposes to
\[ \begin{tikzcd}[row sep=4ex, column sep=4ex, text height=1.5ex, text depth=0.25ex]
\Delta^n \bigcup_{\Lambda^n_k} \Lambda^n_k \times_{\Delta^{\inj}} \cE \Delta^{\inj} \ar{r} \ar{d} & S \\
\Delta^n \times_{\Delta^{\inj}} \cE \Delta^{\inj} \ar[dotted]{ru}
\end{tikzcd} \]
and it suffices to show the vertical arrow is inner anodyne. Since $\cE \Delta^{\inj} \to \Delta^{\inj}$ is a cocartesian fibration, it is in particular a flat inner fibration, and the desired result follows. 

We next show that $\max_S$ is a categorical fibration by lifting equivalences from the base. So suppose $e: \Delta^1 \to S$ is an equivalence and $\sigma: \Delta^n \to S$ is an object of $\widehat{\sO}(S)$ such that $\max_S(\sigma) = \sigma(n) = e(0)$. The restriction of $\max_S$ to $\Fun(\Delta^n,S) \subset \widehat{\sO}(S)$ is evaluation at $\{n\}$, which is a categorical fibration, so $e$ lifts to an equivalence in $\Fun(\Delta^n,S)$ and hence in $\widehat{\sO}(S)$.

(2) First observe that since $\widehat{\sO}^{\simeq}(S) \subset \widehat{\sO}(S)$ is a subcategory stable under equivalences, the restricted $\max_S$ functor is a categorical fibration by (1). To prove that $\max_S$ is a locally cocartesian fibration, it then suffices to prove that for any edge $e: s \to t$ in $S$ that is \emph{not} an equivalence, the pullback $\max_S(e): \widehat{\sO}^{\simeq}(S) \times_{S} \Delta^1 \to \Delta^1$ is a cocartesian fibration. To this end, we claim that an edge $\widetilde{e}: x \to y$ lifting $e$ is $\max_S(e)$-cocartesian if and only if the corresponding data of an inclusion $\alpha: \Delta^{a_0} \to \Delta^{a_1}$ and a functor $f: \Delta^1 \times_{\Delta^{\inj}} \cE \Delta^{\inj} \to S$ is such that in addition $a_1 = a_0 +1$ and $\alpha$ is the inclusion of the initial segment. Note that given an object $x: \Delta^{a_0} \to S$ with $s= x(a_0)$, such a lift $\widetilde{e}$ of $e$ may be defined by `appending' $e$ to $x$: indeed, let $y: \Delta^{a_0+1} \to S$ be an extension of $x \cup e: \Delta^{a_0} \cup_{a_0,\Delta^0,0} \Delta^1 \to S$, let
\[ r: \Delta^1 \times_{\alpha, \Delta^{\inj}} \cE \Delta^{\inj} \to \Delta^{a_{0}+1} \]
be the retraction functor which fixes $\Delta^{a_{0} + 1}$ and is given by $\alpha$ on $\Delta^{a_0}$, and define $\widetilde{e}$ as $y \circ r$. Hence, establishing the claim will complete the proof.

The 'only if' direction will follow from the 'if' direction together with the stability of cocartesian edges under equivalence. For the `if' direction, fix such an edge $\widetilde{e}$. Recall from the definition that $\widetilde{e}: x \to y$ is $\max_S(e)$-cocartesian if and only if for all objects $z \in \widehat{\sO}^{\simeq}(S)$ with $\max_S(z) = t$, the commutative square
\[ \begin{tikzcd}[row sep=4ex, column sep=4ex, text height=1.5ex, text depth=0.5ex]
\Map_{\widehat{\sO}^{\simeq}(S)_{\max_S = t}}(y,z) \ar{r}{(\widetilde{e})^\ast} \ar{d} & \Map_{\widehat{\sO}^{\simeq}(S)}(x,z) \ar{d}{\max_S} \\
\{ e \} \ar{r} & \Map_S(s,t)
\end{tikzcd} \]
is a homotopy pullback square.  Viewing $x$ as $x: \Delta^{a_0} \to S$, $y$ as $y: \Delta^{a_0+1} \to S$, and $z$ as $z: \Delta^{a_2} \to S$, and computing the mapping spaces in $\widehat{\sO}^{\simeq}(S)$ as a cartesian fibration over $\Delta^{\inj}$, we see that
\[ \Map_{\widehat{\sO}^{\simeq}(S)}(x,z) \simeq \bigsqcup_{\gamma: [a_0] \subset [a_2]} \Map_{\Map(\Delta^{a_0},S)}(x,\gamma^{\ast}z). \]
Therefore, it suffices to show that for any \emph{fixed} inclusion $\gamma: \Delta^{a_0} \to \Delta^{a_2}$ with $\gamma(a_0) < a_2$, letting $\beta: \Delta^{a_0+1} \to \Delta^{a_2}$ be the unique extension of $\gamma$ with $\beta(a_0+1) = a_2$, we have that the square of mapping spaces
\[ \begin{tikzcd}[row sep=4ex, column sep=4ex, text height=1.5ex, text depth=0.5ex]
\Map_{\Map(\Delta^{a_{0}+1},S)}(y,\beta^{\ast}z) \ar{r}{\alpha^{\ast}} \ar{d} & \Map_{\Map(\Delta^{a_{0}},S)}(x,\gamma^{\ast}z) \ar{d} \\
\{e\} \ar{r} & \Map_{\iota S}(x(a_0),z(a_2))
\end{tikzcd} \]
is a homotopy pullback square (where the right vertical map sends $x \to \gamma^{\ast}z$ to the composite $x(a_0) \to z(\gamma(a_0)) \to z(a_2)$). (Here we implicitly use that maps in $\widehat{\sO}^{\simeq}(S)$ are natural transformations through equivalences to account for the $\max_S = t$ condition for the upper-left mapping space.) But this follows since $\ev_{a_0+1}: \Fun(\Delta^{a_0+1},S) \to S$ is a cocartesian fibration with $\overline{x} \to y$ a cocartesian edge lifting $e$, where $\overline{x}$ is the degeneracy $s_{a_0}$ applied to $x$ (we note that $\Map_{\Map(\Delta^{a_{0}+1},S)}(\overline{x},\beta^{\ast}z) \simeq \Map_{\Map(\Delta^{a_{0}},S)}(x,\gamma^{\ast}z)$).

(3) This is clear from the description of the locally $\max_S$-cocartesian edges given in (2).
\end{proof}

Lem.~\ref{lm:subdivisionLocallyCocartesianByMaxFunctor} ensures that the following proposition is well-formulated; also note that $\sd(S)_0 \subset \sd(S)$ is a sub-locally cocartesian fibration via $\max_S$ as it is the inclusion of a cosieve stable under equivalences.

\begin{prp} \label{prp:subdivisionExtension} Let $p: C \to S$ be a locally cocartesian fibration and $\pi: S \to \Delta^1$ a functor. Let $p_0: C_0 \to S_0$ be the fiber of $p$ over $0$. 
\begin{enumerate}
    \item Restricting the domain and codomain of the map of Prop.~\ref{prp:LocallyCocartesianPushforward} yields the map
    \[ \sd_S(C)_0^{\cocart} \to \sd_{S_0}(C_0) \times_{\sd(S_0)} \sd(S)_0 \]
     which is also a trivial fibration of simplicial sets.
    \item Precomposition by the inclusion $S_0 \to S$ defines a trivial fibration of simplicial sets
\[ \Fun^{\cocart}_{/S}(\sd(S)_0,C) \to \Fun^{\cocart}_{/S_0}(\sd(S_0),C_0). \]
\end{enumerate}
\end{prp}

For the proof, it will be convenient to introduce an auxiliary construction. Define a functor $\delta: \widehat{\sO}(S) \to \widehat{\sO}(\widehat{\sO}(S))$ by the following rule:
\begin{itemize}
\item[($\ast$)] Suppose $\sigma: \Delta^n \to \widehat{\sO}(S)$ is a $n$-simplex, which corresponds to a sequence of inclusions
 \[ \begin{tikzcd}[row sep=4ex, column sep=4ex, text height=1.5ex, text depth=0.25ex]
\Delta^{a_0} \ar[hookrightarrow]{r}{\alpha_1} & \Delta^{a_1} \ar[hookrightarrow]{r}{\alpha_2} & \cdots \ar[hookrightarrow]{r}{\alpha_n} & \Delta^{a_n}
\end{tikzcd} \]
determining a map $a: \Delta^n \to \Delta^{\inj}$ and a functor $f: \Delta^n \times_{a, \Delta^{\inj}} \cE \Delta^{\inj} \to S$. Define a map $\overline{a}: \Delta^n \times_{a, \Delta^{\inj}} \cE \Delta^{\inj} \to \Delta^{\inj}$ on objects by $\overline{a}(i \in \Delta^{a_k}) = \Delta^{\{0,...,i\}}$ and on morphisms $(i \in \Delta^{a_k}) \to (j \in \Delta^{a_l})$, $\alpha_{k l}: \Delta^{a_k} \to \Delta^{a_l}$, $\alpha_{k l}(i) \leq j$ by restriction of $\alpha_{k l}$ to $\Delta^{\{0,...,i\}} \subset \Delta^{a_k}$ (which then is valued in $\Delta^{\{0,...,j\}} \subset \Delta^{a_l}$). Then define a functor of categories
\[ \phi: (\Delta^n \times_{a, \Delta^{\inj}} \cE \Delta^{\inj}) \times_{\overline{a}, \Delta^{\inj}} \cE \Delta^{\inj} \to \Delta^n \times_{a, \Delta^{\inj}} \cE \Delta^{\inj} \]
by sending objects $(i \in \Delta^{a_k}, i' \leq i)$ to $(i' \in \Delta^{a_k})$ and morphisms $(i \in \Delta^{a_k}, i' \leq i) \to (j \in \Delta^{a_l}, j' \leq j)$ (specified by the data of a map $\alpha_{k l}: \Delta^{a_k} \to \Delta^{a_l}$ such that $\alpha_{k l}(i) \leq j$ and $\alpha_{k l}(i') \leq j'$) to the morphism $(i' \in \Delta^{a_k}) \to (j' \in \Delta^{a_l})$ specified by the same data.

We may then specify a map $g: \Delta^n \times_{a, \Delta^{\inj}} \cE \Delta^{\inj} \to \widehat{\sO}(S)$ defined over $\Delta^{\inj}$ via $\overline{a}$ and the structure map $\xi_S$ as adjoint to the map $f \circ \phi: (\Delta^n \times_{a, \Delta^{\inj}} \cE \Delta^{\inj}) \times_{\overline{a}, \Delta^{\inj}} \cE \Delta^{\inj} \to S$. $g$ in turn defines the desired $n$-simplex $\delta(\sigma): \Delta^n \to \widehat{\sO}(\widehat{\sO}(S))$.
\end{itemize}
Informally, $\delta$ sends objects $s_0 \to s_1 \to ... \to s_n$ to their `initial segment parametrization'
\[ [s_0] \to [s_0 \to s_1] \to ... \to [s_0 \to s_1 \to ... \to s_n]. \]
Next, using the functor $\max_S$ to make sense of the next statement, we may use $\delta$ to define functors
\begin{align*}
 \delta &: \widehat{\sO}^{\simeq}(S) \to \widehat{\sO}^{\simeq}_S(\widehat{\sO}^{\simeq}(S)) = \widehat{\sO}^{\simeq}(S) \times_{\widehat{\sO}(S)} \widehat{\sO}(\widehat{\sO}^{\simeq}(S)) \\
 \delta &: \sd(S) \to \sd_S(\sd(S)) = \sd(S) \times_{\widehat{\sO}(S)} \widehat{\sO}(\sd(S))
\end{align*}
as the identity on the first factor and a restriction of $\delta$ on the second factor.

\begin{proof}[Proof of Prop.~\ref{prp:subdivisionExtension}] (1) follows from Prop.~\ref{prp:LocallyCocartesianPushforward} in view of the pullback square
\[ \begin{tikzcd}[row sep=4ex, column sep=4ex, text height=1.5ex, text depth=0.25ex]
\sd_S(C)^{\cocart}_0 \ar{r} \ar{d} & \widehat{\sO}^{\simeq}_S(C)_0^{\cocart} \ar{d} \\
\sd_{S_0}(C_0) \times_{\sd(S_0)} \sd(S)_0 \ar{r} & \widehat{\sO}^{\simeq}_{S_0} (C_0) \times_{\widehat{\sO}^{\simeq}(S_0)} \widehat{\sO}^{\simeq}(S)_0.
\end{tikzcd} \]
For (2), we need to solve the lifting problem
\[ \begin{tikzcd}[row sep=4ex, column sep=4ex, text height=1.5ex, text depth=0.25ex]
A \ar[hookrightarrow]{d} \ar{r} & \Fun^{\cocart}_{/S}(\sd(S)_0,C) \ar{d} \\
B \ar{r} \ar[dotted]{ru} & \Fun^{\cocart}_{/S_0}(\sd(S_0),C_0).
\end{tikzcd} \]
This transposes to
\[ \begin{tikzcd}[row sep=4ex, column sep=4ex, text height=1.5ex, text depth=0.25ex]
A \times \sd(S)_0 \bigcup_{A \times \sd(S_0)} B \times \sd(S_0) \ar{r}{G \cup F} \ar[hookrightarrow]{d} & C \ar{d}{p} \\
B \times \sd(S)_0 \ar{r}{\max_S} \ar[dotted]{ru} & S.
\end{tikzcd} \]
The functoriality of $\sd_{S_0}(-)$ in its argument results in a functor
\[ \sd_{S_0}: \Fun_{/S_0}(\sd(S_0),C_0) \to \Fun_{/S_0}(\sd_{S_0}(\sd(S_0)),\sd_{S_0}(C_0)).\]
Given $F: B \times \sd(S_0) \to C_0$, let $\sd_{S_0}(F): B \times \sd_{S_0}(\sd(S_0)) \to \sd_{S_0}(C_0)$ denote the image. We then define $\overline{F}$ as the composite
\[ \begin{tikzcd}[row sep=4ex, column sep=6ex, text height=1.5ex, text depth=0.5ex]
B \times \sd(S_0) \ar{r}{\id \times \delta} & B \times \sd_{S_0}(\sd(S_0)) \ar{r}{\sd_{S_0}(F)} & \sd_{S_0}(C_0).
\end{tikzcd} \]
Also let $\overline{F}'$ denote $\overline{F}$ with codomain $\sd_S(C)^{\cocart}_0$ via the inclusion $\sd_{S_0}(C_0) \subset \sd_S(C)^{\cocart}_0$.

Similarly, given $G: A \times \sd(S)_0 \to C$, we may define $\overline{G}$ as the composite
\[ \begin{tikzcd}[row sep=4ex, column sep=6ex, text height=1.5ex, text depth=0.5ex]
A \times \sd(S)_0 \ar{r}{\id \times \delta} & A \times \sd_S(\sd(S)_0) \ar{r}{\sd_S(G)} & \sd_S(C)^{\cocart}_0
\end{tikzcd} \]
where we note that the codomain of $\sd_S(G)$ necessarily lies in $\sd_S(C)^{\cocart}_0$ by definition of the locally $\max_S$-cocartesian edges in $\sd(S)_0$ (here it is essential that we use $\sd(S)$ rather than $\widehat{\sO}^{\simeq}(S)$). Clearly, $\overline{G}$ and $\overline{F}'$ are compatible on their common domain $A \times \sd(S_0)$ since $G$ and $F$ are. We thereby may factor the square above as
\[ \begin{tikzcd}[row sep=4ex, column sep=4ex, text height=1.5ex, text depth=0.25ex]
A \times \sd(S)_0 \bigcup_{A \times \sd(S_0)} B \times \sd(S_0) \ar{r}{\overline{G} \cup \overline{F}'} \ar[hookrightarrow]{d} & \sd_S(C)^{\cocart}_0 \ar{r}{\max_C} \ar[->>]{d}{\simeq} & C \ar{d}{p} \\
B \times \sd(S)_0 \ar{r}{(\overline{F} \lambda,\pr)} \ar[dotted]{ru} & \sd_{S_0}(C_0) \times_{\sd(S_0)} \sd(S)_0 \ar{r}{\max_S} & S
\end{tikzcd} \]
The dotted lift exists by (1), and postcomposition of such a lift by $\max_C$ defines the desired lift.
\end{proof}

\subsubsection{Main results}

We begin by constructing a factorization system \cite[Def.~5.2.8.8]{HTT} on $\sd(S)$ associated to a sieve-cosieve decomposition of $S$. To do this, we need a few preparatory lemmas.

\begin{lem} \label{lem:cartesianSlice} Let $p: X \to S$ be a cartesian fibration. Given a functor $\phi: K \to X$, let
\[ \overline{p}: X^{\phi/} = \Fun(K^{\rhd},X) \times_{\Fun(K, X)} \{\phi\} \to S^{p \phi/} = \Fun(K^{\rhd},X) \times_{\Fun(K,X)} \{ p \phi\} \]
be the functor induced by $p$. Then $\overline{p}$ is a cartesian fibration, and an edge $\overline{e}: \overline{x} \to \overline{y} \in X^{\phi/}$ is $\overline{p}$-cartesian if and only if the underlying edge $e: x \to y \in X$ is $p$-cartesian.
\end{lem}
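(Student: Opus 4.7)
The plan is to reduce the lemma to a standard result on cartesian fibrations via a base-change argument.

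I would first observe that $\overline{p}$ arises as the base change of the map
\[ r: \Fun(K^{\rhd}, X) \to \Fun(K^{\rhd}, S) \times_{\Fun(K, S)} \Fun(K, X), \qquad \overline{x} \mapsto (p \overline{x}, \overline{x}|_K), \]
along the inclusion $S^{p\phi/} \hookrightarrow \Fun(K^{\rhd}, S) \times_{\Fun(K, S)} \Fun(K, X)$ sending $\overline{s} \mapsto (\overline{s}, \phi)$. A direct unpacking of the fibers verifies that the resulting square is a pullback. Since cartesian fibrations are preserved under base change, and cartesian edges in the base change are precisely those mapping to cartesian edges in the source, the lemma will follow once we establish that $r$ is a cartesian fibration whose cartesian edges are exactly the edges $f$ in $\Fun(K^{\rhd}, X)$ such that $f(\infty)$ is a $p$-cartesian edge in $X$, where $\infty$ denotes the cone point.

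The statement on $r$ is an elaboration of \cite[Prop.~2.4.3.3]{HTT}, which handles the non-relative case $p_*: \Fun(K^{\rhd}, X) \to \Fun(K^{\rhd}, S)$ and characterizes its cartesian edges as the pointwise $p$-cartesian ones. The extra factor $\Fun(K, X)$ in the base of $r$ has the effect of dropping the cartesian requirement at vertices of $K$, leaving only the cone-point requirement. Concretely, given an edge $(\alpha, \beta)$ in the base of $r$ and a target $\overline{y} \in \Fun(K^{\rhd}, X)$ lifting the target of $(\alpha, \beta)$, one first selects a $p$-cartesian edge $e: x \to y$ in $X$ lifting the cone-point component of $\alpha$ and ending at $y = \overline{y}(\infty)$ (using that $p$ is cartesian). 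One then extends $e$ together with $\overline{y}$ and $\beta$ to a full map $K \star \Delta^1 \to X$ covering $\alpha$, by an inductive horn-filling argument over the cells of $K$ in which every horn that appears is of the form $\Lambda^n_n \hookrightarrow \Delta^n$ with final edge equal to $e$; such fillers exist by the cartesian property of $e$. The pointwise characterization of $r$-cartesian edges then follows by transposing the defining lifting conditions across the join-slice adjunction and checking that the only new cartesian-type condition appears at the cone point vertex.

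The main obstacle will be organizing this inductive extension from $K^{\rhd} \cup_{\{1\}} \Delta^1$ to $K \star \Delta^1$. While the inclusion $K^{\rhd} \cup_{\{1\}} \Delta^1 \hookrightarrow K \star \Delta^1$ is right anodyne by \cite[Lem.~2.1.2.4]{HTT} (applied to $\emptyset \hookrightarrow K$ and the right anodyne $\{1\} \hookrightarrow \Delta^1$), right anodyneness alone does not suffice because $p$ is only a cartesian fibration rather than a right fibration. One must instead carry out the induction in such a way that the distinguished edge $e$ occupies the final edge of each horn, so that the cartesian property of $e$ can be invoked to produce the required filler.
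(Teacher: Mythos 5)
Your base-change observation in the first step is correct and is a clean reformulation: the pullback of $r: \Fun(K^{\rhd}, X) \to \Fun(K^{\rhd}, S) \times_{\Fun(K, S)} \Fun(K, X)$ along $\overline{s} \mapsto (\overline{s}, \phi)$ is indeed $X^{\phi/} \to S^{p\phi/}$, and cartesian fibrations and their cartesian edges are stable under base change, so the lemma would follow from the stated property of $r$. The paper does not phrase things via $r$, but the underlying technical content (transpose the lifting problem and show the relevant inclusion is marked right anodyne) is the same.

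However, there are two problems with the execution of the second step. First, you have conflated the two slice models. The lemma is stated for the ``fat'' slice $X^{\phi/} = \Fun(K^{\rhd}, X) \times_{\Fun(K,X)} \{\phi\}$, so an $n$-simplex of $X^{\phi/}$ is a map $K^{\rhd} \times \Delta^n \to X$, and the transposed lifting problem for $r$ lives over $K^{\rhd} \times \Delta^n$, not $K \star \Delta^n$. Your appeal to the ``join-slice adjunction'' and to the inclusion $K^{\rhd} \cup_{\{1\}} \Delta^1 \hookrightarrow K \star \Delta^1$ belongs to the join slice $X_{\phi/}$, which is a different (though categorically equivalent) simplicial set; since being a cartesian fibration is not invariant under categorical equivalence of the total space, you cannot simply transfer. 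The correct transposed lifting problem is over the inclusion $\Lambda^n_n \times K^{\rhd} \cup_{\Lambda^n_n \times K} \Delta^n \times K \hookrightarrow \Delta^n \times K^{\rhd}$, with the last edge of $\Lambda^n_n$ and $\Delta^n$ marked.

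Second, the ``main obstacle'' you identify at the end is real, but you acknowledge it rather than resolve it. Once the transposed lifting problem is set up with the correct markings (marked edge $\{n-1,n\}$ on the simplex factor, $p$-cartesian edges marked in $X$, all edges of $S$ marked), the needed result is exactly that the pushout-product of a marked right anodyne map with a cofibration is marked right anodyne (this is \cite[Prop.~3.1.2.3]{HTT}); the lift then exists by fibrancy of the marked cartesian fibration $\rightnat{X} \to S^{\sharp}$. This also handles the $n>1$ horns in one stroke, which is needed for the full ``if and only if,'' whereas your sketch only describes the $n=1$ construction of a cartesian lift and does not verify the universal property. Your proposed alternative of ordering the cells of $K$ so that $e$ sits in the final face of each horn is essentially re-proving the marked pushout-product lemma by hand in a special case; it could be made to work, but you would need to be precise about the filtration of $\Delta^n \times K^{\rhd}$ and to verify the marking condition at each stage.
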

\begin{proof} We may duplicate the proof of \cite[3.1.2.1]{HTT} to prove the lemma, the essential tool being \cite[3.1.2.3]{HTT}. In more detail, let $E$ be the described collection of edges in $X^{\phi/}$ and suppose given a lifting problem in marked simplicial sets of the form
\[ \begin{tikzcd}[row sep=4ex, column sep=4ex, text height=1.5ex, text depth=0.25ex]
\rightnat{\Lambda^n_n} \ar{r} \ar{d} & (X^{\phi/}, E) \ar{d}{\overline{p}} \\
\rightnat{\Delta^n} \ar{r} \ar[dotted]{ru} & (S^{p \phi/})^{\sharp} 
\end{tikzcd} \]
where we mark the edge $\{n-1,n\}$ of $\Lambda^n_n$ (if $n>1$) and of $\Delta^n$. This transposes to a lifting problem of the form
\[ \begin{tikzcd}[row sep=4ex, column sep=4ex, text height=1.5ex, text depth=0.25ex]
\rightnat{\Lambda^n_n} \times K^{\rhd} \bigcup_{\rightnat{\Lambda^n_n} \times K} \rightnat{\Delta^n} \times K \ar{r}{f} \ar{d}{i} & \rightnat{X} \ar{d}{p} \\
\rightnat{\Delta^n} \times K^{\rhd} \ar[dotted]{ru} \ar{r} & S^{\sharp}
\end{tikzcd} \]
where we mark the $p$-cartesian edges in $X$. Note that $f$ is indeed a map of marked simplicial sets: this is by definition of $E$ for $f$ on the edge $\{n-1,n\} \times \{v\}$ ($v \in K^{\rhd}$ the cone point), and by definition of $f$ on $\Delta^n \times K$ as given by $\phi \circ \pr_K$ for the other marked edges. Applying \cite[3.1.2.3]{HTT}, we deduce that $i$ is marked right anodyne, so the dotted lift exists.
\end{proof}

\begin{lem} \label{lem:mappingSpacesCartesianFibration} Let $p: X \to S$ be a cartesian fibration. Suppose we have a commutative square in $X$
\[ \begin{tikzcd}[row sep=4ex, column sep=4ex, text height=1.5ex, text depth=0.25ex]
x \ar{r}{h} \ar{d}{f} & z \ar{d}{g} \\
y \ar{r}{k}  & w.
\end{tikzcd} \]
If the edge $g$ is $p$-cartesian, then we have an equivalence
\[ \Map_{x//w}(y,z) \xto{\simeq} \Map_{px//pw}(py,pz). \]
\end{lem}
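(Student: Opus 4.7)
The plan is to leverage the $p$-cartesianness of $g$ at the level of mapping spaces, and then transfer this through the appropriate slice and coslice constructions. First, I would apply the defining property of $p$-cartesian edges: for every object $y \in X$, the commutative square
\[ \begin{tikzcd}[row sep=4ex, column sep=6ex, text height=1.5ex, text depth=0.25ex]
\Map_X(y,z) \ar{r}{g_*} \ar{d}{p} & \Map_X(y,w) \ar{d}{p} \\
\Map_S(py,pz) \ar{r}{(pg)_*} & \Map_S(py,pw)
\end{tikzcd} \]
is a homotopy pullback. Taking vertical fibers over $k: y \to w$ and $pk: py \to pw$ and using the standard identification of such fibers with slice mapping spaces gives a natural equivalence
\[ \Map_{X_{/w}}\!\left((y,k),(z,g)\right) \xto{\simeq} \Map_{S_{/pw}}\!\left((py,pk),(pz,pg)\right). \]

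Next, I would observe that this equivalence is natural with respect to precomposition in the source variable. In particular, the morphism $f: x \to y$ refines to a morphism $\widetilde{f}: (x, gh) \to (y, k)$ in $X_{/w}$, using the $2$-simplex witnessing $gh \simeq kf$ that comes from the commutative square hypothesis. Precomposition along $\widetilde{f}$ upstairs and along $p\widetilde{f}$ downstairs yields a commutative square whose horizontal arrows are the preceding equivalences, and thus induces an equivalence on vertical fibers.

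Finally, I would identify the double slice $X^{x//w}$ with the coslice $(X_{/w})_{(x, gh)/}$, and apply the coslice mapping space identification a second time to express $\Map_{X^{x//w}}((y,f,k),(z,h,g))$ as precisely the fiber of the left-hand vertical map over the point $h$, and analogously for the downstairs mapping space over $ph$. Taking these fibers in the square of the previous paragraph then produces the required equivalence. The main technical point is the bookkeeping needed to view the double slice as an iterated coslice so that the standard slice-mapping-space formula applies; but since $X^{x//w}$ is defined precisely as the space of coherent triangles $x \to y \to w$, this is routine.
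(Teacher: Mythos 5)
Your argument is correct, but it takes a genuinely different route from the paper's. The paper's proof first passes to the undercategory $\overline{p}: X^{x/} \to S^{px/}$, invoking the preceding Lem.~\ref{lem:cartesianSlice} to know that this is again a cartesian fibration and that $g$, regarded as an edge $h \to kf$ in $X^{x/}$, is $\overline{p}$-cartesian; this yields a homotopy pullback of $X^{x/}$-mapping spaces, from which a single passage to horizontal fibers over $k$ and $pk$ produces the claimed equivalence directly. You instead stay in $X$: you apply the mapping-space characterization of $p$-cartesian edges in $X$ itself, take horizontal fibers once to land in overcategory (i.e.\ $X_{/w}$ and $S_{/pw}$) mapping spaces, and then observe that the resulting equivalence is natural in the source variable so that precomposition by $\widetilde{f}$ gives a second commutative square; taking fibers a second time over $\widetilde{h}$ and $p\widetilde{h}$ reaches the double-slice mapping spaces via the identification of $X_{x//w}$ with the coslice $(X_{/w})_{(x,gh)/}$. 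Your route avoids Lem.~\ref{lem:cartesianSlice} entirely, together with its marked-anodyne bookkeeping, at the cost of a second fiber-taking step and a naturality check; the paper's proof is shorter at this point because the work of transporting $p$-cartesianness through slice constructions was front-loaded into Lem.~\ref{lem:cartesianSlice}. One small caution on wording: in your first step the phrase about taking vertical fibers should read taking fibers of the \emph{horizontal} maps over $k$ and $pk$; the fibers you want are of $g_{\ast}$ and $(pg)_{\ast}$, not of the projections $p$.
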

\begin{proof} By Lem.~\ref{lem:cartesianSlice}, $\overline{p}: X^{x/} \to S^{px/}$ is a cartesian fibration and $g$, viewed as an edge $h \to kf$, is a $\overline{p}$-cartesian edge. Therefore, we have a homotopy pullback square of spaces
\[ \begin{tikzcd}[row sep=4ex, column sep=4ex, text height=1.5ex, text depth=0.25ex]
\Map_{x/}(y,z) \ar{r}{g_{\ast}} \ar{d}{p} & \Map_{x/}(y,w) \ar{d}{p} \\
\Map_{px/}(py,pz) \ar{r}{pg_{\ast}} & \Map_{px/}(py,pw)
\end{tikzcd} \]
Taking fibers over $k \in \Map_{x/}(y,w)$ and $pk \in \Map_{px/}(py,pw)$ yields the claimed equivalence.
\end{proof}

Fix a functor $\pi: S \to \Delta^1$ and let $S_i$ denote the fiber over $i \in \{0,1\}$. We now define a factorization system on $\widehat{\sO}^{\simeq}(S)$ that will restrict to a factorization system on the full subcategory $\sd(S)$. Recall that the data of a morphism $e: x \to y$ in $\widehat{\sO}^{\simeq}(S)$ is given by an inclusion $\alpha: \Delta^{a_0} \to \Delta^{a_1}$ and a map $f: \Delta^1 \times_{\Delta^{\inj}} \cE \Delta^{\inj} \to S$ that restricts to $x: \Delta^{a_0} \to S$ and $y: \Delta^{a_1} \to S$, such that $f$ sends morphisms $(i \in \Delta^{a_0}) \to (\alpha(i) \in \Delta^{a_1})$ to equivalences in $S$.

\begin{dfn} Let $\cL$ be the subclass of morphisms $(\alpha,f): x \to y$ such that for every $i \notin \im \alpha$, we have that $y(i) \in S_0$, and let $\cR$ be the subclass of morphisms $(\alpha,f): x \to y$ such that for every $i \notin \im \alpha$, we have that $y(i) \in S_1$.
\end{dfn}

\begin{prp} $(\cL, \cR)$ defines a factorization system on $\widehat{\sO}^{\simeq}(S)$ and on $\sd(S)$.
\end{prp}
\begin{proof} We will check the assertion concerning $\widehat{\sO}^{\simeq}(S)$; the second assertion will then be an obvious consequence. We first explain how to factor morphisms. Suppose that $\gamma: \Delta^{a_0} \to \Delta^{a_2}$, $h: \Delta^1 \times_{a, \Delta^{\inj}} \cE \Delta^{\inj} \to S$ is the data of a morphism in $\widehat{\sO}^{\simeq}(S)$ from $x$ to $z$. Let $\Delta^{a_1} \subset \Delta^{a_2}$ be the subset on those $i \in \Delta^{a_2}$ such that $i \in \im \gamma$ or $z(i) \in S_0$. We then obtain a factorization of $\gamma$ as
\[ \begin{tikzcd}[row sep=4ex, column sep=4ex, text height=1.5ex, text depth=0.25ex]
\Delta^{a_0} \ar[hookrightarrow]{r}{\alpha} & \Delta^{a_1} \ar[hookrightarrow]{r}{\beta} & \Delta^{a_2}.
\end{tikzcd} \]
defining $\overline{a}: \Delta^2 \to \Delta^{\inj}$ extending the given $a: \Delta^{\{0,2\}} \to \Delta^{\inj}$.
Let $r: \Delta^2 \times_{\overline{a}, \Delta^{\inj}} \cE \Delta^{\inj} \to \Delta^1 \times_{a, \Delta^{\inj}} \cE \Delta^{\inj}$ be the unique retraction which is the identity on $\Delta^{a_0}$ and $\Delta^{a_2}$ and is given by $\beta$ on $\Delta^{a_1}$. Let $\overline{h} = h \circ r$. Then $\overline{h}$ is the desired factorization of $h$, as it corresponds to a factorization
\[ \begin{tikzcd}[row sep=4ex, column sep=4ex, text height=1.5ex, text depth=0.25ex]
x \ar{r}{f} \ar[bend right]{rr}[swap]{h} & y \ar{r}{g} & z \\
\end{tikzcd} \]
with $y = z \circ \beta: \Delta^{a_1} \to S$ defined so that $y(i) \in S_0$ for all $i \notin \im \alpha$ and $z(j) \in S_1$ for all $j \notin \im \beta$, hence $f$ in $\cL$, and $g$ in $\cR$.

Next, observe that because $S_0$ and $S_1$ are closed under retracts, so are $\cL$ and $\cR$. It only remains to check that $\cL$ is left orthogonal to $\cR$. For this, suppose given a commutative square in $\widehat{\sO}^{\simeq}(S)$ on the left with $f \in \cL$ and $g \in \cR$ covering the square in $\Delta^{\inj}$ on the right
\[ \begin{tikzcd}[row sep=4ex, column sep=4ex, text height=1.5ex, text depth=0.25ex]
x \ar{r}{h} \ar{d}{f} & z \ar{d}{g} \\
y \ar{r}{k} \ar[dotted]{ru} & w
\end{tikzcd} \qquad, \qquad
\begin{tikzcd}[row sep=4ex, column sep=4ex, text height=1.5ex, text depth=0.25ex]
\Delta^a \ar{r}{\delta} \ar{d}{\alpha} & \Delta^c \ar{d}{\beta} \\
\Delta^b \ar{r}{\kappa} \ar[dotted]{ru}{\gamma} & \Delta^d.
\end{tikzcd} \]
Because $\xi_S: \widehat{\sO}^{\simeq}(S) \to \Delta^{\inj}$ is a right fibration, by Lem.~\ref{lem:mappingSpacesCartesianFibration} it suffices to show that $\Map_{\Delta^a//\Delta^d}(\Delta^b,\Delta^c)$ is contractible. This holds if and only if $\Delta^b \subset \Delta^c$ when viewed as subsets of $\Delta^d$, so that the mapping space is non-empty. Our hypothesis ensures that if $i \notin \im \beta$, then $w(i) \in S_1$, and if $i \in \Delta^b$, either $i \in \im \alpha$ or $y(i) \in S_0$. Therefore, we must have that for every $i \in \Delta^b$ with $i \notin \im \alpha$ that $w(\kappa(i)) \in S_0$, and hence $\kappa(i) \in \im \beta$. We conclude that the dotted lift $\gamma$ exists, which completes the proof.
\end{proof}

Let $\sO^L(\sd(S)) \subset \sO(\sd(S))$ denote the full subcategory on those morphisms $x \to y$ in the class $\cL$.

\begin{lem} \label{lem:relativeAdjSubdivision} \begin{enumerate}[leftmargin=*] \item The inclusion $i: \sO^L(\sd(S)) \subset \sO(\sd(S))$ admits a right adjoint $r$ that on objects sends $h: x \to y$ to $f: x \to z$ where $h$ factors as $g \circ f$ according to the $(\cL, \cR)$ factorization system.
\end{enumerate}
\begin{enumerate}
\setcounter{enumi}{1}
\item $i \dashv r$ defines a relative adjunction with respect to evaluation $\ev_0$ at the source, and therefore for every $x \in \sd(S)$ we obtain an adjunction
\[ \adjunctb{\{x\} \times_{\sd(S)} \sO^L(\sd(S)) }{ \sd(S)^{x/}  }. \]
\item The relative adjunction $i \dashv r$ restricts to a relative adjunction
\[ \adjunct{i}{\sO^L(\sd(S)) \times_{\ev_1, \sd(S)} \sd(S)_0 }{\sO(\sd(S)) \times_{\ev_1, \sd(S)} \sd(S)_0 }{r} \]
and therefore for every $x \in \sd(S)$ we obtain an adjunction
\[ \adjunctb{\{x\} \times_{\sd(S)} \sO^L(\sd(S)) \times_{\sd(S)} \sd(S)_0 }{ \sd(S)_0^{x/}  }. \]
\end{enumerate}
\end{lem}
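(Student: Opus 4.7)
The plan is to derive all three parts from the $(\cL, \cR)$-factorization system on $\sd(S)$ established in the preceding proposition. Part (1) is formal, valid for any factorization system on an $\infty$-category: the inclusion of the arrow category into the arrow category spanned by the left class admits a right adjoint given by taking the left half of the factorization. The unit at $f' \in \cL$ is the identity (using the trivial factorization $f' = \id \circ f'$), and the counit at $h \colon x \to y$ is the triangle $x \xrightarrow{f} z \xrightarrow{g} y$ arising from the $(\cL,\cR)$-factorization $h = g \circ f$. To verify the adjoint property, I would apply the orthogonality $\cL \perp \cR$ directly: any morphism $\alpha \colon i(f') \to h$ of arrows with $f' \in \cL$ admits a unique lift to $f' \to r(h)$ by applying the lifting property to $f'$ and $g$, so the canonical map $\Map_{\sO^L(\sd(S))}(f', r(h)) \to \Map_{\sO(\sd(S))}(i(f'), h)$ is an equivalence.

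For part (2), the key observation is that by construction both $i$ and $r$ fix the source on the nose, so $\ev_0 \circ i = \ev_0$ and $\ev_0 \circ r = \ev_0$, and the unit and counit project to identities along $\ev_0$. This is exactly the data needed to invoke \cite[Prop.~7.3.2.5]{HA} and conclude that $i \dashv r$ is a relative adjunction over $\ev_0 \colon \sO(\sd(S)) \to \sd(S)$. Pulling back the relative adjunction along the inclusion $\{x\} \hookrightarrow \sd(S)$ then yields the fiberwise adjunction, after identifying $\{x\} \times_{\sd(S)} \sO(\sd(S))$ with the slice $\sd(S)^{x/}$ by the standard comparison between pullback and slice $\infty$-categories (e.g.\ \cite[Prop.~4.2.1.5]{HTT}).

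The substantive content of part (3) is to check that $r$ preserves the condition $\ev_1 \in \sd(S)_0$; the inclusion $i$ plainly does. Here I would inspect the explicit construction of the $(\cL,\cR)$-factorization from the proof of the preceding proposition: given $h \colon x \to z$ encoded by an inclusion $\gamma \colon \Delta^{a_0} \hookrightarrow \Delta^{a_2}$, the intermediate object $y$ of $r(h) \colon x \to y$ is the restriction of $z$ to the subposet $\Delta^{a_1} \subset \Delta^{a_2}$ on indices $i$ with $i \in \im \gamma$ or $z(i) \in S_0$. If $z \in \sd(S)_0$, meaning $z(0) \in S_0$, then $0 \in \Delta^{a_1}$, whence $y(0) = z(0) \in S_0$ and $y \in \sd(S)_0$. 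The restricted relative adjunction then produces the fiberwise adjunction by the same pullback argument as in (2).

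I anticipate no serious conceptual obstacle: all three parts reduce to bookkeeping around the explicit description of the factorization. The only piece of real content is the stability observation in (3)—that the middle object of the $(\cL,\cR)$-factorization remains in the cosieve $\sd(S)_0$ when the target does—which is immediate from the definition of $\Delta^{a_1}$ as the sieve-cosieve union in $\Delta^{a_2}$.
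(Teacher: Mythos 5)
Your argument is correct and follows the same route as the paper, which simply cites the dual of \cite[Prop.~5.2.8.19]{HTT} for part (1) and \cite[7.3.2.1, 7.3.2.5]{HA} for parts (2) and (3); you have unpacked exactly the content of those references, including the key explicit check in (3) that $r$ preserves the cosieve condition on targets (which the paper leaves implicit under ``by the definition of relative adjunction''). One small nit: the statement that $i \dashv r$ is a relative adjunction over $\ev_0$ follows from the definition \cite[Def.~7.3.2.1]{HA} together with the observed compatibility of the unit and counit with $\ev_0$, whereas \cite[Prop.~7.3.2.5]{HA} is the pullback property that you then correctly invoke to extract the fiberwise adjunctions.
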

\begin{proof} Claim (1) is the dual formulation of \cite[5.2.8.19]{HTT}. Claims (2) and (3) then follow by the definition of relative adjunction \cite[7.3.2.1]{HA} and its pullback property \cite[7.3.2.5]{HA}.
\end{proof}

We are now prepared to construct the recollement adjunctions. Note that the hypotheses of the following proposition are satisfied if $S$ is equivalent to a finite poset and $p: C \to S$ is a locally cocartesian fibration such that the fibers admit finite limits and the pushforward functors preserve finite limits.

\begin{prp} \label{prp:ExistenceLaxRightKanExtension} Let $p: C \to S$ be a locally cocartesian fibration, let $\pi: S \to \Delta^1$ be a functor, and suppose we have a commutative diagram
\[ \begin{tikzcd}[row sep=4ex, column sep=4ex, text height=1.5ex, text depth=0.25ex]
\sd(S)_0 \ar[hookrightarrow]{d}{\phi} \ar{r}{F} & C \ar{d}{p} \\
\sd(S) \ar{r}{\max_S} & S
\end{tikzcd} \]
where $F$ preserves locally cocartesian edges. Given $x \in \sd(S_1)$, let
\[ J_x = \{x\} \times_{\sd(S)} \sO^L(\sd(S)) \times_{\sd(S)} \sd(S)_0 .\]
Note that $(\max_S \circ \ev_1)|_{J_x}$ is constant at $\max_S(x)$.
\begin{enumerate} \item If for every $x \in \sd(S_1)$, the limit of $(F \ev_1)|_{J_x}: J_x \to C_{\max_S(x)}$ exists, then the $p$-right Kan extension $G$ of $F$ along $\phi$ exists and $G(x) \simeq \lim\limits_{\ot} F|_{J_x}$.
\item If for every $f: s \to t$ in $S$, the pushforward functor $f_!: C_s \to C_t$ preserves all limits appearing in (1), then $G$ preserves all locally cocartesian edges.
\item If the hypotheses of (1) and (2) hold for all $F$, then we have an adjunction
\[ \adjunct{\phi^{\ast}}{\Fun^{\cocart}_{/S}(\sd(S),C)}{\Fun^{\cocart}_{/S}(\sd(S)_0,C)}{\phi_{\ast}}. \]
\end{enumerate}
\end{prp}
\begin{proof} Note that $\sd(S_1) \subset \sd(S)$ is the complementary sieve inclusion to the cosieve $\sd(S)_0 \subset \sd(S)$. For (1), to show existence of the $p$-right Kan extension it suffices for every $x \in \sd(S_1)$ to show that the $p$-limit of $F \circ \pr_1: \sd(S)_0^{x/} \to \sd(S)_0 \to C$ exists. But by the argument of Cor.~\ref{cor:RightKanExtensionComputedInFiber} applied to the adjunction $\adjunctb{J_x}{\sd(S)_0^{x/}}$ of Lem.~\ref{lem:relativeAdjSubdivision}, this follows from the given hypothesis.

For (2), first note that there are no locally $\max_S$-cocartesian edges $e: x \to y$ such that $x \in \sd(S_1)$ and $y \in \sd(S)_0$, or vice-versa, so it suffices to handle the case where $e: x \to y$ is a locally $\max_S$-cocartesian edge in $\sd(S_1)$ only. Let $f: \max_S(x) = s \to \max_S(y) = t$ be the edge in $S_1 \subset S$. If $f$ is an equivalence, then $e$ is an equivalence and $G(e)$ is an equivalence, so we may suppose $f$ is not an equivalence. Then by the description of the locally $\max_S$-cocartesian edges in Lem.~\ref{lm:subdivisionLocallyCocartesianByMaxFunctor}, $y$ is obtained from $e$ by appending the edge $f$. Correspondingly, the functor $J_y \xto{\simeq} J_x$ defined via sending $y \to z$ to $x \to z$ by precomposing is an equivalence, using that such edges are constrained to only add objects in $S_0$. Examining how the functoriality of $G$ is obtained from the pointwise existence criterion for Kan extensions, we see that the comparison morphism in $C_t$
\[ \psi: f_! G(x) \simeq f_! (\lim\limits_{\ot} F \ev_1|_{J_x}) \to G(y) \simeq \lim_{\ot} F \ev_1|_{J_y} \]
is induced via the functoriality of limits (contravariant in the diagram, covariant in the target) from the commutative diagram
\[ \begin{tikzcd}[row sep=4ex, column sep=4ex, text height=1.5ex, text depth=0.25ex]
J_x \ar{r}{F \ev_1} & C_s \ar{d}{f_!} \\
J_y \ar{u}{\simeq} \ar{r}{F \ev_1} & C_t.
\end{tikzcd} \]
The hypothesis that $f_!$ preserve limits indexed by $J_x$ together with $J_y \simeq J_x$ then proves that $\psi$ is an equivalence.

Finally, for (3) it is clear that if $G: \sd(S) \to C$ preserves locally cocartesian edges, then the restriction $\phi^{\ast} G$ of $G$ to $\sd(S)_0$ does as well. (1) and (2) establish the same fact for $\phi_{\ast} F$. Hence, the characteristic adjunction
\[ \adjunct{\phi^{\ast}}{\Fun_{/S}(\sd(S),C)}{\Fun_{/S}(\sd(S)_0,C)}{\phi_{\ast}} \]
of the $p$-right Kan extension along $\phi$ restricts to the full subcategories of functors preserving locally cocartesian edges in order to yield the desired adjunction.
\end{proof}

\begin{rem} \label{rem:IdentifyCategoryForLimitWhenPoset} Suppose that $S$ is a poset and $x \in S_1 \subset \sd(S_1)$. Then the $\infty$-category $J_x$ that appears in Prop.~\ref{prp:ExistenceLaxRightKanExtension} is the poset whose objects are strings $[a_0 < \cdots < a_n < x]$, $n \geq 0$ with $a_i \in S_0$ and whose morphisms are string inclusions.
\end{rem}

\begin{cor} \label{cor:openPartOfRecollement} Suppose the hypotheses of Prop.~\ref{prp:ExistenceLaxRightKanExtension} are satisfied. Let $j: \sd(S_0) \to \sd(S)$ denote the inclusion. Then the functor $j^{\ast}$ of restriction along $j$ participates in an adjunction
\[\adjunct{j^{\ast}}{\Fun^{\cocart}_{/S}(\sd(S),C)}{\Fun^{\cocart}_{/S_0}(\sd(S_0),C_0)}{j_{\ast}} \]
with fully faithful right adjoint $j_{\ast}$.
\end{cor}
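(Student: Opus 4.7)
The plan is to combine the two preceding results, Prop.~\ref{prp:ExistenceLaxRightKanExtension}(3) and Prop.~\ref{prp:subdivisionExtension}(2), and to deduce fully faithfulness from fully faithfulness of right Kan extension along a fully faithful functor. First, factor $j$ as
\[ \sd(S_0) \xto{\iota} \sd(S)_0 \xto{\phi} \sd(S), \]
where $\phi$ is the inclusion of the cosieve $\sd(S)_0$ inside $\sd(S)$. Prop.~\ref{prp:ExistenceLaxRightKanExtension}(3) supplies an adjunction $\phi^{\ast} \dashv \phi_{\ast}$ on locally cocartesian section $\infty$-categories over $S$, while Prop.~\ref{prp:subdivisionExtension}(2) shows that restriction along $\iota$ defines a trivial fibration
\[ \iota^{\ast}: \Fun^{\cocart}_{/S}(\sd(S)_0,C) \xto{\simeq} \Fun^{\cocart}_{/S_0}(\sd(S_0),C_0), \]
and thereby admits a section $\iota_{\ast}$, unique up to contractible choice, which is inverse to $\iota^{\ast}$ on both sides.

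Setting $j^{\ast} := \iota^{\ast} \circ \phi^{\ast}$ (which agrees with restriction along $j$ since $j = \phi \circ \iota$) and $j_{\ast} := \phi_{\ast} \circ \iota_{\ast}$, composition of an adjunction with an equivalence produces the desired adjunction $j^{\ast} \dashv j_{\ast}$, whose counit $j^{\ast} j_{\ast} = \iota^{\ast} \phi^{\ast} \phi_{\ast} \iota_{\ast} \to \iota^{\ast} \iota_{\ast} \simeq \id$ is induced by the counit of $\phi^{\ast} \dashv \phi_{\ast}$, and whose unit is induced analogously. To establish fully faithfulness of $j_{\ast}$, it suffices to show that this counit is an equivalence, which by the equivalence $\iota^{\ast} \iota_{\ast} \simeq \id$ reduces to showing that $\phi^{\ast} \phi_{\ast} \to \id$ is an equivalence, i.e., that $\phi_{\ast}$ is fully faithful.

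The last assertion is a manifestation of the general principle that $p$-right Kan extension along a fully faithful functor preserves the values on the subcategory. Concretely, for $x \in \sd(S)_0$, one needs to identify the pointwise $p$-limit formula for $(\phi_{\ast} F)(x)$ and observe that the governing slice $\sd(S)_0 \times_{\sd(S)} \sd(S)^{x/}$ carries $(x,\id_x)$ as an initial object; the inclusion $\{x\} \hookrightarrow \sd(S)_0 \times_{\sd(S)} \sd(S)^{x/}$ is then a right adjoint and hence right cofinal, so by \cite[Prop.~4.3.1.7]{HTT} the $p$-limit collapses to $F(x)$. The main (minor) obstacle is that the proof of Prop.~\ref{prp:ExistenceLaxRightKanExtension} only explicitly produced the pointwise formula for $x \in \sd(S_1)$; I would extend the argument to $x \in \sd(S)_0$ by applying the uniqueness of $p$-right Kan extensions from Rmk.~\ref{rem:ExistenceOfRelativeRKE}, or equivalently by replaying the adjunction of Lem.~\ref{lem:relativeAdjSubdivision} at such an $x$, where the analogue of $J_x$ degenerates to a category with initial object and the cofinality argument goes through directly.
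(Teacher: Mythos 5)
Your proof is correct and takes essentially the same route as the paper, whose proof is literally "combine Prop.~\ref{prp:ExistenceLaxRightKanExtension} and Prop.~\ref{prp:subdivisionExtension}(2)": factor $j$ through the cosieve inclusion $\phi: \sd(S)_0 \to \sd(S)$, compose the adjunction $\phi^{\ast} \dashv \phi_{\ast}$ with the trivial fibration $\iota^{\ast}$. The "minor obstacle" you flag is not really one: since $\phi$ is a full subcategory inclusion, for $x \in \sd(S)_0$ the slice $\sd(S)_0 \times_{\sd(S)} \sd(S)^{x/}$ governing the pointwise $p$-limit has $\id_x$ as an initial object, so the relative right Kan extension automatically satisfies $\phi^{\ast}\phi_{\ast} \simeq \id$ (this is built into the existence/uniqueness statement of Rmk.~\ref{rem:ExistenceOfRelativeRKE}), giving full faithfulness of $\phi_{\ast}$ and hence of $j_{\ast}$ exactly as you conclude.
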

\begin{proof} Combine Prop.~\ref{prp:ExistenceLaxRightKanExtension} and Prop.~\ref{prp:subdivisionExtension}(2).
\end{proof}

We also have a far simpler result concerning the calculation of the left adjoint $j_!$ of $j^{\ast}$.

\begin{prp} \label{prp:openPartOfRecollementLeftAdjoint} Suppose that for every $s \in S$, the fiber $C_s$ admits an initial object $\emptyset$, and the pushforward functors all preserve initial objects. Then $j^{\ast}$ admits a fully faithful left adjoint $j_!$ such that for $F: \sd(S_0) \to C_0$, $j_! F(x) \simeq \emptyset$ for all $x \in \sd(S_1)$.
\end{prp}
\begin{proof} Suppose we have a commutative diagram
\[ \begin{tikzcd}[row sep=4ex, column sep=4ex, text height=1.5ex, text depth=0.25ex]
\sd(S)_0 \ar[hookrightarrow]{d}{\phi} \ar{r}{F} & C \ar{d}{p} \\
\sd(S) \ar{r}{\max_S} & S.
\end{tikzcd} \]
For all $x \in \sd(S_1)$, the fiber product $\sd(S)^{/x} \times_{\sd(S)} \sd(S)_0$ is the empty category. Therefore, under our assumption the $p$-left Kan extension $\phi_! F$ of $F$ along $\phi$ exists and is computed by $\phi_! F(x) = \emptyset$ on $\sd(S_1)$. Combining this observation with Prop.~\ref{prp:subdivisionExtension}(2), we obtain the desired adjunction
\[ \adjunct{j_!}{\Fun^{\cocart}_{/S_0}(\sd(S_0),C_0)}{\Fun^{\cocart}_{/S}(\sd(S),C)}{j^{\ast}}. \]
\end{proof}

We next turn to the cosieve inclusion $S_1 \subset S$. Note that the inclusion $i: \sd(S_1) \to \sd(S)$ is a sub-locally cocartesian fibration with respect to $\max_S: \sd(S) \to S$, and is in addition a \emph{sieve} inclusion, and hence a cartesian fibration. In fact, the cosieve inclusion $j: \sd(S)_0 \to \sd(S)$ is complementary to $i$.

\begin{prp} \label{prp:closedPartOfRecollement} Suppose the fibers of $p: C \to S$ admit terminal objects and the pushforward functors preserve terminal objects. Then we have the adjunction
\[ \adjunct{i^{\ast}}{\Fun^{\cocart}_{/S}(\sd(S),C)}{\Fun^{\cocart}_{/S_1}(\sd(S_1),C_1)}{i_{\ast}} \]
with $i_{\ast}$ fully faithful, where $i^{\ast}$ is given by restriction along $i$ and $i_{\ast}$ is $p$-right Kan extension along $i$. Moreover, for a functor $G: \sd(S_1) \to C_1$, we have $(i_{\ast} G)(x) \simeq 1 \in C_{\max_S(x)}$ for all $x \in \sd(S)_0$.
\end{prp}
\begin{proof} By Cor.~\ref{cor:RightKanExtensionComputedInFiber}, using the hypothesis that the fibers of $p$ admit terminal objects we have the adjunction
\[ \adjunct{i^{\ast}}{\Fun_{/S}(\sd(S),C)}{\Fun_{/S_1}(\sd(S_1),C_1)}{i_{\ast}}  \]
with $i^{\ast}$ and $i_{\ast}$ as described. Then using that the pushforward functors preserve terminal objects, we see that this adjunction restricts to the one of the proposition.
\end{proof}

\begin{lem} \label{lem:rlaxLimitAdmitsFiniteLimitsAndStable} Suppose that the fibers $C_s$ of $p: C \to S$ admit $K$-(co)limits and the pushforward functors preserve $K$-(co)limits. Then $\Fun^{\cocart}_{/S}(\sd(S), C)$ admits $K$-indexed (co)limits, and for all $\sigma \in \sd(S)$ over $s = \max_S(\sigma)$, the evaluation functor $\ev_{\sigma}: \Fun^{\cocart}_{/S}(\sd(S), C) \to C_s$ preserves $K$-indexed (co)limits. Moreover, if the fibers $C_s$ are stable $\infty$-categories and the pushforward functors are exact, then $\Fun^{\cocart}_{/S}(\sd(S), C)$ is a stable $\infty$-category.
\end{lem}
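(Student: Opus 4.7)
The plan is to compute $K$-indexed (co)limits in $\Fun^{\cocart}_{/S}(\sd(S), C)$ pointwise in the fibers of $p\colon C \to S$. I would proceed in three steps: first establish that the ambient $\infty$-category $\Fun_{/S}(\sd(S), C)$ admits $K$-indexed (co)limits computed by the fiberwise (co)limits in each $C_s$; second, verify that the full subcategory $\Fun^{\cocart}_{/S}(\sd(S), C)$ is closed under these (co)limits; and finally deduce stability.

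For the first step in the limit case, given a diagram $\widetilde{X}\colon \sd(S) \times K \to C$ over $S$, the hypothesis provides a limit of $\widetilde{X}|_{\{\sigma\} \times K}$ in each fiber $C_{\max_S(\sigma)}$, and by Lem.~\ref{lem:LimitIsRelativeLimit} this limit is automatically a $p$-limit in $C$. Hence the pointwise $p$-right Kan extension along $\sd(S) \times K \hookrightarrow \sd(S) \times K^{\lhd}$ (in the sense of \cite[Def.~4.3.2.2, Lem.~4.3.2.13]{HTT}) exists, and by the characterization of limits in section $\infty$-categories (as employed in the proof of Cor.~\ref{cor:RightKanExtensionComputedInFiber}), this yields the required limit cone in $\Fun_{/S}(\sd(S), C)$ on which each $\ev_\sigma$ tautologically preserves limits. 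The identical argument handles colimits, substituting the dual of \cite[Prop.~4.3.1.10]{HTT} for Lem.~\ref{lem:LimitIsRelativeLimit}; this is precisely where the hypothesis that pushforward functors preserve $K$-colimits enters the argument.

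The main step -- and the principal obstacle -- will be verifying closure of $\Fun^{\cocart}_{/S}(\sd(S), C)$ under the (co)limits just constructed. Supposing each $\widetilde{X}_k$ preserves locally $\max_S$-cocartesian edges and letting $e\colon \sigma \to \sigma'$ be a locally $\max_S$-cocartesian edge of $\sd(S)$ lying over $f\colon s \to s'$ in $S$, I must check that the comparison map $f_! \widetilde{X}_\infty(\sigma) \to \widetilde{X}_\infty(\sigma')$ between fiberwise (co)limits is an equivalence. This map is induced from the family of equivalences $f_! \widetilde{X}_k(\sigma) \xto{\simeq} \widetilde{X}_k(\sigma')$ by passage to fiberwise (co)limits, and the hypothesis that $f_!$ preserves $K$-indexed (co)limits yields that this comparison map is likewise an equivalence, as required.

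Finally, stability will follow formally: granted that finite limits and colimits in $\Fun^{\cocart}_{/S}(\sd(S), C)$ are computed pointwise in the fibers, the canonical maps $\id \to \Omega \Sigma$ and $\Sigma \Omega \to \id$ will be pointwise equivalences under the stability hypothesis on each $C_s$, hence equivalences in $\Fun^{\cocart}_{/S}(\sd(S), C)$, establishing stability.
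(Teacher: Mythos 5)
Your proof is correct and follows essentially the same route as the paper's, which consists of a single appeal to \cite[Prop.~5.4.7.11]{HTT} applied to the locally cocartesian fibration $\sd(S) \times_S C \to \sd(S)$ with the appropriate subcategory of $\widehat{\Cat}_\infty$ (either the one of $\infty$-categories admitting and functors preserving $K$-(co)limits, or $\Cat_\infty^{\st}$); your three steps are precisely the content of that proposition unfolded by hand, with Lem.~\ref{lem:LimitIsRelativeLimit} and the relative Kan extension machinery supplying the ingredients. One small citation slip: for colimits you want \cite[Prop.~4.3.1.10]{HTT} itself rather than its dual --- that proposition already concerns colimit diagrams in fibers of a \emph{locally cocartesian} fibration (with the preservation-by-pushforward hypothesis built in), and its formal dual would be a statement about locally cartesian fibrations, which is not the situation here.
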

\begin{proof} Apply \cite[Prop.~5.4.7.11]{HTT} to the locally cocartesian fibration $\sd(S) \times_S C \to \sd(S)$, with the subcategory of $\widehat{\Cat}_{\infty}$ either taken to be those $\infty$-categories that admit $K$-indexed (co)limits and functor that preserve $K$-indexed (co)limits, or the subcategory $\Cat_{\infty}^{\st}$ of stable $\infty$-categories and exact functors thereof.
\end{proof}

\begin{thm} \label{thm:RecollementRlaxLimitOfLlaxFunctor} Suppose that the hypothesis of Prop.~\ref{prp:ExistenceLaxRightKanExtension} hold, and also that the fibers of $p: C \to S$ admit finite limits and the pushforward functors preserve finite limits. Then the two adjunctions of Cor.~\ref{cor:openPartOfRecollement} and Prop.~\ref{prp:closedPartOfRecollement} combine to exhibit $\Fun^{\cocart}_{/S}(\sd(S),C)$ as a recollement of $\Fun^{\cocart}_{/S_0}(\sd(S_0),C_0)$ and $\Fun^{\cocart}_{/S_1}(\sd(S_1),C_1)$.
\end{thm}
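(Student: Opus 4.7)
The strategy is to verify the axioms of Def.~\ref{dfn:recollement} one by one. Cor.~\ref{cor:openPartOfRecollement} and Prop.~\ref{prp:closedPartOfRecollement} supply the adjunctions $j^\ast \dashv j_\ast$ and $i^\ast \dashv i_\ast$ with fully faithful right adjoints, and Lem.~\ref{lem:rlaxLimitAdmitsFiniteLimitsAndStable} shows that $\Fun^{\cocart}_{/S}(\sd(S), C)$ admits finite limits, computed so that the evaluation functors $\ev_\sigma$ preserve them for every $\sigma \in \sd(S)$. It therefore suffices to verify (i) left-exactness of $j^\ast$ and $i^\ast$, (ii) $j^\ast i_\ast$ is equivalent to the constant functor at the terminal object, and (iii) joint conservativity of $j^\ast$ and $i^\ast$.

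For (i), I would observe that $i^\ast$ is literal restriction along the subcategory inclusion $\sd(S_1) \hookrightarrow \sd(S)$ and so preserves objectwise-computed limits; meanwhile $j^\ast$ factors as restriction to the cosieve $\sd(S)_0$ followed by the trivial fibration $\Fun^{\cocart}_{/S}(\sd(S)_0, C) \xto{\simeq} \Fun^{\cocart}_{/S_0}(\sd(S_0), C_0)$ of Prop.~\ref{prp:subdivisionExtension}(2), with the first step preserving limits pointwise and the second a categorical equivalence. For (ii), Prop.~\ref{prp:closedPartOfRecollement} already gives $(i_\ast G)(x) \simeq 1 \in C_{\max_S(x)}$ for every $x \in \sd(S)_0$; further restriction along the inclusion $\sd(S_0) \subset \sd(S)_0$ and transit through the trivial fibration then yields the cocartesian section of $\sd(S_0) \times_{S_0} C_0 \to \sd(S_0)$ selecting the fiberwise terminal object, which is the terminal object of $\Fun^{\cocart}_{/S_0}(\sd(S_0), C_0)$ (the hypothesis that pushforwards preserve terminal objects guarantees such a cocartesian section exists).

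The decisive step for (iii) is to observe that the set of objects of $\sd(S)$ decomposes as a disjoint union $\sd(S)_0 \sqcup \sd(S_1)$: any string $\sigma$ either has $\sigma(0) \in S_0$, placing $\sigma \in \sd(S)_0$, or else $\sigma(0) \in S_1$, in which case the cosieve property of $S_1$ forces every vertex of $\sigma$ into $S_1$ and hence $\sigma \in \sd(S_1)$. Since $\Fun^{\cocart}_{/S}(\sd(S),C)$ sits as a full subcategory of $\Fun_{/S}(\sd(S),C)$ and equivalences in the latter are detected objectwise, a natural transformation $F \to F'$ is an equivalence if and only if its components at every object of $\sd(S)_0$ and at every object of $\sd(S_1)$ are equivalences; $i^\ast$ detects the $\sd(S_1)$ components and $j^\ast$, via the factorization of (i), detects the $\sd(S)_0$ components, completing the verification. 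The main subtlety throughout is the distinction between $\sd(S_0)$ and the larger $\sd(S)_0$, but Prop.~\ref{prp:subdivisionExtension}(2) consistently bridges these so that each check reduces cleanly to a pointwise statement.
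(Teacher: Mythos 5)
Your proof is correct and lines up closely with the paper's argument, but your treatment of joint conservativity is slightly different and arguably tighter. The paper's proof observes that a morphism $\alpha$ in $\Fun^{\cocart}_{/S}(\sd(S),C)$ is an equivalence iff its components at the singleton strings $[x]$, $x\in S$, are equivalences, and then notes that every $x\in S$ lies in $S_0$ or $S_1$. That reduction-to-singletons step is stated without justification; it requires using that the functors preserve locally cocartesian edges (so an equivalence at $[\sigma(0)]$ propagates along the chain of append edges to give an equivalence at $\sigma$). Your argument sidesteps this: you decompose the full collection of strings as $\sd(S)_0 \sqcup \sd(S_1)$ (correctly using the cosieve property of $S_1$ to show every string not in $\sd(S)_0$ lies entirely inside $S_1$), and you observe that $j^\ast$ — being restriction to $\sd(S)_0$ followed by the conservative trivial fibration of Prop.~\ref{prp:subdivisionExtension}(2) — detects equivalences at every object of $\sd(S)_0$, not merely at singletons. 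This makes the pointwise detection explicit and avoids invoking cocartesian preservation for the conservativity check. The checks of left-exactness and of $j^\ast i_\ast \simeq 0$ are essentially the same as the paper's.
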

\begin{proof} We verify the conditions to be a recollement. By our hypothesis on $p$ and Lem.~\ref{lem:rlaxLimitAdmitsFiniteLimitsAndStable}, finite limits in $\Fun^{\cocart}_{/S}(\sd(S),C)$ exist and are computed fiberwise. Therefore, the restriction functors $j^{\ast}$ and $i^{\ast}$ are left exact. By the formula for $i_{\ast}$ given in Prop.~\ref{prp:closedPartOfRecollement}, it is clear that $j^{\ast} i_{\ast}$ is constant at the terminal object. Finally, we check that $j^{\ast}$ and $i^{\ast}$ are jointly conservative. Suppose given a morphism $\alpha: F \to F'$ in $\Fun^{\cocart}_{/S}(\sd(S),C)$ such that $j^{\ast} \alpha$ and $i^{\ast} \alpha$ are equivalences. Observe that $\alpha$ is an equivalence if and only if for all $x \in S$, $\alpha_x: F(x) \to F'(x)$ is an equivalence (viewing $x$ as an object in $\sd(S)$). Because any object of $S$ lies in either $S_0$ or $S_1$, we deduce that $\alpha$ is an equivalence.
\end{proof}

We conclude this subsection by giving an application of Thm.~\ref{thm:RecollementRlaxLimitOfLlaxFunctor} to the presentability of the right-lax limit $\Fun^{\cocart}_{/S}(\sd(S),C)$.

\begin{dfn} Given an object $s \in S$, its \emph{dimension} $\dim_S(s)$ is the supremum over all $n$ such that there exists a functor $\sigma: \Delta^n \to S$, $\sigma(n) = s$ with $\sigma(\{i,i+1 \})$ not an equivalence for all $0 \leq i < n$. The \emph{dimension} $\dim(S)$ of $S$ is the supremum of $\{ \dim_S(s): s \in S \}$.
\end{dfn}

Let us now suppose that $S$ is equivalent to a \emph{finite} poset and write $P = S$. 

\begin{prp} \label{prp:rightLaxLimitPresentable} Suppose that the fibers $C_s$ of $p: C \to P$ are presentable and the pushforward functors are left-exact and accessible. Then $\Fun^{\cocart}_{/P}(\sd(P),C)$ is presentable, and for all $s \in P$, the evaluation functor $\ev_s: \Fun^{\cocart}_{/P}(\sd(P),C) \to C_{s}$ preserves (small) colimits and is accessible.
\end{prp}
\begin{proof} The accessibility statements follow from \cite[Prop.~5.4.7.11]{HTT} as in Lem.~\ref{lem:rlaxLimitAdmitsFiniteLimitsAndStable}, so we only need to show the existence and preservation of small colimits. Our strategy is to proceed by induction on the dimension of $P$. If $\dim(P) = 0$, then the statement is clear. Suppose for the inductive hypothesis that we have established the statement for all $\infty$-categories $P'$ equivalent to finite posets of dimension $<n$ where $n = \dim(P)$. Let $\pi: P \to \Delta^1$ be a functor such that $\dim(P_0)<n$ and $\dim(P_1)<n$; for instance, we may take $P_0 \subset P$ to be the sieve on those objects in $P$ of dimension $<n$. Under our finiteness assumption on $P$, the diagrams in Prop.~\ref{prp:ExistenceLaxRightKanExtension} are finite. Thus, we may apply Thm.~\ref{thm:RecollementRlaxLimitOfLlaxFunctor} to decompose $\Fun^{\cocart}_{/P}(\sd(P),C)$ as a recollement of $\Fun^{\cocart}_{/P_0}(\sd(P_0),C_0)$ and $\Fun^{\cocart}_{/P_1}(\sd(P_1),C_1)$. By the inductive hypothesis, both these $\infty$-categories admit all small colimits such that the evaluation functors at objects in $P_0$ and $P_1$ are colimit-preserving. By Lem.~\ref{lem:ColimitExistenceInRecollement}, we conclude that $\Fun^{\cocart}_{/P}(\sd(P),C)$ admits all small colimits. Moreover, because $P_0$ and $P_1$ cover $P$, we also have that the evaluation functors for objects $s \in P$ are colimit-preserving.
\end{proof}

\subsection{\texorpdfstring{$1$}{1}-generated and extendable objects}

Suppose $S = \Delta^2$ and $p: C \to \Delta^2$ is a locally cocartesian fibration classified by a $2$-functor
\[ \begin{tikzcd}[row sep=4ex, column sep=6ex, text height=1.5ex, text depth=0.25ex]
C_{0} \ar{rd}[swap]{F} \ar{rr}{H} & \ar[phantom]{d}{\Downarrow} & C_{2}. \\
& C_{1} \ar{ru}[swap]{G} &
\end{tikzcd} \]
Then the data of a functor $\sd(\Delta^2) \to C$ over $\Delta^2$ that preserves locally cocartesian edges can be summarized as follows:
\begin{itemize}
    \item Objects $c_i \in C_i$ for $i = 0,1,2$.
    \item Morphisms $f: c_1 \to F(c_0)$, $g: c_2 \to G(c_1)$, and $h: c_2 \to H(c_0)$.
    \item A commutative square
\[ \begin{tikzcd}[row sep=4ex, column sep=6ex, text height=1.5ex, text depth=0.25ex]
c_2 \ar{r}{h} \ar{d}{g} & H(c_0) \ar{d}{\can} \\
G(c_1) \ar{r}{G(f)} & G F(c_0).
\end{tikzcd} \]
\end{itemize}

Furthermore, if the map $\mit{can}$ is an equivalence, then the data of the commutative square and the morphism $h$ is redundant, since then $h \simeq G(f) \circ g$ and compositions in an $\infty$-category are unique up to contractible choice. More precisely, if we let $\gamma_2: \sd_1(\Delta^2) \subset \sd(\Delta^2)$ be the subposet on $\{[0], [1], [2], [0<1], [1<2] \}$, then the functor
\[ \gamma_2^{\ast}: \Fun^{\cocart}_{/\Delta^2}(\sd(\Delta^2), C) \to \Fun^{\cocart}_{/\Delta^2}(\sd_1(\Delta^2), C) \]
is a trivial fibration onto its image when restricted to objects for which $\mit{can}$ is an equivalence.

Our goal in this subsection is to generalize this observation to the case where $S=\Delta^n$. We introduce subcategories of $1$-generated and extendable objects (Def.~\ref{dfn:OneGenerated} and Def.~\ref{dfn:extendability}) and show their equivalence under the restriction functor $\gamma_n^{\ast}$ (Thm.~\ref{thm:OneGenerationAndExtension}), given a stability hypothesis on $C \xto{p} \Delta^n$.

\begin{ntn} \label{ntn:convexStringsLengthOne} Let $\gamma_n: \sd_1(\Delta^n) \subset \sd(\Delta^n)$ be the subposet on strings $[k]$ and $[k<k+1]$.
\end{ntn}

We also introduce convenient notation for convex subposets of $\Delta^n$.

\begin{ntn} Let $[i:j] \subset \Delta^n$ denote the subposet on $i \leq k \leq j$.
\end{ntn}

Via its inclusion into $\sd(\Delta^n)$, we regard $\sd_1(\Delta^n)$ as a simplicial set over $\Delta^n$ (i.e., by the functor that takes the maximum) and as a marked simplicial set (so that each edge $[k] \to [k<k+1]$ is marked). We first state the analogue of Thm.~\ref{thm:RecollementRlaxLimitOfLlaxFunctor} for $\sd_1$, whose proof is far simpler.

\begin{prp} \label{prp:StaircaseRecollement} Let $p: C \to \Delta^n$ be a locally cocartesian fibration such that the fibers admit finite limits and the pushforward functors preserve finite limits. Let $0 \leq k < n$, so the subcategories $[0:k] \cong \Delta^k$ and $[k+1:n] \cong \Delta^{n-k-1}$ of $\Delta^n$ give a sieve-cosieve decomposition. Then we have adjunctions
\[ \begin{tikzcd}[row sep=4ex, column sep=4ex, text height=1.5ex, text depth=0.25ex]
\Fun^{\cocart}_{/[0:k]}(\sd_1([0:k]), C_{[0:k]}) \ar[shift right=1,right hook->]{r}[swap]{j_{\ast}} & \Fun^{\cocart}_{/\Delta^n}(\sd_1(\Delta^n), C) \ar[shift right=2]{l}[swap]{j^{\ast}} \ar[shift left=2]{r}{i^{\ast}} & \Fun^{\cocart}_{/[k+1:n]}(\sd_1([k+1:n]), C_{[k+1:n]}) \ar[shift left=1,left hook->]{l}{i_{\ast}}
\end{tikzcd} \]
that exhibit $\Fun^{\cocart}_{/\Delta^n}(\sd_1(\Delta^n), C)$ as a recollement.
\end{prp}
\begin{proof} Let $j: \sd_1([0:k]) \to \sd_1(\Delta^n)$ and $i: \sd_1([k+1:n]) \to \sd_1(\Delta^n)$ be the inclusions, so $j^{\ast}$ and $i^{\ast}$ are defined by restriction along $j$ and $i$. As in the proof of Lem.~\ref{lem:rlaxLimitAdmitsFiniteLimitsAndStable}, our hypotheses on $p$ ensure that the three $\infty$-categories admit finite limits and the functors $j^{\ast}$ and $i^{\ast}$ are left-exact. Moreover, since equivalences are detected on strings $[k]$, $j^{\ast}$ and $i^{\ast}$ are jointly conservative. The functor $i_{\ast}$ is obtained by $p$-right Kan extension as in the proof of Prop.~\ref{prp:closedPartOfRecollement}, and its essential image consists of functors $F: \sd_1(\Delta^n) \to C$ such that $F(i)$ is a terminal object in $C_i$ for all $0 \leq i \leq k$, so $j^{\ast} i_{\ast}$ is the constant functor at the terminal object.

Finally, we show existence of $j_{\ast}$. Let $\sd_1([0:k])^+$ be the subposet of $\sd_1([0:n])$ on all objects in $\sd_1([0:k])$ and $\{[k<k+1]\}$, with marking inherited from $\sd(\Delta^n)$. Then we have a pushout square of marked simplicial sets
\[ \begin{tikzcd}[row sep=4ex, column sep=4ex, text height=1.5ex, text depth=0.25ex]
\Delta^0 \ar{r} \ar{d} & (\Delta^1)^{\sharp} \ar{d} \\
\sd_1([0:k]) \ar{r} & \sd_1([0:k])^+ 
\end{tikzcd} \]
so the inclusion $\sd_1([0:k]) \subset \sd_1([0:k])^+$ is $\mathfrak{P}$-anodyne for the categorical pattern $\mathfrak{P}$ defining the locally cocartesian model structure on $s\Set^+_{/\Delta^n}$. We thus obtain a trivial fibration
\[ \Fun^{\cocart}_{/[0:k+1]}(\sd_1([0:k])^+, C_{[0:k+1]}) \to \Fun^{\cocart}_{/[0:k]}(\sd_1([0:k]), C_{[0:k]}). \]
On the other hand, given a commutative diagram
\[ \begin{tikzcd}[row sep=4ex, column sep=4ex, text height=1.5ex, text depth=0.25ex]
\sd_1([0:k])^+ \ar{r}{F} \ar{d} & C \ar{d}{p} \\
\sd_1([0:k+1]) \ar{r} \ar[dotted]{ru}[swap]{G} & \Delta^n,
\end{tikzcd} \]
since $\sd_1([0:k])^+ \times_{\sd_1([0:k+1])} \sd_1([0:k+1])_{[k+1]/} \cong \{[k < k+1]\}$, $F$ admits a $p$-right Kan extension along $\sd_1([0:k])^+ \subset \sd_1([0:k+1])$ and $G$ is a $p$-right Kan extension of $F$ if and only if $G$ sends the edge $[k+1] \to [k<k+1]$ to an equivalence. Therefore, we may alternate between anodyne extension and $p$-right Kan extension along the filtration
\[ \sd_1([0:k]) \subset \sd_1([0:k])^+ \subset \sd_1([0:k+1]) \subset \cdots \sd_1([0:n-1])^+ \subset \sd_1(\Delta^n) \]
to define the functor $j_{\ast}$. Moreover, we see that the essential image of $j_{\ast}$ consists of those functors $\sd_1(\Delta^n) \to C$ that send the edges $[l+1] \to [l < l+1]$ to equivalences for all $l \geq k$.
\end{proof}

We next wish to introduce a condition on objects of $\Fun^{\cocart}_{/\Delta^n}(\sd(\Delta^n), C)$, which we term `$1$-generated', that indicates that the data of such objects is essentially determined by their restriction to $\sd_1(\Delta^n)$.

\begin{ntn} Given a string $\sigma = [i<i+k]$ in $\sd(\Delta^n)$, let $Q_{\sigma} \subset \sd(\Delta^n)$ be the subposet on all strings $[i<\cdots<i+k]$. Note that $Q_{\sigma}$ is a $(k-1)$-dimensional cube lying in the fiber $\sd(\Delta^n)_{\max=i+k}$ with $\sigma$ as its minimal element.
\end{ntn}

\begin{dfn} \label{dfn:OneGenerated} Let $C \to \Delta^n$ be a locally cocartesian fibration and $F: \sd(\Delta^n) \to C$ be a functor that preserves locally cocartesian edges. We say that $F$ is \emph{$1$-generated} if for all strings $\sigma = [i<i+k]$ in $\sd(\Delta^n)$, $F|_{Q_{\sigma}}$ is a limit diagram in $C_{i+k}$.

Let $\Fun^{\cocart}_{/\Delta^n}(\sd(\Delta^n), C)_{\gen{1}}$ be the full subcategory on the $1$-generated objects.
\end{dfn}


\begin{lem} \label{lm:equivalentOneGenerationConditions} Let $C \to \Delta^n$ be a locally cocartesian fibration whose fibers are stable $\infty$-categories and whose pushforward functors are exact. Then $F: \sd(\Delta^n) \to C$ is $1$-generated if and only if for all string inclusions $e: [i<i+k] \to [i < i+1 < i+k]$ in $\sd(\Delta^n)$, $F(e)$ is an equivalence in $C_{i+k}$.
\end{lem}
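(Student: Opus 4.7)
I will prove both directions by simultaneous induction on $k \geq 1$. The base case $k=1$ is trivial since $Q_\sigma$ is a point and no relevant edges exist; for $k=2$, $Q_\sigma = \{[i{<}i{+}2] \to [i{<}i{+}1{<}i{+}2]\}$ is a $1$-cube, and being a limit diagram means exactly that this arrow is an equivalence, which is the edge condition. For $k \geq 3$, fix $\sigma = [i{<}i{+}k]$ and decompose
\[
Q_\sigma \;\cong\; \{0,1\}_{i+1} \times Q', \qquad Q' \cong \mathcal{P}(\{i{+}2,\ldots,i{+}k{-}1\}),
\]
where the first coordinate records whether $i{+}1$ belongs to the string. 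Let $B$ and $A$ be the sub-$(k{-}2)$-cubes where this coordinate is $0$ and $1$ respectively. Then $F|_{Q_\sigma}$ is equivalent data to a natural transformation $\eta : F|_B \to F|_A$ of $Q'$-diagrams, whose component $\eta_b : F(b) \to F(b \cup \{i{+}1\})$ at $b \in B$ is precisely the insertion-of-$i{+}1$ arrow.

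The crux of the argument is to show that $\eta_b$ is an equivalence for every $b \in B$ with $b \neq \sigma$. Write such a $b$ as $[i{<}j_1{<}\cdots{<}j_l{<}i{+}k]$ with $l \geq 1$. Each edge of the form $[i{<}j_1{<}\cdots{<}j_s] \to [i{<}j_1{<}\cdots{<}j_s{<}j_{s+1}]$ in $\sd(\Delta^n)$ is locally $\max$-cocartesian by Lem.~\ref{lm:subdivisionLocallyCocartesianByMaxFunctor}(2), so the assumption that $F$ preserves locally cocartesian edges gives
\begin{align*}
F(b) &\simeq (j_l \to i{+}k)_! F([i{<}j_1{<}\cdots{<}j_l]), \\
F(b \cup \{i{+}1\}) &\simeq (j_l \to i{+}k)_! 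F([i{<}i{+}1{<}j_1{<}\cdots{<}j_l]),
\end{align*}
and $\eta_b$ is the image under $(j_l \to i{+}k)_!$ of the corresponding insertion map in $C_{j_l}$. Iterating this pushforward along the $(j_{s-1} \to j_s)_!$ for $s = l, l{-}1, \ldots, 2$ reduces the question to the equivalence of the map $F([i{<}j_1]) \to F([i{<}i{+}1{<}j_1])$ in $C_{j_1}$. This is exactly the edge-condition map for the length-one string $[i{<}j_1]$ whose gap is $j_1 - i < k$: by the inductive hypothesis (using the forward direction of the biconditional at lower gap when $F$ is assumed $1$-generated, or directly by hypothesis in the reverse direction), this map is an equivalence, and exactness of the pushforwards propagates this to $\eta_b \simeq$ equivalence.

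The final step is the total-fiber characterization of cube limits in the stable $\infty$-category $C_{i+k}$: a cube is a limit diagram if and only if its iterated total fiber vanishes. Taking fibers first along the $i{+}1$-direction produces a $(k{-}2)$-cube $\mathrm{fib}(\eta) : B \to C_{i+k}$ which, by the previous paragraph, takes the value $\mathrm{fib}(\eta_\sigma)$ at the corner $\sigma$ and $0$ at every other vertex. A straightforward induction on cube dimension shows that the total fiber of such a corner-supported cube equals its corner value, so the total fiber of $F|_{Q_\sigma}$ equals $\mathrm{fib}(\eta_\sigma)$. Hence $F|_{Q_\sigma}$ is a limit diagram if and only if $\eta_\sigma = F(e)$ is an equivalence, where $e : [i{<}i{+}k] \to [i{<}i{+}1{<}i{+}k]$ is the stated edge. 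This yields both directions of the biconditional at gap $k$ and closes the induction.

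The main obstacle is the middle step: showing $\eta_b$ is an equivalence for $b \neq \sigma$ requires combining the locally cocartesian pushforward identification, exactness of the pushforward functors, and the inductive edge conditions at lower gaps. This interleaving of hypotheses is exactly why the two directions of the equivalence must be established by a simultaneous induction rather than sequentially.
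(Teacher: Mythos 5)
Your proof is correct, but it organizes the cube analysis differently from the paper's. Both arguments induct on the gap $k$ and both reduce via exact pushforwards along locally $\max$-cocartesian edges to lower-gap data, but the decomposition of $Q_\sigma$ is not the same. The paper sets up a descending filtration $Q_\sigma = Q_{\sigma,k} \supset Q_{\sigma,k-1} \supset \cdots \supset Q_{\sigma,2}$ by successively forbidding the largest interior vertex, and shows each complementary $(j-1)$-cube $Q_{\sigma,j}'$ (strings containing $i+j$ but nothing larger up to $i+k$) is a limit diagram by identifying it with the $\tau^{i+k}_{i+j}$-pushforward of $Q_{[i<i+j]}$ and invoking the \emph{limit-diagram} clause of the inductive hypothesis at gap $j$; iterated total-fiber comparison then traverses the filtration down to the single edge $e$. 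You instead split $Q_\sigma$ once along the $i{+}1$-direction into a natural transformation $\eta: F|_B \to F|_A$ and show each individual component $\eta_b$ for $b \neq \sigma$ is an equivalence by repeatedly peeling the final vertex off $b$ (valid since $F$ preserves locally cocartesian edges and the relevant square in $\sd(\Delta^n)$ maps down to a constant square over $j_l \to i+k$), reducing to the string-insertion edge $[i<j_1]\to[i<i+1<j_1]$ at gap $j_1-i<k$ and invoking the \emph{edge-condition} clause of the inductive hypothesis there; the corner-supported fiber cube then closes the argument. Both routes are valid; yours is slightly more economical (one split rather than a full filtration, and the "only if" direction does not actually need the inductive hypothesis, so the framing as "simultaneous" induction is a bit stronger than strictly necessary), while the paper's filtration version reuses whole lower-gap cube statements rather than reducing all the way to individual edges. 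Two small notation points: the locally cocartesian edges of $\sd(\Delta^n)$ are described in Lem.~\ref{lm:subdivisionLocallyCocartesianByMaxFunctor}(3) (via the characterization in (2)), and you should note explicitly that $b\neq\sigma$ forces $j_1\geq i+2$ so that the reduced edge really does have gap at least $2$ and strictly less than $k$, which is what makes the inductive appeal well-founded.
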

\begin{proof} We will prove the stronger claim that for fixed $k \geq 2$ and all string inclusions $e_{ij}: \sigma_{ij} = [i<i+j] \to [i < i+1 < i+j]$ with $2 \leq j \leq k$, $F|_{Q_{\sigma_{ij}}}$ is a limit diagram for all $Q_{\sigma_{ij}}$ if and only if $F(e_{ij})$ is an equivalence for all $e_{ij}$.

We proceed by induction on $k$. For the base case $k=2$, given a string inclusion $\sigma = [i<i+2] \to [i<i+1<i+2]$, the edge is the $1$-dimensional cube $Q_{\sigma}$, so $F|_{Q_{\sigma}}$ is a limit diagram if and only if $F(e)$ is an equivalence. Now let $k>2$ and suppose we have proven the statement for all $l<k$. Note that in proving either direction of the `if and only if' statement, we may suppose that $F|_{Q_{\sigma_{ij}}}$ is a limit diagram \emph{and} $F(e_{ij})$ for all $2 \leq j < k$, so let us do so.

Consider an edge $e: \sigma = [i<i+k] \to [i < i+1 < i+k]$. For $1<j<k$, let $Q_{\sigma,j} \subset Q_{\sigma}$ be the subposet on strings excluding vertices $i+j, ..., i+k-1$. Then we have a descending filtration of sieve inclusions
\[ Q_{\sigma} \coloneq Q_{\sigma,k} \supset Q_{\sigma, k-1} \supset Q_{\sigma, k-2} \supset \cdots \supset Q_{\sigma,2} \]
where $Q_{\sigma, j}$ is a $(j-1)$-dimensional cube and $Q_{\sigma,2}$ consists only of the edge $e$. Note that if we let $Q_{\sigma,j}' = Q_{\sigma,j+1} \setminus Q_{\sigma,j}$ for $1<j<k$, then the minimal element of $Q_{\sigma,j}'$ is given by $\sigma_j = [i<i+j<i+k]$, and if we let $\sigma'_j = [i<i+j]$, then $Q_{\sigma,j}'$ is obtained from $Q_{\sigma_j'}$ by concatenating $i+k$. By the inductive hypothesis and using that the pushforward functors are exact, we get that $F|_{Q_{\sigma,j}'}$ is a limit diagram. Taking total fibers of cubes then shows that $F|_{Q_{\sigma,j}}$ is a limit diagram if and only if $F|_{Q_{\sigma,j-1}}$ is a limit diagram. Traversing the filtration, we conclude that $F|_{Q_{\sigma}}$ is a limit diagram if and only if $F(e)$ is an equivalence.
\end{proof}


\begin{lem} \label{lm:limitCubeDegenerateIfOneGenerated} Let $Q = \sd(\Delta^n)_{\max = n}$, $D$ a stable $\infty$-category, and $f: Q \to D$ a functor. Suppose the following condition holds:
\begin{itemize} \item[($\ast$)] For all string inclusions $e: \sigma \to \sigma'$ in $Q$ obtained by concatenating $[i<k] \to [i<i+1<k] $ by a (possibly empty) suffix $\tau$, $f(e)$ is an equivalence.
\end{itemize}
Then $f$ is a limit diagram if and only if $f([n] \to [n-1<n])$ is an equivalence.
\end{lem}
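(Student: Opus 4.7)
The plan is to prove the strengthened claim, by induction on $n$, that under hypothesis $(\ast)$ the natural projection
\[ \lim_{Q \setminus \{[n]\}} f \longrightarrow f([n-1 < n]) \]
is an equivalence in $D$. The lemma is then immediate since $f$ is a limit diagram precisely when the canonical map $f([n]) \to \lim_{Q \setminus \{[n]\}} f$ is an equivalence. The base case $n=1$ is trivial because $Q \setminus \{[1]\} = \{[0<1]\}$ is a single object.

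For the inductive step I identify $Q$ with the cube $2^{\{0, \dots, n-1\}}$ via $S \mapsto $ the increasing string on $S \cup \{n\}$, so $[n]$ corresponds to $\emptyset$ and $[n-1<n]$ to $\{n-1\}$. I then decompose $Q \setminus \emptyset$ into the sieve $Q_{\bar 0} = \{S \neq \emptyset : 0 \notin S\}$ and the complementary cosieve $Q_0 = \{S : 0 \in S\}$, and apply the recollement formula of Example~\ref{exm:SieveCosieveRecollementOnFunctorCategory} to write $\lim_{Q \setminus \emptyset} f$ as the pullback
\[ \lim_{S \in Q_0} f(S) \times_{\lim_{T \in Q_{\bar 0}} f(T \cup \{0\})} \lim_{T \in Q_{\bar 0}} f(T), \]
where the middle term is the right Kan extension $j_{\ast}(f|_{Q_0})$ restricted to $Q_{\bar 0}$ and then taken the limit of; on $T \in Q_{\bar 0}$ this right Kan extension evaluates to $f(T \cup \{0\})$ because $T \cup \{0\}$ is initial in the relevant slice over $Q_0$.

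The three terms will then be identified in turn. The left limit is $f(\{0\})$ because $\{0\}$ is initial in $Q_0$. The right limit is $f(\{n-1\})$: hypothesis $(\ast)$ restricted to the full subcube $2^{\{1, \dots, n-1\}} \subset Q$ matches, under the relabeling $k \mapsto k-1$, exactly the hypothesis of the lemma in dimension $n-1$, so the inductive hypothesis gives a natural equivalence $\lim_{T \in Q_{\bar 0}} f(T) \simeq f(\{n-1\})$. The key step is the middle term: I introduce the auxiliary functor $g: 2^{\{1, \dots, n-1\}} \to D$ defined by $g(T) = f(T \cup \{0\})$ and observe that every edge $T \to T \cup \{1\}$ with $1 \notin T$ corresponds under $g$ to the edge $T \cup \{0\} \to T \cup \{0,1\}$ of $Q$, which inserts $1 = \min(T \cup \{0\}) + 1$ directly after the minimum and is therefore a $(\ast)$-equivalence. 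Iterating the fibre first in the coordinate direction $1$ produces the zero $(n-2)$-cube, so the total fibre of $g$ vanishes; thus $g$ is a limit diagram and $\lim_{T \in Q_{\bar 0}} f(T \cup \{0\}) \simeq g(\emptyset) = f(\{0\})$ via the natural projection.

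With these identifications the pullback becomes $f(\{0\}) \times_{f(\{0\})} f(\{n-1\})$, and I will verify that the bottom comparison map $f(\{0\}) \to f(\{0\})$ is the identity: both sides are pinned down by the common cone $\{f(\{0\}) \to f(T \cup \{0\})\}_{T \in Q_{\bar 0}}$ obtained from the initial object $\{0\} \in Q_0$, so the map between them is forced by the universal property of the limit. The pullback then collapses to $f(\{n-1\}) = f([n-1<n])$, and the right projection is the desired natural equivalence. The main technical obstacle is this last naturality check—confirming that the two identifications with $f(\{0\})$ in the lower row of the pullback agree as morphisms, not merely abstractly—but this reduces to the fact that both are computed from the same initial-object cone, combined with the sub-lemma that a cube which is constant in one coordinate direction is automatically a limit diagram.
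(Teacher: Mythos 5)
Your proof is correct, but organized differently from the paper's. The paper filters $Q$ by the minimum of the string, $Q = Q_{\geq 0} \supset Q_{\geq 1} \supset \cdots \supset Q_{\geq n-1}$, shows each slice $Q_{=j} = Q_{[j<n]}$ is a limit cube by the argument of Lemma~\ref{lm:equivalentOneGenerationConditions}, and then telescopes via total fibers down to the $1$-cube $[n] \to [n-1<n]$ --- a single filtration, not an induction on $n$. You instead induct on $n$: you puncture the cube at $[n]$, split the punctured cube by the presence or absence of $0$ (the same first cut the paper makes, but you stop after one step), apply the sieve-cosieve fracture square of Example~\ref{exm:SieveCosieveRecollementOnFunctorCategory}, and resolve the three corners --- the $Q_0$-face via its initial object $\{0\}$, the $Q_{\bar{0}}$-face via the inductive hypothesis after relabeling, and the gluing term via the observation that a cube whose edges in one coordinate direction are all $(\ast)$-equivalences has vanishing total fiber. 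The delicate step, which you correctly flag, is confirming that the fracture-square map from $\lim_{Q_0} f$ to the gluing term is an actual equivalence rather than just a map between two abstractly isomorphic copies of $f(\{0\})$; your common-cone argument handles this, since both limits project to $f(T \cup \{0\})$ for $T \in Q_{\bar{0}}$ via the map induced by $\{0\} \subseteq T \cup \{0\}$. The paper's telescoping is shorter and leverages the already-proved Lemma~\ref{lm:equivalentOneGenerationConditions}; your inductive argument is self-contained modulo the recollement formalism and the elementary degenerate-cube fact, at the cost of the naturality check.
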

\begin{proof} The proof is similar to that of Lem.~\ref{lm:equivalentOneGenerationConditions}. For $0 \leq j < n$, let $Q_{\geq j}$, $Q_{=j}$ be the subposet on strings $\sigma$ with minimum $\geq j$, resp $=j$. Then $Q_{\geq j}$ is a $(n-j)$-dimensional cube, $Q_{=j} = Q_{\geq j} \setminus Q_{\geq j+1}$ is a $(n-j-1)$-dimensional cube, and we have a descending filtration
\[ Q = Q_{\geq 0} \supset Q_{\geq 1} \supset Q_{\geq 2} \supset \cdots \supset Q_{\geq n-1}. \] 
Observe that $Q_{=j} = Q_{[j<n]}$, so $f|_{Q_{=j}}$ is a limit diagram under our hypotheses by the proof of Lem.~\ref{lm:equivalentOneGenerationConditions}. Therefore, taking total fibers shows that $f|_{ Q_{\geq j}}$ is a limit diagram if and only if $f|_{ Q_{\geq j+1}}$ is a limit diagram. Traversing the filtration then proves the claim.
\end{proof}

We continue to assume $C \to \Delta^n$ is a locally cocartesian fibration whose fibers are stable $\infty$-categories and whose pushforward functors are exact. Observe that we have a commutative diagram
\[ \begin{tikzcd}[row sep=4ex, column sep=6ex, text height=1.5ex, text depth=0.5ex]
\Fun^{\cocart}_{/[0:n-1]}(\sd([0:n-1]), C_{[0:n-1]}) \ar{r}{\gamma_{n-1}^{\ast}} & \Fun^{\cocart}_{/[0:n-1]}(\sd_1([0:n-1]), C_{[0:n-1]}) \\
\Fun^{\cocart}_{/\Delta^n}(\sd(\Delta^n), C) \ar{r}{\gamma_n^{\ast}} \ar{u}{j^{\ast}} \ar{d}[swap]{i^{\ast}} & \Fun^{\cocart}_{/\Delta^n}(\sd_1(\Delta^n), C) \ar{u}{j^{\ast}} \ar{d}[swap]{i^{\ast}}  \\
C_n \ar{r}{\id} & C_n,
\end{tikzcd} \]
so in particular $\gamma_n^{\ast}$ is a morphism of stable recollements. However $\gamma_n$ generally fails to be a \emph{strict} morphism of stable recollements, i.e., the natural transformation
\[ i^{\ast} j_{\ast} \to i^{\ast} j_{\ast} \gamma_{n-1}^{\ast} \]
is typically not an equivalence.

\begin{lem} \label{lem:OneGeneratedStrictMorphismOfRecollements} Suppose $F: \sd(\Delta^n) \to C$ is $1$-generated. Then the comparison map
\[  i^{\ast} j_{\ast} j^{\ast} F = (j_{\ast} j^{\ast} F)(n) \to i^{\ast} j_{\ast} \gamma_{n-1}^{\ast} j^{\ast} F =  (j_{\ast}( F|_{\sd_1([0:n-1])}))(n) \]
is an equivalence.
\end{lem}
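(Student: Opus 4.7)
The plan is to identify both sides of the comparison map concretely, then to apply Lem.~\ref{lm:limitCubeDegenerateIfOneGenerated} to a carefully chosen auxiliary cone whose vertex is defined as the limit itself. The first step is to unpack the two right Kan extensions. Using Prop.~\ref{prp:subdivisionExtension}(2) to identify $\Fun^{\cocart}_{/\Delta^n}(\sd(\Delta^n)_0, C) \simeq \Fun^{\cocart}_{/[0:n-1]}(\sd([0:n-1]), C_{[0:n-1]})$, together with Prop.~\ref{prp:ExistenceLaxRightKanExtension} applied to $\phi : \sd(\Delta^n)_0 \hookrightarrow \sd(\Delta^n)$, the left-hand side identifies as
\[ (j_\ast j^\ast F)([n]) \simeq \lim_{y \in J_{[n]}} F(y), \]
where I use that $F|_{\sd(\Delta^n)_0}$ is already a locally cocartesian lift of $F|_{\sd([0:n-1])}$ by hypothesis on $F$. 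Concretely, $J_{[n]}$ is the poset of strings $[a_0 < \dots < a_{k-1} < n]$ with $a_i \in [0:n-1]$ and $k \geq 1$, with morphisms the substring inclusions fixing the last element $n$. By the explicit description of $j_\ast$ in the $\sd_1$-recollement of Prop.~\ref{prp:StaircaseRecollement} combined once more with $F$'s preservation of locally cocartesian edges, the right-hand side equals $F([n-1<n]) = (f_{n-1,n})_! F([n-1])$, and the comparison map in the lemma is precisely the canonical projection $\pi_{[n-1<n]} : \lim_{J_{[n]}} F \to F([n-1<n])$ associated to the object $[n-1<n] \in J_{[n]}$.

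Next, I will introduce an auxiliary functor to which Lem.~\ref{lm:limitCubeDegenerateIfOneGenerated} can be applied. Set $Q = \sd(\Delta^n)_{\max=n} = J_{[n]} \sqcup \{[n]\}$, where $[n]$ is initial in $Q$, and define $f : Q \to C_n$ by $f|_{J_{[n]}} := F|_{J_{[n]}}$, $f([n]) := \lim_{J_{[n]}} F|_{J_{[n]}}$, and the edges $f([n]) \to F(y)$ given by the canonical projections of the limit cone. By construction $f$ is a limit diagram. The main obstacle is then to verify that $f$ satisfies condition $(\ast)$ of Lem.~\ref{lm:limitCubeDegenerateIfOneGenerated}. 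Since condition $(\ast)$ concerns only edges $[i<k]\tau \to [i<i+1<k]\tau$ whose source has length at least two, both endpoints lie in $J_{[n]}$, where $f$ agrees with $F$; so it suffices to check the condition for $F$. For this, Lem.~\ref{lm:equivalentOneGenerationConditions} applied to the $1$-generation hypothesis gives that $F([i<k]) \to F([i<i+1<k])$ is an equivalence in $C_k$, and by $F$'s preservation of locally cocartesian edges, both $F([i<k]\tau)$ and $F([i<i+1<k]\tau)$ are obtained by applying the \emph{same} iterated pushforward along the suffix $\tau$ to these two objects; since pushforward functors preserve equivalences, the edge $F(e)$ is itself an equivalence in $C_n$, so $(\ast)$ holds.

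Applying Lem.~\ref{lm:limitCubeDegenerateIfOneGenerated} to $f$ now finishes the proof: since $f$ is a limit diagram by construction, the lemma forces the morphism $f([n] \to [n-1<n])$ to be an equivalence; but by definition of $f$ this is exactly the projection $\pi_{[n-1<n]}$, which is the comparison map of the statement. The conceptual point worth highlighting is that, while $F|_Q$ itself typically fails to be a limit diagram (the edge $F([n]) \to F([n-1<n])$ need not be an equivalence, which is the source of the non-strictness of $\gamma_n^\ast$ mentioned earlier), one can freely replace the value $F([n])$ at the cone vertex by the limit of $F|_{J_{[n]}}$ without disturbing $(\ast)$, precisely because $(\ast)$ only involves edges internal to $J_{[n]}$; this decoupling is what lets the cube-degeneracy criterion force the specific projection $\pi_{[n-1<n]}$ to be an equivalence under the $1$-generation hypothesis.
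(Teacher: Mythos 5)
Your proof is correct and takes essentially the same approach as the paper: identify the comparison map as the projection $\lim_{K}F \to F([n-1<n])$ for $K = \sd(\Delta^n)_{\max=n}\setminus\{[n]\}$ and apply Lem.~\ref{lm:limitCubeDegenerateIfOneGenerated}. You fill in two steps the paper leaves implicit — replacing the cone vertex by the limit to build the auxiliary diagram, and the pushforward argument upgrading the length-two equivalences of Lem.~\ref{lm:equivalentOneGenerationConditions} to the suffixed edges required by condition $(\ast)$ — both of which are correct and do sharpen the exposition.
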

\begin{proof} Let $K \subset \sd(\Delta^n)$ be the subposet on strings $\sigma$ with $\max(\sigma) = n$ and $\sigma \neq n$. By the formulas computing $j_{\ast}$ given in Prop.~\ref{prp:ExistenceLaxRightKanExtension} and Prop.~\ref{prp:StaircaseRecollement}, we see that the comparison map is given by the canonical map from the limit of $F|_{K}$ to $F([n-1<n])$. Since $F$ is $1$-generated, by Lem.~\ref{lm:equivalentOneGenerationConditions} the conditions of Lem.~\ref{lm:limitCubeDegenerateIfOneGenerated} are satisfied, so this canonical map is an equivalence.
\end{proof}

\begin{dfn} For the functor $j_{\ast}$ defined as in Prop.~\ref{prp:ExistenceLaxRightKanExtension} with respect to $[0:n-1]$ and $\{ n \}$, we say that a functor $F: \sd([0:n-1]) \to C_{[0:n-1]}$ is \emph{$+$-1-generated} if both $F$ and $j_{\ast} F$ are $1$-generated. Let
\[ \Fun^{\cocart}_{/[0:n-1]}(\sd([0:n-1]), C_{[0:n-1]})_{\gen{1}}^+ \]
be the full subcategory on the $+$-$1$-generated objects.
\end{dfn}

\begin{lem} \label{lem:OneGeneratedRecollement} We have adjunctions
\[ \begin{tikzcd}[row sep=4ex, column sep=4ex, text height=1.5ex, text depth=0.25ex]
\Fun^{\cocart}_{/[0:n-1]}(\sd([0:n-1]), C_{[0:n-1]})_{\gen{1}}^+ \ar[shift right=1,right hook->]{r}[swap]{j_{\ast}} & \Fun^{\cocart}_{/\Delta^n}(\sd(\Delta^n), C)_{\gen{1}} \ar[shift right=2]{l}[swap]{j^{\ast}} \ar[shift left=2]{r}{i^{\ast}} & C_n \ar[shift left=1,left hook->]{l}{i_{\ast}}
\end{tikzcd} \]
that exhibit $\Fun^{\cocart}_{/\Delta^n}(\sd(\Delta^n), C)_{\gen{1}}$ as a stable recollement.
\end{lem}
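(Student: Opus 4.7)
The plan is to apply Thm.~\ref{thm:RecollementRlaxLimitOfLlaxFunctor} to the sieve-cosieve decomposition of $\Delta^n$ with $S_0 = [0:n-1]$ and $S_1 = \{n\}$ (noting $\sd(S_1) = \{[n]\}$, so $\Fun^{\cocart}_{/S_1}(\sd(S_1), C_{[n]}) = C_n$) to obtain a stable recollement on $\Fun^{\cocart}_{/\Delta^n}(\sd(\Delta^n), C)$, and then to show that each of the four adjoint functors restricts to the claimed full subcategories, after which the recollement axioms and stability are inherited.

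The first key restriction is that $i_{\ast}$ lands in $1$-generated objects. For $c \in C_n$, the formula from Prop.~\ref{prp:closedPartOfRecollement} gives $(i_{\ast} c)([n]) = c$ and $(i_{\ast} c)(\sigma) = 1 \in C_{\max \sigma}$ for all $\sigma \neq [n]$. For any string $\sigma = [i<i+k]$ with $k \geq 2$, the cube $Q_{\sigma}$ consists of strings starting at $i$ and ending at $i+k$; since $k \geq 2$ forces $i \leq n-2 < n$, all these strings have minimum $i \in S_0$ and none is the singleton $[n]$. Hence $(i_{\ast} c)|_{Q_{\sigma}}$ is constant at the terminal object of $C_{i+k}$, which is a limit diagram (the punctured cube is connected).

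The second key restriction is that $j^{\ast}$ sends $1$-generated objects to \emph{$+$-1-generated} ones. Given $F$ $1$-generated, the restriction $j^{\ast} F$ is evidently $1$-generated since cubes $Q_{\sigma}$ for $\sigma \in \sd([0:n-1])$ remain inside $\sd([0:n-1])$. For the remaining condition that $j_{\ast} j^{\ast} F$ be $1$-generated, I would observe that the unit $\eta: F \to j_{\ast} j^{\ast} F$ is an equivalence on every string $\sigma$ with $\min \sigma < n$: on $\sd([0:n-1])$ this follows from $j^{\ast} j_{\ast} \simeq \id$, while on a string $\sigma = [a_0 < \cdots < a_{k-1} < n]$ with $a_0 < n$, both $F(\sigma)$ and $(j_{\ast} j^{\ast} F)(\sigma)$ coincide with $(a_{k-1} \to n)_! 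F([a_0 < \cdots < a_{k-1}])$ via the locally cocartesian pushforward description of $j_{\ast}$ extracted from Prop.~\ref{prp:subdivisionExtension}(2), using that both $F$ and $j_{\ast} j^{\ast} F$ preserve the cocartesian edge appending $n$. Since no cube $Q_{\sigma}$ with $k \geq 2$ contains the singleton $[n]$, this yields $(j_{\ast} j^{\ast} F)|_{Q_{\sigma}} \simeq F|_{Q_{\sigma}}$ for every $\sigma$ relevant to the $1$-generation condition, reducing $1$-generation of $j_{\ast} j^{\ast} F$ to that of $F$.

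The remaining restrictions are immediate: $i^{\ast}$ lands in $C_n$ trivially, and $j_{\ast}$ sends \emph{$+$-1-generated} objects to $1$-generated ones by the very definition of the \emph{$+$-1-generated} condition. Full faithfulness of $j_{\ast}$ and $i_{\ast}$, the property $j^{\ast} i_{\ast} \simeq 0$, left-exactness, and joint conservativity of $j^{\ast}$ and $i^{\ast}$ are inherited from the larger recollement. For stability, using Lem.~\ref{lm:equivalentOneGenerationConditions} one recharacterizes $1$-generation as the requirement that certain string-inclusion edges be sent to equivalences; this property is preserved by fiberwise finite limits and colimits, which by Lem.~\ref{lem:rlaxLimitAdmitsFiniteLimitsAndStable} are computed objectwise in the stable fibers of $p$, and hence both subcategories are stable. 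I expect the main technical obstacle to be the analysis of $\eta$ on strings ending at $n$ in the second step, which rests on the explicit cocartesian pushforward formula underlying $j_{\ast}$.
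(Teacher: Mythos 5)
Your proposal is correct and follows essentially the same route as the paper: restrict the four functors of the recollement from Thm.~\ref{thm:RecollementRlaxLimitOfLlaxFunctor}, with the only subtle point being that $j_{\ast}j^{\ast}F$ remains $1$-generated, which holds because the unit $F \to j_{\ast}j^{\ast}F$ is an equivalence on every string other than the singleton $[n]$ while no cube $Q_{\sigma}$ with $k \geq 2$ contains $[n]$. Your write-up simply spells out details the paper leaves implicit (the terminal-object computation for $i_{\ast}$, the locally cocartesian pushforward analysis of $\eta$ on strings ending at $n$, and stability).
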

\begin{proof} Clearly, we may define $j_{\ast}$, $i^{\ast}$, and $i_{\ast}$ to be the restrictions of the corresponding functors for the adjunctions of Thm.~\ref{thm:RecollementRlaxLimitOfLlaxFunctor}. The only subtle point is that given $F: \sd(\Delta^n) \to C$ which is $1$-generated, we require that the localization $j_{\ast} j^{\ast} F$ is also $1$-generated. But this holds, since $F \simeq j_{\ast} j^{\ast} F$ except possibly at $n \in \sd(\Delta^n)$ and the $1$-generated condition ignores $n$. Therefore, we may also define $j^{\ast}$ as the restricted functor, and the recollement conditions are then immediate. 
\end{proof}

\begin{cor} \label{cor:OneGeneratedStrictMorphismOfRecollements} The restriction $\gamma_n^{\ast}: \Fun^{\cocart}_{/\Delta^n}(\sd(\Delta^n), C)_{\gen{1}} \to \Fun^{\cocart}_{/\Delta^n}(\sd_1(\Delta^n), C)$ is a strict morphism of stable recollements with respect to Lem.~\ref{lem:OneGeneratedRecollement} and Prop.~\ref{prp:StaircaseRecollement}.
\end{cor}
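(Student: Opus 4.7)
The plan is to reduce the strictness check to Lem.~\ref{lem:OneGeneratedStrictMorphismOfRecollements} by unpacking the definition of a strict morphism of recollements. First, I would observe that the commutative diagram of recollement squares displayed immediately before Lem.~\ref{lem:OneGeneratedStrictMorphismOfRecollements} already exhibits $\gamma_n^{\ast}$ as a morphism of stable recollements, with induced open-part functor $F_U = \gamma_{n-1}^{\ast}$ and closed-part functor $F_Z = \id_{C_n}$. I would also briefly record that $\gamma_{n-1}^{\ast}$ does send the LHS open part into the RHS open part, which is automatic since the RHS open part carries no generation condition. Hence the only remaining content is strictness.

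Next, I would invoke the characterization of strictness from \ref{LaxVsStrictMorphismsOfRecollements}: it suffices to prove that the natural transformation $\nu'': F_Z\, i^{\ast} j_{\ast} \to i^{\ast} j_{\ast}\, F_U$ is an equivalence. Because $F_Z = \id_{C_n}$ and $i^{\ast}$ on both sides is evaluation at $n$, for an object $G$ in the open part $\Fun^{\cocart}_{/[0:n-1]}(\sd([0:n-1]), C_{[0:n-1]})_{\gen{1}}^+$ the transformation $\nu''$ is pointwise the comparison map
\[ (j_{\ast}^{\mathrm{full}} G)(n) \longrightarrow \bigl(j_{\ast}^{\mathrm{stair}} \gamma_{n-1}^{\ast} G\bigr)(n) \]
in $C_n$, where $j_{\ast}^{\mathrm{full}}$ and $j_{\ast}^{\mathrm{stair}}$ denote the right adjoints furnished by Prop.~\ref{prp:ExistenceLaxRightKanExtension} and Prop.~\ref{prp:StaircaseRecollement}, respectively.

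Finally, by the very definition of $+$-$1$-generated, the functor $F := j_{\ast}^{\mathrm{full}} G : \sd(\Delta^n) \to C$ is $1$-generated and satisfies $j^{\ast} F = G$. Applied to this $F$, Lem.~\ref{lem:OneGeneratedStrictMorphismOfRecollements} states precisely that the comparison map above is an equivalence, completing the argument.

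I do not anticipate a serious obstacle: all of the real difficulty has been absorbed into the $1$-generated / $+$-$1$-generated formalism and into Lem.~\ref{lem:OneGeneratedStrictMorphismOfRecollements}, whose proof in turn rests on the cube-degeneration statement Lem.~\ref{lm:limitCubeDegenerateIfOneGenerated}. The only mild point requiring care is matching the abstract $\nu''$ datum from the general theory of recollements with the concrete pointwise comparison of right Kan extensions treated in the lemma; since $F_Z$ lives on the single fiber $C_n$, this amounts to straightforward bookkeeping.
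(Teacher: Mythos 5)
Your proposal is correct and matches the paper's (one-line) proof, which simply cites Lem.~\ref{lem:OneGeneratedStrictMorphismOfRecollements}; you have supplied the bookkeeping it leaves implicit. The identification $F_Z = \id_{C_n}$, $F_U = \gamma_{n-1}^{\ast}$, the reduction of strictness to $\nu''$ via \ref{LaxVsStrictMorphismsOfRecollements}, and the observation that taking $F = j_{\ast}G$ for $G$ $+$-$1$-generated lets you invoke Lem.~\ref{lem:OneGeneratedStrictMorphismOfRecollements} are all exactly what is needed.
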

\begin{proof} This follows immediately from Lem.~\ref{lem:OneGeneratedStrictMorphismOfRecollements}.
\end{proof}

We want to apply Cor.~\ref{cor:OneGeneratedStrictMorphismOfRecollements} to show that $\gamma_n^{\ast}$ is an equivalence (in fact, a trivial fibration) onto its essential image. To understand this image as a condition on objects in the codomain, we introduce the following definition. For $0 \leq i < j \leq n$, let $\tau^j_i: C_i \to C_j$ denote the pushforward functor encoded by the locally cocartesian fibration.

\begin{dfn} \label{dfn:extendability} We say that a functor $f: \sd_1(\Delta^n) \to C$ is \emph{extendable} if for every string $[i<i+1<i+k]$ in $\sd(\Delta^n)$, the canonical map in $C_{i+k}$
\[ \tau^{i+k}_i f(i) \to (\tau^k_{i+1} \circ \tau^{i+1}_i) f(i) \]
encoded by the locally cocartesian fibration is an equivalence. Let
\[ \Fun^{\cocart}_{/\Delta^n}(\sd_1(\Delta^n),C)_{\ext} \]
denote the full subcategory on the extendable objects.
\end{dfn}

\begin{dfn} For the functor $j_{\ast}$ defined as in Prop.~\ref{prp:StaircaseRecollement} with respect to $[0:n-1]$ and $\{ n \}$, we say that a functor $f: \sd_1([0:n-1]) \to C$ is \emph{$+$-extendable} if both $f$ and $j_{\ast} f$ are extendable. Let
\[ \Fun^{\cocart}_{/[0:n-1]}(\sd_1([0:n-1]), C_{[0:n-1]})_{\ext}^+ \]
be the full subcategory on the $+$-extendable objects.
\end{dfn}

Note that the extendability condition becomes stronger through considering the additional strings in $\sd(\Delta^n)$; for example, extendability is no condition on $f: \sd_1([0:1]) \to C_{[0:1]}$, but we acquire the condition that the map $\tau^2_0 f(0) \to \tau^2_1 \tau^1_0 f(0)$ is an equivalence upon enlarging to $\Delta^2$. Let us first state the evident counterpart to Lem.~\ref{lem:OneGeneratedRecollement}.

\begin{lem} \label{lem:ExtendableObjectsRecollement} We have adjunctions
\[ \begin{tikzcd}[row sep=4ex, column sep=4ex, text height=1.5ex, text depth=0.25ex]
\Fun^{\cocart}_{/[0:n-1]}(\sd_1([0:n-1]), C_{[0:n-1]})_{\ext}^+ \ar[shift right=1,right hook->]{r}[swap]{j_{\ast}} & \Fun^{\cocart}_{/\Delta^n}(\sd_1(\Delta^n), C)_{\ext} \ar[shift right=2]{l}[swap]{j^{\ast}} \ar[shift left=2]{r}{i^{\ast}} & C_n \ar[shift left=1,left hook->]{l}{i_{\ast}}
\end{tikzcd} \]
that exhibit $\Fun^{\cocart}_{/\Delta^n}(\sd_1(\Delta^n), C)_{\ext}$ as a stable recollement.
\end{lem}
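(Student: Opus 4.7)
My plan is to directly mirror the proof of Lem.~\ref{lem:OneGeneratedRecollement}, showing that the four adjoint functors from the recollement of Prop.~\ref{prp:StaircaseRecollement} (applied to the sieve-cosieve decomposition given by $[0:n-1]$ and $\{n\}$) restrict to the full subcategories cut out by the extendability conditions. Once this is established, the defining properties of a stable recollement (full faithfulness of $j_\ast$ and $i_\ast$, vanishing $j^\ast i_\ast \simeq 0$, joint conservativity of $j^\ast$ and $i^\ast$, and left-exactness) are inherited from the ambient recollement, since the containments are full.

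First I would verify that the extendable and $+$-extendable full subcategories are stable, which amounts to observing that the extendability condition asks that a natural transformation $\tau^{i+k}_i(-) \to \tau^k_{i+1} \tau^{i+1}_i(-)$ between exact functors $C_i \to C_{i+k}$, evaluated at $F(i)$, is an equivalence; since pushforward functors are exact and evaluation at each object in $\sd_1(\Delta^n)$ is an exact functor on the functor $\infty$-category (Lem.~\ref{lem:rlaxLimitAdmitsFiniteLimitsAndStable}), this cuts out a stable subcategory. Next I would check restriction of the four functors. By definition, $j_\ast$ sends $+$-extendable functors to extendable functors. For $i_\ast$, the essential image consists of functors with $F(i)$ a terminal object for $i < n$, and since pushforward preserves terminal objects, both sides of each extendability map are terminal, so the map is an equivalence; thus $i_\ast(c)$ is always extendable. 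The functor $i^\ast$ is restriction to $C_n$, where no extendability condition is imposed.

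The one subtle verification, analogous to the key observation in the proof of Lem.~\ref{lem:OneGeneratedRecollement}, is that $j^\ast$ lands in $+$-extendable functors; I expect this to be the only step requiring careful thought. Given an extendable $F: \sd_1(\Delta^n) \to C$, I need to show that $j_\ast j^\ast F$ is again extendable (then $j^\ast F$ is $+$-extendable by definition). The point is that the extendability condition at a string $[i<i+1<i+k]$ depends only on the value of the functor at the object $[i]$, not at $[i+1]$, $[i+k]$, or any intermediate edge; and $j_\ast j^\ast F$ agrees with $F$ on all objects $[i]$ with $i < n$ (the localization only alters values at $[n]$ and $[n-1<n]$), while no extendability condition starts at the object $[n]$ itself (since such a string $[n<n+1<\cdots]$ does not exist in $\sd(\Delta^n)$). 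Hence $j_\ast j^\ast F$ inherits extendability from $F$ automatically.

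With all four functors verified to restrict, the remaining recollement axioms are immediate: $j_\ast$ and $i_\ast$ remain fully faithful since the subcategory inclusions are full; $j^\ast i_\ast \simeq 0$ by restriction; $j^\ast$ and $i^\ast$ remain jointly conservative since equivalences in the extendable subcategory are detected in the ambient $\infty$-category; and left-exactness of $j^\ast$ and $i^\ast$ follows from left-exactness in the ambient recollement together with the stability of the subcategory inclusions established above.
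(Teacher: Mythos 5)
Your proof is correct and takes the same approach as the paper; the paper's own proof is terse ("immediate from restricting the recollement of Prop.~\ref{prp:StaircaseRecollement}"), leaving the reader to mirror the argument of Lem.~\ref{lem:OneGeneratedRecollement}, which is exactly what you have spelled out. The one subtlety you correctly isolate is that the extendability condition at a string $[i<i+1<i+k]$ depends only on the value at $[i]$ with $i \leq n-2$, so it is invisible to the modification at $[n]$ effected by the unit $F \to j_\ast j^\ast F$, and hence $j^\ast F$ is $+$-extendable whenever $F$ is extendable.
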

\begin{proof} This is immediate from restricting the recollement of Prop.~\ref{prp:StaircaseRecollement}.
\end{proof}

We have assembled all the ingredients needed to prove Thm.~\ref{thm:OneGenerationAndExtension}. Note that by Lem.~\ref{lm:limitCubeDegenerateIfOneGenerated}, $\gamma_n^{\ast}$ of a $1$-generated object is extendable, so the functor of Thm.~\ref{thm:OneGenerationAndExtension} is well-defined.

\begin{thm} \label{thm:OneGenerationAndExtension} Suppose $C \to \Delta^n$ is a locally cocartesian fibration whose fibers are stable $\infty$-categories and whose pushforward functors are exact. Then the functor
\[ \gamma_n^{\ast}: \Fun^{\cocart}_{/\Delta^n}(\sd(\Delta^n), C)_{\gen{1}} \to \Fun^{\cocart}_{/\Delta^n}(\sd_1(\Delta^n),C)_{\ext} \]
is an equivalence of $\infty$-categories.
\end{thm}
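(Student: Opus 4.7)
The plan is to proceed by induction on $n$, with the cases $n = 0$ and $n = 1$ being trivial since $\sd_1(\Delta^n) = \sd(\Delta^n)$ and both auxiliary conditions are vacuous. For the inductive step, the strategy is to exploit the two parallel stable recollements of Lem.~\ref{lem:OneGeneratedRecollement} and Lem.~\ref{lem:ExtendableObjectsRecollement}, which share the same closed part $C_n$. By Cor.~\ref{cor:OneGeneratedStrictMorphismOfRecollements}, $\gamma_n^{\ast}$ constitutes a strict morphism of stable recollements between them, so by Rmk.~\ref{rem:TwoOutOfThreePropertyEquivalencesStrictMorphismRecoll} it suffices to verify that the induced functors on the open and closed parts are equivalences.

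On the closed part, the induced functor $(\gamma_n^{\ast})_Z$ is tautologically the identity on $C_n$, since $i^{\ast}$ is evaluation at $[n]$ in both recollements and $[n] \in \sd_1(\Delta^n)$. On the open part, unwinding the formula $(\gamma_n^{\ast})_U = j_{\sd_1}^{\ast}\, \gamma_n^{\ast}\, j_{\sd,\ast}$ shows that $(\gamma_n^{\ast})_U$ is given by the restriction of $\gamma_{n-1}^{\ast}$ to the corresponding $+$-subcategories. The inductive hypothesis provides an equivalence
\[ \gamma_{n-1}^{\ast}: \Fun^{\cocart}_{/[0:n-1]}(\sd([0:n-1]), C_{[0:n-1]})_{\gen{1}} \xto{\simeq} \Fun^{\cocart}_{/[0:n-1]}(\sd_1([0:n-1]), C_{[0:n-1]})_{\ext}, \]
so the remaining task is to check that this equivalence restricts to an equivalence on the $+$-subcategories --- that is, for a $1$-generated $F$, the extension $j_{\sd,\ast} F: \sd(\Delta^n) \to C$ is $1$-generated if and only if the extension $j_{\sd_1,\ast} \gamma_{n-1}^{\ast} F: \sd_1(\Delta^n) \to C$ is extendable.

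To establish this equivalence of conditions, the key computation is that the two extensions agree on the essential data. By Lem.~\ref{lem:OneGeneratedStrictMorphismOfRecollements}, when $F$ is $1$-generated the $p$-right Kan extension value $(j_{\sd,\ast} F)([n])$ collapses to $(j_{\sd_1,\ast} \gamma_{n-1}^{\ast} F)([n]) \simeq \tau_{n-1}^n F([n-1])$, because the diagram indexing the limit satisfies the hypothesis of Lem.~\ref{lm:limitCubeDegenerateIfOneGenerated}. Combined with Lem.~\ref{lm:equivalentOneGenerationConditions}, the $1$-generation of $j_{\sd,\ast} F$ reduces to the requirement that each edge $[i<n] \to [i<i+1<n]$ in $\sd(\Delta^n)$ is sent to an equivalence, and factoring this edge through the locally cocartesian pushforward $F([i]) \to F([i<i+1])$ identifies the requirement with precisely the extendability condition for $j_{\sd_1,\ast} \gamma_{n-1}^{\ast} F$ on the strings $[i<i+1<n]$.

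The main obstacle will be the careful bookkeeping needed to identify the two extensions and then match their auxiliary conditions string by string; in particular, one must confirm that the limit cubes $Q_\sigma$ appearing in the $1$-generation criterion for strings with $\max(\sigma)=n$ truly collapse under the inductive $1$-generation of $F$, so that the two recollements are genuinely aligned under $\gamma_n^{\ast}$. Once this matching is in place, the inductive hypothesis and the two-out-of-three principle for strict morphisms of stable recollements deliver the desired equivalence.
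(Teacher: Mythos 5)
Your proposal is correct and follows essentially the same route as the paper: induction on $n$, viewing $\gamma_n^{\ast}$ as a strict morphism between the stable recollements of Lem.~\ref{lem:OneGeneratedRecollement} and Lem.~\ref{lem:ExtendableObjectsRecollement}, applying the two-out-of-three principle of Rmk.~\ref{rem:TwoOutOfThreePropertyEquivalencesStrictMorphismRecoll}, and matching the $+$-$1$-generated and $+$-extendable conditions via Lem.~\ref{lm:equivalentOneGenerationConditions}. Your extra bookkeeping with Lem.~\ref{lem:OneGeneratedStrictMorphismOfRecollements} and Lem.~\ref{lm:limitCubeDegenerateIfOneGenerated} is exactly how the paper justifies that the open-part functor restricts correctly.
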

\begin{proof} We proceed by induction on $n$. For the base cases $n = 0$ and $n=1$, the result is trivial. Let $n>1$ and suppose we have proven the theorem for all $k<n$. By the inductive hypothesis, $\gamma_{n-1}^{\ast}$ is an equivalence. Observe that $\gamma_{n-1}^{\ast}$ restricts to a functor
\[ (\gamma_{n-1}^{\ast})^+: \Fun^{\cocart}_{/[0:n-1]}(\sd([0:n-1]), C_{[0:n-1]})_{\gen{1}}^+ \to \Fun^{\cocart}_{/[0:n-1]}(\sd_1([0:n-1]), C_{[0:n-1]})_{\ext}^+. \]
If we let $(\gamma_{n-1}^{\ast})^{-1}$ be an inverse functor, then by Lem.~\ref{lm:equivalentOneGenerationConditions}, if $f: \sd_1([0:n-1]) \to C_{[0:n-1]}$ is $+$-extendable, then $(\gamma_{n-1}^{\ast})^{-1}(f)$ is $+$-1-generated. Therefore, $(\gamma_{n-1}^{\ast})^+$ is also an equivalence. By Cor.~\ref{cor:OneGeneratedStrictMorphismOfRecollements} (but replacing the codomain there with the recollement of Lem.~\ref{lem:ExtendableObjectsRecollement}) and the two-out-of-three property of equivalences for a strict morphism of stable recollements (Rmk.~\ref{rem:TwoOutOfThreePropertyEquivalencesStrictMorphismRecoll}), we deduce that $\gamma_n^{\ast}$ is an equivalence.
\end{proof}

\begin{nul} \label{dualizedDescriptionOfSpine} To make better use of Thm.~\ref{thm:OneGenerationAndExtension}, let us further unpack the $\infty$-category $\Fun_{/\Delta^n}^{\cocart}(\sd_1(\Delta^n),C)$. Note that we may write $\sd_1(\Delta^n)$ as the union of marked simplicial sets
\[ \sd([0:1]) \cup_{1} \sd([1:2]) \cup_{2} \cdots \cup_{n} \sd([n-1:n]), \]
so we obtain a fiber product decomposition
\[ \Fun_{/\Delta^n}^{\cocart}(\sd_1(\Delta^n),C) \simeq \Fun_{/[0:1]}^{\cocart}(\sd([0:1]), C_{[0:1]}) \times_{C_1} \cdots \times_{C_{n-1}} \Fun_{/[n-1:n]}^{\cocart}(\sd([n-1:n]), C_{[n-1:n]}). \]

Let $\tau^{i+1}_i: C_i \to C_{i+1}$ be the pushforward functors as before, and with respect to the trivial fibration (induced by the inner anodyne spine inclusion $[0:1] \cup_1 \cdots \cup_{n-1} [n-1:n] \to \Delta^n$)
\[ \Fun(\Delta^n, \Cat_{\infty}) \xto{\simeq} \Fun([0:1], \Cat_{\infty}) \times_{1} \cdots \times_{n-1} \Fun([n-1:n], \Cat_{\infty}), \]
let $\tau_{\bullet}: \Delta^n \to \Cat_{\infty}$ be a functor lifting the $\tau^{i+1}_i$. Let $C^{\vee} \to (\Delta^n)^{\op}$ be a cartesian fibration classified by $\tau_{\bullet}$. Then if we let $[i+1:i] = [i:i+1]^{\op}$, we have that $(C^{\vee})_{[i+1:i]} \simeq (C_{[i:i+1]})^{\vee}$ where the righthand $(-)^{\vee}$ denotes the dual cartesian fibration of the cocartesian fibration $C_{[i:i+1]} \to [i:i+1]$. Then by \ref{dualizingOneSimplex}, we have an equivalences of $\infty$-categories
\[ \Fun^{\cocart}_{/[i:i+1]}(\sd([i:i+1]), C_{[i:i+1]}) \simeq \Fun_{/[i+1:i]} ([i+1:i], C^{\vee}_{[i+1:i]}) \simeq \sO(C_{i+1}) \times_{\ev_1,C_{i+1}, \tau^{i+1}_i} C_i. \]
\end{nul}

Again using that the spine inclusion is inner anodyne, we obtain the following proposition.

\begin{prp} \label{prp:DualDescriptionOfSections} We have equivalences of $\infty$-categories
\begin{align*} \Fun_{/\Delta^n}^{\cocart}(\sd_1(\Delta^n),C) & \simeq \Fun_{/(\Delta^n)^{\op}}((\Delta^n)^{\op},C^{\vee}) \\
 & \simeq \sO(C_n) \times_{C_n} \sO(C_{n-1}) \times_{C_{n-1}} \cdots \times_{C_2} \sO(C_1) \times_{C_1} C_0,
\end{align*}
where in the fiber product, the maps $\sO(C_k) \to C_k$ are given by evaluation at the target, and the maps $\sO(C_k) \to C_{k+1}$ are given by composing evaluation at the source with $\tau^{k+1}_k: C_k \to C_{k+1}$.
\end{prp}

\begin{ntn} Under the equivalence of Prop.~\ref{prp:DualDescriptionOfSections}, let $\Fun_{/(\Delta^n)^{\op}}((\Delta^n)^{\op},C^{\vee})_{\ext}$ denote the extendable objects. Then we will also write (abusing notation)
\[ \begin{tikzcd}[row sep=4ex, column sep=6ex, text height=1.5ex, text depth=0.5ex]
\Fun^{\cocart}_{/\Delta^n}(\sd(\Delta^n), C)_{\gen{1}} \ar{r}{\gamma_n^{\ast}}[swap]{\simeq} \ar[hook]{d} & \Fun_{/(\Delta^n)^{\op}}((\Delta^n)^{\op},C^{\vee})_{\ext} \ar[hook]{d} \\
\Fun^{\cocart}_{/\Delta^n}(\sd(\Delta^n), C) \ar{r}{\gamma_n^{\ast}} & \sO(C_n) \times_{C_n} \cdots \times_{C_1} C_0.
\end{tikzcd} \]
\end{ntn}

\begin{rem} The type of iterated fiber product occuring in Prop.~\ref{prp:DualDescriptionOfSections} appears in the work of Nikolaus and Scholze when they describe the data of a $C_{p^n}$-spectrum $X$ whose geometric fixed points (except possibly $\Phi^{C_{p^n}} X$) are all bounded below -- see \cite[Cor.~II.4.7]{NS18} and Prop.~\ref{prp:EquivalenceOnBoundedBelowAtFiniteLevel}.
\end{rem}

\section{The \texorpdfstring{$\cF$}{F}-recollement on \texorpdfstring{$\Sp^G$}{SpG}}
\label{section:FirstEquivariantSection}

In this section, we introduce and study recollements on the $\infty$-category $\Sp^G$ of $G$-spectra determined by a family $\cF$ of subgroups of $G$, for $G$ a finite group. We then apply our results in \S \ref{subsection:recollRLaxLLax} to reprove a theorem of Ayala, Mazel-Gee, and Rozenblyum that reconstructs $\Sp^G$ from its geometric fixed points (Thm.~\ref{thm:GeometricFixedPointsDescriptionOfGSpectra}). As a corollary, we deduce a limit formula (Cor.~\ref{cor:FormulaForGeomFixedPointsOfCompleteSpectrum}) for the geometric fixed points of an $\cF$-complete spectrum by means of Prop.~\ref{prp:ExistenceLaxRightKanExtension}, which will play an important role in our proof of the dihedral Tate orbit lemma (Exm.~\ref{exm:DihedralEven} and Lem.~\ref{lem:dihedralTOLEven}, Exm.~\ref{exm:DihedralOdd} and Lem.~\ref{lem:dihedralTOLOdd}).

\subsection{Conventions on equivariant stable homotopy theory}
\label{section:EquivariantConventions}

At the outset, let us be clear about which foundations for equivariant stable homotopy theory are employed in this paper. In their monograph, Nikolaus and Scholze choose to work with the classical point-set model of orthogonal $G$-spectra \cite[Def.~II.2.3]{NS18}, then obtaining the $\infty$-category $\Sp^G$ of $G$-spectra\footnote{In this paper, the term \emph{$G$-spectrum} is synonymous with genuine $G$-spectrum.} via inverting equivalences \cite[Def.~II.2.5]{NS18}. In contrast, we will use the foundations laid out by Bachmann and Hoyois in \cite[\S 9]{BachmannHoyoisNorms}, which attaches to every profinite groupoid $X$ a presentable, stable, and symmetric monoidal $\infty$-category $\SH(X)$ such that for $X = B G$, $\SH(B G)$ is equivalent to $\Sp^G$ as defined in \cite{NS18} (c.f. the remark prior to \cite[Lem.~9.5]{BachmannHoyoisNorms}). In fact, we will only need the Bachmann-Hoyois construction for finite groupoids.

\begin{dfn} \label{dfn:BachmannHoyoisFunctor} Let $\Gpd_{\fin}$ be the $(2,1)$-category of finite groupoids, and let $$\sH, \sH_{\sbullet}, \SH: \Gpd_{\fin}^{\op} \to \CAlg(\Pr^{\mr{L}})$$
denote the (restriction of the) functors constructed in \cite[\S 9.2]{BachmannHoyoisNorms}. For a map $f: X \to Y$ of finite groupoids, write $f^{\ast}$ for the associated functor and $f_{\ast}$ for its right adjoint.
\end{dfn}

\begin{rem} Let $X = BG$. Then $\sH(BG) \simeq \Spc^G \coloneq \Fun(\sO^{\op}_G, \Spc)$, the $\infty$-category of $G$-spaces defined as presheaves on the orbit category $\sO_G$, and likewise $\sH_{\sbullet}(BG)$ is the $\infty$-category $\Spc^G_{\ast}$ of pointed $G$-spaces. As we already mentioned, $\SH(BG) \simeq \Sp^G$ is the $\infty$-category of $G$-spectra, defined as the filtered colimit taken in $\Pr^L$
\[ \Spc^G_{\ast} \xto{\Sigma^{\rho}} \Spc^G_{\ast} \xto{\Sigma^{\rho}} \Spc^G_{\ast}  \xto{\Sigma^{\rho}} \cdots, \]
where $\rho$ is the regular $G$-representation. In addition, by \cite[Exm.~9.11]{BachmannHoyoisNorms} $\Sp^G$ is equivalent to the $\infty$-category of \emph{spectral Mackey functors} on finite $G$-sets that was studied by Barwick \cite{M1} and Guillou-May \cite{guillou2}.
\end{rem}

Note that by definition, $f^{\ast}: \SH(Y) \to \SH(X)$ is the symmetric monoidal left Kan extension of $\sH_{\sbullet}(Y) \xto{f^{\ast}} \sH_{\sbullet}(X) \xto{\Sigma^{\infty}} \SH(X)$ along $\Sigma^{\infty}: \sH_{\sbullet}(Y) \to \SH(Y)$. Therefore:
\begin{enumerate} \item Suppose $f: BH \to BG$ is the map of groupoids induced by an injective group homomorphism $H \to G$. Then $f^{\ast}: \Sp^G \to \Sp^H$ is homotopic to the usual restriction functor, and $f_{\ast}: \Sp^H \to \Sp^G$ is homotopic to the usual induction functor. Instead of $f^{\ast} \dashv f_{\ast}$, we will typically write this adjunction as $\res^G_H \dashv \ind^G_H$. Note that this adjunction is ambidextrous and satisfies the projection formula (in fact, \cite[Lem.~9.4(3)]{BachmannHoyoisNorms} establishes the projection formula for any finite covering map).
\item Suppose $f: BG \to BG/N$ is the map of groupoids induced by a surjective group homomorphism $G \to G/N$. Then $f^{\ast}: \Sp^{G/N} \to \Sp^G$ is homotopic to the usual inflation functor, which we denote as $\inf^N$. The right adjoint to $\inf^N$ is the \emph{categorical fixed points} functor
$$ \Psi^N: \Sp^G \to \Sp^{G/N}. $$
Now suppose $H \leq G$ is any subgroup and let $W_G H = N_G H / H$ be the Weyl group of $H$. Then we will also write
$$ \Psi^H: \Sp^G \xtolong{\res^G_{N_G H}}{1.5} \Sp^{N_G H} \xto{\Psi^H} \Sp^{W_G H} $$
\end{enumerate}

Given a $G$-spectrum $X$, we introduce notation to distinguish the underlying spectrum of $\Psi^H X$.

\begin{ntn} For a $G$-spectrum $X$ and subgroup $H \leq G$, we let $X^H = \res^{W_G H} \Psi^H (X)$.\footnote{With respect to the description of $\Sp^G$ as spectral Mackey functors, $X^H$ is given by evaluation at $G/H$.}
\end{ntn}

Since the restriction functor $\Sp^{W_G H} \to \Sp$ lifts to $\Fun(B W_G H, \Sp)$, the spectrum $X^H$ also comes endowed with a $W_G H$-action.

\begin{rem} By stabilizing the adjointability relations in \cite[Lem.~9.4]{BachmannHoyoisNorms}, it follows that that for any pullback square of finite groupoids
\[ \begin{tikzcd}[row sep=4ex, column sep=4ex, text height=1.5ex, text depth=0.25ex]
W \ar{r}{f} \ar{d}{g} & Y \ar{d}{g} \\
X \ar{r}{f} & Z,
\end{tikzcd} \]
the canonical natural transformation $f^{\ast} g_{\ast} \to f_{\ast} g^{\ast}$ of functors $\SH(X) \to \SH(Y)$ is an equivalence. In particular, we have an equivalence $X^H \simeq \Psi^H \res^G_H (X)$.
\end{rem}

We now turn to the \emph{geometric fixed points} and \emph{Hill-Hopkins-Ravenel norm} functors.

\begin{dfn} \label{BachmannHoyoisFunctorNorms} Let $\sH^{\otimes}, \sH_{\sbullet}^{\otimes}, \SH^{\otimes}: \Span(\Gpd_{\fin}) \to \CAlg(\Cat_{\infty}^{\mr{sift}})$ be the (restrictions of the) functors defined as in \cite[\S 9.2]{BachmannHoyoisNorms}, which on the subcategory $\Gpd_{\fin}^{\op}$ restrict to the functors $\sH, \sH_{\sbullet}, \SH$ of Def.~\ref{dfn:BachmannHoyoisFunctor}. For a map of finite groupoids $f: X \to Y$, write $f_{\otimes}$ for the associated covariant functor.
\end{dfn}

Parallel to the discussion above, we note \cite[Rmk.~9.9]{BachmannHoyoisNorms}:
\begin{enumerate} \item Suppose $f: BH \to BG$ for a subgroup $H \leq G$. Then $f_{\otimes}: \Sp^H \to \Sp^G$ is homotopic to the multiplicative norm functor $N^G_H$ introduced by Hill, Hopkins, and Ravenel \cite{HHR}.
\item Suppose $f: BG \to B(G/N)$. Then $f_{\otimes}: \Sp^G \to \Sp^{G/N}$ is homotopic to the usual geometric fixed points functor $\Phi^N$. For $H \leq G$ any subgroup, we also write
$$ \Phi^H: \Sp^G \xtolong{\res^G_{N_G H}}{1.5} \Sp^{N_G H} \xto{\Phi^H} \Sp^{W_G H}. $$
\end{enumerate}

\begin{ntn} For a $G$-spectrum $X$ and subgroup $H \leq G$, we let $X^{\phi H} = \res^{W_G H} \Phi^H (X)$. Also let
$$ \phi^H: \Sp^G \xto{\Phi^H} \Sp^{W_G H} \xto{\res} \Fun(B W_G H, \Sp). $$
\end{ntn}

\begin{rem} Because $\SH^{\otimes}$ is defined on $\Span(\Gpd_{\fin})$, we have that for any pullback square of finite groupoids
\[ \begin{tikzcd}[row sep=4ex, column sep=4ex, text height=1.5ex, text depth=0.25ex]
W \ar{r}{f} \ar{d}{g} & Y \ar{d}{g} \\
X \ar{r}{f} & Z,
\end{tikzcd} \]
there is a canonical equivalence $f^{\ast} g_{\otimes} \simeq f_{\otimes} g^{\ast}$ of functors $\SH(X) \to \SH(Y)$. In particular, we have an equivalence $X^{\phi H} \simeq \Phi^H \res^G_H X$.
\end{rem}

\begin{rem} We will use some additional features of these fixed points functors:
\begin{enumerate} \item For any subgroup $H \leq G$, the functor $\Psi^H$ is colimit-preserving, since the inflation functors preserve dualizable and hence compact objects; indeed, by equivariant Atiyah duality \cite[\S III.5.1]{MR866482} every compact object in $\Sp^G$ is dualizable, and conversely, since the unit in $\Sp^G$ is compact, all dualizable objects in $\Sp^G$ are compact.
\item The functors $\{ (-)^H : H \leq G \}$ are jointly conservative, since the orbits $\Sigma^{\infty}_+ G/H$ corepresent $(-)^H$ and form a set of compact generators for $\Sp^G$.
\item The functors $\{ \phi^H : H \leq G \}$ are jointly conservative, since the evaluation functors $\ev_{G/H}$ are jointly conservative for $\Spc^G$, $\phi^H \Sigma^{\infty}_+ \simeq \Sigma^{\infty}_+ \ev_{G/H}$, and suspension spectra generate $\Sp^G$ under desuspensions and sifted colimits.
\end{enumerate}
\end{rem}

We will also need to use some aspects of the theory of $G$-$\infty$-categories in this work.

\begin{dfn} Let $\omega_G: \FF_G \to \Gpd_{\fin}$ be the functor that sends a finite $G$-set $U$ to its action groupoid $U//G$.
\end{dfn}

\begin{dfn} \label{dfn:GCategoryGSpectra} Define the \emph{$G$-$\infty$-category of $G$-spectra} $\underline{\Sp}^G \to \sO^{\op}_G$ to be the cocartesian fibration classified by $\SH \circ (\omega^{\op}_G|_{\sO^{\op}_G})$. In addition, let $\underline{\Sp}^{G, \otimes} \to \sO^{\op}_G \times \Fin_{\ast}$ be the cocartesian $\sO_G^{\op}$-family of symmetric monoidal $\infty$-categories classified by $\SH \circ (\omega^{\op}_G|_{\sO^{\op}_G})$ (when viewed as valued in $\CMon(\Cat_{\infty})$).
\end{dfn}

\begin{rem} \label{rem:sliceCategoryPassage} For a subgroup $H$ of $G$, let
\[ \adjunct{\ind^G_H}{\FF_H}{\FF_G}{\res^G_H} \]
 denote the induction-restriction adjunction, where $\ind^G_H(U) = G \times_H U$. Then $\ind^G_H: \sO_H \to \sO_G$ factors as $\sO_H \simeq (\sO_G)_{/(G/H)} \to \sO_G$. Moreover, $\omega_G \circ \ind^G_H$ and $\omega_H$ are canonically equivalent, so we have an equivalence of $H$-$\infty$-categories
\[ \underline{\Sp}^H \simeq \sO_H^{\op} \times_{\sO_G^{\op}} \underline{\Sp}^G. \]
\end{rem}

\begin{rem} Given a $G$-$\infty$-category $K$, we may endow $\Fun_G(K, \ul{\Sp}^G)$ with the pointwise monoidal structure of Def.~\ref{dfn:S-PointwiseMonoidal} with respect to the construction $\underline{\Sp}^{G, \otimes}$ of Def.~\ref{dfn:GCategoryGSpectra}.
\end{rem}

\subsection{Basic theory of families}

\begin{dfn} \label{dfn:subconjugacyPoset} Given a finite group $G$, its \emph{subconjugacy category} $\fS[G]$ is the category whose objects are subgroups $H$ of $G$, and whose morphism sets are defined by
\begin{align*} \Hom_{\fS[G]}(H,K) = \begin{cases} \ast \quad \text{ if } H \text{ is subconjugate to } K, \\
\emptyset \quad \text{ otherwise}.
\end{cases}
\end{align*}
We will also write $\fS = \fS[G]$ if the ambient group $G$ is clear from context.
\end{dfn}

\begin{dfn} \label{dfn:family} A \emph{$G$-family} $\cF$ is a sieve in $\fS$, i.e., a full subcategory of $\fS$ whose set of objects is a set of subgroups of $G$ closed under subconjugacy.
\end{dfn}

\begin{rem} Abusing notation, we will also denote the set of objects of $\fS$ or a family $\cF$ by the same symbol. If we view morphisms in $\fS$ as defining a binary relation $\leq$ on the set of subgroups of $G$, then $\fS$ is a preordered set, which is a poset if $G$ is abelian. Although we generally reserve the expression $H \leq K$ for $H$ a subgroup of $K$, when discussing strings in the preordered set $\fS$ we will also write $\leq$ for its binary relation -- we trust the meaning to be clear from context.
\end{rem}

\begin{cnstr} \label{cnstr:GspaceFromGfamily} Given a $G$-family $\cF$, define $G$-spaces $E \cF$ and $\widetilde{E \cF}$ by the formulas
\begin{align*} E \cF^K = \begin{cases} \emptyset \; \text{ if } K \notin \cF \\
\ast \; \text{ if } K \in \cF
\end{cases}, \quad
\widetilde{E \cF}^K = \begin{cases} S^0 \; \text{ if } K \notin \cF \\
\ast \; \text{ if } K \in \cF
\end{cases}.
\end{align*}
We have a cofiber sequence of pointed $G$-spaces
\[ E \cF_+ \to S^0 \to \widetilde{E \cF}. \]
The unit map $S^0 \to \widetilde{E \cF}$ exhibits $\widetilde{E \cF}$ as an idempotent object \cite[Def.~4.8.2.1]{HA} of $\Spc^G_{\ast}$ with respect to the smash product, hence $\widetilde{E \cF}$ is a idempotent $E_{\infty}$-algebra by \cite[Prop.~4.8.2.9]{HA}.\footnote{This is also obvious since we are considering presheaves of sets.} Let $E \cF_+$ and $\widetilde{E \cF}$ also denote $\Sigma^{\infty}$ of the same pointed $G$-spaces. Then $\widetilde{E \cF}$ is an idempotent $E_{\infty}$-algebra in $\Sp^G$, and hence by the discussion in \ref{SmashingLocalizationsAreStableMonoidalRecollements} defines a stable monoidal recollement
\[ \begin{tikzcd}[row sep=4ex, column sep=4ex, text height=1.5ex, text depth=0.25ex]
\Sp^{h \cF} \ar[shift right=1,right hook->]{r}[swap]{j_{\ast}} & \Sp^G \ar[shift right=2]{l}[swap]{j^{\ast}} \ar[shift left=2]{r}{i^{\ast}} & \Sp^{\Phi \cF} \ar[shift left=1,left hook->]{l}{i_{\ast}}
\end{tikzcd} \]
such that $\Sp^{\Phi \cF} \simeq \Mod_{\Sp^G}(\widetilde{E \cF})$. By Cor.~\ref{cor:FractureSquareMonoidal}, for any $X \in \Sp^G$ we have the \emph{$\cF$-fracture square}
\[ \begin{tikzcd}[row sep=4ex, column sep=4ex, text height=1.5ex, text depth=0.25ex]
X \ar{r} \ar{d} & X \otimes \widetilde{E \cF} \ar{d} \\
F(E \cF_+, X) \ar{r} & F(E \cF_+, X) \otimes \widetilde{E \cF}.
\end{tikzcd} \]
Following standard terminology, we say that a $G$-spectrum $X$ is \emph{$\cF$-torsion}, \emph{$\cF$-complete}, or \emph{$\cF^{-1}$-local} if it is in the essential image of $j_!$, $j_{\ast}$, or $i_{\ast}$, respectively. Note that for a $G$-spectrum $X$,
\begin{itemize}
    \item $X$ is $\cF$-torsion if and only if $X \otimes E \cF_+ \xto{\simeq} X$ or $X \otimes\widetilde{E \cF} \simeq 0$.
    \item $X$ is $\cF$-complete if and only if $X \xto{\simeq} F(E \cF_+, X)$ or $F(\widetilde{E \cF},X) \simeq 0$.
    \item $X$ is $\cF^{-1}$-local if and only if $X \xto{\simeq} X \otimes \widetilde{E \cF}$ or $X \otimes E \cF_+ \simeq 0$.
\end{itemize}
\end{cnstr}

\begin{ntn} For a $G$-family $\cF$, we have already set $\Sp^{h \cF} \subset \Sp^G$ to be the full subcategory of $\cF$-complete $G$-spectra and $\Sp^{\Phi \cF} \subset \Sp^G$ to be the full subcategory of $\cF^{-1}$-local $G$-spectra. We also let $\Sp^{\tau \cF} \subset \Sp^G$ denote the full subcategory of $\cF$-torsion $G$-spectra.

In addition, if $\cF$ is the trivial family $\{1\}$, we will also write $E \cF = E G$, $\Sp^{h \cF} = \Sp^{h G}$, and refer to $\cF$-torsion or complete objects as \emph{Borel} torsion or complete.\footnote{Other authors refer to Borel torsion spectra as \emph{free} and Borel complete spectra as \emph{cofree}.} It is well-known that $\Sp^{h G} \simeq \Fun(B G, \Sp)$ (\cite[Prop.~6.17]{MATHEW2017994}, \cite[Thm.~II.2.7]{NS18}) -- we will later give two different generalizations of this fact (Lem.~\ref{lem:LocallyClosedFibersAreBorel} and Prop.~\ref{prp:BorelSpectraAsCompleteObjects}).
\end{ntn}

\begin{rem} \label{rem:torsionCompleteEquivalence} The functor $j_! j^{\ast}: \Sp^{h \cF} \xto{\simeq} \Sp^{\tau \cF}$ implements an equivalence between $\cF$-complete and $\cF$-torsion objects \cite[Prop.~7]{BarwickGlasmanNoteRecoll}.
\end{rem}

\begin{rem} \label{rem:MathewComparison} The endofunctors $j_! j^{\ast}$, $j_{\ast} j^{\ast}$, and $i_{\ast} i^{\ast}$ of $\Sp^G$ attached to a family $\cF$ agree with the $A_\cF$-acyclization, $A_\cF$-completion, and $A_\cF^{-1}$-localization functors in \cite{MATHEW2017994} defined with respect to the $E_{\infty}$-algebra $A_{\cF} \coloneq \prod_{H \in \cF} F(G/H_+, 1)$ by \cite[Prp 6.5-6.6]{MATHEW2017994}. Moreover, the theory of $A$-torsion, $A$-complete, and $A^{-1}$-local objects for a dualizable $E_{\infty}$-algebra $A$ (\cite[Part 1]{MATHEW2017994} under the hypotheses \cite[2.26]{MATHEW2017994}) extends the more general monoidal recollement theory for the idempotent object $1 \to U_A$ of \cite[Constr.~3.12]{MATHEW2017994}. For example, the $\cF$-fracture square for $\Sp^G$ given by Cor.~\ref{cor:RecollementAsPullbackSquare} agrees with the $A_{\cF}$-fracture square given by \cite[Thm.~3.20]{MATHEW2017994} (although we additionally consider the monoidal refinement Prop.~\ref{prp:CanonicalMonoidalStructureOnMonoidalRecollement}).

As a separate consequence, we also have that $\Sp^{\tau \cF} \subset \Sp^G$ is the localizing subcategory generated by the orbits $\{G/H_+ : H \in \cF \}$. Also, $G/H_+$ is both $\cF$-complete and $\cF$-torsion.
\end{rem}

In the remainder of this subsection, we collect some basic results concerning $\cF$-recollements that we will need in the sequel. Classical references for this material are \cite[\S II]{MR866482} and \cite[\S 17]{GreenleesMay}, and other references include \cite[\S 6]{MATHEW2017994} and \cite[\S 2]{AMGR-NaiveApproach}.

\begin{lem} \label{lem:GeometricFixedPointsDetectionCriterion} Let $\cF$ be a $G$-family and let $X \in \Sp^G$.
\begin{enumerate} \item $X$ is $\cF^{-1}$-local if and only if $X^{\phi K} \simeq 0$ for all $K \in \cF$.
\item $X$ is $\cF$-torsion if and only if $X^{\phi K} \simeq 0$ for all $K \notin \cF$.
\end{enumerate}
Therefore, for a map $f: X \to Y$ in $\Sp^G$, $f$ is a $j^{\ast}$-equivalence if and only if $f^{\phi K}$ is an equivalence for all $K \in \cF$, and $f$ is an $i^{\ast}$-equivalence if and only if $f^{\phi K}$ is an equivalence for all $K \notin \cF$.
\end{lem}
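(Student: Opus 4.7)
The plan is to exploit the symmetric monoidal character of the geometric fixed point functors $\Phi^K$ together with the joint conservativity of the collection $\{\Phi^K : K \leq G\}$. Since $\cF^{-1}$-locality of $X$ is the condition $X \otimes E\cF_+ \simeq 0$ and $\cF$-torsion is the condition $X \otimes \widetilde{E\cF} \simeq 0$, both statements will reduce to computing $\Phi^K$ of the defining $G$-spaces $E\cF_+$ and $\widetilde{E\cF}$.

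First, I would record the elementary computation. Using the identification $\Phi^K \Sigma^{\infty}_+ Y \simeq \Sigma^{\infty}_+ Y^K$ for a $G$-space $Y$, together with the explicit formulas of Constr.~\ref{cnstr:GspaceFromGfamily}, we obtain
\[ \Phi^K(E\cF_+) \simeq \begin{cases} \SS & K \in \cF \\ 0 & K \notin \cF \end{cases}, \qquad \Phi^K(\widetilde{E\cF}) \simeq \begin{cases} 0 & K \in \cF \\ \SS & K \notin \cF \end{cases}, \]
where $\SS$ denotes the sphere spectrum. Indeed, $(E\cF_+)^K = S^0$ exactly when $K \in \cF$ and is the basepoint otherwise, and dually for $\widetilde{E\cF}$; after stabilization the basepoint $G$-space becomes zero.

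For (1), $X$ is $\cF^{-1}$-local if and only if $X \otimes E\cF_+ \simeq 0$, which by joint conservativity of the $\Phi^K$ happens if and only if $\Phi^K(X \otimes E\cF_+) \simeq 0$ for every subgroup $K \leq G$. Symmetric monoidality of $\Phi^K$ gives $\Phi^K(X \otimes E\cF_+) \simeq \Phi^K(X) \otimes \Phi^K(E\cF_+)$. For $K \notin \cF$ the second factor vanishes and the condition is automatic, while for $K \in \cF$ the second factor is $\SS$ and the condition becomes $\Phi^K(X) \simeq 0$, as desired. The argument for (2) is strictly parallel, using $\widetilde{E\cF}$ in place of $E\cF_+$ so that the roles of $\cF$ and its complement swap.

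For the final assertion about morphisms, I would pass to fibers: since $j^{\ast}$ and $i^{\ast}$ are exact, $f$ is a $j^{\ast}$-equivalence (resp. $i^{\ast}$-equivalence) if and only if $\fib(f)$ lies in the kernel of $j^{\ast}$ (resp. $i^{\ast}$). The kernel of $j^{\ast}$ is precisely $\Sp^{\Phi \cF}$ and the kernel of $i^{\ast}$ is precisely $\Sp^{\tau \cF}$. Applying (1) and (2) to $\fib(f)$ and invoking exactness of $\Phi^K$ to identify $\Phi^K(\fib f) \simeq \fib(\Phi^K f)$ then yields both characterizations. No step here is really the obstacle — the whole proof is a routine unwinding once the computation of $\Phi^K$ on $E\cF_+$ and $\widetilde{E\cF}$ is in hand, which itself is immediate from the functor-of-points definitions.
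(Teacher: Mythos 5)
Your proof is correct and takes essentially the same route as the paper: compute the geometric fixed points of $E\cF_+$ and $\widetilde{E\cF}$, then combine the symmetric monoidality of $\Phi^K$ with the joint conservativity of $\{\phi^K : K \leq G\}$ to obtain the characterizations. The paper leaves the "Therefore" clause implicit, whereas you correctly spell it out by passing to the fiber of $f$ and using exactness of the restriction and geometric fixed point functors, together with the identification of $\ker(j^{\ast})$ and $\ker(i^{\ast})$ with the $\cF^{-1}$-local and $\cF$-torsion objects respectively.
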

\begin{proof} First note that for any $X \in \Sp^G$ and subgroup $K$ of $G$,
\begin{align*} (X \otimes E \cF_+)^{\phi K} \simeq X^{\phi K} \otimes (E \cF_+)^{\phi K} & \simeq \begin{cases} 0 \; \text{ if } K \notin \cF \\
X^{\phi K} \; \text{ if } K \in \cF
\end{cases}, \\
(X \otimes \widetilde{E \cF})^{\phi K} \simeq X^{\phi K} \otimes \widetilde{E \cF} {}^{\phi K} & \simeq \begin{cases} X^{\phi K} \; \text{ if } K \notin \cF \\
0 \; \text{ if } K \in \cF
\end{cases}.
\end{align*}
Thus, if $X$ is $\cF^{-1}$-local so that $X \simeq X \otimes \widetilde{E \cF}$, then $X^{\phi K} \simeq 0$ for all $K \in \cF$. Conversely, if $X^{\phi K} \simeq 0$ for all $K \in \cF$, then $(X \otimes E \cF_+)^{\phi K} \simeq 0$ for all subgroups $K$, so by the joint conservativity of the functors $\phi^K$, $X \otimes E \cF_+ \simeq 0$ and $X$ is $\cF^{-1}$-local. This proves (1), and the proof of (2) is similar.
\end{proof}

\begin{rem}[Extension to $G$-recollement] \label{ParamRecollementFamily}  Suppose $\cF$ is a $G$-family, and let $\cF^H \subset \fS[H]$ denote the $H$-family obtained by intersecting $\cF$ with $\fS[H] \subset \fS[G]$. For any map of $G$-orbits $f: G/H \to G/K$ with associated adjunction $\adjunct{f^{\ast}}{\Sp^K}{\Sp^H}{f_{\ast}}$, note that 
\[ f^{\ast}(E \cF^K_+ \to S^0 \to \widetilde{E \cF^K}) \simeq E \cF^H_+ \to S^0 \to \widetilde{E \cF^H}. \]
By monoidality of $f^{\ast}$, it follows that $f^{\ast}$ preserves $\cF$-torsion and $\cF^{-1}$-local objects. Furthermore, the projection formula implies that 
\[ f^{\ast} F(E \cF^K_+,X) \simeq F(E \cF^H_+, f^{\ast} X), \]
so $f^{\ast}$ preserves $\cF$-complete objects. Therefore, $\cF$ defines a lift of the functor $\SH: \sO_G^{\op} \to \Cat^{\st}_{\infty}$ to $\Recoll^{\st}_0$. Passing to Grothendieck constructions, let
\[ \begin{tikzcd}[row sep=4ex, column sep=4ex, text height=1.5ex, text depth=0.25ex]
\underline{\Sp}^{h \cF} \ar[shift right=1,right hook->]{r}[swap]{j_{\ast}} & \underline{\Sp}^G \ar[shift right=2]{l}[swap]{j^{\ast}} \ar[shift left=2]{r}{i^{\ast}} & \underline{\Sp}^{\Phi \cF} \ar[shift left=1,left hook->]{l}{i_{\ast}}
\end{tikzcd} \]
denote the resulting diagram of $G$-adjunctions. By Cor.~\ref{cor:RecollementGivesParamStableSubcategories}, $\underline{\Sp}^{h \cF}$ and $\underline{\Sp}^{\Phi \cF}$ are $G$-stable $G$-$\infty$-categories and all $G$-functors in the diagram are $G$-exact. We thereby obtain a $G$-stable $G$-recollement $(\ul{\Sp}^{h \cF}, \ul{\Sp}^{\Phi \cF})$ of $\ul{\Sp}^G$ (Def.~\ref{dfn:ParamStableRecollement}).
\end{rem}

We may also consider $\cF$-recollements of the $\infty$-category of $G$-spaces (indeed, of any $\infty$-category of $\cE$-valued presheaves on $\sO_G$).

\begin{ntn} Given a $G$-family $\cF$, let $\sO_{G,\cF} \subset \sO_G$ be the full subcategory on those orbits with stabilizer in $\cF$, and let $\sO_{G,\cF}^c$ be its complement. 
\end{ntn}

\begin{cnstr}[$\cF$-recollement of $G$-spaces] \label{cnstr:SpacesRecollement} Given a $G$-family $\cF$, we may define a functor $\pi: \sO_G^{\op} \to \Delta^1$ such that $(\sO_G^{\op})_1 = (\sO_{G,\cF})^{\op}$ and $(\sO_G^{\op})_0 = (\sO_{G,\cF}^c)^{\op}$. Let $\Spc^{h \cF} = \Fun((\sO_{G,\cF})^{\op}, \Spc)$ and $\Spc^{\Phi \cF} = \Fun((\sO^c_{G,\cF})^{\op}, \Spc)$. By Exm.~\ref{exm:SieveCosieveRecollementOnFunctorCategory}, we obtain a monoidal recollement with respect to the cartesian product on $G$-spaces
\[ \begin{tikzcd}[row sep=4ex, column sep=4ex, text height=1.5ex, text depth=0.25ex]
\Spc^{h \cF} \ar[shift right=1,right hook->]{r}[swap]{j_{\ast}} & \Spc^G \ar[shift right=2]{l}[swap]{j^{\ast}} \ar[shift left=2]{r}{i^{\ast}} & \Spc^{\Phi \cF} \ar[shift left=1,left hook->]{l}{i_{\ast}}.
\end{tikzcd} \]
Moreover, if we instead take presheaves in $\Spc_{\ast}$, we obtain a monoidal recollement with respect to the smash product of pointed $G$-spaces
\[ \begin{tikzcd}[row sep=4ex, column sep=4ex, text height=1.5ex, text depth=0.25ex]
\Spc^{h \cF}_{\ast} \ar[shift right=1,right hook->]{r}[swap]{j_{\ast}} & \Spc^G_{\ast} \ar[shift right=2]{l}[swap]{j^{\ast}} \ar[shift left=2]{r}{i^{\ast}} & \Spc^{\Phi \cF}_{\ast} \ar[shift left=1,left hook->]{l}{i_{\ast}}.
\end{tikzcd} \]
where $\widetilde{E \cF} \simeq i_{\ast} i^{\ast}(S^0)$ and the unit map exhibits $\widetilde{E \cF}$ as the same idempotent object as above.

Given a map $f: X \to Y$ in $\Spc^G$, by definition $f$ is a $j^{\ast}$-equivalence if and only if $X^K \to Y^K$ is an equivalence for all $K \in \cF$, and $f$ is a $i^{\ast}$-equivalence if and only if $X^K \to Y^K$ is an equivalence for all $K \notin \cF$. Therefore, by Lem.~\ref{lem:GeometricFixedPointsDetectionCriterion} and the compatibility of geometric fixed points with $\Sigma^{\infty}_+$, the functor $\Sigma^{\infty}_+$ is a morphism of recollements $(\Spc^{h \cF}, \Spc^{\Phi \cF}) \to (\Sp^{h \cF}, \Sp^{\Phi \cF})$, and likewise for $\Sigma^{\infty}$. In particular, we get induced functors
\[ \Sigma^{\infty}_+: \Spc^{h \cF} \to \Sp^{h \cF}, \quad \Sigma^{\infty}_+: \Spc^{\Phi \cF} \to \Sp^{\Phi \cF}. \]
On the other hand, $\Omega^{\infty}$ is not a morphism of recollements; indeed, if $X \in \Sp^G$ is $\cF$-torsion, then we may have that $i^{\ast} \Omega^{\infty} X$ is non-trivial, so $\Omega^{\infty}$ does not preserve $i^{\ast}$-equivalences. However, if $f: X \to Y$ is a $j^{\ast}$-equivalence in $\Sp^G$, so that $f^{\phi K}$ is an equivalence for all $K \in \cF$, then $\res^G_H(f)$ is an equivalence for all $H \in \cF$ because the functors $\phi^K$ for $K \leq H$ jointly detect equivalences in $\Sp^H$. Therefore, $\Omega^{\infty} (f)$ is a $j^{\ast}$-equivalence, and the $\Sigma^{\infty}_+ \dashv \Omega^{\infty}$ adjunction induces an adjunction
\[ \adjunct{\Sigma^{\infty}_+}{\Spc^{h \cF}}{\Sp^{h \cF}}{\Omega^{\infty}}. \]
Now suppose $X$ is $\cF^{-1}$-local, so that $X^{\phi K} \simeq 0$ for all $K \in \cF$. Then $\res^G_H X \simeq 0$ for all $H \in \cF$, so $(\Omega^{\infty} X)^H \simeq \ast$ for all $H \in \cF$ and thus $\Omega^{\infty} X$ lies in the essential image of $i_{\ast}$. We thereby obtain an adjunction
\[ \adjunct{\Sigma^{\infty}_+}{\Spc^{\Phi \cF}}{\Sp^{\Phi \cF}}{\Omega^{\infty}}. \]

To summarize the various compatibilities, we have that
\begin{enumerate} \item $j^{\ast} \Sigma^{\infty}_+ \simeq \Sigma^{\infty}_+ j^{\ast}: \Spc^{G} \to \Sp^{h \cF}$ and $j_{\ast} \Omega^{\infty} \simeq \Omega^{\infty} j_{\ast}: \Sp^{h \cF} \to \Spc^{G}$.
\item $j^{\ast} \Omega^{\infty} \simeq \Omega^{\infty} j^{\ast}: \Sp^G \to \Spc^{h \cF}$ and $j_! \Sigma^{\infty}_+ \simeq \Sigma^{\infty} j_!: \Spc^{h \cF} \to \Sp^G$.
\item $i^{\ast} \Sigma^{\infty}_+ \simeq \Sigma^{\infty}_+ i^{\ast}: \Spc^G \to \Sp^{\Phi \cF}$ and $i_{\ast} \Omega^{\infty} \simeq \Omega^{\infty} i_{\ast}: \Sp^{\Phi \cF} \to \Spc^{G}$.
\end{enumerate}
\end{cnstr}

Next, we study situations that arise in the presence of two $G$-families.

\begin{rem} \label{rem:FamilyIntersectionFormula} Let $\cF$ and $\cG$ be two $G$-families. Then their intersection $\cF \cap \cG$ is again a $G$-family. Note that $E(\cF \cap \cG) \simeq E \cF \times E \cG$ as $G$-spaces, so $E \cF_+ \otimes E \cG_+ \simeq E(\cF \cap \cG)_+$. Consequently, for any $X \in \Sp^G$, the $\cG$-fracture square for $F(E \cF_+, X)$ yields a commutative diagram
\[ \begin{tikzcd}[row sep=4ex, column sep=4ex, text height=1.5ex, text depth=0.25ex]
F(E \cF_+, X) \otimes {E \cG}_+ \ar{r} \ar{d}{\simeq} & F(E \cF_+, X) \ar{r} \ar{d} & F(E \cF_+, X) \otimes \widetilde{E \cG} \ar{d} \\
F(E (\cF \cap \cG)_+, X) \otimes {E \cG}_+ \ar{r} & F(E (\cF \cap \cG)_+, X) \ar{r} & F(E (\cF \cap \cG)_+, X) \otimes \widetilde{E \cG}
\end{tikzcd} \]
in which the righthand square is a pullback square.
\end{rem}

\begin{lem} \label{lem:KeyIntersectionPropertyFamilies} Let $\cF$ and $\cG$ be two $G$-families. Then $\Sp^{\Phi \cG} \cap \Sp^{h \cF} = \Sp^{\Phi(\cF \cap \cG)} \cap \Sp^{h \cF}$ and $\Sp^{\Phi \cG} \cap \Sp^{h \cF} = \Sp^{\Phi \cG} \cap \Sp^{h(\cF \cup \cG)}$.
\end{lem}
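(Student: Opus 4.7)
My plan is to verify each equality by proving two inclusions, with the ``easy'' inclusion in each case following directly from Lem.~\ref{lem:GeometricFixedPointsDetectionCriterion}. For the first equation, the containment $\Sp^{\Phi \cG} \cap \Sp^{h \cF} \subseteq \Sp^{\Phi(\cF \cap \cG)} \cap \Sp^{h \cF}$ is immediate since $\cF \cap \cG \subseteq \cG$: if $X^{\phi K}$ vanishes for all $K \in \cG$, it certainly vanishes for all $K \in \cF \cap \cG$. For the second equation, I would first observe that $\widetilde{E(\cF \cup \cG)} \simeq \widetilde{E \cF} \otimes \widetilde{E \cG}$, since both sides are idempotent $E_\infty$-algebras under $S$ whose geometric fixed points agree by the monoidality of $\Phi^K$ and a direct calculation; then the inclusion $\Sp^{\Phi \cG} \cap \Sp^{h \cF} \subseteq \Sp^{\Phi \cG} \cap \Sp^{h(\cF \cup \cG)}$ follows because $F(\widetilde{E(\cF \cup \cG)}, X) \simeq F(\widetilde{E \cG}, F(\widetilde{E \cF}, X))$ vanishes whenever $X$ is $\cF$-complete.

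For the reverse inclusion of the first equation, suppose $X \in \Sp^{h \cF} \cap \Sp^{\Phi(\cF \cap \cG)}$. Since $X$ lies in the essential image of $i_\ast$ for the $(\cF \cap \cG)$-recollement, we have $j^\ast X \simeq 0$; combined with $F(E(\cF \cap \cG)_+, X) \simeq F(j_! 1_U, X) \simeq j_\ast j^\ast X$ from Prop.~\ref{prp:ProjectionFormulasMonoidalRecollement}(3), this gives $F(E(\cF \cap \cG)_+, X) \simeq 0$. The rightmost pullback square of Rem.~\ref{rem:FamilyIntersectionFormula} then has zero entries along its bottom row and collapses to yield an equivalence $F(E \cF_+, X) \xto{\simeq} F(E \cF_+, X) \otimes \widetilde{E \cG}$. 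Since $X$ is $\cF$-complete, $X \simeq F(E \cF_+, X)$, so $X \simeq X \otimes \widetilde{E \cG}$, placing $X$ in $\Sp^{\Phi \cG}$.

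For the reverse inclusion of the second equation, suppose $X \in \Sp^{\Phi \cG} \cap \Sp^{h(\cF \cup \cG)}$. I first claim $F(\widetilde{E \cG}, X) \simeq X$: writing $X \simeq i_\ast \widetilde{X}$ and applying Prop.~\ref{prp:ProjectionFormulasMonoidalRecollement}(3) to $\widetilde{E \cG} \simeq i_\ast 1_Z$ gives $F(\widetilde{E \cG}, X) \simeq i_\ast i^! X$, and $i^! i_\ast \simeq \id$ because $i_\ast$ is fully faithful. Combined with the identification $\widetilde{E(\cF \cup \cG)} \simeq \widetilde{E \cF} \otimes \widetilde{E \cG}$, this yields
\[ F(\widetilde{E \cF}, X) \simeq F(\widetilde{E \cF}, F(\widetilde{E \cG}, X)) \simeq F(\widetilde{E(\cF \cup \cG)}, X) \simeq 0, \]
so $X$ is $\cF$-complete. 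The main technical point throughout is the dual interplay between the smash decomposition $E(\cF \cap \cG)_+ \simeq E \cF_+ \otimes E \cG_+$ (which feeds Rem.~\ref{rem:FamilyIntersectionFormula}) and the opposite identification $\widetilde{E(\cF \cup \cG)} \simeq \widetilde{E \cF} \otimes \widetilde{E \cG}$ (which feeds the internal-hom chain), with both leveraged via the projection formulas of Prop.~\ref{prp:ProjectionFormulasMonoidalRecollement}; once these tools are aligned, neither inclusion presents further obstacles.
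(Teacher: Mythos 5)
Your proof is correct and follows essentially the same route as the paper: the forward inclusions come from monotonicity of (co)localization in the family, and the reverse inclusion for the first equality is extracted from the fracture square of Rmk.~\ref{rem:FamilyIntersectionFormula} (you read off the right-hand pullback square where the paper uses the equivalent left-hand torsion identification), both resting on $E(\cF\cap\cG)_+ \simeq E\cF_+ \otimes E\cG_+$. The paper dismisses the second equality as ``similar''; your dual argument via $\widetilde{E(\cF\cup\cG)} \simeq \widetilde{E\cF} \otimes \widetilde{E\cG}$ and the internal-hom projection formulas is a valid and complete way of carrying that out.
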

\begin{proof} We prove the first equality, the proof of the second being similar. If $X$ is $\cG^{-1}$-local, then $X$ is $(\cG \cap \cF)^{-1}$-local by Lem.~\ref{lm:subfamilyProperties}(2), so we have the forward inclusion. On the other hand, by Rmk.~\ref{rem:FamilyIntersectionFormula}, for any $X \in \Sp^G$ we have that
\[ F(E \cF_+,X) \otimes E \cG_+ \simeq F(E(\cF \cap \cG)_+, X) \otimes E \cG_+. \]
But $F(E(\cF \cap \cG)_+, X) \simeq 0$ if $X$ is $(\cF \cap \cG)^{-1}$-local, and $X \simeq F(E \cF_+,X)$ if $X$ is $\cF$-complete. Thus, if $X$ is both $\cF$-complete and $(\cF \cap \cG)^{-1}$-local, then $X$ is $\cG^{-1}$-local. We thereby deduce the reverse inclusion.
\end{proof}

\begin{lem} \label{lm:subfamilyProperties} Suppose $\cG$ is a subfamily of $\cF$. Then
    \begin{enumerate}
    \item If $X$ is $\cG$-torsion, then $X$ is $\cF$-torsion.
    \item If $X$ is $\cF^{-1}$-local, then $X$ is $\cG^{-1}$-local.
    \item If $X$ is $\cG$-complete, then $X$ is $\cF$-complete.
    \item If $X$ is $\cG^{-1}$-local, then its $\cF$-completion $F(E \cF_+, X)$ is again $\cG^{-1}$-local.
    \item If $X$ is $\cG^{-1}$-local, then its $\cF$-acyclization $X \otimes E \cF_+$ is again $\cG^{-1}$-local.
    \item If $X$ is $\cG$-complete and $\cF^{-1}$-local, then $X \simeq 0$.
    \end{enumerate}
\end{lem}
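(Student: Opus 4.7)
The plan is to derive all six parts mechanically from the characterizations of torsion, local, and complete objects already established, combined with the hypothesis $\cG \subseteq \cF$ and the intersection formula of Rmk.~\ref{rem:FamilyIntersectionFormula}. Parts (1) and (2) should be immediate from Lem.~\ref{lem:GeometricFixedPointsDetectionCriterion}: both conditions are monotone in the complementary set of subgroups. Explicitly, if $X$ is $\cG$-torsion then $X^{\phi K} \simeq 0$ for all $K \notin \cG$, and since $K \notin \cF$ forces $K \notin \cG$, this is even stronger than $\cF$-torsion. Dually, $\cF^{-1}$-local means $X^{\phi K} \simeq 0$ for all $K \in \cF$, which since $\cG \subseteq \cF$ implies $\cG^{-1}$-local.

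For (3), the cleanest approach is to invoke the characterization recorded in \ref{stableRecollementComment}: $\cF$-complete objects $X$ are precisely those with $\Map(Y,X) \simeq \ast$ for every $\cF^{-1}$-local $Y$. So given $X$ that is $\cG$-complete and $Y$ that is $\cF^{-1}$-local, I would apply (2) to deduce $Y$ is also $\cG^{-1}$-local, whence $\Map(Y,X) \simeq \ast$ by the analogous characterization for the $\cG$-recollement.

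For (4) and (5), I would exploit the simplification $\cF \cap \cG = \cG$ together with the formula $E\cF_+ \otimes E\cG_+ \simeq E(\cF \cap \cG)_+$. Part (5) is then a direct computation: $(X \otimes E\cF_+) \otimes E\cG_+ \simeq X \otimes E\cG_+ \simeq 0$ since $X$ is $\cG^{-1}$-local. For (4), the commutative diagram of Rmk.~\ref{rem:FamilyIntersectionFormula} identifies the smash $F(E\cF_+, X) \otimes E\cG_+$ with $F(E\cG_+, X) \otimes E\cG_+$; but when $X$ is $\cG^{-1}$-local one has $F(E\cG_+, X) = j_\ast j^\ast X \simeq 0$ since $j^\ast X \simeq 0$ in the $\cG$-recollement.

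Finally, (6) combines (2) with the observation that any simultaneously $\cG$-complete and $\cG^{-1}$-local object vanishes: this is immediate from the fracture square of \ref{recollementFractureSquare} applied to $\cG$, or equivalently from $X \simeq j_\ast j^\ast X$ and $j^\ast X \simeq 0$. No substantive obstacle should arise; the lemma is essentially a bookkeeping exercise in the recollement formalism of the preceding sections, and perhaps the only minor subtlety worth double-checking is the direction of the monotonicity in each of (1)--(5), which it is easy to slip up on.
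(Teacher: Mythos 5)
Your parts (1), (2), (3), and (6) follow exactly the paper's argument: (1)--(2) from the geometric fixed points criterion, (3) by testing mapping spaces against $\cF^{-1}$-local objects and reducing to (2), and (6) by reducing to the standard vanishing of simultaneously torsion-complete objects via (2).

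For (4) and (5) you take a genuinely different and somewhat cleaner route. The paper argues via mapping-space characterizations: for (4) it tests $F(E\cF_+,X)$ against $\cG$-torsion $Y$, using the adjunction $\Map(Y, F(E\cF_+,X)) \simeq \Map(Y \otimes E\cF_+, X)$ and (1) to reduce to $\Map(Y,X) \simeq \ast$; for (5) it similarly tests $X \otimes E\cF_+$ against $\cG$-complete $Y$ and invokes (3). Your approach instead verifies the defining tensor vanishing directly, via the idempotence $E\cF_+ \otimes E\cG_+ \simeq E(\cF \cap \cG)_+ = E\cG_+$ from Rmk.~\ref{rem:FamilyIntersectionFormula}: for (5) you compute $(X \otimes E\cF_+) \otimes E\cG_+ \simeq X \otimes E\cG_+ \simeq 0$ in one line, and for (4) you identify $F(E\cF_+,X) \otimes E\cG_+ \simeq F(E\cG_+,X) \otimes E\cG_+$ and observe that the $\cG$-completion $F(E\cG_+,X) = j_{\ast} j^{\ast} X$ already vanishes when $X$ is $\cG^{-1}$-local since $j^{\ast} i_{\ast} \simeq 0$. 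What your method buys is that (4) and (5) no longer depend on (1) and (3), so the logical dependencies flatten; what the paper's method buys is that it works entirely in the mapping-space language already set up in \ref{stableRecollementComment}, without re-invoking the point-set description of $E\cF$ and the intersection formula. Both are correct; yours is arguably the more transparent computation.
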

\begin{proof} (1) and (2) follow immediately from Lem.~\ref{lem:GeometricFixedPointsDetectionCriterion}. For (3), to show $X$ is $\cF$-complete, we need to show that for all $\cF^{-1}$-local $Y$, $\Map(Y,X) \simeq \ast$. But by (2), $Y$ is $\cG^{-1}$-local, so this mapping space is contractible since $X$ is $\cG$-complete by assumption. For (4), we need to show that for all $\cG$-torsion $Y$, $\Map(Y,F(E \cF_+,X)) \simeq \ast$. But
\[ \Map(Y,F(E \cF_+,X)) \simeq \Map(Y \otimes E \cF_+,X) \simeq \Map(Y,X) \simeq \ast \]
since $Y \otimes E \cF_+ \simeq Y$ by (1) and the assumption that $X$ is $\cG^{-1}$-local. The proof of (5) is similar: given $\cG^{-1}$-local $X$ and any $\cG$-complete $Y$, we have that $\Map(X \otimes E \cF_+, Y) \simeq \ast$ because $Y$ is also $\cF$-complete by (3), hence $X \otimes E \cF_+$ is $\cG^{-1}$-local. Finally, for (6) note that $X$ is then $\cG^{-1}$-local by (2), hence $X \simeq 0$.
\end{proof}

Supposing still that $\cG$ is a subfamily of $\cF$, by Lem.~\ref{lm:subfamilyProperties}(1-3), the defining adjunctions of the $\cF$ and $\cG$-recollements on $\Sp^G$ restrict to adjunctions
\[ \adjunct{(i_{\cF})^{\ast}}{\Sp^{\Phi \cG}}{\Sp^{\Phi \cF}}{(i_{\cF})_{\ast}}, \adjunct{(j_{\cG})^{\ast}}{\Sp^{h \cF}}{\Sp^{h \cG}}{(j_{\cG})_{\ast}}, \adjunct{(j'_{\cG})_!}{\Sp^{\tau \cG}}{\Sp^{\tau \cF}}{(j'_{\cG})^{\ast}}. \]
By Lem.~\ref{lm:subfamilyProperties}(4-5), the $\cF$-completion adjunction restricts to
\[  \adjunct{(j_{\cF})^{\ast}}{\Sp^{\Phi \cG}}{\Sp^{h \cF} \cap \Sp^{\Phi \cG}}{(j_{\cF})_{\ast}} \]
such that $(j_{\cF})^{\ast}$ admits a left adjoint $(j_{\cF})_!$ given by the inclusion of $\cF$-torsion and $\cG^{-1}$-local objects under the equivalence $\Sp^{h \cF} \cap \Sp^{\Phi \cG} \simeq \Sp^{\tau \cF} \cap \Sp^{\Phi \cG}$.

Next, let $(i_{\cG})^{\ast}: \Sp^{h \cF} \to \Sp^{h \cF} \cap \Sp^{\Phi \cG}$ be the composite $\Sp^{h \cF} \subset \Sp^G \xto{i^{\ast}} \Sp^{\Phi \cG} \xto{(j_{\cF})^{\ast}} \Sp^{h \cF} \cap \Sp^{\Phi \cG}$. Then $(i_{\cG})^{\ast}$ is left adjoint to the inclusion $(i_{\cG})_{\ast}$. Likewise, define the left adjoint $(i'_{\cG})^{\ast}$ to the inclusion $(i'_{\cG})_{\ast}: \Sp^{\tau \cF} \cap \Sp^{\Phi \cG} \to \Sp^{\tau \cF}$. Finally, note that $\Sp^{h \cF} \cap \Sp^{\Phi \cG}$ inherits a symmetric monoidal structure from the localization $(j_\cF)^{\ast} \dashv (j_{\cF})_{\ast}$, with respect to which $(i_{\cG})^{\ast}$ is symmetric monoidal. Under the equivalence of Rmk.~\ref{rem:torsionCompleteEquivalence}, this transports to a monoidal structure on $\Sp^{\tau \cF}$ and $\Sp^{\tau \cF} \cap \Sp^{\Phi \cG}$ for which the adjunction $(i'_{\cG})^{\ast} \dashv (i'_{\cG})_{\ast}$ is monoidal.

\begin{prp} \label{prp:RecollementsOfRecollements} Let $\cG$ be a subfamily of $\cF$. We have stable monoidal recollements
\[ \begin{tikzcd}[row sep=4ex, column sep=6ex, text height=1.5ex, text depth=0.5ex]
\Sp^{h \cG} \ar[shift right=1,right hook->]{r}[swap]{(j_{\cG})_{\ast}} & \Sp^{h \cF} \ar[shift right=2]{l}[swap]{(j_{\cG})^{\ast}} \ar[shift left=2]{r}{(i_{\cG})^{\ast}} & \Sp^{h \cF} \cap \Sp^{\Phi \cG} \ar[shift left=1,left hook->]{l}{(i_{\cG})_{\ast}},
\end{tikzcd}
\begin{tikzcd}[row sep=4ex, column sep=6ex, text height=1.5ex, text depth=0.5ex]
\Sp^{\tau \cG} \ar[shift right=1,right hook->]{r}[swap]{(j'_{\cG})_{\ast}} & \Sp^{\tau \cF} \ar[shift right=2]{l}[swap]{(j'_{\cG})^{\ast}} \ar[shift left=2]{r}{(i'_{\cG})^{\ast}} & \Sp^{\tau \cF} \cap \Sp^{\Phi \cG} \ar[shift left=1,left hook->]{l}{(i'_{\cG})_{\ast}},
\end{tikzcd} \],
\[ \begin{tikzcd}[row sep=4ex, column sep=6ex, text height=1.5ex, text depth=0.5ex]
\Sp^{h \cF} \cap \Sp^{\Phi \cG} \ar[shift right=1,right hook->]{r}[swap]{(j_{\cF})_{\ast}} & \Sp^{\Phi \cG} \ar[shift right=2]{l}[swap]{(j_{\cF})^{\ast}} \ar[shift left=2]{r}{(i_{\cF})^{\ast}} & \Sp^{\Phi \cF} \ar[shift left=1,left hook->]{l}{(i_{\cF})_{\ast}}.
\end{tikzcd} \]
Furthermore, the equivalence $\Sp^{h \cF} \xto{\simeq} \Sp^{\tau \cF}$ of Rmk.~\ref{rem:torsionCompleteEquivalence} is an equivalence of recollements under which $(j_{\cG})_!$ is the inclusion of $\cG$-torsion objects into $\cF$-torsion objects.
\end{prp}
\begin{proof} The defining properties of a stable monoidal recollement follow immediately from the same properties for the $\cF$ and $\cG$ recollements on $\Sp^G$. For the last assertion, the equivalence of $\cF$-complete and $\cF$-torsion objects is implemented by $j_! j^{\ast}$, and as such clearly restricts to equivalences $\Sp^{h \cG} \xto{\simeq} \Sp^{\tau \cG}$ and $\Sp^{h \cF} \cap \Sp^{\Phi \cG} \xto{\simeq} \Sp^{\tau \cF} \cap \Sp^{\Phi \cG}$ compatibly with the adjunctions in view of Lem.~\ref{lm:subfamilyProperties}(4-5). Finally, the claim about $(j_{\cG})_!$ follows from a diagram chase of the right adjoints.
\end{proof}

\begin{rem}[Compact generation] \label{rem:CompactGenerationIntersection} Given a $G$-family $\cF$, the $\cF^{-1}$-local objects $\{ G/H_+ \otimes \widetilde{E \cF}: H \notin \cF \}$ form a set of compact generators for $\Sp^{\Phi \cF}$ because $\Sp^{\Phi \cF} = \Mod_{\Sp^G}(\widetilde{E \cF})$ and $G/H_+$ is $\cF$-torsion for all $H \in \cF$. Given two $G$-families $\cF$ and $\cG$, the essential image of $(j_\cF)_!$ is the localizing subcategory of $\Sp^{\Phi \cG}$ generated by $\{ G/H_+ \otimes \widetilde{E \cG}: H \notin \cG, H \in \cF \}$.
\end{rem}

\begin{rem} \label{NewRecollementSpaceCompatibility} The conclusions of Prop.~\ref{prp:RecollementsOfRecollements} are also valid for the $\cF$ and $\cG$ recollements on the $\infty$-category of $G$-spaces. We likewise have the adjunction $\adjunct{\Sigma^{\infty}_+}{\Spc^{h \cF} \cap \Spc^{\Phi \cG} }{\Sp^{h \cF} \cap \Sp^{\Phi \cG}}{\Omega^{\infty}}$ and the same compatibility relations as in Constr.~\ref{cnstr:SpacesRecollement}.
\end{rem}

\begin{rem} \label{rem:Fracture} Let us relate Prop.~\ref{prp:RecollementsOfRecollements} to the `canonical fracture' of $G$-spectra studied in \cite[\S 2.4]{AMGR-NaiveApproach}. We say that a full subcategory $C_0 \subset C$ is \emph{convex} if given any $x,z \in C_0$ such that there exists a $2$-simplex $[x \to y \to z] \in C$, then $y \in C_0$. Let $\Conv(\fS)$ denote the poset of convex subcategories of $\fS$ and let $\Loc(\Sp^G)$ denote the poset of reflective subcategories of $\Sp^G$, with the order given by inclusion. Suppose $Q \in \Conv(\fS)$ and write $Q = \cF \setminus \cG$ for some $G$-family $\cF$ and subfamily $\cG$. Then the assignment $$\mathfrak{F}_G: \Conv(\fS) \to \Loc(\Sp^G)$$ of \cite[Prop.~2.69]{AMGR-NaiveApproach} sends $Q$ to $\Sp^{h \cF} \cap \Sp^{\Phi \cG}$. Indeed, if we let $\cK_{H}$ be the localizing subcategory of $\Sp^G$ generated by $G/H_+$ and examine \cite[Notn.~2.54]{AMGR-NaiveApproach}, we see that $\cK_{\leq Q} \simeq \Sp^{h \cF}$ and $\cK_{< Q} \simeq \Sp^{h \cG}$ under the equivalence between torsion and complete objects. Thus, $\Sp^G_Q$ defined as the presentable quotient of $\cK_{< Q} \to \cK_{\leq Q}$ is equivalent to $\Sp^{h \cF} \cap \Sp^{\Phi \cG}$ in view of Prop.~\ref{prp:RecollementsOfRecollements}. Moreover, by inspection the functor $\rho: \Sp^G_Q \xto{\nu} \cK_{\leq Q} \xto{i_R} \Sp^G$ in \cite[Notn.~2.54]{AMGR-NaiveApproach} exhibiting $\Sp^G_Q$ as a reflective subcategory  embeds $\Sp^G_Q$ as $\cF$-complete and $\cG^{-1}$-local objects.

By \cite[Prop.~2.69]{AMGR-NaiveApproach} the functor $\mathfrak{F}_G: \Conv(\fS) \to \Loc(\Sp^G)$ is a \emph{fracture} in the sense of \cite[Def.~2.32]{AMGR-NaiveApproach}. Thus, for any convex subcategory $Q = \cF \setminus \cG$ and sieve-cosieve decomposition of $Q$ into $Q_0 = \cF_0 \setminus \cG_0$ and $Q_1 = \cF_1 \setminus \cG_1$, we obtain a recollement $(\Sp^{h \cF_0} \cap \Sp^{\Phi \cG_0}, \Sp^{h \cF_1} \cap \Sp^{\Phi \cG_1})$ of $\Sp^{h \cF} \cap \Sp^{\Phi \cG}$. It is easily seen that these specialize to those considered in Prop.~\ref{prp:RecollementsOfRecollements} in the case where $Q$ is itself a sieve or a cosieve.
\end{rem}

\begin{ntn} \label{ntn:locallyClosedFibers} Given a subgroup $H$ of $G$, let $\overline{H} = \fS_{\leq H}$ and $\partial \overline{H} = \fS_{<H}$ denote the $G$-family of subgroups that are subconjugate to $H$ and properly subconjugate to $H$, respectively.\footnote{This notation is consistent with viewing sieves as closed sets and cosieves as open sets for a topology on $\fS$.} Let $\fS^c_{\geq H}$ denote the $G$-family of subgroups $K$ such that $H$ is \emph{not} subconjugate to $K$.
\end{ntn}

\begin{lem} \label{lem:VanishingOutsideCone} Suppose $X \in \Sp^G$ is $\overline{H}$-complete and $(\partial \overline{H})^{-1}$-local. Then $X$ is in addition $(\fS^c_{\geq H})^{-1}$-local, i.e., for all subgroups $K$ such that $H$ is not subconjugate to $K$, $X^{\phi K} \simeq 0$.
\end{lem}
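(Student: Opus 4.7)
The plan is to show that $X$ is $\cG^{-1}$-local for the larger $G$-family $\cG = \partial \overline{H} \cup \fS^c_{\geq H}$, from which the conclusion will follow by Lem.~\ref{lm:subfamilyProperties}(2) applied to the subfamily inclusion $\fS^c_{\geq H} \subset \cG$.

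The key input will be the first equality of Lem.~\ref{lem:KeyIntersectionPropertyFamilies}: for any two $G$-families $\cF$ and $\cG$, we have $\Sp^{\Phi \cG} \cap \Sp^{h \cF} = \Sp^{\Phi(\cF \cap \cG)} \cap \Sp^{h \cF}$. Applying this with $\cF = \overline{H}$ and our chosen $\cG$, one computes that $\overline{H} \cap \cG = \partial \overline{H}$: indeed, if $K \leq H$ and $H \not\leq K$ (the second disjunct in $\cG$), then $K$ is properly subconjugate to $H$, so lies in $\partial \overline{H}$; the first disjunct $\partial \overline{H}$ is already contained in $\overline{H}$. Consequently,
\[ \Sp^{\Phi \cG} \cap \Sp^{h \overline{H}} = \Sp^{\Phi \partial \overline{H}} \cap \Sp^{h \overline{H}}, \]
and since by hypothesis $X$ belongs to the right-hand side, $X$ is $\cG^{-1}$-local as desired.

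The only nontrivial step is to verify that $\cG = \partial \overline{H} \cup \fS^c_{\geq H}$ is genuinely a $G$-family, i.e., closed under subconjugacy. Suppose $K \in \cG$ and $K' \leq K$ in $\fS$. If $K \in \partial \overline{H}$, then $K' \leq K < H$, so either $K' < H$ (and $K' \in \partial \overline{H}$) or else $K' \sim H$, in which case $H \leq K' \leq K$ would force $K \sim H$, contradicting $K \in \partial \overline{H}$. If instead $K \in \fS^c_{\geq H}$, then if we had $H \leq K'$, transitivity would give $H \leq K$, again a contradiction; so $K' \in \fS^c_{\geq H}$. This exhausts the cases and I expect it to be the main (mild) obstacle, as the rest is a direct application of the intersection formula. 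Once $\cG$ is established as a family, the two displayed steps above give the result immediately.
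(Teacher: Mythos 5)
Your proof is correct and follows the same route as the paper, which simply notes $\partial \overline{H} = \overline{H} \cap \fS^c_{\geq H}$ and applies Lem.~\ref{lem:KeyIntersectionPropertyFamilies} directly with $\cG = \fS^c_{\geq H}$. Your detour through the union $\partial \overline{H} \cup \fS^c_{\geq H}$ is harmless but redundant: since any $K$ properly subconjugate to $H$ cannot have $H$ subconjugate to it, one has $\partial \overline{H} \subset \fS^c_{\geq H}$, so your family $\cG$ is just $\fS^c_{\geq H}$ and the final appeal to Lem.~\ref{lm:subfamilyProperties}(2) is unnecessary.
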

\begin{proof} Note that $\partial \overline{H} = \overline{H} \cap \fS^c_{\geq H}$ and use Lem.~\ref{lem:KeyIntersectionPropertyFamilies}.
\end{proof}

The following two lemmas are explained in \cite[Obs.~2.11-14]{AMGR-NaiveApproach}) (and the first one also in \cite[Prop.~II.2.14]{NS18}), so we will omit their proofs.

\begin{lem} \label{lem:ClosedPartRecollementNormalSubgroup} Let $N$ be a normal subgroup of $G$. Then the geometric fixed points functor $\Phi^N: \Sp^G \to \Sp^{G/N}$ has fully faithful right adjoint with essential image $\Sp^{\Phi \fS^c_{\geq H}}$. Consequently, $\Sp^{G/N}$ is equivalent to the smashing localization $\Mod_{\Sp^G}(\widetilde{E \fS^c_{\geq N}})$.
\end{lem}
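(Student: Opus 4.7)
The plan is to identify $\Sp^{G/N}$ with $\Sp^{\Phi \fS^c_{\geq N}}$ via an explicit right adjoint to $\Phi^N$. First, I would observe that $\Phi^N$ is symmetric monoidal and colimit-preserving (this is built into the Bachmann-Hoyois formalism of Def.~\ref{BachmannHoyoisFunctorNorms}, since $\Phi^N \simeq f_{\otimes}$ for $f: BG \to B(G/N)$ in $\Pr^{\mr{L}}$), so it admits a right adjoint $R$. To exhibit $R$ concretely, I would use the key formula
\[ \Phi^N X \simeq \Psi^N(X \otimes \widetilde{E \fS^c_{\geq N}}) \]
coming from the $\fS^c_{\geq N}$-recollement, and compose with the $\inf^N \dashv \Psi^N$ adjunction to obtain $R(Y) \simeq F(\widetilde{E \fS^c_{\geq N}}, \inf^N Y)$. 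Since $F(\widetilde{E\fS^c_{\geq N}}, -)$ is the endofunctor $i_\ast i^!$ of the $\fS^c_{\geq N}$-recollement, $R$ visibly lands in $\Sp^{\Phi \fS^c_{\geq N}}$.

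Next, I would show $R$ is fully faithful by verifying the counit $\Phi^N R \to \id$ is an equivalence. Since $RY$ is $\fS^c_{\geq N}$-local, $\Phi^N RY \simeq \Psi^N RY$, and the latter is right adjoint to $\Phi^N \circ \inf^N$ by composing the two adjunctions. So it suffices to verify $\Phi^N \inf^N \simeq \id_{\Sp^{G/N}}$. Both functors are colimit-preserving, so I reduce to checking this on the compact generators $\Sigma^\infty_+ (G/N)/(K/N)$ for $K \supseteq N$; there $\Phi^N \inf^N \Sigma^\infty_+ G/K \simeq \Sigma^\infty_+ (G/K)^N \simeq \Sigma^\infty_+ G/K$ because $N$ is normal and $N \leq K$ implies $(G/K)^N = G/K$.

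To identify the essential image as all of $\Sp^{\Phi \fS^c_{\geq N}}$, I would argue that $\Phi^N$ restricted to $\Sp^{\Phi \fS^c_{\geq N}}$ is conservative: for $X \in \Sp^{\Phi \fS^c_{\geq N}}$ with $\Phi^N X \simeq 0$, I must check $X^{\phi K} \simeq 0$ for all $K$, which by Lem.~\ref{lem:GeometricFixedPointsDetectionCriterion} reduces to $K \supseteq N$. For such $K$, using that $N$ is normal in $N_G(K)$ (as $N$ is normal in $G$) and in $K$, the transitivity identity $\Phi^K \simeq \Phi^{K/N} \circ \Phi^N$ over $N_G(K)$ together with base-change $\Phi^N \res \simeq \res \Phi^N$ (encoded by functoriality of $\SH^\otimes$ on $\Span(\Gpd_{\fin})$) gives $\Phi^K X \simeq \Phi^{K/N} \res \Phi^N X \simeq 0$. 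Granted conservativity, the unit $X \to R \Phi^N X$ has cofiber $C$ in $\Sp^{\Phi \fS^c_{\geq N}}$ (this subcategory is stable) with $\Phi^N C \simeq 0$ (applying $\Phi^N$ and using $R$ fully faithful), forcing $C \simeq 0$. The final clause about $\Mod_{\Sp^G}(\widetilde{E \fS^c_{\geq N}})$ is immediate from \ref{SmashingLocalizationsAreStableMonoidalRecollements}.

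The main obstacle is cleanly deploying the transitivity/base-change compatibilities for $\Phi^N$ in the necessary relative (Weyl-group) setup; this is where the full strength of $\SH$ being a functor on $\Span(\Gpd_{\fin})$ must be invoked, rather than treating each $\Phi^K$ as an isolated construction.
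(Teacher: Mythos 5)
Your overall architecture is the standard one (and essentially the one in the sources the paper cites for this lemma, namely \cite[Prop.~II.2.14]{NS18} and \cite[Obs.~2.11--14]{AMGR-NaiveApproach}, since the paper itself omits the proof): establish $\Phi^N \inf^N \simeq \id$, deduce full faithfulness of the right adjoint, and identify the essential image by showing $\Phi^N$ is conservative on $\fS^c_{\geq N}$-local objects via transitivity and base-change of geometric fixed points. Those last two steps are handled correctly. However, your explicit formula for the right adjoint is wrong, and the error propagates. Writing $\cF = \fS^c_{\geq N}$, the right adjoint of $\Psi^N(- \otimes \widetilde{E\cF})$ is $F(\widetilde{E\cF}, -)$ composed with the \emph{right} adjoint of $\Psi^N$; but $\inf^N$ is the \emph{left} adjoint of $\Psi^N$, so it cannot be slotted in there. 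The correct right adjoint is ``inflate, then localize,'' $R(Y) \simeq \inf^N(Y) \otimes \widetilde{E\cF} = i_\ast i^\ast \inf^N(Y)$, not ``inflate, then colocalize,'' $F(\widetilde{E\cF}, \inf^N Y) \simeq i_\ast i^! \inf^N(Y)$; the two differ by the Tate-type term $i_\ast i^\ast j_\ast j^\ast \inf^N(Y)$. Concretely, for $G = N = C_p$ the right adjoint must satisfy $\map(S^0, RY) \simeq \map(\Phi^{C_p}S^0, Y) \simeq Y$, i.e.\ $(RY)^{C_p} \simeq Y$. One checks $(\inf Y \otimes \widetilde{E C_p})^{C_p} \simeq \Phi^{C_p}\inf Y \simeq Y$ as required, whereas $F(\widetilde{E C_p}, \inf Y)^{C_p} \simeq \fib\bigl(Y \to Y^{t C_p}\bigr)$ (Tate construction of the trivial action), which is not $Y$, e.g.\ for $Y = H\FF_p$. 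In particular, with your $R$ the key assertion $\Psi^N R \simeq \id$ is false, so the full-faithfulness step collapses as written.

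The repair does not actually require an explicit formula: $R$ exists because $\Phi^N$ preserves colimits, and it lands in $\Sp^{\Phi\cF}$ because $\Map(T, RY) \simeq \Map(\Phi^N T, Y) \simeq 0$ for every $\cF$-torsion $T$ ($\Phi^N$ kills the generators $G/K_+$ for $K \in \cF$ since $(G/K)^N = \emptyset$), and the local objects are precisely the right-orthogonal complement of the torsion ones by \ref{stableRecollementComment}. Your remaining steps then go through, with two caveats you should make explicit. First, the identification of the Bachmann--Hoyois $\Phi^N = f_\otimes$ with $\Psi^N(- \otimes \widetilde{E\cF})$ (equivalently, that $\Psi^N$ and $\Phi^N$ agree on local objects) is a genuine input, not a formal consequence of the recollement; it needs a citation or a proof by comparing symmetric monoidal colimit-preserving functors on suspension spectra. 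Second, to prove $\Phi^N\inf^N \simeq \id$ you must produce a natural transformation before checking it on generators (it comes from the unstable identity $(\inf^N A)^N \simeq A$ and the universal property of $\Sp^{G/N}$ as a localization of $\Spc^{G/N}_\ast$), and similarly knowing $\Phi^N R$ is abstractly equivalent to $\id$ is a priori weaker than knowing the counit itself is an equivalence; both points are routine but should not be elided.
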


\begin{lem} \label{lem:LocallyClosedFibersAreBorel} The geometric fixed points functor $\phi^H: \Sp^G \to \Fun(B W_G H, \Sp)$ has fully faithful right adjoint with essential image $\Sp^{h \overline{H}} \cap \Sp^{\Phi (\partial \overline{H})} =\Sp^{h \overline{H}} \cap \Sp^{\Phi \fS^c_{\geq H}}$.
\end{lem}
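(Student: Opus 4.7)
The equality of the two forms of the essential image is immediate from Lem.~\ref{lem:VanishingOutsideCone}, using that $\partial \overline{H} = \overline{H} \cap \fS^c_{\geq H}$; write $\cC$ for the common subcategory. My strategy is to factor $\phi^H$ as
\[ \Sp^G \xto{\res^G_{N_G H}} \Sp^{N_G H} \xto{\Phi^H} \Sp^{W_G H} \xto{u^{\ast}} \Fun(BW_G H, \Sp), \]
where $u^{\ast}$ sends a $W_G H$-spectrum to its underlying spectrum with $W_G H$-action, compose right adjoints to build the candidate $R := \ind^G_{N_G H} \circ (\Phi^H)_{\ast} \circ u_{\ast}$, and then verify that $R$ is fully faithful with essential image $\cC$.

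By Lem.~\ref{lem:ClosedPartRecollementNormalSubgroup} applied to the normal inclusion $H \lhd N_G H$, $(\Phi^H)_{\ast}$ is fully faithful with essential image $\Sp^{\Phi \fS^c_{\geq H}[N_G H]}$, and the Borel completion $u_{\ast}$ is fully faithful with essential image the $\{1\}$-complete $W_G H$-spectra. Their composite $\iota := (\Phi^H)_{\ast} \circ u_{\ast}$ is therefore fully faithful into $\Sp^{N_G H}$, with essential image those $N_G H$-spectra $Y$ whose geometric fixed points $Y^{\phi K}$ vanish for every $K \leq N_G H$ with $K \neq H$ (since $H$ is normal in $N_G H$, this coincides with $\Sp^{h \overline{H}[N_G H]} \cap \Sp^{\Phi \partial \overline{H}[N_G H]}$). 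To verify that $\phi^H R \simeq \id$, I would use the Mackey-type decomposition $\res^G_{N_G H} \ind^G_{N_G H} Y \simeq \bigoplus_{[g] \in N_G H \backslash G / N_G H} \ind^{N_G H}_{L_g}(g^{\ast}Y)$ with $L_g = N_G H \cap g N_G H g^{-1}$: for $g \notin N_G H$, the summand has vanishing $\Phi^H$, because $(g^{\ast} Y)^{\phi H} = Y^{\phi g^{-1}Hg} = 0$ as $g^{-1}Hg \neq H$ in $N_G H$ and $Y$ has support only at $H$; only the identity double coset contributes, yielding $\Phi^H \res^G_{N_G H} \ind^G_{N_G H} Y \simeq \Phi^H Y$ and hence $\phi^H R \simeq \id$.

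For the essential image of $R$: the analogous base-change formula $(\ind Y)^{\phi K'} \simeq \bigoplus_{[g]} (g^{\ast}Y)^{\phi g^{-1}K'g}$ (summing over $[g] \in K' \backslash G / N_G H$ with $g^{-1}K'g \leq N_G H$) forces $(\ind Y)^{\phi K'} = 0$ unless $K'$ is $G$-conjugate to $H$, so $\ind Y \in \Sp^{\Phi \fS^c_{\geq H}}$; and $\overline{H}$-completeness of $\ind Y$ holds because for any $\overline{H}^{-1}$-local $Z$ one has $\Map_{\Sp^G}(Z, \ind Y) \simeq \Map_{\Sp^{N_G H}}(\res Z, Y) \simeq \ast$, as $\res Z$ is $\overline{H}[N_G H]^{-1}$-local (subconjugacy in $N_G H$ implies subconjugacy in $G$) and $Y$ lies in the essential image of $\iota$, hence is $\overline{H}[N_G H]$-complete. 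Thus $R$ lands in $\cC$. For essential surjectivity of $R$ onto $\cC$, it suffices to show that if $F \in \cC$ satisfies $\phi^H F = 0$ then $F = 0$: I would apply Rmk.~\ref{rem:CompactGenerationIntersection} to the recollement of $\Sp^{\Phi \partial\overline{H}}$ coming from Prop.~\ref{prp:RecollementsOfRecollements} to identify $\cC$ as the localizing subcategory of $\Sp^{\Phi \partial\overline{H}}$ generated by a single compact object $P$ arising from the $\partial\overline{H}^{-1}$-localization of $G/H_+$, then check that $\phi^H P$ is a compact generator of $\Fun(BW_G H, \Sp)$ and that $\map_{\cC}(P, -) \simeq \map_{\Sp^{hW_G H}}(\phi^H P, \phi^H -)$ (using $\phi^H R \simeq \id$ and the adjunction), which together kill $F$. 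The main obstacle will be identifying the generator $P$ and its image $\phi^H P$ concretely enough to carry out this last verification.
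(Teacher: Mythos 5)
The paper itself omits the proof of this lemma (deferring to \cite{AMGR-NaiveApproach} and \cite{NS18}), so I will assess your argument on its own terms. Your architecture --- factor $\phi^H$ through $N_G H$, settle the normal case via Lem.~\ref{lem:ClosedPartRecollementNormalSubgroup} and Borel completion, then handle induction by a double-coset analysis of $\res^G_{N_G H}\ind^G_{N_G H}$ --- is viable, and your reduction of the two descriptions of the image to Lem.~\ref{lem:VanishingOutsideCone} is correct. But there is a genuine error at the crux: you assert that the essential image of $\iota = (\Phi^H)_{\ast}\circ u_{\ast}$ consists of those $Y\in\Sp^{N_G H}$ with $Y^{\phi K}\simeq 0$ for all $K\neq H$. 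By Lem.~\ref{lem:GeometricFixedPointsDetectionCriterion} that condition characterizes the \emph{torsion} locus $\Sp^{\tau\overline{H}}\cap\Sp^{\Phi(\partial\overline{H})}$, which is equivalent to, but not equal to, the complete locus where the image of a right adjoint must live. The case $H=1$ already refutes it: there $\iota = F(EG_+,-)$ and $F(EG_+,X)^{\phi K}\simeq X^{tK}$ is generally nonzero for $K\neq 1$. So the normal-subgroup case of the lemma --- the part that actually needs proving --- is asserted via a false intermediate claim, and your later appeals to ``$Y$ has support only at $H$'' and to ``$Y$ is $\overline{H}[N_G H]$-complete'' are mutually inconsistent.

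The gap is fillable, and your downstream steps survive on the correct, weaker input. What you genuinely know about $Y=\iota(Z)$ is that $Y^{\phi K}\simeq 0$ for $K\in\fS^c_{\geq H}[N_G H]$, and every conjugate $g^{-1}Hg$ with $g\notin N_G H$ lies in that family (it is not $N_G H$-conjugate to $H$, hence, having the same order, not subconjugate to it), so the off-identity double-coset summands still die and $\phi^H R\simeq\id$ holds; likewise the vanishing needed to place $\ind Y$ in $\Sp^{\Phi\fS^c_{\geq H}}$ concerns only $K'\in\fS^c_{\geq H}[G]$, whose relevant conjugates land in $\fS^c_{\geq H}[N_G H]$. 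What you must still supply is the completeness half of the normal case: that $(\Phi^H)_{\ast}$ carries Borel-complete $W_G H$-spectra to $\overline{H}$-complete $N_G H$-spectra. This follows from $\Map(W,(\Phi^H)_{\ast}Z)\simeq\Map(\Phi^H W,Z)$ once one checks that $\Phi^H$ carries $\overline{H}^{-1}$-local $N_G H$-spectra to $\{1\}^{-1}$-local $W_G H$-spectra and that every $\{1\}^{-1}$-local object arises this way (apply $(\Phi^H)_{\ast}$ and use Lem.~\ref{lem:GeometricFixedPointsDetectionCriterion}). Finally, your last step is cleaner if run as: for $F$ in the claimed image the cofiber of the unit $F\to R\phi^H F$ lies in the same stable subcategory and is killed by $\phi^H$, and any such object vanishes because $\map(G/H_+,F)\simeq F^H\simeq F^{\phi H}$ by isotropy separation (using that $\res^G_H F$ is local away from proper subgroups of $H$) while $G/H_+\otimes\widetilde{E\partial\overline{H}}$ generates by Rmk.~\ref{rem:CompactGenerationIntersection}; this sidesteps the ``main obstacle'' you flag.
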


\subsection{Reconstruction from geometric fixed points}
 
\label{section:Reconstruction}

We next aim to state the reconstruction theorem \cite[Thm.~A]{AMGR-NaiveApproach} of Ayala, Mazel-Gee, and Rozenblyum. For this, we need a few preliminary notions.

\begin{dfn} \label{dfn:GeometricLocus} The $G$-\emph{geometric locus} $$\Sp^G_{\locus{\phi}} \subset \Sp^G \times \fS[G]$$ is the full subcategory on objects $(X,H)$ such that $X \in \Sp^{h \overline{H}} \cap \Sp^{\Phi (\partial \overline{H})}$, i.e., $X$ is $\overline{H}$-complete and $(\partial \overline{H})^{-1}$-local (Notn.~\ref{ntn:locallyClosedFibers}).
\end{dfn}

\begin{dfn} Given $H$ subconjugate to $K$, the \emph{generalized Tate construction}
\[ \tau^K_H: \Fun(B W_G H, \Sp) \to \Fun(B W_G K, \Sp) \]
is the functor given by the composition
\[ \Fun(B W_G H, \Sp) \to \Sp^G \xto{\phi^K} \Fun(B W_G K, \Sp) \]
where the first functor is the right adjoint to $\phi^H$. If $H=1$, then we will write $\tau^K \coloneq \tau^K_1$.
\end{dfn}

\begin{rem} \label{rem:GenTateResCompatibility} Evidently, the generalized Tate functors $\tau^K_H$ inherit some compatibility properties from the geometric fixed points functors. For example, for $H$ a subgroup of $K$ in $G$, the commutative diagrams
\[ \begin{tikzcd}[row sep=4ex, column sep=4ex, text height=1.5ex, text depth=0.25ex]
\Sp^K \ar{r}{\Phi^H} \ar{d}[swap]{\ind} & \Sp^{W_K H} \ar{d}[swap]{\ind} \ar{r} & \Fun(B W_K H, \Sp) \ar{d}[swap]{\ind} \\
\Sp^G \ar{r}{\Phi^H} & \Sp^{W_G H} \ar{r} & \Fun(B W_G H, \Sp)
\end{tikzcd},
\begin{tikzcd}[row sep=4ex, column sep=4ex, text height=1.5ex, text depth=0.25ex]
\Sp^G \ar{r}{\Phi^K} \ar{d}{\res} & \Sp^{W_K H} \ar{d}{\res} \\
\Sp^K \ar{r}{\Phi^K} & \Sp
\end{tikzcd}
 \]
imply that the diagram of generalized Tate functors defined relative to $G$ and $K$
\[ \begin{tikzcd}[row sep=4ex, column sep=4ex, text height=1.5ex, text depth=0.25ex]
\Sp^{h W_G H} \ar{r}{\tau^K_H} \ar{d}{\res} & \Sp^{h W_G K} \ar{d}{\res} \\
\Sp^{h W_K H} \ar{r}{\tau^K_H} & \Sp
\end{tikzcd} \]
commutes. The notation is therefore unambiguous (or abusive) in the same sense as that for geometric fixed points.

Also, if $N_G K = N_G H$, then the composite (with the first functor right adjoint to $\Phi^H$)
\[ \Sp^{W_G H} \to \Sp^G \xto{\Phi^K} \Sp^{W_G K} \]
is homotopic to $\Phi^{K/H}$, and thus $\tau^K_H \simeq \tau^{K/H}$ for $K/H$ regarded as a normal subgroup of $W_G H$. 

Finally, note that if $G = C_p$ is a cyclic group of prime order, then $\tau^{C_p} \simeq t^{C_p}$ is the ordinary Tate construction, but not generally otherwise.
\end{rem}

\begin{lem} The structure map $p: \Sp^G_{\locus{\phi}} \to \fS$ is the locally cocartesian fibration such that the functors $\tau^K_H$ are the pushforward functors encoded by $p$ under the equivalence of Lem.~\ref{lem:LocallyClosedFibersAreBorel}.
\end{lem}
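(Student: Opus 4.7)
The plan is to realize $\Sp^G_{\locus{\phi}}$ as a full subcategory of the product $\Sp^G \times \fS$ and to deduce the locally cocartesian structure on $p$ from the reflective subcategory structure on each fiber. Write $\Sp^G_H := \Sp^{h \overline{H}} \cap \Sp^{\Phi(\partial \overline{H})}$ for the fiber over $H$. By Lem.~\ref{lem:LocallyClosedFibersAreBorel}, the right adjoint $R_H$ to $\phi^H: \Sp^G \to \Fun(B W_G H, \Sp)$ is fully faithful with essential image $\Sp^G_H$; in particular, $\Sp^G_H \hookrightarrow \Sp^G$ is a reflective subcategory with localization endofunctor $L_H := R_H \phi^H$, and $\phi^H$ restricts to an equivalence $\Sp^G_H \xto{\simeq} \Fun(B W_G H, \Sp)$ with inverse $R_H$.

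To prove $p$ is locally cocartesian, I would fix an edge $g: H \to K$ in $\fS$ and pull back along $g: \Delta^1 \to \fS$. For any $X \in \Sp^G_H$, the localization unit $\eta_K: X \to L_K X$ in $\Sp^G$ gives an edge $\widetilde{\eta}_K: (X,0) \to (L_K X, 1)$ in the pullback $p^{-1}(\Delta^1)$. Since $\Sp^G_{\locus{\phi}} \subset \Sp^G \times \fS$ is a full subcategory and all mapping spaces in $\fS$ are either empty or contractible, for any $Z \in \Sp^G_K$ precomposition with $\widetilde{\eta}_K$ is identified with the reflection map $\Map_{\Sp^G}(L_K X, Z) \to \Map_{\Sp^G}(X, Z)$, which is an equivalence by the universal property of $L_K$. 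Hence $\widetilde{\eta}_K$ is $p$-cocartesian, and the pushforward functor is $L_K|_{\Sp^G_H}: \Sp^G_H \to \Sp^G_K$.

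Finally, under the equivalences $\phi^H: \Sp^G_H \xto{\simeq} \Fun(B W_G H, \Sp)$, the pushforward corresponds to the composite
\[ \Fun(B W_G H, \Sp) \xto{R_H} \Sp^G \xto{L_K = R_K \phi^K} \Sp^G \xto{\phi^K} \Fun(B W_G K, \Sp), \]
which simplifies as $\phi^K R_K \phi^K R_H \simeq \phi^K R_H = \tau^K_H$ using the counit equivalence $\phi^K R_K \simeq \id$ (which holds since $R_K$ is fully faithful). The principal input is the reflective structure of Lem.~\ref{lem:LocallyClosedFibersAreBorel}; the rest is formal manipulation with adjunctions, so I do not anticipate any significant obstacle. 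One small point worth remarking on is that the resulting fibration is only \emph{locally} cocartesian, not cocartesian: for $H \le K \le L$, the composite pushforward $L_L \circ L_K|_{\Sp^G_H}$ typically differs from the direct pushforward $L_L|_{\Sp^G_H}$, reflecting the familiar non-multiplicativity of generalized Tate constructions.
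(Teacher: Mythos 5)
Your proof is correct and follows essentially the same route as the paper: restrict over an edge $H \to K$, use that $(\Sp^G_{\locus{\phi}})_K$ is a reflective localization of $\Sp^G$ (Lem.~\ref{lem:LocallyClosedFibersAreBorel}), and take the localization unit $X \to L_K X$ as the locally cocartesian lift, so that the pushforward is $\phi^K R_H \simeq \tau^K_H$. The only cosmetic difference is that the paper packages the verification via an intermediate sub-cocartesian fibration $C' \subset \Sp^G \times \Delta^1$ and \cite[Lem.~2.2.1.11]{HA} on localizations of cocartesian fibrations, whereas you check the mapping-space criterion for the unit edge directly; the two are interchangeable.
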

\begin{proof} This is \cite[Constr.~2.38]{AMGR-NaiveApproach} applied to the fracture $\mathfrak{F}_G$ of Rmk.~\ref{rem:Fracture}. To spell out a few more details, we need to show that for every edge $e: \Delta^1 \to \fS$ given by $H$ subconjugate to $K$, the pullback $p|_{e}$ of $p$ over $\Delta^1$ is a cocartesian fibration. Let $C' \subset \Sp^G \times \Delta^1$ be the full subcategory on objects $\{(X,i) \}$ where if $i=0$, then $X \in (\Sp^G_{\locus{\phi}})_H$. Then we have a factorization
 \[ \Sp^G_{\locus{\phi}} \times_{\fS, e} \Delta^1 \xto{i''} C' \xto{i'} \Sp^G \times \Delta^1. \]
Note that $C' \to \Delta^1$ is a sub-cocartesian fibration of $\Sp^G \times \Delta^1$ via $i'$ (with cocartesian edges exactly those sent to equivalences via the projection to $\Sp^G$). As for the fiber over $1$, by definition we have that $(\Sp^G_{\locus{\phi}})_K$ is a localization of $\Sp^G$. By an elementary lifting argument, this extends to a localization functor $L: C' \to C'$ whose essential image is $\Sp^G_{\locus{\phi}} \times_{\fS, e} \Delta^1$. By \cite[Lem.~2.2.1.11]{HA}, we deduce that $p|_{e}$ is a cocartesian fibration.
\end{proof}


Recall the barycentric subdivision construction (Def.~\ref{dfn:barycentricSubdivision} and Rmk.~\ref{rem:barycentricSubdivisionOrdinaryCategory}). Unwinding that definition in our situation of interest, we see that $\sd(\fS)$ is the category whose objects are strings $\kappa = [H_0 < H_1 < \cdots < H_n]$ in $\fS$ with each $H_i$ properly subconjugate to $H_{i+1}$, and where a morphism
\[ \kappa = [H_0 < H_1 < \cdots < H_n] \to \lambda = [K_0 < K_1 < \cdots < K_m] \]
is the data of an injective map $\alpha: [n] \to [m]$ of totally ordered sets and a commutative diagram in $\fS$
\[ \begin{tikzcd}[row sep=4ex, column sep=4ex, text height=1.5ex, text depth=0.25ex]
H_{0} \ar{r} \ar{d} & H_{1} \ar{r} \ar{d} & \cdots \ar{r} & H_n \ar{d} \\
K_{\alpha(0)} \ar{r} & K_{\alpha(1)} \ar{r} & \cdots \ar{r} & K_{\alpha(n)}
\end{tikzcd} \]
whose vertical morphisms are equivalences. Note that if a morphism $\kappa \to \lambda$ exists, then $\alpha$ and the commutative ladder are uniquely determined. Thus, the morphism sets in $\sd(\fS)$ are either empty or singleton and $\sd(\fS)$ is also a preordered set. Regard $\sd(\fS)$ as a locally cocartesian fibration over $\fS$ via the functor which takes a string to its maximum element (Constr.~\ref{cnstr:MaxFunctorSubdivision}).


\begin{rem} Given any locally cocartesian fibration $p: C \to \fS$ whose fibers $C_H$ are stable $\infty$-categories and whose pushforward functors are exact, the right-lax limit $\Fun^{\cocart}_{/\fS}(\sd(\fS),C)$ is a stable $\infty$-category by Lem.~\ref{lem:rlaxLimitAdmitsFiniteLimitsAndStable}. Moreover, if the fibers are presentable and the pushforward functors are also accessible, then the right-lax limit is presentable by Prop.~\ref{prp:rightLaxLimitPresentable}. 
\end{rem}

We may now state \cite[Thm.~A]{AMGR-NaiveApproach}, rewritten in our notation.

\begin{thm} There is a canonical equivalence $\Sp^G \simeq \Fun^{\cocart}_{/\fS}(\sd(\fS),\Sp^G_{\locus{\phi}})$.
\end{thm}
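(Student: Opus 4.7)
The plan is to construct a comparison functor
\[
\Theta: \Sp^G \to \Fun^{\cocart}_{/\fS}(\sd(\fS), \Sp^G_{\locus{\phi}})
\]
and prove by induction on a $G$-family $\cF \subseteq \fS$ that its restriction
$\Theta_\cF: \Sp^{h\cF} \to \Fun^{\cocart}_{/\cF}(\sd(\cF), \Sp^G_{\locus{\phi}}|_\cF)$
is an equivalence; the theorem is then the case $\cF = \fS$, where $\Sp^{h\fS} = \Sp^G$. To build $\Theta$, I would use that the assignment $H \mapsto \phi^H$, together with the comparison maps $\tau^K_H \phi^H \to \phi^K$ for $H \leq K$, assembles into a left-lax natural transformation from the identity left-lax diagram on $\fS$ into the left-lax diagram classifying $\Sp^G_{\locus{\phi}} \to \fS$. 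Concretely, $\Theta(X)$ sends a string $\sigma = [H_0 < \cdots < H_n]$ in $\sd(\fS)$ to the iterated generalized Tate construction
\[
\tau^{H_n}_{H_{n-1}} \tau^{H_{n-1}}_{H_{n-2}} \cdots \tau^{H_1}_{H_0}\bigl(\phi^{H_0}(X)\bigr) \in (\Sp^G_{\locus{\phi}})_{H_n},
\]
with inclusions of strings sent to the evident comparison morphisms, and with the edges over non-equivalence morphisms $H_{n-1} \to H_n$ in $\fS$ becoming locally cocartesian lifts implementing the pushforward $\tau^{H_n}_{H_{n-1}}$.

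The base case $\cF = \emptyset$ is vacuous. For the inductive step, pick a maximal element $H \in \cF$ and let $\cF' = \cF \setminus \{H\}$, which is again a $G$-family. By Prop.~\ref{prp:RecollementsOfRecollements}, $\Sp^{h\cF}$ is a recollement of the pair $(\Sp^{h\cF'}, \Sp^{h\cF} \cap \Sp^{\Phi\cF'})$; by Lem.~\ref{lem:KeyIntersectionPropertyFamilies} we may rewrite the closed part as $\Sp^{h\overline{H}} \cap \Sp^{\Phi\partial\overline{H}}$, which by Lem.~\ref{lem:LocallyClosedFibersAreBorel} is identified via $\phi^H$ with $\Fun(B W_G H, \Sp) = (\Sp^G_{\locus{\phi}})_H$. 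On the other side, the decomposition $\cF = \cF' \sqcup \{H\}$ is a sieve-cosieve decomposition, so Thm.~\ref{thm:RecollementRlaxLimitOfLlaxFunctor} applied to $\sd(\cF) \to \cF$ produces a recollement of $\Fun^{\cocart}_{/\cF}(\sd(\cF), \Sp^G_{\locus{\phi}}|_\cF)$ with open part $\Fun^{\cocart}_{/\cF'}(\sd(\cF'), \Sp^G_{\locus{\phi}}|_{\cF'})$ and closed part $(\Sp^G_{\locus{\phi}})_H$. By construction $\Theta_\cF$ restricts to $\Theta_{\cF'}$ on open parts (an equivalence by induction) and to the identity on closed parts, so by Rmk.~\ref{rem:TwoOutOfThreePropertyEquivalencesStrictMorphismRecoll} it suffices to verify that $\Theta_\cF$ is a \emph{strict} morphism of recollements.

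Strictness amounts to intertwining the two gluing functors $\Sp^{h\cF'} \to \Fun(BW_GH,\Sp)$. The left-hand gluing computes to $Y \mapsto \phi^H(Y)$: unfolding $(i_{\cF'})^* = (j_\cF)^*(-\otimes\widetilde{E\cF'})$, the fiber of the localization $Y \to (i_{\cF'})^*Y$ inside $\Sp^{h\cF}$ is $\cF'$-torsion, hence killed by $\phi^H$ since $H \notin \cF'$, and one computes $\phi^H(Y \otimes \widetilde{E\cF'}) \simeq \phi^H(Y)$ using $\phi^H(\widetilde{E\cF'}) = S^0$. By Prop.~\ref{prp:ExistenceLaxRightKanExtension} and Rmk.~\ref{rem:IdentifyCategoryForLimitWhenPoset}, the right-hand gluing sends $\Theta_{\cF'}(Y)$ to
\[
\lim_{[K_0 < \cdots < K_n] \in J_{[H]}} \tau^H_{K_n} \tau^{K_n}_{K_{n-1}} \cdots \tau^{K_1}_{K_0}\bigl(\phi^{K_0}(Y)\bigr),
\]
where $J_{[H]}$ is the poset of strings $[K_0 < \cdots < K_n < H]$ with $K_i \in \cF'$.

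The hard part will therefore be establishing the geometric-fixed-points limit formula
\[
\phi^H(Y) \;\simeq\; \lim_{[K_0 < \cdots < K_n] \in J_{[H]}} \tau^H_{K_n} \cdots \tau^{K_1}_{K_0}\bigl(\phi^{K_0}(Y)\bigr)
\]
for every $\cF'$-complete $Y$ --- essentially the content of Cor.~\ref{cor:FormulaForGeomFixedPointsOfCompleteSpectrum}. My strategy for proving it is a secondary induction on $|\cF'|$: iteratively refracture $Y$ via subfamilies $\cF'' \subsetneq \cF'$ using Rmk.~\ref{rem:FamilyIntersectionFormula}, apply $\phi^H$ to each piece of the resulting tower (noting that $\cF''$-torsion or $\cF''^{-1}$-local summands have controlled behavior under $\phi^H$), and match the tower against the pointwise $p$-right Kan extensions coming from the nested sieve-cosieve decompositions of $\cF'$ provided by Prop.~\ref{prp:ExistenceLaxRightKanExtension}. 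Once this identification is verified, $\Theta_\cF$ is a strict morphism of recollements whose open and closed components are equivalences, so $\Theta_\cF$ is itself an equivalence and the outer induction closes.
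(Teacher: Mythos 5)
Your outline — build a comparison functor, restrict to families $\cF$, recollement on both sides, show strictness, conclude by Rmk.~\ref{rem:TwoOutOfThreePropertyEquivalencesStrictMorphismRecoll} and induction on the size of $\cF$ — matches the scaffolding of the paper's proof (Constr.~\ref{cnstr:ComparisonFunctorFromRightLaxLimitToGSpectra}, Prop.~\ref{prp:ComparisonFunctorIsStrictMorphismOfRecollements}, Thm.~\ref{thm:GeometricFixedPointsDescriptionOfGSpectra}). But the direction of your comparison functor is reversed relative to the paper's, and this is exactly where your proposal has a real gap.

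You propose $\Theta: \Sp^G \to \Fun^{\cocart}_{/\fS}(\sd(\fS),\Sp^G_{\locus{\phi}})$, sending $X$ on a string $[H_0 < \cdots < H_n]$ to the iterated $\tau^{H_n}_{H_{n-1}}\cdots\tau^{H_1}_{H_0}\phi^{H_0}(X)$ and claim this ``assembles into a left-lax natural transformation.'' That assembly is precisely the delicate step. The individual lax structure maps $\tau^K_H\phi^H \to \phi^K$ arise as units of adjunctions, and making them coherent as a functor out of $\sd(\fS)$ (compatibly with the higher simplices) is the content of the AMGR right-lax globalization $\Sp^G \times \fS \dashrightarrow \Sp^G_{\locus{\phi}}$, whose construction relies on $(\infty,2)$-categorical adjunction machinery (\cite[Lem.~1.34]{AMGR-NaiveApproach}, \cite[Cor.~3.1.7]{gaitsgory2017study}). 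The paper explicitly flags this and deliberately runs the functor in the \emph{opposite} direction, using the honest inclusion $\Sp^G_{\locus{\phi}} \subset \Sp^G \times \fS$: the comparison is ``postcompose with the projection to $\Sp^G$, then take the limit over $\sd(\cH)$,'' which requires no lax-coherence input at all. With the functor going that way, the key identification $\phi^H \circ \Theta'_{\cH} \simeq \ev_H$ (Lem.~\ref{lm:GeometricFixedPointsOfComparisonFunctor}) follows from a short cofinality argument after commuting the exact functor $\phi^H$ past the finite limit — you never have to \emph{prove} the limit formula for $\phi^H$ of an $\cF$-complete object as a new theorem, you extract it afterwards (Cor.~\ref{cor:FormulaForGeomFixedPointsOfCompleteSpectrum}). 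Your citation of that corollary as ``essentially the content'' of your hard step is therefore circular: in the paper it is a \emph{consequence} of the theorem you are trying to prove, and your sketch of a secondary induction via iterated refractures does not fill this in. If you insist on the direction $\Sp^G \to \Fun^{\cocart}_{/\fS}(\sd(\fS),\Sp^G_{\locus{\phi}})$, you must either invoke the $(\infty,2)$-categorical toolkit for the existence of $\Theta$, or find another way to rigidify the lax coherence; either way the construction of $\Theta$ deserves the bulk of the argument, not a one-line assertion. Reversing the arrow, as the paper does, removes both obstructions at once, and is the cleaner route given that the rest of your inductive skeleton is essentially correct.
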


Examining the proof of \cite[Thm.~2.40]{AMGR-NaiveApproach}, we see that this equivalence is implemented by the \emph{right-lax} functor $\Sp^G \times \fS \dashrightarrow \Sp^G_{\locus{\phi}}$ that globalizes the left adjoints $\phi^H$. This is not expressible as a functor $\Sp^G \times \fS \to \Sp^G_{\locus{\phi}}$; rather, its construction derives from an existence and uniqueness theorem on adjunctions in $(\infty,2)$-categories (\cite[Lem.~1.34]{AMGR-NaiveApproach} and \cite[Cor.~3.1.7]{gaitsgory2017study}). However, by instead working with the defining inclusion $\Sp^G_{\locus{\phi}} \subset \Sp^G \times \fS$, we can avoid serious usage of $(\infty,2)$-category theory and still define a comparison functor, as in the following construction.

\begin{cnstr} \label{cnstr:ComparisonFunctorFromRightLaxLimitToGSpectra} Let $\cF$ be a $G$-family, $\cG$ a subfamily, and $\cH = \cF \setminus \cG$. Consider the composite functor
\[ \Theta'_{\cH}: \Fun^{\cocart}_{/\cH}(\sd(\cH), \cH \times_{\fS} \Sp^G_{\locus{\phi}}) \to \Fun(\sd(\cH), \Sp^G) \xto{\lim} \Sp^G \]
where the first functor is postcomposition by the projection to $\Sp^G$ and the second takes the limit. Note that by Lem.~\ref{lm:subfamilyProperties}, if $X \in (\Sp^G_{\locus{\phi}})_H$ for any $H \in \cF \setminus \cG$, then $X \in \Sp^{h \cF} \cap \Sp^{\Phi \cG}$. Therefore, $\Theta'_{\cH}$ factors through the inclusion $\Sp^{h \cF} \cap \Sp^{\Phi \cG} \subset \Sp^G$. Denote that functor by $\Theta_{\cH}$.

In the case of $\cF = \fS$ and $\cG = \emptyset$, we also write $\Theta$ for the comparison functor.
\end{cnstr}

\begin{lem} \label{lm:GeometricFixedPointsOfComparisonFunctor} Let $\cF$ be a $G$-family, $\cG$ a subfamily, and $\cH = \cF \setminus \cG$. For every $H \in \cH$, the composition
\[ \Fun^{\cocart}_{/\cH}(\sd(\cH), \cH \times_{\fS} \Sp^G_{\locus{\phi}}) \xto{\Theta'_{\cH}} \Sp^G \xto{\phi^H} \Fun(B W_G H, \Sp) \]
is homotopic to evaluation at $H \in \sd(\cH)$ under the equivalence $(\Sp^G_{\locus{\phi}})_H \simeq \Fun(B W_G H, \Sp)$.
\end{lem}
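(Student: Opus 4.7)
The plan is to prove the claim by interchanging $\phi^H$ with the defining limit of $\Theta'_{\cH}$, computing $\phi^H F(\sigma)$ via the locally cocartesian pushforward structure of $F$, and identifying the resulting diagram as a pointwise right Kan extension from the subcategory of strings ending at $H$, whose initial object is $H \in \sd(\cH)$ itself.

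The first reduction is immediate: by Lem.~\ref{lem:LocallyClosedFibersAreBorel} the functor $\phi^H$ factors as the smashing localization $\Sp^G \to \Sp^{\Phi \fS^c_{\geq H}}$ followed by an equivalence onto $\Fun(B W_G H, \Sp)$, hence is exact. Since $\sd(\cH)$ is finite, this gives $\phi^H \Theta'_{\cH}(F) \simeq \lim_{\sigma \in \sd(\cH)} \phi^H F(\sigma)$, and it remains to compute this limit. I would next perform a case analysis of $\phi^H F(\sigma)$ based on $K := \max \sigma$. Because $F(\sigma) \in (\Sp^G_{\locus{\phi}})_K = \Sp^{h \overline{K}} \cap \Sp^{\Phi \partial \overline{K}}$, which by Lem.~\ref{lem:VanishingOutsideCone} also lies in $\Sp^{\Phi \fS^c_{\geq K}}$, Lem.~\ref{lem:GeometricFixedPointsDetectionCriterion} yields $\phi^H F(\sigma) \simeq 0$ whenever $K \not\leq H$. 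When $K = H$, the identification $(\Sp^G_{\locus{\phi}})_H \simeq \Fun(B W_G H, \Sp)$ gives $\phi^H F(\sigma) \simeq F(\sigma)$. When $K < H$ properly, the edge $\sigma \to (\sigma < H)$ in $\sd(\cH)$ is locally $\max_{\fS}$-cocartesian by Lem.~\ref{lm:subdivisionLocallyCocartesianByMaxFunctor}, with associated pushforward functor the generalized Tate construction $\tau^H_K$; since $F$ preserves locally cocartesian edges, this yields $\phi^H F(\sigma) \simeq \tau^H_K F(\sigma) \simeq F(\sigma < H)$.

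The core of the argument is then to identify the diagram $G := \phi^H F(-)$ as the pointwise right Kan extension $i_{\ast}(F|_T)$ along the inclusion $i: T \hookrightarrow \sd(\cH)$, where $T$ is the full subcategory of strings with final entry $H$. This reduces to a slice analysis of $T \times_{\sd(\cH)} \sd(\cH)_{\sigma/}$ for each $\sigma$: the slice is empty when $K \not\leq H$ (so the empty limit in the stable target gives $0$, matching $G(\sigma) \simeq 0$); it admits $\sigma$ as initial object when $\sigma \in T$ (matching $G(\sigma) \simeq F(\sigma)$); and it admits $(\sigma < H)$ as initial object when $K < H$ properly (matching $G(\sigma) \simeq F(\sigma < H)$). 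In each case the pointwise right Kan extension formula reproduces $G(\sigma)$, so $G \simeq i_{\ast}(F|_T)$.

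Granted this identification, the right Kan extension adjunction reduces $\lim_{\sd(\cH)} G$ to $\lim_T F|_T$, and since $H \in T$ is initial (every string ending at $H$ admits a unique map from the length-$0$ string $H$), this limit evaluates to $F(H)$, which is the desired evaluation at $H$. The main obstacle is the slice analysis in the third paragraph, where one must simultaneously invoke both the $\partial \overline{K}$-inversion and the $\fS^c_{\geq K}$-inversion (via Lem.~\ref{lem:VanishingOutsideCone}) to rule out non-empty slices when $K > H$ properly or $K$ is incomparable to $H$; the remaining steps are routine manipulations of Kan extensions and limits in a stable target.
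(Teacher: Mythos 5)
Your proof is correct and follows essentially the same route as the paper: commute $\phi^H$ past the finite limit, show that $\phi^H F$ is a pointwise right Kan extension from the strings whose maximum is (conjugate to) $H$ — using the vanishing of $\phi^H$ on strata not subconjugate to $H$ and the fact that $\phi^H$ of the locally cocartesian edge $\sigma \to (\sigma < H)$ is an equivalence — and then evaluate at the initial object $H$. The only cosmetic difference is that you perform the Kan-extension reduction in one step where the paper factors it through the intermediate subcategory $J$ of strings with maximum subconjugate to $H$.
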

\begin{proof} Let $f: \sd(\cH) \to \Sp^G_{\locus{\phi}}$ be an object in $\Fun^{\cocart}_{/\cH}(\sd(\cH), \cH \times_{\fS} \Sp^G_{\locus{\phi}})$, and let $f': \sd(\cH) \to \Sp^G$ denote the subsequent functor obtained by the projection to $\Sp^G$. We need to produce a natural equivalence $\phi^H \lim f' \simeq f'(H)$. Since $\sd(\cH)$ is finite, it suffices instead to show $\lim \phi^H f' \simeq f'(H)$. Note that for any $X \in (\Sp^G_{\locus{\phi}})_K$, if $K$ is not in $\overline{H}$ then $X^{\phi H} \simeq 0$; indeed, $\Phi^L(X) \simeq 0$ for all $L \in \fS_{\geq K}^c$ by definition. Therefore, if we let $J \subset \sd(\cH)$ be the full subcategory on those strings $\sigma$ with $\max(\sigma) \leq H$, the functor $\phi^H f'$ is a right Kan extension of its restriction to $J$ (for this, also note that if $\tau = [K_0 < \cdots < K_n] \in \sd(\cH)$ with $K_n \notin \overline{H}$, then $\sd(\cH)_{\tau/} \times_{\sd(\cH)} J = \emptyset$).

Next, let $I \subset J$ be the full subcategory on those strings $\sigma$ with $\max(\sigma)$ conjugate to $H$. For a string $\tau = [K_0 < ... < K_n] \in J$ with $K_n$ properly subconjugate to $H$, the unique string inclusion $e: [K_0 < ... < K_n] \to [K_0 < ... < K_n < H]$ is sent to an equivalence by $\phi^H f'$ by definition of the locally cocartesian edges in $\Sp^G_{\locus{\phi}}$; indeed, $f'(e)$ is a unit map of the localization for the reflective subcategory $(\Sp^G_{\locus{\phi}})_H \subset \Sp^G$.  Observe also that $e$ is an initial object in $I \times_J J^{\tau/}$. We deduce that $\phi^H f'$ is a right Kan extension of its further restriction to $I$. Because $H$ is an initial object of $I$, we conclude that $\lim \phi^H f' \simeq f'(H)$, as desired.
\end{proof}

For the next proposition, recall from Thm.~\ref{thm:RecollementRlaxLimitOfLlaxFunctor} the recollement of a right-lax limit defined by a sieve-cosieve decomposition of the base. 

\begin{prp} \label{prp:ComparisonFunctorIsStrictMorphismOfRecollements} Let $\cF$ be a $G$-family, $\cG$ a subfamily, and $\cH = \cF \setminus \cG$. The functor 
\[ \Theta_{\cF}: \Fun^{\cocart}_{/\cF}(\sd(\cF), \cF \times_{\fS} \Sp^G_{\locus{\phi}}) \to \Sp^{h \cF} \]
 is a strict morphism of stable recollements
\[ (\Fun^{\cocart}_{/\cG}(\sd(\cG), \cG \times_{\fS} \Sp^G_{\locus{\phi}}), \Fun^{\cocart}_{/\cH}(\sd(\cH), \cH \times_{\fS} \Sp^G_{\locus{\phi}})) \to (\Sp^{h \cG}, \Sp^{h \cF} \cap \Sp^{\Phi \cG}). \]
Moreover, the resulting functors between the open and closed parts are equivalent to $\Theta_{\cG}$ and $\Theta_{\cH}$.
\end{prp}
\begin{proof} We need to show that $\Theta_{\cF}$ sends the essential images of $j_!$, $j_{\ast}$, and $i_{\ast}$ to $\cG$-torsion\footnote{More precisely, $\cG$-torsion with respect to the embedding of $\Sp^{h \cF}$ in $\Sp^G$ as $\cF$-torsion objects.}, $\cG$-complete, and $\cG^{-1}$-local objects, respectively. Let $f: \sd(\cF) \to \cF \times_{\fS} \Sp^G_{\locus{\phi}}$ be a functor that preserves locally cocartesian edges. By Prop.~\ref{prp:openPartOfRecollementLeftAdjoint}, if $f$ is in the essential image of $j_!$, then $f(H) = 0$ for all $H \in \cH$. By Lem.~\ref{lm:GeometricFixedPointsOfComparisonFunctor}, we then have $\phi^H \Theta_{\cF}(f) \simeq 0$ for all $H \in \cH$, so $\Theta_{\cF}(f)$ is $\cG$-torsion. Similarly, using Prop.~\ref{prp:closedPartOfRecollement} and Lem.~\ref{lm:GeometricFixedPointsOfComparisonFunctor} again, the same proof shows that if $f$ is in the essential image of $i_{\ast}$, then $\phi^H \Theta_{\cF}(f) \simeq 0$ for all $H \in \cG$ and thus $\Theta_{\cF}(f)$ is $\cG^{-1}$-local. Finally, suppose that $f$ is in the essential image of $j_{\ast}$. By Prop.~\ref{prp:ExistenceLaxRightKanExtension}, $f$ is a relative right Kan extension of its restriction to the subcategory $\sd(\cF)_0$ of strings whose minimums lie in $\cG$. Because the inclusion $(\Sp^G_{\locus{\phi}})_H \subset \Sp^G$ of each fiber preserves limits, the further composition $f': \sd(\cF) \xto{f} \Sp^G_{\locus{\phi}} \to \Sp^G$ is then a right Kan extension of its restriction to $\sd(\cF)_0$ (in the non-relative sense). Moreover, the inclusion $\sd(\cG) \subset \sd(\cF)_0$ is right cofinal. Indeed, for every string $\sigma = [K_0 < ... < K_n]$ in $\sd(\cF)_0$, if we let $\sigma'$ denote its maximal substring in $\sd(\cG)$, then $\sigma'$ is a terminal object in $(\sd(\cF)_0)^{/\sigma} \times_{\sd(\cF)_0} \sd(\cG)$, so these slice categories are weakly contractible and we may thus apply Joyal's version of Quillen's Theorem A \cite[Thm.~4.1.3.1]{HTT}. It follows that $\Theta_{\cF}(f)$ is computed as a limit of $\cG$-complete spectra and is hence itself $\cG$-complete.

The two functors on the open and closed parts induced by the morphism of stable recollements are then definitionally $(j_{\cG})^{\ast} \Theta_{\cF} j_{\ast}$ and $(i_{\cG})^{\ast} \Theta_{\cF} i_{\ast}$. These are equivalent to $\Theta_{\cG}$ and $\Theta_{\cH}$ by the same cofinality arguments as above.
\end{proof}

\begin{thm} \label{thm:GeometricFixedPointsDescriptionOfGSpectra} For every $G$-family $\cF$ and subfamily $\cG$, the functor $\Theta_{\cF \setminus \cG}$ is an equivalence of $\infty$-categories. In particular, we have an equivalence
\[ \Theta: \Fun^{\cocart}_{/\fS}(\sd(\fS), \Sp^G_{\locus{\phi}}) \xto{\simeq} \Sp^G. \]
\end{thm}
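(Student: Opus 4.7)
The plan is to reduce to the ``atomic'' case where $\cF\setminus\cG$ consists of a single conjugacy class, then proceed by induction on the number of conjugacy classes in $\cF$ using the strict morphism of stable recollements provided by Prop.~\ref{prp:ComparisonFunctorIsStrictMorphismOfRecollements} together with the two-out-of-three property recorded in Rmk.~\ref{rem:TwoOutOfThreePropertyEquivalencesStrictMorphismRecoll}.

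First I would dispatch the atomic case $\cH = \cF\setminus\cG = [H]$ for $[H]$ a single conjugacy class; note $[H]$ is realized as such a locally closed piece by taking $\cF = \overline{H}$ and $\cG = \partial\overline{H}$. Since the subconjugacy preorder $\fS$ has singleton or empty hom-sets, the full subcategory $[H] \subset \fS$ is a contractible groupoid: all of its objects are equivalent and there are no strictly increasing strings, so $\sd([H]) = [H]$ and the locally-cocartesian-edge condition becomes vacuous. Consequently the right-lax limit $\Fun^{\cocart}_{/[H]}(\sd([H]), [H]\times_\fS \Sp^G_{\locus{\phi}})$ collapses to the fiber $(\Sp^G_{\locus{\phi}})_H \simeq \Fun(BW_GH,\Sp)$. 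By Lem.~\ref{lm:GeometricFixedPointsOfComparisonFunctor} the composite $\phi^H \circ \Theta'_{[H]}$ is homotopic to evaluation at $H$, which becomes the identity under this identification. Since Lem.~\ref{lem:LocallyClosedFibersAreBorel} exhibits $\phi^H$ as an equivalence $\Sp^{h\overline{H}} \cap \Sp^{\Phi \partial\overline{H}} \xto{\simeq} \Fun(BW_GH,\Sp)$, it follows that $\Theta_{[H]}$ is an equivalence (inverse to $\phi^H$).

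Next I would prove $\Theta_\cF$ (the case $\cG = \emptyset$) is an equivalence for every $G$-family $\cF$ by induction on $|\cF|$. The case $\cF = \emptyset$ is trivial, as both sides are terminal. For the inductive step, pick a conjugacy class $[H]$ that is maximal in $\cF$ and set $\cG = \cF \setminus [H]$, which remains a $G$-family by maximality. Applying Prop.~\ref{prp:ComparisonFunctorIsStrictMorphismOfRecollements} to $(\cF,\cG)$ exhibits $\Theta_\cF$ as a left-exact (being a limit functor) strict morphism of stable recollements whose open and closed parts are identified with $\Theta_\cG$ and $\Theta_{[H]}$. The open part is an equivalence by the inductive hypothesis and the closed part by the atomic case, so Rmk.~\ref{rem:TwoOutOfThreePropertyEquivalencesStrictMorphismRecoll} yields that $\Theta_\cF$ is an equivalence.

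Finally, for arbitrary $\cG \subseteq \cF$, Prop.~\ref{prp:ComparisonFunctorIsStrictMorphismOfRecollements} once more exhibits $\Theta_\cF$ as a strict left-exact morphism of stable recollements with open and closed parts $\Theta_\cG$ and $\Theta_{\cF\setminus\cG}$. Both $\Theta_\cF$ and $\Theta_\cG$ are equivalences by the previous step, so Rmk.~\ref{rem:TwoOutOfThreePropertyEquivalencesStrictMorphismRecoll} forces $\Theta_{\cF\setminus\cG}$ to be an equivalence as well. The only genuinely substantive input is the atomic case, which rests on the Borel-and-local identification in Lem.~\ref{lem:LocallyClosedFibersAreBorel}; I expect the remaining bookkeeping to be routine given the recollement machinery of \S 1 and Prop.~\ref{prp:ComparisonFunctorIsStrictMorphismOfRecollements}, with the only mild care needed in confirming that the atomic subcategory $[H] \subset \fS$ really is contractible so that $\sd([H]) = [H]$ and the right-lax limit reduces to a fiber.
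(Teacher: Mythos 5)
Your proposal is correct and follows essentially the same strategy as the paper: induction on the size of $\cF$ using Prop.~\ref{prp:ComparisonFunctorIsStrictMorphismOfRecollements} and the two-out-of-three property of Rmk.~\ref{rem:TwoOutOfThreePropertyEquivalencesStrictMorphismRecoll}, with the general $\cF\setminus\cG$ case deduced at the end by running the recollement argument in reverse. The only difference is that you spell out the atomic case $\overline{H}\setminus\partial\overline{H}$ (contractible groupoid, $\sd([H])=[H]$, collapse to the fiber) which the paper simply records as ``definitionally an equivalence''; your elaboration is accurate.
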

\begin{proof} Our strategy is to use Prop.~\ref{prp:ComparisonFunctorIsStrictMorphismOfRecollements} in conjunction with the fact that given a strict morphism $F: \cX \to \cX'$ of stable recollements $(\cU, \cZ) \to (\cU', \cZ')$, if $F_{U}$ and $F_{Z}$ are equivalences then $F$ is an equivalence (Rmk.~\ref{rem:TwoOutOfThreePropertyEquivalencesStrictMorphismRecoll}).\footnote{This type of inductive argument is also used in the proof of \cite[Thm.~2.40]{AMGR-NaiveApproach}.} Let us first prove that $\Theta_{\cF}$ is an equivalence for all families $\cF$. We proceed by induction on the size of $\cF$. For the base case, if $\cF = \{ 1 \}$ is the trivial family, then $\sd(\cF) \cong \cF$ and $\Theta_{\cF}$ is definitionally an equivalence. Now suppose for the inductive hypothesis that $\Theta_{\cG}$ is an equivalence for all proper subfamilies $\cG$ of $\cF$. Let $H \in \cF$ be a maximal element and let $\cG \subset \cF$ be the largest subfamily excluding $H$. Then $\cF \setminus \cG = \overline{H} \setminus \partial \overline{H}$, so $\Theta_{\cF \setminus \cG}$ is definitionally an equivalence. By Prop.~\ref{prp:ComparisonFunctorIsStrictMorphismOfRecollements}, we deduce that $\Theta_{\cF}$ is an equivalence.

Finally, to deal with the general case, we note that any strict morphism of stable recollements that is also an equivalence restricts to equivalences between the open and closed parts. Thus, having proven that $\Theta_{\cF}$ is an equivalence, we further deduce that $\Theta_{\cF \setminus \cG}$ is an equivalence for any subfamily $\cG$.
\end{proof}

\begin{rem} The generalized Tate functors $\tau^K_H$ are lax monoidal, and the various natural transformations among these functors encoded by the locally cocartesian fibration are also lax monoidal. This data should assemble to a symmetric monoidal structure on the right-lax limit $\Fun^{\cocart}_{/\fS}(\sd(\fS),\Sp^G_{\locus{\phi}})$ such that the functor $\Theta$ of Thm.~\ref{thm:GeometricFixedPointsDescriptionOfGSpectra} is an equivalence of symmetric monoidal $\infty$-categories. However, in the formalism of $\infty$-operads it seems difficult to make this intuition rigorous. Instead, we may endow $\Fun^{\cocart}_{/\fS}(\sd(\fS),\Sp^G_{\locus{\phi}})$ with the symmetric monoidal structure of $\Sp^G$ obtained by transfer of structure under $\Theta$.
\end{rem}

Thm.~\ref{thm:GeometricFixedPointsDescriptionOfGSpectra} and Prop.~\ref{prp:ComparisonFunctorIsStrictMorphismOfRecollements}, along with the explicit description of the functor $j_{\ast}$ given in Prop.~\ref{prp:ExistenceLaxRightKanExtension}, gives a formula for the geometric fixed points of an $\cF$-complete spectrum in terms of a limit of generalized Tate constructions.

\begin{cor} \label{cor:FormulaForGeomFixedPointsOfCompleteSpectrum} Let $X$ be a $G$-spectrum and let $X^{\bullet}: \sd(\fS) \to \Sp^G_{\locus{\phi}}$ denote a lift of $X$ under the equivalence $\Theta$. Suppose that $X$ is $\cF$-complete. For $H \notin \cF$, let $J_H \subset \sd(\fS)$ be the full subcategory on strings $[K_0 < \cdots < K_n < H]$ such that $K_i \in \cF$. Then $$X^{\phi H} \simeq \lim_{J_H} X^{\bullet},$$
with the limit taken in the fiber $\Fun(B W_G H, \Sp) \simeq (\Sp^G_{\locus{\phi}})_H$.
\end{cor}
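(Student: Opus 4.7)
The plan is to use the reconstruction theorem $\Theta$ to translate $\cF$-completeness of $X$ into a right Kan extension condition on $X^\bullet$, and then read off the limit formula from the pointwise description of the right adjoint $j_\ast$.

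First I would apply Prop.~\ref{prp:ComparisonFunctorIsStrictMorphismOfRecollements} with ambient family $\fS$ and subfamily $\cF$; in conjunction with Thm.~\ref{thm:GeometricFixedPointsDescriptionOfGSpectra}, this exhibits
\[
\Theta : \Fun^{\cocart}_{/\fS}(\sd(\fS), \Sp^G_{\locus{\phi}}) \xto{\simeq} \Sp^G
\]
as a strict equivalence of stable recollements whose open part on the codomain side is $\Sp^{h\cF}$ and whose open part on the domain side corresponds to $\sd(\cF) \subset \sd(\fS)$ as in Cor.~\ref{cor:openPartOfRecollement}. Consequently, the hypothesis that $X$ is $\cF$-complete translates exactly into the assertion that $X^\bullet$ lies in the essential image of the fully faithful right adjoint $j_\ast$ of Cor.~\ref{cor:openPartOfRecollement}, taken with $S = \fS$ and sieve $S_0 = \cF$.

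Next I would unpack $j_\ast$. Recall from the proof of Cor.~\ref{cor:openPartOfRecollement} that $j_\ast$ factors as the inverse of the trivial fibration $\sd(\cF) \xto{\simeq} \sd(\fS)_0$ supplied by Prop.~\ref{prp:subdivisionExtension}(2), followed by the $p$-right Kan extension along $\sd(\fS)_0 \hookrightarrow \sd(\fS)$ constructed in Prop.~\ref{prp:ExistenceLaxRightKanExtension}. The hypotheses of Prop.~\ref{prp:ExistenceLaxRightKanExtension} are in force because the fibers of $\Sp^G_{\locus{\phi}} \to \fS$ are stable and the generalized Tate pushforwards $\tau^K_H$ are exact, so all of the finite limits that arise exist and are preserved. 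Evaluating at $H \in \fS \setminus \cF$ (which lies in $\sd(\fS \setminus \cF)$), the pointwise formula of Prop.~\ref{prp:ExistenceLaxRightKanExtension}(1) together with the identification of the indexing category in Rmk.~\ref{rem:IdentifyCategoryForLimitWhenPoset} yields
\[
X^\bullet(H) \simeq \lim_{J_H} X^\bullet
\]
in the fiber $(\Sp^G_{\locus{\phi}})_H \simeq \Fun(B W_G H, \Sp)$, where $J_H$ is precisely the poset of strings $[K_0 < \cdots < K_n < H]$ with $K_i \in \cF$ described in the statement.

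Finally, Lem.~\ref{lm:GeometricFixedPointsOfComparisonFunctor} identifies $\phi^H \circ \Theta$ with evaluation at $H \in \sd(\fS)$ under this fiber identification; since $X = \Theta(X^\bullet)$, we conclude $X^{\phi H} \simeq X^\bullet(H) \simeq \lim_{J_H} X^\bullet$. The main obstacle I anticipate is bookkeeping rather than mathematics: one must correctly align the sieve-cosieve decomposition $(\cF, \fS \setminus \cF)$ on the base with the recollement on $\Sp^G$ via $\Theta$, and verify that the abstract index category $J_H$ of Prop.~\ref{prp:ExistenceLaxRightKanExtension} matches the explicit poset description given in the corollary. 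Both are handled by Prop.~\ref{prp:ComparisonFunctorIsStrictMorphismOfRecollements} and Rmk.~\ref{rem:IdentifyCategoryForLimitWhenPoset} respectively, so the conceptual content is already in hand.
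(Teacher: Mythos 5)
Your proposal is correct and follows exactly the route the paper intends: the paper gives no separate proof of this corollary beyond the sentence preceding it, which cites the same three ingredients you use (the reconstruction equivalence $\Theta$ together with Prop.~\ref{prp:ComparisonFunctorIsStrictMorphismOfRecollements} to translate $\cF$-completeness into membership in the image of $j_\ast$, the pointwise formula for $j_\ast$ from Prop.~\ref{prp:ExistenceLaxRightKanExtension} with the index category identified as in Rmk.~\ref{rem:IdentifyCategoryForLimitWhenPoset}, and Lem.~\ref{lm:GeometricFixedPointsOfComparisonFunctor} to identify $\phi^H\circ\Theta$ with evaluation at $H$). The only caveat worth noting is that Rmk.~\ref{rem:IdentifyCategoryForLimitWhenPoset} is stated for posets while $\fS$ is in general only a preorder, but the identification of $J_H$ goes through up to equivalence, as the corollary's own phrasing presumes.
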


\begin{exm} Suppose that $G = C_{p^2}$ and let $\cP$ be the family of proper subgroups of $G$. Then $\sd(\cP) \cong \sd(\Delta^1)$, so the data of a $\cP$-complete spectrum $X$ amounts to
\begin{itemize}
\item A Borel $C_{p^2}$-spectrum $X^1$.
\item A Borel $C_{p^2}/ C_p$-spectrum $X^{\phi C_p}$.
\item A $C_{p^2}/ C_p \cong C_p$-equivariant map $\alpha: X^{\phi C_p} \to (X^1)^{t C_p}$.
\end{itemize}
The category $J_{C_{p^2}}$ as well as the functor $J_{C_{p^2}} \to \Sp$ is then identified as
\[ \left( \begin{tikzcd}[row sep=4ex, column sep=4ex, text height=1.5ex, text depth=0.25ex]
& \goesto{\left[ C_p < C_{p^2} \right] \ar{d} \\
\left[ 1 < C_{p^2} \right] \ar{r} & \left[ 1 < C_p < C_{p^2} \right]
\end{tikzcd} \right) }{ \left(
\begin{tikzcd}[row sep=4ex, column sep=4ex, text height=1.5ex, text depth=0.25ex]
& X^{\phi C_p t C_p} \ar{d}{\alpha^{t C_p}} \\
(X^1)^{\tau C_{p^2}} \ar{r}{\can} & (X^1)^{t C_p t C_p}
\end{tikzcd} \right) }, \]
where $\mit{can}$ is the canonical map encoded by the locally cocartesian fibration. Thus,
\[ X^{\phi C_{p^2}} \simeq (X^1)^{\tau C_{p^2}} \times_{(X^1)^{t C_p t C_p}} X^{\phi C_p t C_p}. \]
We will later see that $(-)^{\tau C_{p^2}} \simeq (-)^{h C_p tC_p}$ (Lem.~\ref{lm:identifyGenTate}).
\end{exm}

Let us now turn to the examples of interest for the dihedral Tate orbit lemma.

\begin{exm} \label{exm:DihedralEven} Suppose that $G = D_4 = C_2 \times \mu_2$ is the Klein four-group and let $\Gamma = \{ 1, C_2, \Delta \}$ for $\Delta$ the diagonal subgroup. The data of a $\Gamma$-complete spectrum $X$ amounts to
\begin{itemize}
\item A Borel $D_4$-spectrum $X^1$, Borel $(D_4/C_2)$-spectrum $X^{\phi C_2}$, and Borel $(D_4/\Delta)$-spectrum $X^{\phi \Delta}$.
\item A $(D_4/C_2)$-equivariant map $\alpha: X^{\phi C_2} \to (X^1)^{t C_2}$ and $(D_4/ \Delta)$-equivariant map $\beta: X^{\phi \Delta} \to (X^1)^{t \Delta}$.
\end{itemize}
Since $J_{\mu_2} = \{ [1<\mu_2] \}$ and $J_{\mu_2} \to \Fun(B(D_4/\mu_2),\Sp)$ is the pushforward of $X^1$ by $(-)^{t \mu_2}$, we see that $X^{\phi \mu_2} \simeq (X^1)^{t \mu_2}$. On the other hand, $J_{D_4} \to \Sp$ is given by
\[ \left( \begin{tikzcd}[row sep=4ex, column sep=4ex, text height=1.5ex, text depth=0.25ex]
\goesto{\left[ \Delta < D_4 \right] \ar{r} & \left[ 1<\Delta<D_4 \right] \\
\left[ 1 < D_4 \right] \ar{ru} \ar{rd} &  \\
\left[ C_2 < D_4 \right] \ar{r} & \left[ 1 < C_2 < D_4 \right]
\end{tikzcd} \right)}{\left(
\begin{tikzcd}[row sep=4ex, column sep=8ex, text height=1.5ex, text depth=0.25ex]
(X^{\phi \Delta})^{t(D_4/\Delta)} \ar{r}{\beta^{t(D_4/\Delta)}} & ((X^1)^{t \Delta})^{t (D_4/ \Delta)} \\
(X^1)^{\tau D_4} \ar{ru}[swap]{\can} \ar{rd}{\can} &  \\
(X^{\phi C_2})^{t (D_4/C_2)} \ar{r}[swap]{\alpha^{t(D_4/C_2)}} & ((X^1)^{t C_2})^{t (D_4/C_2)}
\end{tikzcd} \right)}, \]
and $X^{\phi D_4}$ is the limit of this diagram.
\end{exm}

To handle the case of the dihedral group $D_{2p}$ of order $2p$ for $p$ an odd prime, we first record a vanishing property of the generalized Tate construction.

\begin{lem} \label{lem:GenTateVanishingMultiprime} Let $G$ be a finite group and suppose $K \leq G$ is a subgroup that is not a $p$-group. Then $\tau^K \simeq 0$.
\end{lem}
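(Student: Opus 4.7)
The plan is to reduce the statement in two stages and then exploit the Sylow/prime decomposition of $K$. First, since $\tau^K = \phi^K \circ R_1$ is the composite of the symmetric monoidal functor $\phi^K$ with the lax monoidal right adjoint $R_1$ to $\phi^1$, the functor $\tau^K$ is itself lax monoidal. Consequently, it factors through $\Mod_{\tau^K(\mathbb{S})}(\Sp)$, and it suffices to show $\tau^K(\mathbb{S}) \simeq 0$: if the unit $E_\infty$-algebra $\tau^K(\mathbb{S})$ is zero then $\Mod_0(\Sp) \simeq 0$, since for any $M$ in this category the unit axiom forces $\id_M = [\mathbb{S} \otimes M \to 0 \otimes M \to M] = 0$. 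Moreover, by Rmk.~\ref{rem:GenTateResCompatibility} with $H = 1$, the functors $\tau^K$ defined in ambient $G$ and in ambient $K$ are intertwined by restriction $\Fun(B W_G K, \Sp) \to \Sp$, which is conservative, so I may assume $G = K$ throughout.

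With $G = K$, I would apply $\Phi^K$ to the middle column of the fracture diagram for the Borel $\{1\}$-recollement of $\Sp^K$ evaluated at the unit $\mathbb{S}_K$. Because this recollement is smashing (the idempotent algebra being $\widetilde{EK}$), we have $i^! \simeq i^\ast$, giving $i_\ast i^! \mathbb{S}_K \simeq \mathbb{S}_K \wedge \widetilde{EK} \simeq \widetilde{EK}$, while $j_\ast j^\ast \mathbb{S}_K \simeq F(EK_+, \mathbb{S}_K) \simeq R_1(\mathbb{S})$. This yields a fiber sequence
\[
\widetilde{EK} \to \mathbb{S}_K \to R_1(\mathbb{S})
\]
in $\Sp^K$. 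Applying the exact functor $\Phi^K$ and identifying $\Phi^K(\Sigma^\infty Z) \simeq \Sigma^\infty Z^K$ for pointed $K$-spaces $Z$ (which gives $\Phi^K(\widetilde{EK}) \simeq \mathbb{S}$ from $\widetilde{EK}^K = S^0$ and $\Phi^K(\mathbb{S}_K) \simeq \mathbb{S}$), one obtains a fiber sequence $\mathbb{S} \xrightarrow{\alpha} \mathbb{S} \to \tau^K(\mathbb{S})$, so that the vanishing $\tau^K(\mathbb{S}) \simeq 0$ is equivalent to the identification of $\alpha$ with an equivalence.

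The hard part will be exhibiting $\alpha$ as an equivalence precisely when $K$ is not a $p$-group. The plan is to verify this prime-by-prime after completion and after rationalization. Rationally, $|K|$ is invertible, and a standard transfer argument (the composite of restriction and induction along $1 \leq K$ being multiplication by $|K|$) forces $\tau^K(\mathbb{S})_\mathbb{Q} \simeq 0$. At a prime $p \mid |K|$, I would use a Greenlees--May-type Sylow reduction: after $p$-completion, $\tau^K$ should be controlled by the Sylow $p$-subgroup $P \leq K$, with the discrepancy measured by the index $[K : P]$, which is a $p$-adic unit. Combining the analogous statement at a second prime $q \mid |K|$ with the arithmetic identity $\mathbb{Z}_p \otimes_{\mathbb{Z}} \mathbb{Z}_q = 0$ for $p \neq q$ should force a contradiction unless the spectrum vanishes. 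The main technical obstacle is to organize this prime-by-prime argument rigorously inside the generalized Tate formalism; one concrete route is to apply Cor.~\ref{cor:FormulaForGeomFixedPointsOfCompleteSpectrum} to the Borel complete $R_1(\mathbb{S})$ with the family $\cF_p$ of $p$-subgroups of $K$, expressing $\tau^K(\mathbb{S})$ as a limit over $J_K^{\cF_p}$ whose entries can be controlled inductively on $|K|$.
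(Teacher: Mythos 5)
Your two reductions are fine: $\tau^K$ is lax monoidal, so it factors through $\Mod_{\tau^K(\mathbb{S})}$ and it suffices to kill the unit; and Rmk.~\ref{rem:GenTateResCompatibility} lets you assume $G=K$. The paper makes the second reduction too. But the rest of the argument has a genuine error and then a gap exactly where the content lives.

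The error: for the Borel recollement $(\Sp^{hK},\Sp^{\Phi K\text{-free}})$ of $\Sp^K$, ``smashing'' means $i_*i^*$ is given by $-\otimes\widetilde{EK}$; it does \emph{not} give $i^!\simeq i^*$. By the projection formulas (Prop.~\ref{prp:ProjectionFormulasMonoidalRecollement}(4)), $i_*i^!\mathbb{S}=F(\widetilde{EK},\mathbb{S})$, and the diagram of fiber sequences for a stable recollement identifies $\operatorname{cofib}(i_*i^!\to i_*i^*)\simeq i_*i^*j_*j_{}^*$, i.e.\ the assertion $i^!\simeq i^*$ is precisely the assertion that $F(EK_+,\mathbb{S})\otimes\widetilde{EK}\simeq 0$, whose geometric fixed points compute \emph{all} the $\tau^H(\mathbb{S})$ for $1\neq H\leq K$. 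That is false (e.g.\ $\tau^{C_p}(\mathbb{S})=\mathbb{S}^{tC_p}\neq 0$) and in particular circularly presupposes the lemma. The cofiber sequence involving $\widetilde{EK}$ is $EK_+\to S^0\to\widetilde{EK}$; $\widetilde{EK}$ is a cofiber, not the fiber $i_*i^!\mathbb{S}$. So your fiber sequence $\mathbb{S}\xrightarrow{\alpha}\mathbb{S}\to\tau^K(\mathbb{S})$ is not established, and even granting it, identifying $\alpha$ as an equivalence is literally equivalent to the statement being proved, so the setup makes no progress.

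The gap: the ``hard part'' is the whole theorem, and your sketch omits the one nontrivial input. Your transfer argument does show, correctly, that for any proper $P<K$ the image of $[K/P]\in A(K)$ in $\pi_0\tau^K(\mathbb{S})$ is zero, because $\tau^K(F(K/P_+,-))\simeq\Phi^K(K/P_+\otimes F(EK_+,-))=0$. Rationally $|K|=[K/1]$ handles things. But to conclude $p$-adically you need the classes $[K/P]$, $P$ a (proper) Sylow or $p$-subgroup, to become \emph{units} after Borel completion and $p$-completion --- equivalently, an induction theorem saying the Borel-complete unit is built from orbits $K/H_+$ with $H$ a $p$-group. This is exactly the theorem of Mathew--Naumann--Noel \cite{mathew2019} that the paper invokes: the thick $\otimes$-ideal generated by $\{K/H_+:|H|=p^n\text{ for some }p,n\}$ contains the Borel completion of the unit, and since $K$ is not a $p$-group all such $H$ are proper, so the Verdier quotient of $\Sp^{hK}$ along which $\tau^K$ is left Kan extended is the zero category. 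Your plan gestures at this (``Sylow reduction,'' ``$\mathbb{Z}_p\otimes\mathbb{Z}_q=0$'') but never supplies it; without it, knowing that each $[K/P]$ maps to a zero divisor of $\pi_0\tau^K(\mathbb{S})$ that is also $0$ does not force $\tau^K(\mathbb{S})\simeq 0$.
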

\begin{proof} By the compatibility of the generalized Tate functors with restriction (Rmk.~\ref{rem:GenTateResCompatibility}), we may suppose $K = G$ without loss of generality. Note that $\tau^G$ may be computed as the left Kan extension of $(-)^{h G}$ along the functor from $\Sp^{h G}$ to its Verdier quotient by orbits $\{ G/H_+ : H <G\}$ with $H$ proper \cite[Rmk.~2.16]{AMGR-NaiveApproach}. If we let $\ul{All}$ be the family of subgroups $H$ such that $|H|=p^n$ for some prime $p$ and integer $n$ as in \cite[Fig. 1.7]{mathew2019}, then $\ul{All}$ is a subfamily of the proper subgroups under our assumption. However, by \cite[Thm.~4.25]{mathew2019}, the thick $\otimes$-ideal in $\Sp^G$ generated by $\{G/H_+: H \in \ul{All} \}$ includes the Borel completion of the unit. Therefore, the Verdier quotient in question is the trivial category, and we deduce that $\tau^G \simeq 0$.
\end{proof}

\begin{exm} \label{exm:DihedralOdd} Let $p$ be an odd prime, $G = D_{2p} = \mu_p \rtimes C_2$ the dihedral group of order $2p$, and $\Gamma$ the family of subgroups $H$ such that $H \cap \mu_p = 1$. Note that up to conjugacy, $\Gamma$ consists of the subgroups $1$ and $C_2$, and the Weyl group of $C_2$ is trivial. Thus, up to equivalence, the data of a $\Gamma$-complete spectrum $X$ amounts to 
\begin{itemize}
\item A Borel $D_{2p}$-spectrum $X^1$ and a spectrum $X^{\phi C_2}$.
\item A map $\alpha: X^{\phi C_2} \to (X^1)^{t C_2}$.
\end{itemize}
Using that $J_{\mu_p} = [1 < \mu_p]$, we compute $X^{\phi \mu_p} \simeq (X^1)^{t \mu_p}$. As for $X^{\phi D_{2p}}$, by Lem.~\ref{lem:GenTateVanishingMultiprime} we have that $(X^1)^{\tau D_{2p}} \simeq 0$. We further claim that the generalized Tate functor $\tau^{D_{2p}}_{C_2}$ vanishes:
\begin{itemize} \item[($\ast$)] Let $\cH = \{ 1, \mu_p \} = \fS^c_{\geq C_2}$. By Rmk.~\ref{ParamRecollementFamily} applied to $\ul{\Sp}^{\Phi \cH}$, the restriction and induction functors for $C_2 \subset D_{2p}$ descend to an adjunction
\[ \adjunct{\res'}{\Sp^{\Phi \cH}}{\Sp}{\ind'}, \]
where $(\ul{\Sp}^{\Phi \cH})_{D_{2p}/C_2} \simeq \Sp$ because the restriction of $\cH$ to a $C_2$-family yields the trivial family. Now consider the inclusion of the open fiber
\[ j_{\ast}: \Sp^{h W_G C_2} \to \Sp^{\Phi \cH}. \]
Because $W_{D_{2p}} C_2 \cong 1$, we have that $\res' \simeq j^{\ast}$, and we deduce that $j_! \simeq j_{\ast}$. Because $\cH^c = \fS_{\geq C_2}$ consists only of the two subgroups $C_2$ and $D_{2p}$ up to conjugacy, we may identify $\phi^{D_{2p}}: \Sp^{\Phi \cH} \to \Sp$ with the restriction $i^{\ast}$ to the closed complement of a recollement of $\Sp^{\Phi \cH}$ with $j_{\ast}$ as the inclusion of the open part. We then have $\tau^{D_{2p}}_{C_2} \simeq i^{\ast} j_{\ast}$. In view of the fiber sequence $$j_! \xto{\simeq} j_{\ast} \to i_{\ast} i^{\ast} j_{\ast} \simeq 0,$$ we deduce that $\tau^{D_{2p}}_{C_2} \simeq 0$.
\end{itemize}
Using Cor.~\ref{cor:FormulaForGeomFixedPointsOfCompleteSpectrum}, we conclude that $X^{\phi D_{2p}} \simeq 0$.
\end{exm}

We conclude this section by indicating how the comparison functor $\Theta$ is functorial in the group $G$ with respect to restriction and geometric fixed points.

\begin{cnstr}[Restriction functoriality for geometric loci] \label{restrictionGeometricLoci} Let $H$ be a subgroup of $G$ and consider the map $i: \fS[H] \to \fS[G]$ that sends a subgroup $K$ of $H$ to the same $K$ viewed as a subgroup of $G$. Since $i$ preserves the subconjugacy relation, $i$ is a functor,\footnote{However, since there may be additional conjugacy relations in $G$, $i$ is not generally the inclusion of a subcategory.} and also let $i: \sd(\fS[H]) \to \sd(\fS[G])$ denote the induced functor on barycentric subdivisions. Next, consider the functor $\res^G_H \times \id: \Sp^G \times \fS[H] \to \Sp^H \times \fS[H]$. Since for any subgroup $K \leq H$, the restriction of the $G$-families $\fS[G]_{\leq K}$, $\fS[G]_{<K}$ to $H$ yields $H$-families $\fS[H]_{\leq K}$, $\fS[H]_{<K}$, by Rmk.~\ref{ParamRecollementFamily} we have an induced functor over $\fS[H]$
\[ \res^G_H: \Sp^G_{\locus{\phi}} \times_{\fS[G]} \fS[H] \to \Sp^H_{\locus{\phi}} \]
that preserves locally cocartesian edges. Precomposition by $i$ and postcomposition by $\res^G_H$ then defines a functor
\[ \res^G_H: \Fun^{\cocart}_{/\fS[G]}(\sd(\fS[G]), \Sp^G_{\locus{\phi}}) \to \Fun^{\cocart}_{/\fS[H]}(\sd(\fS[H]), \Sp^H_{\locus{\phi}}). \]
We have a lax commutative diagram
\[ \begin{tikzcd}[row sep=4ex, column sep=6ex, text height=1.5ex, text depth=0.5ex]
\Fun^{\cocart}_{/\fS[G]}(\sd(\fS[G]), \Sp^G_{\locus{\phi}}) \ar{d}[swap]{\res^G_H} \ar{r}{\Theta_G}[swap]{\simeq} \ar[phantom]{rd}{\NEarrow} & \Sp^G \ar{d}{\res^G_H} \\
\Fun^{\cocart}_{/\fS[H]}(\sd(\fS[H]), \Sp^H_{\locus{\phi}}) \ar{r}{\Theta_H}[swap]{\simeq} & \Sp^H
\end{tikzcd} \]
where the natural transformation $\eta: \res^G_H \circ \Theta_G \to \Theta_H \circ \res^G_H$ is defined using the contravariant functoriality of the limit for $i: \sd(\fS[H]) \to \sd(\fS[G])$.

We claim that $\eta$ is an equivalence, so that this diagram commutes. Indeed, suppose given $f: \sd(\fS[G]) \to \Sp^G_{\locus{\phi}}$ and let $g = \res^G_H f: \sd(\fS[H]) \to \Sp^H_{\locus{\phi}}$. Let $f': \sd(\fS[G]) \to \Sp^G$ and $g': \sd(\fS[H]) \to \Sp^H$ be the functors obtained by postcomposition, so $g' = \res^G_H f' i$ by definition and $\eta_f$ is the comparison map
\[ \lim_{\sd(\fS[G])} \res^G_H f' \to \lim_{\sd(\fS[H])} \res^G_H f' i. \]
It suffices to check that for all subgroups $K \leq H$,  $\phi^K(\eta_f)$ is an equivalence. But then by the commutativity of the diagram
\[ \begin{tikzcd}[row sep=4ex, column sep=4ex, text height=1.5ex, text depth=0.25ex]
\Sp^G \ar{d}[swap]{\res^G_H} \ar{r}{\phi^K} & \Fun(B W_G K, \Sp) \ar{d}{\res^{W_G K}_{W_H K}} \\
\Sp^H \ar{r}{\phi^K} & \Fun(B W_H K, \Sp),
\end{tikzcd} \]
and under the equivalences $\phi^K \Theta_G \simeq \ev_K$ and $\phi^K \Theta_H \simeq \ev_K$ of Lem.~\ref{lm:GeometricFixedPointsOfComparisonFunctor}, we see that $\phi^K(\eta_f)$ is an equivalence.
\end{cnstr}

\begin{cnstr}[Geometric fixed points functoriality for geometric loci] \label{GeometricFixedPointsGeometricLoci} Let $N$ be a normal subgroup of $G$. Then we may embed $\fS[G/N]$ as a cosieve in $\fS[G]$ via the functor $i: \fS[G/N] \to \fS[G]$ that sends $M/N$ to $M$. We also let $i: \sd(\fS[G/N]) \to \sd(\fS[G])$ denote the induced functor on barycentric subdivisions, which is a cosieve inclusion. By Lem.~\ref{lem:ClosedPartRecollementNormalSubgroup}, $\Phi^N: \Sp^G \to \Sp^{G/N}$ has fully faithful right adjoint with essential image given by the $(\fS[G] \setminus \fS[G/N])^{-1}$-local objects. Therefore, $\Phi^N$ implements an equivalence over $\fS[G/N]$
\[ \Sp^G_{\locus{\phi}} \times_{\fS[G]} \fS[G/N] \xto{\simeq} \Sp^{G/N}_{\locus{\phi}}. \]
Define $\Phi^N: \Fun^{\cocart}_{/\fS[G]}(\sd(\fS[G]), \Sp^G_{\locus{\phi}}) \to \Fun^{\cocart}_{/\fS[G/N]}(\sd(\fS[G/N]), \Sp^{G/N}_{\locus{\phi}})$ to be the functor obtained by $i^{\ast}$ under that equivalence. Then because $\Theta$ is a morphism of recollements, we have a commutative diagram
\[ \begin{tikzcd}[row sep=4ex, column sep=6ex, text height=1.5ex, text depth=0.5ex]
\Fun^{\cocart}_{/\fS[G]}(\sd(\fS[G]), \Sp^G_{\locus{\phi}}) \ar{d}[swap]{\Phi^N} \ar{r}{\Theta_G}[swap]{\simeq} & \Sp^G \ar{d}{\Phi^N} \\
\Fun^{\cocart}_{/\fS[G/N]}(\sd(\fS[G/N]), \Sp^{G/N}_{\locus{\phi}}) \ar{r}{\Theta_{G/N}}[swap]{\simeq} & \Sp^{G/N}.
\end{tikzcd} \]
\end{cnstr}

\section{Theories of \texorpdfstring{$G$}{G}-spectra relative to a normal subgroup \texorpdfstring{$N$}{N}}

In classical approaches to equivariant stable homotopy theory \cite{MR866482} \cite{AlaskaNotes}, one attaches to every $G$-universe $\cU$ a corresponding theory of $G$-spectra indexed with respect to $\cU$; upon inverting the weak equivalences, this yields a stable $\infty$-category $\Sp^G_{\cU}$. For the complete $G$-universe $\cU$, one obtains \emph{genuine} $G$-spectra $\Sp^G_{\cU} \simeq \Sp^G$, whereas for the trivial $G$-universe $\sU^G$, one obtains \emph{naive} $G$-spectra $\Sp^G_{\cU^G} \simeq \Fun(\sO^{\op}_G, \Sp)$.\footnote{We identify the $\infty$-category as the ordinary stabilization of $G$-spaces $\Spc^G = \Fun(\sO_G^{\op}, \Spc)$.} Interpolating between genuine and naive $G$-spectra, for every normal subgroup $N \trianglelefteq G$, one has the fixed points $G$-universe $\cU^N$ \cite[Ch.~XVI, \S 5]{AlaskaNotes} and the associated $\infty$-category $\Sp^G_{\cU^N}$. In this section, we will revisit these notions from a different and intrinsically $\infty$-categorical perspective that makes no reference to representation theory. Using the language of parametrized $\infty$-category theory, we define $\infty$-categories $\Sp^G_{\naive{N}}$ and $\Sp^G_{\Borel{N}}$ of \emph{$N$-naive} and \emph{$N$-Borel} $G$-spectra (Def.~\ref{dfn:NaiveGSpectraRelativeToNormalSubgroup} and Def.~\ref{dfn:BorelGSpectraRelativeToNormalSubgroup}). We then show $\Sp^G_{\Borel{N}}$ canonically embeds into $\Sp^G$ as the $\Gamma_N$-complete $G$-spectra for $\Gamma_N$ the $N$-free $G$-family (Prop.~\ref{prp:BorelSpectraAsCompleteObjects}). 

\begin{rem} Although we expect the $\infty$-category $\Sp^G_{\naive{N}}$ to be equivalent to $\Sp^G_{\cU^N}$, we will not give a precise comparison in this paper.
\end{rem}

To begin with, we will need a technical lemma.

\begin{lem} \label{lem:QuotientMapCartesianFibration} \begin{enumerate}[leftmargin=*] \item Let $\adjunct{L}{C}{D}{R}$ be an adjunction such that for all $c \in C$, $d \in D$, and $f: d \to L c$ the natural map
\[ L( Rd \times_{R L c} c ) \to d \]
adjoint to the projection $Rd \times_{R L c} c \to R d$ is an equivalence. Then $L$ is a weak cartesian fibration,\footnote{A weak cartesian fibration is the version of cartesian fibration that is stable under equivalence, defined to be the obvious generalization of a Street fibration to the $\infty$-categorical context.} and hence a cartesian fibration if $L$ is assumed to be a categorical fibration.
\end{enumerate}
\begin{enumerate} \setcounter{enumi}{1}
\item Let 
\[ \begin{tikzcd}[row sep=4ex, column sep=4ex, text height=1.5ex, text depth=0.25ex]
L'' = L' \circ L: C \ar[shift left=2]{r}{L} & C' \ar[shift left=1]{l}{R} \ar[shift left=2]{r}{L'} & D : R \circ R' = R'' \ar[shift left=1]{l}{R'}
\end{tikzcd} \]
\end{enumerate}
be a diagram of adjunctions such that $L \dashv R$, $L' \dashv R'$, and $L'' \dashv R''$ all satisfy the assumption in (1). Then $L$ sends $L''$-cartesian edges to $L'$-cartesian edges.
\end{lem}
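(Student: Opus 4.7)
The plan is to prove (1) by producing an explicit $L$-cartesian lift as a pullback in $C$, then to deduce (2) by showing that the pullback defining an $L''$-cartesian lift decomposes as $R$ applied to the pullback defining an $L'$-cartesian lift, so that the hypothesis of (1) for $L \dashv R$ forces $L$ to carry one to the other.

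For (1), given $f: d \to Lc$ in $D$, I would form $c' = Rd \times_{RLc} c$ using $Rf: Rd \to RLc$ and the unit $\eta_c: c \to RLc$, and let $e: c' \to c$ be the projection. I first verify that $L(e)$ corresponds to $f$ under the equivalence $L(c') \xrightarrow{\simeq} d$ supplied by the hypothesis. Indeed, applying $L$ to the pullback square, the triangle identity $\epsilon_{Lc} \circ L\eta_c \simeq \id_{Lc}$ together with naturality of $\epsilon$ gives $L(e) \simeq f \circ \epsilon_d \circ L(\pr_{Rd})$, whose second factor is precisely the equivalence $L(c') \simeq d$ of the hypothesis. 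I then verify the universal property of a cartesian edge: for every $c'' \in C$, the square
\[
\begin{tikzcd}[row sep=4ex, column sep=4ex]
\Map_C(c'',c') \ar{r} \ar{d} & \Map_C(c'',c) \ar{d} \\
\Map_D(Lc'',d) \ar{r} & \Map_D(Lc'',Lc)
\end{tikzcd}
\]
is a pullback. This falls out of the pullback formula for $c'$ combined with the adjunction equivalences $\Map_C(c'',Rd) \simeq \Map_D(Lc'',d)$ and $\Map_C(c'',RLc) \simeq \Map_D(Lc'',Lc)$, once one checks that the induced maps agree with the adjunct maps, which is routine. This shows $e$ is a weakly cartesian edge lifting $f$, so $L$ is a weak cartesian fibration; the upgrade to a cartesian fibration when $L$ is a categorical fibration is standard.

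For (2), start with an $L''$-cartesian edge $e: c' \to c$ over some $f: d \to L''c = L'Lc$. By (1) applied to $L'' \dashv R'' = R \circ R'$, I may assume up to equivalence that $c' = RR'd \times_{RR'L'Lc} c$, with the pullback formed via the composite unit $c \to RR'L'Lc$. I factor this unit as $c \xrightarrow{\eta^{L \dashv R}_c} RLc \xrightarrow{R\eta^{L' \dashv R'}_{Lc}} RR'L'Lc$ (which is compatible with the standard expression of the unit of a composite adjunction) and correspondingly rewrite the pullback as
\[
c' \;\simeq\; \bigl(RR'd \times_{RR'L'Lc} RLc\bigr) \times_{RLc} c.
\]
Because $R$ preserves limits, the inner pullback is $R(\bar c')$ with $\bar c' = R'd \times_{R'L'Lc} Lc$; but $\bar c'$, equipped with its projection $\bar p: \bar c' \to Lc$, is exactly the pullback produced by (1) for the $L' \dashv R'$ adjunction as an $L'$-cartesian lift of $f$. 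So $c' \simeq R(\bar c') \times_{RLc} c$ with structure map to $c$ formed against $\bar p$. Applying the hypothesis of (1) for $L \dashv R$ to $\bar p$ then gives $L(c') \simeq \bar c'$. A diagram chase entirely parallel to the one in (1), using the triangle identities for $L \dashv R$ and $L' \dashv R'$, identifies $L(e)$ with $\bar p$, so $L(e)$ is $L'$-cartesian.

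The main obstacle will be the final bookkeeping in (2): verifying that the equivalence $L(c') \simeq \bar c'$ obtained from the factored pullback really does carry $L(e)$ to $\bar p$ covering $f$, and not merely to some edge equivalent in $C'$ but over a different map. This reduces to the same unit/counit manipulation that established the compatibility of $L(e)$ with $f$ in (1), now applied one layer deeper to the composite adjunction; the triangle identities glue the pieces together.
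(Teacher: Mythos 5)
Your proposal is correct and follows essentially the same route as the paper: for (1) you exhibit the cartesian lift as the pullback $Rd \times_{RLc} c \to c$ and verify cartesianness via the pullback square of mapping spaces through the adjunction equivalences, and for (2) you factor the composite unit, rewrite $R''d \times_{R''L''c} c$ as $R(R'd \times_{R'L''c} Lc) \times_{RLc} c$, and apply the hypothesis for $L \dashv R$ to identify $L$ of the $L''$-cartesian edge with the $L'$-cartesian edge $R'd \times_{R'L''c} Lc \to Lc$. The only difference is that you spell out the unit/counit bookkeeping identifying $L(e)$ with $f$ (resp. $\bar p$) more explicitly than the paper, which leaves it as a diagram chase.
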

\begin{proof} For (1), under our assumption, we need only show that $R d \times_{R L c} c \to c$ is a $L$-cartesian edge. But for this, for any $c' \in C$ we have the pullback square of spaces
\[ \begin{tikzcd}[row sep=4ex, column sep=4ex, text height=1.5ex, text depth=0.25ex]
\Map_{C}(c',Rd \times_{RL c} c) \ar{r} \ar{d} & \Map_C(c',Rd) \ar{d} \ar{r}{\simeq} & \Map_D(Lc',d) \ar{d} \\
\Map_C(c',c) \ar{r} & \Map_C(c',RLc) \ar{r}{\simeq} & \Map_D(Lc',Lc), \\
\end{tikzcd} \]
and the assertion follows from the definition of $L$-cartesian edge and a simple diagram chase.

For (2), let $c \in C$, $(f:d \to L'' c) \in D$, and consider the $L''$-cartesian edge $R'' d \times_{R'' L'' c} c \to c$ (this case suffices since all $L''$-cartesian edges are of this form up to equivalence). Note that the unit map for $L'' \dashv R''$ factors as the composition $R'' L'' c \simeq R R' L' L c \to R L c \to c$ of unit maps for $L' \dashv R'$ and $L \dashv R$. Thus, we have
\[ L(R'' d \times_{R'' L'' c} c) \simeq L(R \left(R' d \times_{R' L'' c} Lc \right) \times_{R L c} c) \xto{\simeq} R' d \times_{R' L'' c} L c  \]
by our assumption on $L \dashv R$, and this equivalence respects the projection map to $L(c)$. But our assumption on $L' \dashv R'$ ensures that $R'(d) \times_{R' L''(c)} L(c) \to L(c)$ is a $L'$-cartesian edge.
\end{proof}

\subsection{\texorpdfstring{$N$}{N}-naive \texorpdfstring{$G$}{G}-spectra}

\begin{nul} \label{inflationFunctors} Let $N$ be a normal subgroup of $G$ and let $\pi: G \to G/N$ denote the quotient map. We have the adjunction
\[ \adjunct{r_N}{\FF_G}{\FF_{G/N}}{\iota_N} \]
where $r_N(U) = U/N$ and $\iota_N(V) = V$ with $V$ regarded as a $G$-set via $\pi$. Clearly, $\Hom_{G/N}(U,V) \cong \Hom_G(U,V)$, so $\iota_N$ is fully faithful. For $U \in \FF_G$, $V \in \FF_{G/N}$, and a $G/N$-map $f: V \to U/N$, we also have $(V \times_{U/N} U)/N \cong V$, so by Lem.~\ref{lem:QuotientMapCartesianFibration} $r_N$ is a cartesian fibration. Note also that the adjunction $r_N \dashv \iota_N$ restricts to $\adjunct{r_N}{\sO_G}{\sO_{G/N}}{\iota_N}$ and $V \times_{U/N} U$ is transitive if $V$ and $U$ are, hence $r_N$ remains a cartesian fibration when restricted to $\sO_G$. Given a $G$-orbit $G/H$ and a $G/N$-map $f: \frac{G/N}{K/N} \to \frac{G/N}{H N/N}$, we may identify the pullback $G/H \times_{G/H N} G/K \cong G/(H \cap K)$, and a $r_N$-cartesian edge lifting $f$ is given by $G/(H \cap K) \to G/H$. 
\end{nul}

\begin{cvn} For $N$ a normal subgroup of $G$, we will regard $\sO_G^{\op}$ as a $G/N$-category via $r_N^{\op}$.
\end{cvn}

\begin{dfn} \label{dfn:NaiveGSpectraRelativeToNormalSubgroup} Let $\Sp^G_{\naive{N}} = \Fun_{G/N}(\sO_G^{\op}, \underline{\Sp}^{G/N})$ be the $\infty$-category of \emph{naive $G$-spectra relative to $N$}, or \emph{$N$-naive $G$-spectra}.
\end{dfn}

For example, if $N = G$ we have the usual $\infty$-category $\Fun(\sO_G^{\op}, \Sp)$ of naive $G$-spectra, and if $N = 1$ we instead have the $\infty$-category $\Sp^G$ itself. 

\begin{cnstr} \label{cnstr:ForgetfulFunctorToNaiveGSpectra} We define a `forgetful' functor $\sU[N]: \Sp^G \to \Fun_{G/N}(\sO_G^{\op}, \underline{\Sp}^{G/N})$.

First, let $q_N: \omega_G \circ \iota_N \to \omega_{G/N}$ be the natural transformation defined on objects $U \in \FF_{G/N}$ by the functor $U//G \to U//(G/N)$ which sends objects $x \in U$ to the same $x \in U$ and morphisms $g: x \to g \cdot x = \pi(g) \cdot x$ to $\pi(g): x \to \pi(g) \cdot x$.

For any $\infty$-category $C$, the adjunction $r_N \dashv \iota_N$ induces an adjunction
\[ \adjunct{(r_N^{\op})^{\ast}}{\Fun(\FF_{G/N}^{\op},C)}{\Fun(\FF_G^{\op}, C)}{(\iota_N^{\op})^{\ast}} \]
where we may identify $(r_N^{\op})^{\ast}$ with the left Kan extension along $\iota_N^{\op}$. Let $\underline{\inf}[N]: \SH \omega_{G/N}^{\op} r_N^{\op} \to \SH \omega_G^{\op}$ be the natural transformation adjoint to $\SH q_N^{\op}$ and let 
\[ \underline{\inf}[N]: \sO_G^{\op} \times_{\sO_{G/N}^{\op}} \underline{\Sp}^{G/N} \to \underline{\Sp}^G \]
also denote the associated $G$-functor. Note that for a $G$-orbit $G/H$, $\underline{\inf}[N]_{G/H}$ is given by the inflation functor $\inf^{H \cap N}: \Sp^{H/H \cap N} \to \Sp^H$. By the dual of \cite[Prop.~7.3.2.6]{HA}, $\underline{\inf}[N]$ admits a relative right adjoint
\[ \widehat{\Psi}[N]: \underline{\Sp}^G \to \sO_G^{\op} \times_{\sO_{G/N}^{\op}} \underline{\Sp}^{G/N} \]
that does not preserve cocartesian edges; rather, for a map of $G$-orbits $f: G/K \to G/H$ we have a lax commutative square
\[ \begin{tikzcd}[row sep=6ex, column sep=6ex, text height=1.5ex, text depth=0.25ex]
\Sp^H \ar{r}{\gamma_{\ast}} \ar{d}{f^{\ast}} \ar[phantom]{rd}{\SWarrow}  & \Sp^{H/H \cap N} \ar{d}{f^{\ast}} \\
\Sp^K \ar{r}{\gamma_{\ast}} & \Sp^{K/ K \cap N}
\end{tikzcd}
\text{ for the square of $G$-orbits }
\begin{tikzcd}[row sep=6ex, column sep=4ex, text height=1.5ex, text depth=0.25ex]
G/K \ar{d}{f} \ar{r}{\gamma} & (G/N)/(K N/ N) \ar{d}{f} \\
G/H \ar{r}{\gamma} & (G/N)/(H N/N)
\end{tikzcd}. \]
However, for a map of $G/N$-orbits $f: \frac{G/N}{K/N} \to \frac{G/N}{H N/N}$ we have a homotopy commutative square
\[ \begin{tikzcd}[row sep=6ex, column sep=6ex, text height=1.5ex, text depth=0.25ex]
\Sp^H \ar{r}{\gamma_{\ast}} \ar{d}{f^{\ast}} & \Sp^{H/(H \cap N)} \ar{d}{f^{\ast}} \\
\Sp^{K \cap H} \ar{r}{\gamma_{\ast}} & \Sp^{(K \cap H)/(H \cap N)}
\end{tikzcd}
\text{ for the pullback square }
\begin{tikzcd}[row sep=6ex, column sep=4ex, text height=1.5ex, text depth=0.25ex]
G/(K \cap H) \ar{d}{\gamma} \ar{r}{f} & G/H \ar{d}{\gamma} \\
G/K \ar{r}{f} & G/H N
\end{tikzcd} \]
and hence the further composition $\underline{\Psi}[N] = \pr \circ \widehat{\Psi}[N]: \underline{\Sp}^G \to \underline{\Sp}^{G/N}$ \emph{does} preserve cocartesian edges over $\sO_{G/N}^{\op}$, where we regard $\underline{\Sp}^G$ as a $G/N$-$\infty$-category via $r_N^{\op}$. We also have the unit $\eta: \id \to \iota_N r_N$ which by precomposition yields the $G$-functor
\[ \underline{\res}[N]: \sO_G^{\op} \times_{(\iota_N r_N)^{\op}, \sO_G^{\op}} \underline{\Sp}^G \to \underline{\Sp}^G \]
where for a $G$-orbit $G/H$, $\underline{\res}[N]_{G/H}$ is given by the restriction functor $\res^{H N}_H: \Sp^{H N} \to \Sp^H$. The composite $\underline{\Psi}[N] \circ \underline{\res}[N]$ is a $G/N$-functor. We obtain a $G/N$-functor
\[ \widetilde{\sU}[N]: \sO_{G/N}^{\op} \times_{\sO_G^{\op}} \Sp^G \to \underline{\Fun}_{G/N}(\sO_G^{\op}, \underline{\Sp}^{G/N}) \]
via adjunction.\footnote{The ad-hoc notation $\widetilde{\sU}[N]$ for this $G/N$-functor is employed so as not to conflict with the $G$-functor $\underline{\sU}[N]$ in \ref{ExtendingNaiveSpectraToGCategory} below.} Define $\sU[N]$ to be the fiber of $\widetilde{\sU}[N]$ over $(G/N)/(G/N)$.
\end{cnstr}

\begin{nul}[\textbf{Monoidality of forgetful functor}] \label{LaxMonoidalForgetfulFunctorToNaiveSpectra} In Constr.~\ref{cnstr:ForgetfulFunctorToNaiveGSpectra}, the monoidality of inflation and restriction implies that with respect to $\underline{\Sp}^{G, \otimes}$ and $\underline{\Sp}^{G/N, \otimes}$, the $G$-functors $\underline{\inf}[N]$ and $\underline{\res}[N]$ are symmetric monoidal and the $G/N$-functor $\underline{\Psi}[N]$ is lax monoidal. Therefore, $\sU[N]$ is lax monoidal with respect to the pointwise monoidal structure on $\Sp^G_{\naive{N}}$.
\end{nul}

\begin{nul}[\textbf{Extension to $G$-$\infty$-category}] \label{ExtendingNaiveSpectraToGCategory} For any subgroup $H$ of $G$, consider the commutative diagram of restriction functors
\[ \begin{tikzcd}[row sep=4ex, column sep=8ex, text height=1.5ex, text depth=0.25ex]
\FF_H & \FF_G \ar{l}[swap]{\res^G_H} \\
\FF_{H/(H \cap N)} \ar{u}[swap]{\iota_{H \cap N}} & \FF_{G/N} \ar{l}{\res^{G/N}_{H/(H \cap N)}} \ar{u}{\iota_N}
\end{tikzcd}
\text{ that yields by adjunction }
\begin{tikzcd}[row sep=4ex, column sep=8ex, text height=1.5ex, text depth=0.25ex]
\FF_H \ar{r}{\ind^G_H} \ar{d}{r_{N \cap H}} & \FF_G \ar{d}{r_N} \\
\FF_{H/(H \cap N)} \ar{r}[swap]{\ind^{G/N}_{H/(H \cap N)}} & \FF_{G/N}
\end{tikzcd}. \]
Precomposition by $(\ind^G_H)^{\op}: \sO_H^{\op} \to \sO_G^{\op}$ yields functors
\[ \res^G_H: \Fun_{G/N}(\sO^{\op}_G, \underline{\Sp}^{G/N}) \to \Fun_{H/(H\cap N)}(\sO^{\op}_H, \underline{\Sp}^{H/(H \cap N)}) \]
that assemble to the data of a functor $\sO_G^{\op} \to \Cat_{\infty}$ and thereby define a $G$-$\infty$-category $\underline{\Sp}^G_{\naive{N}}$. Furthermore, $\sU[N]$ extends to a $G$-functor $\underline{\sU}[N]: \underline{\Sp}^G \to \underline{\Sp}^G_{\naive{N}}$, given on the fiber over $G/H$ by $\sU[N \cap H]$.
\end{nul}

\begin{nul}[\textbf{Evaluation functors}] \label{evaluationFactorizationNaiveSpectra} For any subgroup $H$ of $G$, evaluation on the orbit $G/H$ yields a functor
\[ s_H^{\ast}: \Fun_{G/N}(\sO^{\op}_G, \underline{\Sp}^{G/N}) \to \Sp^{H/(H \cap N)}. \]
By construction, this fits into a commutative diagram
\[ \begin{tikzcd}[row sep=4ex, column sep=4ex, text height=1.5ex, text depth=0.25ex]
\Sp^G \ar{r}{\sU[N]} \ar{d}[swap]{\res^G_H} & \Fun_{G/N}(\sO_G^{\op}, \underline{\Sp}^{G/N}) \ar{d}{s_H^{\ast}} \\
\Sp^H \ar{r}{\Psi^{H \cap N}} & \Sp^{H/(H \cap N)}.
\end{tikzcd} \]
Because $\underline{\Sp}^{G/N}$ is a $G/N$-presentable $G/N$-stable $\infty$-category, the same holds for $\underline{\Fun}_{G/N}(\sO^{\op}_G, \underline{\Sp}^{G/N})$ with $G/N$-limits and colimits computed as in \cite[Prop.~9.17]{Exp2}. Thus, $\Fun_{G/N}(\sO_G^{\op}, \underline{\Sp}^{G/N})$ is a presentable stable $\infty$-category such that the $s_H^{\ast}$ form a set of jointly conservative functors that preserve and detect limits and colimits. Since both the restriction and categorical fixed points functors preserve limits and colimits, it follows that $\sU[N]$ preserves limits and colimits and therefore admits a left adjoint $\sF[N]$ and right adjoint $\sF^{\vee}[N]$.

We also have a partial compatibility relation as $H$ varies. Namely, given $H$, if $K$ is a subgroup such that $N \cap H \leq K \leq H$ (so $K \cap N = H \cap N$), then
\[ \res^{H/(N \cap H)}_{K/(N \cap H)} \circ s_H^{\ast} \simeq s_K^{\ast}: \Fun_{G/N}(\sO^{\op}_G, \underline{\Sp}^{G/N}) \to \Sp^{K/N \cap H}. \]
\end{nul}

\begin{nul}[\textbf{Interaction with $G$-spaces}] By repeating the construction of $\sU[N]$ for $G$-spaces and using the compatibility of restriction and categorical fixed points with $\Omega^{\infty}$, we obtain a commutative diagram
\[ \begin{tikzcd}[row sep=4ex, column sep=4ex, text height=1.5ex, text depth=0.25ex]
\Sp^G \ar{r}{\sU[N]} \ar{d}[swap]{\Omega^{\infty}} & \Fun_{G/N}(\sO^{\op}_G, \underline{\Sp}^{G/N}) \ar{d}{\Omega^{\infty}}  \\
\Spc^G \ar{r}{\sU'[N]} & \Fun_{G/N}(\sO^{\op}_G, \underline{\Spc}^{G/N}) 
\end{tikzcd} \]
where the righthand $\Omega^{\infty}$ functor denotes postcomposition by the $G/N$-functor $\Omega^{\infty}: \underline{\Sp}^{G/N} \to \underline{\Spc}^{G/N}$. Moreover, a diagram chase reveals that under the equivalence
\[ \Fun_{G/N}(\sO_G^{\op}, \underline{\Spc}^{G/N}) \simeq \Fun(\sO^{\op}_G, \Spc) = \Spc^G \]
of \cite[Prop.~3.9]{Exp2}, $\sU'[N]$ is an equivalence.
\end{nul}

To understand the compact generation of $N$-naive $G$-spectra, we need the following lemma.

\begin{lem} \label{lem:CompactGeneration} Let $C$ and $\{C_i: i \in I \}$ be presentable stable $\infty$-categories (with $I$ a small set) such that each $C_i$ has a (small) set $\{ x_{i \alpha}: \alpha \in \Lambda_i \}$ of compact generators. Suppose we have functors $U_i: C \to C_i$ that preserve limits and colimits and are jointly conservative. Let $F_i$ be left adjoint to $U_i$. Then $C$ has a (small) set of compact generators given by $\{  F_i x_{i \alpha}: i \in I, \alpha \in \Lambda_i \}$. In particular, $C$ is compactly generated. 
\end{lem}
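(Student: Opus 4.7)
The plan is to verify the two defining properties of a set of compact generators for the collection $\{F_i x_{i\alpha}\}$, namely compactness of each $F_i x_{i\alpha}$ and joint generation, and then observe that the collection is small.

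First I would establish compactness. Since $U_i$ preserves colimits by hypothesis, in particular it preserves filtered colimits, so its left adjoint $F_i$ preserves compact objects. Explicitly, for any filtered diagram $y_\bullet$ in $C$, the adjunction equivalence
\[ \Map_C(F_i x_{i\alpha}, \colim y_\bullet) \simeq \Map_{C_i}(x_{i\alpha}, U_i \colim y_\bullet) \simeq \Map_{C_i}(x_{i\alpha}, \colim U_i y_\bullet) \simeq \colim \Map_{C_i}(x_{i\alpha}, U_i y_\bullet) \]
together with a second application of adjunction identifies this with $\colim \Map_C(F_i x_{i\alpha}, y_\bullet)$, where the third equivalence uses compactness of $x_{i\alpha}$ in $C_i$. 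Hence each $F_i x_{i\alpha}$ is compact in $C$.

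Next I would verify generation. In a presentable stable $\infty$-category, a set of compact objects $\{g_\lambda\}$ generates if and only if for every $c \in C$, $\map_C(g_\lambda, c) \simeq 0$ for all $\lambda$ implies $c \simeq 0$. So suppose $c \in C$ satisfies $\map_C(F_i x_{i\alpha}, c) \simeq 0$ for all $i$ and all $\alpha \in \Lambda_i$. By adjunction, $\map_{C_i}(x_{i\alpha}, U_i c) \simeq 0$ for all $\alpha$. Since the $\{x_{i\alpha}\}$ form a set of compact generators of the stable $\infty$-category $C_i$, this forces $U_i c \simeq 0$. Applying this for every $i \in I$ and then invoking joint conservativity of the family $\{U_i\}$, we conclude that $c \simeq 0$.

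Finally, the indexing set $\{(i,\alpha) : i \in I, \alpha \in \Lambda_i\}$ is small because $I$ is small and each $\Lambda_i$ is small, so $\{F_i x_{i\alpha}\}$ is a small set of compact generators of $C$, proving in particular that $C$ is compactly generated. There is no real obstacle here; the only subtlety is ensuring that the $U_i$ preserving filtered colimits (which follows from preserving all colimits) is what gives compactness of $F_i x_{i\alpha}$, and that the joint conservativity hypothesis is exactly strong enough to promote the fiberwise generation criterion in each $C_i$ to a generation criterion in $C$.
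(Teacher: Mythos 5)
Your proof is correct and follows essentially the same route as the paper: compactness of $F_i x_{i\alpha}$ from $U_i$ preserving (filtered) colimits, and generation by combining the adjunction with fiberwise generation in each $C_i$ and joint conservativity of the $U_i$. The paper phrases the generation check with $\Hom_C(\Sigma^n F_i x_{i\alpha}, c)\cong 0$ for all $n$ rather than the vanishing of the mapping spectrum, but this is the same criterion.
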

\begin{proof} We check directly that the indicated set generates $C$. Let $c \in C$ be any object and suppose that $\Hom_C(\Sigma^n F_i x_{i \alpha}, c) \cong 0$ for all choices of indices. Then by adjunction, $\Hom_{C_i}(\Sigma^n x_{i \alpha}, U_i c) \cong 0$, hence $U_i c \simeq 0$ for all $i \in I$. Invoking the joint conservativity of the $U_i$, we deduce that $c \simeq 0$. As for compactness, note that the assumption that each $U_i$ preserves colimits ensures that its left adjoint $F_i$ preserves compact objects.
\end{proof}

\begin{cor} \label{cor:CompactGenerationNaive} The $\infty$-category $\Fun_{G/N}(\sO^{\op}_G, \underline{\Sp}^{G/N})$ has a set of compact generators given by $\{ (s_H)_!(1) : H \leq G \}$.
\end{cor}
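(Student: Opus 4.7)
The plan is to deduce this corollary from Lem.~\ref{lem:CompactGeneration} applied to the family of evaluation functors $\{ s_H^{\ast} : H \leq G \}$, using the compatibility relation between evaluation and restriction noted at the end of \ref{evaluationFactorizationNaiveSpectra}. The hypotheses of Lem.~\ref{lem:CompactGeneration} are essentially immediate: by the discussion in \ref{evaluationFactorizationNaiveSpectra} the functors $s_H^{\ast}$ are jointly conservative, preserve limits and colimits, and admit left adjoints $(s_H)_!$; moreover, each target $\Sp^{H/(H \cap N)}$ is compactly generated (by standard results in equivariant stable homotopy theory) with a set of compact generators given by the orbits $\{ \Sigma^{\infty}_+ (H/(H \cap N))/L : L \leq H/(H \cap N) \}$.

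Applying Lem.~\ref{lem:CompactGeneration} directly, we obtain that $\Fun_{G/N}(\sO^{\op}_G, \underline{\Sp}^{G/N})$ is compactly generated by the set
\[ \left\{ (s_H)_! \left( \Sigma^{\infty}_+ (H/(H \cap N))/L \right) : H \leq G, \; L \leq H/(H \cap N) \right\}. \]
The first step is thus complete; it remains to show that every such generator is of the form $(s_K)_!(1)$ for some subgroup $K \leq G$. For this, we use that any subgroup $L \leq H/(H \cap N)$ is of the form $K/(H \cap N)$ for a unique $K$ satisfying $H \cap N \leq K \leq H$, and in particular $K \cap N = H \cap N$.

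Next, passing to left adjoints in the compatibility relation
\[ \res^{H/(H \cap N)}_{K/(H \cap N)} \circ s_H^{\ast} \simeq s_K^{\ast} \]
from \ref{evaluationFactorizationNaiveSpectra} yields an equivalence
\[ (s_H)_! \circ \ind^{H/(H \cap N)}_{K/(H \cap N)} \simeq (s_K)_! \]
of functors $\Sp^{K/(K \cap N)} \to \Fun_{G/N}(\sO^{\op}_G, \underline{\Sp}^{G/N})$. Evaluating on the unit $1 \in \Sp^{K/(K \cap N)}$ and using $\ind^{H/(H \cap N)}_{K/(H \cap N)}(1) \simeq \Sigma^{\infty}_+ (H/(H \cap N))/(K/(H \cap N))$ gives
\[ (s_H)_! \left( \Sigma^{\infty}_+ (H/(H \cap N))/(K/(H \cap N)) \right) \simeq (s_K)_!(1). \]
Combining this with the previous step shows that the set $\{ (s_H)_!(1) : H \leq G \}$ already contains (up to equivalence) the compact generating set produced by Lem.~\ref{lem:CompactGeneration}, and is therefore itself a set of compact generators. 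There is no substantial obstacle here; the only subtlety is recognizing that one need not introduce separate generators for each subgroup of $H/(H \cap N)$, since induction from $K/(H \cap N)$ exchanges those generators for units of $\Sp^{K/(K \cap N)}$ on the source side of a smaller evaluation functor $(s_K)_!$.
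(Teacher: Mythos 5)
Your proof is correct and follows the same route as the paper's: apply Lem.~\ref{lem:CompactGeneration} to the evaluation functors $s_H^{\ast}$ to get the generators $(s_H)_!\bigl(\Sigma^{\infty}_+ (H/(H\cap N))/L\bigr)$, then use the compatibility $\res^{H/(N\cap H)}_{K/(N\cap H)} \circ s_H^{\ast} \simeq s_K^{\ast}$ to identify each of these with some $(s_K)_!(1)$. The paper merely states this reduction as "eliminating redundant expressions," whereas you spell out the adjoint-passing argument explicitly; there is no substantive difference.
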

\begin{proof} By applying Lem.~\ref{lem:CompactGeneration} to the functors $s_H^{\ast}$ described in \ref{evaluationFactorizationNaiveSpectra}, we deduce that $\{(s_H)_! \left( \frac{H/(H \cap N)}{K/(H \cap N)}_+ \right) : H \leq G \}$ is a set of compact generators for $\Fun_{G/N}(\sO^{\op}_G, \underline{\Sp}^{G/N})$. Because $\res^{H/(N \cap H)}_{K/(N \cap H)} s_H^{\ast} \simeq s_K^{\ast}$, we may eliminate redundant expressions and reduce to the set $\{ (s_H)_!(1) : H \leq G \}$.
\end{proof}

\subsection{\texorpdfstring{$N$}{N}-Borel \texorpdfstring{$G$}{G}-spectra}

We next consider Borel $G$-spectra relative to $N$. Let $\psi$ denote the extension $N \to G \to G/N$.

\begin{dfn} \label{dfn:TwistedClassifyingSpace} Let $B^{\psi}_{G/N} N \subset \sO^{\op}_G$ be the full subcategory on those $G$-orbits that are $N$-free.
\end{dfn}

Note that $B^{\psi}_{G/N} N$ is a cosieve in $\sO^{\op}_G$: this amounts to the observation that if $U$ is $N$-free and $f: V \to U$ is a $G$-equivariant map of $G$-sets, then $V$ is $N$-free.

\begin{lem} \label{lm:TwistedClassifyingSpaceIsSpace} The cocartesian fibration $r_N^{\op}: \sO^{\op}_G \to \sO^{\op}_{G/N}$ restricts to a left fibration
\[ \rho_N: B^{\psi}_{G/N} N \to \sO^{\op}_{G/N}. \]
\end{lem}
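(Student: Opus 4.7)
The plan is to use the fact that a cocartesian fibration is a left fibration if and only if all of its fibers are Kan complexes (HTT 2.4.2.4). First, I would observe that since $B^{\psi}_{G/N} N$ is a cosieve in $\sO^{\op}_G$, the restriction of $r_N^{\op}$ to $B^{\psi}_{G/N} N$ is again a cocartesian fibration: any cocartesian edge of $r_N^{\op}$ whose source lies in $B^{\psi}_{G/N} N$ must land in $B^{\psi}_{G/N} N$, because cosieves are closed under all outgoing edges. So $\rho_N$ inherits the cocartesian fibration structure from $r_N^{\op}$, and it remains to verify that each fiber is an $\infty$-groupoid.

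The fibers admit a concrete description. For a $G/N$-orbit $(G/N)/(H/N)$ with $N \le H \le G$, the objects of the fiber of $\rho_N$ are (opposites of) $G$-orbits $G/K$ such that $r_N(G/K) = (G/N)/(H/N)$ and $G/K$ is $N$-free, which translates to $K$ being a subgroup of $H$ satisfying $KN = H$ and $K \cap N = 1$; equivalently, $K$ is a complement to $N$ in $H$, and in particular $|K| = |H|/|N|$. A morphism in $\sO_G$ from $G/K$ to $G/K'$ is represented by an element $gK' \in G/K'$ with $g^{-1}Kg \subseteq K'$, and by the description of $r_N$ in \ref{inflationFunctors}, this morphism lies over the identity of $(G/N)/(H/N)$ precisely when $g \in H$.

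The key step is to check that every such morphism is an isomorphism. But if $K$ and $K'$ are both complements to $N$ in $H$, then $|K| = |K'| = |H|/|N|$, so the inclusion $g^{-1}Kg \subseteq K'$ forces $g^{-1}Kg = K'$ by a simple order count, and the morphism is invertible with inverse induced by $g^{-1}$. Since $\sO_G$ is (the nerve of) a $1$-category, each fiber is therefore a $1$-groupoid, hence an $\infty$-groupoid. This verifies condition (3) of HTT 2.4.2.4 and concludes that $\rho_N$ is a left fibration. There is no real obstacle here; the only subtlety is identifying the morphisms in the fiber correctly, which is handled by the explicit formula for cartesian edges of $r_N$ recorded at the end of \ref{inflationFunctors}.
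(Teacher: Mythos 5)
Your proof is correct and takes essentially the same approach as the paper: the cosieve observation gives the cocartesian fibration, and the remaining step reduces to verifying that morphisms in the fiber are isomorphisms (which you spell out concretely via the order count on complements of $N$ in $H$, where the paper phrases the same fact at the level of $N$-free $G$-sets and invokes conservativity rather than HTT 2.4.2.4(3) — two equivalent formulations of the same criterion).
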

\begin{proof} Because $B^{\psi}_{G/N} N$ is a cosieve, the inclusion $B^{\psi}_{G/N} N \subset \sO^{\op}_G$ is stable under $r_N^{\op}$-cocartesian edges, so $\rho_N$ is a cocartesian fibration such that the inclusion preserves cocartesian edges. Furthermore, if $f: U \to V$ is a $G$-equivariant map of $N$-free $G$-sets such that $\overline{f}: U/N \to V/N$ is an isomorphism, then it is easy to check that $f$ is an isomorphism. Because $\rho_N$ is conservative, we deduce that $\rho_N$ is in addition a left fibration.
\end{proof}

\begin{rem} If $N$ yields a product decomposition $G \cong G/N \times N$, then $B^{\psi}_{G/N} N$ is spanned by those orbits of the form $G/\Gamma_{\phi}$ where $\Gamma_{\phi}$ is the graph of a homomorphism $\phi: M \to G/N$ for $M \leq N$. As a $G/N$-space, $B^{\psi}_{G/N} N$ then is the classifying $G/N$-space for $G/N$-equivariant principal $N$-bundles, which is usually denoted as $B_{G/N} N$. We thus think of $B^{\psi}_{G/N} N$ as a twisted variant of $B_{G/N} N$ for non-trivial extensions $\psi$. Many other authors have also studied equivariant classifying spaces in varying levels of generality -- for example, see \cite{Guillou2017} \cite{Luck05} \cite{Lck2014}.
\end{rem}

The following definition extends \cite[Rmk.~2.23]{Qui19b} to the case of a non-trivial extension $\psi$.

\begin{dfn} \label{dfn:BorelGSpectraRelativeToNormalSubgroup} Let $\Sp^G_{\Borel{N}} = \Fun_{G/N}(B^{\psi}_{G/N} N, \underline{\Sp}^{G/N})$ be the $\infty$-category of \emph{Borel $G$-spectra relative to $N$}, or \emph{$N$-Borel $G$-spectra}. We will also refer to $G/N$-functors $$X: B^{\psi}_{G/N} N \to \ul{\Sp}^{G/N}$$
as \emph{$G/N$-spectra with $\psi$-twisted $N$-action}.
\end{dfn}

\begin{ntn} Given an abelian group $A$, we will use $B^t_{C_2} A$ as preferred alternative notation in lieu of $B^{\psi}_{C_2} A$ for the defining extension $\psi = [A \to A \rtimes C_2 \to C_2]$ of the semidirect product, where $C_2$ acts on $A$ by the inversion involution. We will also refer to $C_2$-functors $X:B^t_{C_2} A \to \ul{\Sp}^{C_2}$ as \emph{$C_2$-spectra with twisted $A$-action}, leaving $\psi$ implicit.
\end{ntn}

\begin{properties} \label{VariousPropertiesForgetfulFunctorBorelSpectra} By composing $\sU[N]$ with restriction along the $G/N$-functor $i: B^{\psi}_{G/N} N \subset \sO^{\op}_G$, we obtain a forgetful functor
\[ \sU_b[N]: \Sp^G \to \Fun_{G/N}(B^{\psi}_{G/N} N, \underline{\Sp}^{G/N}). \]

Parallel to the above discussion of the properties of $\sU[N]$, let us enumerate some of the properties of $\sU_b[N]$.

\begin{enumerate} \item Because both $\sU[N]$ and restriction along $i$ preserve limits and colimits, $\sU_b[N]$ preserves limits and colimits and thus admits left and right adjoints $\sF_b[N]$ and $\sF^{\vee}_b[N]$ that factor through $\sF[N]$ and $\sF^{\vee}[N]$.

\item For all $G/H \in  B^{\psi}_{G/N} N$ we have $H \cap N = 1$. Therefore, the smaller collection of functors $\{ s_H^{\ast}: \Sp^G_{\Borel{N}} \to \Sp^H : H \cap N = 1 \}$ is jointly conservative and preserves and detects limits and colimits. Moreover, from \ref{evaluationFactorizationNaiveSpectra} we get that $s_H^{\ast} \circ \sU_b[N] \simeq \res^G_H$. 

\item Since $\sU[N]$ is lax monoidal by \ref{LaxMonoidalForgetfulFunctorToNaiveSpectra} and restriction is symmetric monoidal, we get that $\sU_b[N]$ is a lax monoidal functor. However, because each $s_H^{\ast} \circ \sU_b[N]$ for $H \cap N = 1$ is now symmetric monoidal, $\sU_b[N]$ is in fact symmetric monoidal.

\item As in Constr.~\ref{cnstr:ForgetfulFunctorToNaiveGSpectra}, the functor $\sU_b[N]$ is the fiber over $(G/N)/(G/N)$ of a $G/N$-functor $$\widetilde{\sU}_b[N]: \sO_{G/N}^{\op} \times_{\sO_G^{\op}} \ul{\Sp}^G \to \ul{\Fun}_{G/N}(\sO_G^{\op}, \ul{\Sp}^{G/N}). $$ 

Also, as in \ref{ExtendingNaiveSpectraToGCategory}, $\Sp^G_{\Borel{N}}$ extends to a $G$-$\infty$-category $\underline{\Sp}^G_{\Borel{N}}$ and $\sU_b[N]$ extends to a $G$-functor $\underline{\sU}_b[N]: \underline{\Sp}^G \to \underline{\Sp}^G_{\Borel{N}}$, given on the fiber over $G/H$ by
\[  \sU_b[N \cap H]: \Sp^H \to \Fun_{H/(N \cap H)}(B^{\psi_H}_{H/(H \cap N)} (N \cap H), \underline{\Sp}^{H/(N \cap H)}) \]
for the restricted extension $\psi_H \coloneq [N \cap H \to H \to H/(N \cap H)]$.

\item As with $N$-naive $G$-spectra, we have a commutative diagram
\[ \begin{tikzcd}[row sep=4ex, column sep=4ex, text height=1.5ex, text depth=0.25ex]
\Sp^G \ar{r}{\sU_b[N]} \ar{d}[swap]{\Omega^{\infty}} & \Fun_{G/N}(B^{\psi}_{G/N} N, \underline{\Sp}^{G/N}) \ar{d}{\Omega^{\infty}}  \\
\Spc^G \ar{r}{\sU'_b[N]} \ar[equal]{d} & \Fun_{G/N}(B^{\psi}_{G/N} N, \underline{\Spc}^{G/N}) \ar{d}{\simeq} \\
\Fun(\sO^{\op}_G,\Sp) \ar{r}{i^{\ast}} & \Fun(B^{\psi}_{G/N} N, \Spc).
\end{tikzcd} \]
where now we may identify $\sU'_b[N]$ with restriction along $i$. Consider the transposed lax commutative diagram
\[ \begin{tikzcd}[row sep=4ex, column sep=4ex, text height=1.5ex, text depth=0.25ex]
\Sp^G \ar{r}{\sU_b[N]} \ar[phantom]{rd}{\NWarrow}  & \Fun_{G/N}(B^{\psi}_{G/N} N, \underline{\Sp}^{G/N})  \\
\Spc^G \ar{r}{\sU'_b[N]} \ar{u}{\Sigma^{\infty}_+} & \Fun_{G/N}(B^{\psi}_{G/N} N, \underline{\Spc}^{G/N}) \ar{u}{\Sigma^{\infty}_+}.
\end{tikzcd} \]
For any subgroup $H$ such that $H \cap N = 1$, we may extend this diagram to
\[ \begin{tikzcd}[row sep=4ex, column sep=4ex, text height=1.5ex, text depth=0.25ex]
\Sp^G \ar{r}{\sU_b[N]} \ar[phantom]{rd}{\NWarrow} & \Fun_{G/N}(B^{\psi}_{G/N} N, \underline{\Sp}^{G/N}) \ar{r}{\ev_H} & \Sp^H \\
\Spc^G \ar{r}{\sU'_b[N]} \ar{u}{\Sigma^{\infty}_+} & \Fun_{G/N}(B^{\psi}_{G/N} N, \underline{\Spc}^{G/N}) \ar{u}{\Sigma^{\infty}_+} \ar{r}{\ev_H} & \Spc^H \ar{u}{\Sigma^{\infty}_+}
\end{tikzcd} \]
where the horizontal composites are given by the restriction functor $\res^G_H$ since $H/(H \cap N) \cong H$ (c.f. \ref{evaluationFactorizationNaiveSpectra} and the analogous setup for $G$-spaces). The righthand square commutes by definition, and the outer square commutes by the compatibility of restriction with $\Sigma^{\infty}_+$. Since the $\ev_H$ are jointly conservative, it follows that the the lefthand square commutes.
\end{enumerate}
\end{properties}

\begin{ntn} \label{ntn:NFreeFamily} Let $\Gamma_N$ be the \emph{$N$-free} $G$-family consisting of subgroups $H$ such that $H \cap N = 1$.
\end{ntn}

\begin{prp} \label{prp:BorelSpectraAsCompleteObjects} The functors $\sF_b[N]$ and $\sF_b^{\vee}[N]$ are fully faithful with essential image the $\Gamma_N$-torsion and $\Gamma_N$-complete $G$-spectra, respectively.
\end{prp}
\begin{proof} We first check that the unit $\eta: \id \to \sU_b[N] \sF_b[N]$ is an equivalence to show that the left adjoint $\sF_b[N]$ is fully faithful. Because $\Omega^{\infty} \sU_b[N] \simeq i^{\ast} \Omega^{\infty}$ and $\Sigma^{\infty}_+ i^{\ast} \simeq \sU_b[N] \Sigma^{\infty}_+$, we have an equivalence of left adjoints $\Sigma^{\infty}_+ i_! \simeq \sF_b[N] \Sigma^{\infty}_+$  and for $X \in \Fun(B^{\psi}_{G/N} N, \Spc)$ we may identify $\eta_{\Sigma^{\infty}_+ X}$ with $\Sigma^{\infty}_+ \eta'_X$, where $\eta': \id \to i^{\ast} i_!$ is the unit of the adjunction $i_! \dashv i_{\ast}$. But $\eta'$ is an equivalence since $i_!$ is left Kan extension along the inclusion of a full subcategory. Thus, $\eta$ is an equivalence on all suspension spectra. In view of the commutative diagram for $H \in \Gamma_N$
\[ \begin{tikzcd}[row sep=4ex, column sep=4ex, text height=1.5ex, text depth=0.25ex]
\Fun_{G/N}(B^{\psi}_{G/N} N, \underline{\Sp}^{G/N}) \ar{r}{s_H^{\ast}} \ar{d}{\Omega^{\infty}} & \Sp^H \ar{d}{\Omega^{\infty}} \\
\Fun(B^{\psi}_{G/N} N, \Spc) \ar{r}{{s'_H}^{\ast}} & \Spc^H
\end{tikzcd} \]
where $s_H' = (\ind^G_H)^{\op}: \sO_H^{\op} \to B^{\psi}_{G/N} N \subset \sO_G^{\op}$, we have an equivalence of left adjoints ${s_H}_! \Sigma^{\infty}_+ \simeq \Sigma^{\infty}_+ {s'_H}_!$, so in particular ${s_H}_! (1)$ is a suspension spectrum. Elaborating upon Cor.~\ref{cor:CompactGenerationNaive}, we observe that the set $\{ {s_H}_! (1) : H \in \Gamma_N \}$ constitutes a set of compact generators for $\Sp^G_{\Borel{N}}$. Because both the domain and codomain of $\eta$ commute with colimits, we conclude that $\eta$ is an equivalence. Moreover, because $(s_H)^{\ast} \circ \sU_b[N] \simeq \res^G_H$ as noted in \ref{evaluationFactorizationNaiveSpectra}, by adjunction $\sF_b[N] \circ (s_H)_! \simeq \ind^G_H$ and thus the essential image of $\sF_b[N]$ is the localizing subcategory generated by the set $\{ G/H_+ : H \in \Gamma_N \}$. This equals the full subcategory of $\Gamma_N$-torsion $G$-spectra. Because we already have the stable recollement
\[ \begin{tikzcd}[row sep=4ex, column sep=4ex, text height=1.5ex, text depth=0.25ex]
\Sp^{h \Gamma_N} \ar[shift right=1,right hook->]{r}[swap]{j_{\ast}} & \Sp^G \ar[shift right=2]{l}[swap]{j^{\ast}} \ar[shift left=2]{r}{i^{\ast}} & \Sp^{\Phi \Gamma_N} \ar[shift left=1,left hook->]{l}{i_{\ast}},
\end{tikzcd} \]
it follows that $\sF^{\vee}_b[N]$ is fully faithful with essential image the $\Gamma_N$-complete $G$-spectra. In more detail:
\begin{itemize} \item The composite $i^{\ast} \sF_b[N] \simeq 0$ because $\sF_b[N](X) \in j_!(\Sp^{h \Gamma_N})$ and $i^{\ast} j_! \simeq 0$. Passing to adjoints, we get $\sU_b[N] i_{\ast} \simeq 0$. Then for all $X \in \Sp^G_{\Borel{N}}$, $\sF^{\vee}_b[N](X)$ is $\Gamma_N$-complete by the equivalence
\[ \Map(i_{\ast} Z, \sF^{\vee}_b[N](X)) \simeq \Map(\sU_b[N] i_{\ast} Z, X) \simeq 0. \]
\item Using that $\sF_b[N]$ is an equivalence onto its essential image, we see that the composite $\sU_b[N] j_!$ is an equivalence from $\Sp^{h \Gamma_N}$ to $\Sp^G_{\Borel{N}}$. Its right adjoint $j^{\ast} \sF^{\vee}_b[N]$ is thus an equivalence.
\item Combining these two assertions, we have that the composite
\[ \begin{tikzcd}[row sep=4ex, column sep=4ex, text height=1.5ex, text depth=0.25ex]
\Sp^G_{\Borel{N}} \ar{r}{\sF^{\vee}_b[N]} \ar[bend right=15]{rr}[swap]{\simeq} & \Sp^G \ar{r}{j^{\ast}} & \Sp^{h \Gamma_N} \ar{r}{j_{\ast}} & \Sp^G
\end{tikzcd} \]
is equivalent to $\sF^{\vee}_b[N]$ via the unit $\sF^{\vee}_b[N] \xto{\simeq} j_{\ast} j^{\ast} \sF^{\vee}_b[N]$ and is fully faithful onto $\Gamma_N$-complete $G$-spectra.
\end{itemize}
\end{proof}

\begin{rem}[Monoidality] \label{rem:MonoidalIdentificationOfBorelSpectra} Because $\sU_b[N]$ is symmetric monoidal by \ref{VariousPropertiesForgetfulFunctorBorelSpectra}(3), its right adjoint $\sF^{\vee}_b[N]$ is lax monoidal. Therefore, the equivalence $\sF^{\vee}_b[N]: \Fun_{G/N}(B^{\psi}_{G/N} N, \underline{\Sp}^{G/N}) \xto{\simeq} \Sp^{h \Gamma_N}$ of Prop.~\ref{prp:BorelSpectraAsCompleteObjects} is one of symmetric monoidal $\infty$-categories with respect to the pointwise monoidal structure on the lefthand side and the monoidal structure induced by the $\Gamma_N$-monoidal recollement on the righthand side.
\end{rem}

\begin{cor}[Compatibility with restriction] \label{cor:BorelCompatibilityWithRestriction} The left and right adjoints $\sF_b[N]$ and $\sF_b^{\vee}[N]$ extend to $G$-left and right adjoints
\[  \underline{\sF}_b[N], \underline{\sF}_b^{\vee}[N]: \underline{\Sp}^G_{\Borel{N}} \to \underline{\Sp}^G.  \]
\end{cor}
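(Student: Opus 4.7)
The plan is to reduce the statement to the $G$-recollement on $\ul{\Sp}^G$ associated to the $N$-free family $\Gamma_N$ of Notn.~\ref{ntn:NFreeFamily}. For any subgroup $H \leq G$, the restriction of $\Gamma_N$ to $\fS[H]$ is precisely $\Gamma_{N \cap H}$; hence Rmk.~\ref{ParamRecollementFamily} together with Cor.~\ref{cor:RecollementGivesParamStableSubcategories} and the ambidexterity case Lem.~\ref{lem:AmbidexterityRecollement}(3) (applied to the ambidextrous adjunction between restriction and induction for equivariant spectra) yield $G$-adjunctions
\[ \ul{j_!} \dashv \ul{j^\ast} \dashv \ul{j_\ast} \colon \ul{\Sp}^G \rightleftarrows \ul{\Sp}^{h\Gamma_N} \]
in which $\ul{j_!}$ and $\ul{j_\ast}$ are fully faithful $G$-functors.

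Next, by \ref{VariousPropertiesForgetfulFunctorBorelSpectra}(4) the $G$-functor $\ul{\sU}_b[N]\colon \ul{\Sp}^G \to \ul{\Sp}^G_{\Borel{N}}$ has fiber over $G/H$ equal to $\sU_b[N\cap H]$. I would therefore define the candidate $G$-equivalence
\[ \Theta \;\coloneqq\; \ul{\sU}_b[N] \circ \ul{j_\ast} \colon \ul{\Sp}^{h \Gamma_N} \longrightarrow \ul{\Sp}^G_{\Borel{N}}. \]
On the fiber over $G/H$, $\Theta$ restricts to the composite
\[ \Sp^{h \Gamma_{N \cap H}} \xhookrightarrow{j_\ast} \Sp^H \xto{\sU_b[N\cap H]} \Sp^H_{\Borel{N\cap H}}, \]
which is inverse to $\sF_b^\vee[N\cap H]$ by the final bullet in the proof of Prop.~\ref{prp:BorelSpectraAsCompleteObjects}, hence an equivalence. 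A $G$-functor that is a fiberwise equivalence is a $G$-equivalence, so $\Theta$ is a $G$-equivalence.

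Finally, I would set
\[ \ul{\sF}_b^\vee[N] \coloneqq \ul{j_\ast} \circ \Theta^{-1}, \qquad \ul{\sF}_b[N] \coloneqq \ul{j_!} \circ \Theta^{-1}, \]
so that the $G$-adjunction $\ul{j_!} \dashv \ul{j^\ast} \dashv \ul{j_\ast}$ transports along $\Theta$ to the desired $G$-adjunctions $\ul{\sF}_b[N] \dashv \ul{\sU}_b[N] \dashv \ul{\sF}_b^\vee[N]$; these recover $\sF_b[N]$ and $\sF_b^\vee[N]$ on the fiber over $G/G$ by construction. The main obstacle is the fiberwise-to-global compatibility — that the equivalences of Prop.~\ref{prp:BorelSpectraAsCompleteObjects} should be simultaneously natural in $H$ under restriction — but this is absorbed into the already-established $G$-parametrization of the $\Gamma_N$-recollement and thus requires no additional work beyond verifying that the restriction of $\Gamma_N$ yields $\Gamma_{N \cap H}$.
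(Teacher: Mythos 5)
Your proposal is correct and follows essentially the same route as the paper, whose proof is simply to combine the $G$-parametrized $\Gamma_N$-recollement of Rmk.~\ref{ParamRecollementFamily} with the identification of $\Sp^G_{\Borel{N}}$ as the $\Gamma_N$-complete (resp.\ torsion) objects from Prop.~\ref{prp:BorelSpectraAsCompleteObjects}. Your extra care in constructing the global comparison $\Theta$ as a composite of $G$-functors and then checking it fiberwise is exactly the right way to handle the naturality-in-$H$ issue, and the observation that $\Gamma_N$ restricts to $\Gamma_{N\cap H}$ is indeed the only substantive point to verify.
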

\begin{proof} Combine Rmk.~\ref{ParamRecollementFamily} and Prop.~\ref{prp:BorelSpectraAsCompleteObjects}.
\end{proof}

We conclude this section by applying Prop.~\ref{prp:BorelSpectraAsCompleteObjects} to decompose the $\infty$-category of $D_{2p^n}$-spectra.

\begin{exm} \label{exm:DihedralRecollement} Let $\Gamma = \Gamma_{\mu_{p^n}}$ be the $D_{2p^n}$-family that consists of those subgroups $H$ such that $H \cap \mu_{p^n} = 1$. Note that $H \notin \Gamma$ if and only if $\mu_p \leq H$. Therefore, $\Sp^{\Phi \Gamma} \simeq \Sp^{D_{2 p^{n-1}}}$ for $D_{2 p^{n-1}}$ viewed as the quotient $D_{2p^n}/\mu_p$. Together with Prop.~\ref{prp:BorelSpectraAsCompleteObjects}, we obtain a stable monoidal recollement
\[ \begin{tikzcd}[row sep=4ex, column sep=10ex, text height=1.5ex, text depth=0.5ex]
\Fun_{C_2}(B^t_{C_2} \mu_{p^n}, \underline{\Sp}^{C_2}) \ar[shift right=1,right hook->]{r}[swap]{j_{\ast} = \sF^{\vee}_b[\mu_{p^n}]} & \Sp^{D_{2p^n}} \ar[shift right=2]{l}[swap]{j^{\ast} = \sU_b[\mu_{p^n}]} \ar[shift left=2]{r}{i^{\ast} = \Phi^{\mu_p}} & \Sp^{D_{2p^{n-1}}} \ar[shift left=1,left hook->]{l}{i_{\ast}}.
\end{tikzcd} \]
Furthermore, using Rmk.~\ref{ParamRecollementFamily}, this extends to a $C_2$-stable $C_2$-recollement
\[ \begin{tikzcd}[row sep=4ex, column sep=8ex, text height=1.5ex, text depth=0.5ex]
\ul{\Fun}_{C_2}(B^t_{C_2} \mu_{p^n}, \underline{\Sp}^{C_2}) \ar[shift right=1,right hook->]{r}[swap]{\widetilde{\sF}^{\vee}_b[\mu_{p^n}]} &  \sO^{\op}_{C_2} \times_{\sO^{\op}_{D_{2p^n}}} \ul{\Sp}^{D_{2p^n}} \ar[shift right=2]{l}[swap]{ \widetilde{\sU}_b[\mu_{p^n}]} \ar[shift left=2]{r}{\Phi^{\mu_p}} & \sO^{\op}_{C_2} \times_{\sO^{\op}_{D_{2p^{n-1}}}} \ul{\Sp}^{D_{2p^{n-1}}} \ar[shift left=1,left hook->]{l}{i_{\ast}}
\end{tikzcd} \]
whose fiber over $C_2/1$ is the stable monoidal recollement
\[ \begin{tikzcd}[row sep=4ex, column sep=10ex, text height=1.5ex, text depth=0.5ex]
\Fun(B \mu_{p^n}, \Sp) \ar[shift right=1,right hook->]{r}[swap]{\sF^{\vee}_b[\mu_{p^n}]} & \Sp^{\mu_{p^n}} \ar[shift right=2]{l}[swap]{\sU_b[\mu_{p^n}]} \ar[shift left=2]{r}{\Phi^{\mu_p}} & \Sp^{\mu_{p^{n-1}}} \ar[shift left=1,left hook->]{l}{i_{\ast}},
\end{tikzcd} \]
with the $C_2$-action induced by the inversion action of $C_2$ on $\mu_{p^n}$.
\end{exm}

\section{Parametrized norm maps and ambidexterity}
\label{section:NormMaps}

In this section, we construct \emph{parametrized norm maps} that will permit us to define the parametrized Tate construction (Def.~\ref{dfn:ParamTateCnstr}). Our strategy is to mimic Lurie's construction of the norm maps \cite[\S 6.1.6]{HA} in a parametrized setting over a base $\infty$-category $S$, eventually specializing to $S = \sO_G^{\op}$. We first collect a few more necessary aspects of the theory of $S$-colimits, limits, and Kan extensions from \cite{Exp2}, building upon our discussion in \S\ref{section:ParamTerminology}.

\begin{nul} \label{GeneralTheoryParamColimits} Let $K$, $L$, and $C$ be $S$-$\infty$-categories (with $p: L \to S$ the structure map), and let $\phi: K \to L$ be an $S$-cocartesian fibration \cite[Def.~7.1]{Exp2}. We are interested in computing the left adjoint $\phi_!$ to the restriction functor $\phi^{\ast}: \Fun_S(L,C) \to \Fun_S(K,C)$ as a (pointwise) $S$-left Kan extension \cite[Def.~10.1 or Def.~9.13]{Exp2}. In \cite[\S 9]{Exp2}, the second author gave a pointwise existence criterion and formula for $\phi_! F$ of a $S$-functor $F: K \to C$ \cite[Thm.~9.15 and Prop.~10.7]{Exp2}. Namely, for all objects $x \in L$, let $\underline{x} = \sO^{\cocart}_{x \rightarrow}(L)$ be as in \cite[Notn.~2.28]{Exp2} and let $i_x: \underline{x} \to L$ be the $S$-functor given by evaluation at the target that extends $x \in L$. Note that the composite $p \circ i_x$ is a corepresentable left fibration equivalent to $\underline{s} = S^{s/}$ for $s = p(x)$, so we may think of $\underline{x}$ as a $S$-point of $L$. Let
\[ F_x: K_{\underline{x}} \coloneq \underline{x} \times_L K \to C_{\underline{s}} \coloneq S^{s/} \times_S C \]
denote the resulting $S^{s/}$-functor. Then $\phi_! F$ exists if the $S^{s/}$-colimit of $F_x$ exists for all $x \in L$, and then $\phi_! F i_x$ is computed as that $S^{s/}$-colimit. We will also say that $C$ \emph{admits the relevant $S$-colimits} with respect to $\phi: K \to L$ if for all $x \in L$ with $p(x)=s$, $C_{\underline{s}}$ admits all $S^{s/}$-colimits indexed by $K_{\underline{x}}$. In this case, the left adjoint $\phi_!$ to $\phi^{\ast}$ exists by \cite[Cor.~9.16]{Exp2} (note the logic of the proof allows us to replace the $S$-cocompleteness assumption there).
 
Now suppose instead that $\phi: K \to L$ is an $S$-cartesian fibration \cite[Def.~7.1]{Exp2}. In view of the discussion of vertical opposites in \cite[\S 5]{Exp2} and the observation that the formation of vertical opposites exchanges $S$-cocartesian and $S$-cartesian fibrations, we may dualize the above discussion to see that the $S$-right Kan extension $\phi_{\ast} F$ exists if the $S^{s/}$-limit of $F_x$ exists for all $x \in L$, and then $\phi_{\ast} F i_x$ is computed as that $S^{s/}$-limit. Likewise, we have the dual notion of $C$ admitting the relevant $S$-limits with respect to $\phi$, in which case the right adjoint $\phi_{\ast}$ exists.

Finally, suppose $K$ and $L$ are $S$-spaces. Using the cocartesian model structure on $s\Set^+_{/S}$ and the description of the fibrations between fibrant objects \cite[Prop.~B.2.7]{HA}, up to equivalence we may replace any $S$-functor $\phi: K \to L$ by a categorical fibration. But a categorical fibration between left fibrations over $S$ is necessarily both a $S$-cocartesian and $S$-cartesian fibration, hence both of the above formulas apply to compute $\phi_!$ and $\phi_{\ast}$.
\end{nul}

\begin{nul} We can also consider the $S$-functor $S$-$\infty$-category $\underline{\Fun}_S(K,C) \to S$ whose cocartesian sections are $\Fun_S(K,C)$. Let $\phi: K \to L$ be a $S$-cocartesian fibration and suppose that for every $s \in S$, the $S^{s/}$-$\infty$-category $C_{\underline{s}}$ admits the relevant colimits with respect to $\phi_{\underline{s}}: K_{\underline{s}} \to L_{\underline{s}}$, so that $\phi_{\underline{s}}^{\ast}: \Fun_{S^{s/}}(L_{\underline{s}}, C_{\underline{s}}) \to \Fun_{S^{s/}}(K_{\underline{s}}, C_{\underline{s}})$ admits a left adjoint $(\phi_{\underline{s}})_!$ computed as above. Then using the built-in compatibility of $S$-left Kan extension with restriction, by \cite[Prop.~7.3.2.11]{HA} these fiberwise left adjoints assemble to yield a $S$-adjunction \cite[Def.~8.1]{Exp2}
\[ \adjunct{\underline{\phi}_!}{\underline{\Fun}_S(K,C)}{\underline{\Fun}_S(L,C)}{\underline{\phi}^{\ast}} \]
(also see \cite[Cor.~9.16 and Thm.~10.4]{Exp2}). In particular, upon forgetting the structure maps\footnote{In other words, a relative adjunction yields an adjunction between the Grothendieck constructions.} we have an ordinary adjunction $\underline{\phi}_! \dashv \underline{\phi}_{\ast}$. Similarly, for $\phi$ an $S$-cartesian fibration we can consider the $S$-adjunction
\[ \adjunct{\underline{\phi}^{\ast}}{\underline{\Fun}_S(L,C)}{\underline{\Fun}_S(K,C)}{\underline{\phi}_{\ast}}. \]
For $\phi: X \to Y$ a map of $S$-spaces, we will consider $\underline{\phi}_! \dashv \underline{\phi}^{\ast} \dashv \underline{\phi}_{\ast}$.
\end{nul}

The key result that enables the construction of norm maps is the following lemma on adjointability. Note for the formulation of the statement that $S$-(co)cartesian fibrations are stable under pullback, and the property that $C$ admits the relevant $S$-(co)limits with respect to $\phi$ is stable under pullbacks in the $\phi$ variable.

\begin{lem} \label{lem:adjointability} Let $C$ be an $S$-$\infty$-category and let
\[ \begin{tikzcd}[row sep=4ex, column sep=4ex, text height=1.5ex, text depth=0.25ex]
K' \ar{r}{f'} \ar{d}[swap]{\phi'} & K \ar{d}{\phi} \\
L' \ar{r}{f} & L
\end{tikzcd} \]
be a pullback square of $S$-$\infty$-categories. Consider the resulting commutative square of $S$-functor categories and restriction functors
\[ \begin{tikzcd}[row sep=4ex, column sep=6ex, text height=1.5ex, text depth=0.25ex]
\Fun_S(K',C)  & \Fun_S(K,C) \ar{l}[swap]{{f'}^{\ast}} \\
\Fun_S(L',C) \ar{u}{{\phi'}^{\ast}} & \Fun_S(L,C) \ar{u}[swap]{\phi^{\ast}} \ar{l}[swap]{f^{\ast}}.
\end{tikzcd} \]
\begin{enumerate} \item If $\phi$ is a $S$-cocartesian fibration and $C$ admits the relevant $S$-colimits, then the square is left adjointable, i.e., the natural map ${\phi'}_! {f'}^{\ast} \to f^{\ast} \phi_!$ is an equivalence.
\item If $\phi$ is a $S$-cartesian fibration and $C$ admits the relevant $S$-limits, then this square is right adjointable, i.e., the natural map $f^{\ast} \phi_{\ast} \to {\phi'}_{\ast} {f'}^{\ast}$ is an equivalence.
\item If $K, L, K', L'$ are $S$-spaces, then the square is both left and right adjointable provided that $C$ admits the relevant $S$-colimits and $S$-limits.
\end{enumerate}
Likewise, we have the same results for the commutative square of $S$-functor $S$-$\infty$-categories
\[ \begin{tikzcd}[row sep=4ex, column sep=6ex, text height=1.5ex, text depth=0.5ex]
\underline{\Fun}_S(K',C)  & \underline{\Fun}_S(K,C) \ar{l}[swap]{\underline{f'}^{\ast}} \\
\underline{\Fun}_S(L',C) \ar{u}{\underline{\phi'}^{\ast}} & \underline{\Fun}_S(L,C) \ar{u}[swap]{\underline{\phi}^{\ast}} \ar{l}[swap]{\underline{f}^{\ast}}.
\end{tikzcd} \]
\end{lem}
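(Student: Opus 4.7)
The plan is to reduce both (1) and (2) to a pointwise computation using the explicit formulas for parametrized Kan extensions recalled in \ref{GeneralTheoryParamColimits}, then deduce (3) by the observation that for maps of $S$-spaces we may simultaneously model $\phi$ as a categorical fibration that is both $S$-cocartesian and $S$-cartesian. Part (2) is formally dual to (1) by passage to vertical opposites (using \cite[\S 5]{Exp2}), so I would only spell out the proof of (1) in detail.

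For (1), fix a $S$-functor $F: K \to C$ and an object $y' \in L'$ over $s' = p'(y') \in S$, writing $s = p(f(y')) \in S$ and noting $s = s'$ since $f$ is an $S$-functor. The natural comparison morphism $\eta: {\phi'}_! {f'}^{\ast} F \to f^{\ast} \phi_! F$ has value at $y'$ a map
\[ ({\phi'}_! {f'}^{\ast} F)(y') \longrightarrow (\phi_! F)(f(y')). \]
By the pointwise formula, the target is the $S^{s/}$-colimit of $F_{f(y')}: K_{\underline{f(y')}} \to C_{\underline{s}}$, and the source is the $S^{s'/}$-colimit of $({f'}^{\ast} F)_{y'}: K'_{\underline{y'}} \to C_{\underline{s'}}$. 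The key point is now the canonical equivalence of $S^{s/}$-$\infty$-categories over $L_{\underline{s}}$
\[ K'_{\underline{y'}} = \underline{y'} \times_{L'} (L' \times_L K) \simeq \underline{y'} \times_L K \xrightarrow{\;\simeq\;} \underline{f(y')} \times_L K = K_{\underline{f(y')}}, \]
induced by the $S^{s/}$-equivalence $f_\ast: \underline{y'} \xrightarrow{\simeq} \underline{f(y')}$ (both $\underline{y'}$ and $\underline{f(y')}$ are models for the $S^{s/}$-point $\underline{s}$, and $f$ preserves the cocartesian edges out of $y'$). Under this equivalence, the diagram $({f'}^{\ast} F)_{y'}$ is carried to $F_{f(y')}$, and $\eta$ at $y'$ is identified with the induced map between the $S^{s/}$-colimits of equivalent diagrams; hence $\eta_{y'}$ is an equivalence.

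For (3), we use that if $K, L, K', L'$ are all $S$-spaces (i.e. left fibrations over $S$), then up to equivalence we may replace $\phi$ by a categorical fibration. Any such categorical fibration between left fibrations is automatically both $S$-cocartesian and $S$-cartesian (every edge of $L$ is $\phi$-cocartesian and $\phi$-cartesian, since left fibrations are conservative), so both (1) and (2) apply. The extension to the $S$-functor $S$-$\infty$-category statement is formal: it suffices by pointwise detection of equivalences to check adjointability fiber-by-fiber at each $t \in S$. But pulling the whole square back along $\underline{t} \to S$ yields an analogous pullback square of $S^{t/}$-$\infty$-categories to which (1)--(3) apply, and the natural transformations in question restrict to the corresponding natural transformations for those fibers, thanks to the built-in compatibility of $S$-Kan extension with restriction invoked in the construction of the $S$-adjunctions $\underline{\phi}_! \dashv \underline{\phi}^\ast$ and $\underline{\phi}^\ast \dashv \underline{\phi}_\ast$.

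The main obstacle I anticipate is purely bookkeeping: verifying that the canonical equivalence $K'_{\underline{y'}} \simeq K_{\underline{f(y')}}$ is actually compatible with the structural data so that the pointwise formulas assemble into the natural transformation $\eta$ rather than merely producing an abstract equivalence between its source and target. This is really a matter of tracking the units and counits of the relevant (parametrized) adjunctions through the construction in \cite[Thm.~9.15 and Prop.~10.7]{Exp2}, and it is here that one wants to invoke the functoriality of the pointwise formula established there rather than redo the argument from scratch.
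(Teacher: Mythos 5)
Your proposal is correct and follows essentially the same route as the paper: evaluate the comparison map on $S$-points of $L'$, identify $K'_{\underline{y'}} \simeq K_{\underline{f(y')}}$ using the pullback hypothesis, and conclude via the pointwise formula for parametrized Kan extensions, with (3) handled by modeling a map of $S$-spaces as a categorical fibration and the $\underline{\Fun}_S$ version checked fiberwise. The only difference is that you spell out the compatibility bookkeeping that the paper leaves implicit, which is a reasonable addition but not a change of method.
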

\begin{proof} Let $F: K \to C$ be a $S$-functor. For (1), we need to check that ${\phi'}_! {f'}^{\ast} F \to f^{\ast} \phi_! F$ is an equivalence of $S$-functors. It suffices to evaluate on $S$-points $\underline{x}$ in $L'$, and we then have the map
\[ \colimP{S^{s/}} (F \circ f')_x \to \colimP{S^{s/}} F_{f(x)}. \]
But since $\underline{x} \times_{L'} K' \simeq \underline{f(x)} \times_L K$ as $S^{s/}$-$\infty$-categories, these $S^{s/}$-colimits are equivalent under the comparison map. The proof of (2) is similar. For (3) we replace $\phi$ by a categorical fibration and then use (1) and (2). For the corresponding assertion about $\underline{\Fun}_S(-,C)$, it suffices to check that the natural transformations of interest are equivalences fiberwise, upon which we reduce to the prior assertion for $\Fun_{S^{s/}}(-,C_{\underline{s}})$.
\end{proof}

\subsection{Ambidexterity of parametrized local systems}

In this subsection, we extend Hopkins and Lurie's study of ambidexterity for local systems \cite[\S 4.3]{hopkins2013ambidexterity} to the parametrized setting. The following definition generalizes \cite[Def.~4.3]{hopkins2013ambidexterity}.
 
\begin{dfn} Let $C$ be an $S$-$\infty$-category. The $\infty$-category of \emph{$S$-local systems} on $C$ $$\LocSys^S(C) \to \Spc^S$$ is the cartesian fibration classified by the composite 
 \[  \Fun_S(-,C): (\Spc^S)^{\op} \subset \Cat_{\infty}^{S,\op} \to \Cat_{\infty}. \]
The \emph{$S$-$\infty$-category of $S$-local systems} on $C$ $$\ul{\LocSys}^S(C) \to \Spc^S$$ is the cartesian fibration classified by the composite
\[  \underline{\Fun}_S(-,C): (\Spc^S)^{\op} \subset \Cat_{\infty}^{S,\op} \to \Cat_{\infty}^S \xto{U} \Cat_{\infty}. \]
where $U$ forgets the structure map of a cocartesian fibration.
\end{dfn}

\begin{cor} Suppose that for all $s \in S$, $C_{\underline{s}}$ admits all $S^{s/}$-colimits indexed by $S^{s/}$-spaces. Then $\LocSys^S(C)$ and $\ul{\LocSys}^S(C)$ are Beck-Chevalley fibrations \cite[Def.~4.1.3]{hopkins2013ambidexterity}.
\end{cor}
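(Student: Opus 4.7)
The plan is to verify the three defining properties of a Beck-Chevalley fibration (in the sense of \cite[Def.~4.1.3]{hopkins2013ambidexterity}) for the cartesian fibration $\LocSys^S(C) \to \Spc^S$: namely, cocompleteness of each fiber, preservation of colimits by the pullback functors, and left adjointability of the restriction square attached to any pullback square in the base.

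By definition, the fiber of $\LocSys^S(C)$ over an $S$-space $X$ is $\Fun_S(X, C)$, and the pullback along a map $f: X \to Y$ of $S$-spaces is the restriction functor $f^{\ast}: \Fun_S(Y, C) \to \Fun_S(X, C)$. Our hypothesis says that $C_{\underline{s}}$ admits all $S^{s/}$-colimits indexed by $S^{s/}$-spaces; specializing to constant $S^{s/}$-spaces on small spaces, each fiber $C_s$ is cocomplete in the usual sense. Since $X \to S$ is a left fibration, the pointwise existence and formula for $S$-left Kan extensions recorded in \ref{GeneralTheoryParamColimits} implies that small colimits in $\Fun_S(X,C)$ exist and are computed at each $x \in X$ via the evaluation functor $\ev_{\underline{x}}: \Fun_S(X,C) \to C_{\underline{s}}$ (with $s = p(x)$). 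This gives cocompleteness of every fiber, and since restriction along $f: X \to Y$ intertwines $\ev_{\underline{x}}$ with $\ev_{\underline{f(x)}}$, the pullback functors $f^{\ast}$ manifestly preserve these pointwise colimits. The third and decisive condition -- left adjointability of the square of restrictions attached to any pullback square of $S$-spaces -- is exactly the content of Lemma~\ref{lem:adjointability}(3), whose hypothesis ``$C$ admits the relevant $S$-colimits'' is guaranteed by our standing assumption on $C$.

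The assertion for the $S$-$\infty$-categorical enhancement $\ul{\LocSys}^S(C) \to \Spc^S$ is proved by the same argument, now invoking the second half of Lemma~\ref{lem:adjointability} that treats $\underline{\Fun}_S(-,C)$, together with the fiberwise computation of colimits in $\underline{\Fun}_S(X,C)$ for $X$ an $S$-space. No genuinely new input is needed: the only step requiring care is the pointwise formula for colimits in $\Fun_S(X,C)$, which I expect to be the main obstacle to writing down a fully rigorous proof but which follows directly from \cite[Cor.~9.16]{Exp2} once one observes that every edge of an $S$-space is $S$-cocartesian, so that $S$-functors $X \to C$ are automatically morphisms of cocartesian fibrations and colimits may be computed unobstructedly at each point $x \in X$.
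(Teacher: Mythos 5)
Your proof follows essentially the same route as the paper: observe that the hypothesis supplies the relevant $S$-colimits and $S$-limits so that parametrized left Kan extensions along maps of $S$-spaces exist, and then invoke Lem.~\ref{lem:adjointability}(3) for the left adjointability of restriction squares over pullbacks. One small correction is worth making: the Hopkins--Lurie notion of a Beck--Chevalley fibration (Def.~4.1.3 of \cite{hopkins2013ambidexterity}) has only two defining conditions -- each $f^{\ast}$ admits a left adjoint $f_!$, and the Beck--Chevalley transformation associated to a pullback square is an equivalence -- not the three you list; cocompleteness of fibers and colimit-preservation by $f^{\ast}$ are not axioms. Both of the genuine axioms are delivered simultaneously by Lem.~\ref{lem:adjointability}(3) (whose statement already packages the existence of $\phi_!$ with its adjointability), which is precisely the mechanism the paper uses, so your additional verifications are harmless but superfluous.
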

\begin{proof} Note that by the same argument as \cite[Rmk.~5.13]{Exp2}, the hypothesis ensures that $C$ admits all $S$-colimits indexed by $S$-spaces. The corollary is then immediate from Lem.~\ref{lem:adjointability}.
\end{proof}

\begin{rem} $\ul{\LocSys}^S(C)$ also admits a structure map to $S$ that is a cocartesian fibration, so is indeed an $S$-$\infty$-category.
\end{rem}

We now have the general theory of ambidexterity \cite[\S 4.1-2]{hopkins2013ambidexterity} for a Beck-Chevalley fibration, along with the attendant notions of ambidextrous and weakly ambidextrous morphisms \cite[Constr.~4.1.8 and Def.~4.1.11]{hopkins2013ambidexterity} in $\Spc^S$. For the reader's convenience, let us recall the relevance of these notions for constructing norm maps, referring to \cite[\S 4.1]{hopkins2013ambidexterity} for greater detail and precise definitions.

\begin{rem} Suppose $\sE \to \Spc^S$ is a Beck-Chevalley fibration, $f: X \to Y$ is a map of $S$-spaces, the left and right adjoints $f_!$ and $f_{\ast}$ to $f^{\ast}$ exist, and we wish to construct a norm map $\Nm_f: f_! \to f_{\ast}$. Consider the commutative diagram
\[ \begin{tikzcd}[row sep=4ex, column sep=6ex, text height=1.5ex, text depth=0.5ex]
X \ar{rd}{\delta} \ar[bend left=20]{rrd}{=} \ar[bend right=20]{rdd}[swap]{=}  & \\
& X \times_Y X \ar{r}{\pr_2} \ar{d}{\pr_1} & X \ar{d}{f} \\
& X \ar{r}{f} & Y
\end{tikzcd} \]
and suppose we have already constructed a norm map $\Nm_{\delta}: \delta_! \to \delta_{\ast}$ and shown it to be an equivalence. If $\Nm_{\delta}^{-1}: \delta_{\ast} \xto{\simeq} \delta_!$ is a choice of inverse, then we have a natural transformation
\[ \pr_1^{\ast} \xto{\eta_{\delta}} \delta_{\ast} \delta^{\ast} \pr_1^{\ast} \simeq \delta_{\ast} \xto{\Nm_{\delta}^{-1}} \delta_! \simeq \delta_! \delta^{\ast} \pr_2^{\ast} \xto{\epsilon_{\delta}} \pr_2^{\ast}. \]
By adjunction and using the Beck-Chevalley property, we obtain a map
\[ f^{\ast} f_! \simeq (\pr_1)_! \pr_2^{\ast} \to \id. \]
Finally, we may adjoint this map in turn to define $\Nm_f: f_! \to f_{\ast}$.

Thus, for an inductive construction of norm maps, we may single out a class of `ambidextrous' morphisms for which a norm map has been constructed and shown to be an equivalence, and then define `weakly ambidextrous' morphisms to be those morphisms $f: X \to Y$ whose diagonal $\delta: X \to X \times_Y X$ is ambidextrous.
\end{rem}

Continuing our study, we henceforth suppose that $C_{\underline{s}}$ also admits all $S^{s/}$-limits indexed by $S^{s/}$-spaces, so that the right adjoints $f_{\ast}$, $\underline{f}_{\ast}$ exist for all maps $f$ of $S$-spaces. Then by \cite[Rmk.~4.1.12]{hopkins2013ambidexterity}, for $\LocSys^S(C)$ a map $f: X \to Y$ in $\Spc^S$ is ambidextrous if and only if the norm map $\Nm_{f'}: f'_! \to f'_{\ast}$ is an equivalence for all pullbacks $f': X' \to Y'$ of $f$, and similarly for $\ul{\LocSys}^S(C)$.

To simplify the following discussion, we will phrase all of our statements for $\LocSys^S(C)$. However, such statements have obvious implications for $\ul{\LocSys}^S(C)$ via checking fiberwise.

\begin{lem} \label{lem:AmbidexCheckedFiberwise} \begin{enumerate}[leftmargin=*] \item Let $f: X \to Y$ be a weakly ambidextrous morphism. Then $f$ is ambidextrous if and only if for all $y \in Y$, the norm map $\Nm_{f_y}$ for the pullback $f_y: X_{\underline{y}} \to \underline{y}$ is an equivalence.
\end{enumerate}
\begin{enumerate}
    \setcounter{enumi}{1}
    \item $f: X \to Y$ is weakly ambidextrous if and only if if for all $y \in Y$, the pullback $f_y: X_{\underline{y}} \to \underline{y}$ is weakly ambidextrous.
\end{enumerate}
\end{lem}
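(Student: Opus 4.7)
The lemma follows from two structural facts. The first is that the norm map construction is compatible with pullback via the Beck--Chevalley property of $\LocSys^S(C)$ recorded in Lem.~\ref{lem:adjointability}: for any pullback of $f \colon X \to Y$ along a map $g \colon Z \to Y$, the norm map $\Nm_{g^{\ast} f}$ is canonically equivalent to the pullback of $\Nm_f$ along $g$. The second is that a natural transformation between $S$-functors $Y \to C$ is an equivalence if and only if its restriction along every $S$-point $\underline{y} \to Y$ is an equivalence; this reduces pointwise to the standard fact that equivalences in each fiber of $C \to S$ are detected on objects.

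For part (1), the ``only if'' direction is immediate from the characterization of ambidexterity as pullback-stable equivalence of the norm map \cite[Rmk.~4.1.12]{hopkins2013ambidexterity}. For the converse, suppose $\Nm_{f_y}$ is an equivalence for all $y \in Y$, and let $f' \colon X' \to Y'$ be any pullback of $f$. By the second fact above, it suffices to show that for each $y' \in Y'$, the restriction of $\Nm_{f'}$ along $\underline{y'} \to Y'$ is an equivalence. By the first fact, this restriction agrees with $\Nm_{(f')_{y'}}$, and $(f')_{y'}$ is canonically identified with $f_y$ for $y$ the image of $y'$ in $Y$. The hypothesis then completes the argument.

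For part (2), the ``only if'' direction proceeds similarly: weak ambidexterity of $f$ means $\delta_f \colon X \to X \times_Y X$ is ambidextrous, and pulling back along $\underline{y} \to Y$ yields $\delta_{f_y}$, so $f_y$ is weakly ambidextrous. For the converse, I would induct on the truncation level $n$ appearing in the recursive definition of $n$-ambidexterity (\cite[Constr.~4.1.8, Def.~4.1.11]{hopkins2013ambidexterity}). The base case reduces to the fact that $(-1)$-truncatedness of maps of $S$-spaces is detected on fibers. For the inductive step, we must show $\delta_f$ is $(n-1)$-ambidextrous. By the inductive hypothesis of part (2) applied to $\delta_f$, together with part (1) applied to $\delta_f$, it suffices to show that $(\delta_f)_z$ is $(n-1)$-ambidextrous for every $z \in X \times_Y X$. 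A point $z$ consists of a pair $x_1, x_2 \in X$ together with an identification $\alpha \colon f(x_1) \simeq f(x_2)$ in $Y$; letting $y$ be this common image, the pullback $(\delta_f)_z$ is canonically equivalent to the pullback of $\delta_{f_y}$ along the $S^{s/}$-point $(x_1, x_2) \colon \underline{y} \to X_{\underline{y}} \times_{\underline{y}} X_{\underline{y}}$. Since $\delta_{f_y}$ is $(n-1)$-ambidextrous by the weak $n$-ambidexterity of $f_y$, so is its pullback $(\delta_f)_z$.

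The principal obstacle is executing the induction cleanly, in particular making precise the identification of $(\delta_f)_z$ with a pullback of $\delta_{f_y}$; this amounts to elementary manipulation of iterated fiber products of $S$-spaces together with the compatibility of the Beck--Chevalley property under composition of pullback squares. A secondary subtlety is that the two parts of the induction on $n$ must be carried out jointly, since part (1) at level $n-1$ is invoked in the inductive step of part (2) at level $n$, and conversely; however, both base cases are trivial, so this mutual recursion is harmless.
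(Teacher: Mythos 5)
Your proof is correct and follows essentially the same route as the paper's: both parts rest on the compatibility of norm maps with pullback (Lem.~\ref{lem:adjointability} together with \cite[Rmk.~4.2.3]{hopkins2013ambidexterity}) and the joint conservativity of restriction along $S$-points, and part (2) is reduced to part (1) by observing that each $S$-point of $X \times_Y X$ factors through $X_{\underline{y}} \times_{\underline{y}} X_{\underline{y}}$, so the fibers of $\delta_f$ are pullbacks of the ambidextrous maps $\delta_{f_y}$. The only substantive difference is that you make explicit the mutual induction on the ambidexterity level $n$ (needed to know $\delta_f$ is weakly ambidextrous before invoking part (1)), which the paper leaves implicit.
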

\begin{proof} For (1), first note that the maps $f_y$ are weakly ambidextrous by \cite[Prop.~4.1.10(3)]{hopkins2013ambidexterity}, so the statement is well-posed. The `only if' direction holds by definition. For the `if' direction, suppose given a pullback square of $S$-spaces
\[ \begin{tikzcd}[row sep=4ex, column sep=4ex, text height=1.5ex, text depth=0.25ex]
X' \ar{d}[swap]{f'} \ar{r}{\phi'} & X \ar{d}{f} \\
Y' \ar{r}{\phi} & Y.
\end{tikzcd} \]
For any point $y' \in Y'$, if we let $y=\phi(y')$ then we have an equivalence $X'_{\underline{y'}} \simeq X_{\underline{y}} \to \underline{y'} \simeq \underline{y}$. Therefore, without loss of generality it suffices to prove that $\Nm_f: f_! \to f_{\ast}$ is an equivalence. Let $y \in Y$ and denote the inclusion of the $S$-point as $i_y: \underline{y} \to Y$ and the $S$-fiber as $j_y: X_{\underline{y}} \to X$. By \cite[Rmk.~4.2.3]{hopkins2013ambidexterity} and Lem.~\ref{lem:adjointability}, we have an equivalence
\[ i_y^{\ast}  \Nm_f \simeq \Nm_{f_y} j_y^{\ast} : \Fun_S(X,C) \to \Fun_S(\underline{y},C) \simeq C_s \]
(where $y$ covers $s$), which by assumption is an equivalence. Because the evaluation functors $i_y^{\ast}$ are jointly conservative, it follows that $\Nm_f$ is an equivalence.

For (2), we only need to prove the `if' direction. We will show that the diagonal $\delta: X \to X \times_Y X$ is ambidextrous. Let $\delta_y$ denote the diagonal for $f_y$. For all $y$ we have a pullback square
\[ \begin{tikzcd}[row sep=4ex, column sep=6ex, text height=1.5ex, text depth=0.25ex]
 X_{\underline{y}} \ar{r}{j_y} \ar{d}{\delta_y} & X \ar{d}{\delta} \\
 X_{\underline{y}} \times_{\underline{y}} X_{\underline{y}} \ar{r}{(j_y,j_y)} & X \times_Y X.
\end{tikzcd} \]
Given any object $(x,x') \in X \times_Y X$ with $f(x)=f(x')=y$,\footnote{We write an equality here because we are implicitly modeling $f$ as a categorical fibration of left fibrations over $S$.} the inclusion of the $S$-fiber $i_{(x,x')}: \underline{(x,x')} \to X \times_Y X$ factors through $X_{\underline{y}} \times_{\underline{y}} X_{\underline{y}}$. Therefore, if $\delta_y$ is ambidextrous, the norm map for $X_{\underline{(x,x')}} \to \underline{(x,x')}$ is an equivalence. By statement (1) of the lemma, we conclude that $\delta$ is ambidextrous.
\end{proof}

Recall from \cite[Prop.~4.1.10(6)]{hopkins2013ambidexterity} that given a weakly $n$-ambidextrous morphism $f: X \to Y$ and $-2 \leq m \leq n$, $f$ is weakly $m$-ambidextrous if and only if $f$ is $m$-truncated \cite[Def.~5.5.6.1]{HTT}. To identify the $n$-truncated maps in $\Spc^S$, we have the following result.

\begin{lem} \label{lem:TruncationPresheaves} Let $X: S \to \Spc$ be a $S$-space. Then $X$ is $n$-truncated as an object of $\Spc^S$ if and only if for each $s \in S$, $X(s)$ is an $n$-truncated space. Similarly, for a map $f: X \to Y$ of $S$-spaces, $f$ is $n$-truncated if and only if $f(s)$ is an $n$-truncated map of spaces for all $s \in S$.
\end{lem}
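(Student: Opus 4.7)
The plan is to reduce both statements to a single claim about maps and then prove that by induction on $n \geq -2$. First, note that the statement for objects follows from the statement for maps: by \cite[Lem.~5.5.6.14]{HTT} an object $X \in \Spc^S$ is $n$-truncated if and only if the unique map $X \to \ast$ is $n$-truncated, where $\ast$ denotes the terminal object (i.e., the constant functor at a point); since evaluation at $s$ preserves terminal objects, the claim for objects follows from the claim for maps applied to $X \to \ast$.

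Next I would record the two structural properties of the evaluation functors $\ev_s: \Spc^S = \Fun(S, \Spc) \to \Spc$ that drive the argument: they are jointly conservative (equivalences in $\Fun(S, \Spc)$ are detected pointwise), and each $\ev_s$ preserves all small limits (limits in a functor $\infty$-category are computed pointwise). In particular, each $\ev_s$ preserves fiber products, so for any map $f: X \to Y$ of $S$-spaces the diagonal $\delta_f: X \to X \times_Y X$ satisfies $\ev_s(\delta_f) \simeq \delta_{\ev_s(f)}$ in $\Spc$.

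Now induct on $n$. For the base case $n = -2$, a map is $(-2)$-truncated if and only if it is an equivalence, and equivalences of $S$-spaces are detected pointwise by the joint conservativity of the $\ev_s$. For the inductive step, suppose the statement is known for $n-1$. By \cite[Lem.~5.5.6.15]{HTT}, a map $g$ in any $\infty$-category with finite limits is $n$-truncated if and only if its diagonal $\delta_g$ is $(n-1)$-truncated. Applying this to $f$ in $\Spc^S$ and to each $\ev_s(f)$ in $\Spc$, together with the compatibility $\ev_s(\delta_f) \simeq \delta_{\ev_s(f)}$, we get the chain of equivalences: $f$ is $n$-truncated $\iff$ $\delta_f$ is $(n-1)$-truncated $\iff$ $\ev_s(\delta_f) = \delta_{\ev_s(f)}$ is $(n-1)$-truncated for all $s$ (by the inductive hypothesis) $\iff$ $\ev_s(f)$ is $n$-truncated for all $s$. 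This completes the induction and hence the proof.

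There is no serious obstacle here — the argument is a formal consequence of the pointwise computation of limits in $\Fun(S, \Spc)$ combined with the standard inductive characterization of $n$-truncated maps via their diagonals. The only mild subtlety is organizing the induction so that one does not need to separately treat $n = -1$ (monomorphisms), which is automatic from the $n = -2$ base case and the diagonal recursion.
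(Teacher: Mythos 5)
Your proof is correct, but it takes a different route from the paper. The paper repeats the argument of \cite[5.5.8.26]{HTT}: one direction is immediate from the identification $f(s) \simeq \Map(j(s), f)$ for the Yoneda embedding $j: S^{\op} \to \Spc^S$, and the converse uses that the class of $Z$ with $\Map(Z,f)$ $n$-truncated is closed under colimits (since limits of $n$-truncated spaces are $n$-truncated) together with the fact that the representables generate $\Spc^S$ under colimits. Your argument instead inducts on $n$ using the diagonal characterization of \cite[Lem.~5.5.6.15]{HTT}, with the base case $n=-2$ handled by joint conservativity of the evaluation functors and the inductive step by the compatibility $\ev_s(\delta_f) \simeq \delta_{\ev_s(f)}$, which holds because limits in $\Fun(S,\Spc)$ are computed pointwise. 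What your approach buys is generality and economy of inputs: it uses only that the evaluations are jointly conservative and preserve finite limits, so it applies verbatim to $\Fun(S,\cC)$ for any $\cC$ admitting finite limits, whereas the paper's argument is tied to presheaves of spaces via Yoneda and generation under colimits. What the paper's approach buys is that it avoids any induction and exhibits the statement as a direct instance of the mapping-space definition of truncatedness. The only cosmetic point in your write-up is the citation for the reduction from objects to maps (the fact that $X$ is $n$-truncated if and only if $X \to \ast$ is $n$-truncated); this is standard and the paper simply asserts ``it suffices to prove the result for maps,'' so nothing is at stake there.
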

\begin{proof} We repeat the argument of \cite[5.5.8.26]{HTT} for the reader's convenience. It suffices to prove the result for maps. Let $j: S^{\op} \to \Spc^S$ denote the Yoneda embedding. Then if $f$ is $n$-truncated, for any $s \in S$,
\[ f(s) \simeq \Map(j(s),f): \Map(j(s),X) \simeq X(s) \to \Map(j(s),Y) \simeq Y(s) \]
 is $n$-truncated. Conversely, suppose each $f(s)$ is $n$-truncated. The collection of $S$-spaces $Z$ for which $\Map(Z,f)$ is $n$-truncated is stable under colimits, because limits of $n$-truncated spaces and maps are again $n$-truncated. Since the representable functors $j(s)$ generate $\Spc^S$ under colimits, it follows that $f$ itself is $n$-truncated.
\end{proof}

Let us now consider the $n=-1$ case.

\begin{dfn} Let $C$ be a $S$-$\infty$-category. $C$ is \emph{$S$-pointed} if for every $s \in S$, $C_s$ is pointed, and for every $\alpha: s \to t$, the pushforward functor $\alpha_{\sharp}: C_s \to C_{t}$ preserves the zero object. If $S = \sO_G^{\op}$, we also say that $C$ is \emph{$G$-pointed}.
\end{dfn}

\begin{lem} \label{lem:pointedAmbidex} $C$ is $S$-pointed if and only if for every $s \in S$, the weakly $(-1)$-ambidextrous morphism $0_s: \emptyset \to \underline{s}$ is ambidextrous.
\end{lem}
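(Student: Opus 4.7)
The plan is to directly apply Lem.~\ref{lem:AmbidexCheckedFiberwise}(1), reducing the question to a fiberwise identification of the norm map via parametrized Yoneda. First I would observe that each $0_s: \emptyset \to \underline{s}$ is weakly $(-1)$-ambidextrous, which is automatic: its diagonal $\emptyset \to \emptyset \times_{\underline{s}} \emptyset \simeq \emptyset$ is the identity (equivalently, $0_s$ is $(-1)$-truncated by Lem.~\ref{lem:TruncationPresheaves}, hence weakly $(-1)$-ambidextrous by \cite[Prop.~4.1.10(6)]{hopkins2013ambidexterity}). By Lem.~\ref{lem:AmbidexCheckedFiberwise}(1), ambidexterity of $0_s$ reduces to checking that $\Nm_{(0_s)_y}$ is an equivalence for every $y \in \underline{s}$. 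Since pulling back $\emptyset \to \underline{s}$ along $\underline{y} \to \underline{s}$ yields $\emptyset \to \underline{y} \simeq \underline{t}$ for $t = p(y)$, this amounts to requiring that $\Nm_{0_t}$ be an equivalence for every $t \in S$ receiving a morphism from $s$.

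Next I would identify $\Nm_{0_t}$ in the fiber. By parametrized Yoneda, evaluation at $\id_t$ furnishes an equivalence $\Fun_S(\underline{t}, C) \simeq C_t$. Under this identification, $0_t^\ast$ is the constant functor to $\ast$, so its left adjoint $(0_t)_!$ picks out the initial object $\emptyset_t \in C_t$ and its right adjoint $(0_t)_\ast$ picks out the terminal object $1_t \in C_t$; the norm map $\Nm_{0_t}$ corresponds to the canonical morphism $\emptyset_t \to 1_t$. Hence $\Nm_{0_t}$ is an equivalence if and only if $C_t$ is pointed.

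Combining these observations, the family $\{0_s: s \in S\}$ consists of ambidextrous morphisms if and only if each fiber $C_t$ is pointed. To conclude the equivalence with $S$-pointedness, it remains to note that the preservation-of-zero condition on the pushforwards $\alpha_{\sharp}$ is automatic once each fiber is pointed: as a left adjoint to $\alpha^{\ast}$, the functor $\alpha_{\sharp}$ preserves initial objects, and in the pointed setting initial objects coincide with zero objects. The only substantive input is the parametrized Yoneda identification $\Fun_S(\underline{t}, C) \simeq C_t$, which I expect to follow from standard results in \cite{Exp2}; I do not anticipate a serious obstacle.
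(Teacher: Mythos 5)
Your argument follows the paper's proof essentially step for step: the pullbacks of $0_s$ along the $S$-points $[\alpha\colon s \to t]$ of $\underline{s}$ are the morphisms $0_t$, and under the identification $\Fun_S(\underline{t},C)\simeq C_t$ the norm $\Nm_{0_t}$ is the canonical map from the initial to the terminal object of $C_t$ (the left and right adjoints of the functor to the point), so it is an equivalence exactly when $C_t$ has a zero object. This is the same reduction the paper performs.

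The one step whose justification does not hold up as written is the final one. In the paper's conventions $\alpha_{\sharp}$ and $\alpha^{\ast}$ are two names for the \emph{same} cocartesian pushforward functor (indexed by the morphism in $S$, respectively the corresponding morphism in $T=S^{\op}$), so ``$\alpha_{\sharp}$ is a left adjoint to $\alpha^{\ast}$'' is a category error, and for a general cocartesian fibration the pushforward functors need not preserve initial objects. What rescues the claim is the standing hypothesis of the section that $C$ admits all $S$-colimits and $S$-limits indexed by $S$-spaces (already needed for the norm maps to be defined): the $S^{s/}$-colimit of the empty diagram is by definition a cocartesian section valued in initial objects, which forces $\alpha_{\sharp}(\emptyset_s)\simeq \emptyset_t$, and dually for terminal objects; equivalently, this is the adjointability statement of Lem.~\ref{lem:adjointability} applied to the square exhibiting $0_t$ as a pullback of $0_s$. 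With that substitution your argument closes, and it then agrees with the paper's proof, which instead packages the pushforward condition directly into the requirement that the norm be an equivalence for \emph{every} pullback of $0_s$ rather than asserting it is automatic.
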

\begin{proof} For any $s \in S$, it is easy to see that the norm map $\Nm_{0_s}$ is the canonical map between the initial and final object in $C_s$. Moreover, for any $[\alpha: s \rightarrow t] \in \underline{s}$, the map $\underline{\alpha} \to \underline{s}$ is homotopic to $\alpha^{\ast}: \underline{t} \to \underline{s}$ and the pullback of $0_s$ along $\alpha^{\ast}$ is $0_t$. Thus $0_s$ is ambidextrous if and only if $C_s$ admits a zero object and for all $\alpha: s \to t$, the pushforward functor $\alpha_{\sharp}: C_s \to C_t$ preserves the zero object. The conclusion then follows.
\end{proof}

\begin{wrn} \label{wrn:ambidexCounterexample} In contrast to the non-parametrized case \cite[Prop.~6.1.6.7]{HA}, if $C$ is $S$-pointed then we may have weakly $(-1)$-ambidextrous morphisms that fail to be ambidextrous. For example, let $S = \sO_{C_2}^{\op}$ and let $p: J \to \sO_{C_2}^{\op}$ be the $C_2$-functor given by the inclusion of the full subcategory on the free transitive $C_2$-sets (so $J \simeq BC_2$). Then $J$ is a $C_2$-space via $p$, and for any $C_2$-$\infty$-category $C$, we have that $\Fun_{C_2}(J, C) \simeq (C_{C_2/1})^{h C_2}$ for the $C_2$-action on the fiber $C_{C_2/1}$ encoded by the cocartesian fibration. On the one hand, the $C_2$-diagonal functor $\delta: J \to J \times_{\sO_{C_2}^{\op}} J$ is an equivalence, so $J$ is a $(-1)$-truncated $C_2$-space. On the other hand, for $C = \underline{\Sp}^{C_2}$, the restriction $p^{\ast}$ may be identified with
\[ j^{\ast}: \Sp^{C_2} \simeq \Fun_{C_2}(\sO_{C_2}^{\op}, \underline{\Sp}^{C_2}) \to \Fun_{C_2}(J,\underline{\Sp}^{C_2}) \simeq \Fun(BC_2, \Sp) \]
which we saw has left and right adjoints $j_!$ and $j_{\ast}$ such that the norm map $j_! \to j_{\ast}$ is \emph{not} an equivalence.
\end{wrn}

We next consider the $n=0$ case. Let $T = S^{\op}$. Recall from \cite[Def.~4.1]{Exp4} that an $\infty$-category $T$ is said to be \emph{atomic orbital} if its finite coproduct completion $\FF_T$ admits pullbacks and $T$ has no non-trivial retracts (i.e., every retract is an equivalence). For example, $\sO_G$ is atomic orbital. We now assume that $T$ is atomic orbital, and we regard $V \in T$ as `$T$-orbits' and $U \in \FF_T$ as `finite $T$-sets'.

\begin{ntn} For any $U \in \FF_T$, if $U = \coprod_{i \in I} U_i$ is its unique decomposition into orbits $U_i \in T$, then we let $\underline{U} \coloneq \coprod_{i \in I} S^{U_i/} \to S$ denote the corresponding $S$-space.
\end{ntn}

\begin{lem} \label{lem:ComparisonCoproductProduct} Suppose $C$ is $S$-pointed. Then for any finite $T$-set $U$, $T$-orbit $V$ and morphism
\[ f: \underline{U} \to \underline{V} \]
(necessarily specified by a morphism $f: U \to V$ in $\FF_T$), the diagonal $\delta: \underline{U} \to \underline{U} \times_{\underline{V}} \underline{U}$ is ambidextrous. Consequently, if $g: X \to Y$ is a morphism between finite coproducts of representables, then $g$ is weakly $0$-ambidextrous.
\end{lem}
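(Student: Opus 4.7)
The plan is to exploit the fact that in an atomic orbital category, any fiber-product diagonal $U \to U \times_V U$ in $\FF_T$ is a summand inclusion, which in the $S$-pointed setting makes it ambidextrous essentially for the same reason as the morphism $\emptyset \to \underline{s}$ of Lemma \ref{lem:pointedAmbidex}.

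First I would argue that the extension of the Yoneda embedding $T \hookrightarrow \Spc^S$ to a functor $\FF_T \to \Spc^S$ sending $\coprod_i U_i \mapsto \coprod_i \underline{U_i}$ preserves pullbacks. This uses that pullbacks exist in $\FF_T$ and distribute over coproducts, combined with the fact that the Yoneda embedding preserves all limits existing in the source. Consequently $\underline{U} \times_{\underline{V}} \underline{U} \simeq \underline{U \times_V U}$, and $\delta$ is identified with the diagonal in $\FF_T$.

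Next I would show this diagonal is a summand inclusion. Writing $U = \coprod_i U_i$ as orbits, we have $U \times_V U = \coprod_{i,j} U_i \times_V U_j$, and $\delta$ factors through the part indexed by $i=j$. For each $i$, decompose $U_i \times_V U_i = \coprod_k W_{ik}$ into orbits; the diagonal selects an inclusion of some summand $W_{ik_0}$ together with a map $U_i \to W_{ik_0}$ whose composition with $\pr_1 \colon W_{ik_0} \hookrightarrow U_i \times_V U_i \to U_i$ is the identity. Thus $U_i$ is a retract of the orbit $W_{ik_0}$, and atomicity of $T$ forces $U_i \xrightarrow{\simeq} W_{ik_0}$. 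Hence $\delta \colon \underline{U} \to \underline{U} \times_{\underline{V}} \underline{U}$ is a summand inclusion $\underline{U} \to \underline{U} \sqcup Z$ for some $Z \in \Spc^S$; in particular $\delta$ is $(-1)$-truncated, so weakly $(-1)$-ambidextrous.

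I expect the main (though not difficult) step to be verifying that $\Nm_\delta$ is an equivalence. By Lemma \ref{lem:AmbidexCheckedFiberwise}(1) it suffices to check this on the pullback of $\delta$ to any $S$-point $\underline{y}$ of $\underline{U} \times_{\underline{V}} \underline{U}$. Because summand inclusions are stable under pullback, this pullback is either an equivalence (when $y$ lies in the $\underline{U}$ summand) or the morphism $\emptyset \to \underline{y}$ (when $y \in Z$). The first case is trivial, and the second is exactly $S$-pointedness via Lemma \ref{lem:pointedAmbidex}. Thus $\delta$ is $(-1)$-ambidextrous, i.e.\ ambidextrous.

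For the consequence, let $g \colon X \to Y$ be a morphism of finite coproducts of representables. Decomposing $Y = \coprod_j \underline{V_j}$ with each $V_j$ an orbit and pulling back yields $g_j \colon X_j \to \underline{V_j}$, where each $X_j$ is again a finite coproduct of representables (since $\FF_T$ has pullbacks preserved by the Yoneda extension). The diagonal of $g$ splits as $\coprod_j \delta_{g_j}$, and each $\delta_{g_j}$ is ambidextrous by the first part. Applying Lemma \ref{lem:AmbidexCheckedFiberwise} fiberwise to the coproduct then shows $\delta_g$ is ambidextrous, so $g$ is weakly $0$-ambidextrous.
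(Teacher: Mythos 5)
Your proposal is correct and follows essentially the same route as the paper: identify $\underline{U}\times_{\underline{V}}\underline{U}$ with a finite disjoint union of representables, use atomicity (no non-trivial retracts) to see that $\delta$ is a summand inclusion, and then conclude ambidexterity from Lem.~\ref{lem:AmbidexCheckedFiberwise} together with Lem.~\ref{lem:pointedAmbidex}, since each $S$-fiber of $\delta$ is either an equivalence or of the form $\emptyset \to \underline{y}$. Your write-up merely spells out the steps (preservation of pullbacks by the Yoneda extension, the retract argument orbit by orbit, and the reduction of the final consequence to orbits of $Y$) that the paper leaves implicit.
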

\begin{proof} By our assumption on $T$, $\underline{U} \times_{\underline{V}} \underline{U}$ decomposes as a finite disjoint union of representables $\coprod_{i \in I} \underline{V_i}$. Moreover, because $T$ admits no non-trivial retracts, for some $J \subset I$ we have that $\underline{U} \simeq \coprod_{j \in J} \underline{V_j}$ with matching orbits, and $\delta$ is a summand inclusion $\underline{U} \to (\coprod_{j \in J} \underline{V_j}) \sqcup (\coprod_{i \in I-J } \underline{V_i})$. $\delta$ is then ambidextrous by Lem.~\ref{lem:AmbidexCheckedFiberwise} and Lem.~\ref{lem:pointedAmbidex}. The final consequence also follows by Lem.~\ref{lem:AmbidexCheckedFiberwise}.
\end{proof}

By Lem.~\ref{lem:ComparisonCoproductProduct}, the following definition is well-posed.

\begin{dfn} \label{dfn:semiadditive} Let $C$ be $S$-pointed. We say that $C$ is \emph{$S$-semiadditive} if for each morphism $f: U \to V$ in $\FF_T$, the norm map $\Nm_f$ for $f: \underline{U} \to \underline{V}$ is an equivalence. If $S = \sO_G^{\op}$, we will instead say that $C$ is \emph{$G$-semiadditive}.
\end{dfn}

Equivalently, in Def.~\ref{dfn:semiadditive} we could demand only that the norm maps for $f: U \to V$ with $V$ an orbit are equivalences.

\begin{rem} Unwinding the definition of the norm maps produced via our setup and in \cite[Constr.~5.2]{Exp4}, one sees that Def.~\ref{dfn:semiadditive} is the same as the notion of $T$-semiadditive given in \cite[Def.~5.3]{Exp4}. In particular, for $T = \sO_G^{\op}$, $\underline{\Sp}^G$ is an example of a $G$-semiadditive $G$-$\infty$-category. This amounts to the familiar fact that for each orbit $G/H$, $\Sp^H$ is semiadditive, and for each map of orbits $f: G/H \to G/K$, the left and right adjoints to the restriction functor $f^{\ast}: \Sp^K \to \Sp^H$ given by induction and coinduction are canonically equivalent.
\end{rem}



In the remainder of this subsection, we further specialize to the case $S = \sO_G^{\op}$ for $G$ a finite group. We have already encountered a potential problem in Warn.~\ref{wrn:ambidexCounterexample} with developing a useful theory of $G$-ambidexterity. The issue is essentially due to the presence of fiberwise discrete $G$-spaces that do not arise from $G$-sets. To remedy this, we will restrict our attention to the Borel subclass of $G$-spaces.

\begin{dfn} \label{dfn:BorelGSpace} Suppose that $X \to \sO_G^{\op}$ is a $G$-space. Then $X$ is \emph{Borel} if the functor $\sO^{\op}_G \to \Spc$ classifying $X$ is a right Kan extension along the inclusion of the full subcategory $BG \subset \sO_G^{\op}$.
\end{dfn}

\begin{rem} Def.~\ref{dfn:BorelGSpace} is equivalent to the following condition on a $G$-space $X$: if we let $$X_H \coloneq \sO^{\op}_H \times_{(\ind^G_H)^{\op},\sO^{\op}_G} X$$ denote the restriction of $X$ to an $H$-space, then for every subgroup $H \leq G$, the natural map
\[ X_{G/H} \simeq \Map^{\cocart}_{/\sO^{\op}_H}(\sO^{\op}_H,X_H) \to \Map_{/BH}(BH,BH \times_{\sO^{\op}_H} X_H) \simeq (X_{G/1})^{h H} \]
is an equivalence.
\end{rem}

\begin{rem} Limits and coproducts of Borel $G$-spaces are Borel. Moreover, for every $G$-set $U$, the $G$-space $\underline{U}$ is Borel. Indeed, this amounts to the observation that $\Hom_G(G/H,U) \cong U^H$ for all subgroups $H \leq G$. In particular, since representables are Borel, the Borel property is stable under passage to $G$-fibers.
\end{rem}


\begin{nul} Let $f: X \to Y$ be a map of Borel $G$-spaces and let $f_0: X_0 \to Y_0$ denote the underlying map of spaces. Then by Lem.~\ref{lem:TruncationPresheaves}, $f$ is $n$-truncated if and only if $f_0$ is $n$-truncated. Furthermore, because every $G$-orbit is a finite set, the following two conditions are equivalent:
\begin{enumerate}
    \item For every $y \in Y_0$, the homotopy fiber $(X_0)_y$ is a finite $n$-type \cite[Def.~4.4.1]{hopkins2013ambidexterity}.
    \item For every $y \in Y$, the underlying space of the homotopy $G$-fiber $X_{\underline{y}}$ is a finite $n$-type.
\end{enumerate}
In this case, we say that $f$ is \emph{$\pi$-finite $n$-truncated}. Note that if $f$ is $\pi$-finite $n$-truncated, then its diagonal is $\pi$-finite $(n-1)$-truncated; indeed, this property can be checked for the underlying spaces.

More generally, if $f:X \to Y$ is a map of $G$-spaces such that for every $y \in Y$, $X_{\ul{y}}$ is Borel, then we say that $f$ is ($\pi$-finite) $n$-truncated if the above conditions hold for all $y$ and $X_{\ul{y}}$.
\end{nul}

\begin{lem} \label{lem:BorelAmbidex} Let $f: X \to Y$ be a map of $G$-spaces such that for all $y \in Y$, $X_{\ul{y}}$ is Borel.
\begin{enumerate}
\item Suppose that $C$ is $G$-pointed.
    \begin{enumerate}
    \item If $f$ is $(-1)$-truncated, then $f$ is $(-1)$-ambidextrous.
    \item If $f$ is $0$-truncated, then $f$ is weakly $0$-ambidextrous.
    \end{enumerate}
\item Suppose in addition that $C$ is $G$-semiadditive.
    \begin{enumerate}
    \item If $f$ is $\pi$-finite $0$-truncated, then $f$ is $0$-ambidextrous.
    \item If $f$ is $\pi$-finite $1$-truncated, then $f$ is weakly $1$-ambidextrous.
    \end{enumerate}
\end{enumerate}
\end{lem}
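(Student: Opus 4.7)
The plan is to prove all four assertions by a fiberwise reduction, using Lem.~\ref{lem:AmbidexCheckedFiberwise} to reduce each statement about $f: X \to Y$ to the corresponding statement about the $G$-fibers $f_y: X_{\ul{y}} \to \ul{y}$ over points $y \in Y$. The Borel hypothesis will then let us identify $X_{\ul{y}}$ concretely via its underlying space with $H$-action, where $H$ denotes the stabilizer of $y$.

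For (1)(a), I would apply Lem.~\ref{lem:AmbidexCheckedFiberwise}(1) to reduce to showing $\Nm_{f_y}$ is an equivalence for every $y$. Since $f$ is $(-1)$-truncated and $X_{\ul{y}}$ is Borel, Lem.~\ref{lem:TruncationPresheaves} forces the underlying $H$-space of $X_{\ul{y}}$ to be either empty or contractible. In the empty case $X_{\ul{y}} \simeq \emptyset$, so $G$-pointedness together with Lem.~\ref{lem:pointedAmbidex} implies that $\emptyset \to \ul{y}$ is ambidextrous. In the contractible case the Borel extension of a contractible $H$-space is the terminal object $\ul{y}$, so $f_y$ is an equivalence and trivially ambidextrous. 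For (1)(b), weak $0$-ambidexterity of $f$ is by definition $(-1)$-ambidexterity of the diagonal $\delta: X \to X \times_Y X$; since $f$ is $0$-truncated, $\delta$ is $(-1)$-truncated, and the $G$-fiber of $\delta$ over a point $(x,x')$ is the mapping $G$-space in $X_{\ul{y}}$, which is Borel because Borel $G$-spaces are closed under limits. Hence (1)(a) applies to $\delta$.

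For (2)(a), the fiber $X_{\ul{y}}$ is Borel with finite discrete underlying $H$-space, so $X_{\ul{y}} \simeq \underline{U}$ for a finite $H$-set $U$, and under the identification $\ul{y} \simeq \sO_H^{\op}$ the map $f_y$ corresponds to a morphism $\underline{U} \to \ul{y}$ of finite $T$-sets in the sense of Def.~\ref{dfn:semiadditive}. The $G$-semiadditivity of $C$ restricts to $H$-semiadditivity of the slice $C_{\ul{y}}$ by the stability of ambidextrous morphisms under pullback (Lem.~\ref{lem:AmbidexCheckedFiberwise} combined with the identification $(\sO_G^{\op})^{(G/H)/} \simeq \sO_H^{\op}$), so Def.~\ref{dfn:semiadditive} yields that $\Nm_{f_y}$ is an equivalence; Lem.~\ref{lem:AmbidexCheckedFiberwise}(1), with the requisite weak $0$-ambidexterity provided by (1)(b), then gives the conclusion. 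For (2)(b), the diagonal $\delta$ of a $\pi$-finite $1$-truncated map is $\pi$-finite $0$-truncated, and its fibers are again mapping $G$-spaces inside the Borel spaces $X_{\ul{y}}$, so they remain Borel. Thus (2)(a) applies to $\delta$, which is precisely weak $1$-ambidexterity of $f$.

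The primary technical obstacle will be verifying that the Borel hypothesis propagates to the diagonals in (1)(b) and (2)(b): I must check that the homotopy fibers of $\delta$ at a point $(x,x')$ are Borel $G$-spaces over $\ul{(x,x')}$, which I would deduce by realizing them as pullbacks of $X_{\ul{y}}$ along representable maps and invoking closure of the Borel property under such pullbacks. A secondary subtle point is the descent of $G$-pointedness and $G$-semiadditivity to the slice $C_{\ul{y}}$; this should follow formally from the characterization of these properties in terms of ambidexterity of specific morphisms (Lem.~\ref{lem:pointedAmbidex} and Def.~\ref{dfn:semiadditive}) together with stability of ambidexterity under pullback.
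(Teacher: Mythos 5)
Your proposal is correct and follows essentially the same route as the paper: reduce via Lem.~\ref{lem:AmbidexCheckedFiberwise} to the parametrized fibers $f_y: X_{\ul{y}} \to \ul{y}$, use the Borel hypothesis to identify these as $\emptyset$, $\ul{y}$, or $\underline{U}$ for a finite set $U$, invoke Lem.~\ref{lem:pointedAmbidex} and Def.~\ref{dfn:semiadditive}, and handle the ``weakly'' statements by passing to the diagonal. Your explicit check that the Borel condition propagates to the fibers of the diagonal (as limits of Borel $G$-spaces) is a point the paper leaves implicit, but it is the same argument.
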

\begin{proof} By Lem.~\ref{lem:AmbidexCheckedFiberwise} and under our hypothesis on the parametrized fibers, we may suppose in the proof that $Y = \underline{G/H}$ and $X$ is Borel. For (1), if $f$ is $(-1)$-truncated then the underlying space of $X$ is a discrete set that injects into $G/H$. But if $X$ is non-empty then $f$ must also be a surjective map of $G$-sets since the $G$-action on $G/H$ is transitive. Thus either $X = \emptyset$ or $f$ is an equivalence, so by Lem.~\ref{lem:pointedAmbidex}, $f$ is $(-1)$-ambidextrous. If $f$ is $0$-truncated, then the $(-1)$-truncated diagonal $X \to X \times_Y X$ is $(-1)$-ambidextrous as just shown, so $f$ is weakly $0$-ambidextrous.

For (2), we employ the same strategy. If $f$ is $\pi$-finite $0$-truncated, then $X$ is necessarily a finite $G$-set, so $f$ is $0$-ambidextrous by hypothesis. If $f$ is $\pi$-finite $1$-truncated, then the diagonal $X \to X \times_Y X$ is $\pi$-finite $0$-truncated and hence $0$-ambidextrous, so $f$ is weakly $1$-ambidextrous.
\end{proof}

To apply the parametrized ambidexterity theory to our situation of interest, we need the following lemma.

\begin{lem} \label{lem:BorelClassifyingSpace} The $G/N$-space $B^{\psi}_{G/N} N$ of Def.~\ref{dfn:TwistedClassifyingSpace} is Borel.
\end{lem}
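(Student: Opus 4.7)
The plan is to directly verify the pointwise Borel condition: for each orbit $(G/N)/(K/N) \in \sO^{\op}_{G/N}$, I would show that the natural map
\[ F((G/N)/(K/N)) \to F(G/N)^{h(K/N)} \]
is an equivalence, where $F: \sO^{\op}_{G/N} \to \Spc$ is the functor classifying $\rho_N$ and the $K/N$-action on $F(G/N)$ is inherited from the natural $G/N$-action.

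First, I would identify the restriction $i^{\ast}F$ along $i: B(G/N) \subset \sO^{\op}_{G/N}$. Since the only $N$-free $G$-orbit $G/H$ with $HN = N$ has $H = 1$, the strict fiber of $\rho_N$ over $G/N$ consists of a single object $G = G/1$ whose automorphisms over $G/N$ are the right-multiplications $r_n$ for $n \in N$. Assembling the monodromy from lifts $r_{\tilde g}$ of the automorphisms $r_{gN}$ of $G/N$, one identifies $(B^{\psi}_{G/N} N) |_{B(G/N)}$ with the left fibration $BG \to B(G/N)$ induced by the extension $\psi$; in particular $F(G/N) \simeq BN$, with the $G/N$-action encoding the extension class, not merely the outer action.

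Next, for $c = (G/N)/(K/N)$, I would compute both sides. The comma category $c \downarrow i$ is the translation groupoid of the transitive $G/N$-action on $c$, hence equivalent to $B(K/N)$, so
$F(G/N)^{h(K/N)}$ is the space of sections of the pullback fibration $BK \to B(K/N)$, i.e.\ the groupoid of group-theoretic splittings $s: K/N \to K$ of the restricted extension $1 \to N \to K \to K/N \to 1$, with morphisms $s \to s'$ given by those $n \in N$ satisfying $s'(kN) = n\, s(kN)\, n^{-1}$. For the left-hand side, the fiber of $\rho_N$ over $c$ consists of orbits $G/H$ with $H$ a complement of $N$ in $K$ (i.e., $HN = K$ and $H \cap N = 1$), with morphisms $r_g$ for $g \in K$ implementing $N$-conjugation of complements, and
\[ \Aut_{G/K}(G/H) = (N_G(H) \cap K)/H = N_N(H) = C_N(H), \]
where the final equality is the key algebraic observation: for $n \in N_N(H)$ and $h \in H$, the element $h^{-1} n h n^{-1}$ lies in $H \cap N = 1$, so $n$ centralizes $H$.

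Finally, I would exhibit the canonical equivalence between these groupoids: to a complement $H$ of $N$ in $K$, associate the splitting $s_H: K/N \to K$ sending $kN$ to the unique element of $kN \cap H$. This matches morphisms (both parametrized by $N$-conjugation) and automorphisms (both $C_N(H)$). The main obstacle is verifying that this pointwise equivalence is implemented by the canonical unit map $F \to i_{\ast} i^{\ast} F$ of the RKE adjunction and is natural in $c \in \sO^{\op}_{G/N}$; this reduces to unwinding the explicit pushforward formulas for $\rho_N$ along orbit maps and checking compatibility with the translation-groupoid description of the RKE, a routine verification once both sides are described in terms of splittings.
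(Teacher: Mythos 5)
Your proposal follows essentially the same route as the paper: identify the relevant fiber of $\rho_N$ with the groupoid of complements of $N$ in $K$, identify the homotopy fixed points with the groupoid of splittings of the extension $N \to K \to K/N$, match up the automorphism groups via the key algebraic observation that $H \cap N = 1$ forces $N_N(H) = C_N(H)$, and conclude. The only presentational difference is that the paper first reduces to the top orbit $K/N = G/N$ using the restriction equivalence $(B^{\psi}_{G/N} N)_{K/N} \simeq B^{\psi'}_{K/N} N$ and then writes down the comparison map $\chi$ explicitly, rather than working at a general orbit and appealing to the unit of the right Kan extension; this sidesteps the "routine verification" you flag at the end.
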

\begin{proof} For any subgroup $K/N$ of $G/N$, $(B^{\psi}_{G/N} N)_{K/N} \simeq (B^{\psi'}_{K/N} N)$ for $\psi' = [N \to K \to K/N]$. Therefore, without loss of generality it suffices to prove that the map of groupoids
\[ \chi: \Map^{\cocart}_{/\sO_{G/N}^{\op}}(\sO_{G/N}^{\op}, B^{\psi}_{G/N} N) \to \Map_{/B(G/N)}(B(G/N), (B^{\psi}_{G/N} N) \times_{\sO_{G/N}^{\op}} B(G/N) ) \]
is an equivalence. The fiber $E$ of $B^{\psi}_{G/N} N$ over the terminal $G/N$-set $\ast$ is spanned by those $N$-free $G$-orbits $U$ such that $U/N \cong \ast$, and an explicit inverse to the evaluation map
\[ \Map^{\cocart}_{/\sO_{G/N}^{\op}}(\sO_{G/N}^{\op}, B^{\psi}_{G/N} N) \xto{\simeq} E \]
is given by sending $U$ to the cocartesian section $s_U = (- \times U): \sO_{G/N}^{\op} \to B^{\psi}_{G/N} N$ that sends $V$ to $V \times U$: this follows from our identification of the cocartesian edges in Lem.~\ref{lem:QuotientMapCartesianFibration}. Then
\[ \chi(s_U): B(G/N) \to (B^{\psi}_{G/N} N) \times_{\sO_{G/N}^{\op}} B(G/N) \] is the section which sends $G/N$ to the free transitive $G$-set $G/N \times U$. Let us now select a basepoint to identify $U \cong G/H$ for $H$ a subgroup such that $H \cap N = 1$ and $G = N H$. We have $W_G H \cong \Aut_G(G/H)$, where a coset $\overline{a} \in W_G H$ gives an automorphism $\theta_{\overline{a} }$ of $G/H$ that sends $1H$ to the well-defined coset $aH$, and under $\chi$ this is sent to the automorphism $\id \times \theta_{\overline{a} }$ of the section $\chi(s_{G/H})$.

By elementary group theory, each coset in $G/N$ has a unique representative $x N$ with $x \in H$, and each coset in $G/H$ has a unique representative $y H$ with $y \in N$. Moreover, the inclusion $N_G(H) \cap N \to N_G(H)$ yields an isomorphism $N_G(H) \cap N  \cong W_G(H)$; the map is an injection because $N \cap H = 1$ and a surjection because $G = NH$. The two surjections $G/1 \to G/N$ and $G/1 \to G/H$ sending $1G$ to $1N$ and $1H$ define an isomorphism $G/1 \xto{\cong} G/N \times G/H$ for which an explicit inverse sends $(xN,yH)$ to $x \cdot y$. Under this isomorphism, $\id \times \theta_{\overline{a} }$ is sent to the unique element $a \in N_G(H) \cap N$ that is a representative for $\overline{a}$.

On the other hand, $(B^{\psi}_{G/N} N) \times_{\sO_{G/N}^{\op}} B(G/N) \simeq BG$ where we select $G/1$ to be the unique object of $BG$. We compute the groupoid of maps $\Map_{/B(G/N)}(B(G/N), BG)$ to have objects given by splittings $\tau: G/N \to G$ of the surjection $\pi: G \to G/N$ and morphisms $\tau \to \tau'$ given by $n \in N$ such that for every coset $bN$, $n \tau(b N) n^{-1} = \tau'(b N)$. In particular, $\Aut(\tau) = N \cap C_G(H)$ for $H = \tau(G/N)$. However, if $n h n^{-1} = h' \in H$, then $\pi(n h n^{-1} h^{-1}) = 1$ shows that $n h n^{-1} h^{-1} \in N \cap H =1$, so in fact $n h = h n$ and thus $\Aut(\tau) = N \cap N_G(H)$. Combining this with the explicit understanding of the comparison map given above, we deduce that $\chi$ is fully faithful. Essential surjectivity is also clear by the bijection between splittings of $\pi$ and subgroups $H$ with $N \cap H = 1$ and $G = N H$. We conclude that $\chi$ is an equivalence.
\end{proof}

\subsection{The parametrized Tate construction}

In view of Lem.~\ref{lem:BorelClassifyingSpace}, we may define the parametrized Tate construction (Def.~\ref{dfn:ParamTateCnstr}) by applying the parametrized ambidexterity theory to the Beck-Chevalley fibration $$\LocSys^{G/N}(\underline{\Sp}^{G/N}) \to \Spc^{G/N}.$$ Let $\rho_N: B^{\psi}_{G/N} N \to \sO_{G/N}^{\op}$ be the structure map as in Lem.~\ref{lm:TwistedClassifyingSpaceIsSpace}, and let
 \[ {\rho_N}^{\ast}: \Sp^{G/N} \simeq \Fun_{G/N}(\sO_{G/N}^{\op},\underline{\Sp}^{G/N}) \to  \Fun_{G/N}(B^{\psi}_{G/N} N, \underline{\Sp}^{G/N}) \]
be the functor given by restriction along $\rho_N$. We first introduce some alternative notation for parametrized homotopy orbits $(\rho_N)_!$ and fixed points $(\rho_N)^{\ast}$.


\begin{ntn} Given $X \in \Fun_{G/N}(B^{\psi}_{G/N} N, \underline{\Sp}^{G/N})$, we will write
\begin{align*} X_{h[\psi]} \coloneq (\rho_N)_! (X) \text{ and } X^{h[\psi]} \coloneq (\rho_N)^{\ast} X.
\end{align*}
\end{ntn}

\begin{dfn} \label{dfn:ParamTateCnstr} The $G/N$-functor $\rho_N$ has as its underlying map of spaces $B N \to \ast$, which is $\pi$-finite $1$-truncated. By Lem.~\ref{lem:BorelAmbidex}, $\rho_N$ is weakly $1$-ambidextrous, so we can construct the norm map $\Nm_{\rho_N}: (\rho_N)_! \to (\rho_N)^{\ast}$. Let
\[ t[\psi]: \Fun_{G/N}(B^{\psi}_{G/N} N, \underline{\Sp}^{G/N}) \to \Sp^{G/N} \]
denote the cofiber of $\Nm_{\rho_N}$. On objects $X$, we write $X^{t[\psi]}$ for the image of $X$ under $t[\psi]$. 
\end{dfn}

\begin{ntn} If $\psi$ is the defining extension $A \to A \rtimes C_2 \to C_2$ of the semidirect product where $C_2$ acts on the abelian group $A$ by inversion, we will instead write $X_{h_{C_2} A} = X_{h[\psi]}$, $X^{h_{C_2} A} = X^{h[\psi]}$, and $X^{t_{C_2} A} = X^{t[\psi]}$.
\end{ntn}

\begin{rem}[The norm vanishes on induced objects] \label{NormVanishesOnInduced} For $H \in \Gamma_N$, let $U = \rho_N(G/H) \cong \frac{G/N}{H N/N}$ be the $G/N$-orbit and $s_H: \underline{U} \to B^{\psi}_{G/N} N$ be the unique $G/N$-functor that selects $G/H$, so that the functor $s_H^{\ast}$ of \ref{evaluationFactorizationNaiveSpectra} is obtained by restriction along $s_H$. Note that the map of Borel $G/N$-spaces $s_H$ is $\pi$-finite $0$-truncated because its underlying map of spaces is $U \to B N$ with $U$ a finite discrete set. By Lem.~\ref{lem:BorelAmbidex}, we see that $\Nm_{s_H}: {s_H}_! \xto{\simeq} (s_H)_{\ast}$.

Now consider the composite map $p_U = \rho_N \circ (s_H)$, which is also $\pi$-finite $0$-truncated. On the one hand, the associated norm map $\Nm_{p_U}$ is an equivalence (explicitly, between induction and coinduction from $\Sp^H$ to $\Sp^{G/N}$ for $H \cong H N/N$ viewed as a subgroup of $G/N$). On the other hand, by \cite[Rmk.~4.2.4]{hopkins2013ambidexterity}, we have that $\Nm_{p_U}$ is homotopic to the composite $((\rho_N)_{\ast} \Nm_{s_H}) \circ (\Nm_{\rho_N} (s_H)_!)$. We deduce that $\Nm_{\rho_N}$ is an equivalence on the image of $(s_H)_!$. This extends the observation that the ordinary norm map $X_{h G} \to X^{h G}$ is an equivalence on objects induced from $\Sp$ to $\Fun(B G, \Sp)$.
\end{rem}

By Prop.~\ref{prp:BorelSpectraAsCompleteObjects}, we also have a norm map $\Nm': \sF_b[N] \to \sF^{\vee}_b[N]$ arising from the $\Gamma_N$-recollement of $\Sp^G$, with functors $\sF_b[N]$ and $\sF^{\vee}_b[N]$ as in \ref{VariousPropertiesForgetfulFunctorBorelSpectra}. We now proceed to show that the two norm maps $\Psi^N \Nm'$ and $\Nm_{\rho_N}$ are equivalent.

\begin{lem} \label{lm:IdentifyingDiagonalAsComposition} The functor $\rho_N^{\ast}$ is homotopic to the composite
\[ \sU_b[N] \circ {\inf}^N: \Sp^{G/N} \to \Sp^G \to \Fun_{G/N}(B^{\psi}_{G/N} N, \underline{\Sp}^{G/N}). \]
\end{lem}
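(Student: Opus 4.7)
The plan is to first reduce the statement to a cleaner identification, then verify it by pointwise evaluation. Note that the structure map $\rho_N$ factors as the composite of the cosieve inclusion $i: B^{\psi}_{G/N} N \hookrightarrow \sO^{\op}_G$ (viewed as a $G/N$-functor, with $\sO^{\op}_G$ regarded as a $G/N$-$\infty$-category via $r_N^{\op}$) followed by $r_N^{\op}: \sO^{\op}_G \to \sO^{\op}_{G/N}$. Since $\sU_b[N]$ was defined in \ref{VariousPropertiesForgetfulFunctorBorelSpectra} as $i^{\ast} \circ \sU[N]$, I reduce to proving the equivalence of $G/N$-functors
\[ \sU[N] \circ \inf^N \simeq (r_N^{\op})^{\ast}: \Sp^{G/N} \longrightarrow \Fun_{G/N}(\sO^{\op}_G, \ul{\Sp}^{G/N}). \]

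To construct the comparison, I will use the unit of the $G$-adjunction $\underline{\inf}[N] \dashv \widehat{\Psi}[N]$ from Constr.~\ref{cnstr:ForgetfulFunctorToNaiveGSpectra}. Because each fiberwise inflation functor $\inf^{H \cap N}: \Sp^{H/(H \cap N)} \to \Sp^H$ is fully faithful, so is the $G$-functor $\underline{\inf}[N]$, and hence the unit $\eta: \id \to \widehat{\Psi}[N] \circ \underline{\inf}[N]$ is an equivalence of $G$-functors. Unpacking the definition of $\widetilde{\sU}[N]$ as the $G/N$-functor adjoint to $\underline{\Psi}[N] \circ \underline{\res}[N] = \pr \circ \widehat{\Psi}[N] \circ \underline{\res}[N]$, and using that $\underline{\res}[N] \circ \underline{\inf}[N]$ recovers the pullback functor encoding $(r_N^{\op})^{\ast}$ at the level of $G$-spectra (via the canonical identification of $\inf^N X$ as the $G$-spectrum whose underlying $G$-functor at an orbit $G/H$ is $\inf^{H \cap N} \res^{G/N}_{HN/N} X$), yields a natural transformation
\[ \nu: (r_N^{\op})^{\ast} \longrightarrow \sU[N] \circ \inf^N. \]

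To check that $\nu$ is an equivalence, I invoke the joint conservativity of the evaluation functors $\{s_H^{\ast} : H \leq G\}$ recorded in \ref{evaluationFactorizationNaiveSpectra}, together with the factorization $s_H^{\ast} \circ \sU[N] \simeq \Psi^{H \cap N} \circ \res^G_H$. Given $X \in \Sp^{G/N}$, the commutative diagram of groups
\[ \begin{tikzcd}[row sep=3ex, column sep=4ex]
H \ar[r, hook] \ar[d, two heads] & G \ar[d, two heads] \\
HN/N \ar[r, hook] & G/N
\end{tikzcd} \]
(with $\ker(H \twoheadrightarrow HN/N) = H \cap N$) induces an equivalence $\res^G_H \circ \inf^N \simeq \inf^{H \cap N} \circ \res^{G/N}_{HN/N}$, and then the unit of $\inf^{H \cap N} \dashv \Psi^{H \cap N}$ gives $\Psi^{H \cap N} \inf^{H \cap N} \res^{G/N}_{HN/N} X \simeq \res^{G/N}_{HN/N} X$. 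This matches $s_H^{\ast}((r_N^{\op})^{\ast} X) = X((G/N)/(HN/N)) \simeq \res^{G/N}_{HN/N} X$, and tracing through the construction confirms that $s_H^{\ast}(\nu_X)$ is exactly this equivalence.

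The main technical obstacle is the passage from pointwise equivalence to a coherent natural equivalence, which requires handling the $G/N$-functoriality of all the constructions involved. I address this by performing the construction of $\nu$ at the $G$-functor level using $\eta$ before taking adjoints, so that coherence is guaranteed and only the fiberwise verification using the ordinary inflation–fixed points adjunction is needed at the end.
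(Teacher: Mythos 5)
There is a genuine error here. Your reduction replaces the lemma by the stronger claim that $\sU[N] \circ \inf^N \simeq (r_N^{\op})^{\ast}$ as functors into $\Fun_{G/N}(\sO^{\op}_G, \ul{\Sp}^{G/N})$, i.e.\ an identification over \emph{all} $G$-orbits rather than just the $N$-free ones. That stronger claim is false, and the step of your argument that fails is the assertion that the fiberwise inflation functors $\inf^{H \cap N}: \Sp^{H/(H\cap N)} \to \Sp^H$ are fully faithful (equivalently, that the unit $\id \to \Psi^{H \cap N}\inf^{H\cap N}$ is an equivalence). Inflation of \emph{genuine} equivariant spectra is not fully faithful when $H \cap N \neq 1$: on the unit object, $\pi_0$ of the unit map $S^0 \to \Psi^{H\cap N}\inf^{H \cap N} S^0$ is the inclusion of $\ZZ$ into the Burnside ring of $H \cap N$, which is not surjective (already for $H \cap N = C_p$ the target is $\ZZ^2$). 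Concretely, your pointwise check at an orbit $G/H$ produces $\Psi^{H\cap N}\inf^{H\cap N}\res^{G/N}_{HN/N}X$, and this does \emph{not} agree with $\res^{G/N}_{HN/N}X$ unless $H \cap N = 1$; so $s_H^{\ast}(\nu_X)$ is an equivalence only for $H \in \Gamma_N$, and joint conservativity of \emph{all} the $s_H^{\ast}$ does not let you conclude.

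The fix is to not overreach: restrict to $B^{\psi}_{G/N}N$ \emph{before} attempting the identification, exactly as the actual statement demands. Over the $N$-free orbits every $H$ that appears satisfies $H \cap N = 1$, so the fiberwise composite $\Sp^{HN/N} \xto{\inf^N} \Sp^{HN} \xto{\res^{HN}_H} \Sp^H \xto{\Psi^{H\cap N}} \Sp^{H/(H\cap N)}$ is literally the identity (no appeal to full faithfulness of inflation is needed), and one concludes by unwinding the adjunction defining $\widetilde{\sU}[N]$ --- which is the paper's argument. The rest of your setup (factoring $\rho_N$ through the cosieve inclusion, computing $\res^G_H \circ \inf^N \simeq \inf^{H\cap N}\circ\res^{G/N}_{HN/N}$ by functoriality of $\SH$ on groupoids, and using the evaluation functors to check the comparison map) is sound; it is only the passage through the non-$N$-free orbits that breaks.
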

\begin{proof} For the proof, we work in the setup of Constr.~\ref{cnstr:ForgetfulFunctorToNaiveGSpectra}. Let
\[ \underline{\inf}'[N]: \underline{\Sp}^{G/N} \to \sO_{G/N}^{\op} \times_{\sO_G^{\op}} \underline{\Sp}^G \]
be the $G/N$-functor defined by the natural transformation $\SH q_N^{\op}: \SH \omega_{G/N}^{\op} \to \SH \omega_G^{\op} \iota_N^{\op}$, so for a $G/N$ orbit $V = \frac{G/N}{K/N}$, the fiber $\underline{\inf}'[N]_V: \Sp^{K/N} \to \Sp^K$ is given by the inflation functor $\inf^N$. By definition, the composite
\[ \begin{tikzcd}[row sep=4ex, column sep=6ex, text height=1.5ex, text depth=0.5ex]
\underline{\Sp}^{G/N} \ar{r}{\underline{\inf}'[N]} & \sO_{G/N}^{\op} \times_{\sO^{\op}_G} \Sp^G \ar{r}{\widetilde{\sU}[N]} & \underline{\Fun}_{G/N}(\sO_G^{\op}, \underline{\Sp}^{G/N})
\end{tikzcd} \]
is adjoint to the composite (abusing notation for the first functor)
\[ \begin{tikzcd}[row sep=4ex, column sep=6ex, text height=1.5ex, text depth=0.5ex]
\sO^{\op}_G \times_{\sO_{G/N}^{\op}} \underline{\Sp}^{G/N} \ar{r}{\underline{\inf}'[N]} & \sO_G^{\op} \times_{\sO_G^{\op}} \underline{\Sp}^G \ar{r}{\underline{\res}[N]} & \underline{\Sp}^G \ar{r}{\widehat{\Psi}[N]} & \sO^{\op}_G \times_{\sO_{G/N}^{\op}} \underline{\Sp}^{G/N} \ar{r}{\pr} & \underline{\Sp}^{G/N}.
\end{tikzcd} \]
For a $G$-orbit $G/H$, the fiber of $\widehat{\Psi}[N] \circ \underline{\res}[N] \circ \underline{\inf}'[N]$ over $G/H$ is given by the composition
\[ \begin{tikzcd}[row sep=4ex, column sep=6ex, text height=1.5ex, text depth=0.5ex]
\Sp^{H N/N} \ar{r}{\inf^N} & \Sp^{H N} \ar{r}{\res^{H N}_H} & \Sp^H \ar{r}{\Psi^{H \cap N}} & \Sp^{H/(H \cap N)}.
\end{tikzcd} \]
Using that $H N/N \cong H/(H \cap N)$, the composition $\res^{H N}_H \circ \inf^N$ is homotopic to $\inf^{H \cap N}$. Therefore, if $H \in \Gamma_N$ so that $H \cap N = 1$, the entire composite is trivial. We deduce that the composite
\[ \begin{tikzcd}[row sep=4ex, column sep=6ex, text height=1.5ex, text depth=0.5ex]
B^{\psi}_{G/N} N \times_{\sO_{G/N}^{\op}} \underline{\Sp}^{G/N} \ar{r}{\underline{\inf}'[N]} & B^{\psi}_{G/N} N \times_{\sO_G^{\op}} \underline{\Sp}^G \ar{r}{\underline{\res}[N]} & \underline{\Sp}^G \ar{r}{\widehat{\Psi}[N]} & B^{\psi}_{G/N} N \times_{\sO_{G/N}^{\op}} \underline{\Sp}^{G/N} 
\end{tikzcd} \]
is homotopic to the identity, which proves the claim.
\end{proof}

\begin{nul} By Lem.~\ref{lm:IdentifyingDiagonalAsComposition}, $(\rho_N)_{\ast} \simeq \Psi^N \sF^{\vee}_b[N]$. Let $t'[\psi] : \Fun_{G/N}(B^{\psi}_{G/N} N, \underline{\Sp}^{G/N}) \to \Sp^{G/N}$ be the cofiber of $\Psi^N \Nm'$. Since the orbits $G/H_+ \in \Sp^G$ for $H \in \Gamma_N$ are both $\Gamma_N$-torsion and $\Gamma_N$-complete, $\Nm' \circ (s_H)_!(1)$ is an equivalence for all $H \in \Gamma_N$. Therefore, $t'[\psi]$ vanishes on each $(s_H)_!(1)$. Because $\{ (s_H)_!(1): H \in \Gamma_N \}$ is a set of compact generators for $\Fun_{G/N}(B^{\psi}_{G/N} N, \underline{\Sp}^{G/N})$ and $(\rho_N)_! $ is a colimit preserving functor, the composite
\[ (\rho_N)_! \xtolong{\Nm_{\rho_N}}{1} (\rho_N)_{\ast} \simeq \Psi^N \sF^{\vee}_b[N] \to t'[\psi] \]
is null-homotopic. We thereby obtain a natural transformation $\nu: t[\psi] \to t'[\psi]$. Taking fibers, we also have a natural transformation $\mu: (\rho_N)_! \to \Psi^N \sF_b[N]$. All together, for $X \in \Sp^G_{\Borel{N}}$, we have
\[ \begin{tikzcd}[row sep=4ex, column sep=8ex, text height=1.5ex, text depth=0.5ex]
X_{h[\psi]} \ar{r}{\Nm_{\rho_N}} \ar{d}{\mu_X} & X^{h[\psi]} \ar{r} \ar{d}{\simeq} & X^{t[\psi]} \ar{d}{\nu_X} \\
\Psi^N \sF_b[N] (X) \ar{r}{\Psi^N \Nm'} & \Psi^N \sF_b^{\vee}[N](X) \ar{r} & X^{t'[\psi]}.
\end{tikzcd} \]
\end{nul}

\begin{prp} \label{prp:EquivalentTateConstructions} The natural transformations $\mu$ and $\nu$ are equivalences.
\end{prp}
\begin{proof} It suffices to show that $\mu$ is an equivalence. By Rmk.~\ref{NormVanishesOnInduced}, $\Nm_{\rho_N}$ is an equivalence on $(s_H)_!(1)$ for every $H \in \Gamma_N$, and we just saw the same property for $\Nm'$. Therefore, $\mu$ is an equivalence on each $(s_H)_!(1)$ by the two-out-of-three property of equivalences. Since both $(\rho_N)_!$ and $\Psi^N \sF_b[N]$ preserve colimits and the $(s_H)_!(1)$ form a set of compact generators, we conclude that $\mu$ is an equivalence.
\end{proof}

\begin{rem}[$\infty$-categorical Adams isomorphism] By Prop.~\ref{prp:EquivalentTateConstructions}, for $X \in \Sp^G_{\Borel{N}}$, we have an equivalence of $G/N$-spectra $X_{h[\psi]} \simeq \Psi^N \sF_b[N] (X)$. Viewing $X$ as an `$N$-free' $G$-spectrum, this amounts to the Adams isomorphism for a normal subgroup $N$ of a \emph{finite} group $G$ in our context (compare \cite[Ch.~XVI, Thm.~5.4]{AlaskaNotes}). 

We also note that Sanders \cite{SandersCompactnessLocus} has recently introduced a different formal framework for producing the Adams isomorphism, in the more general situation of a closed normal subgroup of a compact Lie group. It would be interesting to understand the relationship between his results and ours.
\end{rem}

\begin{rem} In view of Prop.~\ref{prp:EquivalentTateConstructions}, we could have defined the parametrized Tate construction as $t'[\psi]$ to begin with. However, we still need the Adams isomorphism to identify the fiber term $\Psi^N \sF_b[N]$ of $(\rho_N)_{\ast} \to t'[\psi]$ as the parametrized orbits functor $(\rho_N)_!$.
\end{rem}

\begin{rem}[Point-set models] \label{rem:PointSetModels} Let $X \in \Sp^G_{\Borel{N}}$ and consider the fiber sequence of $G/N$-spectra
\[ X_{h[\psi]} \to  X^{h[\psi]} \to X^{t[\psi]}. \]
By Prop.~\ref{prp:EquivalentTateConstructions} and the monoidal recollement theory for $\Gamma_N$, this fiber sequence is obtained as $\Psi^N$ of the fiber sequence of $G$-spectra
\[ \sF_b^{\vee}[N](X) \otimes {E \Gamma_N}_+ \to \sF_b^{\vee}[N](X) \to \sF_b^{\vee}[N](X) \otimes \widetilde{E \Gamma_N}. \]
If we let $X = \sU_b[N](Y)$ for $Y \in \Sp^G$, then we may also write this as
\[ Y \otimes {E \Gamma_N}_+ \simeq F( {E \Gamma_N}_+, Y) \otimes {E \Gamma_N}_+ \to F( {E \Gamma_N}_+, Y) \to F( {E \Gamma_N}_+, Y) \otimes \widetilde{E \Gamma_N}. \]

In \cite[\S 2.3]{Qui19b}, the first author defined the parametrized fixed points, orbits, and Tate constructions using these `point-set' models in the special case of a trivial extension $[\Sigma \to G \times \Sigma \to G]$ (writing $(-)^N$ for the categorical fixed points functor in place of $\Psi^N$).
\end{rem}

\begin{rem}[Compatibility with restriction] \label{rem:ParamTateCompatibleRestriction} The norm map $\Nm'$ extends to a natural transformation of $G$-functors
\[ \left( \underline{\Nm'}: \underline{\sF}_b[N] \Rightarrow \underline{\sF}^{\vee}_b[N] \right): \underline{\Sp}^G_{\Borel{N}} \simeq \underline{\Sp}^{h \Gamma_N} \to \underline{\Sp}^G. \]
Postcomposing with the functor $\widehat{\Psi}[N]: \underline{\Sp}^G \to \sO_G^{\op} \times_{\sO_{G/N}^{\op}} \underline{\Sp}^{G/N}$ defined in Constr.~\ref{cnstr:ForgetfulFunctorToNaiveGSpectra} and taking the cofiber, we may extend $t[\psi]$ to a functor over $\sO^{\op}_G$
\[ \widehat{t}[\psi] : \underline{\Sp}^G_{\Borel{N}} \to \sO_G^{\op} \times_{\sO_{G/N}^{\op}} \underline{\Sp}^{G/N} \]
that over an orbit $G/H$ is given by
\[ t[\psi_H]: \Fun_{H/(N \cap H)}(B^{\psi_H}_{H/(N \cap N)} (N \cap H), \underline{\Sp}^{H/(N \cap H)}) \to \Sp^{H/(N \cap H)}. \]
However, because $\underline{\Psi}[N]$ is not typically a $G$-functor, $\widehat{t}[\psi]$ may also fail to be a $G$-functor. If instead we precompose by the inclusion 
\[ \underline{\Fun}_{G/N}(B^{\psi}_{G/N} N, \underline{\Sp}^{G/N}) \simeq \sO_{G/N}^{\op} \times_{\sO_G^{\op}} \underline{\Sp}^G_{\Borel{N}} \to \underline{\Sp}^G_{\Borel{N}} \] 
and postcompose by the projection to $\underline{\Sp}^{G/N}$, then we obtain a $G/N$-functor
\[ \underline{t}[\psi]: \underline{\Fun}_{G/N}(B^{\psi}_{G/N} N, \underline{\Sp}^{G/N}) \to \underline{\Sp}^{G/N}. \]
By checking fiberwise, it is easy to verify that $\underline{t}[\psi]$ is equivalent to the cofiber of the norm map $\Nm_{\rho_N}: (\underline{\rho_N})_! \to (\underline{\rho_N})_{\ast}$ produced by the ambidexterity theory for the other Beck-Chevalley fibration $\underline{\LocSys}^{G/N}(\underline{\Sp}^{G/N}) \to \Spc^{G/N}$ -- we leave further details to the reader. In any case, we obtain a compatibility between the parametrized Tate construction and restriction. For example, given a $G/N$-functor $X: B^{\psi}_{G/N} N \to \underline{\Sp}^{G/N}$, we see that the underlying spectrum of $X^{t[\psi]}$ is $X^{tN} \coloneq (\res^{G/N} X)^{t N}$ for the underlying functor $\res^{G/N} X: B N \to \Sp$.
\end{rem}


A useful consequence of Prop.~\ref{prp:EquivalentTateConstructions} is that it enables us to endow the functor $t[\psi]$ and the natural transformation $(-)^{h[\psi]} \to (-)^{t[\psi]}$ with lax monoidal structures, with respect to the pointwise symmetric monoidal structure on $\Fun_{G/N}(B^{\psi}_{G/N} N, \underline{\Sp}^{G/N})$ and the smash product on $\Sp^{G/N}$.

\begin{cor} \label{cor:TateLaxMonoidalStructure} The functor $t[\psi]$ and the natural transformation $(-)^{h[\psi]} \to (-)^{t[\psi]}$ are lax monoidal.
\end{cor}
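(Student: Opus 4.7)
The strategy is to transport the lax monoidal structure through the identification established in Prop.~\ref{prp:EquivalentTateConstructions}, rather than attempting to work directly with the parametrized ambidexterity construction of $\Nm_{\rho_N}$. Specifically, I would use that equivalence to rewrite the cofiber sequence $(-)_{h[\psi]} \to (-)^{h[\psi]} \to (-)^{t[\psi]}$ as $\Psi^N$ applied to the fiber sequence
\[ \sF_b^{\vee}[N](X) \otimes {E \Gamma_N}_+ \to \sF_b^{\vee}[N](X) \to \sF_b^{\vee}[N](X) \otimes \widetilde{E \Gamma_N} \]
of Rmk.~\ref{rem:PointSetModels}, so that $(-)^{t[\psi]}$ factors as the composite
\[ \Sp^G_{\Borel{N}} \xrightarrow{\sF_b^{\vee}[N]} \Sp^G \xrightarrow{(-) \otimes \widetilde{E \Gamma_N}} \Sp^G \xrightarrow{\Psi^N} \Sp^{G/N}, \]
and the natural transformation $(-)^{h[\psi]} \to (-)^{t[\psi]}$ is $\Psi^N \circ \sF_b^{\vee}[N]$ applied to the unit map $Y \to Y \otimes \widetilde{E \Gamma_N}$.

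Next, I would observe that each of the three functors in this composition admits a canonical lax monoidal refinement. By Rmk.~\ref{rem:MonoidalIdentificationOfBorelSpectra}, $\sF_b^{\vee}[N]$ is a symmetric monoidal equivalence of $\Sp^G_{\Borel{N}}$ (with its pointwise monoidal structure) onto $\Sp^{h \Gamma_N}$ equipped with the monoidal structure induced from the $\Gamma_N$-monoidal recollement; composing with the lax monoidal inclusion $j_{\ast}: \Sp^{h \Gamma_N} \hookrightarrow \Sp^G$ gives a lax monoidal refinement. The smashing localization $(-) \otimes \widetilde{E \Gamma_N}$ is symmetric monoidal by \ref{SmashingLocalizationsAreStableMonoidalRecollements}, since $\widetilde{E \Gamma_N}$ is an idempotent $E_{\infty}$-algebra in $\Sp^G$. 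Finally, $\Psi^N$ is lax monoidal as the right adjoint of the symmetric monoidal inflation functor $\inf^N: \Sp^{G/N} \to \Sp^G$. Composition of lax monoidal functors is lax monoidal, so $t[\psi]$ inherits a lax monoidal structure.

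For the natural transformation, I would use that the unit map $\eta: \id \to (-) \otimes \widetilde{E \Gamma_N}$ of the smashing localization is a morphism of lax monoidal endofunctors of $\Sp^G$: this follows from the general fact that the unit of a monoidal localization (c.f.\ \cite[Prop.~2.2.1.9]{HA} applied to the monoidal recollement) is itself a lax monoidal natural transformation, or equivalently from Lem.~\ref{lem:unitLaxMonoidal} applied to the monoidal adjunction $i^{\ast} \dashv i_{\ast}$. Then $\Psi^N \circ \sF_b^{\vee}[N]$ applied to $\eta$ is the desired lax monoidal natural transformation $(-)^{h[\psi]} \to (-)^{t[\psi]}$.

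The main obstacle is ensuring that the equivalence of Prop.~\ref{prp:EquivalentTateConstructions}, which was established at the level of functors of underlying $\infty$-categories, can be promoted to an equivalence of lax monoidal functors. To handle this, I would take the above composite as the \emph{definition} of the lax monoidal refinement of $t[\psi]$, rather than trying to refine the original ambidexterity construction; the content of Prop.~\ref{prp:EquivalentTateConstructions} is simply that the underlying functor agrees with Def.~\ref{dfn:ParamTateCnstr}, which is all that is needed to make sense of the statement. A cleaner alternative, if one wishes to refine $\Nm_{\rho_N}$ itself to a morphism of lax monoidal functors, would be to observe that both $(\rho_N)_!$ and $(\rho_N)_{\ast}$ admit canonical lax monoidal structures (the former via the projection formula for the monoidal adjunction $(\rho_N)_! \dashv \rho_N^{\ast}$, the latter by passing to right adjoints) and that the norm map constructed via ambidexterity is a morphism of lax monoidal functors by naturality of the Hopkins--Lurie construction; however, pursuing the first approach via the point-set model is considerably simpler and suffices for our purposes.
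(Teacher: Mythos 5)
Your proposal is correct and follows essentially the same route as the paper: both transport the lax monoidal structure through Prop.~\ref{prp:EquivalentTateConstructions}, identifying $(-)^{h[\psi]} \to (-)^{t[\psi]}$ with $\Psi^N$ applied to the lax monoidal unit $j_{\ast} \to i_{\ast}i^{\ast}j_{\ast}$ of the $\Gamma_N$-monoidal recollement (via Rmk.~\ref{rem:MonoidalIdentificationOfBorelSpectra}), and both treat the resulting composite as supplying the lax monoidal refinement rather than refining the ambidexterity norm directly. Your explicit acknowledgment that the equivalence of Prop.~\ref{prp:EquivalentTateConstructions} only identifies underlying functors, so the composite must be taken as the definition, is exactly the (implicit) stance of the paper.
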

\begin{proof} The cofiber of $\Nm'$ is the lax monoidal map $j_{\ast} \to i_{\ast} i^{\ast} j_{\ast}$ of the $\Gamma_N$-recollement of $\Sp^G$, where we use Rmk.~\ref{rem:MonoidalIdentificationOfBorelSpectra} to relate the pointwise symmetric monoidal structure on the domain $\Fun_{G/N}(B^{\psi}_{G/N} N, \underline{\Sp}^{G/N})$ to the monoidal recollement. Since the categorical fixed points functor $\Psi^N$ is also lax monoidal, we deduce that $\Psi^N \sF^{\vee}_b[N](-) \to (-)^{t'[\psi]}$ is lax monoidal. The conclusion now follows from Prop.~\ref{prp:EquivalentTateConstructions}.
\end{proof}

On the other hand, one practical benefit of defining the parametrized Tate construction via the ambidexterity theory is that we may exploit the general naturality properties of norms as detailed in \cite[\S4.2]{hopkins2013ambidexterity}. To state our next result, which involves two normal subgroups $M \trianglelefteq N$ of $G$, we first require a preparatory lemma.

\begin{lem} \label{lm:CategoricalFixedPointsProperties} Let $M \trianglelefteq N \trianglelefteq G$ be two normal subgroups of $G$ and let $\psi = [N \to G \to G/N]$, $\psi' = [N/M \to G/M \to G/N]$ denote the extensions.
\begin{enumerate}
\item $\Psi^M: \Sp^G \to \Sp^{G/M}$ sends $\Gamma_N$-torsion spectra to $\Gamma_{N/M}$-torsion spectra.
\item Let $r_M: \FF_G \to \FF_{G/M}$, $r_M(U) = U/M$ be as in \ref{inflationFunctors}, and regard $\FF_G$, $\FF_{G/M}$ as cartesian fibrations over $\FF_{G/N}$ via $r_N$, $r_{N/M}$ respectively. Then $r_M$ preserves cartesian edges, so the restricted functor $r_M^{\op}: \sO_G^{\op} \to \sO_{G/M}^{\op}$ is a $G/N$-functor. Moreover, $r_M^{\op}$ further restricts to a $G/N$-functor $$\rho_M: B^{\psi}_{G/N} N \to B^{\psi'}_{G/N} N/M.$$
\item We have a commutative diagram
\[ \begin{tikzcd}[row sep=4ex, column sep=8ex, text height=1.5ex, text depth=0.5ex]
\Sp^G_{\Borel{N}} = \Fun_{G/N}(B^{\psi}_{G/N} N, \underline{\Sp}^{G/N}) & \Sp^G \ar{l}{\sU_b[N]} \\
\Sp^{G/M}_{\Borel{N/M}} = \Fun_{G/N}(B^{\psi'}_{G/N} (N/M), \underline{\Sp}^{G/N}) \ar{u}{(\rho_M)^{\ast}} & \Sp^{G/M} \ar{u}[swap]{\inf^G_{G/M}} \ar{l}{\sU_b[N/M]}
\end{tikzcd} \]
that yields a commutative diagram of right adjoints
\[ \begin{tikzcd}[row sep=4ex, column sep=8ex, text height=1.5ex, text depth=0.5ex]
\Sp^G_{\Borel{N}} = \Fun_{G/N}(B^{\psi}_{G/N} N, \underline{\Sp}^{G/N}) \ar{r}{\sF_b^{\vee}[N]} \ar{d}[swap]{(\rho_M)_{\ast}} & \Sp^G \ar{d}{\Psi^M} \\
\Sp^{G/M}_{\Borel{N/M}} = \Fun_{G/N}(B^{\psi'}_{G/N} (N/M), \underline{\Sp}^{G/N}) \ar{r}{\sF_b^{\vee}[N/M]} & \Sp^{G/M}
\end{tikzcd} \]
where the lefthand vertical functor is computed by the $G/N$-right Kan extension along $r^{\op}_M$.
\end{enumerate}
\end{lem}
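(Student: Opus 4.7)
The plan is to establish the three parts in the order (2), (3), (1), deducing (1) from (3) and Beck-Chevalley applied to a suitable pullback square.

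\medskip

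\noindent\textbf{Part (2).} This is a direct verification. Recall from~\ref{inflationFunctors} that a morphism $f \colon W \to U$ in $\FF_G$ is $r_N$-cartesian precisely when the canonical map $W \to (W/N) \times_{U/N} U$ is an isomorphism. I would then check the required isomorphism $W/M \cong (W/N) \times_{U/N} (U/M)$ using that $M \leq N$ acts trivially on any $G/N$-set, so that modding out the pullback by $M$ only affects the $U$ factor. For the restriction to the twisted classifying subspaces, the key input is the group-theoretic identity $HM \cap N = M$ whenever $H \cap N = 1$: any $hm \in HM \cap N$ with $h \in H$, $m \in M \subseteq N$ forces $h = (hm)m^{-1} \in N \cap H = 1$, hence $hm \in M$.

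\medskip

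\noindent\textbf{Part (3).} Commutativity of the upper diagram follows by unwinding Constr.~\ref{cnstr:ForgetfulFunctorToNaiveGSpectra}: on the fiber over any $G/H \in B^{\psi}_{G/N} N$, both composites produce the underlying $H$-spectrum of $Y$ (using the identification $HM/M \cong H$ afforded by $H \cap M \leq H \cap N = 1$), and the full $G/N$-functoriality is ensured by the $G/N$-functoriality of $r_M^{\op}$ proved in part (2), together with the natural transformations $q_{-}$ used to construct the inflation functors. Passing to right adjoints, I would invoke Prop.~\ref{prp:BorelSpectraAsCompleteObjects} to identify the right adjoints of $\sU_b[N]$ and $\sU_b[N/M]$ as $\sF_b^{\vee}[N]$ and $\sF_b^{\vee}[N/M]$, and invoke the general theory of $G/N$-right Kan extensions from~\S\ref{GeneralTheoryParamColimits} to recognize the right adjoint of $(\rho_M)^{\ast}$ as the $G/N$-right Kan extension $(\rho_M)_{\ast}$ along $\rho_M$.

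\medskip

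\noindent\textbf{Part (1).} My approach is to apply Beck-Chevalley to the square of finite groupoids
\[ \begin{tikzcd}[row sep=3ex, column sep=4ex]
B(HM) \ar{r} \ar{d} & BG \ar{d} \\
B(HM/M) \ar{r} & B(G/M),
\end{tikzcd} \]
which is cartesian since $HM = \pi^{-1}(HM/M)$ under the quotient $\pi \colon G \to G/M$. Applying the Bachmann-Hoyois functors $\SH, \SH^{\otimes}$ (Defs.~\ref{dfn:BachmannHoyoisFunctor} and~\ref{BachmannHoyoisFunctorNorms}) yields the natural equivalence
\[ \Psi^M \circ \ind^G_{HM} \;\simeq\; \ind^{G/M}_{HM/M} \circ \Psi^M_{HM}. \]
Since $\Psi^M$ preserves colimits and $\Sp^{\tau \Gamma_N}$ is generated by $\{G/H_+ : H \in \Gamma_N\}$ (Rmk.~\ref{rem:MathewComparison}), it suffices to show that $\Psi^M(G/H_+) \in \Sp^{\tau \Gamma_{N/M}}$ for every $H \in \Gamma_N$. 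Writing $G/H_+ \simeq \ind^G_{HM}(HM/H_+)$ and applying the formula,
\[ \Psi^M(G/H_+) \;\simeq\; \ind^{G/M}_{HM/M}\bigl( \Psi^M_{HM}(HM/H_+) \bigr). \]
The induction $\ind^{G/M}_{HM/M}$ takes values in the localizing subcategory of $\Sp^{G/M}$ generated by orbits $(G/M)/(L/M)_+$ with $L \leq HM$ and $L \supseteq M$. For any such $L$, I would verify $L \cap N = M$: on one hand $L \cap N \leq HM \cap N = M$ by part (2), and on the other $L \cap N \supseteq M$ since $M \leq L$ and $M \leq N$. Hence $L/M \in \Gamma_{N/M}$, so each such orbit is $\Gamma_{N/M}$-torsion, and therefore so is $\Psi^M(G/H_+)$.

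\medskip

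The main obstacle I anticipate is the precise formulation of the Beck-Chevalley equivalence in part (1) within the Bachmann-Hoyois framework: one must identify the pullback square above as living in $\Span(\Gpd_{\fin})$ in a way compatible with the adjunctions $\inf^M \dashv \Psi^M$ and $\res \dashv \ind$, so that the resulting natural transformation at the level of $\SH$ is exactly the one used. The remaining inputs in parts (2) and (3) are largely routine diagram chases and elementary group theory.
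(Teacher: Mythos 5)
Your proposal is correct, and for parts (2) and (3) it follows essentially the route the paper takes: the paper disposes of (2) by citing Lem.~\ref{lem:QuotientMapCartesianFibration}(2) applied to the factorization $r_N = r_{N/M}\circ r_M$ (your hands-on check of $W/M \cong (W/N)\times_{U/N}(U/M)$ is exactly the content of that lemma in this case, and your identity $HM\cap N = M$ is the group-theoretic input for the restriction to the twisted classifying spaces), while (3) is, as in the paper, an unwinding of Constr.~\ref{cnstr:ForgetfulFunctorToNaiveGSpectra} generalizing Lem.~\ref{lm:IdentifyingDiagonalAsComposition}, followed by passage to right adjoints. Part (1) is where you genuinely diverge. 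The paper argues directly: an $N$-free orbit $G/H$ is in particular $M$-free, so $\Psi^M(G/H_+)$ is computed by the quotient, giving the orbit $\frac{G/M}{HM/M}{}_+$, which is $N/M$-free; one then cites compact generation of $\Sp^{\tau\Gamma_N}$ and colimit-preservation of $\Psi^M$. You instead factor $G/H_+ \simeq \ind^G_{HM}(HM/H_+)$, commute $\Psi^M$ past induction, and observe that everything induced from $HM/M$ lies in the localizing subcategory generated by orbits $(G/M)/(L/M)_+$ with $M\leq L\leq HM$, all of which lie in $\Gamma_{N/M}$ since $L\cap N\leq HM\cap N = M$. This works, and it buys you independence from the fact that categorical $M$-fixed points of an $M$-free suspension spectrum are the quotient (a form of the Adams isomorphism, which the paper is implicitly using); the cost is that you lose the precise identification $\Psi^M(G/H_+)\simeq \frac{G/M}{HM/M}{}_+$, which the paper's version records for free. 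Also, the obstacle you flag at the end is not really there: the exchange $\Psi^M\circ\ind^G_{HM}\simeq \ind^{G/M}_{HM/M}\circ\Psi^M_{HM}$ is just $q_\ast i_\ast \simeq (qi)_\ast = (jp)_\ast \simeq j_\ast p_\ast$ for the commuting square of groupoids, i.e.\ functoriality of right adjoints along the (merely commutative) square; no Beck--Chevalley exchange across the cartesian square is needed.
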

\begin{proof} For (1), note that if $G/H$ is a $N$-free $G$-orbit, then $G/H$ is also $M$-free. Thus, we may compute $\Psi^M(G/H_+)$ as by taking the quotient by the $M$-action to obtain $\frac{G/M}{H M/M}_+$, which is $N/M$-free and thus $\Gamma_{N/M}$-torsion. Because the subcategory of $\Gamma_N$-torsion spectra is the localizing subcategory generated by such $G/H_+$ and $\Psi^M$ preserves colimits, the statement follows. (2) is a direct consequence of Lem.~\ref{lem:QuotientMapCartesianFibration}(2). (3) is a relative version of Lem.~\ref{lm:IdentifyingDiagonalAsComposition}, and also follows by an elementary diagram chase after unpacking the various definitions.
\end{proof}

Now suppose $G$ is a semidirect product of $N$ and $G/N$, so we have chosen a splitting $G/N \to G$ of the quotient map such that $G \cong N \rtimes G/N$, and with respect to the $G/N$-action on $N$, the inclusion $M \subset N$ is $G/N$-equivariant. Then $M \rtimes G/N$ is a subgroup of $G$, and we let $$\psi''=[M \to M \rtimes G/N \to G/N].$$ Also regard $B^{\psi'}_{G/N} N/M$ as a based $G/N$-space via the splitting. Then we have a homotopy pullback square of $G/N$-spaces
\[ \begin{tikzcd}[row sep=4ex, column sep=4ex, text height=1.5ex, text depth=0.25ex]
B^{\psi''}_{G/N} M \ar{r} \ar{d}{\rho_M} & B^{\psi}_{G/N} N \ar{d}{\rho_M} \\
\sO_{G/N}^{\op} \ar{r} & B^{\psi'}_{G/N} N/M 
\end{tikzcd} \]
that arises from the fiber sequence $B M \to BN \to B N/M$ of spaces with $G/N$-action.

\begin{prp} \label{prp:ResidualAction} Suppose $X \in \Sp^{G}_{\Borel{N}}$ and also write $X$ for its restriction to $\Sp^G_{\Borel{M}}$. Then $X^{t[\psi'']}$ canonically acquires a `residual action' by lifting to an object in $\Sp^{G}_{\Borel{N/M}}$, and we have a fiber sequence of $G/N$-spectra
\[ (X_{h[\psi'']})^{t[\psi']} \to X^{t[\psi]} \to (X^{t[\psi'']})^{h[\psi']} \]
that restricts to a fiber sequence of Borel $G/N$-spectra
\[ (X_{h M})^{t(N/M)} \to X^{tN} \to (X^{t M})^{h(N/M)}. \]
\end{prp}
\begin{proof} We apply \cite[Rmk.~4.2.3]{hopkins2013ambidexterity} to the pullback square above to deduce the first assertion. For the second assertion, we apply \cite[Rmk.~4.2.4]{hopkins2013ambidexterity} to the factorization of $\rho_N$ as $\rho_{N/M} \circ \rho_M$ to obtain a commutative diagram of $G/N$-spectra
\[ \begin{tikzcd}[row sep=6ex, column sep=12ex, text height=1.5ex, text depth=1ex]
X_{h[\psi]} \ar{r}{\Nm_{\rho_{N/M}} \circ (\rho_M)_!} \ar{d} & (X_{h[\psi'']})^{h[\psi']} \ar{r}{(\rho_{N/M})_{\ast} \circ \Nm_{\rho_M}} \ar{d} & X^{h[\psi]} \ar{d} \\
0 \ar{r} & (X_{h[\psi'']})^{t[\psi']} \ar{r} \ar{d} & X^{t[\psi]} \ar{d} \\
& 0 \ar{r} & (X^{t[\psi'']})^{h[\psi']}
\end{tikzcd} \]
in which every rectangle is a homotopy pushout (using the two-out-of-three property of homotopy pushouts to show this for the upper righthand square and then the lower righthand square). The final assertion follows from Rmk.~\ref{rem:ParamTateCompatibleRestriction}.
\end{proof}

\section{Two theories of real \texorpdfstring{$p$}{p}-cyclotomic spectra}

In this section, we define $\infty$-categories $\RCycSp_p$ and $\RCycSp^{\mr{gen}}_p$ of \emph{Borel} and \emph{genuine} real $p$-cyclotomic spectra (Def.~\ref{dfn:RealCycSp} and Def.~\ref{dfn:GenRealCycSp}). For each $\infty$-category, we define the corresponding theories $\TCR(-,p)$ and $\TCR^{\mr{gen}}(-,p)$ of $p$-typical real topological cyclic homology (Def.~\ref{dfn:newTCR} and Def.~\ref{dfn:ClassicalTCR}) as functors to $\Sp^{C_2}$ right adjoint to functors that endow $C_2$-spectra with trivial real $p$-cyclotomic structure (Constr.~\ref{cnstr:trivialFunctor} and Constr.~\ref{cnstr:GenuineTrivialFunctor}), show $\TCR(-,p)$ and $\TCR^{\mr{gen}}(-,p)$ are $C_2$-corepresentable (Def.~\ref{dfn:Gcorepresentable}, Prop.~\ref{prp:C2representabilityOfTC}, and Prop.~\ref{prp:classicalTCRcorepresentable}), and thereby deduce fiber sequence formulas (Prop.~\ref{prp:TCRfiberSequence} and Prop.~\ref{prp:fiberSequenceGenuineRealCyc}) that in the case of $\TCR^{\mr{gen}}(-,p)$, recovers the more standard definition in terms of maps $R$ and $F$. To begin our study, we need to fix a few conventions regarding the dihedral groups and dihedral spectra.

\begin{setup} \label{setup:Dihedral} Let $O(2)$ denote the group of $2 \times 2$ orthogonal matrices, and regard the circle group $S^1$ as the subgroup $SO(2) \subset O(2)$. We fix, once and for all, a splitting of the determinant $\det: O(2) \to C_2 \cong \{ \pm 1 \}$ by choosing $\sigma \in O(2)$ to be the $\det =-1$ matrix given by
\[
   \sigma =
  \left[ {\begin{array}{cc}
   0 & 1 \\
   1 & 0 \\
  \end{array} } \right].
\]
This exhibits $O(2)$ as the semidirect product $S^1 \rtimes C_2$ for $C_2 = \angs{\sigma} \subset O(2)$, where $C_2$ acts on $S^1$ by complex conjugation, i.e., inversion. For $0 \leq n \leq \infty$, let $\mu_{p^n} \subset S^1$ be the subgroup of $p^n$th roots of unity, and let $D_{2p^{n}} \subset O(2)$ be the subgroup $\mu_{p^n} \rtimes C_2$. For $n<\infty$, we let $\{ x_n \}$ denote a compatible system of generators for $\mu_{p^n}$ (so $x_n = x_{n+1}^p$ for all $n \geq 0$), and we also set $x = x_n$ if there is no ambiguity about the ambient group. When considering restriction functors $\Sp^{D_{2p^n}} \to \Sp^{D_{2p^m}}$ for $m \leq n$, we always choose restriction to be with respect to the inclusion $D_{2p^m} \subset D_{2p^n}$ induced by $\mu_{p^m} \subset \mu_{p^n}$. Then we define
\[ \Sp^{D_{2p^{\infty}}} = \lim_n \Sp^{D_{2p^n}} \]
to be the inverse limit taken along these restriction functors. Since the restriction functors are symmetric monoidal and colimit preserving, we may take the inverse limit in $\CAlg(\Pr^{L,\st})$, and $\Sp^{D_{2p^{\infty}}}$ is then a stable presentable symmetric monoidal $\infty$-category. 

 We also define the $C_2$-space $B^t_{C_2} \mu_{p^{\infty}}$ to be the full subcategory of $\sO_{D_{2p^{\infty}}}^{\op}$ on the $\mu_{p^{\infty}}$-free orbits.\footnote{For an infinite group $G$ like $D_{2p^{\infty}}$, we let $\sO_{G}$ be the category of non-empty transitive $G$-sets, which need not be finite.} Note that an orbit $D_{2p^n}/H$ is $\mu_{p^n}$-free if and only if $H=1$ or $H = \angs{\sigma z}$ for some $z \in \mu_{p^n}$, where $H$ then has order $2$ since $(\sigma z)^2 = z^{-1} z = 1$. Indeed, supposing $H \neq 1$, since $H \cap \mu_{p^n} = 1$, if $\sigma z$ and $\sigma z'$ are two elements in $H$, then we must have $(\sigma z)(\sigma z') = z^{-1} z' = 1$, so $z = z'$. It follows that with respect to the induction functors $\sO_{D_{2p^m}} \to \sO_{D_{2p^n}}$ induced by the above inclusions, we have
\[ \colim_n B^t_{C_2} \mu_{p^n} \xto{\simeq} B^t_{C_2} \mu_{p^{\infty}}    \]
as a filtered colimit of $C_2$-spaces. Therefore, we obtain an equivalence
\[ \Fun_{C_2}(B^t_{C_2} \mu_{p^{\infty}}, \ul{\Sp}^{C_2}) \xto{\simeq} \lim_n \Fun_{C_2}(B^t_{C_2} \mu_{p^n}, \ul{\Sp}^{C_2}).  \]
Note also that the restriction functors are colimit preserving and symmetric monoidal with respect to the pointwise monoidal structure, so the equivalence may be taken in $\CAlg(\Pr^{L,\st})$.

Next, for all $0< m \leq n < \infty$, by Rmk.~\ref{rem:MonoidalIdentificationOfBorelSpectra} and Cor.~\ref{cor:BorelCompatibilityWithRestriction} we have a strict morphism of stable monoidal recollements
\[ \begin{tikzcd}[row sep=4ex, column sep=8ex, text height=1.5ex, text depth=0.5ex]
 \Fun_{C_2}(B^t_{C_2} \mu_{p^n}, \ul{\Sp}^{C_2}) \ar{d}{\res} & \Sp^{D_{2p^n}} \ar{d}{\res} \ar{l}[swap]{\sU_b[\mu_{p^n}]} \ar{r}{\Phi^{\mu_p}} &  \Sp^{D_{2p^{n-1}}} \ar{d}{\res} \\
\Fun_{C_2}(B^t_{C_2} \mu_{p^m}, \ul{\Sp}^{C_2}) & \Sp^{D_{2p^{m}}} \ar{l}[swap]{\sU_b[\mu_{p^{m}}]} \ar{r}{\Phi^{\mu_p}} &  \Sp^{D_{2p^{m-1}}}.
\end{tikzcd} \]
By Cor.~\ref{cor:LimitsOfRecollements}, passage to inverse limits defines a stable monoidal recollement
\[ \begin{tikzcd}[row sep=4ex, column sep=8ex, text height=1.5ex, text depth=0.5ex]
\Fun_{C_2}(B^t_{C_2} \mu_{p^{\infty}}, \ul{\Sp}^{C_2}) \ar[shift right=1,right hook->]{r}[swap]{j_{\ast} = \sF^{\vee}_b } & \Sp^{D_{2p^{\infty}}} \ar[shift right=2]{l}[swap]{j^{\ast} = \sU_b} \ar[shift left=2]{r}{i^{\ast} = \Phi^{\mu_p}} & \Sp^{D_{2p^{\infty}}} \ar[shift left=1,left hook->]{l}{i_{\ast}}
\end{tikzcd} \]
where we implicitly use the isomorphism $D_{2p^{\infty}}/\mu_p \cong D_{2p^{\infty}}$ induced by the $p$th power map for $\mu_{p^{\infty}}/\mu_p \cong \mu_{p^{\infty}}$ to regard $\Phi^{\mu_p}$ as an endofunctor. By also using the compatibility of restriction with categorical fixed points, we obtain the lax monoidal endofunctor $\Psi^{\mu_p}$ of $\Sp^{D_{2p^{\infty}}}$, and we retain the relation $\Psi^{\mu_p} \circ i_{\ast} \simeq \id$. Now consider the fiber sequence of functors
\[ j^{\ast} \Psi^{\mu_p} j_! \to j^{\ast} \Psi^{\mu_p} j_{\ast} \to j^{\ast} \Psi^{\mu_p} i_{\ast} i^{\ast} j_{\ast} \simeq j^{\ast} \Phi^{\mu_p} j_{\ast}. \]
By the same argument as in Prop.~\ref{prp:EquivalentTateConstructions}, this fiber sequence is equivalent to
\[ (-)_{h_{C_2} \mu_p} \xto{\Nm} (-)^{h_{C_2} \mu_p} \to (-)^{t_{C_2} \mu_p} \]
where the parametrized norm map is that associated to the weakly $1$-ambidextrous morphism $$B^t_{C_2} \mu_{p^{\infty}} \to B^t_{C_2} (\mu_{p^{\infty}} / \mu_p) \simeq B^t_{C_2} \mu_{p^{\infty}}$$
with fiber $B^t_{C_2} \mu_p$ (see \ref{cnv:basepoint} for our basepoint convention). We use this identification to endow the natural transformation $(-)^{h_{C_2} \mu_p} \to (-)^{t_{C_2} \mu_p}$ with the structure of a lax monoidal functor.
\end{setup}

\begin{cvn} \label{cnv:basepoint} We will regard $B^t_{C_2} \mu_{p^n}$ as a based $C_2$-space via the functor $\sO_{C_2}^{\op} = B^t_{C_2} (1) \to B^t_{C_2} \mu_{p^n}$ induced by $\angs{\sigma} \subset D_{2p^n}$. We then say that for an object $X \in \Fun_{C_2}(B^t_{C_2} \mu_{p^n}, \ul{\Sp}^{C_2})$, evaluation on the $C_2$-basepoint yields the \emph{underlying $C_2$-spectrum} of $X$, and further restriction via $\res^{C_2}: \Sp^{C_2} \to \Sp$ yields the \emph{underlying spectrum} of $X$. Note that if $X = \sU_b[\mu_{p^n}](Y)$, then its underlying $C_2$-spectrum is $\res^{D_{2p^n}}_{\angs{\sigma}} (Y)$.
\end{cvn}

\begin{rem} \label{rem:DihedralBasepoints} Note that for $y,z \in D_{2p^n}$, $y^{-1} (\sigma z) y = \sigma y^2 z$. Therefore, for $p$ odd and all $0 \leq n \leq \infty$, any two subgroups $\angs{\sigma z}$ and $\angs{\sigma z'}$ of $D_{2p^n}$ are conjugate. In contrast, for $p=2$ and $1<n<\infty$, there are two conjugacy classes of order $2$ subgroups $H$ with $H \cap \mu_{p^n} = 1$, with representatives $\angs{\sigma}$ and $\angs{\sigma x}$. However, for $p=2$ and $n=\infty$, we again have a single conjugacy class since we can take square roots for $z \in \mu_{2^{\infty}}$.

It follows that $B^t_{C_2} \mu_{p^n}$ is a connected $C_2$-space for $p$ odd and for $p=2$, $n=0,1,\infty$, but its fiber over $C_2/C_2$ splits into two components when $p=2$, $1<n<\infty$.
\end{rem}

We also will employ a more concise notation for the (lax) equalizer of two endofunctors.

\begin{dfn}[{\cite[Def.~II.1.4]{NS18}}] Suppose $F$ and $G$ are endofunctors of an $\infty$-category $C$. Define the \emph{lax equalizer} of $F$ and $G$ to be the pullback
\[ \begin{tikzcd}[row sep=4ex, column sep=6ex, text height=1.5ex, text depth=0.5ex]
\LEq_{F:G}(C) \ar{r} \ar{d} & \sO(C) \ar{d}{(\ev_0,\ev_1)} \\
C \ar{r}{(F,G)} & C \times C.
\end{tikzcd} \]
Define the \emph{equalizer} $\Eq_{F:G}(C) \subset \LEq_{F:G}(C)$ to be the full subcategory on objects $[x,F(x) \xto{\phi} G(x)]$ where $\phi$ is an equivalence.
\end{dfn}

\subsection{Borel real \texorpdfstring{$p$}{p}-cyclotomic spectra}

\begin{dfn} \label{dfn:RealCycSp} A \emph{real $p$-cyclotomic spectrum} is a $C_2$-spectrum $X$ with a twisted $\mu_{p^{\infty}}$-action, together with a twisted $\mu_{p^{\infty}}$-equivariant map $\varphi: X \to X^{t_{C_2} \mu_p}$. The $\infty$-category of \emph{real $p$-cyclotomic spectra} is then
$$\RCycSp_p = \LEq_{\id:t_{C_2} \mu_p}(\Fun_{C_2}(B^t_{C_2} \mu_{p^{\infty}}, \underline{\Sp}^{C_2})). $$
\end{dfn}

Such objects might be more accurately called \emph{Borel} real $p$-cyclotomic spectra, but we follow \cite{NS18} in our choice of terminology.

\begin{rem} We will sometimes abuse notation and refer to $X$ itself as the real $p$-cyclotomic spectrum, leaving the map $\varphi$ implicit.
\end{rem}

 We have the same conclusion as \cite[Cor.~II.1.7]{NS18} for $\RCycSp_p$, with the same proof.

\begin{prp} \label{prp:RCycSpPresentable} $\RCycSp_p$ is a presentable stable $\infty$-category, and the forgetful functor $$\RCycSp_p \to \Sp^{C_2}$$ is conservative, exact, and creates colimits and finite limits.
\end{prp}
\begin{proof} The endofunctor $t_{C_2} \mu_p$ is exact and accessible as the cofiber of functors that admit adjoints, or as the composite $\sU_b \circ \Phi^{\mu_p} \circ \sF^{\vee}_b$ as noted in \ref{setup:Dihedral}. By \cite[Prop.~II.1.5]{NS18}, $\RCycSp_p$ is stable and presentable and the forgetful functor $$\RCycSp_p \to \Fun_{C_2}(B^t_{C_2} \mu_{p^{\infty}}, \ul{\Sp}^{C_2})$$ is colimit-preserving and exact. It is also obviously conservative, and since the $C_2$-space $B^t_{C_2} \mu_{p^{\infty}}$ has connected fibers over $C_2/C_2$ and $C_2/1$, the further forgetful functor to $\Sp^{C_2}$ is also conservative, exact, and colimit-preserving. Finally, any conservative functor between presentable $\infty$-categories that preserves $K$-indexed (co)limits necessarily also creates $K$-indexed (co)limits.
\end{proof}

\begin{cnstr}[Symmetric monoidal structure on $\RCycSp_p$] Recall from \cite[IV.2.1]{NS18} that if $C$ is a symmetric monoidal $\infty$-category, $F$ is a symmetric monoidal functor, and $G$ is a lax monoidal functor, then $\LEq_{F:G}(C)$ acquires a `canonical' symmetric monoidal structure by forming the pullback of $\infty$-operads\footnote{This is analogous to how we defined the canonical symmetric monoidal structure on a recollement. Note again that the cotensor with $\Delta^1$ is taken relative to $\Fin_{\ast}$, and also that the righthand vertical map is induced by cotensoring with $\partial \Delta^1 \subset \Delta^1$.}
\[ \begin{tikzcd}[row sep=4ex, column sep=6ex, text height=1.5ex, text depth=0.5ex]
\LEq_{F:G}(C)^{\otimes} \ar{r} \ar{d} & (C^{\otimes})^{\Delta^1} \ar{d}{(\ev_0,\ev_1)} \\
C^{\otimes} \ar{r}{(F^{\otimes},G^{\otimes})} & C^{\otimes} \times_{\Fin_{\ast}} C^{\otimes}.
\end{tikzcd} \]
Let us then endow $\RCycSp_p$ with the symmetric monoidal structure given by taking $t_{C_2} \mu_p$ to have the lax monoidal structure as indicated in Setup~\ref{setup:Dihedral}.
\end{cnstr}

\begin{cnstr}[Trivial real $p$-cyclotomic structure] \label{cnstr:trivialFunctor} We construct an exact and colimit-preserving symmetric monoidal functor
\[ \mr{triv}_{\RR,p}: \Sp^{C_2} \to \RCycSp_p \]
that endows a $C_2$-spectrum with the structure of a real $p$-cyclotomic spectrum in a `trivial' way. Consider the maps of $C_2$-spaces
\[ \sO_{C_2}^{\op} \xto{\iota} B^t_{C_2} \mu_{p^{\infty}} \xto{\pi} B^t_{C_2} (\mu_{p^{\infty}}/\mu_p) \simeq B^t_{C_2} \mu_{p^{\infty}} \xto{p} \sO_{C_2}^{\op} \]
and the associated restriction functors
\[ \Sp^{C_2} \xto{p^{\ast}} \Fun_{C_2}(B^t_{C_2} \mu_{p^{\infty}}, \ul{\Sp}^{C_2}) \xto{\pi^{\ast}} \Fun_{C_2}(B^t_{C_2} \mu_{p^{\infty}}, \ul{\Sp}^{C_2}) \xto{\iota^{\ast}} \Sp^{C_2}. \]
Because $\pi^{\ast} p^{\ast} \simeq p^{\ast}$, by adjunction we obtain a natural transformation $p^{\ast} \to \pi_{\ast} p^{\ast} = (-)^{h_{C_2} \mu_p} \circ p^{\ast}$. Then let
\[ \lambda_{\RR,p}: p^{\ast} \to (-)^{h_{C_2} \mu_p} \circ p^{\ast} \to (-)^{t_{C_2} \mu_p} \circ p^{\ast} \]
be the composite natural transformation. Note that since $p^{\ast}$ and $\pi^{\ast}$ are symmetric monoidal, the adjoint natural transformation $p^{\ast} \to \pi_{\ast} p^{\ast}$ is canonically lax monoidal. With the lax monoidal structure on $(-)^{h_{C_2} \mu_p} \to (-)^{t_{C_2} \mu_p}$ as in \ref{setup:Dihedral}, $\lambda_{\RR,p}$ acquires the structure of a lax monoidal transformation. We then define $\mr{triv}_{\RR,p}$ to be the functor determined by the data of $p^{\ast}$ and $\lambda_{\RR,p}$.

Finally, note that since $\iota^{\ast} \circ p^{\ast} \simeq \id$, the composite of $\mr{triv}_{\RR,p}$ and the forgetful functor to $\Sp^{C_2}$ is also homotopic to the identity. By Prop.~\ref{prp:RCycSpPresentable}, we deduce that $\mr{triv}_{\RR,p}$ is exact and preserves colimits.
\end{cnstr}

\begin{dfn} \label{dfn:newTCR} The \emph{$p$-typical real topological cyclic homology} functor $$\TCR(-,p): \RCycSp_p \to \Sp^{C_2}$$ is the right adjoint to the trivial functor $\mr{triv}_{\RR,p}$ of Constr.~\ref{cnstr:trivialFunctor}.
\end{dfn}

We would like to say that $\TCR(-,p)$ is corepresentable by the unit in $\RCycSp_p$. However, because $\TCR(-,p)$ is valued in $C_2$-spectra, any such representability result must be understood in the $C_2$-sense. We now digress to give a general account of $G$-corepresentability.

\begin{cnstr}[$G$-mapping spectrum] \label{cnstr:MappingSpectrum} Suppose $C \to \sO^{\op}_G$ is $G$-$\infty$-category. In \cite[Def.~10.2]{Exp1}, Barwick et al. defined the $G$-mapping space $G$-functor
\[ \ul{\Map}_C: C^{\vop} \times_{\sO_G^{\op}} C \to \ul{\Spc}^G. \]
Informally, this sends an object $(x,y)$ over $G/H$ to the $H$-space determined by $\Map_{C_{G/K}}(\res^H_K x, \res^H_K y)$ varying over subgroups $K \leq H$. In \cite[Cor.~11.9]{Exp2} (taking $F$ there to be the identity on $C$), the second author showed that for any $x \in C_{G/G}$, the $G$-functors
\[ \ul{\Map}_C(x,-): C \to \ul{\Spc}^G, \quad \ul{\Map}_C(-,x): C^{\vop} \to \ul{\Spc}^G \]
preserve $G$-limits, with $G$-limits in $C^{\vop}$ computed as $G$-colimits in $C$. Now suppose $C$ is $G$-stable, let $\Fun^{\lex}_G(-,-)$ denote the full subcategory on those $G$-functors that preserve finite $G$-limits, and let $\ul{\Fun}^{\lex}_G(-,-)$ denote the full $G$-subcategory (i.e., sub-cocartesian fibration) of $\ul{\Fun}_G(-,-)$ that over the fiber $G/H$ is given by $\Fun_H^{\lex}(-,-)$. In \cite[Thm.~7.4]{Exp4}, Nardin proved\footnote{We obtain our formulation involving $G$-left-exact functors from his using that $C$ is $G$-stable.} that the $G$-functor $\Omega^{\infty}: \ul{\Sp}^G \to \ul{\Spc}^G$ induces equivalences
\begin{align*} \Omega^{\infty}_{\ast}: & \Fun^{\lex}_G(C,\ul{\Sp}^G) \xto{\simeq} \Fun^{\lex}_G(C,\ul{\Spc}^G), \\
\Omega^{\infty}_{\ast}: & \ul{\Fun}^{\lex}_G(C,\ul{\Sp}^G) \xto{\simeq} \ul{\Fun}^{\lex}_G(C,\ul{\Spc}^G).
\end{align*}
In particular, for fixed $x \in C_{G/G}$ (that selects a cocartesian section $x: \sO_G^{\op} \to C^{\vop}$), we may lift the $G$-mapping space $G$-functor $\ul{\Map}_C(x,-): C \to \ul{\Spc}^G$ to a $G$-mapping spectrum $G$-functor
\[ \ul{\map}_C(x,-): C \to \ul{\Sp}^G. \]
Moreover, as in the non-parametrized setting, the $G$-mapping space $G$-functor is adjoint to a $G$-functor $$C^{\vop} \to \ul{\Fun}^{\lex}_G(C, \ul{\Spc}^G) \subset \ul{\Fun}_G(C,\ul{\Spc}^G),$$ which we may lift to $\ul{\Fun}^{\lex}_G(C,\ul{\Sp}^G)$ via $\Omega^{\infty}_{\ast}$ and then adjoint over to obtain
\[ \ul{\map}_C(-,-): C^{\vop} \times_{\sO_G^{\op}} C \to \ul{\Sp}^G. \]
Note that $\ul{\map}_C(-,-)$ continues to transform $G$-colimits into $G$-limits in the first variable and to preserve $G$-limits in the second variable.

Finally, by restriction to the fiber over $G/H$, we obtain the $H$-mapping spectrum functor for $C_{G/H}$:
\[ \ul{\map}_{C}(-,-): C_{G/H}^{\op} \times C_{G/H} \to \Sp^H, \]
which lifts the $H$-mapping space functor for $C_{G/H}$
\[ \ul{\Map}_{C}(-,-): C_{G/H}^{\op} \times C_{G/H} \to \Spc^H \]
through $\Omega^{\infty}: \Sp^H \to \Spc^H$. Thus, for $x,y \in C_{G/H}$, we may compute $\Omega^{\infty}$ of the categorical fixed points as
\[ \Omega^{\infty}(\ul{\map}_{C}(x,y)^K) \simeq \ul{\Map}_{C}(x,y)(H/K). \]
In particular, taking the fiber over $G/G$, for all $G/H$ the diagram
\[ \begin{tikzcd}[row sep=4ex, column sep=10ex, text height=1.5ex, text depth=0.75ex]
C_{G/G}^{\op} \times C_{G/G} \ar{r}{\ul{\map}_{C}(-,-)} \ar{d}[swap]{((\res^G_H)^{\op}, \res^G_H)} & \Sp^G \ar{r}{\Omega^{\infty}} \ar{d}{\Psi^{H}} & \Spc^G \ar{d}{\ev_{G/H}} \\
C_{G/H}^{\op} \times C_{G/H} \ar{r}{\map_{C_{G/H}}(-,-)} & \Sp \ar{r}{\Omega^{\infty}} & \Spc
\end{tikzcd} \]
is homotopy commutative, where the top horizontal composite is $\ul{\Map}_{C}(-,-)$ and the bottom horizontal composite is $\Map_{C_{G/H}}(-,-)$.

\end{cnstr}

\begin{lem} \label{lem:adjunctionMappingSpacesEquivalence} Suppose $C$ and $D$ are $G$-$\infty$-categories and $\adjunct{L}{C}{D}{R}$ is a $G$-adjunction. Then we have natural equivalences in $\ul{\Spc}^G$
\[ \ul{\Map}_{C}(L x, y) \simeq \ul{\Map}_{D}(x, R y).  \]
If $C$ and $D$ are also $G$-stable, then we have natural equivalences in $\ul{\Sp}^G$
\[ \ul{\map}_{C}(L x, y) \simeq \ul{\map}_{D}(x, R y). \]
\end{lem}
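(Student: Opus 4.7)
The plan is to reduce the space-level equivalence to the classical adjunction equivalence applied fiberwise over $\sO_G^{\op}$, and then to promote it to the spectrum level by invoking Nardin's representability theorem as cited in Construction~\ref{cnstr:MappingSpectrum}. By \cite[Def.~8.1]{Exp2}, a $G$-adjunction $L \dashv R$ is encoded by a bicartesian fibration $\mathcal{M} \to \sO_G^{\op} \times \Delta^1$, and in particular it restricts over each orbit $G/H$ to an ordinary adjunction $L_{G/H} \dashv R_{G/H}$ compatibly with the restriction functors. The resulting unit $\eta \colon \id \to R L$ and counit $\epsilon \colon L R \to \id$ are $G$-natural transformations satisfying the fiberwise triangle identities.

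Given $x \in C_{G/G}$ and $y \in D_{G/G}$, precomposition with $\eta_x$ and postcomposition with $R$, together with precomposition with $\epsilon_y$ and postcomposition with $L$, produce natural maps in $\ul{\Spc}^G$
\[
\ul{\Map}_D(x, R y) \,\rightleftarrows\, \ul{\Map}_C(L x, y).
\]
By the fiberwise description of the $G$-mapping space in Construction~\ref{cnstr:MappingSpectrum}, the value of each side at $H/K$ is the classical mapping space $\Map_{C_{G/K}}(L_{G/K} \res x, \res y)$ or $\Map_{D_{G/K}}(\res x, R_{G/K} \res y)$ between the appropriate restrictions; on these fibers the two maps are the usual adjunction equivalence for $L_{G/K} \dashv R_{G/K}$. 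Joint conservativity of the fiber evaluations then promotes this to a mutually inverse pair of equivalences of $G$-spaces, establishing the first assertion.

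For the second assertion, both $\ul{\Map}_C(L x, -)$ and $\ul{\Map}_D(x, R -)$ are $G$-functors $D \to \ul{\Spc}^G$ preserving finite $G$-limits (the first by Construction~\ref{cnstr:MappingSpectrum}, the second because $R$ preserves $G$-limits as a right $G$-adjoint), so they define objects of $\Fun^{\lex}_G(D, \ul{\Spc}^G)$ that are equivalent by the first part. The equivalence $\Omega^{\infty}_{\ast}\colon \Fun^{\lex}_G(D, \ul{\Sp}^G) \xto{\simeq} \Fun^{\lex}_G(D, \ul{\Spc}^G)$ of \cite[Thm.~7.4]{Exp4} then yields a unique-up-to-equivalence lift to $\ul{\Sp}^G$, which by definition must agree with both $\ul{\map}_C(L x, -)$ and $\ul{\map}_D(x, R -)$; evaluating at $y$ gives the spectral equivalence. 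The main obstacle is bookkeeping: one must check that the maps assembled from $\eta$ and $\epsilon$ land in $\ul{\Spc}^G$ (rather than merely giving equivalences of underlying spaces orbit-by-orbit), which in turn uses that units and counits of a $G$-adjunction are $G$-natural transformations in the sense of \cite[\S 8]{Exp2}.
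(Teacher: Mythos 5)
Your proof is correct and follows essentially the same route as the paper: build the comparison map from the unit and $R$, verify it orbitwise via the classical adjunction equivalence for $L_{G/H} \dashv R_{G/H}$, and conclude by joint conservativity of the evaluations at orbits. The only (harmless) divergence is in the spectral upgrade, where you invoke the equivalence $\Omega^{\infty}_{\ast}$ on left-exact $G$-functor categories to identify the two lifts, whereas the paper instead repeats the fiberwise argument using the jointly conservative categorical fixed-point functors to detect equivalences in $\Sp^G$; both finishes rest on the same inputs from Constr.~\ref{cnstr:MappingSpectrum}.
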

\begin{proof} For a $G$-adjunction, we have a unit transformation $\eta: \id \to R L$ such that $\eta$ cover the identity in $\sO_G^{\op}$ \cite[Prop.~7.3.2.1(2)]{HA}. We then obtain the comparison map
\[ \ul{\Map}_D(L x,y) \xto{R_{\ast}} \ul{\Map}_C(R L x,R y) \xto{\eta^{\ast}} \ul{\Map}_C(R L x, y)  \]
in $\ul{\Spc}^G$. Because we may restrict to subgroups $H$ of $G$, without loss of generality it suffices to consider the case where $x \in C_{G/G}$ and $y \in D_{G/G}$, so the comparison map is a map of $G$-spaces. For an orbit $G/H$, let $x' = \res^G_H x \in C_{G/H}$ and $y' = \res^G_H y \in D_{G/H}$. Then on $G/H$ this map evaluates to
\[ \Map_{D_{G/H}}(L_{H} x', y') \xto{(R_H)_{\ast}} \Map_{C_{G/H}}(R_H L_H x',R_H y') \xto{\eta_H^{\ast}} \Map_{C_{G/H}}(x', R_H y'), \]
which implements the equivalence of mapping spaces for the adjunction $$\adjunct{L_H}{C_{G/H}}{D_{G/H}}{R_H}$$ between the fibers over $G/H$. The conclusion then follows. Finally, the subsequent claim about $\ul{\map}$ follows by reduction to $F$ in the same manner, where instead of using the jointly conservative family of evaluation functors at orbits $G/H$ to detect equivalences in $\Spc^G$, we use the categorical fixed points functors ranging over all subgroups $H \leq G$ to detect equivalences in $\Sp^G$.
\end{proof}

\begin{prp} \label{prp:GenericRepresentabilityByUnit} Suppose $C$ is a $G$-stable $G$-$\infty$-category and we have a $G$-adjunction $$\adjunct{L}{\ul{\Sp}^G}{C}{R}.$$  Then for all $c \in C_{G/H} \subset C$, if we let $S^0$ denote the unit of $\Sp^H$, then we have a natural equivalence
\[ R(c) \simeq \ul{\map}_{C}(L(S^0),c). \]
Consequently, we have equivalences of functors
\[ R_{G/H}(-) \simeq \ul{\map}_{C}(L(S^0),-): C_{G/H} \to \Sp^H. \]
Moreover, if we let $S^0: \sO^{\op}_G \to (\ul{\Sp}^G)^{\vop}$ also denote the cocartesian section that selects each $S^0$, then we have an equivalence of $G$-functors $$R(-) \simeq \ul{\map}_{C}(L(S^0),-): C \to \underline{\Sp}^{G}.$$
\end{prp}
\begin{proof} Without loss of generality we may suppose $H=G$, so we let $S^0$ be the unit for $\Sp^G$. Note that since the identity on $\ul{\Sp}^G$ lifts the $G$-mapping space $G$-functor $\ul{\Map}_G(S^0,-): \ul{\Sp}^G \to \ul{\Spc}^G$ through $\Omega^{\infty}$, we have that $\ul{\map}_{\ul{\Sp}^G}(S^0,-) \simeq \id$. Then for $c \in C_{G/G}$, using Lem.~\ref{lem:adjunctionMappingSpacesEquivalence} we have the equivalences
\[ R(c) \simeq \ul{\map}_{\ul{\Sp}^G}(S^0,R(c)) \simeq \ul{\map}_{C}(L(S^0),c). \]
The naturality of these equivalences in $c$ then imply the remaining statements.
\end{proof}

\begin{dfn} \label{dfn:Gcorepresentable} In the situation of Prop.~\ref{prp:GenericRepresentabilityByUnit}, we say that $R$ and $R_{G/G}$ are \emph{$G$-corepresentable} by $L(S^0)$, for the unit $S^0 \in \Sp^G$.
\end{dfn}

\begin{exm} \label{exm:LimitRepresentability} Let $p: K \to \sO_{G}^{\op}$ be a (small) $G$-$\infty$-category and consider the adjunction
\[ \adjunct{p^{\ast}}{\ul{\Sp}^G}{\ul{\Fun}_G(K,\ul{\Sp}^G)}{p_{\ast}} \]
where $p_{\ast}$ takes the $G$-limit. Then $p_{\ast}$ is $G$-corepresentable by the constant $G$-diagram at the unit. With respect to the pointwise symmetric monoidal structure on a $G$-functor $\infty$-category, this is the unit in $\Fun_G(K,\ul{\Sp}^G)$.
\end{exm}

We want to apply Prop.~\ref{prp:GenericRepresentabilityByUnit} to prove that real topological cyclic homology is $C_2$-corepresentable by $\mr{triv}_{\RR,p}(S^0)$, the $C_2$-sphere spectrum endowed with the trivial real $p$-cyclotomic structure, which by construction is also the unit in $\RCycSp_p$. For this, we need to refine $\RCycSp_p$ to a $C_2$-stable $C_2$-$\infty$-category and to refine $\TCR(-,p)$ to a $C_2$-right adjoint. We first extend the definition of lax equalizer to the parametrized setting.

\begin{dfn} Suppose $C \to S$ is a $S$-$\infty$-category and $F$,$G$ are $S$-endofunctors of $C$. Let $\sO_S(C) = S \times_{\sO(S)} \sO(C)$ be the $S$-$\infty$-category of arrows in $C$ \cite[Notn.~4.29]{Exp2} and define the \emph{$S$-lax equalizer} to be the pullback of $S$-$\infty$-categories
\[ \begin{tikzcd}[row sep=4ex, column sep=4ex, text height=1.5ex, text depth=0.25ex]
\ul{\LEq}_{F:G}(C) \ar{r} \ar{d} & \sO_S(C) \ar{d}{(\ev_0,\ev_1)} \\
C \ar{r}{(F,G)} & C \times_S C.
\end{tikzcd} \]
Note that for all $s \in S$, we have an isomorphism of simplicial sets $\ul{\LEq}_{F:G}(C)_s \cong \LEq_{F_s:G_s}(C_s)$, and for all $\alpha: s \to t$, the pushforward $\alpha_{\sharp}: \LEq_{F_s:G_s}(C_s) \to \LEq_{F_t:G_t}(C_t)$ sends $[x,F_s(x) \xto{\phi} G_s(x)]$ to $[\alpha_{\sharp} x, \alpha_{\sharp}(\phi)]$. Define the \emph{$S$-equalizer} $\ul{\Eq}_{F:G}(C) \subset \ul{\LEq}_{F:G}(C)$ to be the full $S$-subcategory that fiberwise is given by $\Eq_{F_s:G_s}(C_s)$.
\end{dfn}

We have the following parametrized analogue of \cite[Lem.~II.1.5(iii)]{NS18}.

\begin{lem} \label{lem:ParamLaxEqualizerStable} If $C$ is a $G$-$\infty$-category that admits finite $G$-limits and $F$,$G$ are $G$-left exact endofunctors, then $\ul{\LEq}_{F:G}(C)$ admits finite $G$-limits and the forgetful functor $\ul{\LEq}_{F:G}(C) \to C$ preserves finite $G$-limits. If $C$ is moreover $G$-stable, then $\ul{\LEq}_{F:G}(C)$ is $G$-stable.
\end{lem}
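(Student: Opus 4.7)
My plan is to reduce to the non-parametrized analogue \cite[Prop.~II.1.5]{NS18} by arguing fiberwise, then transferring the fiberwise structure to a parametrized statement via standard principles about cocartesian fibrations over $S = \sO_G^{\op}$. By construction, $\ul{\LEq}_{F:G}(C) \to S$ is a pullback of cocartesian fibrations, and its fiber over $s \in S$ is canonically isomorphic to the ordinary lax equalizer $\LEq_{F_s:G_s}(C_s)$. Since $C$ admits finite $G$-limits and $F, G$ are $G$-left exact, each $C_s$ admits finite limits and each $F_s, G_s$ is left exact. Applying \cite[Prop.~II.1.5]{NS18} fiberwise, each $\LEq_{F_s:G_s}(C_s)$ admits finite limits, detected and preserved by the forgetful functor to $C_s$, and is stable whenever $C_s$ is.

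Next I would identify the pushforward $\alpha_\sharp$ along $\alpha\colon s \to t$ on $\ul{\LEq}_{F:G}(C)$ with the functor $[x,\, \phi\colon F_s(x) \to G_s(x)] \mapsto [\alpha_\sharp x,\, \alpha_\sharp(\phi)]$, where I implicitly use the $G$-naturality equivalences $\alpha_\sharp \circ F_s \simeq F_t \circ \alpha_\sharp$ and $\alpha_\sharp \circ G_s \simeq G_t \circ \alpha_\sharp$ built into the fact that $F$ and $G$ are $G$-endofunctors. From this explicit description, $\alpha_\sharp$ on $\ul{\LEq}_{F:G}(C)$ preserves finite limits (resp.\ is exact if $C$ is $G$-stable) since $\alpha_\sharp$ on $C$ does, and moreover intertwines the forgetful functor. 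Combining fiberwise finite limits with this compatibility, the standard criterion for the existence of finite $G$-limits on a cocartesian fibration over $\sO_G^{\op}$ (cf.~\cite[\S 5]{Exp2}) reduces to lifting the requisite right adjoints (indexed products along orbit maps) from $C$ along the pullback square, together with the associated Beck--Chevalley identities.

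The main obstacle is precisely this last step: constructing the indexed products in $\ul{\LEq}_{F:G}(C)$ and verifying Beck--Chevalley. However, since $\ul{\LEq}_{F:G}(C)$ sits as an $S$-pullback along $G$-left-exact structure maps $(F,G)$ and $(\ev_0,\ev_1)$, the indexed products are computed componentwise from the corresponding indexed products in $C$ (which exist by hypothesis), and the Beck--Chevalley identities reduce to a diagram chase using the $G$-naturality coherence data of $F$ and $G$. This yields finite $G$-limits in $\ul{\LEq}_{F:G}(C)$ and their preservation by the forgetful functor. For the final assertion, one invokes the characterization of $G$-stability as fiberwise stability together with exactness of the pushforwards (cf.~\cite[\S 7]{Exp4}); both have been verified above, concluding the proof.
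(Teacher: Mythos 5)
Your fiberwise reduction and overall outline track the paper's argument closely: reduce to fibers via \cite[Prop.~II.1.5]{NS18}, identify the pushforwards, then establish the indexed products and Beck--Chevalley to get finite $G$-limits. However, there are two issues worth flagging.

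First, the key technical step --- the existence of the right adjoint $f_\ast$ to $f^\ast$ on the lax equalizer fibers --- is handled in the paper by a concrete bicartesian fibration argument: given the adjunction $f^\ast \dashv f_\ast$ on $C$, one forms the bicartesian fibration $\cM \to \Delta^1$ encoding it, notes that $F$ and $G$ commuting with both $f^\ast$ and $f_\ast$ yields endofunctors $F_f, G_f$ of $\cM$ over $\Delta^1$ preserving cocartesian and cartesian edges, and then $\ul{\LEq}_{F_f:G_f}(\cM) \to \Delta^1$ is again bicartesian, which exhibits the desired right adjoint as postcomposition by $f_\ast$. Your proposal instead gestures at "componentwise" indexed products coming from the pullback presentation; the conclusion is right, but the argument is not spelled out, and you explicitly identify this as the main obstacle without resolving it. The paper's Beck--Chevalley argument is also slicker than your proposed "diagram chase with $G$-naturality coherence data": since the forgetful functor $\LEq_{F_U:G_U}(C_U) \to C_U$ is conservative and the natural transformation in question is a comparison of right adjoints constructed by postcomposition, the condition is detected in $C$, where it holds by hypothesis.

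Second, and more seriously, the final step is wrong as stated. $G$-stability of a $G$-$\infty$-category is \emph{not} "fiberwise stability together with exactness of the pushforwards"; following Nardin's definition (cited as \cite[Def.~7.1]{Exp4}), one also needs the finite $G$-products to be ambidextrous, i.e., the norm maps $f_! \to f_\ast$ must be equivalences. Exactness of the restriction functors $\alpha^\ast$ does not give you this for free. The paper addresses it by noting that ambidexterity for $f^\ast \dashv f_\ast$ on $C$ promotes to $\ul{\LEq}_{F:G}(C)$ via the same bicartesian-fibration machinery (one can build the norm map fiberwise through the $\Delta^1$-lax equalizer and again use conservativity of the forgetful functor to detect that it is an equivalence). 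Your proof would need to supply an argument of this kind; as written, the claimed characterization of $G$-stability is incomplete and the proof of the last assertion does not go through.
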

\begin{proof} We already know that the fibers of $\ul{\LEq}_{F:G}(C)$ admit finite limits or are stable and the pushforward functors are left-exact or exact, given our respective hypotheses. For the first statement, it thus suffices to show that for all maps of $G$-orbits $f:U \to V$, the restriction functor $f^{\ast}: \LEq_{F_V:G_V}(C_V) \to \LEq_{F_U:G_U}(C_U)$ admits a right adjoint $f_{\ast}$ computed by postcomposing by the right adjoint in $C$, and moreover that these adjunctions satisfy the Beck-Chevalley condition. Given the adjunction $\adjunct{f^{\ast}}{C_V}{C_U}{f_{\ast}}$, let $\cM \to \Delta^1$ be the bicartesian fibration that encodes this adjunction. Then since $F$ and $G$ commute with both $f^{\ast}$ and $f_{\ast}$, we obtain induced endofunctors $F_f$ and $G_f$ of $\cM$ over $\Delta^1$ that preserve both cocartesian and cartesian edges and restrict to $F_U$, $F_V$ and $G_U$, $G_V$ on the fibers. Therefore, the $\Delta^1$-lax equalizer $\ul{\LEq}_{F_f:G_f}(\cM) \to \Delta^1$ is again a bicartesian fibration that encodes the adjunction between $\LEq_{F_V:G_V}(C_V)$ and $\LEq_{F_U:G_U}(C_U)$, so $f^{\ast}$ as a functor on lax equalizers admits a right adjoint computed by postcomposition by $f_{\ast}: C_U \to C_V$. For the Beck-Chevalley condition, suppose a pullback square of finite $G$-sets
\[ \begin{tikzcd}[row sep=4ex, column sep=4ex, text height=1.5ex, text depth=0.25ex]
U \times_V W \ar{r}{f} \ar{d}{g} & W \ar{d}{g} \\
U \ar{r}{f} & V
\end{tikzcd} \]
where without loss of generality we may suppose $U,V,W$ are $G$-orbits. We need to show the natural transformations
\[ (\eta: f^{\ast} g_{\ast} \Rightarrow g_{\ast} f^{\ast}): \LEq_{F_W:G_W}(C_W) \to \LEq_{F_U:G_U}(C_U) \]
are equivalences. However, since the forgetful functor $\LEq_{F_U:G_U}(C_U) \to C_U$ detects equivalences, this follows from the Beck-Chevalley conditions assumed on $C$ itself. For the second statement, ambidexterity for the adjunctions $f^{\ast} \dashv f_{\ast}$ on $C$ promotes to the same for $\ul{\LEq}_{F:G}(C)$, using the same methods.
\end{proof}

The restriction functors $\Sp^{D_{2p^n}} \to \Sp^{D_{2p^m}}$ extend to $C_2$-functors $$\sO_{C_2}^{\op} \times_{\sO_{D_{2p^n}}^{\op}} \Sp^{D_{2p^n}} \to \sO_{C_2}^{\op} \times_{\sO_{D_{2p^m}}^{\op}} \Sp^{D_{2p^m}}$$
given on the fiber $C_2/1$ by the restriction functors $\Sp^{\mu_{2p^n}} \to \Sp^{\mu_{2p^m}}$. Taking the inverse limit, we obtain a $C_2$-stable $C_2$-$\infty$-category $\Sp^{D_{2p^{\infty}}}_{C_2}$. By taking inverse limits of the dihedral $C_2$-stable $C_2$-recollements of Exm.~\ref{exm:DihedralRecollement} along the restriction $C_2$-functors, we obtain a $C_2$-stable $C_2$-recollement
\[ \begin{tikzcd}[row sep=4ex, column sep=8ex, text height=1.5ex, text depth=0.5ex]
\ul{\Fun}_{C_2}(B^t_{C_2} \mu_{p^{\infty}}, \ul{\Sp}^{C_2}) \ar[shift right=1,right hook->]{r}[swap]{j_{\ast}} & \Sp^{D_{2p^{\infty}}}_{C_2} \ar[shift right=2]{l}[swap]{j^{\ast}} \ar[shift left=2]{r}{\Phi^{\mu_p}} & \Sp^{D_{2p^{\infty}}}_{C_2} \ar[shift left=1,left hook->]{l}{i_{\ast}}
\end{tikzcd} \]
that over $C_2/C_2$ restricts to the recollement in \ref{setup:Dihedral} and whose fiber over $C_2/1$ is the recollement
\[ \begin{tikzcd}[row sep=4ex, column sep=8ex, text height=1.5ex, text depth=0.5ex]
\Fun(B \mu_{p^{\infty}}, \Sp) \ar[shift right=1,right hook->]{r}[swap]{j_{\ast}} & \Sp^{\mu_{p^{\infty}}} \ar[shift right=2]{l}[swap]{j^{\ast}} \ar[shift left=2]{r}{\Phi^{\mu_p}} & \Sp^{\mu_{p^{\infty}}} \ar[shift left=1,left hook->]{l}{i_{\ast}}.
\end{tikzcd} \]

Observe that the $C_2$-exact $C_2$-endofunctor $j^{\ast} \Phi^{\mu_p} j_{\ast}$ restricts over the fiber $C_2/C_2$ to $t_{C_2} \mu_p$ and over the fiber $C_2/1$ to $t^{\mu_p}$. We let $\ul{t_{C_2} \mu_p} = j^{\ast} \Phi^{\mu_p} j_{\ast}$.

\begin{dfn} \label{dfn:C2CategoryOfCyclotomicSpectra} The \emph{$C_2$-$\infty$-category of real $p$-cyclotomic spectra} is
\[ \ul{\RCycSp}_p = \ul{\LEq}_{\id:t_{C_2} \mu_p}(\ul{\Fun}_{C_2}(B^t_{C_2} \mu_{p^{\infty}}, \ul{\Sp}^{C_2})). \]
\end{dfn}

Observe that the fiber of $\ul{\RCycSp}_p$ over $C_2/C_2$ is $\RCycSp_p$, and the fiber over $C_2/1$ is the $\infty$-category of $p$-cyclotomic spectra $\CycSp_p$ as defined in \cite[Def.~II.1.6(ii)]{NS18}. Also, by repeating the construction in \ref{cnstr:trivialFunctor} with $C_2$-$\infty$-categories, we obtain a $C_2$-functor
\[ \ul{\mr{triv}}_{\RR,p}: \ul{\Sp}^{C_2} \to \ul{\RCycSp}_p \]
that restricts over $C_2/C_2$ to $\mr{triv}_{\RR,p}$ and over $C_2/1$ to the trivial functor $\mr{triv}_p: \Sp \to \CycSp_p$ whose right adjoint is by definition $p$-typical topological cyclic homology $\TC(-,p)$ (see \cite[Prop.~IV.4.14]{NS18} for the integral version of this adjunction). By the dual of \cite[Prop.~7.3.2.6]{HA}, the fiberwise right adjoints refine to the structure of a relative right adjoint $$\ul{\TCR}(-,p): \ul{\RCycSp}_p \to  \ul{\Sp}^{C_2} $$ to $\ul{\mr{triv}}_{\RR,p}$. Moreover, because the composite $$\ul{\Sp}^{C_2} \to \ul{\RCycSp}_p \to \ul{\Fun}_{C_2}(B^t_{C_2} \mu_{p^{\infty}}, \ul{\Sp}^{C_2})$$ of $\ul{\mr{triv}}_{\RR,p}$ and the forgetful functor is $C_2$-left exact, by Lem.~\ref{lem:ParamLaxEqualizerStable}, $\ul{\mr{triv}}_{\RR,p}$ itself is $C_2$-left exact. Therefore, $\ul{\TCR}(-,p)$ preserves cocartesian edges, i.e., is a $C_2$-functor, and is thus $C_2$-right adjoint to $\ul{\mr{triv}}_{\RR,p}$.

\begin{prp} \label{prp:C2representabilityOfTC} $\ul{\TCR}(-,p)$ and $\TCR(-,p)$ are $C_2$-corepresentable by the unit.
\end{prp}
\begin{proof} Since $\ul{\RCycSp}_p$ is $C_2$-stable by Lem.~\ref{lem:ParamLaxEqualizerStable}, this follows immediately from Prop.~\ref{prp:GenericRepresentabilityByUnit} and the above discussion.
\end{proof}

We now apply Prop.~\ref{prp:C2representabilityOfTC} to derive an equalizer formula for $\TCR(-,p)$. We first explain how to compute $G$-mapping spaces and spectra in a limit of $G$-$\infty$-categories and then in the $G$-lax equalizer, analogous to \cite[II.1.5(ii)]{NS18}.

\begin{lem} \label{lem:LimitOfGMappingSpaces} Let $C_{\bullet}: K \to \Cat_{\infty}^G$ be a diagram of $G$-$\infty$-categories, and let $C = \lim C_{\bullet}$ be the limit. Let $x,y \in C_{G/G}$. Then the natural comparison map
\[ \ul{\Map}_{C}(x,y) \to \lim_{i \in K} \left(\ul{\Map}_{C_i}(x_i,y_i) \right)\]
is an equivalence of $G$-spaces. Furthermore, if $C_{\bullet}$ is a diagram of $G$-stable $G$-$\infty$-categories and $G$-exact $G$-functors, then the natural comparison map
\[ \ul{\map}_{C}(x,y) \to \lim_{i \in K} \left(\ul{\map}_{C_i}(x_i,y_i) \right) \]
is an equivalence of $G$-spectra.
\end{lem}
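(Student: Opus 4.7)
The plan is to handle the space-level statement first by a fiberwise argument, and then bootstrap to the spectrum-level statement using Nardin's theorem that $\Omega^{\infty}_{\ast}$ induces an equivalence on $G$-left-exact $G$-functor categories.

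For the first assertion, it suffices to check that the comparison map is an equivalence on each $G$-orbit $G/H$. Limits in $\Cat_{\infty}^{G}$ (viewed as cocartesian fibrations over $\sO_G^{\op}$) are computed fiberwise via \cite[\S 3.3.3]{HTT}, so $C_{G/H} \simeq \lim_{i \in K} (C_i)_{G/H}$ in $\Cat_{\infty}$ for each subgroup $H \leq G$. Similarly, limits in $\ul{\Spc}^G$ are fiberwise. By construction of $\ul{\Map}$ in \ref{cnstr:MappingSpectrum}, the value of $\ul{\Map}_{C}(x,y)$ on $G/H$ is $\Map_{C_{G/H}}(\res^G_H x, \res^G_H y)$, and likewise on each $C_i$. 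So the comparison map at $G/H$ reduces to the natural equivalence
\[ \Map_{\lim_i (C_i)_{G/H}}(x_H, y_H) \xto{\simeq} \lim_i \Map_{(C_i)_{G/H}}((x_i)_H, (y_i)_H), \]
which is the standard formula for mapping spaces in a limit of $\infty$-categories.

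For the spectrum version, first observe that both sides, regarded as $G$-functors in the variable $y$, define $G$-left-exact $G$-functors $C \to \ul{\Sp}^G$. The left-hand side is $\ul{\map}_C(x,-)$, which is $G$-left-exact by the discussion in \ref{cnstr:MappingSpectrum}. The right-hand side $y \mapsto \lim_{i} \ul{\map}_{C_i}(x_i, y_i)$ is a $G$-limit in $\ul{\Fun}_G(C, \ul{\Sp}^G)$ of $G$-left-exact $G$-functors (each is a composition of the projection $G$-functor $C \to C_i$ with $\ul{\map}_{C_i}(x_i, -)$), hence is itself a $G$-left-exact $G$-functor. The natural comparison map between them is the one induced by the projections $C \to C_i$ and functoriality of $\ul{\map}$ in the source variable. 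To show it is an equivalence, by Nardin's theorem \cite[Thm.~7.4]{Exp4} it suffices to check that its image under $\Omega^{\infty}_{\ast}: \Fun^{\lex}_G(C,\ul{\Sp}^G) \to \Fun^{\lex}_G(C, \ul{\Spc}^G)$ is an equivalence. Since $\Omega^{\infty}: \ul{\Sp}^G \to \ul{\Spc}^G$ preserves $G$-limits, applying $\Omega^{\infty}$ to both sides yields precisely the space-level comparison map, which was already shown to be an equivalence.

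The main technical obstacle will be verifying that the natural transformations are set up so that Nardin's equivalence $\Omega^{\infty}_{\ast}$ can be applied cleanly — in particular, identifying the functoriality of $\ul{\map}$ in the source and exhibiting the right-hand side $y \mapsto \lim_i \ul{\map}_{C_i}(x_i, y_i)$ as genuinely a $G$-functor (not merely a pointwise construction), so that the comparison map lives in the $G$-left-exact $G$-functor category to which Nardin's theorem applies.
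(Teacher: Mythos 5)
Your proof is correct, and the space-level half is exactly the paper's argument (evaluate at each orbit $G/H$, use that limits of $G$-$\infty$-categories and of $G$-spaces are computed fiberwise, and invoke the standard formula for mapping spaces in a limit of $\infty$-categories). For the spectrum-level half you diverge from the paper: the paper instead applies the $H$-categorical fixed point functors $\Psi^H$, which by the commutative diagram at the end of Constr.~\ref{cnstr:MappingSpectrum} carry $\ul{\map}_C(x,y)$ to $\map_{C_{G/H}}(x_H,y_H)$, preserve limits, and are jointly conservative, thereby reducing directly to the known non-parametrized statement that mapping spectra in a limit of stable $\infty$-categories are the limit of the mapping spectra. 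Your route deduces the spectrum statement from the space statement via Nardin's equivalence $\Omega^{\infty}_{\ast}\colon \Fun^{\lex}_G(C,\ul{\Sp}^G)\xto{\simeq}\Fun^{\lex}_G(C,\ul{\Spc}^G)$, using that an equivalence of $\infty$-categories reflects equivalences of morphisms. This is sound, but it buys its economy (no separate non-parametrized input for spectra) at the cost of the bookkeeping you flag yourself: one must check that $C=\lim C_\bullet$ is $G$-stable with finite $G$-limits computed componentwise so that the projections $C\to C_i$ are $G$-exact, that $y\mapsto \lim_i\ul{\map}_{C_i}(x_i,y_i)$ is genuinely a $G$-left-exact $G$-functor (a $G$-limit of such in $\ul{\Fun}_G(C,\ul{\Sp}^G)$), and that the comparison map is a morphism in $\Fun^{\lex}_G(C,\ul{\Sp}^G)$ whose image under $\Omega^{\infty}_{\ast}$ is identified with the space-level comparison map. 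The paper's $\Psi^H$ argument sidesteps all of this, which is why it is the shorter proof; but your version is a legitimate alternative.
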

\begin{proof} By either evaluation at $G/H$ or taking $H$-categorical fixed points and using the commutative diagram at the end of Constr.~\ref{cnstr:MappingSpectrum}, we may reduce to the known non-parametrized statements.
\end{proof}

\begin{lem} \label{lem:EqualizerMappingSpaces} Suppose $C$ is a $G$-$\infty$-category and $F,F'$ are $G$-endofunctors of $C$. Let $X = [x,\phi: F(x) \to F'(x)]$ and $Y=[y,\psi: F(y) \to F'(y)]$ be two objects in $\LEq_{F_{G/G}:F'_{G/G}}(C_{G/G})$. Then we have a natural equivalence of $G$-spaces
\[ \ul{\Map}_{\ul{\LEq}_{F:G}(C)}(X,Y) \simeq \mr{eq} \left( 
\begin{tikzcd}[row sep=4ex, column sep=6ex, text height=1.5ex, text depth=0.25ex]
\ul{\Map}_{C}(x,y) \ar[shift left=1]{r}{\psi_{\ast} \circ F} \ar[shift right=1]{r}[swap]{\phi^{\ast} \circ F'} & \ul{\Map}_{C}(F(x),F'(y)).
\end{tikzcd} \right) \]
If $C$ is $G$-stable and $F,F'$ are $G$-exact, then we have a natural fiber sequence of $G$-spectra
\[ \ul{\map}_{\ul{\LEq}_{F:G}(C)}(X,Y) \to \ul{\map}_{C}(x,y) \xtolong{\psi_{\ast}  F - \phi^{\ast} F'}{2} \ul{\map}_{C}(F(x),F'(y)).  \]
\end{lem}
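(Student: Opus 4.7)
The plan is to exploit the definition of $\ul{\LEq}_{F:F'}(C)$ as a $G$-pullback of $G$-$\infty$-categories and reduce the computation of its $G$-mapping space to the identification of $G$-mapping spaces in $C$, $C \times_S C$, and $\sO_S(C)$ via Lem.~\ref{lem:LimitOfGMappingSpaces}. Viewing the pullback
\[ \ul{\LEq}_{F:F'}(C) = C \times_{(F,F'), C \times_S C, (\ev_0,\ev_1)} \sO_S(C) \]
as a limit over the cospan $\bullet \to \bullet \leftarrow \bullet$ in $\Cat_{\infty}^G$, Lem.~\ref{lem:LimitOfGMappingSpaces} yields
\[ \ul{\Map}_{\ul{\LEq}_{F:F'}(C)}(X,Y) \simeq \ul{\Map}_C(x,y) \times_{\ul{\Map}_{C \times_S C}((Fx,F'x),(Fy,F'y))} \ul{\Map}_{\sO_S(C)}(\phi,\psi). \]

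The second step is to identify the two auxiliary $G$-mapping spaces. For $C \times_S C$, the fiber over each $G/H$ is the cartesian product of the fibers, and a straightforward check on mapping spaces at each subgroup gives $\ul{\Map}_{C \times_S C}((a,a'),(b,b')) \simeq \ul{\Map}_C(a,b) \times \ul{\Map}_C(a',b')$. For $\sO_S(C)$, the fiber over $G/H$ is $\sO(C_{G/H})$, and the mapping space in an arrow category is the standard pullback of mapping spaces along pre- and post-composition; parametrizing over subgroups yields
\[ \ul{\Map}_{\sO_S(C)}(\phi,\psi) \simeq \ul{\Map}_C(Fx,Fy) \times_{\ul{\Map}_C(Fx,F'y)} \ul{\Map}_C(F'x,F'y), \]
with structure maps given by post-composition with $\psi$ and pre-composition with $\phi$.

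Substituting these into the iterated pullback above and collapsing the diagram (the inner pullback sits over $\ul{\Map}_C(Fx,Fy) \times \ul{\Map}_C(F'x,F'y)$ as the fiber of the diagonal of $\ul{\Map}_C(Fx,F'y)$) produces the equalizer
\[ \ul{\Map}_C(x,y) \times_{\ul{\Map}_C(Fx,F'y) \times \ul{\Map}_C(Fx,F'y), \Delta} \ul{\Map}_C(Fx,F'y), \]
where the two maps $\ul{\Map}_C(x,y) \rightrightarrows \ul{\Map}_C(Fx,F'y)$ are $\psi_{\ast} \circ F$ and $\phi^{\ast} \circ F'$. This gives the first assertion.

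For the $G$-stable statement, the same argument applies verbatim to $\ul{\map}$ in place of $\ul{\Map}$, using the spectral refinement of Lem.~\ref{lem:LimitOfGMappingSpaces}. Lem.~\ref{lem:ParamLaxEqualizerStable} ensures $\ul{\LEq}_{F:F'}(C)$ is $G$-stable, so its mapping $G$-spectra exist. An equalizer in $\ul{\Sp}^G$ is computed as the fiber of the difference map, yielding the fiber sequence as stated. The main subtlety is the identification of the $G$-mapping space in $\sO_S(C)$, which requires checking at each subgroup that restriction from $G/G$ to $G/H$ is compatible with the fiberwise arrow-category mapping space formula; this is routine using the fact that the fibers of $\sO_S(C) \to S$ are arrow categories of the fibers of $C$.
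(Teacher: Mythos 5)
Your proposal is correct and takes essentially the same route as the paper: the paper's proof simply invokes Lem.~\ref{lem:LimitOfGMappingSpaces} together with the argument of \cite[Prop.~II.1.5(ii)]{NS18}, and what you have written is precisely that argument unfolded (mapping spaces in the defining pullback, the product, and the arrow category, then collapsing the iterated pullback to the equalizer, with the fiber of the difference map in the stable case).
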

\begin{proof} In view of Lem.~\ref{lem:LimitOfGMappingSpaces}, the same arguments as in the proof of \cite[II.1.5.(ii)]{NS18} apply to produce the formulas.
\end{proof}

\begin{dfn} For $X \in \Fun_{C_2}(B^t_{C_2} \mu_{p^{\infty}}, \ul{\Sp}^{C_2})$, we define the \emph{canonical} map
\[ \can_p: X^{h_{C_2} \mu_{p^{\infty}}} \simeq (X^{h_{C_2} \mu_p})^{h_{C_2} \mu_{p^{\infty}}} \to (X^{t_{C_2} \mu_p})^{h_{C_2} \mu_{p^{\infty}}} \]
where for the first equivalence we use that $B^t_{C_2} (\mu_{p^{\infty}}/\mu_p) \simeq B^t_{C_2} \mu_{p^{\infty}}$ as before.
\end{dfn}

\begin{prp} \label{prp:TCRfiberSequence} Let $[X,\varphi:X \to X^{t_{C_2} \mu_p}]$ be a real $p$-cyclotomic spectrum. Then we have a natural fiber sequence of $C_2$-spectra
\[ \TCR(X,p) \to X^{h_{C_2} \mu_{p^\infty}} \xtolong{\varphi^{h_{C_2} \mu_{p^{\infty}}} -\can_p}{2} (X^{t_{C_2} \mu_p})^{h_{C_2} \mu_{p^\infty}}. \]
\end{prp}
\begin{proof} We mimic the proof of \cite[Prop.~II.1.9]{NS18}. Let $C = \ul{\Fun}_{C_2}(B^t_{C_2} \mu_{p^{\infty}}, \ul{\Sp}^{C_2})$, let $S^0 \in C$ be the unit (i.e., $S^0$ with trivial action), and note that by Exm.~\ref{exm:LimitRepresentability}, $$\ul{\map}_{C}(S^0,X) \simeq X^{h_{C_2} \mu_{p^{\infty}}}.$$
The claim then follows from Prop.~\ref{prp:C2representabilityOfTC} and Lem.~\ref{lem:EqualizerMappingSpaces}. In more detail, if we let $\lambda = \lambda_{\RR,p}: S^0 \to (S^0)^{t_{C_2} \mu_p}$ denote the trivial real $p$-cyclotomic structure map as in Constr.~\ref{cnstr:trivialFunctor}, then given a map $f: S^0 \to X$ in $C$, we have a commutative diagram (again in $C$)
\[ \begin{tikzcd}[row sep=4ex, column sep=8ex, text height=1.5ex, text depth=0.5ex]
S^0 \ar{r} \ar{rd}[swap]{\lambda} & (S^0)^{h_{C_2} \mu_p} \ar{r}{f^{h_{C_2} \mu_p}} \ar{d} & X^{h_{C_2} \mu_p } \ar{d} \\
& (S^0)^{t_{C_2} \mu_p} \ar{r}{f^{t_{C_2} \mu_p}} & X^{t_{C_2} \mu_p}.
\end{tikzcd} \]
Therefore, the composite map
\[ \ul{\map}_C(S^0, X) \xto{t_{C_2} \mu_p} \ul{\map}_C((S^0)^{t_{C_2} \mu_p}, X^{t_{C_2} \mu_p}) \xto{\lambda^{\ast}} \ul{\map}_C((S^0), X^{t_{C_2} \mu_p}) \]
is homotopic to $\can_p$. It is then clear that the desired fiber sequence is given by Lem.~\ref{lem:EqualizerMappingSpaces}.
\end{proof}

\subsection{Genuine real \texorpdfstring{$p$}{p}-cyclotomic spectra}
\label{section:genRCycSp}

\begin{dfn} \label{dfn:GenRealCycSp} A \emph{genuine real $p$-cyclotomic spectrum} is a $D_{2p^{\infty}}$-spectrum $X$, together with an equivalence $\Phi^{\mu_p} X \xto{\simeq} X$ in $\Sp^{D_{2p^{\infty}}}$. The $\infty$-category of \emph{genuine real $p$-cyclotomic spectra} is then
\[ \RCycSp^{\mathrm{gen}}_p = \Eq_{\Phi^{\mu_p}:\id}(\Sp^{D_{2p^{\infty}}}). \]
\end{dfn}

\begin{rem} \label{rem:EqualizerSwap} Let $F_i: C \to D$, $i=0,1$, be two functors and let us temporarily revert to the notation of \cite[Def.~II.1.4]{NS18} for (lax) equalizers. Let $J = (a \twoarrows b)$, so equalizers in an $\infty$-category $\cE$ are limits over diagrams $J \to \cE$ (\cite[\S 4.4.3]{HTT}). Let $q: J \to \Cat_{\infty}$ be the diagram that sends one arrow to $F_0$ and the other to $F_1$. Because
\[  \mr{eq}(\begin{tikzcd}[row sep=4ex, column sep=4ex, text height=1.5ex, text depth=0.25ex]
C \ar[shift left=1]{r}{F_0} \ar[shift right=1]{r}[swap]{F_1} & D
\end{tikzcd}) \coloneq \lim_J q \simeq \Eq(\begin{tikzcd}[row sep=4ex, column sep=4ex, text height=1.5ex, text depth=0.25ex]
C \ar[shift left=1]{r}{F_0} \ar[shift right=1]{r}[swap]{F_1} & D
\end{tikzcd}) \]
and the former expression is symmetric in $F_i$, we have an equivalence 
\[ \Eq(\begin{tikzcd}[row sep=4ex, column sep=4ex, text height=1.5ex, text depth=0.25ex]
C \ar[shift left=1]{r}{F_0} \ar[shift right=1]{r}[swap]{F_1} & D
\end{tikzcd}) \simeq \Eq(\begin{tikzcd}[row sep=4ex, column sep=4ex, text height=1.5ex, text depth=0.25ex]
C \ar[shift left=1]{r}{F_1} \ar[shift right=1]{r}[swap]{F_0} & D
\end{tikzcd}). \]
These equivalences are already implicit in \cite{NS18}, but in more detail, let $\cX \to J$ be the cocartesian fibration classified by $q$, so $\lim_J q \simeq \Sect(\cX) \coloneq \Fun^{\cocart}_{/J}(J,\cX)$. Let $\alpha_i: \Delta^1 \to J$, $i = 0,1$ be the two arrows in $J$, with $q \alpha_i$ selecting $F_i$, and let $\cX_{\alpha_i}$ be the pullback. Then $\ev_a: \Sect(\cX_{\alpha_i}) \to C$ is a trivial fibration, and choosing a section, the composite
\[ C \xto{\simeq} \Sect(\cX_{\alpha_i}) \xto{\ev_b} D \]
is homotopic to $F_i$. Therefore, if we let $\iota: D \to \sO(D)$ denote the identity section, we have a homotopy commutative diagram
\[ \begin{tikzcd}[row sep=4ex, column sep=4ex, text height=1.5ex, text depth=0.25ex]
\Sect(\cX) \ar{r}{\ev_b} \ar{d}{\ev_a} & D \ar{r}{\iota} & \sO(D) \ar{d}{(\ev_0,\ev_1)} \\
C \ar{rr}{(F,G)} & & D \times D
\end{tikzcd} \]
and an induced functor $\Sect(\cX) \to \LEq(\begin{tikzcd}[row sep=4ex, column sep=4ex, text height=1.5ex, text depth=0.25ex]
C \ar[shift left=1]{r}{F_0} \ar[shift right=1]{r}[swap]{F_1} & D
\end{tikzcd})$, which is fully faithful by comparing the formulas for mapping spaces in the limit over $J$ and in the lax equalizer. Because the essential image of $\iota$ consists of the equivalences in $\sO(D)$, it follows that the essential image of $\Sect(\cX)$ is $\Eq(\begin{tikzcd}[row sep=4ex, column sep=4ex, text height=1.5ex, text depth=0.25ex]
C \ar[shift left=1]{r}{F_0} \ar[shift right=1]{r}[swap]{F_1} & D
\end{tikzcd})$. Repeating the analysis with $F_0$ and $F_1$ exchanged, we obtain a zig-zag of equivalences
\[ \begin{tikzcd}[row sep=4ex, column sep=4ex, text height=1.5ex, text depth=0.25ex]
\Eq(C \ar[shift left=1]{r}{F_1} \ar[shift right=1]{r}[swap]{F_0} & D) & \Sect(\cX) \ar{r}{\simeq} \ar{l}[swap]{\simeq} & \Eq(C \ar[shift left=1]{r}{F_0} \ar[shift right=1]{r}[swap]{F_1} & D).
\end{tikzcd} \]

It follows that in defining genuine real $p$-cyclotomic spectra, the choice of direction of the equivalence $\Phi^{\mu_p} X \simeq X$ is immaterial. Thus, in lieu of Def.~\ref{dfn:GenRealCycSp} we could have let
\[ \RCycSp^{\mr{gen}}_p = \Eq_{\id:\Phi^{\mu_p}}(\Sp^{D_{2p^{\infty}}}). \]
This definition is more convenient when comparing to Def.~\ref{dfn:RealCycSp}, whereas Def.~\ref{dfn:GenRealCycSp} is more suitable for defining the structure maps $R$ (Constr.~\ref{dfn:ParamRStructureMaps}) that define the term $\TRR_p$ in the fiber sequence formula for $\TCR^{\mr{gen}}(-,p)$ of Prop.~\ref{prp:fiberSequenceGenuineRealCyc}.
\end{rem}

\begin{prp} $\RCycSp^{\mr{gen}}_p$ is a stable presentable symmetric monoidal $\infty$-category such that the forgetful functor to $\Sp^{D_{2p^{\infty}}}$ is conservative, creates colimits and finite limits, and is symmetric monoidal.
\end{prp}
\begin{proof} Because $\Phi^{\mu_p}$ is colimit-preserving and symmetric monoidal, we may lift the equalizer diagram to $\CAlg(\Pr^{L,\st})$. Since limits there are computed as for the underlying $\infty$-categories, the claim then follows, with conservativity proven as for the lax equalizer.
\end{proof}

\begin{cnstr} \label{cnstr:InflationToInfinity} Since for all $0 \leq m<n<\infty$, the diagram
\[ \begin{tikzcd}[row sep=4ex, column sep=6ex, text height=1.5ex, text depth=0.25ex]
\Sp^{C_2} \ar{r}{\inf^{\mu_{p^n}}} \ar{rd}[swap]{\inf^{\mu_{p^m}}} & \Sp^{D_{2p^n}} \ar{d}{\res}  \\
&  \Sp^{D_{2p^m}}
\end{tikzcd} \]
commutes in $\CAlg(\Pr^{L,\st})$, we may define an exact, colimit-preserving, and symmetric monoidal functor
\[ \inf{}^{\mu_{p^{\infty}}}: \Sp^{C_2} \to \Sp^{D_{2p^{\infty}}} \]
as the inverse limit of the functors $\inf^{\mu_{p^n}}$. Let $$\Psi^{\mu_{p^{\infty}}}: \Sp^{D_{2p^{\infty}}} \to \Sp^{C_2}$$ denote its right adjoint, and also write $$\Psi^{\mu_{p^{n}}}: \Sp^{D_{2p^{\infty}}} \to \Sp^{C_2}$$ for the composite of the restriction to $\Sp^{D_{2p^n}}$ and $\Psi^{\mu_{p^{n}}}$. Recall that for a diagram $\overline{C}_{\bullet}: K^{\lhd} \to \Pr^L$, if we write $C$ for its value on the cone point $v$ and $D = \lim_K (C_{\bullet})$, then for the induced adjunction
\[ \adjunct{L}{C}{D}{R} \]
we may compute $R$ in terms of the description of $D$ as an $\infty$-category of cocartesian sections as follows:
\begin{itemize} \item[($\ast$)] Let $\overline{\cX} \to K^{\lhd}$ be the presentable fibration classified by $\overline{C}_{\bullet}$, with restriction $\cX \to K$. Let $p: \cX \subset \overline{\cX} \to \overline{\cX}_v \simeq C$ be the \emph{cartesian} pushforward to the fiber over the initial object $v \in K^{\lhd}$. Then the functor
\[ p_{\ast}: D \simeq \Sect(\cX) \to C \]
obtained via postcomposition by $p$ is homotopic to $R$.
\end{itemize}

See \cite[\S 2]{BehrensShah} for a reference. To specialize to our situation, we note that $\Psi^{\mu_{p^n}}: \Sp^{D_{2p^n}} \to \Sp^{C_2}$ applied to the unit map $X \to \ind \res X$ for the restriction-induction adjunction $$\adjunct{\res}{\Sp^{D_{2p^n}}} {\Sp^{D_{2p^{n-1}}}} {\ind}$$ defines the map $F: \Psi^{\mu_{p^n}}(X) \to \Psi^{\mu_{p^{n-1}}}(X)$ of $C_2$-spectra that lifts the map $F: X^{\mu_{p^n}} \to X^{\mu_{p^{n-1}}}$ of spectra given by inclusion of fixed points.  Using the formula above, we conclude that 
\[ \Psi^{\mu_{p^{\infty}}}(X) \xto{\simeq} \lim_{n,F} \Psi^{\mu_{p^n}}(X). \]
\end{cnstr}

\begin{rem} For $n< \infty$, the functors $\Psi^{\mu_{p^n}}$ all commute with colimits, but the inverse limit $\Psi^{\mu_{p^{\infty}}}$ does not commute with colimits in general.
\end{rem}

\begin{cnstr} \label{cnstr:GenuineTrivialFunctor} Because the diagram
\[ \begin{tikzcd}[row sep=4ex, column sep=6ex, text height=1.5ex, text depth=0.25ex]
\Sp^{C_2} \ar{r}{\inf^{\mu_{p^n}}} \ar{rd}[swap]{\inf^{\mu_{p^{n-1}}}} & \Sp^{D_{2p^n}} \ar{d}{\Phi^{\mu_p}}  \\
&  \Sp^{D_{2p^{n-1}}}
\end{tikzcd} \]
commutes for all $0<n<\infty$, we have an equivalence $\Phi^{\mu_p} \inf^{\mu_{p^{\infty}}} \simeq \inf^{\mu_{p^{\infty}}}$ in $\CAlg(\Pr^{L,\st})$. Therefore, $\inf^{\mu_{p^{\infty}}}$ lifts to the equalizer of $\id$ and $\Phi^{\mu_p}$ to define an exact, colimit-preserving, and symmetric monoidal functor
\[ \mr{triv}^{\mr{gen}}_{\RR,p}: \Sp^{C_2} \to \RCycSp^{\mr{gen}}_p. \]

\end{cnstr}

\begin{dfn} \label{dfn:ClassicalTCR} The \emph{classical $p$-typical real topological cyclic homology} functor
$$\TCR^{\mr{gen}}(-,p): \RCycSp^{\mr{gen}}_p \to \Sp^{C_2} $$
is the right adjoint to $\mr{triv}^{\mr{gen}}_{\RR,p}$.
\end{dfn}

As with $\TCR(-,p)$, we can prove that $\TCR^{\mr{gen}}(-,p)$ is $C_2$-corepresentable by the unit and thereby deduce a fiber sequence formula for the functor. Recall that in the course of formulating Def.~\ref{dfn:C2CategoryOfCyclotomicSpectra}, we extended $\Sp^{D_{2p^{\infty}}}$ and $\Phi^{\mu_p}$ to a $C_2$-$\infty$-category $\Sp^{D_{2p^{\infty}}}_{C_2}$ with $\Phi^{\mu_p}$ as a $C_2$-endofunctor.

\begin{dfn} The \emph{$C_2$-$\infty$-category of genuine real $p$-cyclotomic spectra} is 
\[ \ul{\RCycSp}^{\mr{gen}}_p = \ul{\Eq}_{\Phi^{\mu_p}:\id}(\Sp^{D_{2p^{\infty}}}_{C_2}). \]
\end{dfn}

Note that the fiber of $\ul{\RCycSp}^{\mr{gen}}_p$ over $C_2/C_2$ is $\RCycSp^{\mr{gen}}_p$, and the fiber over $C_2/1$ is $\CycSp^{\mr{gen}}_p$ as defined in \cite[Def.~II.3.1]{NS18}. By repeating the constructions \ref{cnstr:InflationToInfinity} and \ref{cnstr:GenuineTrivialFunctor} in the $C_2$-sense, we construct $C_2$-functors
\begin{align*} \ul{\inf}^{\mu_{p^{\infty}}} &: \ul{\Sp}^{C_2} \to \Sp^{D_{2p^{\infty}}}_{C_2}, \\
\ul{\mr{triv}}^{\mr{gen}}_{\RR,p} &: \ul{\Sp}^{C_2} \to \ul{\RCycSp}^{\mr{gen}}_p, \end{align*}
which are $C_2$-left exact in view of the commutativity of the diagrams
\[ \begin{tikzcd}[row sep=4ex, column sep=6ex, text height=1.5ex, text depth=0.5ex]
\Sp^{C_2} \ar{r}{\inf^{\mu_{p^n}}} & \Sp^{D_{2p^n}} \\
\Sp \ar{r} \ar{u}{\ind^{C_2}} \ar{r}{\inf^{\mu_{p^n}}} & \Sp^{\mu_{p^n}} \ar{u}[swap]{\ind^{D_{2p^n}}_{\mu_{p^n}} }
\end{tikzcd} \]
for all $n \geq 0$. Therefore, we obtain $C_2$-right adjoints
\begin{align*} \ul{\Psi}^{\mu_{p^{\infty}}} &: \Sp^{D_{2p^{\infty}}}_{C_2} \to \ul{\Sp}^{C_2}, \\
\ul{\TCR}^{\mr{gen}}(-,p) &: \ul{\RCycSp}^{\mr{gen}}_p \to \ul{\Sp}^{C_2}. \end{align*}

\begin{prp} \label{prp:classicalTCRcorepresentable} $\ul{\TCR}^{\mr{gen}}(-,p)$ and $\TCR^{\mr{gen}}(-,p)$ are $C_2$-corepresentable by the unit.
\end{prp}
\begin{proof} Since $\Sp^{D_{2p^{\infty}}}_{C_2}$ is $C_2$-stable and $\Phi^{\mu_p}$ is $C_2$-exact, by Lem.~\ref{lem:ParamLaxEqualizerStable} we see that $\ul{\LEq}_{\Phi^{\mu_p}:\id}(\Sp^{D_{2p^{\infty}}}_{C_2})$ is $C_2$-stable. Note that  the $G$-equalizer as a full $G$-subcategory of the $G$-lax equalizer of $F$ and $F'$ is closed under finite $G$-limits if $F$ and $F'$ are $G$-left exact. Thus, we deduce that $\ul{\RCycSp}^{\mr{gen}}_p$ is also $C_2$-stable. The claim then follows by applying Prop.~\ref{prp:GenericRepresentabilityByUnit} to the $C_2$-adjunction $\ul{\mr{triv}}^{\mr{gen}}_{\RR,p} \dashv \ul{\TCR}^{\mr{gen}}(,-p)$ and using that $\mr{triv}^{\mr{gen}}_{\RR,p}$ is symmetric monoidal.
\end{proof}

\begin{cnstr}[Structure maps $R$] \label{dfn:ParamRStructureMaps} First note that we have natural transformations $$\Psi^{\mu_{p^n}} \to \Psi^{\mu_{p^{n-1}}} \Phi^{\mu_p}$$ defined via applying $\Psi^{\mu_{p^n}}$ to the unit map $\id \to i_{\ast} i^{\ast} \simeq i_{\ast} \Phi^{\mu_p}$ of the recollement on $\Sp^{D_{2p^n}}$ with closed part $\Sp^{D_{2p^{n-1}}}$, using again that $\Psi^{\mu_{p^n}} i_{\ast} \simeq \Psi^{\mu_{p^{n-1}}}$.

Let $[X, \alpha: \Phi^{\mu_p} X \xto{\simeq} X]$ be a genuine real $p$-cyclotomic spectrum. For all $0 < n < \infty$, we define natural maps of $C_2$-spectra
\[ R: \Psi^{\mu_{p^n}} X \to \Psi^{\mu_{p^{n-1}}} \Phi^{\mu_p} (X) \xto{\simeq} \Psi^{\mu_{p^{n-1}}} (X) \]
to be the composite of the above map and $\alpha$. Note that $R$ lifts the maps of spectra $$R: X^{\mu_{p^n}} \to (\Phi^{\mu_p} (X))^{\mu_{p^{n-1}}} \xto{\simeq} X^{\mu_{p^{n-1}}}$$ for the underlying $p$-cyclotomic spectrum (c.f. the discussion prior to \cite[Def.~II.4.4]{NS18}). Note also that the diagram
\[ \begin{tikzcd}[row sep=4ex, column sep=4ex, text height=1.5ex, text depth=0.25ex]
\Psi^{\mu_{p^n}} X \ar{r} \ar{d}{F} & \Psi^{\mu_{p^{n-1}}} \Phi^{\mu_p} (X) \ar{r}{\simeq} \ar{d}{F} & \Psi^{\mu_{p^{n-1}}} (X) \ar{d}{F} \\
\Psi^{\mu_{p^{n-1}}} X \ar{r} & \Psi^{\mu_{p^{n-2}}} \Phi^{\mu_p} (X) \ar{r}{\simeq} & \Psi^{\mu_{p^{n-2}}} (X)
\end{tikzcd} \]
commutes for all $1<n<\infty$, with the maps $F$ defined as in Constr.~\ref{cnstr:InflationToInfinity}. By taking the inverse limit along the maps $F$, the maps $R$ then induce a map
\[ R: \lim_{n,F}\Psi^{\mu_{p^n}}(X) \to \lim_{n,F}\Psi^{\mu_{p^n}}(X). \]
On the other hand, taking the inverse limit along the maps $R$, the maps $F$ induce a map
\[ F:  \lim_{n,R}\Psi^{\mu_{p^n}}(X) \to \lim_{n,R}\Psi^{\mu_{p^n}}(X).\]
\end{cnstr}

\begin{dfn} For a genuine real $p$-cyclotomic spectrum $[X, \Phi^{\mu_p} X \xto{\simeq} X]$, let 
\begin{align*} \TRR(X,p) &= \lim_{n,R} \Psi^{\mu_{p^n}}(X), \\
\TFR(X,p) &= \lim_{n,F} \Psi^{\mu_{p^n}}(X).
\end{align*}
\end{dfn}

\begin{prp} \label{prp:fiberSequenceGenuineRealCyc} Let $[X, \Phi^{\mu_p} X \xto{\simeq} X]$ be a genuine real $p$-cyclotomic spectrum. We have natural fiber sequences of $C_2$-spectra
\begin{align*} \TCR^{\mr{gen}}(X,p) &\to \TFR(X,p) \xtolong{\id - R}{1.5} \TFR(X,p), \\
\TCR^{\mr{gen}}(X,p) &\to \TRR(X,p) \xtolong{\id - F}{1.5} \TRR(X,p),
\end{align*}
and a natural equivalence of $C_2$-spectra
\[ \TCR^{\mr{gen}}(X,p) \simeq \lim_{n, F, R} \Psi^{\mu_{p^n}}(X) \coloneq \lim_{J_{\infty}} \Psi^{\mu_{p^n}}(X).  \]
\end{prp}

Before giving the proof, we define the category $J_{\infty}$ and prove a few necessary results about it.

\begin{dfn} \label{dfn:EqualizerInfinity} Let $J_{\infty}$ be the category freely generated by
\[ \begin{tikzcd}[row sep=4ex, column sep=4ex, text height=1.5ex, text depth=0.25ex]
\cdots \ar[shift left=1]{r}{\alpha_{n+1}} \ar[shift right=1]{r}[swap]{\beta_{n+1}} & n+1 \ar[shift left=1]{r}{\alpha_n} \ar[shift right=1]{r}[swap]{\beta_n} & n \ar[shift left=1]{r}{\alpha_{n-1}} \ar[shift right=1]{r}[swap]{\beta_{n-1}} & \cdots \ar[shift left=1]{r}{\alpha_1} \ar[shift right=1]{r}[swap]{\beta_1} & 1 \ar[shift left=1]{r}{\alpha_0} \ar[shift right=1]{r}[swap]{\beta_0} & 0 \:,
\end{tikzcd} \]
modulo the relation $\beta_n \circ \alpha_{n+1} = \alpha_n \circ \beta_{n+1}$ for all $n \geq 0$. More concretely, the objects of $J_{\infty}$ are non-negative integers, there are no morphisms $n \to n+k$ for $k>0$, there is only the identity $n \to n$, and morphisms $n+k \to n$, $k > 0$ are in bijection with non-empty sieves in $[k]$, where we attach to $S \subset [k]$ the composition $$\beta_{n} \cdots \beta_{n+l-1} \alpha_{n+l} \cdots \alpha_{n+k-1}$$ for $l = \max(S)$ (so if $l=0$, we have $\alpha_n \cdots \alpha_{n+k-1}$, and if $l=k$, we have $\beta_n \cdots \beta_{n+k-1}$).
\end{dfn}

\begin{rem} \label{rem:spineOfEqualizerInfinity} Let $\pi: J_{\infty} \to \ZZ_{\geq 0}^{\op}$ be the functor that sends $n$ to $n$, and $\alpha_n, \beta_n$ to $n+1 \to n$. For $n \geq m$, let $[n:m] \subset \ZZ_{\geq 0}^{\op}$ denote the full subcategory on integers $n \geq k \geq m$, and let $$J_{[n:m]} = J_{\infty} \times_{\ZZ_{\geq 0}^{\op}} [n:m].$$
We claim that the square
\[ \begin{tikzcd}[row sep=4ex, column sep=4ex, text height=1.5ex, text depth=0.25ex]
J_{[2:1]} \ar{r} \ar{d} & J_{[2:0]} \ar{d} \\
J_{[3:1]} \ar{r} & J_{[3:0]}
\end{tikzcd} \]
is a homotopy pushout square of $\infty$-categories. Indeed, for clarity write $a<b<c$ for the vertices of $\Delta^2$, and let $q: J_{[3:0]} \to \Delta^2$ be the functor that sends $3$ to $a$, $2,1$ to $b$, and $0$ to $c$, and maps in the obvious way. The claim amounts to showing that $q$ is a flat inner fibration (\cite[Def.~B.3.1]{HA}), for which we may use the criterion of \cite[Prop.~B.3.2]{HA}. Suppressing subscripts of morphisms in $J_{[3:0]}$ for clarity, we need to check that for morphisms $$\gamma \in \{ \beta^3, \beta^2 \alpha, \beta \alpha^2, \alpha^3 \} \in \Hom(3,0),$$
the resulting category $(J_{[2:1]})_{3/ /1}$ of factorizations of $\gamma$ through $J_{[2:1]}$ is weakly contractible. For $\gamma = \delta \circ \epsilon$ with the domain of $\delta$ equal to $i=1,2$, write $[\delta|\epsilon]_i$ for the object in $(J_{[2:1]})_{3/ /1}$. If $\gamma = \beta^3$, then $(J_{[2:1]})_{3/ /1}$ is given by $[\beta^2|\beta]_2 \to [\beta|\beta^2]_1$, so is weakly contractible, and likewise for $\gamma = \alpha^3$. If $\gamma = \beta \alpha^2$, then $(J_{[2:1]})_{3/ /1}$ is the category
\[ \begin{tikzcd}[row sep=4ex, column sep=4ex, text height=1.5ex, text depth=0.25ex]
\left[\beta \alpha | \alpha \right]_2 \ar{d} \ar{rd} & \left[ \alpha^2 | \beta\right]_2 \ar{d} \\
\left[ \beta| \alpha^2 \right]_1 & \left[ \alpha| \beta \alpha \right]_1,
\end{tikzcd} \]
using that $\beta \alpha = \alpha \beta$ and always writing maps as $\beta^i \alpha^j$. Thus, $(J_{[2:1]})_{3/ /1}$ is weakly contractible, and likewise for $\gamma = \beta \alpha^2$, proving the claim. Continuing this line of reasoning, we see that the cofibration
\[ \left( J_{[2:0]} \bigcup_{J_{[2:1]}} J_{[3:1]} \bigcup_{J_{[3:2]}} J_{[4:2]} \bigcup_{J_{[4:3]}} \cdots \right) \to J_{\infty} \]
is a categorical equivalence. Therefore, for an $\infty$-category $C$ and two diagrams $\ZZ_{\geq 0}^{\op} \to C$ written as
\begin{align*} \cdots \xto{F} X_2 \xto{F} X_1 \xto{F} X_0, \\
\cdots \xto{R} X_2 \xto{R} X_1 \xto{R} X_0,
\end{align*}
to extend this data to a diagram $J_{\infty} \to C$ that sends $\alpha$ to $F$ and $\beta$ to $R$, we only need to supply the data of commutative squares in $C$
\[ \begin{tikzcd}[row sep=4ex, column sep=4ex, text height=1.5ex, text depth=0.25ex]
X_{n+2} \ar{r}{F} \ar{d}{R} & X_{n+1} \ar{d}{R} \\
X_{n+1} \ar{r}{F} & X_n 
\end{tikzcd} \]
for all $n \geq 0$.
\end{rem}

\begin{lem} \label{lm:abstractEqualizerExchangeFormula} \begin{enumerate}[leftmargin=*] \item Let $p, p': J_{\infty} \to B \NN$ be the functors determined by sending $\alpha_n$ to $0$ and $\beta_n$ to $1$, resp. $\beta_n$ to $0$ and $\alpha_n$ to $1$. Then $p$ and $p'$ are cartesian fibrations classified by the functor $(B \NN)^{\op} \simeq B \NN \to \Cat \subset \Cat_{\infty}$ that sends the unique object to $\ZZ_{\geq 0}^{\op}$ and $1$ to $s^{\op}$, the (opposite of the) successor endofunctor (c.f. Notn.~\ref{ntn:nonnegativeintegers}).
\item Suppose $C$ is an $\infty$-category and $X_{\bullet}: J_{\infty} \to C$ is a diagram. Denote all maps $(X_{\bullet})(\alpha_n)$ by $F$ and all maps $(X_{\bullet})(\beta_n)$ by $R$. Then assuming the limits exist in $C$, we have equivalences
\[ \begin{tikzcd}[row sep=4ex, column sep=4ex, text height=1.5ex, text depth=0.25ex]
\mr{eq}(\lim_{n,R} X_n \ar[shift left=1]{r}{\id} \ar[shift right=1]{r}[swap]{F} & \lim_{n,R} X_n) \simeq \lim_{J_{\infty}} (X_{\bullet}) \simeq \mr{eq}(\lim_{n,F} X_n \ar[shift left=1]{r}{\id} \ar[shift right=1]{r}[swap]{R} & \lim_{n,F} X_n),
\end{tikzcd} \]
where we also write $F$ and $R$ for the induced maps on the limits.
\end{enumerate} 
\end{lem}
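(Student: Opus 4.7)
First I would verify that $p$ is a cartesian fibration. Since $B\NN$ is generated as a category by the morphism $1 \colon \ast \to \ast$, it suffices to exhibit a $p$-cartesian lift of $1$ at every object $n$ of the fiber. I take this lift to be $\beta_n \colon n+1 \to n$. To check that $\beta_n$ is $p$-cartesian, I use the explicit presentation of $J_\infty$ from Def.~\ref{dfn:EqualizerInfinity}: morphisms $m \to n$ are in bijection with nonempty sieves of $[m-n]$, and under $p$ such a morphism lies over $k = m - n$. Any morphism $\gamma \colon m \to n$ with $k \geq 1$ then factors uniquely as $\beta_n \circ \gamma'$ with $\gamma'$ over $k-1$, because the sieve corresponding to $\gamma$ contains the maximum element precisely when the composition starts with $\beta_n$, and once this factorization is fixed the remaining sieve datum determines $\gamma'$. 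The fiber of $p$ at $\ast$ is the wide subcategory generated by the $\alpha_n$, canonically identified with $\ZZ_{\geq 0}^{\op}$. The cartesian pushforward by $1$ then sends $n \mapsto n+1$ on objects, and the defining relation $\beta_n \circ \alpha_{n+1} = \alpha_n \circ \beta_{n+1}$ shows that the pushforward of $\alpha_n$ is $\alpha_{n+1}$; hence the classifying endofunctor is $s^{\op}$. The argument for $p'$ is entirely symmetric with $\alpha$ and $\beta$ interchanged.

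\textbf{Part (2), first formula.} Using the cartesian fibration $p \colon J_\infty \to B\NN$, I would compute the limit as an iterated limit
\[ \lim_{J_\infty} X_\bullet \simeq \lim_{B\NN}\bigl(p_\ast X_\bullet\bigr), \]
where $p_\ast X_\bullet$ is the $B\NN$-diagram with value at $\ast$ given by the fiber limit. Because the fiber inclusion $\ZZ_{\geq 0}^{\op} \hookrightarrow J_\infty \times_{B\NN} (B\NN)_{\ast/}$ admits a left adjoint by cartesian pullback (and is therefore coinitial, preserving limits), this fiber limit is $\lim_{n,F} X_n$. The $B\NN$-action on this limit arises from $s^{\op}$ on the fiber together with the cartesian natural transformation whose components are the $\beta_n$, which under $X_\bullet$ become the maps $R \colon X_{n+1} \to X_n$. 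After identifying $\lim_n X_{n+1} \simeq \lim_n X_n$ via the cofinality of the shift $s^{\op} \colon \ZZ_{\geq 0}^{\op} \to \ZZ_{\geq 0}^{\op}$, this endomorphism becomes precisely the map on limits induced by $R$. Since a limit over $B\NN$ of an object with a self-map is the equalizer of the identity and that self-map, I conclude
\[ \lim_{J_\infty} X_\bullet \simeq \eq\bigl(\lim_{n,F} X_n \rightrightarrows \lim_{n,F} X_n\bigr) \]
with parallel arrows $\id$ and $R$.

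\textbf{Part (2), second formula, and main obstacle.} Applying the same argument to $p'$ in place of $p$ (exchanging the roles of $\alpha$ and $\beta$, and hence of $F$ and $R$) yields the second equivalence. The main subtlety is matching the abstract endomorphism on the fiber limit coming from cartesian pushforward with the concrete map $R$ in the statement; this requires carefully tracking the cofinality equivalence $\lim_n X_{n+1} \simeq \lim_n X_n$. A cleaner alternative that avoids this bookkeeping is to argue directly from the universal property: a cone under $L$ on $X_\bullet$ restricted to the $\alpha$-subcategory is precisely a map $L \to \lim_{n,F} X_n$, and the additional compatibility demanded by the $\beta_n$ amounts to requiring that the two composites $L \to \lim_{n,F} X_n$ — namely the original map, and the one obtained by postcomposing with $R$ and reindexing — agree, which is exactly the equalizer condition. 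The same argument with $\alpha$ and $\beta$ swapped gives the second formula.
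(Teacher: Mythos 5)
Your strategy is the same as the paper's: exhibit $\beta_n$ as a $p$-cartesian lift of $1 \in \NN$, identify the fiber of $p$ with $\ZZ_{\geq 0}^{\op}$ and the monodromy with $s^{\op}$, and then compute $\lim_{J_\infty}(X_\bullet)$ as an iterated limit through $p$ (and symmetrically through $p'$). The conclusions you reach are correct, but several intermediate claims are wrong.

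In Part (1), you assert that a morphism $\gamma: m \to n$ lies over $k = m - n$ under $p$. This is false: $p$ records the number of $\beta$'s occurring in $\gamma$, which by Def.~\ref{dfn:EqualizerInfinity} is $\max(S)$ for the associated sieve $S \subseteq [m-n]$, and this ranges over all of $\{0,\ldots,m-n\}$. Consequently your claim that every $\gamma : m \to n$ with $m - n \geq 1$ factors uniquely through $\beta_n$ fails: the all-$\alpha$ morphism $\alpha^{m-n}$ lies over $0$ and does not factor through $\beta_n$. The correct hypothesis for unique factorization is $p(\gamma) \geq 1$, i.e.\ $\gamma$ contains at least one $\beta$, as in the paper. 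Similarly, the leftmost factor of $\gamma$ is $\beta_n$ iff $1 \in S$ (equivalently $\max(S) \geq 1$), not iff $S$ contains the top element $m - n$; the latter holds only for $\gamma = \beta^{m-n}$. Under your reading of $p$ the fiber over $\ast$ would consist solely of identity morphisms, contradicting your subsequent (and correct) identification of the fiber with the $\alpha$-subcategory.

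In Part (2), the adjunction direction is reversed. The fiber inclusion $\iota: (J_\infty)_{\ast} \hookrightarrow J_\infty \times_{B\NN} (B\NN)^{\ast/}$ admits a \emph{right} adjoint, given by cartesian pushforward $(j,k) \mapsto j + k$; it is the property of being a left adjoint that makes $\iota$ coinitial and so allows the slice limit to be replaced by the fiber limit. There is in fact no left adjoint at all, since for $k > 0$ there are no morphisms $(j,k) \to (n,0)$ in the slice. The reindexing subtlety you flag at the end is genuine, but the paper's proof glosses over it in the same way; your universal-property sketch is a reasonable substitute.
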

\begin{proof} (1): To show that $p$ is a cartesian fibration, it suffices to show that $\beta_n$ is a $p$-cartesian edge for all $n \geq 0$. For this, suppose given a map $f:m \to n$ in $J_{\infty}$ such that $p(f)$ factors as $\ast \xto{k} \ast \xto{1} \ast$, i.e., $p(f) \geq 1$. Then we must have $m \geq n+1$ and $f \neq \alpha_n \cdots \alpha_{m-1}$, so $f$ factors uniquely through $\beta_n$ and the claim is proven. The case of $p'$ is identical. Finally, the description of the resulting action of $\NN$ on the fiber $\ZZ_{\geq 0}^{\op}$ is clear in view of the commutative diagram
\[ \begin{tikzcd}[row sep=4ex, column sep=4ex, text height=1.5ex, text depth=0.25ex]
0 & 1 \ar{l}[swap]{\alpha_0} & 2 \ar{l}[swap]{\alpha_1} & 3 \ar{l}[swap]{\alpha_2} & \cdots \ar{l}[swap]{\alpha_3} \\
1 \ar{u}{\beta_0} & 2 \ar{l}{\alpha_1} \ar{u}{\beta_2} & 3 \ar{l}{\alpha_2} \ar{u}{\beta_2} & 4 \ar{l}{\alpha_3} \ar{u}{\beta_3} & \cdots \ar{l}{\alpha_4}
\end{tikzcd} \]
and similarly with the roles of $\alpha_{\bullet}$ and $\beta_{\bullet}$ exchanged.

(2): Factoring $J_{\infty} \to \ast$ through the cartesian fibration $p$ and using the transitivity of right Kan extensions, we get that 
$$\lim_{J_{\infty}} (X_{\bullet}) \simeq \lim_{B \NN} \lim_{n, F} X_n, $$
where $\NN$ acts on $\lim_{n, F} X_n$ via $R$. But the limit over $B\NN$ is computed also as the equalizer of $\id$ and $R$, so we deduce the equivalence 
\[ \begin{tikzcd}[row sep=4ex, column sep=4ex, text height=1.5ex, text depth=0.25ex]
\lim_{J_{\infty}} (X_{\bullet}) \simeq \mr{eq}(\lim_{n,F} X_n \ar[shift left=1]{r}{\id} \ar[shift right=1]{r}[swap]{R} & \lim_{n,F} X_n).
\end{tikzcd} \]
Doing the same with $p'$ shows the other equivalence.
\end{proof}

\begin{proof}[Proof of Prop.~\ref{prp:fiberSequenceGenuineRealCyc}] Let $\alpha: \Phi^{\mu_p} X \xto{\simeq} X$ denote the structure map. Let $C = \Sp^{D_{2p^{\infty}}}_{C_2}$. For $X \in \Sp^{D_{2p^{\infty}}}$, we have that $\Psi^{\mu_{p^{\infty}}}(X) \simeq \ul{\map}_{C}(S^0,X)$ as the $C_2$-right adjoint to $\ul{\inf}^{\mu_{p^{\infty}}}$, and $\Psi^{\mu_{p^{\infty}}}(X) \simeq \lim_{n,F} \Psi^{\mu_{p^n}}(X)$ as we saw in Constr.~\ref{cnstr:InflationToInfinity}. Also, the $G$-equalizer is a full $G$-subcategory of the $G$-lax equalizer, so $G$-mapping spaces and spectra may be computed as in Lem.~\ref{lem:EqualizerMappingSpaces} if we take $F = \Phi^{\mu_p}$, $F'=\id$. We thus obtain a fiber sequence with the objects as in the first fiber sequence in the statement, and it remains to identify the maps. Because $\Phi^{\mu_p}(S^0) \simeq S^0$, one of the maps in that fiber sequence is homotopic to $\id$. On the other hand, we claim that
\[ \Phi^{\mu_p}: \ul{\map}_C(S^0,X) \to \ul{\map}_C(S^0,\Phi^{\mu_p} X) \]
is homotopic to the map $$\lim_{n,F} \Psi^{\mu_{p^n}}(X) \to \lim_{n,F} \Psi^{\mu_{p^n}} \Phi^{\mu_p}(X)$$ induced by taking the limit of the natural transformations $\Psi^{\mu_{p^n}} X \to \Psi^{\mu_{p^{n-1}}} \Phi^{\mu_p} X$ defined in Constr.~\ref{dfn:ParamRStructureMaps}. Indeed, since the functor $\Phi^{\mu_p}$ is obtained as the inverse limit of functors $\Phi^{\mu_p}: \Sp^{D_{2p^n}} \to \Sp^{C_2}$, the map $\Phi^{\mu_p}$ of $C_2$-spectra is also obtained as the inverse limits of maps
\[\Phi^{\mu_p}: \ul{\map}_{C_n}(S^0,X_n) \to \ul{\map}_{C_{n-1}}(S^0,\Phi^{\mu_p} X_n) \]
where $C_n = \sO_{C_2}^{\op} \times_{\sO_{D_{2p^n}}^{\op}} \ul{\Sp}^{D_{2p^{n}}}$ and $X_n$ is the restriction of $X$ to $\Sp^{D_{2p^{n}}}$. But with respect to the $C_2$-adjunction $\Phi^{\mu_p} \dashv i_{\ast}$ and the resulting equivalence
\[ \ul{\map}_{C_{n-1}}(S^0,\Phi^{\mu_p} X_n) \simeq \ul{\map}_{C_{n}}(S^0, i_{\ast} \Phi^{\mu_p} X_n),  \]
we may identify this map as given by $\ul{\map}_{C_{n}}(S^0,-) \simeq \Psi^{\mu_{p^n}}$ on the unit for $X_n$, which is the map of Constr.~\ref{dfn:ParamRStructureMaps}. It follows that the composite
\[ \ul{\map}_C(S^0,X) \xto{\Phi^{\mu_p}} \ul{\map}_C(S^0,\Phi^{\mu_p} X) \xto{\alpha_{\ast}} \ul{\map}_C(S^0, X) \]
is homotopic to $R$, and we deduce the first fiber sequence.

Because the maps $F$ and $R$ commute, by Rmk.~\ref{rem:spineOfEqualizerInfinity} the $F$ and $R$ maps extend to define a diagram $J_{\infty} \to \Sp^{C_2}$. Then by Lem.~\ref{lm:abstractEqualizerExchangeFormula}, we deduce the last equivalence and second fiber sequence.
\end{proof}

\begin{rem} Although they allude to the corepresentability of $\TC^{\mr{gen}}$ in the introduction \cite[p.~207]{NS18}, Nikolaus and Scholze choose to \emph{define} $\TC^{\mr{gen}}(-,p)$ via the fiber sequence \cite[Def.~II.4.4]{NS18}
\[ \TC^{\mr{gen}}(X,p) \to \mr{TR}(X,p) \xtolong{\id - F}{1} \mr{TR}(X,p). \]
The $C_2$-corepresentability of $\ul{\TCR}^{\mr{gen}}(-,p)$, or simple repetition of the proof of Prop.~\ref{prp:classicalTCRcorepresentable}, immediately implies that $\TC^{\mr{gen}}(-,p)$ is corepresentable by the unit. Alternatively, one may deduce this from results of Blumberg-Mandell \cite{BM16} and the comparison \cite[Thm.~II.3.7]{NS18} as noted in \cite[Rmk.~II.6.10]{NS18}.
\end{rem}

\section{Comparison of the theories}
\label{section:ComparisonSection}

Let $[X, \alpha: \Phi^{\mu_p} X \xto{\simeq} X]$ be a genuine real $p$-cyclotomic spectrum. From Setup \ref{setup:Dihedral}, consider the recollement
\[ \begin{tikzcd}[row sep=4ex, column sep=8ex, text height=1.5ex, text depth=0.5ex]
\Fun_{C_2}(B^t_{C_2} \mu_{p^{\infty}}, \ul{\Sp}^{C_2}) \ar[shift right=1,right hook->]{r}[swap]{\sF^{\vee}_b } & \Sp^{D_{2p^{\infty}}} \ar[shift right=2]{l}[swap]{\sU_b} \ar[shift left=2]{r}{\Phi^{\mu_p}} & \Sp^{D_{2p^{\infty}}} \ar[shift left=1,left hook->]{l}{i_{\ast}}
\end{tikzcd} \]
and the morphism induced by the unit of $\sU_b \dashv \sF^{\vee}_b$
$$ \beta: \sU_b \Phi^{\mu_p} (X) \to  \sU_b \Phi^{\mu_p} \sF^{\vee}_b \sU_b (X) \simeq (\sU_b X)^{t_{C_2} \mu_p}. $$

Choosing an inverse $\alpha^{-1}$, let $\varphi = \beta \circ (\sU_b \alpha^{-1})$. Then $[\sU_b X, \varphi]$ is a real $p$-cyclotomic spectrum. More generally, the lax monoidal natural transformation
\[ \sU^b \Phi^{\mu_p} \to \sU_b \Phi^{\mu_p} \sF^{\vee}_b \sU_b \simeq t_{C_2} \mu_p \sU_b \]
defines a symmetric monoidal functor
\[ \Eq_{\id:\Phi^{\mu_p}}(\Sp^{D_{2p^{\infty}}}) \to \RCycSp_p \]
via the universal property of the lax equalizer (as explained in \cite[Prop.~II.3.2]{NS18}), and precomposition with the (symmetric monoidal) equivalence of Rmk.~\ref{rem:EqualizerSwap} defines a symmetric monoidal functor
\[ \sU_{\RR}: \RCycSp^{\mr{gen}}_p \to \RCycSp_p \]
that lifts the functor $\sU_b: \Sp^{D_{2p^{\infty}}} \to \Fun_{C_2}(B^t_{C_2} \mu_{p^{\infty}}, \ul{\Sp}^{C_2})$ through the functors that forget the structure maps.

\begin{dfn} Let $[X,\varphi]$ be a real $p$-cyclotomic spectrum. Then $[X,\varphi]$ is \emph{underlying bounded below} if the underlying spectrum of $X$ is bounded below.

Similarly, we say that a genuine real $p$-cyclotomic spectrum $[X,\alpha]$ is \emph{underlying bounded below} if the underlying spectrum of $X$ is bounded below.\footnote{This implies that the underlying spectra of $\Phi^{\mu_{p^n}}(X)$ are bounded below for all $n \geq 0$.}
\end{dfn}

Let us restate the main theorem of this paper from the introduction.

\begin{thm}[See Thm.~\ref{thm:MainTheoremRestated}] \label{thm:MainTheoremEquivalenceBddBelow} $\sU_{\RR}$ restricts to an equivalence on the full subcategories of underlying bounded below objects.
\end{thm}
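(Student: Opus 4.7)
My plan is to mimic the Nikolaus--Scholze strategy, now equipped with the parametrized tools developed in the paper: the reconstruction theorem for $G$-spectra as right-lax limits over $\sd(\fS[G])$ (Thm.~\ref{thm:GeometricFixedPointsDescriptionOfGSpectra}), the $1$-generated/extendable dictionary (Thm.~\ref{thm:OneGenerationAndExtension}), and the dihedral Tate orbit lemma (Lem.~\ref{lem:dihedralTOLEven}, \ref{lem:dihedralTOLOdd}). The idea is that an underlying bounded below genuine real $p$-cyclotomic spectrum is reconstructible from its `Borel data', and this Borel data is precisely the same thing as an underlying bounded below Borel real $p$-cyclotomic spectrum.

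First, I would work at finite level. Using the inverse limit presentation $\Sp^{D_{2p^{\infty}}} = \lim_n \Sp^{D_{2p^n}}$ together with the cyclotomic identification $\Phi^{\mu_p} X \simeq X$, one reduces $\RCycSp^{\mr{gen}}_p$ to a limit of $\infty$-categories built from $\Sp^{D_{2p^n}}$, and similarly $\RCycSp_p$ to a limit built from $\Fun_{C_2}(B^t_{C_2}\mu_{p^n}, \ul{\Sp}^{C_2})$. Via the dihedral recollement of Exm.~\ref{exm:DihedralRecollement}, the successive geometric fixed points for $\mu_p \subset \mu_{p^2} \subset \cdots \subset \mu_{p^n}$ combined with Thm.~\ref{thm:GeometricFixedPointsDescriptionOfGSpectra} identifies $\Sp^{D_{2p^n}}$ with a right-lax limit of its $\Phi$-local slices. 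The cyclotomic condition identifies all these slices with a single copy of $\Fun_{C_2}(B^t_{C_2}\mu_{p^\infty}, \ul{\Sp}^{C_2})$ (using the isomorphism $\mu_{p^\infty}/\mu_p \cong \mu_{p^\infty}$), and the slice data arrangement reduces via Var.~\ref{vrn:ReconstructionEquivalence} to the same structure that defines $\RCycSp_p$, but with generalized parametrized Tate constructions $\tau^{\mu_{p^k}}_{C_2}$ as gluing functors rather than the simple $t_{C_2}\mu_p$.

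Next I would apply Thm.~\ref{thm:OneGenerationAndExtension}: comparing $\Fun^{\cocart}_{/\Delta^n}(\sd(\Delta^n), C)_{\gen{1}}$ with $\Fun^{\cocart}_{/\Delta^n}(\sd_1(\Delta^n), C)_{\ext}$ shows that the extra iterated-Tate data in the genuine theory at level $n$ is redundant \emph{provided} every object is $1$-generated, i.e.\ provided the canonical maps $(X^1)^{\tau \mu_{p^k}} \to ((X^1)^{t_{C_2}\mu_p})^{h_{C_2}\mu_p \cdots h_{C_2}\mu_p \, t_{C_2}\mu_p}$ arising from the nested structure are equivalences. This is precisely where the dihedral Tate orbit lemma (Lem.~\ref{lem:dihedralTOLEven}, \ref{lem:dihedralTOLOdd}) enters: by iteratively factoring the generalized Tate construction through $(X_{h_{C_2}\mu_p})^{t_{C_2}(\mu_{p^2}/\mu_p)} \simeq 0$ under the underlying boundedness hypothesis, one checks (as in \cite[Lem.~II.4.2]{NS18} and Cor.~\ref{cor:FormulaForGeomFixedPointsOfCompleteSpectrum}) that every layer of generalized Tate terms collapses to an iterated ordinary parametrized Tate. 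This yields Prop.~\ref{prp:EquivalenceOnBoundedBelowAtFiniteLevel}, asserting that at each finite level $n$ the forgetful functor between bounded below subcategories is an equivalence.

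Finally, I would pass to the inverse limit $n \to \infty$ along restriction to assemble the finite-level equivalences into the desired equivalence of $\infty$-categories, checking that the bounded-below subcategories are stable under the restriction functors (since restriction preserves underlying spectra) and that the equivalence on the nose follows from compatibility of the comparison functors with restriction (Cnstr.~\ref{restrictionGeometricLoci}). The compatibility with $\TCR$ versus $\TCR^{\mr{gen}}$ then follows from $C_2$-corepresentability (Prop.~\ref{prp:C2representabilityOfTC}, \ref{prp:classicalTCRcorepresentable}) and the fact that $\sU_{\RR}$ carries the unit to the unit. The main obstacle, as in \cite{NS18}, is the vanishing step: verifying that the dihedral Tate orbit lemma suffices to make the $1$-generation condition automatic \emph{throughout} the iterated nested Tate constructions indexed by $\sd(\fS[D_{2p^n}])$, not just at a single stage; this requires a careful induction on $n$ combining Lem.~\ref{lm:equivalentOneGenerationConditions} with Prop.~\ref{prp:ResidualAction} (to decompose parametrized Tate constructions along the filtration $\mu_p \subset \mu_{p^2} \subset \cdots$) and the recollement splitting principle of Prop.~\ref{prp:RecollementsOfRecollements}.
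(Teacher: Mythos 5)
Your proposal is correct and follows essentially the same route as the paper: reconstruct $\Sp^{D_{2p^n}}$ via the relative geometric locus (Var.~\ref{vrn:ReconstructionEquivalence}), use the $1$-generated/extendable dictionary (Thm.~\ref{thm:OneGenerationAndExtension}) together with the dihedral Tate orbit lemma to establish the finite-level equivalence on underlying bounded below objects (Prop.~\ref{prp:EquivalenceOnBoundedBelowAtFiniteLevel}), and pass to the inverse limit over $n$. The one step your outline glosses over is the translation between the equalizer of the shift functor on the right-lax limit over $\ZZ_{\geq 0}$ and the lax equalizer $\LEq_{\id:t_{C_2}\mu_p}(-)$ that actually defines $\RCycSp_p$, which the paper handles via the technical Lem.~\ref{lem:LaxEqualizerGenericEquivalence}.
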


\begin{cor} \label{cor:TCRFormulasEquivalent} Let $X$ be a genuine real $p$-cyclotomic spectrum that is bounded below. Then we have a canonical equivalence
\[ \TCR^{\mr{gen}}(X,p) \simeq \TCR(\sU_{\RR} X,p). \]
\end{cor}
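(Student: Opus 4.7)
The strategy is to leverage the $C_2$-corepresentability of both flavors of real topological cyclic homology by their respective monoidal units, and to promote the comparison functor $\sU_{\RR}$ to a $C_2$-functor that inherits an equivalence statement on underlying bounded below objects. Specifically, Prop.~\ref{prp:classicalTCRcorepresentable} and Prop.~\ref{prp:C2representabilityOfTC} yield canonical equivalences of $C_2$-spectra
\[ \TCR^{\mr{gen}}(X,p) \simeq \ul{\map}_{\ul{\RCycSp}^{\mr{gen}}_p}(\mathbf{1}^{\mr{gen}}, X), \qquad \TCR(\sU_{\RR}X,p) \simeq \ul{\map}_{\ul{\RCycSp}_p}(\mathbf{1}, \sU_{\RR}X), \]
where $\mathbf{1}^{\mr{gen}} = \mr{triv}^{\mr{gen}}_{\RR,p}(S^0)$ and $\mathbf{1} = \mr{triv}_{\RR,p}(S^0)$ are the units, and since $\sU_{\RR}$ is symmetric monoidal it sends $\mathbf{1}^{\mr{gen}}$ to $\mathbf{1}$.

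First, I would extend $\sU_{\RR}$ to a $C_2$-symmetric-monoidal $C_2$-functor $\ul{\sU}_{\RR}: \ul{\RCycSp}^{\mr{gen}}_p \to \ul{\RCycSp}_p$ by repeating the construction in the parametrized setting: the unit of the adjunction $\sU_b \dashv \sF^{\vee}_b$ on the $C_2$-recollement from Setup~\ref{setup:Dihedral} produces a lax monoidal natural transformation of $C_2$-endofunctors $\sU_b \Phi^{\mu_p} \to \ul{t_{C_2} \mu_p} \circ \sU_b$, and the universal property of the $C_2$-lax equalizer then supplies $\ul{\sU}_{\RR}$, which over the fiber $C_2/1$ recovers the Nikolaus--Scholze forgetful functor $\sU: \CycSp^{\mr{gen}}_p \to \CycSp_p$ of \cite[\S II.3]{NS18}. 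In particular $\ul{\sU}_{\RR}$ preserves units.

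Functoriality of $\ul{\sU}_{\RR}$ on $C_2$-mapping spectra then supplies a canonical comparison map
\[ \ul{\map}_{\ul{\RCycSp}^{\mr{gen}}_p}(\mathbf{1}^{\mr{gen}}, X) \to \ul{\map}_{\ul{\RCycSp}_p}(\ul{\sU}_{\RR}\mathbf{1}^{\mr{gen}}, \ul{\sU}_{\RR}X) \simeq \ul{\map}_{\ul{\RCycSp}_p}(\mathbf{1}, \sU_{\RR}X), \]
which combined with the corepresentability equivalences yields the desired map $\TCR^{\mr{gen}}(X,p) \to \TCR(\sU_{\RR}X,p)$. To see this is an equivalence, it suffices to check that it is one after applying each categorical fixed points functor $\Psi^H$ for $H \leq C_2$, and by the commutative diagram at the end of Constr.~\ref{cnstr:MappingSpectrum} this reduces to equivalences of ordinary mapping spaces in each fiber of $\ul{\RCycSp}^{\mr{gen}}_p$.

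The equivalence of mapping spaces in the fiber over $C_2/C_2$ follows from Thm.~\ref{thm:MainTheoremEquivalenceBddBelow}, since both $\mathbf{1}^{\mr{gen}}$ and $X$ are underlying bounded below by hypothesis. The equivalence in the fiber over $C_2/1$ follows from the Nikolaus--Scholze theorem (Thm.~\ref{thm:MainTheoremNikolausScholze}) applied to the underlying $p$-cyclotomic spectrum of $X$, which is bounded below because the underlying spectrum of $X$ is. The main point of friction is verifying that the $C_2$-enhancement $\ul{\sU}_{\RR}$ is genuinely compatible with restriction along $\sO_{C_2}^{\op}$, so that its fiber over $C_2/1$ agrees with $\sU$ from \cite{NS18} and hence Thm.~\ref{thm:MainTheoremNikolausScholze} applies; but this compatibility is essentially built into the construction of the $C_2$-recollement on $\Sp^{D_{2p^\infty}}_{C_2}$ from Setup~\ref{setup:Dihedral} together with the construction of the $C_2$-lax equalizer.
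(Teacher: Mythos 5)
Your proposal is correct and follows essentially the same route as the paper: $C_2$-corepresentability of both functors by their units (Prop.~\ref{prp:C2representabilityOfTC}, Prop.~\ref{prp:classicalTCRcorepresentable}), the fact that the symmetric monoidal functor $\sU_{\RR}$ preserves units, and full faithfulness of the comparison on underlying bounded below objects via Thm.~\ref{thm:MainTheoremEquivalenceBddBelow}. Your extra step of checking the fiber over $C_2/1$ separately via the Nikolaus--Scholze theorem is a sound way to handle the underlying-spectrum part of the $C_2$-mapping spectrum, which the paper's one-line proof leaves implicit (and which is in any case covered since \S\ref{section:ComparisonSection} reproves \cite[Thm.~II.6.3]{NS18} alongside the real case).
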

\begin{proof} Because $\sU_{\RR}$ preserves the unit (which is bounded below), this follows immediately from Thm.~\ref{thm:MainTheoremEquivalenceBddBelow} and the $C_2$-corepresentability of $\TCR(-,p)$ and $\TCR^{\mr{gen}}(-,p)$ by the unit (Prop.~\ref{prp:C2representabilityOfTC} and Prop.~\ref{prp:classicalTCRcorepresentable}).
\end{proof}

As with the comparison theorem for $p$-cyclotomic spectra \cite[Thm.~II.6.3]{NS18}, the key computation that establishes Thm.~\ref{thm:MainTheoremEquivalenceBddBelow} is a dihedral extension of the Tate orbit lemma, to which we turn first. Using this, we indicate how the `decategorified' version of Thm.~\ref{thm:MainTheoremEquivalenceBddBelow} in the form of Cor.~\ref{cor:TCRFormulasEquivalent} follows by the same arguments as in \cite[\S II.4]{NS18}. We then prove Thm.~\ref{thm:MainTheoremEquivalenceBddBelow}, proceeding in two stages: first, we obtain a comparison result at `finite level' (Prop.~\ref{prp:EquivalenceOnBoundedBelowAtFiniteLevel}) as a formal consequence of the equivalence between $1$-generated and extendable objects (Thm.~\ref{thm:OneGenerationAndExtension}), and we then promote this to Thm.~\ref{thm:MainTheoremEquivalenceBddBelow} by executing a few more formal maneuvers. Finally, we apply Cor.~\ref{cor:TCRFormulasEquivalent} to compute $\TCR^{\mr{gen}}(-,p)$ of $\THR(H \ul{\FF_p})$ for $p$ an odd prime (Thm.~\ref{thm:TCROddPrimeComputation}).

\subsection{The dihedral Tate orbit lemma}

In \cite[\S I.2]{NS18}, Nikolaus and Scholze prove the \emph{Tate orbit lemma}: for a Borel $C_{p^2}$-spectrum $X$ that is bounded below, the spectrum $(X_{h C_p})^{t_{C_p}}$ vanishes \cite[Lem.~I.2.1]{NS18}. In this subsection, we give a dihedral refinement of the Tate orbit lemma (Lem.~\ref{lem:dihedralTOLEven} for $p=2$ and Lem.~\ref{lem:dihedralTOLOdd} for $p$ odd). As a corollary, we then deduce that $\TCR^{\mr{gen}}(-,p)$ is computed by the fiber sequence formula for $\TCR(-,p)$ on bounded below genuine real $p$-cyclotomic spectra (Cor.~\ref{cor:decategorifiedEasy}).

\begin{dfn}[{\cite[\S 6]{BarwickGlasmanShah}}] The \emph{homotopy $t$-structure} on $\Sp^G$ is the $t$-structure \cite[Def.~1.2.1.1]{HA} determined by the pair of full subcategories $\Sp^G_{\geq 0}$, $\Sp^G_{\leq 0}$ of $G$-spectra $X$ such that $X^H$ is connective, resp. coconnective for all subgroups $H \leq G$.

A $G$-spectrum $X$ is \emph{bounded below} if $X$ is bounded below in the homotopy $t$-structure on $\Sp^G$, i.e., for all subgroups $H \leq G$, $X^H$ is bounded below.
\end{dfn}

\begin{rem}[{\cite[Exm.~6.3]{BarwickGlasmanShah}}] The heart of the homotopy $t$-structure on $\Sp^G$ is the category of abelian group-valued Mackey functors on finite $G$-sets. In addition, the homotopy $t$-structure on $\Sp^G$ is accessible \cite[Def.~1.4.4.12]{HA} and left and right complete \cite[\S 1.2.1]{HA}.
\end{rem}

\begin{dfn} \label{dfn:sliceBoundedBelow} A $G$-spectrum $X$ is \emph{slice bounded below} if for all subgroups $H \leq G$, $X^{\phi H}$ is bounded below.
\end{dfn}

\begin{rem} By \cite[Thm.~A]{HillYarnall}, a $G$-spectrum $X$ is slice bounded below in the sense of Def.~\ref{dfn:sliceBoundedBelow} if and only if it is slice $n$-connective for some $n>-\infty$ in the sense of the slice filtration \cite[\S 4]{HHR}.
\end{rem}

When $G = C_{p^n}$, there is no distinction between bounded below and slice bounded below $G$-spectra.

\begin{lem} \label{lem:BddBelowEqualsSliceBddBelow} Suppose $X \in \Sp^{C_{p^n}}$. Then $X$ is bounded below if and only if $X$ is slice bounded below.
\end{lem}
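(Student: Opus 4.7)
The plan is to argue by induction on $n$. The base case $n=0$ is tautological since $X^{\phi 1}=X^1=X$. For the inductive step, suppose the equivalence has been established for $C_{p^{n-1}}$ and let $X\in\Sp^{C_{p^n}}$. Because every proper subgroup of $C_{p^n}$ is contained in $C_{p^{n-1}}$, and both $X^K$ and $X^{\phi K}$ for $K\leq C_{p^{n-1}}$ agree (up to underlying spectrum) with the corresponding fixed points of $\res^{C_{p^n}}_{C_{p^{n-1}}}X$, the induction hypothesis applied to $\res^{C_{p^n}}_{C_{p^{n-1}}}X$ shows that the condition ``$X^K$ bounded below for all proper $K$'' is equivalent to ``$X^{\phi K}$ bounded below for all proper $K$''. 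Thus, assuming this common hypothesis, it suffices to prove that $X^{C_{p^n}}$ is bounded below if and only if $X^{\phi C_{p^n}}$ is.

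The key step is to invoke the isotropy separation cofiber sequence for the family $\cP$ of proper subgroups of $C_{p^n}$, which, upon applying $\Psi^{C_{p^n}}$, takes the form
\[ \Psi^{C_{p^n}}(X\otimes E\cP_+)\to X^{C_{p^n}}\to X^{\phi C_{p^n}}. \]
By the two-out-of-three closure of bounded below spectra in cofiber sequences, it will be enough to show that $\Psi^{C_{p^n}}(X\otimes E\cP_+)$ is bounded below under the hypothesis that each $X^K$ with $K\in\cP$ is bounded below. The bulk of the argument will go into this step; once it is established, the desired equivalence at the top level follows immediately.

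To analyze $\Psi^{C_{p^n}}(X\otimes E\cP_+)$, I realize $E\cP_+$ as a filtered colimit of finite $C_{p^n}$-CW spectra whose cells have the form $C_{p^n}/K_+\otimes S^m$ with $K\in\cP$ and $m\geq 0$. Because $\Psi^{C_{p^n}}$ preserves all colimits (its left adjoint, inflation, is colimit-preserving and carries compact objects to compact objects, as noted in the paper's conventions), the smash with $X$ and $\Psi^{C_{p^n}}$ commute past this colimit. Using the projection formula together with the ambidexterity equivalence $\ind^{C_{p^n}}_K\simeq\coind^{C_{p^n}}_K$, one computes
\[ \Psi^{C_{p^n}}(C_{p^n}/K_+\otimes X)\simeq \Psi^K\res^{C_{p^n}}_K X = X^K. \]
Taking $-N$ to be a uniform lower bound for the finitely many $X^K$ with $K\in\cP$, each cell contributes an $m$-fold suspension of an $(-N)$-connective spectrum with $m\geq 0$, so every finite stage of the cellular filtration is $(-N)$-connective, and hence so is the filtered colimit. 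This gives the required bound on $\Psi^{C_{p^n}}(X\otimes E\cP_+)$. The main subtlety is the compatibility of $\Psi^{C_{p^n}}$ with the cellular filtration of $E\cP$ and the identification $\Psi^{C_{p^n}}\ind^{C_{p^n}}_K\simeq\Psi^K$, but both are standard consequences of the Wirth\-mül\-ler isomorphism for the finite group $C_{p^n}$.
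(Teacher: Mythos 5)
Your argument is correct, and it takes a different route from the paper's. The paper also inducts on $n$ and reduces to comparing the top categorical and geometric fixed points, but it does so via the fiber sequence $(X^1)_{h C_{p^n}} \to X^{C_{p^n}} \to (\Phi^{C_p} X)^{C_{p^{n-1}}}$ coming from isotropy separation for the \emph{minimal} nontrivial subgroup $C_p$ (i.e.\ the Borel/$\Phi^{C_p}$ recollement, as in \cite[Prop.~II.2.13]{NS18}); the left-hand term is identified with homotopy orbits of the underlying spectrum by the Adams isomorphism, hence is bounded below for free, and the induction hypothesis is then fed into the third term to unwind $(\Phi^{C_p}X)^{C_{p^{n-1}}}$ down to $X^{\phi C_{p^n}}$. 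You instead use isotropy separation for the family $\cP$ of \emph{all} proper subgroups, which produces $X^{\phi C_{p^n}}$ directly as the cofiber and shifts the work onto showing that $\Psi^{C_{p^n}}(X \otimes E\cP_+)$ is bounded below; your cell-by-cell analysis of $E\cP_+$, using $\Psi^{C_{p^n}}(C_{p^n}/K_+ \otimes X) \simeq X^K$ via the projection formula and ambidexterity, is a valid (and standard) way to do this, and all the inputs you invoke (colimit-preservation of $\Psi^{C_{p^n}}$, existence of a $G$-CW structure on $E\cP$ with isotropy in $\cP$ and cells in non-negative dimensions, closure of bounded-below spectra under cofiber sequences and filtered colimits) are available. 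The trade-off is that the paper's choice of family makes the ``error term'' bounded below with no cellular argument but requires an extra inductive step to identify the third term, whereas your choice gives the comparison with $X^{\phi C_{p^n}}$ in one stroke at the cost of the skeletal filtration argument. One cosmetic remark: your reduction needs all $X^K$ for proper $K$ bounded below (not just $X^1$), but that is exactly what your inductive normalization provides, so there is no gap.
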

\begin{proof} We proceed by induction on $n$. The base case $n=0$ is trivial. Let $n>0$ and suppose we have proven the lemma for $C_{p^{n-1}}$. Let $X \in \Sp^{C_{p^n}}$ and consider the recollement
\[ \begin{tikzcd}[row sep=4ex, column sep=6ex, text height=1.5ex, text depth=0.25ex]
\Fun(B C_{p^n},\Sp) \ar[shift right=1,right hook->]{r}[swap]{j_{\ast}} & \Sp^{C_{p^n}} \ar[shift right=2]{l}[swap]{j^{\ast}} \ar[shift left=2]{r}{i^{\ast} = \Phi^{C_p}} & \Sp^{C_{p^{n-1}}} \simeq \Sp \ar[shift left=1,left hook->]{l}{i_{\ast}}
\end{tikzcd} \]
from which we obtain the fiber sequence $(X^1)_{h C_{p^n}} \to X^{C_{p^n}} \to (\Phi^{C_p} X)^{C_{p^{n-1}}}$ as in \cite[Prop.~II.2.13]{NS18}. By the inductive hypothesis, we may suppose that both $X^{C_{p^k}}$ and $X^{\phi C_{p^k}}$ are bounded below for all $0 \leq k < n$. Then noting that $(\Phi^{C_p} X)^{\phi C_{p^k}} \simeq X^{\phi C_{p^{k+1}}}$, we deduce from the fiber sequence that $X^{C_{p^n}}$ is bounded below if and only if $X^{\phi C_{p^n}}$ is bounded below.
\end{proof}

Note that the restriction functors $\res^G_H: \Sp^G \to \Sp^H$ are $t$-exact with respect to the homotopy $t$-structures. Consequently, we can make the following definition.

\begin{dfn} Let $\underline{\Sp}^G_{\geq n}, \underline{\Sp}^G_{\leq m} \subset \underline{\Sp}^G$ be the full $G$-subcategories defined fiberwise over $G/H$ on objects $X \in \Sp^H_{\geq n}$, resp. $X \in \Sp^H_{\leq m}$.
\end{dfn}

\begin{lem} The inclusions $\underline{\Sp}^G_{\geq n} \subset \underline{\Sp}^G$, resp. $\underline{\Sp}^G_{\leq m} \subset \underline{\Sp}^G$ admit right $G$-adjoints $\tau_{\geq n}$, resp. left $G$-adjoints $\tau_{\leq m}$.
\end{lem}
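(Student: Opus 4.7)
The plan is to construct the adjoints fiberwise and then promote them to $G$-adjoints via a Beck--Chevalley condition. First, I would invoke the homotopy $t$-structure on each fiber $\Sp^H$ (for $H \leq G$) to obtain truncation functors $\tau_{\geq n}^H: \Sp^H \to \Sp^H_{\geq n}$ right adjoint to the inclusion, and $\tau_{\leq m}^H: \Sp^H \to \Sp^H_{\leq m}$ left adjoint to the inclusion.

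Next, I would observe that for any map of $G$-orbits $f: G/K \to G/H$, the associated cocartesian pushforward in $\underline{\Sp}^G$ is the restriction functor $\res^H_K: \Sp^H \to \Sp^K$, and this functor is $t$-exact. Indeed, for any subgroup $L \leq K$, one has $(\res^H_K X)^L \simeq X^L$, so restriction preserves both connectivity and coconnectivity conditions. Consequently $\res^H_K$ restricts to functors $\Sp^H_{\geq n} \to \Sp^K_{\geq n}$ and $\Sp^H_{\leq m} \to \Sp^K_{\leq m}$, confirming that $\underline{\Sp}^G_{\geq n}$ and $\underline{\Sp}^G_{\leq m}$ really are sub-$G$-$\infty$-categories of $\underline{\Sp}^G$ (i.e., the inclusions preserve cocartesian edges). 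Moreover, because $\res^H_K$ is $t$-exact, the canonical natural transformations
\[ \res^H_K \circ \tau_{\geq n}^H \to \tau_{\geq n}^K \circ \res^H_K, \quad \tau_{\leq m}^K \circ \res^H_K \to \res^H_K \circ \tau_{\leq m}^H \]
coming from the unit/counit of the respective fiberwise adjunctions are equivalences.

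Finally, I would assemble the pointwise adjoints by invoking \cite[Prop.~7.3.2.11]{HA} (equivalently, the dual of \cite[Prop.~7.3.2.6]{HA}), whose hypothesis is precisely the Beck--Chevalley equivalences just verified. This produces a $G$-functor $\tau_{\geq n}: \underline{\Sp}^G \to \underline{\Sp}^G_{\geq n}$ restricting on each fiber to $\tau_{\geq n}^H$ and right $G$-adjoint to the inclusion, together with a $G$-functor $\tau_{\leq m}: \underline{\Sp}^G \to \underline{\Sp}^G_{\leq m}$ restricting on each fiber to $\tau_{\leq m}^H$ and left $G$-adjoint to the inclusion. There is no real obstacle in this argument: the only substantive input is the $t$-exactness of restriction, which is immediate from the description of the homotopy $t$-structure via categorical fixed points and the compatibility $(\res^H_K X)^L \simeq X^L$ for $L \leq K \leq H$.
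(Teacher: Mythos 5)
Your proof is correct and follows essentially the same route as the paper's one-line argument: produce the truncation functors fiberwise from the homotopy $t$-structure on each $\Sp^H$, and assemble them into $G$-adjoints via \cite[Prop.~7.3.2.6]{HA} and \cite[Prop.~7.3.2.11]{HA}, with the required Beck-Chevalley equivalences supplied by the $t$-exactness of the restriction functors. The only (cosmetic) difference is that the paper also cites the $t$-exactness of the induction functors, which one needs only if the hypotheses of \cite[Prop.~7.3.2.6]{HA} are checked through preservation of cartesian edges (i.e., coinduction $\simeq$ induction by ambidexterity); your route through cocartesian edges and the commutation of truncation with restriction avoids this.
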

\begin{proof} These adjunctions exist fiberwise, so we deduce both statements from the $t$-exactness of the restriction and induction functors using \cite[Prop.~7.3.2.6]{HA} and \cite[Prop.~7.2.3.11]{HA}.
\end{proof}

\begin{rem} For a $G$-$\infty$-category $K$, we have an induced `pointwise' $t$-structure on $\Fun_G(K, \ul{\Sp}^G)$ determined by $\Fun_G(K, \ul{\Sp}^G_{\geq 0})$ and $\Fun_G(K, \ul{\Sp}^G_{\leq 0})$.
\end{rem}

\begin{lem} \label{lem:TateConvergence} Let $K$ be a $G$-$\infty$-category and $f: K \to \underline{\Sp}^G$ a $G$-functor. Then the canonical maps
\[ \begin{tikzcd}[row sep=4ex, column sep=4ex, text height=1.5ex, text depth=0.25ex]
\colim^G_K f \ar{r} & \lim_{n} \colim^G_K \tau_{\leq n} f
\end{tikzcd}, \quad 
\begin{tikzcd}[row sep=4ex, column sep=4ex, text height=1.5ex, text depth=0.25ex]
\colim_{n} \lim^G_K \tau_{\geq -n} f \ar{r} & \lim^G_K f
\end{tikzcd} \]
are equivalences. Consequently, if $X: B_{G/N}^{\psi} N \to \ul{\Sp}^{G/N}$ is a $G/N$-spectrum with $\psi$-twisted $N$-action, then the canonical maps
\[ X^{t[\psi]} \to \lim_n (\tau_{\leq n} X)^{t[\psi]}, \quad \colim_n (\tau_{\geq -n} X)^{t[\psi]} \to X^{t[\psi]} \]
are equivalences
\end{lem}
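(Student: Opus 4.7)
The plan is to first prove the abstract $G$-(co)limit statement by a Mackey-functor homotopy group argument, and then deduce the Tate consequence formally from the definition $X^{t[\psi]} = \cofib(X_{h[\psi]} \to X^{h[\psi]})$ together with the fact that cofibers commute with both limits and colimits in a stable $\infty$-category.

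For the abstract statement, let $p : K \to \sO_G^{\op}$ be the structure map of $K$. I first observe that the pullback $p^\ast : \ul{\Sp}^G \to \ul{\Fun}_G(K, \ul{\Sp}^G)$ is $t$-exact with respect to the pointwise $t$-structure on the target, because it is computed fiberwise by the restriction functors $\res^G_H$, and these are $t$-exact for the homotopy $t$-structure on each $\Sp^H$. By adjunction, $p_! = \colim^G_K$ is right $t$-exact and $p_\ast = \lim^G_K$ is left $t$-exact.

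For the first equivalence, set $Y = \colim^G_K f$ and $Y_n = \colim^G_K \tau_{\leq n} f$. Applying $\colim^G_K$ to the pointwise fiber sequence $\tau_{\geq n+1} f \to f \to \tau_{\leq n} f$ yields a fiber sequence whose first term $\colim^G_K \tau_{\geq n+1} f$ lies in $\Sp^G_{\geq n+1}$ by right $t$-exactness. Hence $\ul{\pi}_k Y \to \ul{\pi}_k Y_n$ is an isomorphism of Mackey functors for every $k \leq n$, so the tower $\{\ul{\pi}_k Y_n\}_n$ stabilizes at $\ul{\pi}_k Y$ for $n \geq k$. The Milnor short exact sequence for a sequential limit of spectra (checked fiberwise for each subgroup $H \leq G$) then gives $\ul{\pi}_k(\lim_n Y_n) \cong \ul{\pi}_k Y$, with the $\lim^1$ term vanishing by eventual constancy, so $Y \to \lim_n Y_n$ is an equivalence. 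The second equivalence follows by a dual argument: writing $Z = \lim^G_K f$ and $Z^n = \lim^G_K \tau_{\geq -n} f$, the cofiber of $Z^n \to Z$ is $\lim^G_K \tau_{\leq -n-1} f \in \Sp^G_{\leq -n-1}$ by left $t$-exactness, and filtered colimits of Mackey functors compute the Mackey-functor homotopy groups of filtered colimits of spectra.

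For the Tate consequence, applying Definition~\ref{dfn:ParamTateCnstr} to $K = B^\psi_{G/N} N$ and using that cofibers commute with both limits and colimits in the stable $\infty$-category $\Sp^{G/N}$,
\[
\lim_n (\tau_{\leq n} X)^{t[\psi]} \simeq \cofib\bigl( \lim_n (\tau_{\leq n} X)_{h[\psi]} \to \lim_n (\tau_{\leq n} X)^{h[\psi]} \bigr).
\]
The left limit is $X_{h[\psi]}$ by the first abstract equivalence. The right limit is $X^{h[\psi]}$ by formal commutativity of iterated limits together with the pointwise identification $X \simeq \lim_n \tau_{\leq n} X$ coming from left completeness of the homotopy $t$-structure on $\Sp^H$. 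The colimit statement follows dually from the second abstract equivalence and pointwise right completeness.

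The main technical content lies in the Mackey-functor Milnor argument: once the parametrized-colimit fiber $\colim^G_K \tau_{\geq n+1} f$ is shown to live in $\Sp^G_{\geq n+1}$, eventual constancy of $\{\ul{\pi}_k Y_n\}$ at each fixed degree $k$ is automatic, so the $\lim^1$ term vanishes. I expect no serious further obstacle: the remaining steps are routine $t$-structure bookkeeping and stable $\infty$-categorical formal manipulations.
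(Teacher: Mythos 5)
Your proof is correct and follows essentially the same route as the paper: both reduce to the fact that $\colim^G_K$ preserves connectivity (you via the adjunction $p_!\dashv p^\ast$ and $t$-exactness of $p^\ast$, the paper via closure of $\ul{\Sp}^G_{>n}$ under $G$-colimits, which is the same fact), and then conclude by left/right completeness of the homotopy $t$-structure, which your Milnor-sequence and eventual-constancy argument simply unwinds. The deduction of the Tate statement from the defining (co)fiber sequence is also the paper's argument.
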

\begin{proof} For the first equivalence, using the cofiber sequences $\tau_{>n} \to \id \to \tau_{\leq n}$, it suffices to show that $\lim_{n} \colim^G_K \tau_{> n} f \simeq 0$. But this follows by completeness of the homotopy $t$-structure on $\Sp^G$, since the inclusion $\underline{\Sp}^G_{>n} \subset \underline{\Sp}^G$ preserves $G$-colimits as a left $G$-adjoint \cite[Cor.~8.7]{Exp2}. The second equivalence is proven by a dual argument. The final two equivalences then follow from the first two in view of the defining fiber sequence
\[ X_{h[\psi]} \to X^{h[\psi]} \to X^{t[\psi]} \]
and the commutativity of parametrized orbits with colimits and parametrized fixed points with limits.
\end{proof}

For applying the next lemma, note that we have canonical lax monoidal natural transformations $(-)^{t G} \to (-)^{\tau G}$ and $(-)^{t G} \to ((-)^{t N})^{t(G/N)}$, defined via the universal property of the Verdier quotient \cite[\S1.3]{NS18}.

\begin{lem} \label{lem:TateNilpotent} Let $G$ be a finite $p$-group, $X$ a Borel $G$-spectrum, and $(-)^{t G} \to F(-)$ a lax monoidal natural transformation.
\begin{enumerate}
\item Suppose that $X$ is bounded. Then $F(X)$ is $p$-nilpotent.
\item Suppose that $X$ is bounded below. Then $F(X)$ is $p$-complete, and the map $F(X) \to F(X^{\wedge}_{p})$ is an equivalence.
\end{enumerate}
\end{lem}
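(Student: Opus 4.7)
The strategy is to propagate $p$-adic properties from the Tate spectrum $(-)^{tG}$ through the lax monoidal natural transformation to $F$, using the module-theoretic content it provides: the ring map $(S^0)^{tG} \to F(S^0)$ makes $F(S^0)$ into an algebra over the ring spectrum $(S^0)^{tG}$, and compatibly every $F(X)$ becomes a module over $F(S^0)$ (and hence over $(S^0)^{tG}$), with the map $X^{tG} \to F(X)$ a map of $(S^0)^{tG}$-modules. Throughout, $F$ may be assumed exact, as a lax monoidal functor between stable $\infty$-categories is automatically additive, and in particular additivity gives $F(p^k \cdot \id_X) \simeq p^k \cdot \id_{F(X)}$.

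For part (1), the plan is to reduce to the Eilenberg--Mac Lane case. Since $X$ is bounded, the finite Postnikov filtration exhibits $X$ as an iterated extension of shifts of Eilenberg--Mac Lane Borel $G$-spectra $HM_i$. Because $F$ is exact, $F(X)$ inherits a finite filtration with graded pieces $\Sigma^{n_i} F(HM_i)$; since $p$-nilpotence is closed under finite extensions, it suffices to treat $X = HM$. For $X = HM$, the computation $\pi_n(HM)^{tG} \cong \hat H^{-n}(G; M)$ shows that $p^k = |G|$ annihilates $(HM)^{tG}$, so the self-map $p^k \cdot \id_{(HM)^{tG}}$ is null-homotopic. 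I would then exploit the naturality square
\[
\begin{tikzcd}
(HM)^{tG} \ar{r}{p^k \cdot} \ar{d}{\alpha_{HM}} & (HM)^{tG} \ar{d}{\alpha_{HM}} \\
F(HM) \ar{r}{p^k \cdot } & F(HM)
\end{tikzcd}
\]
together with the $F(S^0)$-module structure on $F(HM)$, to force the self-map $p^k \cdot \id_{F(HM)}$ to be null. The main obstacle is precisely this propagation: naturality alone only gives $(p^k \cdot \id_{F(HM)}) \circ \alpha_{HM} \simeq 0$, so one must combine this with the module structure coming from the lax monoidal structure (using $F(HM) \otimes F(S^0) \to F(HM)$ and the compatible action of $(S^0)^{tG}$) to identify $F(HM)$ with a quotient or retract on which $p^k$ genuinely acts by zero.

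For part (2), suppose $X$ is bounded below. By left-completeness of the homotopy $t$-structure, $X \simeq \lim_n \tau_{\leq n} X$, with each $\tau_{\leq n} X$ bounded. By part (1), each $F(\tau_{\leq n} X)$ is $p$-nilpotent and, in particular, $p$-complete. A convergence statement $F(X) \simeq \lim_n F(\tau_{\leq n} X)$ --- which should follow from Lem.~\ref{lem:TateConvergence} applied to $(-)^{tG}$, combined with the naturality of $\alpha$ and the fact that the comparison becomes an equivalence after $p$-completion of the target --- then presents $F(X)$ as a limit of $p$-complete spectra, hence $p$-complete. For the final claim $F(X) \simeq F(X^\wedge_p)$, note that the $p$-completion map $X \to X^\wedge_p$ is an equivalence on each truncation $\tau_{\leq n}$ after smashing with the mod $p$ Moore spectrum; applying $F$ and passing to the Postnikov limit identifies $F(X)$ and $F(X^\wedge_p)$ as the same $p$-complete limit.
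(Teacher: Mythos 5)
Your reduction of part (1) to $X = HM$ is the same as the paper's, and you have correctly located the crux of the matter: naturality of $(-)^{tG} \to F$ only gives $(p^k\cdot \id_{F(HM)})\circ\alpha_{HM}\simeq 0$, not $p^k\cdot\id_{F(HM)}\simeq 0$. But the fix you propose does not work. The module structure you invoke is over $F(S^0)$, fed by the ring map $(S^0)^{tG}\to F(S^0)$, and there is no reason for $p$ to be nilpotent (or even a zero divisor) in $\pi_0\bigl((S^0)^{tG}\bigr)$ or $\pi_0 F(S^0)$: by the Segal conjecture one has $(S^0)^{tC_p}\simeq (S^0)^{\wedge}_p$, so $\pi_0((S^0)^{tC_p})\cong\ZZ_p$ and $p$ acts injectively. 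So being a module over this ring gives no control on multiplication by $p^k$, and $F(HM)$ is not exhibited as a retract or quotient of anything on which $p^k$ acts by zero. The paper's resolution is to change the base ring from $S^0$ to $H\ZZ$ (with trivial action): $HM$ is an $H\ZZ$-module, so by lax monoidality $F(HM)$ is an $F(H\ZZ)$-module, and the lax monoidal transformation supplies a ring map $(H\ZZ)^{tG}\to F(H\ZZ)$. Since $\pi_0\bigl((H\ZZ)^{tG}\bigr)\cong\widehat{H}^0(G;\ZZ)\cong\ZZ/|G|$ is killed by $p^k=|G|$, the element $p^k$ vanishes in $\pi_0 F(H\ZZ)$ and therefore annihilates every $F(H\ZZ)$-module, in particular $F(HM)$. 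This is the one idea your argument is missing.

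Two smaller points. First, your claim that a lax monoidal functor between stable $\infty$-categories is automatically additive (hence exact) is false — a constant functor at a ring spectrum is lax monoidal and not additive — so exactness of $F$ must be taken as a standing (tacit) hypothesis; it is genuinely used in the Postnikov reduction, both in your argument and in the paper's. Second, your part (2) hinges on the convergence statement $F(X)\simeq\lim_n F(\tau_{\leq n}X)$, which again is not a consequence of lax monoidality alone and must be assumed or verified for the $F$'s at hand (it holds for the Tate-type functors to which the lemma is applied); given that, the deduction of $p$-completeness and of $F(X)\xrightarrow{\simeq}F(X^{\wedge}_p)$ (via $F$ of the cofiber $C$ of $X\to X^{\wedge}_p$, which is bounded below with $C/p\simeq 0$, hence $F(C)$ is $p$-complete with $F(C)/p\simeq 0$ and so vanishes) is fine and matches the paper's intent.
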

\begin{proof} First suppose that $F = (-)^{t G}$ itself. Then the same proof as in \cite[Lem.~I.2.9]{NS18} applies: for (1), we reduce to $X = HM$ by induction on the Postnikov tower and use that the order of $G$ annihilates Tate cohomology $\widehat{H}^{\ast}(G;M)$, and (2) then follows, using that $p$-complete spectra are closed under limits. For the general situation, again we may reduce to the case $X = H M$. Then because $F(-)$ is a lax monoidal functor, $F(HM)$ is a $F(H \ZZ)$-module, so it suffices to show that $F(H \ZZ)$ is $p$-nilpotent. For this, via the lax monoidal natural transformation $(-)^{t G} \to F(-)$, we obtain an $E_{\infty}$-map $(H \ZZ)^{tG} \to F(H \ZZ)$, and because $(H \ZZ)^{tG}$ is $p$-nilpotent, we deduce that $F(H \ZZ)$ is $p$-nilpotent.
\end{proof}

By Prop.~\ref{prp:BorelSpectraAsCompleteObjects} and Prop.~\ref{prp:EquivalentTateConstructions}, we may identify $$(-)^{t_{C_2} \mu_p}: \Fun_{C_2}(B^t_{C_2} \mu_p, \ul{\Sp}^{C_2}) \to \Sp^{C_2}$$ with the gluing functor of the $\Gamma_{\mu_p}$-recollement on $\Sp^{D_{2p}}$. We thus obtain the following corollary of Lem.~\ref{lem:TateNilpotent}.

\begin{cor} \label{cor:ParamTatePcomplete} Let $X$ be a $C_2$-spectrum with twisted $\mu_p$-action. Suppose that for all choices of $C_2$-basepoints $\iota: \sO_{C_2}^{\op} \to B^t_{C_2} \mu_p$, $\iota^{\ast} X$ is bounded below as a $C_2$-spectrum (c.f. Rmk.~\ref{rem:DihedralBasepoints}).\footnote{In particular, if $X$ arises as the restriction of a $C_2$-spectrum with twisted $\mu_{p^{\infty}}$-action, then this bounded below condition is equivalent to stipulating that the underlying $C_2$-spectrum is bounded below.} Then $X^{t_{C_2} \mu_p}$ is $p$-complete (i.e., $S^0/p$-local in $\Sp^{C_2}$).
\end{cor}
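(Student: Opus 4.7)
The plan is to apply Lem.~\ref{lem:TateNilpotent}(2) using the identification (noted just before the statement) of $(-)^{t_{C_2}\mu_p}$ with the gluing functor $\Phi^{\mu_p} \circ \sF^{\vee}_b$ of the $\Gamma_{\mu_p}$-recollement on $\Sp^{D_{2p}}$, which is lax monoidal by Cor.~\ref{cor:TateLaxMonoidalStructure}. Recall that $p$-completeness of a $C_2$-spectrum $W$ can be checked fiberwise at each subgroup: $W$ is $p$-complete in $\Sp^{C_2}$ if and only if $\res^{C_2}(W)$ and $\Psi^{C_2}(W)$ are $p$-complete spectra. I will treat the two cases separately.

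For the underlying spectrum, Rmk.~\ref{rem:ParamTateCompatibleRestriction} gives $\res^{C_2}(X^{t_{C_2}\mu_p}) \simeq X^{t\mu_p}$, the ordinary Tate construction on the underlying Borel $\mu_p$-spectrum. Restricting any basepoint $\iota^{\ast}X$ (which is bounded below by hypothesis) to $C_2/1$ yields the underlying spectrum of $X$, which is therefore bounded below. Since $\mu_p$ is a $p$-group, Lem.~\ref{lem:TateNilpotent}(2) applied to $G = \mu_p$ and $F = (-)^{t\mu_p}$ immediately produces the $p$-completeness of $X^{t\mu_p}$.

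For the $C_2$-fixed points, I would adapt the proof strategy of Lem.~\ref{lem:TateNilpotent}(2) to the equivariant setting. First, by Lem.~\ref{lem:TateConvergence} (using the pointwise $t$-structure on $\Fun_{C_2}(B^t_{C_2}\mu_p, \ul{\Sp}^{C_2})$) and the fact that $p$-complete $C_2$-spectra are closed under limits, we reduce to the case when $X$ is bounded at each basepoint (bounded-belowness being preserved by $\tau_{\leq n}$). Since a limit of $p$-nilpotent $C_2$-spectra is $p$-complete (the standard argument with $\Map(S^0[1/p], -)$ carries over), it suffices to show that for such bounded $X$, the $C_2$-spectrum $X^{t_{C_2}\mu_p}$ is $p$-nilpotent (i.e., $p^k\cdot\id$ is null for some $k$). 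Devissage along the Postnikov tower, together with the fact that $p$-nilpotence is preserved by extensions in fiber sequences, reduces us to the case $X = HM$ for $M$ in the pointwise heart, and then by the lax monoidality of Cor.~\ref{cor:TateLaxMonoidalStructure}, $(HM)^{t_{C_2}\mu_p}$ becomes a module over the $E_{\infty}$-ring $(H\underline{\ZZ})^{t_{C_2}\mu_p}$ in $\Sp^{C_2}$, so it suffices to show the latter is $p$-nilpotent.

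The main obstacle is the last step: establishing that $(H\underline{\ZZ})^{t_{C_2}\mu_p}$ is $p$-nilpotent in $\Sp^{C_2}$. The underlying spectrum $(H\ZZ)^{t\mu_p}$ is $p$-nilpotent by Lem.~\ref{lem:TateNilpotent}(1) applied to $G = \mu_p$, but for $p$-nilpotence in $\Sp^{C_2}$ one must also control the $C_2$-fixed points. For $p = 2$, one can invoke Lem.~\ref{lem:TateNilpotent}(1) directly with the $2$-group $G = D_4 = D_{2p}$, after noting that $\sF_b^\vee(H\underline{\ZZ})$ is built from bounded Eilenberg--MacLane $D_4$-spectra. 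For $p$ odd, $D_{2p}$ is not a $p$-group, but $\mu_p$ is a $p$-Sylow subgroup with $[D_{2p}:\mu_p] = 2$ coprime to $p$; one exploits this via a transfer--restriction argument to express the homotopy Mackey functors of $(H\underline{\ZZ})^{t_{C_2}\mu_p}$ as built from $\mu_p$-Tate cohomology groups, which are $p$-power torsion of bounded exponent, yielding the required uniform $p$-nilpotence.
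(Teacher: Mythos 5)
Your reduction of $p$-completeness in $\Sp^{C_2}$ to the underlying spectrum and the fixed points is legitimate, and the underlying-spectrum half is handled exactly as in the paper: $\res^{C_2}(X^{t_{C_2}\mu_p})\simeq X^{t\mu_p}$ with $X^1$ bounded below, so Lem.~\ref{lem:TateNilpotent}(2) for the $p$-group $\mu_p$ applies. The gap is in the fixed-point half, at precisely the step you flag as "the main obstacle." Lem.~\ref{lem:TateNilpotent} is a statement about a lax monoidal transformation $(-)^{tG}\to F(-)$ of functors defined on \emph{Borel} $G$-spectra, whereas $(-)^{t_{C_2}\mu_2}$ is a functor on $\Fun_{C_2}(B^t_{C_2}\mu_2,\ul{\Sp}^{C_2})$, i.e.\ on $\Gamma_{\mu_2}$-complete $D_4$-spectra, valued in $\Sp^{C_2}$: there is no lax monoidal transformation $(-)^{tD_4}\to(-)^{t_{C_2}\mu_2}$ with a common source and target, and the object $H\ul{\ZZ}$ with trivial twisted action is not Borel (it has nontrivial geometric fixed points at $\angs{\sigma}$ and $\angs{\sigma x}$), so it does not lie in the image of $j_*\colon\Fun(BD_4,\Sp)\to\Fun_{C_2}(B^t_{C_2}\mu_2,\ul{\Sp}^{C_2})$ where such a comparison could be arranged. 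Hence the $p=2$ case cannot "invoke Lem.~\ref{lem:TateNilpotent}(1) directly with $G=D_4$"; you would have to prove a genuinely equivariant nilpotence lemma from scratch. The odd-prime "transfer--restriction argument" is likewise only a placeholder, not a proof.

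The paper avoids all of this by detecting $p$-completeness on \emph{geometric} rather than categorical fixed points (via the fracture square for the $C_2$-isotropy recollement), and then feeding in the explicit formulas for $(X^{t_{C_2}\mu_p})^{\phi C_2}$ coming from the reconstruction theorem: Exm.~\ref{exm:DihedralEven} for $p=2$ and Exm.~\ref{exm:DihedralOdd} for $p$ odd. For $p=2$ the geometric fixed points are a finite limit of terms $(-)^{\tau D_4}$, $(-)^{t\angs{\sigma}tC_2}$, $(-)^{t\angs{\sigma x}tC_2}$, $(-)^{tC_2}$ applied to bounded below Borel spectra for $2$-groups, and each of these receives a lax monoidal transformation from an honest Tate construction, so Lem.~\ref{lem:TateNilpotent}(2) applies verbatim and a finite limit of $p$-complete spectra is $p$-complete. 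For $p$ odd the geometric fixed points vanish identically, so there is nothing to check. I would rewrite your fixed-point step to route through these examples rather than attempting the nilpotence devissage internally to $\Sp^{C_2}$.
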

\begin{proof} Note that a $C_2$-spectrum $E$ is $p$-complete if its geometric fixed points are $p$-complete, by reference to the usual fracture square. The claim then follows from Lem.~\ref{lem:TateNilpotent} and Exm.~\ref{exm:DihedralEven} (for $p=2$) or Exm.~\ref{exm:DihedralOdd} (for $p$ odd).
\end{proof}

We now turn to our dihedral refinement of the Tate orbit lemma. In the proofs of Lem.~\ref{lem:dihedralTOLEven} and Lem.~\ref{lem:dihedralTOLOdd}, we let $x = x_{2}$ be a generator for $\mu_{p^2}$ (c.f. Setup \ref{setup:Dihedral}).

\begin{lem} \label{lem:dihedralTOLEven} The functor given by the composite
\[ \begin{tikzcd}[row sep=4ex, column sep=8ex, text height=2ex, text depth=0.75ex]
\Fun_{C_2}(B^t_{C_2} \mu_4, \underline{\Sp}^{C_2}) \ar{r}{(-)_{h_{C_2} \mu_2}} & \Fun_{C_2}(B^t_{C_2} \mu_2, \underline{\Sp}^{C_2}) \ar{r}{(-)^{t_{C_2} \mu_2}} & \Sp^{C_2}
\end{tikzcd} \]
evaluates to $0$ on those objects $X$ such that the underlying spectrum $X^1$ is bounded below.
\end{lem}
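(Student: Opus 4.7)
Since $F(X) \coloneq (X_{h_{C_2}\mu_2})^{t_{C_2}\mu_2}$ is a $C_2$-spectrum and the pair $\{(-)^1, (-)^{\phi C_2}\}$ is jointly conservative on $\Sp^{C_2}$, it suffices to prove that the underlying spectrum $F(X)^1$ and the geometric $C_2$-fixed points $F(X)^{\phi C_2}$ both vanish. My plan is to handle these two pieces separately, deducing each from the classical Tate orbit lemma of \cite[Lem.~I.2.1]{NS18} but in different ways.

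For the underlying spectrum, the compatibility of the parametrized Tate construction with restriction (Rmk.~\ref{rem:ParamTateCompatibleRestriction}) identifies $F(X)^1$ with the ordinary spectrum $(X^1_{h\mu_2})^{t\mu_2}$ built from the Borel $\mu_4$-spectrum $X^1 \in \Fun(B\mu_4,\Sp)$. Since $X^1$ is bounded below by hypothesis, this vanishes directly by \cite[Lem.~I.2.1]{NS18}.

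The geometric $C_2$-fixed points requires more work. First, I invoke Prop.~\ref{prp:ResidualAction} with $G = D_8$, $N = \mu_4$, and $M = \mu_2$ to obtain a fiber sequence of $C_2$-spectra
\[ F(X) \to X^{t_{C_2}\mu_4} \to (X^{t_{C_2}\mu_2})^{h_{C_2}\mu_2}, \]
so applying $\phi^{C_2}$ reduces the task to proving that the induced map on $\phi^{C_2}$ is an equivalence. Passing to the associated Borel $D_8$-spectrum $\tilde X = \sF^{\vee}_b(X)$ via Prop.~\ref{prp:BorelSpectraAsCompleteObjects}, I identify the source as $\phi^{D_8}(\tilde X)$ and the target as $\phi^{D_4}$ of the $\Gamma''$-completion of the $D_4 = D_8/\mu_2$-spectrum $\Phi^{\mu_2}(\tilde X)$, where $\Gamma'' = \{1, C_2, \Delta\}$ is the family from Exm.~\ref{exm:DihedralEven}.

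Both of these terms may then be computed as limits of generalized-Tate diagrams via the reconstruction equivalence Cor.~\ref{cor:FormulaForGeomFixedPointsOfCompleteSpectrum}, with Exm.~\ref{exm:DihedralEven} making the target side explicit for the Klein four group. I will analyze the comparison map strand by strand: each fiber term can be expressed as an iterated Tate orbit of the form $(Y_{h\mu_2})^{t\mu_2}$ applied to a bounded-below Borel $\mu_4$-spectrum built from $\tilde X^1$, and thus vanishes by another appeal to \cite[Lem.~I.2.1]{NS18}. The main technical obstacle will be this diagram comparison: one must match the two limit-indexing categories (strings in $\fS[D_8]$ ending at $D_8$ versus their images through $D_4$) and verify that every discrepancy between the corresponding generalized Tate terms is either tautologically an equivalence or reduces to a vanishing instance of the classical Tate orbit lemma.
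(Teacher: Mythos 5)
Your treatment of the underlying spectrum is correct and is exactly the paper's first step: by compatibility of $(-)^{t_{C_2}\mu_2}$ and $(-)_{h_{C_2}\mu_2}$ with restriction, $F(X)^1 \simeq ((X^1)_{h\mu_2})^{t\mu_2}$, which vanishes by \cite[Lem.~I.2.1]{NS18}. The gap is in the geometric fixed points. Your key claim — that after unwinding $\phi^{C_2}$ of both sides via Cor.~\ref{cor:FormulaForGeomFixedPointsOfCompleteSpectrum} and Exm.~\ref{exm:DihedralEven}, ``each fiber term can be expressed as an iterated Tate orbit of the form $(Y_{h\mu_2})^{t\mu_2}$ \ldots and thus vanishes by another appeal to \cite[Lem.~I.2.1]{NS18}'' — is not true. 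What Exm.~\ref{exm:DihedralEven} actually produces (after one uses Lem.~\ref{lem:GeometricFixedPointsPreservesParametrizedColimits} to discard the $X^{\phi\angs{\sigma}}$- and $X^{\phi\angs{\sigma x}}$-strands, since parametrized $\mu_2$-orbits land in $C_2$-induced objects there) is
\[ F(X)^{\phi C_2} \simeq \fib\Bigl((X^1_{h\mu_2})^{\tau D_4} \to (X^1_{h\mu_2})^{t\angs{\sigma}\, tC_2}\times (X^1_{h\mu_2})^{t\angs{\sigma x}\, tC_2}\Bigr). \]
The Tate constructions here are taken over the \emph{reflection} subgroups $\angs{\sigma},\angs{\sigma x}\leq D_4$ and over the full Klein four-group (via $\tau^{D_4}$), not over $\mu_4/\mu_2$, and the classical Tate orbit lemma says nothing about them. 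Indeed these terms do not vanish individually in any formal way: for an $\FF_2[D_8]$-module $M$, the restriction of $M[D_4/\angs{\sigma}]$ to $\angs{\sigma}$ is a \emph{trivial} (not free) $\angs{\sigma}$-module, so $(-)^{t\angs{\sigma}}$ does not kill it — one needs the outer $(-)^{tC_2}$ together with the fact that the residual Weyl action permutes the summands. Establishing the vanishing of the displayed fiber is precisely the new, dihedral-specific content of the lemma.

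Concretely, what your sketch is missing is the entire second half of the argument: (a) the reduction to Eilenberg--MacLane input $X = j_!HM$ with $M$ an $\FF_2[D_8]$-module, which uses convergence of the parametrized Tate construction along Postnikov towers (Lem.~\ref{lem:TateConvergence}) and $2$-completeness of the three functors appearing in the fiber (Lem.~\ref{lem:TateNilpotent}); and (b) the explicit standard free resolution of $M$ over $\FF_2[D_8]$, whose $\mu_2$-quotient (where $\Sigma_x = 1+x+x^2+x^3$ becomes $0$ mod $2$) yields a filtration of $M_{h\mu_2}$ whose graded pieces are extensions of $M[D_4/\angs{\sigma}]$ and $M[D_4/\angs{\sigma x}]$, i.e.\ objects induced from proper subgroups of $D_4$; these are then annihilated by $(-)^{\tau D_4}$, $(-)^{t\angs{\sigma}\,tC_2}$, and $(-)^{t\angs{\sigma x}\,tC_2}$ by a case-by-case check. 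Without some substitute for this resolution/filtration argument, the strand-by-strand comparison you propose cannot close, because the individual strands are not instances of the classical Tate orbit lemma.
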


For the proof, we first need the following lemma on $\Phi^{C_2}$ as a $C_2$-functor.

\begin{lem} \label{lem:GeometricFixedPointsPreservesParametrizedColimits} The $C_2$-functor $\Phi^{C_2}: \underline{\Sp}^{C_2} \to \underline{\Sp}^{\Phi C_2}$ preserves $C_2$-colimits, so for every $C_2$-functor $f: I \to J$, the diagram
\[ \begin{tikzcd}[row sep=4ex, column sep=8ex, text height=1.5ex, text depth=0.5ex]
\Fun_{C_2}(I, \underline{\Sp}^{C_2}) \ar{r}{f_!} \ar{d}{\Phi^{C_2}} & \Fun_{C_2}(J,\underline{\Sp}^{C_2}) \ar{d}{\Phi^{C_2}} \\
\Fun_{C_2}(I, \underline{\Sp}^{\Phi C_2}) \ar{r}{f_!} \ar{d}{\simeq} & \Fun_{C_2}(J, \underline{\Sp}^{\Phi C_2}) \ar{d}{\simeq} \\
\Fun(I_{C_2/C_2}, \Sp) \ar{r}{(f_{C_2/C_2})_!} & \Fun(J_{C_2/C_2},\Sp)
\end{tikzcd} \]
commutes, where $f_!$ denotes $C_2$-left Kan extension along $f$ and $(f_{C_2/C_2})_!$ denotes left Kan extension along $f_{C_2/C_2}$.
\end{lem}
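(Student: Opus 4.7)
The plan is to exhibit $\Phi^{C_2}$ as a left $C_2$-adjoint and then reduce $C_2$-left Kan extension into $\underline{\Sp}^{\Phi C_2}$ to ordinary left Kan extension over the $C_2/C_2$-fiber.

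First I would invoke Lem.~\ref{lem:ClosedPartRecollementNormalSubgroup} together with Rmk.~\ref{ParamRecollementFamily}, applied to the $C_2$-family $\Gamma_{C_2} = \{1\}$. The former identifies $\Phi^{C_2}: \Sp^{C_2} \to \Sp$ with the left adjoint $i^{\ast}$ of the $\Gamma_{C_2}$-recollement on $\Sp^{C_2}$, since the essential image of its right adjoint is precisely $\Sp^{\Phi \Gamma_{C_2}}$; the latter upgrades this to a left $C_2$-adjoint $i^{\ast}: \underline{\Sp}^{C_2} \to \underline{\Sp}^{\Phi \Gamma_{C_2}}$ in a $C_2$-stable $C_2$-recollement. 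Under this identification the $C_2$-functor $\Phi^{C_2}$ of the lemma is $i^{\ast}$, and as a $C_2$-left adjoint it automatically preserves all $C_2$-colimits. Passing to $C_2$-functor categories and using the standard compatibility of $C_2$-left Kan extension along $f$ with postcomposition by a $C_2$-colimit preserving functor, this yields the commutativity of the upper square in the stated diagram.

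Next I would identify the $C_2/1$-fiber of $\underline{\Sp}^{\Phi C_2}$ with the zero $\infty$-category: restricting the $C_2$-family $\Gamma_{C_2} = \{1\}$ to the trivial subgroup gives the whole family on the trivial group, whose associated $\Phi$-category is $0$. Consequently the cocartesian pushforward $\Sp \to 0$ is the zero functor, and for any $C_2$-$\infty$-category $I$ a $C_2$-functor into $\underline{\Sp}^{\Phi C_2}$ is forced to vanish over $I_{C_2/1}$ and is otherwise freely determined by its restriction to $I_{C_2/C_2}$. This produces an equivalence
\[ \Fun_{C_2}(I, \underline{\Sp}^{\Phi C_2}) \xrightarrow{\simeq} \Fun(I_{C_2/C_2}, \Sp), \]
under which restriction along $f$ becomes restriction along $f_{C_2/C_2}$. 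Passing to left adjoints then identifies the parametrized $f_!$ with the ordinary $(f_{C_2/C_2})_!$, giving the commutativity of the lower square.

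The subtle step is the second identification: one must verify that the $C_2$-functoriality on $\Fun_{C_2}(-, \underline{\Sp}^{\Phi C_2})$ truly collapses, rather than encoding hidden structure coming from the $C_2/1$-fiber. The key point is that every cocartesian edge of $I$ mapping from $I_{C_2/C_2}$ to $I_{C_2/1}$ must be sent to the canonical map to $0$ in $\underline{\Sp}^{\Phi C_2}$, which is automatic and imposes no coherence on the $C_2/C_2$-restriction. Once this unpacking of cocartesian sections into an $\infty$-category with vanishing fiber is carried out, the equivalence and the compatibility with restriction along $f$ are formal, and the identification of $f_!$ with $(f_{C_2/C_2})_!$ follows by adjunction.
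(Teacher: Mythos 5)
Your proof is correct and follows essentially the same route as the paper: exhibit $\Phi^{C_2}$ as a $C_2$-left adjoint via the parametrized recollement for the $C_2$-free family (hence $C_2$-colimit preserving, giving the upper square), and use that $(\underline{\Sp}^{\Phi C_2})_{C_2/1}$ is trivial to identify $\Fun_{C_2}(-,\underline{\Sp}^{\Phi C_2})$ with $\Fun((-)_{C_2/C_2},\Sp)$ compatibly with restriction, then pass to left adjoints for the lower square. The only difference is that you spell out why the $C_2/1$-fiber vanishes and why the cocartesian-section unpacking is formal, which the paper leaves implicit.
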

\begin{proof} By Rmk.~\ref{ParamRecollementFamily}, $\Phi^{C_2}$ is a $C_2$-left adjoint and hence preserves $C_2$-colimits \cite[Cor.~8.7]{Exp2}, so the upper square commutes. By definition, the $C_2$-left Kan extension $f_!$ is left adjoint to restriction along $f$. Since $({\underline{\Sp}^{\Phi C_2}})_{C_2/1} \simeq \ast$, we have the vertical equivalences of the lower square under which restriction along $f$ is identified with restriction along $f_{C_2/C_2}$. This implies the commutativity of the lower square.
\end{proof}

\begin{proof}[Proof of Lem.~\ref{lem:dihedralTOLEven}] In view of the compatiblity of the functors with restriction as described by the commutative diagram
\[ \begin{tikzcd}[row sep=4ex, column sep=8ex, text height=2ex, text depth=0.75ex]
\Fun_{C_2}(B^t_{C_2} \mu_4, \underline{\Sp}^{C_2}) \ar{r}{(-)_{h_{C_2} \mu_2}} \ar{d} & \Fun_{C_2}(B^t_{C_2} \mu_2, \underline{\Sp}^{C_2}) \ar{r}{(-)^{t_{C_2} \mu_2}} \ar{d} & \Sp^{C_2} \ar{d} \\
\Fun(B D_8, \Sp) \ar{r}{(-)_{h \mu_2}} & \Fun(B D_4, \Sp) \ar{r}{(-)^{t \mu_2}} & \Fun(B C_2, \Sp) 
\end{tikzcd} \]

we have that $((X_{h_{C_2} \mu_2})^{t_{C_2} \mu_2})^1 \simeq ((X^1)_{h \mu_2})^{t \mu_2}$, which vanishes by the Tate orbit lemma for bounded below $\mu_4$-Borel spectra \cite[Lem.~I.2.1]{NS18}. Thus, it suffices to show that $((X_{h_{C_2} \mu_2})^{t_{C_2} \mu_2})^{\phi C_2} \simeq 0$. Let $\rho = \rho_{\mu_2}: B^t_{C_2} \mu_4 \to B^t_{C_2} \mu_2$ be as in Lem.~\ref{lm:CategoricalFixedPointsProperties}(2), so the $C_2$-orbits functor $(-)_{h_{C_2} \mu_2}$ is $C_2$-left Kan extension along $\rho$. By Lem.~\ref{lem:GeometricFixedPointsPreservesParametrizedColimits}, we have a commutative diagram
\[ \begin{tikzcd}[row sep=4ex, column sep=8ex, text height=2ex, text depth=0.75ex]
\Fun_{C_2}(B^t_{C_2} \mu_4, \underline{\Sp}^{C_2}) \ar{r}{(-)_{h_{C_2} \mu_2}} \ar{d}{\Phi^{C_2}} & \Fun_{C_2}(B^t_{C_2} \mu_2, \underline{\Sp}^{C_2}) \ar{d}{\Phi^{C_2}} \\
\Fun( (B^t_{C_2} \mu_4)_{C_2/C_2}, \Sp) \ar{r}{\rho'_!} & \Fun((B^t_{C_2} \mu_2)_{C_2/C_2}, \Sp)
\end{tikzcd} \]
where the bottom horizontal functor is left Kan extension along the restriction $\rho'$ of $\rho$ to the fiber $(B^t_{C_2} \mu_4)_{C_2/C_2}$. Picking $\angs{\sigma}$ and $\angs{\sigma x}$ as representatives of their respective conjugacy classes of subgroups in $D_8$, we have 
\begin{align*}
(B^t_{C_2} \mu_4)_{C_2/C_2} \simeq B W_{D_8} \angs{\sigma} \bigsqcup B W_{D_8} \angs{\sigma x} \simeq B C_2 \sqcup B C_2, \\
(B^t_{C_2} \mu_2)_{C_2/C_2} \simeq B W_{D_4} \angs{\sigma} \bigsqcup B W_{D_4} \angs{\sigma x} \simeq B C_2 \sqcup B C_2.
\end{align*}

Note that $\rho$ sends the generator $x^2 \in W_{D_8} \angs{\sigma} \cong C_2$ to $1 \in W_{D_4} \angs{\sigma} \cong C_2$ and likewise for $W_{D_8} \angs{\sigma x}$. Therefore, $\rho'$ may be identified with the map $BC_2 \bigsqcup BC_2 \to \ast \bigsqcup \ast \to BC_2 \bigsqcup BC_2$, and we see that
\[ (X_{h_{C_2} \mu_2})^{\phi \angs{\sigma x^i}} \simeq \ind^{C_2}((X^{\phi \angs{\sigma x^i}})_{h C_2}), \quad i = 0, 1. \]

As for the functor $(-)^{t_{C_2} \mu_2}$, given $Y \in \Fun_{C_2}(B^t_{C_2} \mu_2, \underline{\Sp}^{C_2})$, by Exm.~\ref{exm:DihedralEven} we have that
\[ (Y^{t_{C_2} \mu_2})^{\phi C_2} \simeq (Y^1)^{\tau D_4} \times_{((Y^1)^{t \angs{\sigma} t C_2} \times (Y^1)^{t \angs{\sigma x} t C_2})} ((Y^{\phi \angs{\sigma}})^{t C_2} \times (Y^{\phi \angs{\sigma x}})^{t C_2}). \]

Using that $(-)^{t C_2}$ vanishes on $C_2$-induced objects, we deduce that
\[ \tag{$\ast$} ((X_{h_{C_2} \mu_2})^{t_{C_2} \mu_2})^{\phi C_2} = \fib((X^1_{h \mu_2})^{\tau D_4} \to (X^1_{h \mu_2})^{t \angs{\sigma} tC_2} \times (X^1_{h \mu_2})^{t \angs{\sigma x} tC_2} ). \]

Thus, the terms $X^{\phi \angs{\sigma}}$ and $X^{\phi \angs{\sigma x}}$ are irrelevant for the computation, in the sense that the counit map $j_! j^{\ast} X = X \otimes E {D_8}_+ \to X$ for the adjunction
\[ \adjunct{j_!}{\Fun(B D_8, \Sp)}{\Fun_{C_2}(B^t_{C_2} \mu_4, \underline{\Sp}^{C_2})}{j^\ast} \]
is sent to an equivalence under $(((-)_{h_{C_2} \mu_2})^{t_{C_2} \mu_2})^{\phi C_2}$. We may therefore extend our hypothesis that $X^1 = j^{\ast} X$ is bounded below to further suppose that $X$ is bounded below with respect to the homotopy $t$-structure on $\Sp^{C_2}$. Then by Lem.~\ref{lem:TateConvergence}, the cofiber sequence
\[ (-)_{h_{C_2} \mu_4} \to ((-)_{h_{C_2} \mu_2})^{h_{C_2} \mu_2} \to  ((-)_{h_{C_2} \mu_2})^{t_{C_2} \mu_2}, \]
and induction up the Postnikov tower of $X$, we reduce to the case of $X = j_! HM$ for $M$ a $\ZZ[D_8]$-module. Moreover, in view of the fiber sequence ($\ast$) and Lem.~\ref{lem:TateNilpotent}, $((X_{h_{C_2} \mu_2})^{t_{C_2} \mu_2})^{\phi C_2}$ is $2$-complete. Thus, to show vanishing we may further suppose that $M$ is a $\FF_2[D_8]$-module.

Let us now consider the $\FF_2[D_8]$-free resolution of $M$ from \cite[\S IV.2, p.~129]{AdemMilgram} (and with all signs suppressed since $2=0$), given by taking the total complex of the bicomplex

\[ \begin{tikzcd}[row sep=4ex, column sep=4ex, text height=1.5ex, text depth=0.5ex]
\vdots \ar{d}{\sigma+1} & \vdots \ar{d}{\sigma x +1} & \vdots \ar{d}{\sigma+1} & \vdots \ar{d}{\sigma x +1} &  \\
M[D_8] \ar{d}{\sigma+1} & M[D_8] \ar{l}{x+1} \ar{d}{\sigma x + 1} & M[D_8] \ar{l}{\Sigma_x} \ar{d}{\sigma+1} & M[D_8] \ar{d}{\sigma x +1} \ar{l}{x+1} & \cdots \ar{l}{\Sigma_x} \\
M[D_8] \ar{d}{\sigma+1} & M[D_8] \ar{l}{x+1} \ar{d}{\sigma x + 1} & M[D_8] \ar{l}{\Sigma_x} \ar{d}{\sigma+1} & M[D_8] \ar{d}{\sigma x +1} \ar{l}{x+1} & \cdots \ar{l}{\Sigma_x} \\
M[D_8] \ar{d}{\sigma+1} & M[D_8] \ar{l}{x+1} \ar{d}{\sigma x + 1} & M[D_8] \ar{l}{\Sigma_x} \ar{d}{\sigma+1} & M[D_8] \ar{d}{\sigma x +1} \ar{l}{x+1} & \cdots \ar{l}{\Sigma_x} \\
M[D_8] & M[D_8] \ar{l}{x+1} & M[D_8] \ar{l}{\Sigma_x} & M[D_8] \ar{l}{x+1} & \cdots \ar{l}{\Sigma_x}
\end{tikzcd} \]
where $\Sigma_x = 1+x+x^2+x^3$. Application of the functor $(-)/\mu_2$ to this bicomplex yields the bicomplex of $\FF_2[D_4]$-modules
\[ \begin{tikzcd}[row sep=4ex, column sep=4ex, text height=1.5ex, text depth=0.5ex]
\vdots \ar{d}{\sigma+1} & \vdots \ar{d}{\sigma x +1} & \vdots \ar{d}{\sigma+1} & \vdots \ar{d}{\sigma x +1} &  \\
M[D_4] \ar{d}{\sigma+1} & M[D_4] \ar{l}{x+1} \ar{d}{\sigma x + 1} & M[D_4] \ar{l}{0} \ar{d}{\sigma+1} & M[D_4] \ar{d}{\sigma x +1} \ar{l}{x+1} & \cdots \ar{l}{0} \\
M[D_4] \ar{d}{\sigma+1} & M[D_4] \ar{l}{x+1} \ar{d}{\sigma x + 1} & M[D_4] \ar{l}{0} \ar{d}{\sigma+1} & M[D_4] \ar{d}{\sigma x +1} \ar{l}{x+1} & \cdots \ar{l}{0} \\
M[D_4] \ar{d}{\sigma+1} & M[D_4] \ar{l}{x+1} \ar{d}{\sigma x + 1} & M[D_4] \ar{l}{0} \ar{d}{\sigma+1} & M[D_4] \ar{d}{\sigma x +1} \ar{l}{x+1} & \cdots \ar{l}{0} \\
M[D_4] & M[D_4] \ar{l}{x+1} & M[D_4] \ar{l}{0} & M[D_4] \ar{l}{x+1} & \cdots \ar{l}{0}
\end{tikzcd} \]
whose total complex is quasi-isomorphic to $M_{h \mu_2}$ in the derived category of $\FF_2[D_4]$ (crucially, we use that $2=0$ to see that $(\Sigma_x)/\mu_2 = 0$). Let $F^n(M_{\mu_2})$ be the total complex obtained by truncating the bicomplex to the first $2n$ columns, viewed in the derived category. Because of the zero maps that appear horizontally in the bicomplex, we have retractions $r_n: M_{\mu_2} \to F^n(M_{\mu_2})$ splitting the natural inclusions such that
\begin{enumerate}
    \item The induced map $M_{h \mu_2} \to \lim_{n} F^n(M_{h \mu_2})$ is an equivalence.
    \item The connectivity of the fiber of $M_{h \mu_2} \to F^n(M_{h \mu_2})$ goes to $\infty$ as $n \to \infty$. 
\end{enumerate}

Moreover, in view of the commutative diagram
\[ \begin{tikzcd}[row sep=4ex, column sep=4ex, text height=1.5ex, text depth=0.25ex]
\Fun(B D_8, \Sp) \ar{r}{j_!} \ar{d}{(-)_{h \mu_2}} & \Fun_{C_2}(B^t_{C_2} \mu_4, \underline{\Sp}^{C_2}) \ar{d}{(-)_{h_{C_2} \mu_2}} \\
\Fun(B D_4, \Sp) \ar{r}{j_!} & \Fun_{C_2}(B^t_{C_2} \mu_2, \underline{\Sp}^{C_2})
\end{tikzcd} \]
we obtain a filtration $j_! (F^n(M_{\mu_2})$ of $(j_! M)_{h_{C_2} \mu_2} \simeq j_!(M_{h \mu_2})$ such that
\begin{enumerate}
    \item The induced map $j_! (M_{h \mu_2}) \to \lim_{n} j_! (F^n(M_{h \mu_2}))$ is an equivalence. For this, to commute $j_!$ past the inverse limit we use that
    \[ (\lim_{n} F^n(M_{h \mu_2}))^{t \mu_2} \simeq \lim_n F^n(M_{h \mu_2})^{t \mu_2} \]
    in view of the increasing connectivity of the fibers.
    \item The $C_2$-connectivity of the fiber of $j_! (M_{h \mu_2}) \to j_! (F^n(M_{h \mu_2}))$ goes to $\infty$ as $n \to \infty$.\footnote{A priori, when considering $C_2$-connectivity of the underlying object in $\Sp^{C_2}$ of a $C_2$-functor $B^t_{C_2} \mu_2 \to \underline{\Sp}^{C_2}$, we must consider all $C_2$-basepoints of $B^t_{C_2} \mu_2$. However, because the objects in question are Borel-torsion, any choice of $C_2$-basepoint yields the same object.} For this, note that for Borel-torsion objects $E \in \Sp^{C_2}$, $E^{C_2} \simeq E_{h C_2}$ since $E^{\phi C_2} \simeq 0$, so the connectivity of $E^{C_2}$ is bounded below by that of $E$.
\end{enumerate}

Now by Lem.~\ref{lem:TateConvergence} applied to $(-)^{t_{C_2} \mu_2}$, in order to show that $(((j_! M)_{h_{C_2} \mu_2})^{t_{C_2} \mu_2})^{\phi C_2} \simeq 0$ it suffices to consider the vanishing of the functor $((j_!(-))^{t_{C_2} \mu_2})^{\phi C_2}$ on the filtered quotients $F^{n+1}/F^n(M_{\mu_2})$. For this, we observe that the alternating vertical columns of the bicomplex are free resolutions of $M[D_4/\angs{\sigma}]$ and $M[D_4/\angs{\sigma x}]$, respectively. Therefore, the filtered quotients $F^{n+1}/F^n(M_{\mu_2})$ are extensions of objects induced from proper subgroups of $D_4$, and are thus annihilated by $(-)^{\tau D_4}$, $(-)^{t \angs{\sigma} tC_2}$, and $(-)^{t \angs{\sigma x} tC_2}$.
\end{proof}

In contrast to Lem.~\ref{lem:dihedralTOLEven}, the proof of the dihedral Tate orbit lemma at an odd prime is far simpler.

\begin{lem} \label{lem:dihedralTOLOdd} Let $p$ be an odd prime. The functor given by the composite
\[ \begin{tikzcd}[row sep=4ex, column sep=8ex, text height=2ex, text depth=0.75ex]
\Fun_{C_2}(B^t_{C_2} \mu_{p^2}, \underline{\Sp}^{C_2}) \ar{r}{(-)_{h_{C_2} \mu_p}} & \Fun_{C_2}(B^t_{C_2} \mu_p, \underline{\Sp}^{C_2}) \ar{r}{(-)^{t_{C_2} \mu_p}} & \Sp^{C_2}
\end{tikzcd} \]
evaluates to $0$ on those objects $X$ such that the underlying spectrum $X^1$ is bounded below.
\end{lem}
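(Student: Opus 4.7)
The plan is to invoke joint conservativity of $(-)^1$ and $(-)^{\phi C_2}$ on $\Sp^{C_2}$ and show that both functors vanish on $Z \coloneq (X_{h_{C_2}\mu_p})^{t_{C_2}\mu_p}$. For the underlying spectrum, I would use the compatibility of the parametrized orbits and parametrized Tate construction with restriction to the $C_2/1$-fiber recorded in Rmk.~\ref{rem:ParamTateCompatibleRestriction}. Under this compatibility, the $C_2$-map $\rho_{\mu_p}\colon B^t_{C_2}\mu_{p^2}\to B^t_{C_2}\mu_p$ restricts on the fiber over $C_2/1$ to the ordinary map $B\mu_{p^2}\to B\mu_p$, so $Z^1\simeq ((X^1)_{h\mu_p})^{t\mu_p}$. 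This vanishes by the Tate orbit lemma \cite[Lem.~I.2.1]{NS18}, using that $X^1$ is bounded below by hypothesis.

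For the geometric $C_2$-fixed points, the plan is to rewrite the parametrized Tate construction via the Adams-isomorphism identification of Prop.~\ref{prp:EquivalentTateConstructions} as $(-)^{t_{C_2}\mu_p}\simeq \sU_b\circ \Phi^{\mu_p}\circ \sF_b^{\vee}$, where $\sF_b^{\vee}$ is the fully faithful embedding of $C_2$-spectra with twisted $\mu_p$-action as $\Gamma_{\mu_p}$-complete $D_{2p}$-spectra. This identifies $Z^{\phi C_2}\simeq (\sF_b^{\vee}(X_{h_{C_2}\mu_p}))^{\phi D_{2p}}$. Since $\sF_b^{\vee}(X_{h_{C_2}\mu_p})$ is $\Gamma_{\mu_p}$-complete by construction, Exm.~\ref{exm:DihedralOdd} applies directly to conclude that this vanishes, requiring \emph{no} bounded below hypothesis.

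I do not expect any substantive obstacle in carrying this out; the odd-prime case is structurally much simpler than the $p=2$ case of Lem.~\ref{lem:dihedralTOLEven}. The vanishing on $\phi C_2$-fixed points reduces entirely to Exm.~\ref{exm:DihedralOdd}, which itself rests on two already-proven ingredients: the vanishing $\tau^{D_{2p}}\simeq 0$ from Lem.~\ref{lem:GenTateVanishingMultiprime} (since $D_{2p}$ has order $2p$ and is not a $p$-group for $p$ odd) and the vanishing $\tau^{D_{2p}}_{C_2}\simeq 0$ established by a short recollement argument exploiting that $W_{D_{2p}}C_2$ is trivial. Consequently the entire limit formula of Cor.~\ref{cor:FormulaForGeomFixedPointsOfCompleteSpectrum} collapses to zero, bypassing the Postnikov-tower induction and bicomplex-level computation that were needed at $p=2$.
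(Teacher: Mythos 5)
Your proposal is correct and follows essentially the same route as the paper: the underlying spectrum is identified with $((X^1)_{h\mu_p})^{t\mu_p}$ via compatibility with restriction and killed by the Nikolaus--Scholze Tate orbit lemma (where the bounded below hypothesis enters), while $(-)^{\phi C_2}$ of the parametrized Tate construction vanishes unconditionally by Exm.~\ref{exm:DihedralOdd}. The only slip is notational: at finite level the identification of Prop.~\ref{prp:EquivalentTateConstructions} reads $(-)^{t_{C_2}\mu_p}\simeq \Phi^{\mu_p}\circ\sF_b^{\vee}$ with target $\Sp^{C_2}$ (no outer $\sU_b$), but this does not affect the computation of $Z^{\phi C_2}\simeq(\sF_b^{\vee}(X_{h_{C_2}\mu_p}))^{\phi D_{2p}}$.
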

\begin{proof} As in the proof of Lem.~\ref{lem:dihedralTOLEven}, one has the commutative diagram
\[ \begin{tikzcd}[row sep=4ex, column sep=8ex, text height=2ex, text depth=0.75ex]
\Fun_{C_2}(B^t_{C_2} \mu_{p^2}, \underline{\Sp}^{C_2}) \ar{r}{(-)_{h_{C_2} \mu_p}} \ar{d} & \Fun_{C_2}(B^t_{C_2} \mu_p, \underline{\Sp}^{C_2}) \ar{r}{(-)^{t_{C_2} \mu_p}} \ar{d} & \Sp^{C_2} \ar{d} \\
\Fun(B D_{2 p^2}, \Sp) \ar{r}{(-)_{h \mu_p}} & \Fun(B D_{p^2}, \Sp) \ar{r}{(-)^{t \mu_p}} & \Fun(B C_2, \Sp) ,
\end{tikzcd} \]

so $((X_{h_{C_2} \mu_p})^{t_{C_2} \mu_p})^1 \simeq ((X^1)_{h \mu_p})^{t \mu_p}$, which vanishes by the Tate orbit lemma for bounded below $\mu_{p^2}$-Borel spectra \cite[Lem.~I.2.1]{NS18}. Then by Exm.~\ref{exm:DihedralOdd},  we have that $(Y)^{t_{C_2} \mu_p})^{\phi C_2} \simeq 0$ for all $Y$ and thus $((X_{h_{C_2} \mu_p})^{t_{C_2} \mu_p})^{\phi C_2} \simeq 0$ unconditionally.
\end{proof}

\begin{rem} The restriction of the $C_2$-functor $B^t_{C_2} \mu_{p^2} \to B^t_{C_2} \mu_{p}$ to the fiber over $C_2/C_2$ is equivalent to the trivial map $\ast \to \ast$, so by Lem.~\ref{lem:GeometricFixedPointsPreservesParametrizedColimits}, for $X \in \Fun_{C_2}(B^t_{C_2} \mu_{p^2}, \underline{\Sp}^{C_2})$ we have that $(X_{h_{C_2} \mu_p})^{\phi \angs{\sigma}} \simeq X^{\phi \angs{\sigma}}$.
\end{rem}

We now prove a few corollaries of the dihedral Tate orbit lemma. These results are all obvious analogues of those in \cite[\S II.4]{NS18}.

\begin{lem} \label{lem:BoundedBelowEquivalence} Suppose $X$ is a $C_2$-spectrum with twisted $\mu_{p^n}$-action whose underlying spectrum $X^1$ is bounded below. Then the canonical map of Prop.~\ref{prp:ResidualAction}
$$ X^{t_{C_2} \mu_{p^n}} \to (X^{t_{C_2} \mu_p})^{h_{C_2} \mu_{p^{n-1}}} $$
is an equivalence of $C_2$-spectra.
\end{lem}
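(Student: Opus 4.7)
The plan is to fit the canonical map into a fiber sequence using Prop.~\ref{prp:ResidualAction} and then to induct on $n$, using the dihedral Tate orbit lemma as the key vanishing input. Concretely, specialize Prop.~\ref{prp:ResidualAction} to $G = D_{2p^n}$, $N = \mu_{p^n}$, $M = \mu_p$, and the chosen splittings from Setup~\ref{setup:Dihedral}, so that $\psi = [\mu_{p^n} \to D_{2p^n} \to C_2]$, $\psi'' = [\mu_p \to D_{2p} \to C_2]$, and $\psi' \cong [\mu_{p^{n-1}} \to D_{2p^{n-1}} \to C_2]$ under the $p$-th power isomorphism. This yields a natural fiber sequence of $C_2$-spectra
\[ (X_{h_{C_2}\mu_p})^{t_{C_2}\mu_{p^{n-1}}} \longrightarrow X^{t_{C_2}\mu_{p^n}} \longrightarrow (X^{t_{C_2}\mu_p})^{h_{C_2}\mu_{p^{n-1}}}, \]
in which the second map is (by construction of the residual action) the canonical map in question. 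Therefore it suffices to prove that $(X_{h_{C_2}\mu_p})^{t_{C_2}\mu_{p^{n-1}}} \simeq 0$ whenever the underlying spectrum $X^1$ is bounded below.

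The proof of this vanishing proceeds by induction on $n \geq 1$. For $n=1$ there is nothing to show (both sides reduce to $X^{t_{C_2}\mu_p}$, and the leftmost Tate is the $\mu_{p^0}$-Tate, which is trivial). For $n=2$, the claim is $(X_{h_{C_2}\mu_p})^{t_{C_2}\mu_p} \simeq 0$, which is exactly the dihedral Tate orbit lemma (Lem.~\ref{lem:dihedralTOLEven} for $p=2$, Lem.~\ref{lem:dihedralTOLOdd} for $p$ odd), whose hypothesis that the underlying spectrum of $X$ is bounded below is satisfied by assumption.

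For the inductive step, suppose the lemma has been proven for all exponents $\leq n-1$; we prove it for $n$. Applying the inductive hypothesis to the $C_2$-spectrum with twisted $\mu_{p^{n-1}}$-action $Y \coloneq X_{h_{C_2}\mu_p}$ (whose underlying spectrum $Y^1 \simeq (X^1)_{h\mu_p}$ is bounded below since $X^1$ is), we obtain an equivalence
\[ (X_{h_{C_2}\mu_p})^{t_{C_2}\mu_{p^{n-1}}} = Y^{t_{C_2}\mu_{p^{n-1}}} \xrightarrow{\simeq} \bigl(Y^{t_{C_2}\mu_p}\bigr)^{h_{C_2}\mu_{p^{n-2}}} = \bigl((X_{h_{C_2}\mu_p})^{t_{C_2}\mu_p}\bigr)^{h_{C_2}\mu_{p^{n-2}}}. \]
But $(X_{h_{C_2}\mu_p})^{t_{C_2}\mu_p} \simeq 0$ by the dihedral Tate orbit lemma applied to the restriction of $X$ to a $C_2$-spectrum with twisted $\mu_{p^2}$-action (valid since $n \geq 2$), again using that $X^1$ is bounded below. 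Thus the right-hand side vanishes, and feeding this back into the fiber sequence of the first paragraph completes the induction.

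The main point requiring care is the consistent identification of the `residual action' of $\mu_{p^{n-1}} \cong \mu_{p^n}/\mu_p$ on $X_{h_{C_2}\mu_p}$ and on $X^{t_{C_2}\mu_p}$ coming out of Prop.~\ref{prp:ResidualAction}, so that the two applications of the inductive hypothesis and of the dihedral Tate orbit lemma really pertain to the same twisted actions. Once this bookkeeping is in place (which is just an unwinding of the compatibilities in Setup~\ref{setup:Dihedral} via the chosen splittings and the $p$-th power isomorphism), everything else is formal.
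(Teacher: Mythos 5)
Your argument is correct, and it uses the same essential input as the paper (the dihedral Tate orbit lemma, fed into an induction on $n$), but it is packaged differently. The paper's proof does not invoke the three-term fiber sequence of Prop.~\ref{prp:ResidualAction} at all: instead, it first shows by iterating the dihedral Tate orbit lemma that the composite norm map $X_{h_{C_2}\mu_{p^n}} \to (X_{h_{C_2}\mu_p})^{h_{C_2}\mu_{p^{n-1}}}$ is an equivalence, and then compares the two norm cofiber sequences $X_{h_{C_2}\mu_{p^n}} \to X^{h_{C_2}\mu_{p^n}} \to X^{t_{C_2}\mu_{p^n}}$ and $(X_{h_{C_2}\mu_p})^{h_{C_2}\mu_{p^{n-1}}} \to (X^{h_{C_2}\mu_p})^{h_{C_2}\mu_{p^{n-1}}} \to (X^{t_{C_2}\mu_p})^{h_{C_2}\mu_{p^{n-1}}}$; since the left and middle vertical maps are equivalences, so is the right one. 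You instead isolate the fiber of the canonical map as $(X_{h_{C_2}\mu_p})^{t_{C_2}\mu_{p^{n-1}}}$ via Prop.~\ref{prp:ResidualAction} and kill it by induction plus the Tate orbit lemma; this vanishing statement is essentially the content of Cor.~\ref{cor:dihedralTOLyieldsEquivs} later in the paper, so your route anticipates that corollary. The two arguments are equivalent in content — yours is arguably slightly more direct since it works with a single fiber sequence whose cofiber term is exactly the map in question, while the paper's version avoids having to track the residual-action bookkeeping on the Tate-of-orbits term that you rightly flag as the delicate point. Your base cases and the application of the inductive hypothesis to $Y = X_{h_{C_2}\mu_p}$ (with bounded below underlying spectrum $(X^1)_{h\mu_p}$) are all in order.
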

\begin{proof} We mimic the proof of \cite[Lem.~II.4.1]{NS18}. Note that $X_{h_{C_2} \mu_{p^{n-1}}}$ has bounded below underlying spectrum $(X^1)_{h \mu_{p^{n-1}}}$. By the dihedral Tate orbit lemma, we see that the norm map
\[ X_{h_{C_2} \mu_{p^n}} \simeq (X_{h_{C_2} \mu_{p^{n-1}}})_{h_{C_2} \mu_p} \to (X_{h_{C_2} \mu_{p^{n-1}}})^{h_{C_2} \mu_p} \]
is an equivalence. By induction, it follows that the norm map
\[ X_{h_{C_2} \mu_{p^n}} \to (X_{h_{C_2} \mu_p})^{h_{C_2} \mu_{p^{n-1}}} \]
is an equivalence. Therefore, the left and middle vertical maps in the commutative diagram
\[ \begin{tikzcd}[row sep=4ex, column sep=4ex, text height=1.5ex, text depth=0.25ex]
X_{h_{C_2} \mu_{p^n}} \ar{r} \ar{d} & X^{h_{C_2} \mu_{p^n}} \ar{r} \ar{d} & X^{t_{C_2} \mu_{p^n}} \ar{d} \\
(X_{h_{C_2} \mu_p})^{h_{C_2} \mu_{p^{n-1}}} \ar{r} & (X^{h_{C_2} \mu_p})^{h_{C_2} \mu_{p^{n-1}}} \ar{r} & (X^{t_{C_2} \mu_p})^{h_{C_2} \mu_{p^{n-1}}}
\end{tikzcd} \]
are equivalences, so the right vertical map is also an equivalence.
\end{proof}

\begin{dfn} For $X \in \Fun_{C_2}(B^t_{C_2} \mu_{p^{\infty}}, \ul{\Sp}^{C_2})$, let
$$ X^{t_{C_2} \mu_{p^{\infty}}} = \lim_n X^{t_{C_2} \mu_{p^{n}}} $$
where the inverse limit is taken along the maps
$$ X^{t_{C_2} \mu_{p^{n}}} \to (X^{t_{C_2} \mu_{p^{n-1}}})^{h_{C_2} \mu_p} \to X^{t_{C_2} \mu_{p^{n-1}}}. $$
\end{dfn}


\begin{cor} \label{cor:TateToFixedPointsEquivalence} Suppose that $X$ is a $C_2$-spectrum with twisted $\mu_{p^{\infty}}$-action whose underlying spectrum is bounded below. Then the canonical map $$X^{t_{C_2} \mu_{p^{\infty}}} \to (X^{t_{C_2} \mu_p})^{h_{C_2} \mu_{p^{\infty}}}$$ is an equivalence.
\end{cor}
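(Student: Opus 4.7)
The plan is to reduce to Lem.~\ref{lem:BoundedBelowEquivalence} by passing to the inverse limit over $n$. The statement of Lem.~\ref{lem:BoundedBelowEquivalence} concerns twisted $\mu_{p^n}$-actions for fixed $n$, so the first step is to observe that if $X$ has twisted $\mu_{p^\infty}$-action with $X^1$ bounded below, then for every $n$ the restriction of $X$ to a $C_2$-spectrum with twisted $\mu_{p^n}$-action still has bounded below underlying spectrum $X^1$. Hence Lem.~\ref{lem:BoundedBelowEquivalence} gives, for each $n \ge 1$, an equivalence $\theta_n \colon X^{t_{C_2}\mu_{p^n}} \xrightarrow{\simeq} (X^{t_{C_2}\mu_p})^{h_{C_2}\mu_{p^{n-1}}}$ of $C_2$-spectra.

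The main compatibility I need to check is that the equivalences $\theta_n$ are compatible with the transition maps on both sides as $n$ varies. On the left, the transition map $X^{t_{C_2}\mu_{p^n}} \to X^{t_{C_2}\mu_{p^{n-1}}}$ in the tower defining $X^{t_{C_2}\mu_{p^\infty}}$ factors through $(X^{t_{C_2}\mu_{p^{n-1}}})^{h_{C_2}\mu_p}$, and applying Lem.~\ref{lem:BoundedBelowEquivalence} once more identifies this intermediate term with $((X^{t_{C_2}\mu_p})^{h_{C_2}\mu_{p^{n-2}}})^{h_{C_2}\mu_p} \simeq (X^{t_{C_2}\mu_p})^{h_{C_2}\mu_{p^{n-1}}}$. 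On the right, the transition map is the parametrized fixed-points map $(X^{t_{C_2}\mu_p})^{h_{C_2}\mu_{p^{n-1}}} \to (X^{t_{C_2}\mu_p})^{h_{C_2}\mu_{p^{n-2}}}$ induced by restriction along the inclusion $\mu_{p^{n-2}} \hookrightarrow \mu_{p^{n-1}}$. That these two agree is a naturality statement for the canonical map of Prop.~\ref{prp:ResidualAction} in the normal subgroup $\mu_{p^{n-1}} \trianglelefteq \mu_{p^n}$ relative to the subsubgroup $\mu_p$; it should follow formally from the naturality of norm maps under compositions of weakly ambidextrous morphisms established in \cite[\S 4.2]{hopkins2013ambidexterity}, which was also the tool used to produce the fiber sequence of Prop.~\ref{prp:ResidualAction}.

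Granted this compatibility, taking $\lim_n$ of $\theta_n$ yields an equivalence
\[
X^{t_{C_2}\mu_{p^\infty}} = \lim_n X^{t_{C_2}\mu_{p^n}} \xrightarrow{\simeq} \lim_n (X^{t_{C_2}\mu_p})^{h_{C_2}\mu_{p^{n-1}}}.
\]
To conclude, I identify the target with $(X^{t_{C_2}\mu_p})^{h_{C_2}\mu_{p^\infty}}$. From Setup~\ref{setup:Dihedral}, we have $B^t_{C_2}\mu_{p^\infty} \simeq \colim_n B^t_{C_2}\mu_{p^n}$ as $C_2$-spaces, and thus, applying $\Fun_{C_2}(-,\underline{\Sp}^{C_2})$, the parametrized fixed points functor $(-)^{h_{C_2}\mu_{p^\infty}}$ is the inverse limit of the $(-)^{h_{C_2}\mu_{p^n}}$ along the restriction maps. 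This gives the final equivalence, and checking that the resulting composite agrees with the canonical comparison map of the statement is routine naturality.

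The main obstacle is the compatibility check in the second paragraph: one needs to verify that the equivalences produced by Lem.~\ref{lem:BoundedBelowEquivalence} at different levels assemble into an equivalence of towers, not just an equivalence at each level. This amounts to keeping careful track of the interaction of the norm-induced fiber sequence of Prop.~\ref{prp:ResidualAction} for the factorizations $\mu_{p^n} \twoheadrightarrow \mu_{p^{n-1}} \twoheadrightarrow \mu_{p^{n-2}}$ and $\mu_{p^n} \twoheadrightarrow \mu_p$, and verifying the resulting pentagon of canonical maps commutes; the relevant coherence is encoded in the ambidexterity theory but will require some unwinding.
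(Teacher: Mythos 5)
Your proof is correct and is essentially the paper's argument: the canonical map is by construction the inverse limit of the levelwise canonical maps of Prop.~\ref{prp:ResidualAction}, each of which is an equivalence by Lem.~\ref{lem:BoundedBelowEquivalence}. The tower-compatibility you flag as the main obstacle is not really an extra step---the levelwise maps already form a map of towers by the naturality of the norm construction, which is exactly what makes the canonical map on the inverse limit well-defined in the first place---so once each level is an equivalence the limit is too.
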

\begin{proof} The map in question is the inverse limit of the maps $X^{t_{C_2} \mu_{p^n}} \to (X^{t_{C_2} \mu_p})^{h_{C_2} \mu_{p^{n-1}}}$, which by Lem.~\ref{lem:BoundedBelowEquivalence} are equivalences under our assumption on $X$.
\end{proof}

\begin{rem} Using Cor.~\ref{cor:ParamTatePcomplete}, if we further suppose that the underlying $C_2$-spectrum of $X$ is bounded below, then $(X^{t_{C_2} \mu_p})^{h_{C_2} \mu_{p^n}}$ is $p$-complete for all $0 \leq n \leq \infty$, and hence $X^{t_{C_2} \mu_{p^n}}$ is also $p$-complete for all $1 \leq n \leq \infty$.
\end{rem}

The following lemma extends \cite[II.4.5-7]{NS18}.

\begin{lem} \label{lm:SameLemmaNS} Suppose $X$ is a $D_{2p^n}$-spectrum.
\begin{enumerate} \item We have a natural pullback square of $C_2$-spectra
\[ \begin{tikzcd}[row sep=4ex, column sep=4ex, text height=1.5ex, text depth=0.25ex]
\Psi^{\mu_{p^n}} X \ar{r} \ar{d} & \Psi^{\mu_{p^{n-1}}} (\Phi^{\mu_p} X) \ar{d} \\
X^{h_{C_2} \mu_{p^n}} \ar{r} & X^{t_{C_2} \mu_{p^n}}.
\end{tikzcd} \]
\item Suppose in addition that the underlying spectrum of $X$ is bounded below. Then we have a natural pullback square of $C_2$-spectra
\[ \begin{tikzcd}[row sep=4ex, column sep=4ex, text height=1.5ex, text depth=0.25ex]
\Psi^{\mu_{p^n}} X \ar{r} \ar{d} & \Psi^{\mu_{p^{n-1}}} (\Phi^{\mu_p} X) \ar{d} \\
X^{h_{C_2} \mu_{p^n}} \ar{r} & (X^{t_{C_2} \mu_{p}})^{h_{C_2} \mu_{p^{n-1}}}.
\end{tikzcd} \] 
\item Suppose in addition that the underlying spectra of
\[ X, \Phi^{\mu_p} X, \Phi^{\mu_{p^2}} X, \cdots, \Phi^{\mu_{p^{n-1}}} X \]
are all bounded below. Then we have a natural limit diagram of $C_2$-spectra
\[ \begin{tikzcd}[row sep=4ex, column sep=4ex, text height=1.5ex, text depth=0.25ex]
\Psi^{\mu_{p^n}} X \ar{rrrr} \ar{dddd} & & & & \Phi^{\mu_{p^n}} X \ar{d} \\
 & & & (\Phi^{\mu_{p^{n-1}}} X)^{h_{C_2} \mu_p} \ar{r} \ar{d}  &  (\Phi^{\mu_{p^{n-1}}} X)^{t_{C_2} \mu_p} \\
 & & (\Phi^{\mu_{p^{2}}} X)^{h_{C_2} \mu_{p^{n-2}}} \ar{r} \ar{d} & \cdots \\
 & (\Phi^{\mu_p} X)^{h_{C_2} \mu_{p^{n-1}}} \ar{r} \ar{d} & ((\Phi^{\mu_p} X)^{t_{C_2} \mu_p})^{h_{C_2} \mu_{p^{n-2}}} \\
X^{h_{C_2} \mu_{p^n}} \ar{r} & (X^{t_{C_2} \mu_p})^{h_{C_2} \mu_{p^{n-1}}}
\end{tikzcd} \]
\end{enumerate}
\end{lem}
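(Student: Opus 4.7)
The starting point for all three parts is the fracture square for the $\Gamma_{\mu_{p^n}}$-recollement on $\Sp^{D_{2p^n}}$ of Example~\ref{exm:DihedralRecollement}, whose open part is $\Fun_{C_2}(B^t_{C_2}\mu_{p^n},\ul{\Sp}^{C_2})$ (via $\sU_b = \sU_b[\mu_{p^n}]$ and its right adjoint $\sF^\vee_b$) and whose closed part is $\Sp^{D_{2p^{n-1}}}$ (via $\Phi^{\mu_p}$ and its right adjoint $i_\ast$). That is, the square
\[ \begin{tikzcd}[row sep=4ex, column sep=5ex, text height=1.5ex, text depth=0.25ex]
X \ar{r} \ar{d} & i_\ast \Phi^{\mu_p} X \ar{d} \\
\sF^\vee_b \sU_b X \ar{r} & i_\ast \Phi^{\mu_p} \sF^\vee_b \sU_b X
\end{tikzcd} \]
is a pullback square for every $X \in \Sp^{D_{2p^n}}$. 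To deduce part (1), apply $\Psi^{\mu_{p^n}} = \Psi^{\mu_{p^{n-1}}} \circ \Psi^{\mu_p}$ to this square. Using $\Psi^{\mu_p} i_\ast \simeq \id$ (Setup~\ref{setup:Dihedral}), the top-right becomes $\Psi^{\mu_{p^{n-1}}} \Phi^{\mu_p} X$. Iterating Lemma~\ref{lm:CategoricalFixedPointsProperties}(3) with $M = \mu_p$ and $N = \mu_{p^k}$ for $k = n, n{-}1, \ldots, 1$ gives $\Psi^{\mu_{p^n}} \sF^\vee_b[\mu_{p^n}] \simeq (\rho_{\mu_{p^n}})_\ast$, so the bottom-left identifies with $X^{h_{C_2}\mu_{p^n}}$. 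For the bottom-right, Proposition~\ref{prp:EquivalentTateConstructions} identifies $(-)^{t_{C_2}\mu_{p^n}}$ with the cofiber of $\Psi^{\mu_{p^n}}\Nm' : \Psi^{\mu_{p^n}} \sF_b \to \Psi^{\mu_{p^n}} \sF^\vee_b$; since $\Psi^{\mu_{p^n}}$ is exact and the cofiber of $\Nm'$ is $i_\ast \Phi^{\mu_p} \sF^\vee_b$ (coming from the recollement's fracture fiber sequence $\sF_b \to \sF^\vee_b \to i_\ast i^\ast \sF^\vee_b$), the bottom-right is $\Psi^{\mu_{p^{n-1}}} \Phi^{\mu_p} \sF^\vee_b \sU_b X \simeq X^{t_{C_2}\mu_{p^n}}$.

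Part (2) is then an immediate consequence of part (1) combined with Lemma~\ref{lem:BoundedBelowEquivalence}: under the hypothesis that the underlying spectrum $X^1$ is bounded below, the canonical map $X^{t_{C_2}\mu_{p^n}} \to (X^{t_{C_2}\mu_p})^{h_{C_2}\mu_{p^{n-1}}}$ is an equivalence of $C_2$-spectra, so we may replace the bottom-right of the square in (1) by $(X^{t_{C_2}\mu_p})^{h_{C_2}\mu_{p^{n-1}}}$. (Strictly speaking, one should also verify naturality of the induced map $X^{h_{C_2}\mu_{p^n}} \to (X^{t_{C_2}\mu_p})^{h_{C_2}\mu_{p^{n-1}}}$ with the canonical map, but this is immediate from the construction of $\can_p$ in Proposition~\ref{prp:ResidualAction}.)

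Part (3) is obtained by iterating part (2). We build up the staircase limit diagram inductively, by applying part (2) to the $D_{2p^{n-k}}$-spectrum $\Phi^{\mu_{p^k}} X$ at each stage, for $k = 0, 1, \ldots, n-1$. The hypothesis that the underlying spectrum of each $\Phi^{\mu_{p^k}} X$ is bounded below is exactly what is needed to invoke part (2) at every step. Concretely, starting from the pullback square of part (2) applied to $X$, one applies part (2) to $\Phi^{\mu_p} X$ to rewrite the top-right corner $\Psi^{\mu_{p^{n-1}}} \Phi^{\mu_p} X$ as a pullback of $(\Phi^{\mu_p} X)^{h_{C_2} \mu_{p^{n-1}}} \to ((\Phi^{\mu_p} X)^{t_{C_2}\mu_p})^{h_{C_2}\mu_{p^{n-2}}}$ and $\Psi^{\mu_{p^{n-2}}}\Phi^{\mu_{p^2}} X$, and then continues unfolding the resulting top-right corner by the same procedure. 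After $n$ steps, the sequence of iterated pullbacks assembles into the claimed limit diagram, with the terminal top-right corner being $\Phi^{\mu_{p^n}} X$. The bookkeeping for assembling the finitely many pullback squares into the single limit diagram is straightforward once one observes that at each stage, the new square shares exactly the edge that was formerly the top-right corner.

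The main obstacle is the precise identification of the bottom-right corner $\Psi^{\mu_{p^{n-1}}} \Phi^{\mu_p} \sF^\vee_b \sU_b X$ with $X^{t_{C_2}\mu_{p^n}}$ in part (1). While the ingredients (Proposition~\ref{prp:EquivalentTateConstructions}, the iteration of Lemma~\ref{lm:CategoricalFixedPointsProperties}(3), and the exactness of $\Psi^{\mu_{p^n}}$) are all available, carefully checking that the comparison map in the resulting square matches the canonical map of Proposition~\ref{prp:ResidualAction} requires some care, and one should verify it at the level of the underlying lax monoidal natural transformations rather than just on objects.
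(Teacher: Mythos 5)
Your proposal is correct and follows essentially the same route as the paper: apply $\Psi^{\mu_{p^n}}$ to the $\Gamma_{\mu_{p^n}}$-fracture square and identify the corners via Prop.~\ref{prp:EquivalentTateConstructions} for (1), invoke Lem.~\ref{lem:BoundedBelowEquivalence} for (2), and induct for (3). The paper's proof is just a terser version of exactly this argument.
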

\begin{proof} In view of Prop.~\ref{prp:EquivalentTateConstructions}, the first pullback square arises from applying $\Psi^{\mu_{p^n}}$ to the fracture square for the $\Gamma_{\mu_{p^n}}$-recollement on $\Sp^{D_{2p^n}}$. The second pullback square then follows by Lem.~\ref{lem:BoundedBelowEquivalence}, and the last limit diagram follows by induction on $n$.
\end{proof}

We may now equate the fiber sequences for $\TCR(-,p)$ (Prop.~\ref{prp:TCRfiberSequence}) and $\TCR^{\mr{gen}}(-,p)$ (Prop.~\ref{prp:fiberSequenceGenuineRealCyc}) in the bounded below situation, giving a direct proof of Cor.~\ref{cor:TCRFormulasEquivalent}.

\begin{cor} \label{cor:decategorifiedEasy} Let $X$ be a genuine real $p$-cyclotomic spectrum whose underlying spectrum is bounded below. Then there is a canonical and natural fiber sequence
\[ \TCR^{\mr{gen}}(X,p) \to X^{h_{C_2} \mu_{p^{\infty}}} \xtolong{\varphi^{h_{C_2} \mu_{p^{\infty}}} - \can}{2} (X^{t_{C_2} \mu_p})^{h_{C_2} \mu_{p^{\infty}}} \]
and thus an equivalence $\TCR^{\mr{gen}}(X,p) \simeq \TCR(\sU_{\RR} X,p)$.
\end{cor}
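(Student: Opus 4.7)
The plan is to trace through and compare the two fiber-sequence presentations, using the pullback square from Lem.~\ref{lm:SameLemmaNS}(2) as the computational heart of the argument, following the strategy of \cite[Thm.~II.4.10]{NS18} for the non-real case.

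To begin, Prop.~\ref{prp:fiberSequenceGenuineRealCyc} gives
\[ \TCR^{\mr{gen}}(X,p) \simeq \fib\bigl(\TFR(X,p) \xrightarrow{1-R} \TFR(X,p)\bigr), \]
where $\TFR(X,p) = \lim_{n,F} \Psi^{\mu_{p^n}} X$. The plan is to identify this expression with the desired fiber sequence. For each $n \geq 1$, I would combine Lem.~\ref{lm:SameLemmaNS}(2) with the structure equivalence $\alpha: \Phi^{\mu_p} X \xrightarrow{\simeq} X$ to obtain a pullback square
\[ \begin{tikzcd}
\Psi^{\mu_{p^n}} X \ar{r}{R_n} \ar{d}{\pi_n} & \Psi^{\mu_{p^{n-1}}} X \ar{d}{\varphi^{h_{C_2}\mu_{p^{n-1}}} \pi_{n-1}} \\
X^{h_{C_2}\mu_{p^n}} \ar{r}{\can} & (X^{t_{C_2}\mu_p})^{h_{C_2}\mu_{p^{n-1}}},
\end{tikzcd} \]
whose top map is $R$ from Constr.~\ref{dfn:ParamRStructureMaps} and whose right vertical map is $\varphi^{h_{C_2}\mu_{p^{n-1}}} \circ \pi_{n-1}$, where $\varphi: X \to X^{t_{C_2}\mu_p}$ is the Borel structure map obtained by sending $\alpha$ through the gluing of the $\Gamma_{\mu_p}$-recollement on $\Sp^{D_{2p^n}}$ (Exm.~\ref{exm:DihedralRecollement}).

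Next, I would take $\lim_{n,F}$ of these squares. The compatibility $FR \simeq RF$ from Constr.~\ref{dfn:ParamRStructureMaps} ensures that the top map $R_n$ defines a well-defined endomorphism $R$ at infinity; the bounded-below hypothesis together with Cor.~\ref{cor:TateToFixedPointsEquivalence} identifies the inverse limit of $(X^{t_{C_2}\mu_p})^{h_{C_2}\mu_{p^{n-1}}}$ with $(X^{t_{C_2}\mu_p})^{h_{C_2}\mu_{p^\infty}}$; and since inverse limits preserve pullbacks, the result is a pullback square
\[ \begin{tikzcd}
\TFR(X,p) \ar{r}{R} \ar{d}{\pi_\infty} & \TFR(X,p) \ar{d}{\varphi^{h_{C_2}\mu_{p^\infty}} \pi_\infty} \\
X^{h_{C_2}\mu_{p^\infty}} \ar{r}{\can_p} & (X^{t_{C_2}\mu_p})^{h_{C_2}\mu_{p^\infty}}.
\end{tikzcd} \]

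From this pullback square --- which expresses $\TFR(X,p)$ as the fiber sequence
\[ \TFR(X,p) \xrightarrow{(\pi_\infty, R)} X^{h_{C_2}\mu_{p^\infty}} \oplus \TFR(X,p) \xrightarrow{\can_p \pr_1 - \varphi^{h_{C_2}\mu_{p^\infty}} \pi_\infty \pr_2} (X^{t_{C_2}\mu_p})^{h_{C_2}\mu_{p^\infty}} \]
and encodes the relation $\can_p \pi_\infty \simeq \varphi^{h_{C_2}\mu_{p^\infty}} \pi_\infty R$ --- a diagram chase in the stable $\infty$-category of $C_2$-spectra (mirroring the manipulation on p.~220 of \cite{NS18}) identifies $\fib(1-R) \simeq \fib(\varphi^{h_{C_2}\mu_{p^\infty}} - \can_p)$. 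Combined with Prop.~\ref{prp:TCRfiberSequence} applied to $\sU_\RR(X)$, this gives both the stated fiber sequence and the equivalence $\TCR^{\mr{gen}}(X,p) \simeq \TCR(\sU_\RR X, p)$. The main obstacle is the careful identification in the second step of the right vertical map as $\varphi^{h_{C_2}\mu_{p^{n-1}}} \pi_{n-1}$: this requires unwinding how $\alpha$ factors the gluing map through the Borel structure map under $\Psi^{\mu_{p^{n-1}}}$ and using that $(X^{t_{C_2}\mu_p})^{h_{C_2}\mu_{p^{n-1}}}$ arises as parametrized fixed points of a Borel-complete object in $\Sp^{D_{2p^{n-1}}}$, whereas the final comparison of fibers is a formal consequence of the pullback structure.
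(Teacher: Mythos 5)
Your proposal is correct and is essentially the paper's own proof: the paper simply says to transcribe the argument of \cite[Thm.~II.4.10]{NS18} using Lem.~\ref{lm:SameLemmaNS}, and your outline (pullback squares from Lem.~\ref{lm:SameLemmaNS}(2), passage to the limit over $F$, and the formal identification of $\fib(1-R)$ with $\fib(\varphi^{h_{C_2}\mu_{p^\infty}}-\can_p)$) is exactly that transcription, with the details written out.
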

\begin{proof} Using Lem.~\ref{lm:SameLemmaNS}, we may transcribe the proof of \cite[Thm.~II.4.10]{NS18} into the $C_2$-parametrized setting to prove the claim, with no change of detail.
\end{proof}

\subsection{The comparison at finite level}

\begin{dfn} Let $\Sp^{C_{p^n}}_{bb} \subset \Sp^{C_{p^n}}$ be the full subcategory on those $C_{p^n}$-spectra that are bounded below, or equivalently, slice bounded below (Lem.~\ref{lem:BddBelowEqualsSliceBddBelow}).

Let $\Sp^{D_{2p^n}}_{ubb} \subset \Sp^{D_{2p^n}}$ be the full subcategory on those $D_{2p^n}$-spectra whose underlying $\mu_{p^n}$-spectrum is bounded below.
\end{dfn}

We wish to give an iterative decomposition of $\Sp^{D_{2p^n}}_{ubb}$  that categorifies the `staircase' limit diagram in Lem.~\ref{lm:SameLemmaNS}. We also take the opportunity to give a similar iterative decomposition of $\Sp^{C_{p^n}}_{bb}$, along the lines described in \cite[Rmk.~II.4.8]{NS18}.\footnote{We write $C_{p^n}$ instead of $\mu_{p^n}$ here in adherence to \cite{NS18}.} Our main tool in achieving this will be Thm.~\ref{thm:OneGenerationAndExtension} in conjunction with Thm.~\ref{thm:GeometricFixedPointsDescriptionOfGSpectra}. For the case of $D_{2p^n}$, we will need a \emph{relative} version of the geometric locus construction of Def.~\ref{dfn:GeometricLocus}.

\begin{dfn} \label{dfn:relativeGeometricLocus} Suppose $\pi: \fS[G] \to \Delta^n$ is a surjective functor, and for $0 \leq k \leq n$, let $\fS_{\pi \leq k}, \fS_{\pi<k} \subset \fS[G]$ be the sieves containing subgroups $H$ such that $\pi(H) \leq k$, resp. $\pi(H) < k$ (by convention, $\fS_{\pi<0} = \emptyset$). Let $L[\pi]_k: \Sp^G \to \Sp^{h \fS_{\pi \leq k}} \cap \Sp^{\Phi \fS_{\pi<k}}$ denote the localization functors.

Define the \emph{$\pi$-relative geometric locus} $\Sp^G_{\locus{\phi,\pi}} \subset \Sp^G \times \Delta^n$ be the full subcategory on $(X,k)$ such that $X$ is $\fS_{\pi \leq k}$-complete and $\fS_{\pi<k}^{-1}$-local. For $0\leq i \leq j \leq n$, define the \emph{$\pi$-relative generalized Tate construction} to be the composite
\[ \tau[\pi]^j_i: \Sp^{h \fS_{\pi \leq i}} \cap \Sp^{\Phi \fS_{\pi<i}} \to \Sp^G \to \Sp^{h \fS_{\pi \leq j}} \cap \Sp^{\Phi \fS_{\pi<j}} \]
of the inclusion and localization functors.

Let $[i:j] \subset \Delta^n$ denote the full subcategory on the vertices $i$ through $j$, so $\Delta^{j-i} \cong [i:j]$. Define the comparison functor
\[ \Theta'[\pi]_{i,j}: \Fun^{\cocart}_{/[i:j]}(\sd([i:j]), [i:j] \times_{\Delta^n} \Sp^G_{\locus{\phi,\pi}} ) \to \Fun(\sd([i:j]), \Sp^G) \xto{\lim} \Sp^G. \]
As before, the essential image of $\Theta'[\pi]_{i,j}$ lies in $\Sp^{h \fS_{\pi \leq j}} \cap \Sp^{\Phi \fS_{\pi < i}}$. Let $\Theta[\pi]_{i,j}$ denote the comparison functor with this codomain, and also write $\Theta[\pi] = \Theta[\pi]_{0,n}$.
\end{dfn}

\begin{vrn} \label{vrn:ReconstructionEquivalence} We have the following variants of the results in \S\ref{section:Reconstruction}, with the same proofs.
\begin{enumerate}
\item $\Sp^G_{\locus{\phi,\pi}} \to \Delta^n$ is a locally cocartesian fibration such that the pushforward functors are given by $\tau[\pi]^j_i$.
\item For all $0 \leq i \leq j \leq n$, $\Theta[\pi]_{i,j}$ is an equivalence of $\infty$-categories.
\item Let $0 \leq i \leq j <k \leq n$, so $[i:j]$, $[j+1:k]$ is a sieve-cosieve decomposition of $[i:k]$. Then we have a strict morphism of stable recollements through equivalences
\[ \begin{tikzcd}[row sep=4ex, column sep=8ex, text height=1.5ex, text depth=0.5ex]
\Fun^{\cocart}_{/[i:j]}(\sd([i:j]), [i:j] \times_{\Delta^n} \Sp^G_{\locus{\phi,\pi}}) \ar[shift left=4, left hook->]{d} \ar[shift right=4, left hook->]{d} \ar{r}{\Theta[\pi]_{i,j}}[swap]{\simeq} & \Sp^{h \fS_{\pi \leq j}} \cap \Sp^{\Phi \fS_{\pi < i}} \ar[shift left=4, left hook->]{d} \ar[shift right=4, left hook->]{d} \\
\Fun^{\cocart}_{/[i:k]}(\sd([i:k]), [i:k] \times_{\Delta^n} \Sp^G_{\locus{\phi,\pi}}) \ar{u} \ar{d} \ar{r}{\Theta[\pi]_{i,k}}[swap]{\simeq} & \Sp^{h \fS_{\pi \leq k}} \cap \Sp^{\Phi \fS_{\pi < i}}  \ar{u} \ar{d} \\
\Fun^{\cocart}_{/[j+1:k]}(\sd([j+1:k]), [j+1:k] \times_{\Delta^n} \Sp^G_{\locus{\phi,\pi}} )  \ar{r}{\Theta[\pi]_{j+1,k}}[swap]{\simeq} \ar[shift right=4, right hook->]{u} & \Sp^{h \fS_{\pi \leq k}} \cap \Sp^{\Phi \fS_{\pi \leq j}} \ar[shift right=4, right hook->]{u}
\end{tikzcd} \]
In particular, for any $i \leq l \leq j$, the composite
\[ \Fun^{\cocart}_{/[i:j]}(\sd([i:j]), [i:j] \times_{\Delta^n} \Sp^G_{\locus{\phi,\pi}}) \to \Sp^{h \fS_{\pi \leq j}} \cap \Sp^{\Phi \fS_{\pi < i}} \to \Sp^{h \fS_{\pi \leq l}} \cap \Sp^{\Phi \fS_{\pi < l}} \]
is homotopic to evaluation at $l \in [i:j] \subset \sd[i:j]$.
\end{enumerate}
\end{vrn}

Specializing to the situation of interest, we have the following definition, which exploits a key self-similarity property of the dihedral groups.

\begin{dfn} Given a subgroup $H \subset D_{2p^n}$, let $\zeta(H) \geq 0$ be the integer such that $H \cap \mu_{p^n} = \mu_{p^{\zeta(H)}}$, and let $\zeta: \fS[D_{2 p^n}] \to \Delta^n$ denote the resulting map. Note that if $H$ is subconjugate to $K$, then $\zeta(H) \leq \zeta(K)$, so $\zeta$ defines a functor.
\end{dfn}

Note that for all $0 \leq k \leq n$, the cosieve $\fS_{\zeta \geq k}$ equals $\fS[D_{2p^n}]_{\geq \mu_{p^k}}$, so $\Sp^{\Phi \fS_{\zeta<k}} \simeq \Sp^{D_{2p^n}/\mu_{p^k}}$ and
\[ \Fun_{C_2}(B^t_{C_2} (\mu_{p^n} / \mu_{p^k}), \underline{\Sp}^{C_2}) \simeq  \Sp^{h \fS_{\zeta \leq k}} \cap \Sp^{\Phi \fS_{\zeta < k}}, \]
using the equivalence of Prop.~\ref{prp:BorelSpectraAsCompleteObjects}. We will also write
\[ L[\zeta]_k: \Sp^{D_{2 p^n}} \to \Fun_{C_2}(B^t_{C_2} \mu_{p^{n-k}}, \underline{\Sp}^{C_2})  \]
for the composite of $\Phi^{\mu_{p^k}}: \Sp^{D_{2 p^n}} \to \Sp^{D_{2 p^{n-k}}}$ and $\sU_b[\mu_{p^{n-k}}]$, which is identified with the functor $L[\zeta]_k$ of Def.~\ref{dfn:relativeGeometricLocus} under this equivalence and the isomorphism $D_{2 p^{n-k}} \cong D_{2 p^n} / \mu_{p^k}$.

\begin{dfn} The \emph{$C_2$-generalized Tate functors}
\[ \tau_{C_2} \mu_{p^k}: \Fun_{C_2}(B^t_{C_2} \mu_{p^n}, \underline{\Sp}^{C_2}) \to \Fun_{C_2}(B^t_{C_2} (\mu_{p^n} / \mu_{p^k}), \underline{\Sp}^{C_2}) \]
are defined as in Def.~\ref{dfn:relativeGeometricLocus} with respect to $\zeta$ under the above equivalence.
\end{dfn}

To apply the (dihedral) Tate orbit lemma, we need to re-express the functors $\tau C_{p^n}$ and $\tau_{C_2} \mu_{p^n}$ in terms of more familiar functors. For expositional purposes, we deal with these cases separately, although the statement for $\tau_{C_2} \mu_{p^n}$ logically implies those for $\tau C_{p^n}$. We note at the outset that we have already identified $\tau C_p \simeq t C_p$ and $\tau_{C_2} \mu_p \simeq t_{C_2} \mu_p$.

\begin{lem} \label{lm:identifyGenTate} Suppose $X$ is a Borel $C_{p^n}$-spectrum. For $1 < k \leq n$, we have $C_{p^{n-k}}$-equivariant equivalences
\[ X^{\tau C_{p^k}} \simeq X^{h C_p \tau C_{p^{k-1}}} \simeq X^{h C_{p^2} \tau C_{p^{k-2}}} \simeq \cdots \simeq X^{h C_{p^{k-1}} t C_p} \]
with respect to which the canonical map $X^{\tau C_{p^k}} \to X^{t C_p \tau C_{p^{k-1}}}$ fits into the fiber sequence
\[ (X_{h C_p})^{\tau C_{p^{k-1}}} \to X^{h C_p \tau C_{p^{k-1}}} \to X^{t C_p \tau C_{p^{k-1}}}.  \]
\end{lem}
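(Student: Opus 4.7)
We prove the single-step reduction: for any Borel $C_{p^m}$-spectrum $Y$ and integer $2 \leq j \leq m$, I will establish $\tau C_{p^j}(Y) \simeq (Y^{hC_p})^{\tau C_{p^{j-1}}}$ as Borel $C_{p^{m-j}}$-spectra, together with the indicated fiber sequence; iterating with $Y = X, X^{hC_p}, X^{hC_{p^2}}, \ldots$ and using $(X^{hC_p})^{hC_{p^i}} \simeq X^{hC_{p^{i+1}}}$ then yields the full chain of equivalences. Setting $\tilde Y := j_{\ast}^{\{1\}}(Y) \in \Sp^{C_{p^m}}$ to be the Borel completion, the standard composition of geometric fixed points gives $\Phi^{C_{p^j}} \simeq \Phi^{C_{p^{j-1}}} \circ \Phi^{C_p}$ via the normal chain $C_p \trianglelefteq C_{p^j}$ (Def.~\ref{BachmannHoyoisFunctorNorms}); together with Rmk.~\ref{rem:GenTateResCompatibility} for the Weyl-group identification $W_{C_{p^m}} C_{p^j} \simeq W_{C_{p^{m-1}}} C_{p^{j-1}}$, this yields the factorization $\tau C_{p^j}(Y) \simeq \phi^{C_{p^{j-1}}}(\Phi^{C_p}(\tilde Y))$.

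The main step is to compare $\Phi^{C_p}(\tilde Y)$ with $\Psi^{C_p}(\tilde Y)$ in $\Sp^{C_{p^{m-1}}}$. Iterated categorical fixed points show that $\Psi^{C_p}(\tilde Y)$ is Borel-complete with underlying spectrum $Y^{hC_p}$:
\[ (\Psi^{C_p}(\tilde Y))^{C_{p^i}} \simeq \tilde Y^{C_{p^{i+1}}} \simeq Y^{hC_{p^{i+1}}} \simeq (Y^{hC_p})^{hC_{p^i}}, \]
so $\Psi^{C_p}(\tilde Y) \simeq j_{\ast}^{\{1\}}(Y^{hC_p})$. Next, applying the exact functor $\Psi^{C_p}$ to the $\{1\}$-isotropy fiber sequence $\tilde Y \otimes EC_{p^m+} \to \tilde Y \to \tilde Y \otimes \widetilde{EC_{p^m}}$ in $\Sp^{C_{p^m}}$, and using $\Phi^{C_p}(-) \simeq \Psi^{C_p}(- \otimes \widetilde{EC_{p^m}})$, yields the fiber sequence
\[ \Psi^{C_p}(j_!(Y)) \to j_{\ast}^{\{1\}}(Y^{hC_p}) \to \Phi^{C_p}(\tilde Y) \]
in $\Sp^{C_{p^{m-1}}}$, where $j_!(Y) := Y \otimes EC_{p^m+}$ denotes the $\{1\}$-torsion form of $Y$.

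The key observation is that the fiber is $\{1\}$-torsion. By Prop.~\ref{prp:EquivalentTateConstructions} (the Adams isomorphism) applied to $N = C_p \trianglelefteq C_{p^m}$, we have $\Psi^{C_p}(j_!(Y)) \simeq j_!(Y_{hC_p})$, which lies in the $\{1\}$-torsion subcategory of $\Sp^{C_{p^{m-1}}}$. Since $\phi^H$ vanishes on any $\{1\}$-torsion $C_{p^{m-1}}$-spectrum for non-trivial $H$ (as $\phi^H(C_{p^{m-1}+}) = 0$), applying $\phi^{C_{p^{j-1}}}$ with $j \geq 2$ annihilates the fiber, giving the desired equivalence $\tau C_{p^{j-1}}(Y^{hC_p}) \xrightarrow{\simeq} \tau C_{p^j}(Y)$.

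The fiber sequence in the second part of the lemma follows by applying the exact functor $\tau C_{p^{k-1}}$ to the norm-cofiber sequence $X_{hC_p} \to X^{hC_p} \to X^{tC_p}$ of Borel $C_{p^{n-1}}$-spectra; a diagram chase with the Borel-completion unit verifies that the resulting last map $\tau C_{p^{k-1}}(X^{hC_p}) \to \tau C_{p^{k-1}}(X^{tC_p})$ matches the canonical map $X^{\tau C_{p^k}} \to X^{tC_p \tau C_{p^{k-1}}}$ of the statement. The main technical obstacle will be the functorial identification $\Psi^{C_p}(j_!(Y)) \simeq j_!(Y_{hC_p})$ as genuine $C_{p^{m-1}}$-spectra: while matching fixed-point spectra is straightforward via iterated Adams isomorphisms, upgrading this to a natural equivalence requires the parametrized-orbits formalism of Prop.~\ref{prp:EquivalentTateConstructions}.
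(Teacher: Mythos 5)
Your proposal is correct and follows essentially the same route as the paper: apply $\Psi^{C_p}$ to the recollement fiber sequence $j_!X \to j_\ast X \to i_\ast\Phi^{C_p}j_\ast X$, identify $\Psi^{C_p}(j_\ast X)\simeq j'_\ast(X^{hC_p})$ and $\Psi^{C_p}(j_!X)$ as Borel-torsion (the paper cites Lem.~\ref{lm:CategoricalFixedPointsProperties} where you use iterated categorical fixed points and the Adams isomorphism of Prop.~\ref{prp:EquivalentTateConstructions}, but these amount to the same input), then kill the torsion term with $\phi^{C_{p^{i}}}$ for $i\geq 1$. The derivation of the fiber sequence by applying the exact functor $\tau C_{p^{k-1}}$ to the norm cofiber sequence and matching the last map with the Borel-completion unit is likewise the paper's argument.
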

\begin{proof} Consider the recollement
\[ \begin{tikzcd}[row sep=4ex, column sep=6ex, text height=1.5ex, text depth=0.25ex]
\Sp^{h C_{p^n}} \ar[shift right=1,right hook->]{r}[swap]{j_{\ast}} & \Sp^{C_{p^n}} \ar[shift right=2]{l}[swap]{j^{\ast}} \ar[shift left=2]{r}{i^{\ast} \simeq \Phi^{C_p}} & \Sp^{C_{p^{n-1}}} \ar[shift left=1,left hook->]{l}{i_{\ast}}
\end{tikzcd} \]
and the associated fiber sequence $j_! \to j_{\ast} \to i_{\ast} i^{\ast} j_{\ast}$. By Lem.~\ref{lm:CategoricalFixedPointsProperties} applied to $C_p \trianglelefteq C_{p^n} \trianglelefteq C_{p^n}$, we see that $\Psi^{C_p}(j_! X)$ is $C_{p^{n-1}}$-Borel torsion and $\Psi^{C_p}(j_{\ast} X) \simeq j'_{\ast}( X^{h C_p})$ for $j'_{\ast}: \Sp^{h C_{p^{n-1}}} \to \Sp^{C_{p^{n-1}}}$. Therefore, the fiber sequence of $C_{p^{n-1}}$-spectra
\[ \Psi^{C_p}(j_! X) \to \Psi^{C_p}(j_{\ast} X) \to \Psi^{C_p}(i_{\ast} i^{\ast} j_{\ast} X) \simeq \Phi^{C_p} (j_{\ast} X) \]
yields the fiber sequence of underlying Borel $C_{p^{n-1}}$-spectra
\[ X_{h C_p} \to X^{h C_p} \to X^{t C_p} \]
and, applying $\phi^{C_{p^i}}:\Sp^{C_{p^{n-1}}} \to \Fun(B C_{p^{n-1-i}}, \Sp)$ for $0 < i \leq n-1$, the fiber sequence of Borel $(C_{p^{n-1-i}})$-spectra
\[ 0 \to X^{h C_p \tau C_{p^i}} \to X^{\tau C_{p^{i+1}}}. \]
We thereby deduce the equivalence $X^{h C_p \tau C_{p^i}} \simeq X^{\tau C_{p^{i+1}}}$, and the remaining equivalences follow by replacing $X$ by $X^{h C_{p^k}}$.

Next, we map the fiber sequence in $\Sp^{C_{p^{n-1}}}$ to its Borel completion
\[ \begin{tikzcd}[row sep=4ex, column sep=4ex, text height=1.5ex, text depth=0.25ex]
\Psi^{C_p}(j_! X) \ar{r} \ar{d} & \Psi^{C_p}(j_{\ast} X) \ar{r} \ar{d}{\simeq} &  \Phi^{C_p} (j_{\ast} X) \ar{d} \\
(j'_{\ast} {j'}^{\ast}) (\Psi^{C_p}(j_! X))  \ar{r} & (j'_{\ast} {j'}^{\ast}) (\Psi^{C_p}(j_{\ast} X)) \ar{r} & (j'_{\ast} {j'}^{\ast}) (\Phi^{C_p} (j_{\ast} X)).
\end{tikzcd} \]
We note that by definition, the canonical map $X^{\tau C_{p^k}} \to X^{t C_p \tau C_{p^{k-1}}}$ is obtained as $\phi^{C_{p^{k-1}}}$ of the unit map $\Phi^{C_p} (j_{\ast} X) \to (j'_{\ast} {j'}^{\ast}) (\Phi^{C_p} (j_{\ast} X))$. Because the middle map is an equivalence, the fiber of the righthand map is canonically equivalent to the cofiber of the lefthand map. But because $(j_! X)^{C_p}$ is Borel-torsion with $X_{h C_p}$ as its underlying Borel $C_{p^{n-1}}$-spectrum, $\phi^{C_{p^{k-1}}}$ of that cofiber is definitionally $(X_{ hC_p})^{\tau C_{p^{k-1}}}$.
\end{proof}

\begin{cor} \label{cor:CanonicalFunctorsEquivalences} Suppose that $X$ is a bounded below Borel $C_{p^n}$-spectrum. Then the canonical map
\[ X^{\tau C_{p^k}} \to X^{t C_p \tau C_{p^{k-1}}} \]
is an equivalence for all $1 < k \leq n$.
\end{cor}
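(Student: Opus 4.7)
The plan is to reduce Cor.~\ref{cor:CanonicalFunctorsEquivalences} directly to the Tate orbit lemma of Nikolaus--Scholze by invoking the fiber sequence already established in Lem.~\ref{lm:identifyGenTate}. Specifically, that lemma identifies the canonical map $X^{\tau C_{p^k}} \to X^{tC_p \tau C_{p^{k-1}}}$ as the right-hand map in the fiber sequence
\[ (X_{hC_p})^{\tau C_{p^{k-1}}} \to X^{hC_p \tau C_{p^{k-1}}} \to X^{tC_p \tau C_{p^{k-1}}}, \]
where the middle term is equivalent to $X^{\tau C_{p^k}}$. So it suffices to show that $(X_{hC_p})^{\tau C_{p^{k-1}}} \simeq 0$ as a Borel $C_{p^{n-k}}$-spectrum whenever $X$ is a bounded below Borel $C_{p^n}$-spectrum.

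To handle the fiber term, I would first observe that $X_{hC_p}$ is itself a bounded below Borel $C_{p^{n-1}}$-spectrum, since parametrized orbits preserve the bounded below condition in the homotopy $t$-structure. Now I split into two cases. For $k=2$, the fiber is literally $(X_{hC_p})^{tC_p}$, which vanishes by the Tate orbit lemma \cite[Lem.~I.2.1]{NS18} applied to the underlying bounded below Borel $C_{p^2}$-spectrum of $X$ (noting that vanishing of the underlying spectrum implies vanishing as a Borel $C_{p^{n-2}}$-spectrum).

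For $k>2$, I would apply Lem.~\ref{lm:identifyGenTate} a second time, this time to $X_{hC_p}$, to obtain the equivalence
\[ (X_{hC_p})^{\tau C_{p^{k-1}}} \simeq (X_{hC_p})^{hC_{p^{k-2}} tC_p}. \]
Since $(X_{hC_p})^{tC_p} \simeq 0$ by the Tate orbit lemma, the iterated fixed-point construction $(-)^{hC_{p^{k-2}}}$ sends this to $0$, giving the desired vanishing. Combining the two cases and invoking the fiber sequence completes the proof.

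There is no real obstacle here — all the work has already been done in Lem.~\ref{lm:identifyGenTate} and in the Nikolaus--Scholze Tate orbit lemma. The only point to keep track of is that ``equivalence'' throughout means equivalence of Borel $C_{p^{n-k}}$-spectra, but since Borel equivariant equivalences are detected on underlying spectra this comes for free once the Tate orbit lemma gives vanishing of the underlying spectrum.
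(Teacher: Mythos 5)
Your overall strategy --- use Lem.~\ref{lm:identifyGenTate} to reduce to the vanishing of the fiber $(X_{hC_p})^{\tau C_{p^{k-1}}}$, note that $X_{hC_p}$ is still bounded below, and handle the base case $k=2$ by the Tate orbit lemma --- is exactly the paper's, and the $k=2$ case is correct. The problem is the step for $k>2$. In the notation of Lem.~\ref{lm:identifyGenTate} the superscripts compose left to right: $X^{hC_{p^{k-1}} tC_p}$ means $(X^{hC_{p^{k-1}}})^{tC_p}$, i.e.\ one first takes homotopy fixed points for $C_{p^{k-1}}$ and then the \emph{ordinary} Tate construction of the residual $C_{p^k}/C_{p^{k-1}}\cong C_p$-action. (This is forced by the proof of that lemma, which establishes $(X^{hC_p})^{\tau C_{p^i}}\simeq X^{\tau C_{p^{i+1}}}$ and iterates by replacing $X$ with $X^{hC_{p^j}}$, and by the parallel notation $X^{tC_p\tau C_{p^{k-1}}}=(X^{tC_p})^{\tau C_{p^{k-1}}}$ in the statement of the corollary itself.) So the identification you invoke reads
\[ (X_{hC_p})^{\tau C_{p^{k-1}}} \simeq \bigl((X_{hC_p})^{hC_{p^{k-2}}}\bigr)^{tC_p}, \]
with the Tate construction applied \emph{last}. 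Your justification ``the iterated fixed-point construction $(-)^{hC_{p^{k-2}}}$ sends this to $0$'' tacitly rewrites this as $\bigl((X_{hC_p})^{tC_p}\bigr)^{hC_{p^{k-2}}}$, which is a different object; the vanishing of $(X_{hC_p})^{tC_p}$ does not by itself give the vanishing of $\bigl((X_{hC_p})^{hC_{p^{k-2}}}\bigr)^{tC_p}$ --- statements of the shape $(Y^{hC_{p^m}})^{tC_p}\simeq 0$ are of Tate-fixed-point-lemma type and need separate input.

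The gap is repaired by inducting on $k$ and using the \emph{fiber sequence} part of Lem.~\ref{lm:identifyGenTate} rather than the string of equivalences: applying that lemma to $Y=X_{hC_p}$ (in place of $X$, with $k-1$ in place of $k$) gives
\[ \bigl((X_{hC_p})_{hC_p}\bigr)^{\tau C_{p^{k-2}}} \to (X_{hC_p})^{\tau C_{p^{k-1}}} \to \bigl((X_{hC_p})^{tC_p}\bigr)^{\tau C_{p^{k-2}}}. \]
In the third term the $tC_p$ genuinely is applied first, so it vanishes by the Tate orbit lemma; the first term is $(Y_{hC_p})^{\tau C_{p^{k-2}}}$ for the bounded below spectrum $Y=X_{hC_p}$ and vanishes by the inductive hypothesis. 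This is the paper's argument, and your base case together with your observation that orbits preserve bounded-belowness are precisely the inputs it needs.
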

\begin{proof} By Lem.~\ref{lm:identifyGenTate}, we may equivalently show that $(X_{h C_p})^{\tau C_{p^{k-1}}} \simeq 0$. We proceed by induction on $k$ (and prove the claim for all $n \geq k$ with $k$ fixed since we may ignore residual action for vanishing). For the base case $k=2$, the fiber of the canonical map is $(X_{h C_p})^{t C_p}$, which vanishes by the Tate orbit lemma. Now suppose $k>2$ and $(Y_{h C_p})^{\tau C_{p^i}} \simeq 0$ for all bounded below $Y \in \Sp^{h C_{p^m}}$, $m \geq i+1$ and $1 \leq i<k-1$. By Lem.~\ref{lm:identifyGenTate}, we have a fiber sequence
\[ ((X_{h C_{p}})_{h C_p})^{\tau C_{p^{k-2}}} \to (X_{h C_p})^{\tau C_{p^{k-1}}} \to (X_{h C_p})^{t C_p \tau C_{p^{k-2}}}. \]
Since $X_{h C_{p}}$ remains bounded below, the inductive hypothesis ensures that the left term vanishes, and the right term vanishes by the Tate orbit lemma again. We conclude that the middle term vanishes.
\end{proof}

\begin{lem} \label{lem:dihedralIdentifyGenTate} Suppose $X$ is a $C_2$-spectrum with twisted $\mu_{p^n}$-action. For $1 < k \leq n$, we have twisted $\mu_{p^{n-k}}$-equivariant equivalences
\[ X^{\tau_{C_2} \mu_{p^k}} \simeq X^{h_{C_2} \mu_p \tau_{C_2} \mu_{p^{k-1}}} \simeq X^{h_{C_2} \mu_{p^2} \tau_{C_2} \mu_{p^{k-2}}} \simeq \cdots \simeq X^{h_{C_2} \mu_{p^{k-1}} t_{C_2} \mu_p}, \]
with respect to which the canonical map $$X^{\tau_{C_2} \mu_{p^k}} \to X^{t_{C_2} \mu_p \tau_{C_2} \mu_{p^{k-1}}}$$ fits into the fiber sequence
\[ (X_{h_{C_2} \mu_p})^{\tau_{C_2} \mu_{p^{k-1}}} \to X^{h_{C_2} \mu_p \tau_{C_2} \mu_{p^{k-1}}} \to X^{t_{C_2} \mu_p \tau_{C_2} \mu_{p^{k-1}}}.  \]
\end{lem}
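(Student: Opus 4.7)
The plan is to mimic the proof of Lemma~\ref{lm:identifyGenTate} in the dihedral setting, substituting the recollement on $\Sp^{C_{p^n}}$ with the dihedral recollement of Setup~\ref{setup:Dihedral} / Example~\ref{exm:DihedralRecollement},
\[ \begin{tikzcd}[row sep=4ex, column sep=8ex, text height=1.5ex, text depth=0.5ex]
\Fun_{C_2}(B^t_{C_2} \mu_{p^n}, \ul{\Sp}^{C_2}) \ar[shift right=1,right hook->]{r}[swap]{j_{\ast}} & \Sp^{D_{2p^n}} \ar[shift right=2]{l}[swap]{j^{\ast}} \ar[shift left=2]{r}{i^{\ast}=\Phi^{\mu_p}} & \Sp^{D_{2p^{n-1}}}\ar[shift left=1,left hook->]{l}{i_{\ast}},
\end{tikzcd} \]
and invoking Lemma~\ref{lm:CategoricalFixedPointsProperties} for the nested normal subgroups $\mu_p \trianglelefteq \mu_{p^n} \trianglelefteq D_{2p^n}$. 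Applying $\Psi^{\mu_p}$ to the stable fiber sequence $j_! \to j_\ast \to i_\ast i^\ast j_\ast$ produces a fiber sequence of $D_{2p^{n-1}}$-spectra
\[ \Psi^{\mu_p}(j_! X) \longrightarrow j'_\ast(X^{h_{C_2}\mu_p}) \longrightarrow \Phi^{\mu_p}(j_\ast X), \]
where Lemma~\ref{lm:CategoricalFixedPointsProperties}(1) identifies the first term as $\Gamma_{\mu_{p^{n-1}}}$-torsion in $\Sp^{D_{2p^{n-1}}}$, Lemma~\ref{lm:CategoricalFixedPointsProperties}(3) identifies the middle term with the Borel completion $j'_\ast$ of the parametrized homotopy fixed points $X^{h_{C_2}\mu_p}$, and the third term is simply $\Phi^{\mu_p}(j_\ast X)$ because $\Psi^{\mu_p} i_\ast \simeq \id$.

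For the chain of equivalences, I will apply the localization functor
\[ L[\zeta]_{k-1}: \Sp^{D_{2p^{n-1}}} \to \Fun_{C_2}(B^t_{C_2}\mu_{p^{n-k}}, \ul{\Sp}^{C_2}), \]
which is the dihedral analog of $\phi^{C_{p^{k-1}}}$ and can be read off from the $\pi$-relative geometric locus of Variant~\ref{vrn:ReconstructionEquivalence} applied to the surjection $\zeta$. This functor vanishes on $\Gamma_{\mu_{p^{n-1}}}$-torsion objects by Lemma~\ref{lem:GeometricFixedPointsDetectionCriterion}, so applying it to the above fiber sequence yields an equivalence $X^{h_{C_2}\mu_p \tau_{C_2}\mu_{p^{k-1}}} \xrightarrow{\simeq} X^{\tau_{C_2}\mu_{p^k}}$, since $L[\zeta]_{k-1}\Phi^{\mu_p}(j_\ast X) \simeq \Phi^{\mu_{p^k}}(j_\ast X) = X^{\tau_{C_2}\mu_{p^k}}$ and $L[\zeta]_{k-1}j'_\ast(X^{h_{C_2}\mu_p}) = X^{h_{C_2}\mu_p \tau_{C_2}\mu_{p^{k-1}}}$ by definition. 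Iterating this equivalence with $X$ replaced by $X^{h_{C_2}\mu_{p^j}}$ and using $B^t_{C_2}(\mu_{p^n}/\mu_{p^j}) \simeq B^t_{C_2}\mu_{p^{n-j}}$ via the $p^j$-power map produces the full chain.

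For the final fiber sequence, I will map the above fiber sequence of $D_{2p^{n-1}}$-spectra to its Borel completion as in the non-dihedral proof:
\[ \begin{tikzcd}[row sep=3ex, column sep=4ex, text height=1.5ex, text depth=0.25ex]
\Psi^{\mu_p}(j_!X) \ar{r}\ar{d} & \Psi^{\mu_p}(j_\ast X) \ar{r}\ar{d}{\simeq} & \Phi^{\mu_p}(j_\ast X) \ar{d} \\
j'_\ast j'^\ast \Psi^{\mu_p}(j_! X) \ar{r} & j'_\ast j'^\ast \Psi^{\mu_p}(j_\ast X) \ar{r} & j'_\ast(X^{t_{C_2}\mu_p}).
\end{tikzcd} \]
The middle vertical arrow is an equivalence since $\Psi^{\mu_p}(j_\ast X) \simeq j'_\ast(X^{h_{C_2}\mu_p})$ is already Borel complete; the canonical map $X^{\tau_{C_2}\mu_{p^k}} \to X^{t_{C_2}\mu_p \tau_{C_2}\mu_{p^{k-1}}}$ is by construction $L[\zeta]_{k-1}$ of the right vertical, so its fiber equals $L[\zeta]_{k-1}$ of the cofiber of the left vertical. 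Because $\Psi^{\mu_p}(j_! X)$ is $\Gamma_{\mu_{p^{n-1}}}$-torsion with $j'^\ast\Psi^{\mu_p}(j_!X) \simeq X_{h_{C_2}\mu_p}$ (the parametrized Adams isomorphism, Proposition~\ref{prp:EquivalentTateConstructions}), the cofiber of the norm map for a torsion object yields $(X_{h_{C_2}\mu_p})^{\tau_{C_2}\mu_{p^{k-1}}}$ after $L[\zeta]_{k-1}$.

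The key technical input is Lemma~\ref{lm:CategoricalFixedPointsProperties}, which supplies the identifications of $\Psi^{\mu_p}$ on torsion and complete objects; everything else is a formal transcription of the non-dihedral proof of Lemma~\ref{lm:identifyGenTate}. The main obstacle is purely notational, namely carefully matching the dihedral restriction functors and $C_2$-parametrized Tate terminology with the non-dihedral template—no new computation is required beyond verifying that the two-variable recollement theory of Proposition~\ref{prp:RecollementsOfRecollements}, applied here to the subfamily $\Gamma_{\mu_{p^{n-1}}}$ embedded into $\Sp^{D_{2p^{n-1}}}$, behaves exactly as in the cyclic case.
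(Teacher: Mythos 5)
Your proof follows the same strategy as the paper's: apply $\Psi^{\mu_p}$ to the recollement fiber sequence $j_! \to j_\ast \to i_\ast i^\ast j_\ast$ for the $\Gamma_{\mu_{p^n}}$-family on $\Sp^{D_{2p^n}}$, identify the terms using Lemma~\ref{lm:CategoricalFixedPointsProperties}, apply the localization functors $L[\zeta]_i$ to extract the chain of equivalences, and map to the $\Gamma$-completion to obtain the fiber sequence. The only cosmetic difference is that you produce the chain by iterating the base equivalence with $X$ replaced by $X^{h_{C_2}\mu_{p^j}}$, whereas the paper reads off all the equivalences at once by ranging $L[\zeta]_i$ over $0 < i \leq n-1$, which is the same argument.
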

\begin{proof} The strategy of the proof is the same as that of Lem.~\ref{lm:identifyGenTate}, where we instead consider the recollement 
\[ \begin{tikzcd}[row sep=4ex, column sep=6ex, text height=1.5ex, text depth=0.25ex]
\Fun_{C_2}(B^t_{C_2} \mu_{p^n}, \underline{\Sp}^{C_2}) \ar[shift right=1,right hook->]{r}[swap]{j_{\ast}} & \Sp^{D_{2 p^n}} \ar[shift right=2]{l}[swap]{j^{\ast}} \ar[shift left=2]{r}{i^{\ast} \simeq \Phi^{{\mu}_p}} & \Sp^{D_{2p^{n-1}}} \ar[shift left=1,left hook->]{l}{i_{\ast}}.
\end{tikzcd} \]
Let $\Gamma = \Gamma_{\mu_{p^{n-1}}}$ be the $\mu_{p^{n-1}}$-free $D_{2p^{n-1}}$-family, and let 
\[ j'_{\ast} = \sF_b^{\vee}[\mu_{p^{n-1}}]: \Fun_{C_2}(B^t_{C_2} \mu_{p^{n-1}}, \underline{\Sp}^{C_2}) \to \Sp^{D_{2 p^{n-1}}}. \]
Applying Lem.~\ref{lm:CategoricalFixedPointsProperties} to the functor $\Psi^{\mu_p}: \Sp^{D_{2 p^n}} \to \Sp^{D_{2 p^{n-1}}}$, we see that in the fiber sequence
\[ \Psi^{\mu_p}(j_! X) \to \Psi^{\mu_p}(j_{\ast} X) \to \Phi^{\mu_p}(j_{\ast} X), \]
$\Psi^{\mu_p}(j_! X)$ is $\Gamma$-torsion and $\Psi^{\mu_p}(j_{\ast} X) \simeq j'_{\ast}(X^{h_{C_2} \mu_p})$. As before, for $0 \leq i \leq n-1$ let
\[ L[\zeta]_i: \Sp^{D_{2 p^{n-1}}} \to \Fun_{C_2}(B^t_{C_2} \mu_{p^{n-1-i}}, \underline{\Sp}^{C_2}) \]
be the localization functor. Then $L[\zeta]_0$ of the fiber sequence yields
\[ X_{h_{C_2} \mu_p} \to X^{h_{C_2} \mu_p} \to X^{t_{C_2} \mu_p} \]
whereas for $0< i\leq n-1$, $L[\zeta]_i$ of the fiber sequence yields
\[ 0 \to (X^{h_{C_2} \mu_p})^{\tau_{C_2} \mu_{p^i}} \xto{\simeq} X^{\tau_{C_2} \mu_{p^{i+1}}}  \]
from which we deduce the string of equivalences in the statement. Finally, if we map the fiber sequence of $D_{2p^{n-1}}$-spectra into its $\Gamma$-completion, then as in the proof of Lem.~\ref{lm:identifyGenTate} we obtain the fiber sequence as in the statement.
\end{proof}

\begin{cor} \label{cor:dihedralTOLyieldsEquivs} Suppose $X$ is a $C_2$-spectrum with twisted $\mu_{p^n}$-action whose underlying spectrum is bounded below. Then the canonical map
\[ X^{\tau_{C_2} \mu_{p^k}} \to (X^{t_{C_2} \mu_p})^{\tau_{C_2} \mu_{p^{k-1}}} \]
is an equivalence for all $1 < k \leq n$.
\end{cor}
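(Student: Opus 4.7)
The plan is to mimic the proof of Cor.~\ref{cor:CanonicalFunctorsEquivalences}, using Lem.~\ref{lem:dihedralIdentifyGenTate} in place of Lem.~\ref{lm:identifyGenTate} and the dihedral Tate orbit lemma (Lem.~\ref{lem:dihedralTOLEven} or Lem.~\ref{lem:dihedralTOLOdd}, depending on the parity of $p$) in place of the ordinary Tate orbit lemma. First I would observe that by Lem.~\ref{lem:dihedralIdentifyGenTate}, the fiber of the canonical map
\[ X^{\tau_{C_2} \mu_{p^k}} \longrightarrow (X^{t_{C_2} \mu_p})^{\tau_{C_2} \mu_{p^{k-1}}} \simeq X^{t_{C_2} \mu_p \tau_{C_2} \mu_{p^{k-1}}} \]
is $(X_{h_{C_2} \mu_p})^{\tau_{C_2} \mu_{p^{k-1}}}$, so it suffices to prove this parametrized iterated Tate vanishes whenever the underlying spectrum of $X$ is bounded below.

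I would prove this vanishing by induction on $k$, with the statement taken uniformly over all $n \geq k$ (the residual twisted $\mu_{p^{n-k}}$-action is irrelevant for a vanishing claim, so we may as well ignore it). For the base case $k=2$, the fiber is $(X_{h_{C_2} \mu_p})^{t_{C_2} \mu_p}$, which vanishes by the dihedral Tate orbit lemma, since the hypothesis that the underlying spectrum of $X$ is bounded below is exactly what is required there. For the inductive step with $k>2$, one assumes the vanishing $(Y_{h_{C_2} \mu_p})^{\tau_{C_2} \mu_{p^{i}}} \simeq 0$ for all $1 \leq i < k-1$ and all $C_2$-spectra $Y$ with twisted $\mu_{p^m}$-action (for $m \geq i+1$) whose underlying spectrum is bounded below. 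Applying Lem.~\ref{lem:dihedralIdentifyGenTate} to $X_{h_{C_2} \mu_p}$ itself gives a fiber sequence
\[ ((X_{h_{C_2} \mu_p})_{h_{C_2} \mu_p})^{\tau_{C_2} \mu_{p^{k-2}}} \to (X_{h_{C_2} \mu_p})^{\tau_{C_2} \mu_{p^{k-1}}} \to (X_{h_{C_2} \mu_p})^{t_{C_2} \mu_p \tau_{C_2} \mu_{p^{k-2}}}. \]
The leftmost term vanishes by the inductive hypothesis applied to $Y = X_{h_{C_2} \mu_p}$ with $i = k-2$, and the rightmost term vanishes by the dihedral Tate orbit lemma applied to $X_{h_{C_2} \mu_p}$ followed by the iterated parametrized fixed points used to compute $\tau_{C_2} \mu_{p^{k-2}}$ via Lem.~\ref{lem:dihedralIdentifyGenTate}.

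The key point to verify in each step is that the underlying spectrum of $X_{h_{C_2} \mu_p}$ remains bounded below. This follows because parametrized orbits are compatible with restriction (see Rmk.~\ref{rem:ParamTateCompatibleRestriction} and the discussion in Setup~\ref{setup:Dihedral}), so the underlying spectrum of $X_{h_{C_2} \mu_p}$ is $(X^1)_{h \mu_p}$, and homotopy orbits preserve the property of being bounded below. The main obstacle, such as it is, lies not in this induction (which is essentially formal once the dihedral Tate orbit lemma is in hand) but in correctly invoking Lem.~\ref{lem:dihedralIdentifyGenTate}: one must keep track of the residual twisted actions and check that the various identifications are equivariant in the appropriate sense, though ultimately for a vanishing statement the residual action plays no role.
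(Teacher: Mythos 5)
Your proposal is correct and is essentially the paper's own argument: the paper proves Cor.~\ref{cor:dihedralTOLyieldsEquivs} by declaring it "the same as that of Cor.~\ref{cor:CanonicalFunctorsEquivalences}" with the dihedral Tate orbit lemma and Lem.~\ref{lem:dihedralIdentifyGenTate} substituted in, which is exactly the induction on $k$ you carry out. Your added check that the underlying spectrum of $X_{h_{C_2}\mu_p}$ is $(X^1)_{h\mu_p}$ and stays bounded below is the same point the paper makes implicitly.
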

\begin{proof} Using the dihedral Tate orbit lemma and Lem.~\ref{lem:dihedralIdentifyGenTate}, the proof is the same as that of Cor.~\ref{cor:CanonicalFunctorsEquivalences}.
\end{proof}

We are now prepared to deploy Thm.~\ref{thm:OneGenerationAndExtension} together with Cor.~\ref{cor:CanonicalFunctorsEquivalences} and Cor.~\ref{cor:dihedralTOLyieldsEquivs} to decompose $\Sp^{C_{p^n}}_{bb}$ and $\Sp^{D_{2p^n}}_{ubb}$. To give a concise statement, we need to introduce some more notation.

\begin{ntn} Let 
\begin{align*} \Fun^{\cocart}_{/\Delta^n}(\sd(\Delta^n), \Sp^{C_{p^n}}_{\locus{\phi}})_{bb} \subset \Fun^{\cocart}_{/\Delta^n}(\sd(\Delta^n), \Sp^{C_{p^n}}_{\locus{\phi}}) \\
\Fun^{\cocart}_{/\Delta^n}(\sd(\Delta^n), \Sp^{D_{2 p^n}}_{\locus{\phi,\zeta}})_{ubb} \subset \Fun^{\cocart}_{/\Delta^n}(\sd(\Delta^n), \Sp^{D_{2 p^n}}_{\locus{\phi,\zeta}})
\end{align*}
be the full subcategories on functors that evaluate on all singleton strings to bounded below, resp. underlying bounded below spectra.
\end{ntn}

\begin{lem} We have equivalences
\begin{align*} \Theta: & \Fun^{\cocart}_{/\Delta^n}(\sd(\Delta^n), \Sp^{C_{p^n}}_{\locus{\phi}})_{bb} \xto{\simeq} \Sp^{C_{p^n}}_{bb}, \\
\Theta[\zeta]: & \Fun^{\cocart}_{/\Delta^n}(\sd(\Delta^n), \Sp^{D_{2 p^n}}_{\locus{\phi,\zeta}})_{ubb} \xto{\simeq} \Sp^{D_{2 p^n}}_{ubb}
\end{align*}
obtained by restriction from the equivalences of Thm.~\ref{thm:GeometricFixedPointsDescriptionOfGSpectra} and Var.~\ref{vrn:ReconstructionEquivalence}.
\end{lem}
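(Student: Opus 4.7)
The proof should be straightforward: both $\Theta$ and $\Theta[\zeta]$ are already known to be equivalences of $\infty$-categories by Thm.~\ref{thm:GeometricFixedPointsDescriptionOfGSpectra} and Var.~\ref{vrn:ReconstructionEquivalence}. Since the subcategories in question are defined as full subcategories on both sides, it suffices to show that an object belongs to the left-hand subcategory if and only if its image belongs to the right-hand subcategory. The plan is therefore to match the two boundedness conditions under evaluation at singleton strings.

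For the first equivalence, let $F \colon \sd(\Delta^n) \to \Sp^{C_{p^n}}_{\locus{\phi}}$ preserve locally cocartesian edges and set $X = \Theta(F)$. By Lem.~\ref{lm:GeometricFixedPointsOfComparisonFunctor} (applied with $\cH = \fS = \Delta^n$), the composite $\phi^{C_{p^k}} \circ \Theta$ is homotopic to evaluation at the singleton string $[k]$ under the identification $(\Sp^{C_{p^n}}_{\locus{\phi}})_{C_{p^k}} \simeq \Fun(B W_{C_{p^n}} C_{p^k}, \Sp)$. Thus $F([k])$, viewed as a spectrum, is equivalent to the underlying spectrum of $X^{\phi C_{p^k}}$, for every $0 \leq k \leq n$. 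By Lem.~\ref{lem:BddBelowEqualsSliceBddBelow}, $X$ is bounded below if and only if each $X^{\phi C_{p^k}}$ is bounded below, which is exactly the condition that $F$ lies in $\Fun^{\cocart}_{/\Delta^n}(\sd(\Delta^n), \Sp^{C_{p^n}}_{\locus{\phi}})_{bb}$. Hence $\Theta$ restricts to an equivalence on these full subcategories.

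For the second equivalence, the same strategy applies, but we must track the underlying spectrum rather than the geometric fixed points directly. Given $F \colon \sd(\Delta^n) \to \Sp^{D_{2p^n}}_{\locus{\phi,\zeta}}$ with $\Theta[\zeta](F) = X$, the analogue of Lem.~\ref{lm:GeometricFixedPointsOfComparisonFunctor} in the relative setting (see Var.~\ref{vrn:ReconstructionEquivalence}(3)) identifies evaluation at $[k] \in \Delta^n \subset \sd(\Delta^n)$ with the localization functor $L[\zeta]_k(X) \simeq \sU_b[\mu_{p^{n-k}}](\Phi^{\mu_{p^k}} X)$. Its underlying spectrum in the sense of Cvn.~\ref{cnv:basepoint} is the underlying spectrum of $\Phi^{\mu_{p^k}} X$, namely $X^{\phi \mu_{p^k}}$. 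Applying Lem.~\ref{lem:BddBelowEqualsSliceBddBelow} to the underlying $\mu_{p^n}$-spectrum of $X$, we see that $X \in \Sp^{D_{2p^n}}_{ubb}$ if and only if $X^{\phi \mu_{p^k}}$ is bounded below for all $0 \leq k \leq n$, which matches precisely the condition that $F$ evaluates to an underlying-bounded-below object on every singleton string. Thus $\Theta[\zeta]$ restricts to the claimed equivalence.

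The only subtle point is the translation of the singleton-string condition, which concerns each $L[\zeta]_k(X)$ as a twisted-$\mu_{p^{n-k}}$-equivariant $C_2$-spectrum, into the condition on the underlying $\mu_{p^n}$-spectrum of $X$. This is resolved by observing that the underlying spectrum of $L[\zeta]_k(X)$ depends only on the underlying $\mu_{p^n}$-spectrum structure (since $\Phi^{\mu_{p^k}}$ and $\sU_b$ commute with restriction to $\mu_{p^{n-k}}$), so that the equivalence of Lem.~\ref{lem:BddBelowEqualsSliceBddBelow} applied to the cyclic case is exactly what is needed. Once this matching is verified, the lemma follows immediately from the fact that fully faithful functors restrict to equivalences between corresponding full subcategories determined by pointwise conditions.
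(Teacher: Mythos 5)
Your proof is correct and follows essentially the same approach as the paper's (very terse) proof: match the singleton-string boundedness condition to the slice-bounded-below characterization via the identification of $\phi^{C_{p^k}} \circ \Theta$ (resp.\ $L[\zeta]_k \circ \Theta[\zeta]$) with evaluation at $[k]$, and then apply Lem.~\ref{lem:BddBelowEqualsSliceBddBelow}. The paper compresses all of this into a one-sentence appeal to the definitions; your expanded version, including the observation that the underlying spectrum of $L[\zeta]_k(X)$ is $X^{\phi\mu_{p^k}}$ via the basepoint convention, correctly supplies the missing details.
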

\begin{proof} This follows from the definitions after recalling that $\Phi^{H} \circ \Theta$ and $L[\zeta]_k \circ \Theta[\zeta]$ are homotopic to evaluation at $H$ and $k$, respectively.
\end{proof}

\begin{ntn} Let $(t C_p)_{\bullet}: \Delta^n \to \Cat_{\infty}$ be the functor defined by
\[ \Sp^{h C_{p^n}} \xto{t^{C_p}} \Sp^{h C_{p^{n-1}}} \xto{t^{C_p}} \cdots \xto{t^{C_p}} \Sp^{h C_p} \xto{t^{C_p}} \Sp \]
and let $\Sp^{h C_{p^n}}_{\Tate} \to (\Delta^n)^{\op}$ be the cartesian fibration classified by $(t C_p)_{\bullet}$.

Similarly, $(t_{C_2} \mu_p)_{\bullet}: \Delta^n \to \Cat_{\infty}$ be the functor defined by
\[ \Fun_{C_2}(B^t_{C_2} \mu_{p^n}, \underline{\Sp}^{C_2}) \xto{t_{C_2} {\mu_p}} \Fun_{C_2}(B^t_{C_2} \mu_{p^{n-1}}, \underline{\Sp}^{C_2}) \xto{t_{C_2} {\mu_p}} \cdots \xto{t_{C_2} {\mu_p}} \Sp^{C_2} \]
and let $\Sp^{h_{C_2} \mu_{p^n}}_{\Tate} \to (\Delta^n)^{\op}$ be the cartesian fibration classified by $(t_{C_2} \mu_p)_{\bullet}$.
\end{ntn}

\begin{dfn} Given a section $X: (\Delta^n)^{\op} \to \Sp^{h C_{p^n}}_{\Tate}$, resp. $X:(\Delta^n)^{\op} \to \Sp^{h_{C_2} \mu_{p^n}}_{\Tate}$, we say that $X$ is \emph{bounded below}, resp. \emph{underlying bounded below} if for all $0 \leq k \leq n$, the underlying spectrum of $X(k)$ is bounded below. Let $\Fun_{/(\Delta^n)^{\op}}((\Delta^n)^{\op},\Sp^{h C_{p^n}}_{\Tate})_{bb}$, resp. $\Fun_{/(\Delta^n)^{\op}}((\Delta^n)^{\op},\Sp^{h_{C_2} \mu_{p^n}}_{\Tate})_{ubb}$ denote the corresponding full subcategories.
\end{dfn}

\begin{prp} \label{prp:EquivalenceOnBoundedBelowAtFiniteLevel} We have inclusions of full subcategories
\begin{align*} \Fun^{\cocart}_{/\Delta^n}(\sd(\Delta^n), \Sp^{C_{p^n}}_{\locus{\phi}})_{bb} &\subset \Fun^{\cocart}_{/\Delta^n}(\sd(\Delta^n), \Sp^{C_{p^n}}_{\locus{\phi}})_{\gen{1}},  \\
\Fun_{/(\Delta^n)^{\op}}((\Delta^n)^{\op},\Sp^{h C_{p^n}}_{\Tate})_{bb} &\subset \Fun_{/(\Delta^n)^{\op}}((\Delta^n)^{\op},\Sp^{h C_{p^n}}_{\Tate})_{\ext}, \\
\Fun^{\cocart}_{/\Delta^n}(\sd(\Delta^n), \Sp^{D_{2 p^n}}_{\locus{\phi,\zeta}})_{ubb} &\subset \Fun^{\cocart}_{/\Delta^n}(\sd(\Delta^n), \Sp^{D_{2 p^n}}_{\locus{\phi,\zeta}})_{\gen{1}}, \\
\Fun_{/(\Delta^n)^{\op}}((\Delta^n)^{\op},\Sp^{h_{C_2} \mu_{p^n}}_{\Tate})_{ubb} &\subset \Fun_{/(\Delta^n)^{\op}}((\Delta^n)^{\op},\Sp^{h_{C_2} \mu_{p^n}}_{\Tate})_{\ext},
\end{align*}
such that the equivalences of Thm.~\ref{thm:OneGenerationAndExtension} between $1$-generated and extendable objects restrict to
\begin{align*} \Fun^{\cocart}_{/\Delta^n}(\sd(\Delta^n), \Sp^{C_{p^n}}_{\locus{\phi}})_{bb} & \xto{\simeq} \Fun_{/(\Delta^n)^{\op}}((\Delta^n)^{\op},\Sp^{h C_{p^n}}_{\Tate})_{bb}, \\
\Fun^{\cocart}_{/\Delta^n}(\sd(\Delta^n), \Sp^{D_{2 p^n}}_{\locus{\phi,\zeta}})_{ubb} & \xto{\simeq} \Fun_{/(\Delta^n)^{\op}}((\Delta^n)^{\op},\Sp^{h_{C_2} \mu_{p^n}}_{\Tate})_{ubb}.
\end{align*}
\end{prp}
\begin{proof} The inclusions follow from Cor.~\ref{cor:CanonicalFunctorsEquivalences}, Cor.~\ref{cor:dihedralTOLyieldsEquivs}, and Lem.~\ref{lm:equivalentOneGenerationConditions}. Matching the (underlying) bounded below conditions then implies that the equivalence of Thm.~\ref{thm:OneGenerationAndExtension} restricts as claimed.
\end{proof}

\begin{rem} Using Prop.~\ref{prp:DualDescriptionOfSections}, we may further unwind the equivalences of Prop.~\ref{prp:EquivalenceOnBoundedBelowAtFiniteLevel}. For example, analogous to \cite[Rmk.~II.4.8]{NS18}, we see that the data of an object $X \in \Sp^{D_{2p^n}}_{ubb}$ is equivalent to a sequence of objects
\[ L[\zeta]_0(X), L[\zeta]_1(X), \cdots, L[\zeta]_n(X) \]
where $L[\zeta]_k(X)$ is a $C_2$-spectrum with twisted $\mu_{p^n}/\mu_{p^k}$-action whose underlying spectrum is bounded below, together with twisted $\mu_{p^n}/\mu_{p^k}$-equivariant maps
\[ L[\zeta]_k(X) \to L[\zeta]_{k-1}(X)^{t_{C_2} \mu_p} \]
for $1 \leq k \leq n$.
\end{rem}

\subsection{Exchanging a lax equalizer for an equalizer}

In this subsection, we record a technical lemma regarding the lax equalizer of the identity and an endofunctor $F:C \to C$ that we will need for the proof of Thm.~\ref{thm:MainTheoremEquivalenceBddBelow}.

\begin{ntn} \label{ntn:nonnegativeintegers} Let $\ZZ_{\geq 0}$ denote the totally ordered set of non-negative integers regarded as a category, and let $\NN$ denote the monoid of non-negative integers under addition. Let $s: \ZZ_{\geq 0} \to \ZZ_{\geq 0}$ denote the successor functor that sends $n$ to $n+1$.
\end{ntn}

\begin{dfn} \label{dfn:spines} The \emph{spine} $\spine(\Delta^n) \subset \Delta^n$ is the subsimplicial set $\bigcup_{k=0}^{n-1} \{k<k+1\}$. Likewise, the \emph{spine} $\spine(\ZZ_{\geq 0}) \subset \ZZ_{\geq 0}$ is the subsimplicial set $\bigcup_{k=0}^{\infty} \{k<k+1\}$.
\end{dfn}

\begin{rem} \label{rem:innerAnodyneSpineInclusion} Recall that the spine inclusions of Def.~\ref{dfn:spines} are inner anodyne; indeed, a simple inductive argument with inner horn inclusions shows the maps $\spine(\Delta^n) \subset \Delta^n$ are inner anodyne, and it follows that $\spine(\ZZ_{\geq 0}) \subset \ZZ_{\geq 0}$ is inner anodyne by the stability of inner anodyne maps under filtered colimits.
\end{rem}

\begin{cnstr} \label{cnstr:SetupForEndofunctorFibration} Let $C$ be an $\infty$-category and $F: C \to C$ an endofunctor. Let $$\widehat{C} \to B \NN \cong B \NN^{\op}$$ be the cartesian fibration classified by the functor $B \NN \to \Cat_{\infty}$ that deloops the map of monoids $\NN \to \Fun(C,C)$ uniquely specified by $1 \mapsto F$.\footnote{This is the operadic left Kan extension of the functor $\ast \to \Fun(C,C)$ selecting $F$ for the monoidal structure on $\Fun(C,C)$ defined by composition of endofunctors.} Define a structure map $$p: \ZZ_{\geq 0}^{\op} \times \ZZ_{\geq 0} \to B \NN$$ by $p[(n+k,m) \rightarrow (n,m+l)] = k$ and note that $p$ is a cartesian fibration. We will regard any subcategory of $\ZZ_{\geq 0}^{\op} \times \ZZ_{\geq 0}$ as over $B \NN$ via $p$, so $\ZZ_{\geq 0}^{\op} \to B \NN$ is a cartesian fibration whereas $\ZZ_{\geq 0} \to B \NN$ is the constant functor at $\ast$. Precomposition by the successor functor $s$ defines two `shift' functors
\begin{align*} \sh = s^{\ast} &: \Fun(\ZZ_{\geq 0},C) \to \Fun(\ZZ_{\geq 0},C), \\ 
\sh = (s^{\op})^{\ast} &: \Fun_{/B \NN}(\ZZ_{\geq 0}^{\op}, \widehat{C}) \to \Fun_{/B \NN}(\ZZ_{\geq 0}^{\op}, \widehat{C}).
\end{align*}

Let $F_{\ast}$ be the endofunctor of $\Fun(\ZZ_{\geq 0},C)$ defined by postcomposition by $F$. Observe that under the straightening correspondence, $\Fun(\ZZ_{\geq 0}, C) \simeq \Fun_{/ B \NN}(\ZZ_{\geq 0}, \widehat{C})$ (since $C \simeq \ast \times_{B \NN} \widehat{C}$) and $F_{\ast}$ is encoded by the exponentiated cartesian fibration $(\widehat{C})^{\ZZ_{\geq 0}} \to B \NN$. Elaborating upon this, it is easily seen that the lax equalizer $\LEq_{\sh:F_{\ast}}(\Fun(\ZZ_{\geq 0}, C))$ is equivalent to the pullback of the diagram
\[ \begin{tikzcd}[row sep=4ex, column sep=4ex, text height=1.5ex, text depth=0.25ex]
& \Fun_{/B \NN}(\{1 < 0\} \times \ZZ_{\geq 0} , \widehat{C}) \ar{d}{(\ev_1, \ev_0)} \\
\Fun_{/ B \NN}(\ZZ_{\geq 0}, \widehat{C}) \ar{r}{(s^{\ast},\id)} & \Fun_{/B \NN}(\ZZ_{\geq 0}, \widehat{C}) \times \Fun_{/B \NN}(\ZZ_{\geq 0}, \widehat{C}),
\end{tikzcd} \]
since objects of that pullback are equivalent to diagrams
\[ \begin{tikzcd}[row sep=4ex, column sep=4ex, text height=1.5ex, text depth=0.25ex]
X_1 \ar{r}{\beta_0} \ar{d}{\alpha_1} & F(X_0) \ar{d}{F(\alpha_0)} \ar{r} & X_0 \ar{d}{\alpha_0} \\
X_2 \ar{r}{\beta_1} \ar{d}{\alpha_2} & F(X_1) \ar{r} \ar{d}{F(\alpha_1)} & X_1 \ar{d}{\alpha_1} \\
\vdots & \vdots & \vdots
\end{tikzcd} \]
where the labeled arrows are in $C$ and the right horizontal edges are cartesian in $\widehat{C}$ over $1 \in \NN$. Rather than give a complete account of the details, for the subsequent lemma let us abuse notation and instead \emph{define} the expression $\LEq_{\sh:F_{\ast}}(\Fun(\ZZ_{\geq 0}, C))$ to refer to this pullback.
\end{cnstr}

\begin{lem} \label{lem:LaxEqualizerGenericEquivalence} There is an equivalence of $\infty$-categories
\[ \chi: \LEq_{\id:\sh}(\Fun_{/B \NN}(\ZZ_{\geq 0}^{\op}, \widehat{C})) \simeq \LEq_{\sh:F_{\ast}}(\Fun(\ZZ_{\geq 0}, C)) \]
that restricts to an equivalence of $\infty$-categories
\[  \chi_0: \Eq_{\id:\sh}(\Fun_{/B \NN}(\ZZ_{\geq 0}^{\op}, \widehat{C})) \simeq \LEq_{\id:F}(C).  \]
\end{lem}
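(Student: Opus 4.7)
The plan is to unpack both sides as pullbacks of the same shape in $\Cat_{\infty}$ and exhibit $\chi$ by comparing these pullback descriptions. Both sides describe equivalent underlying data -- a sequence of objects in $C$ equipped with compatible fiberwise and $F$-twisted structure maps -- presented differently: on the left via a section $\infty$-category of $\widehat{C} \to B \NN$ plus a shift-invariance datum, and on the right via a functor category $\Fun(\ZZ_{\geq 0}, C)$ plus an explicit $\sh \to F_{\ast}$ map. The key translation between the two presentations is the factorization property for edges of $\widehat{C}$ covering $1 \in \NN$: any such edge $W \to V$ decomposes, uniquely up to contractible choice, as a fiberwise edge $W \to F(V)$ in $C$ followed by a cartesian edge $F(V) \to V$.

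To define $\chi$, take a pair $(X_{\bullet}, \phi \colon X_{\bullet} \to \sh X_{\bullet})$ on the left. The section $X_{\bullet} \colon \ZZ_{\geq 0}^{\op} \to \widehat{C}$ supplies structure edges $X_{n+1} \to X_n$ in $\widehat{C}$ covering $1 \in \NN$; factoring each through the cartesian edge $F(X_n) \to X_n$ produces $\beta_n \colon X_{n+1} \to F(X_n)$ in $C$. The components $\phi_n \colon X_n \to X_{n+1}$ of $\phi$ are fiberwise in $C$ (since $X_{\bullet}$ and $\sh X_{\bullet}$ cover the same functor $\ZZ_{\geq 0}^{\op} \to B \NN$), so setting $\alpha_n \coloneq \phi_n$ promotes $X_{\bullet}$ to a covariant sequence $Y_{\bullet} \colon \ZZ_{\geq 0} \to C$ under the identification $\Fun_{/B \NN}(\ZZ_{\geq 0}, \widehat{C}) \simeq \Fun(\ZZ_{\geq 0}, C)$ from Constr.~\ref{cnstr:SetupForEndofunctorFibration}. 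The naturality squares of $\phi$ against the structure edges of $X_{\bullet}$ then yield, via the universal property of cartesian edges, the compatibility squares between $\alpha$ and $\beta$ displayed in Constr.~\ref{cnstr:SetupForEndofunctorFibration}. To verify $\chi$ is an equivalence, I would realize both sides as limits of the same finite diagram in $\Cat_{\infty}$: expanding both lax equalizers as pullbacks involving the arrow category reduces the problem to identifying the pieces $\Fun_{/B \NN}(\ZZ_{\geq 0}^{\op} \times K, \widehat{C})$ (for $K = \ast, \Delta^1, \partial \Delta^1$) with the corresponding pieces formed from $\Fun(\ZZ_{\geq 0}, C)$ and $F_{\ast}$ by systematic application of the factorization property. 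The main technical obstacle is making this identification precise in the $\infty$-categorical setting; I would work at the level of marked simplicial sets over $B \NN$, modelling $\widehat{C}$ as fibrant in the cartesian model structure and building the comparison via the trivial fibrations guaranteeing existence and uniqueness of cartesian pushforwards.

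For the restriction to $\Eq_{\id : \sh}$, note that under $\chi$ the condition that $\phi$ be an equivalence corresponds to each $\alpha_n \colon Y_n \to Y_{n+1}$ being an equivalence in $C$. The full subcategory of $\Fun(\ZZ_{\geq 0}, C)$ on sequences with invertible structure maps is equivalent to $C$ via evaluation at $0$, since $\ZZ_{\geq 0}$ has an initial object and the localization inverting all its morphisms is weakly contractible. Under this identification the compatibility squares force the system $(\beta_n)$ to be coherently determined by a single map $\beta \colon Y_0 \to F(Y_0)$ in $C$ (using that $F$ preserves equivalences to transport $\beta_n$ back to $\beta$ along $\alpha$ and $F(\alpha)$). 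This is precisely the datum of an object of $\LEq_{\id : F}(C)$, and naturality in morphisms gives the desired equivalence of $\infty$-categories $\chi_0$.
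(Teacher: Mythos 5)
Your object-level description of $\chi$ matches the paper's: the translation really is the factorization of the structure edges of a section $X_{\bullet}$ of $\widehat{C}$ over $1 \in \NN$ through cartesian edges $F(X_n) \to X_n$, and the components of $\phi$ do live in the fibers; the paper's proof is likewise, at bottom, a zigzag of trivial fibrations coming from anodyne extensions. But the step you defer --- ``identifying the pieces \ldots by systematic application of the factorization property'' --- is where essentially all of the content lies, and as stated it would not go through. The two lax equalizers are pullbacks of genuinely different cospans: on the left the middle term is $\sO(D)$ for $D = \Fun_{/B\NN}(\ZZ_{\geq 0}^{\op}, \widehat{C})$, i.e.\ $\Fun_{/B\NN}(\Delta^1 \times \ZZ_{\geq 0}^{\op}, \widehat{C})$ with the $\Delta^1$-direction sitting over $0 \in \NN$, while on the right it is $\Fun_{/B\NN}(\{1<0\} \times \ZZ_{\geq 0}, \widehat{C})$ with the $\{1<0\}$-direction sitting over $1 \in \NN$; and the legs $(\id,\sh)$ versus $(\sh,\id)$ point the comparison in opposite directions. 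Matching the pieces therefore requires choosing a cartesian factorization for every structure edge of every object simultaneously and coherently, and this cannot be done ``pointwise.'' What is missing is a mediating object. The paper supplies one by unfolding each lax equalizer into a doubly-indexed diagram on $\ZZ_{\geq 0}^{\op} \times \ZZ_{\geq 0}$ (via the auxiliary $\LEq^{\infty}$ pullbacks, whose projections are trivial fibrations because the spine inclusions $\ZZ_{\geq 0}^{\op} \times \spine(\ZZ_{\geq 0}) \subset \ZZ_{\geq 0}^{\op} \times \ZZ_{\geq 0}$ and $\spine(\ZZ_{\geq 0}^{\op}) \times \ZZ_{\geq 0} \subset \ZZ_{\geq 0}^{\op} \times \ZZ_{\geq 0}$ are inner anodyne), and then restricting both to the product of spines, where they land in the \emph{same} full subcategory $\sB$ cut out by equality of adjacent squares. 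Without some such device your comparison of cospans has nothing to compare along.

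The same issue recurs in your argument for $\chi_0$. Identifying the equivalence-inverting sequences in $\Fun(\ZZ_{\geq 0},C)$ with $C$ via $\ev_0$ is fine, but the claim that ``the compatibility squares force $(\beta_n)$ to be coherently determined by a single map $\beta \colon Y_0 \to F(Y_0)$'' is precisely the statement that needs proof; observing that $F$ preserves equivalences does not by itself contract the space of such coherent systems onto the space of single edges. The paper does this by exhibiting the whole system as a relative left Kan extension along the inclusion of the initial edge $\{(1,-1) \to (0,0)\}$ into an auxiliary poset $P$ (yielding a trivial fibration onto an $\infty$-category of single edges $Y_0 \to F(Y_0)$), followed by one more comparison using the free cartesian fibration $\sO(\Delta^1) \to \Delta^1$ to land in $\LEq_{\id:F}(C)$. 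Your outline is the right one and is recoverable, but at both of its key steps the proposal asserts a coherence statement rather than proving it; you need to produce these anodyne-extension and relative-Kan-extension arguments, or an equivalent mechanism, for the proof to be complete.
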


\begin{proof} Intuitively, the first equivalence $\chi$ exchanges diagrams
\[ \begin{tikzcd}[row sep=4ex, column sep=4ex, text height=1.5ex, text depth=0.25ex]
\cdots \ar{r}{\phi_2} & X_2 \ar{r}{\phi_1} \ar{d}{\alpha_2} & X_1 \ar{r}{\phi_0} \ar{d}{\alpha_1} & X_0 \ar{d}{\alpha_0} \\
\cdots \ar{r}{\phi_3} & X_3 \ar{r}{\phi_2} & X_2 \ar{r}{\phi_1} & X_1
\end{tikzcd} \]
with diagrams
\[ \begin{tikzcd}[row sep=4ex, column sep=4ex, text height=1.5ex, text depth=0.25ex]
X_1 \ar{r}{\phi_0} \ar{d}{\alpha_1} & X_0 \ar{d}{\alpha_0} \\
X_2 \ar{r}{\phi_1} \ar{d}{\alpha_2} & X_1 \ar{d}{\alpha_1} \\
\vdots & \vdots
\end{tikzcd} \]
with one such diagram uniquely determining the other. To make this idea precise, we need to introduce some auxiliary constructions. Given $n \geq 0$, define
\[ \sh^n: \LEq_{\id:\sh}(\Fun_{/B \NN}(\ZZ_{\geq 0}^{\op}, \widehat{C}))  \to \Fun_{/B \NN}(\ZZ_{\geq 0}^{\op} \times \{n<n+1\}, \widehat{C}) \] to be the composite of the projection to $\Fun_{/B \NN}(\ZZ_{\geq 0}^{\op} \times \{0<1\}, \widehat{C})$ and precomposition by the $n$-fold successor functor $\ZZ_{\geq 0}^{\op} \times \{n < n+1 \} \to \ZZ_{\geq 0}^{\op} \times \{0 <1 \} $, $(i,n+j) \mapsto (i+n,j)$. Then form the pullback
\[ \begin{tikzcd}[row sep=4ex, column sep=6ex, text height=1.5ex, text depth=0.25ex]
\LEq^{\infty}_{\id:\sh}(\Fun_{/B \NN}(\ZZ_{\geq 0}^{\op}, \widehat{C})) \ar{r} \ar{d}{\pi'} & \Fun_{/B \NN}(\ZZ_{\geq 0}^{\op} \times \ZZ_{\geq 0}, \widehat{C}) \ar{d} \\
\LEq_{\id:\sh}(\Fun_{/B \NN}(\ZZ_{\geq 0}^{\op}, \widehat{C})) \ar{r}{(\sh^k)} & \prod_{k=0}^{\infty} \Fun_{/B \NN}(\ZZ_{\geq 0}^{\op} \times \{k < k+1 \}, \widehat{C}).
\end{tikzcd} \]
where the lower right object is the iterated fiber product. The righthand vertical map is obtained via precomposition by the inclusion $\ZZ_{\geq 0}^{\op} \times \spine(\ZZ_{\geq 0}) \to \ZZ_{\geq 0}^{\op} \times \ZZ_{\geq 0}$, which is inner anodyne by Rmk.~\ref{rem:innerAnodyneSpineInclusion} and \cite[Prop.~2.3.2.4]{HTT}. Therefore, the vertical maps are trivial fibrations. Similarly, define 
\[ \sh^n: \LEq_{\sh:F_{\ast}}(\Fun(\ZZ_{\geq 0}, C)) \to \Fun_{/B \NN}( \{ n+1<n \} \times \ZZ_{\geq 0}, \widehat{C}) \]
as the composite of the projection to $ \Fun_{/B \NN}( \{ 1<0 \} \times \ZZ_{\geq 0}, \widehat{C})$ and precomposition by the $n$-fold successor functor $\{ n+1<n \} \times \ZZ_{\geq 0} \to \{ 1<0 \} \times \ZZ_{\geq 0}$, $(n+j,i) \mapsto (j,i+n)$. Form the pullback square
\[ \begin{tikzcd}[row sep=4ex, column sep=6ex, text height=1.5ex, text depth=0.25ex]
\LEq^{\infty}_{\sh:F_{\ast}}(\Fun(\ZZ_{\geq 0}, C)) \ar{r} \ar{d}{\pi''} & \Fun_{/B \NN}(\ZZ_{\geq 0}^{\op} \times \ZZ_{\geq 0}, \widehat{C}) \ar{d} \\
\LEq_{\sh:F_{\ast}}(\Fun(\ZZ_{\geq 0}, C)) \ar{r}{(\sh^k)} & \prod_{k=0}^{\infty} \Fun_{/B \NN}(\{k+1<k\} \times \ZZ_{\geq 0}, \widehat{C}).
\end{tikzcd} \]
The righthand vertical map is precomposition by the inner anodyne map $\spine(\ZZ_{\geq 0}^{\op}) \times \ZZ_{\geq 0} \to \ZZ_{\geq 0}^{\op} \times \ZZ_{\geq 0}$, so both vertical maps are trivial fibrations. Next, the product map $\spine(\ZZ_{\geq 0}^{\op}) \times \spine(\ZZ_{\geq 0}) \subset \ZZ_{\geq 0}^{\op} \times \ZZ_{\geq 0}$ is also inner anodyne, and via precomposition we get a trivial fibration
\[ \rho: \Fun_{/B \NN}(\ZZ_{\geq 0}^{\op} \times \ZZ_{\geq 0}, \widehat{C}) \to \Fun_{/B \NN}(\spine(\ZZ_{\geq 0}^{\op}) \times \spine(\ZZ_{\geq 0}), \widehat{C}). \]
Let $\sB$ be the full subcategory of $\Fun_{/B \NN}(\spine(\ZZ_{\geq 0}^{\op}) \times \spine(\ZZ_{\geq 0}), \widehat{C})$ on objects $X_{\bullet,\bullet}$ such that for all $m \geq 0$, $n >0$ we have that
\[ X|_{ \{n+1<n\} \times \{m<m+1\} } = X|_{ \{n<n-1\} \times \{m+1,m+2\} }. \]
By definition, objects of $\LEq^{\infty}_{\id:\sh}(\Fun_{/B \NN}(\ZZ_{\geq 0}^{\op}, \widehat{C}))$ are diagrams $X_{\bullet, \bullet}: \ZZ_{\geq 0}^{\op} \times \ZZ_{\geq 0} \to \widehat{C}$ such that for every $m \geq 0$,
\[ X_{\bullet,\bullet}|_{\ZZ_{\geq 0}^{\op} \times \{m\} } = X_{\bullet,\bullet}|_{\ZZ_{\geq 0}^{\op} \times \{0\}} \circ s^m \text{ and } X_{\bullet,\bullet}|_{\ZZ_{\geq 0}^{\op} \times \{m<m+1\} } = X_{\bullet,\bullet}|_{\ZZ_{\geq 0}^{\op} \times \{0<1\}} \circ s^m,\]
and similarly for $\LEq^{\infty}_{\sh:F_{\ast}}(\Fun(\ZZ_{\geq 0}, C))$. The conditions on edges are implied by those for squares in $\cB$, and the functor $\rho$ thereby restricts to trivial fibrations
\[ \rho', \rho'': \LEq^{\infty}_{\id:\sh}(\Fun_{/B \NN}(\ZZ_{\geq 0}^{\op}, \widehat{C})), \LEq^{\infty}_{\sh:F_{\ast}}(\Fun(\ZZ_{\geq 0}, C)) \to \sB.  \]
Choosing sections of the trivial fibrations $\pi', \rho''$ or $\pi'', \rho'$ then furnishes the equivalence $\chi$. 

For the second equivalence $\chi_0$, let $\Fun^{\simeq}_{/B \NN}(\ZZ_{\geq 0}, \widehat{C}) \subset \Fun_{/B \NN}(\ZZ_{\geq 0}, \widehat{C})$ be the full subcategory on those objects $X_{\bullet}: \ZZ_{\geq 0} \to C$ that send every edge to an equivalence, and form the pullback
\[ \begin{tikzcd}[row sep=4ex, column sep=4ex, text height=1.5ex, text depth=0.25ex]
\LEq_{\sh:F_{\ast}}(\Fun^{\simeq}(\ZZ_{\geq 0}, C)) \ar{r} \ar{d} & \Fun_{/B \NN}(\{1 < 0\} \times \ZZ_{\geq 0} , \widehat{C}) \ar{d}{(\ev_1, \ev_0)} \\
\Fun^{\simeq}_{/ B \NN}(\ZZ_{\geq 0}, \widehat{C}) \ar{r}{(s^{\ast},\id)} & \Fun_{/B \NN}(\ZZ_{\geq 0}, \widehat{C}) \times \Fun_{/B \NN}(\ZZ_{\geq 0}, \widehat{C}),
\end{tikzcd} \]
which defines $\LEq_{\sh:F_{\ast}}(\Fun^{\simeq}(\ZZ_{\geq 0}, C))$ as a full subcategory of $\LEq_{\sh:F_{\ast}}(\Fun(\ZZ_{\geq 0}, C))$. It follows from the definitions that $\chi$ restricts to an equivalence
\[ \chi_0': \Eq_{\id:\sh}(\Fun_{/B \NN}(\ZZ_{\geq 0}^{\op}, \widehat{C})) \simeq \LEq_{\sh:F_{\ast}}(\Fun^{\simeq}(\ZZ_{\geq 0}, C)).  \]

Let $P =( \{ 1 < 0\} \times \ZZ_{\geq -1}) \setminus \{ (0,-1)\} $ and regard it as over $B \NN$ via the projection to $\{1<0\}$. Note by Lem.~\ref{lem:posetPushoutViaFlatness} that the cofibration $\{ (1,-1) \rightarrow (1,0) \} \cup_{(1,0)} (\{1<0\} \times \ZZ_{\geq 0}) \to P$ is a categorical equivalence. Therefore, if we form the pullback
\[ \begin{tikzcd}[row sep=4ex, column sep=6ex, text height=1.5ex, text depth=0.25ex]
\LEq^+_{\sh:F_{\ast}}(\Fun(\ZZ_{\geq 0}, C)) \ar{r} \ar{d} & \Fun_{/B \NN}(P, \widehat{C}) \ar{d}{(\ev_1,\ev_0)} \\
\Fun_{/B \NN}(\ZZ_{\geq 0}, \widehat{C}) \ar{r}{(s^{\ast},\id)} & \Fun_{/B \NN}(\ZZ_{\geq -1}, \widehat{C}) \times \Fun_{/B \NN}(\ZZ_{\geq 0}, \widehat{C}),
\end{tikzcd} \]
precomposition by $\{ 1 < 0\} \times \ZZ_{\geq 0} \subset P$ induces a trivial fibration
\[ \xi: \LEq^+_{\sh:F_{\ast}}(\Fun(\ZZ_{\geq 0}, C)) \to \LEq_{\sh:F_{\ast}}(\Fun(\ZZ_{\geq 0}, C)). \]
Defining $\LEq^+_{\sh:F_{\ast}}(\Fun^{\simeq}(\ZZ_{\geq 0}, C))$ in a similar fashion, we also obtain a trivial fibration
\[ \xi_0: \LEq^+_{\sh:F_{\ast}}(\Fun^{\simeq}(\ZZ_{\geq 0}, C)) \to \LEq_{\sh:F_{\ast}}(\Fun^{\simeq}(\ZZ_{\geq 0}, C)), \]
which is obtained by restricting $\xi$.

We now observe that a functor $f: P \to \widehat{C}$ over $B \NN$ is a relative left Kan extension of its restriction to $P_0 = \{(1,-1) \to (0,0)\}$ if and only if it sends the edges $\{ (1,m) \to (1,m+k) \}$, $m \geq -1$ and $\{0,m) \to (0,m+k) \}$, $m \geq 0$ to equivalences, since each slice category $P_0 \times_P P_{/(i,m)}$ has $(i,-i)$ as a terminal object. Therefore, if we form the pullback
\[ \begin{tikzcd}[row sep=4ex, column sep=6ex, text height=1.5ex, text depth=0.25ex]
\LEq'_{\id:F}(C) \ar{r} \ar{d} & \Fun_{/P_0}(P_0, P_0 \times_{B \NN} \widehat{C}) \ar{d}{(\ev_1, \ev_0)} \\
C \ar{r}{(\id,\id)} & C \times C
\end{tikzcd} \]
the restriction functor induced by $P_0 \subset P$
\[ \LEq^+_{\sh:F_{\ast}}(\Fun^{\simeq}(\ZZ_{\geq 0}, C)) \to \LEq'_{\id:F}(C)\]
is a trivial fibration. Let us now write $\Delta^1 = P_0$ and $\sM = \Delta^1 \times_{B \NN} \widehat{C}$ for clarity. Since the source functor $\sO(\Delta^1) \to \Delta^1$ is the free cartesian fibration (\cite[Exm.~2.6 or Def.~7.5]{Exp2}) on the identity, we obtain a trivial fibration
\[ \Fun^{\cart}_{/\Delta^1}(\sO(\Delta^1), \sM) \to \Fun_{/\Delta^1}(\Delta^1, \sM). \]
Moreover, writing $\sO(\Delta^1) = [00<01<11]$, the square
\[ \begin{tikzcd}[row sep=4ex, column sep=8ex, text height=1.5ex, text depth=0.5ex]
\Fun^{\cart}_{/\Delta^1}(\sO(\Delta^1), \sM) \ar{r}{\ev|_{[00 <01]}} \ar{d}[swap]{(\ev_{00}, \ev_{11})} & \sO(C) \ar{d}{(\ev_0, \ev_1)} \\
C \times C \ar{r}{(\id,F)} & C \times C
\end{tikzcd} \]
is homotopy commutative. We thereby obtain an equivalence $\LEq'_{\id:F}(C) \simeq \LEq_{\id:F}(C)$. Chaining together the various equivalences above then produces the desired equivalence $\chi_0$.
\end{proof}

\begin{rem} The equivalence $\chi_0$ of Lem.~\ref{lem:LaxEqualizerGenericEquivalence} sends an object 
\[ \begin{tikzcd}[row sep=4ex, column sep=4ex, text height=1.5ex, text depth=0.25ex]
\cdots \ar{r}{\phi_2} & X_2 \ar{r}{\phi_1} \ar{d}{\alpha_2}[swap]{\simeq} & X_1 \ar{r}{\phi_0} \ar{d}{\alpha_1}[swap]{\simeq} & X_0 \ar{d}{\alpha_0}[swap]{\simeq} \\
\cdots \ar{r}{\phi_3} & X_3 \ar{r}{\phi_2} & X_2 \ar{r}{\phi_1} & X_1
\end{tikzcd} \]
 to the composite $X_0 \xto{\alpha_0} X_1 \xto{\beta_0} F(X_0)$, where we factor the edge $\phi_0$ through $\beta_0$ and a cartesian edge $F(X_0) \to X_0$.
\end{rem}


\subsection{Proof of the main theorem}

We have almost assembled all of the ingredients needed to prove Thm.~\ref{thm:MainTheoremEquivalenceBddBelow}. In fact, we will also reprove \cite[Thm.~II.6.3]{NS18} by way of illustrating the formal nature of our proof. In order to make effective use of Prop.~\ref{prp:EquivalenceOnBoundedBelowAtFiniteLevel} in the dihedral situation, we first establish the compatibility of the relative geometric locus construction with restriction and geometric fixed points (compare \ref{restrictionGeometricLoci} and \ref{GeometricFixedPointsGeometricLoci}). 

\begin{ntn} \label{ntn:ConciseRestrictionNotation} For $0 \leq k \leq n$, we have the inclusions $C_{p^k} \subset C_{p^n}$ and $D_{2 p^k} \subset D_{2 p^n}$. Let
\begin{align*} \res^n_k: \Sp^{C_{p^n}} \to \Sp^{C_{p^k}}, \quad & \res^n_k: \Fun(B C_{p^n} ,\Sp) \to \Fun(B C_{p^k} ,\Sp) \\
\res^n_k: \Sp^{D_{2 p^n}} \to \Sp^{D_{2 p^k}}, \quad & \res^n_k: \Fun_{C_2}(B^t_{C_2} \mu_{p^{n}}, \underline{\Sp}^{C_2}) \to \Fun_{C_2}(B^t_{C_2} \mu_{p^{k}}, \underline{\Sp}^{C_2})
\end{align*}
be alternative notation for the restriction functors.
\end{ntn}

\begin{vrn} \label{vrn:DihedralRestrictionGeometricLoci} For $0 \leq k \leq n$ and the inclusion $D_{2 p^k} \subset D_{2 p^n}$ given by $\mu_{p^k} \subset \mu_{p^n}$, we have a commutative diagram
\[ \begin{tikzcd}[row sep=4ex, column sep=4ex, text height=1.5ex, text depth=0.25ex]
\fS[D_{2p^k}] \ar{r}{i} \ar{d}{\zeta} & \fS[D_{2p^n}] \ar{d}{\zeta} \\
\Delta^k \ar{r}{i} & \Delta^n
\end{tikzcd} \]
where the bottom functor is the inclusion of $\Delta^k$ as a sieve. As in \ref{restrictionGeometricLoci}, the restriction functor $\res^n_k: \Sp^{D_{2 p^n}} \to \Sp^{D_{2 p^k}}$ induces a functor 
\[ \res^n_k: \Sp^{D_{2p^n}}_{\locus{\phi,\zeta}} \times_{\Delta^n} \Delta^k \to \Sp^{D_{2p^k}}_{\locus{\phi,\zeta}} \]
that on the fiber over $i \in \Delta^k$ is equivalent to the functor
\[ \res^n_k: \Fun_{C_2}(B^t_{C_2} \mu_{p^{n-i}}, \underline{\Sp}^{C_2}) \to \Fun_{C_2}(B^t_{C_2} \mu_{p^{k-i}}, \underline{\Sp}^{C_2}). \]
Precomposition by $i: \sd(\Delta^k) \to \sd(\Delta^n)$ and postcomposition by $\res^n_k$ yields the functor
\[ \res^n_k: \Fun^{\cocart}_{/\Delta^n}(\sd(\Delta^n), \Sp^{D_{2p^n}}_{\locus{\phi,\zeta}}) \to \Fun^{\cocart}_{/\Delta^k}(\sd(\Delta^k), \Sp^{D_{2p^k}}_{\locus{\phi,\zeta}}). \]
Furthermore, by the same argument as in \ref{restrictionGeometricLoci} we have a commutative diagram
\[ \begin{tikzcd}[row sep=4ex, column sep=6ex, text height=1.5ex, text depth=0.5ex]
\Fun^{\cocart}_{/\Delta^n}(\sd(\Delta^n), \Sp^{D_{2p^n}}_{\locus{\phi,\zeta}}) \ar{r}{\Theta[\zeta]} \ar{d}[swap]{\res^n_k} & \Sp^{D_{2p^n}} \ar{d}{\res^n_k} \\
\Fun^{\cocart}_{/\Delta^k}(\sd(\Delta^{k}), \Sp^{D_{2p^k}}_{\locus{\phi,\zeta}}) \ar{r}{\Theta[\zeta]} & \Sp^{D_{2p^k}}.
\end{tikzcd} \]
\end{vrn}

\begin{vrn} \label{vrn:DihedralGeometricFixedPoints} For $0 \leq k \leq n$ and the quotient homomorphism $D_{2p^n} \to D_{2p^n}/\mu_{p^k} \cong D_{2p^{n-k}}$, we have a commutative diagram of cosieve inclusions
\[ \begin{tikzcd}[row sep=4ex, column sep=4ex, text height=1.5ex, text depth=0.25ex]
\fS[D_{2p^{n-k}}] \ar{r}{i} \ar{d}{\zeta} & \fS[D_{2p^n}] \ar{d}{\zeta} \\
\Delta^{n-k} \ar{r}{i} & \Delta^n.
\end{tikzcd} \]
As in \ref{GeometricFixedPointsGeometricLoci}, $\Phi^{\mu_{p^k}}$ implements an equivalence
\[ \Sp^{D_{2p^n}}_{\locus{\phi,\zeta}} \times_{\Delta^n} \Delta^{n-k} \simeq \Sp^{D_{2p^{n-k}}}_{\locus{\phi,\zeta}} \]
with respect to which we write $i^{\ast}$ as
\[ \Phi^{\mu_{p^k}}: \Fun^{\cocart}_{/\Delta^n}(\sd(\Delta^n), \Sp^{D_{2p^n}}_{\locus{\phi,\zeta}}) \to \Fun^{\cocart}_{/\Delta^{n-k}}(\sd(\Delta^{n-k}), \Sp^{D_{2p^{n-k}}}_{\locus{\phi,\zeta}}). \]
We then obtain a commutative diagram
\[ \begin{tikzcd}[row sep=4ex, column sep=6ex, text height=1.5ex, text depth=0.5ex]
\Fun^{\cocart}_{/\Delta^n}(\sd(\Delta^n), \Sp^{D_{2p^n}}_{\locus{\phi,\zeta}}) \ar{r}{\Theta[\zeta]} \ar{d}[swap]{\Phi^{\mu_{p^k}}} & \Sp^{D_{2p^{n}}} \ar{d}{\Phi^{\mu_{p^k}}} \\
\Fun^{\cocart}_{/\Delta^{n-k}}(\sd(\Delta^{n-k}), \Sp^{D_{2p^{n-k}}}_{\locus{\phi,\zeta}}) \ar{r}{\Theta[\zeta]} & \Sp^{D_{2p^{n-k}}}.
\end{tikzcd} \]
\end{vrn}

We now consider an axiomatic setup that will handle the $C_{p^{\infty}}$ and $D_{2 p^{\infty}}$-situations simultaneously.

\begin{lem} \label{lm:joinLocallyCocartesian} Suppose $p: C \to S$ is a locally cocartesian fibration.
\begin{enumerate}
\item For any $\infty$-category $T$, $p': C \star T \to S \star T$ is a locally cocartesian fibration.
\item For any locally cocartesian fibration $D \to S$, the restriction functor implements an equivalence
\[ \Fun^{\cocart}_{/S \star T}(C \star T, D \star T) \xto{\simeq} \Fun^{\cocart}_{/S}(C, D). \]
\item Suppose $S \cong S_0 \star S_1$ and let $C_0 = S_0 \times_S C$. Then for any locally cocartesian fibration $D \to S_0$, the restriction functor implements an equivalence 
\[ \Fun^{\cocart}_{/S}(C,D \star S_1) \xto{\simeq} \Fun^{\cocart}_{/S_0}(C_0,D). \]
\item For any $\infty$-category $T$, the restriction functor
\[ j^{\ast}: \Fun^{\cocart}_{/S \star T}(\sd(S \star T), C \star T) \to \Fun^{\cocart}_{/S}(\sd(S), C) \]
is an equivalence of $\infty$-categories.
\end{enumerate}
\end{lem}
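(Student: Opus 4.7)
My plan is to establish (1)--(3) as essentially structural facts about joins, and then use these together with a sieve-cosieve decomposition of $\sd(S \star T)$ to deduce (4). Throughout, I will rely crucially on the fact that the fibers of $p' \colon C \star T \to S \star T$ over $t \in T$ are singletons $\{t\}$, so that there is ``no room'' for non-uniqueness on the $T$-part.

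For (1), the map $p'$ is an inner fibration by the standard compatibility of joins with inner fibrations (cf.\ \cite[\S 2.1.2]{HTT}). To produce locally cocartesian lifts, I would split into cases based on whether an edge $e$ in $S \star T$ lies in $S$, in $T$, or crosses from $S$ to $T$: in the first case lifts come from $p$ being locally cocartesian; in the second the fiber of $p'$ is a singleton so lifts are degenerate; in the third case, for $c \in C_s$ over the source $s$ of $e \colon s \to t$, the unique edge $c \to t$ in $C \star T$ coming from the join structure provides the locally cocartesian lift, with the lifting property following from the pointwise description of the fibers. For (2), any functor $F \colon C \star T \to D \star T$ over $S \star T$ must restrict to $\id_T$ on $T$ (since the fibers over $T$ in both source and target are singletons) and to some $F_0 \colon C \to D$ over $S$; preservation of cocartesian edges on crossing morphisms then forces $F$ to act on the unique edge $c \to t$ in $C \star T$ by the unique edge $F_0(c) \to t$ in $D \star T$. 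Conversely, the join adjunction for (marked) simplicial sets shows any such $F_0$ extends uniquely to a cocartesian-preserving map over $S \star T$. Part (3) is proven by the same kind of argument with the roles of sieve and cosieve interchanged: a functor $C \to D \star S_1$ over $S_0 \star S_1$ must act as the identity on $S_1$-parts and so is determined by its restriction to $C_0 \to D$ over $S_0$.

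For (4), the key observation is that the subdivision $\sd(S \star T)$ admits a sieve-cosieve decomposition: let $U \subset \sd(S \star T)$ be the full subcategory on strings whose maximum lies in $T$, so that $U$ is a cosieve and $\sd(S) \hookrightarrow \sd(S \star T)$ is the complementary sieve. Since the fibers of $C \star T \to S \star T$ over $T$ are singletons and $\max|_U$ lands in $T$, any functor $f \colon U \to C \star T$ over $S \star T$ is forced to send each string $\tau$ to $\max(\tau) \in T \subset C \star T$, so such an $f$ is essentially unique. Moreover, given any cocartesian section $g \colon \sd(S) \to C$ over $S$, extending $g$ over the locally cocartesian edges in $\sd(S \star T)$ that append a crossing edge in $S \star T$ is forced by part (1): the target of the extension sits in the singleton fiber $\{t\}$, and the cocartesian lift from $g(\sigma) \in C$ to $t$ in $C \star T$ is unique. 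I would make this gluing precise along the lines of the proof of Prop.~\ref{prp:StaircaseRecollement}, replacing the spine inclusion used there with the sieve inclusion $\sd(S) \subset \sd(S \star T)$ and exhibiting a filtration in which each step appends a single crossing edge; the target $C \star T$ being fibrant in the locally cocartesian model structure then gives the desired lifting.

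The main obstacle will be the combinatorial bookkeeping in (4): verifying that the filtration of $\sd(S) \subset \sd(S \star T)$ by successive attachments of strings crossing into $T$ is a $\mathfrak{P}$-anodyne extension in $s\Set^+_{/S \star T}$, using a Lem.~\ref{lem:posetPushoutViaFlatness}-style flatness criterion to reduce each attachment step to a product of the form $(\Delta^k)^\flat \times (\Delta^1)^\sharp$ with the last-edge marked. Once that combinatorial input is in place, the result will follow formally by combining parts (1)--(3) with the usual sieve-cosieve gluing for restriction functors along cocartesian fibrations.
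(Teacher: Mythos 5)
Parts (1)--(3) of your argument are fine and match the paper's: for (1) the paper likewise identifies the pullback of $p'$ over a crossing edge $s \to t$ with $(C_s)^{\rhd} \to \Delta^1$, and for (2)--(3) it observes that the join adjunction gives a literal isomorphism of simplicial sets $\Fun_{/S\star T}(C\star T, D\star T)\cong \Fun_{/S}(C,D)$ under which functors preserving locally cocartesian edges are identified, which is the rigorous form of your ``forcing'' argument.

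The gap is in (4). Your plan is to show that the inclusion $\sd(S)\subset \sd(S\star T)$, filtered by attaching the strings that cross into $T$, is a $\mathfrak{P}$-anodyne map in $s\Set^+_{/S\star T}$ and then lift against the fibrant object $C\star T$. That cannot work, because a $\mathfrak{P}$-anodyne inclusion would make the restriction functor a trivial fibration for \emph{every} fibrant target, and this is false: take $S=T=\Delta^0$, so $S\star T=\Delta^1$ and $\sd(S)=\{[0]\}\subset\sd(\Delta^1)$; for a cocartesian fibration $D\to\Delta^1$ classified by $\phi\colon D_0\to D_1$ one has $\Fun^{\cocart}_{/\Delta^1}(\sd(\Delta^1),D)\simeq \sO(D_1)\times_{D_1}D_0$, which is not equivalent to $D_0$ unless $D_1\simeq\ast$. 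The underlying issue is that the strings lying entirely in $T$ (already the singletons $[t]$, $t\in T$) are not reached from $\sd(S)$ by attaching marked edges at all, so their values cannot be supplied by an anodyne extension; they must be supplied by a \emph{relative right Kan extension}, and the restriction is an equivalence only because the fibers of $C\star T$ over $T$ are terminal. This is exactly how the paper argues: using the sieve--cosieve decomposition of $\sd(S\star T)$ by \emph{minima} (cosieve $=$ strings with minimum in $S$, sieve $=\sd(T)$), Prop.~\ref{prp:subdivisionExtension}(2) handles the genuinely anodyne part (appending $T$-tails to $S$-strings by locally cocartesian pushforward), while Prop.~\ref{prp:ExistenceLaxRightKanExtension} handles $\sd(T)$: since the relevant fibers are terminal categories, every functor is automatically a relative right Kan extension of its restriction, so $j_{\ast}$ is essentially surjective and $j^{\ast}$ is an equivalence. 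Your intuition that there is ``no room for non-uniqueness on the $T$-part'' is correct, but it has to enter through the Kan-extension step, not through an anodyne-extension argument.
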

\begin{proof} For (1), first recall that the join is defined by the right Quillen functor $j_{\ast}: s\Set_{/ \partial \Delta^1} \to s\Set_{/\Delta^1}$ for the inclusion $j: \partial \Delta^1 \to \Delta^1$ (c.f. \cite[Dfn 4.1]{Exp2}). Therefore, given two categorical fibrations $X \to A$ and $Y \to B$, $X \star Y \to A \star B$ is a categorical fibration, so in particular $p'$ is a categorical fibration. It is clear that for any edge $e: \Delta^1 \to S \star T$ with image in $S$ or $T$ that the pullback over $e$ of $p'$ is a cocartesian fibration. Suppose $e$ is specified by $e(0) = s \in S$ and $e(1) = t \in T$. Then the pullback over $e$ equals $(C_s)^{\rhd} \to \Delta^1$, which is obviously cocartesian. Thus, $p'$ is locally cocartesian.

For (2), by definition of the join we actually have an isomorphism of simplicial sets
\[ \Fun_{/S \star T}(C \star T, D \star T) \cong \Fun_{/S}(C, D), \]
under which functors preserving locally cocartesian edges are identified with each other. (3) follows by the same argument.

For (4), note that the hypotheses of Prop.~\ref{prp:ExistenceLaxRightKanExtension} are satisfied because the zero category admits all limits, so any functor $F \in \Fun^{\cocart}_{/S \star T}(\sd(S \star T), C \star T)$ is necessarily a $(p \star \id_T)$-right Kan extension of its restriction to $\sd(S \star T)_0$. It follows that $j^{\ast}$ is an equivalence.
\end{proof}

\begin{cnstr} \label{cnstr:FamilyOfLocallyCocartesianFibrations} Suppose given a set $\{ p_n: C^n \to \Delta^n: n \geq 0\}$ of locally cocartesian fibrations, together with structure maps
\[ r_n: [0:n] \times_{\Delta^{n+1}} C^{n+1} \to C^n \]
over $\Delta^n \cong [0:n]$, where $r_n$ preserves locally cocartesian edges. Then, viewing $\Delta^n \subset \ZZ_{\geq 0}$ as the subcategory $[0:n]$, let
\[ r_n : C^{n+1} \star \ZZ_{>n+1} \to C^{n} \star \ZZ_{>n} \]
also denote the functor over $\ZZ_{\geq 0}$ obtained as in Lem.~\ref{lm:joinLocallyCocartesian}, and let
\[ C^{\infty} \coloneq \lim_n \left( C^{n} \star \ZZ_{>n} \right) \]
be the locally cocartesian fibration over $\ZZ_{\geq 0}$ with the inverse limit taken over the maps $r_n$.

Suppose further that for all $n \geq 0$, we have functors $i_n: C^{n} \to C^{n+1}$ over the cosieve inclusion $\Delta^{n} \cong [1:n+1] \sub \Delta^{n+1}$ that preserve locally cocartesian edges, such that the commutative square
\[ \begin{tikzcd}[row sep=4ex, column sep=4ex, text height=1.5ex, text depth=0.25ex]
C^{n} \ar{r}{i_n} \ar{d} & C^{n+1} \ar{d} \\
\Delta^{n} \ar{r} & \Delta^{n+1}
\end{tikzcd} \]
is a homotopy pullback, and for all $n>0$, the diagram
\[ \begin{tikzcd}[row sep=4ex, column sep=6ex, text height=1.5ex, text depth=0.5ex]
C^n \ar{r}{i_{n}} \ar{d}[swap]{r_{n-1}} & C^{n+1} \ar{d}{r_n} \\
C^{n-1} \star \{n\} \ar{r}{i_{n-1}} & C^n \star \{n\}
\end{tikzcd} \]
is homotopy commutative (where we denote the various extensions of maps $i_n$ and $r_n$ by the same symbols). By Lem.~\ref{lm:joinLocallyCocartesian}(4),
\[ \Fun^{\cocart}_{/ \ZZ_{\geq 0}}(\sd(\ZZ_{\geq 0}), C^n \star \ZZ_{>n}) \simeq \Fun^{\cocart}_{/\Delta^n}(\sd(\Delta^n), C^n) \]
under which the maps induced by postcomposing by $r_n$ are identified. Thus, we get that
\[ \Fun^{\cocart}_{/ \ZZ_{\geq 0}}(\sd(\ZZ_{\geq 0}), C^{\infty} ) \simeq \lim_n \Fun^{\cocart}_{/\Delta^n}(\sd(\Delta^n), C^n). \]

Under our assumptions, the diagram
\[ \begin{tikzcd}[row sep=4ex, column sep=4ex, text height=1.5ex, text depth=0.25ex]
\Fun^{\cocart}_{/\Delta^{n+1}}(\sd(\Delta^{n+1}),C^{n+1}) \ar{r}{(i_n)^{\ast}} \ar{d}{(r_n)_{\ast}} & \Fun^{\cocart}_{/\Delta^{n}}(\sd(\Delta^{n}),C^{n}) \ar{d}{(r_{n-1})_{\ast}} \\
\Fun^{\cocart}_{/\Delta^{n}}(\sd(\Delta^{n}),C^{n}) \ar{r}{(i_{n-1})^{\ast}} & \Fun^{\cocart}_{/\Delta^{n-1}}(\sd(\Delta^{n-1}),C^{n-1})
\end{tikzcd} \]
is homotopy commutative, so the maps $(i_n)^{\ast}$ assemble into a natural transformation
\[ i_{\bullet}^{\ast}: \Fun^{\cocart}_{/\Delta^{\bullet+1}}(\sd(\Delta^{\bullet+1}),C^{\bullet+1}) \to \Fun^{\cocart}_{/\Delta^{\bullet}}(\sd(\Delta^{\bullet}),C^{\bullet}). \]
Taking the inverse limit, we obtain an endofunctor $i_{\infty}^{\ast}$ of $\Fun^{\cocart}_{/\ZZ_{\geq 0}}(\sd(\ZZ_{\geq 0}), C^{\infty})$. On the other hand, the successor functor $s: \ZZ_{\geq 0} \to \ZZ_{\geq 0}$ induces a endofunctor $\sd(s)$ of $\sd(\ZZ_{\geq 0})$ that preserves locally cocartesian edges, and thus a `shift' endofunctor $\sh = \sd(s)^{\ast}$ of $\Fun^{\cocart}_{/\ZZ_{\geq 0}}(\sd(\ZZ_{\geq 0}), C^{\infty})$.
\end{cnstr}

\begin{lem} \label{lm:GeometricFixedPointsAsShift} We have an equivalence $\sh \simeq i^{\ast}_{\infty}$.
\end{lem}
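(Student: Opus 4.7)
The plan is to verify the equivalence by working termwise through the identification
\[ \Fun^{\cocart}_{/\ZZ_{\geq 0}}(\sd(\ZZ_{\geq 0}), C^{\infty}) \simeq \lim_n \Fun^{\cocart}_{/\Delta^n}(\sd(\Delta^n), C^n) \]
established in Constr.~\ref{cnstr:FamilyOfLocallyCocartesianFibrations}. Both $\sh$ and $i_{\infty}^{\ast}$ are defined as inverse limits of operations on the tower $\{\Fun^{\cocart}_{/\Delta^n}(\sd(\Delta^n), C^n)\}_{n \geq 0}$, so it suffices to check that for every $n$, the induced transition functor $\Fun^{\cocart}_{/\Delta^{n+1}}(\sd(\Delta^{n+1}), C^{n+1}) \to \Fun^{\cocart}_{/\Delta^n}(\sd(\Delta^n), C^n)$ arising from $\sh$ agrees with $(i_n)^{\ast}$ appearing in the definition of $i_{\infty}^{\ast}$.

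The key observation is that the successor $s \colon \ZZ_{\geq 0} \to \ZZ_{\geq 0}$ restricted to the sieve $[0:n] = \Delta^n$ is exactly the cosieve inclusion $\Delta^n \cong [1:n{+}1] \hookrightarrow \Delta^{n+1}$, which is the map covered by $i_n$. Consequently, using the assumed homotopy pullback square $C^n \simeq [1:n{+}1] \times_{\Delta^{n+1}} C^{n+1}$ via $i_n$, I would first identify the pullback $s^{\ast}(C^{n+1} \star \ZZ_{>n+1}) \simeq C^n \star \ZZ_{>n}$ over $\ZZ_{\geq 0}$. Passing to the inverse limit over $n$ with the aid of the homotopy commutativity of the square relating $i_{\bullet}$ and $r_{\bullet}$, this assembles into an equivalence $s^{\ast} C^{\infty} \simeq C^{\infty}$, which is precisely the identification implicit in the definition of $\sh$ as an endofunctor (since precomposition by $\sd(s)$ produces a section valued in $s^{\ast} C^{\infty}$ rather than $C^{\infty}$).

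Tracing through the definitions, the level-$n$ component of $\sh$ applied to a compatible system $\{F_m\}$ is then obtained by restricting $F_{n+1} \colon \sd(\Delta^{n+1}) \to C^{n+1}$ along the inclusion $\sd(s)|_{\sd(\Delta^n)} \colon \sd(\Delta^n) \to \sd([1:n{+}1]) \subset \sd(\Delta^{n+1})$ and reinterpreting the resulting functor $\sd(\Delta^n) \to C^{n+1}|_{[1:n{+}1]}$ as a functor to $C^n$ via the pullback isomorphism. This is definitionally $(i_n)^{\ast}(F_{n+1})$.

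The main obstacle is the coherent assembly of the level-wise identifications into an equivalence of inverse systems, rather than just a pointwise comparison. The compatibility of the equivalences $s^{\ast}(C^{n+1} \star \ZZ_{>n+1}) \simeq C^n \star \ZZ_{>n}$ with the transition maps $r_n$ must be extracted from the homotopy commutativity of the square relating $i_{\bullet}$ and $r_{\bullet}$ in Constr.~\ref{cnstr:FamilyOfLocallyCocartesianFibrations}; once this coherence is in place, the equivalence $\sh \simeq i_{\infty}^{\ast}$ follows by functoriality of the inverse limit.
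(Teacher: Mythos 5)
Your proposal is correct and follows essentially the same route as the paper: both arguments reduce to checking, for each $n$, that the projection to $\Fun^{\cocart}_{/\Delta^n}(\sd(\Delta^n), C^n)$ intertwines $\sh$ with $(i_n)^{\ast}$, and both hinge on the observation that $s$ restricted to the sieve $[0:n]$ is the cosieve inclusion $[1:n{+}1]\subset\Delta^{n+1}$ covered by $i_n$. Your extra step making explicit the identification $s^{\ast}C^{\infty}\simeq C^{\infty}$ implicit in the definition of $\sh$ is a reasonable unpacking of what the paper leaves tacit, not a different argument.
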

\begin{proof} It suffices to check that for all $n \geq 0$, the diagram
\[ \begin{tikzcd}[row sep=4ex, column sep=4ex, text height=1.5ex, text depth=0.25ex]
\Fun^{\cocart}_{/\ZZ_{\geq 0}}(\sd(\ZZ_{\geq 0}), C^{\infty}) \ar{r}{\sh} \ar{d} & \Fun^{\cocart}_{/\ZZ_{\geq 0}}(\sd(\ZZ_{\geq 0}), C^{\infty}) \ar{d} \\
\Fun^{\cocart}_{/\Delta^{n+1}}(\sd(\Delta^{n+1}), C^{n+1}) \ar{r}{i_n^{\ast}} & \Fun^{\cocart}_{/\Delta^n}(\sd(\Delta^n), C^n)
\end{tikzcd} \]
is homotopy commutative. But this follows from the commutativity of the diagram
\[ \begin{tikzcd}[row sep=4ex, column sep=4ex, text height=1.5ex, text depth=0.25ex]
\Delta^n \ar{r} \ar{d} & \Delta^{n+1} \ar{d} \\
\ZZ_{\geq 0} \ar{r}{s} & \ZZ_{\geq 0}
\end{tikzcd} \]
where the upper map is the inclusion of $\Delta^n$ as the cosieve $[1:n+1] \subset \Delta^{n+1}$.
\end{proof}

Next, let $\sd_1(\ZZ_{\geq 0}) \subset \sd(\ZZ_{\geq 0})$ be the subposet on strings $[k]$ and $[k< k+1]$ as in Notn.~\ref{ntn:convexStringsLengthOne}, and let $$ \gamma_{\infty}^{\ast}: \Fun^{\cocart}_{/\ZZ_{\geq 0}}(\sd(\ZZ_{\geq 0}), C^{\infty}) \to \Fun^{\cocart}_{/\ZZ_{\geq 0}}(\sd_1(\ZZ_{\geq 0}), C^{\infty}) $$ be the functor given by restriction along the inclusion. Parallel to the setup in \ref{dualizedDescriptionOfSpine}, let $t_{\bullet}: \ZZ_{\geq 0} \to \Cat_{\infty}$ be the functor that sends $n$ to the fiber $C^{\infty}_n$ and $[n \to n+1]$ to the pushforward functor $t^{n+1}_n: C^{\infty}_n \to C^{\infty}_{n+1}$, and let $(C^{\infty})^{\vee} \to \ZZ_{\geq 0}^{\op}$ be the cartesian fibration classified by $t_{\bullet}$. Then we may replace the codomain of $\gamma_{\infty}^{\ast}$ as in Prop.~\ref{prp:DualDescriptionOfSections} to instead write
$$ \gamma_{\infty}^{\ast}: \Fun^{\cocart}_{/\ZZ_{\geq 0}}(\sd(\ZZ_{\geq 0}), C^{\infty}) \to \Fun_{/\ZZ_{\geq 0}^{\op}}(\ZZ_{\geq 0}^{\op}, (C^{\infty})^{\vee}). $$

The functor $\gamma^{\ast}_{\infty}$ clearly commutes with the shift functor $\sh$ defined as $\sd(s)^{\ast}$ on the left and $(s^{\op})^{\ast}$ on the right, so we obtain a functor between the equalizers
\[ \Eq_{\id:\sh}(\Fun^{\cocart}_{/\ZZ_{\geq 0}}(\sd(\ZZ_{\geq 0}), C^{\infty})) \to \Eq_{\id:\sh}(\Fun_{/\ZZ_{\geq 0}^{\op}}(\ZZ_{\geq 0}^{\op}, (C^{\infty})^{\vee})).\]

Note that under our assumptions, for all $0 \leq k \leq n$ we have equivalences
\[ \begin{tikzcd}[row sep=4ex, column sep=6ex, text height=1.5ex, text depth=0.5ex]
C^{n+1}_k \ar{d}{r_{n,k}} \ar{r}{\simeq} & C^{n}_{k-1} \ar{r}{\simeq} \ar{d}{r_{n-1,k-1}} & \cdots \ar{r}{\simeq} & C^{n-k+2}_{1} \ar{r}{\simeq} \ar{d}{r_{n-k+1,1}} & C^{n-k+1}_0 \ar{d}{r_{n-k,0}} \\
C^n_k \ar{r}{\simeq} &  C^{n-1}_{k-1} \ar{r}{\simeq} & \cdots \ar{r}{\simeq} & C^{n-k+1}_{1} \ar{r}{\simeq} & C^{n-k}_0,
\end{tikzcd} \]
hence we have equivalences $C^{\infty}_{n+1} \simeq C^{\infty}_{n}$ for all $n \geq 0$, under which $t^{n+2}_{n+1} \simeq t^{n+1}_n$. Therefore, if we let $C = C^{\infty}_0$ and $F = t^1_0: C \to C^{\infty}_1 \simeq C$ as an endofunctor of $C$, then with $\widehat{C} \to B \NN$ defined as in Constr.~\ref{cnstr:SetupForEndofunctorFibration}, we have a homotopy pullback square
\[ \begin{tikzcd}[row sep=4ex, column sep=4ex, text height=1.5ex, text depth=0.25ex]
(C^{\infty})^{\vee} \ar{r} \ar{d} & \widehat{C} \ar{d} \\
\ZZ_{\geq 0}^{\op} \ar{r} & B \NN
\end{tikzcd} \]
and hence $\Fun_{/\ZZ_{\geq 0}^{\op}}(\ZZ_{\geq 0}^{\op}, (C^{\infty})^{\vee}) \simeq \Fun_{/B \NN}(\ZZ_{\geq 0}^{\op}, \widehat{C})$. Lem.~\ref{lem:LaxEqualizerGenericEquivalence} then implies the equivalence
\[ \Eq_{\id:\sh}(\Fun_{/\ZZ_{\geq 0}^{\op}}(\ZZ_{\geq 0}^{\op}, (C^{\infty})^{\vee})) \simeq \LEq_{\id:F}(C). \]
We thereby obtain the `generic' comparison functor
\begin{equation} \label{eqn:AbstractComparisonFunctor} \Eq_{\id:\sh}(\Fun^{\cocart}_{/\ZZ_{\geq 0}}(\sd(\ZZ_{\geq 0}), C^{\infty})) \to \LEq_{\id:F}(C).
\end{equation}

Let us now return to our two situations of interest. In Constr.~\ref{cnstr:FamilyOfLocallyCocartesianFibrations}, we may take either
\begin{enumerate}
\item $C^n = \Sp^{C_{p^n}}_{\locus{\phi}}$, the maps $r_n$ as in Constr.~\ref{restrictionGeometricLoci}, and the maps $i_n$ as in Constr.~\ref{GeometricFixedPointsGeometricLoci}.
\item $C^n = \Sp^{D_{2p^n}}_{\locus{\phi,\zeta}}$, the maps $r_n$ as in Var.~\ref{vrn:DihedralRestrictionGeometricLoci}, and the maps $i_n$ as in Var.~\ref{vrn:DihedralGeometricFixedPoints}.
\end{enumerate}

Let $\Sp^{C_{p^{\infty}}}_{\locus{\phi}}$ and $\Sp^{D_{2p^{\infty}}}_{\locus{\phi,\zeta}}$ be the resulting inverse limits as locally cocartesian fibrations over $\ZZ_{\geq 0}$, so we have equivalences
\begin{align*} \Theta: \Fun^{\cocart}_{/\ZZ_{\geq 0}}(\sd(\ZZ_{\geq 0}), \Sp^{C_{p^{\infty}}}_{\locus{\phi}}) \xto{\simeq} \Sp^{C_{p^{\infty}}}, \\
\Theta[\zeta]: \Fun^{\cocart}_{/\ZZ_{\geq 0}}(\sd(\ZZ_{\geq 0}), \Sp^{D_{2p^{\infty}}}_{\locus{\phi,\zeta}}) \xto{\simeq} \Sp^{D_{2p^{\infty}}}.
\end{align*}

By the identification of $\Phi^{C_p}$, resp. $\Phi^{\mu_p}$ as $i_n^{\ast}$ as observed in Constr.~\ref{GeometricFixedPointsGeometricLoci} and Var.~\ref{vrn:DihedralGeometricFixedPoints}, together with Lem.~\ref{lm:GeometricFixedPointsAsShift}, we may identify the endofunctors $\Phi^{C_p}$ of $\Sp^{C_{p^{\infty}}}$ and $\Phi^{\mu_p}$ of $\Sp^{D_{2p^{\infty}}}$ with the shift endofunctors under the equivalences $\Theta$ and $\Theta[\zeta]$. Consequently, we obtain equivalences\footnote{Here, we implicitly use the equivalence of Rmk.~\ref{rem:EqualizerSwap}.}
\begin{align*} \Theta: \Eq_{\id:\sh}(\Fun^{\cocart}_{/\ZZ_{\geq 0}}(\sd(\ZZ_{\geq 0}), \Sp^{C_{p^{\infty}}}_{\locus{\phi}})) &\xto{\simeq} \CycSp_p^{\mr{gen}}, \\
\Theta[\zeta]: \Eq_{\id:\sh}(\Fun^{\cocart}_{/\ZZ_{\geq 0}}(\sd(\ZZ_{\geq 0}), \Sp^{D_{2p^{\infty}}}_{\locus{\phi,\zeta}})) &\xto{\simeq} \RCycSp_p^{\mr{gen}}.
\end{align*}

Moreover, being defined as the inverse limit over the restriction functors, the fibers of $\Sp^{C_{p^{\infty}}}_{\locus{\phi}}$ and $\Sp^{D_{2p^{\infty}}}_{\locus{\phi,\zeta}}$ are $\Fun(B C_{p^{\infty}}, \Sp)$ and $\Fun_{C_2}(B^t_{C_2} \mu_{p^{\infty}}, \underline{\Sp}^{C_2})$, and the pushforward endofunctors are $t^{C_p}$ and $t_{C_2} \mu_p$. Choosing inverses to $\Theta$ and $\Theta[\zeta]$, the functor (\ref{eqn:AbstractComparisonFunctor}) then yields comparison functors
\begin{align*} \sU: \CycSp^{\mr{gen}}_p \to \CycSp_p, \\
\sU_{\RR}: \RCycSp^{\mr{gen}}_p \to \RCycSp_p.
\end{align*}

In general, for a functor $X: \sd(\ZZ_{\geq 0}) \to C^{\infty}$ over $\ZZ_{\geq 0}$, let $X_n \in C^{\infty}_n$ be the object given by evaluating $X$ at $n$. Then we may describe $\sU$ and $\sU_{\RR}$ on objects by the formulas
\begin{align*} \sU(X, X \simeq \Phi^{C_p} X) = (X_0, X_0 \simeq X_1 \to (X_0)^{t C_p}), \\
\sU_{\RR}(X,X \simeq \Phi^{\mu_p} X) = (X_0, X_0 \simeq X_1 \to (X_0)^{t_{C_2} \mu_p}).
\end{align*}

It is then clear that $\sU$ is equivalent to the functor of \cite[Prop.~II.3.2]{NS18} considered by Nikolaus and Scholze, and $\sU_{\RR}$ is equivalent to the functor defined at the beginning of this section. The main motivation behind our somewhat roundabout reformulation of the comparison functors is to leverage Prop.~\ref{prp:EquivalenceOnBoundedBelowAtFiniteLevel} to prove analogous statements for $\sU$ and $\sU_{\RR}$. 

\begin{thm} \label{thm:MainTheoremRestated} $\sU$ and $\sU_{\RR}$ restrict to equivalences on the full subcategories of bounded below, resp. underlying bounded below objects.
\end{thm}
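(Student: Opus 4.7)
The plan is to pass from the finite-level equivalences of Prop.~\ref{prp:EquivalenceOnBoundedBelowAtFiniteLevel} to the infinite level, then pass to equalizers on both sides using the reformulation of the comparison functors $\sU$ and $\sU_{\RR}$ given immediately before the statement, and finally match the (underlying) bounded below conditions. I will focus on the dihedral case $\sU_{\RR}$, since the argument for $\sU$ is parallel and actually simpler (with Lem.~\ref{lem:dihedralTOLEven} and Lem.~\ref{lem:dihedralTOLOdd} replaced throughout by the Nikolaus–Scholze Tate orbit lemma).

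First, I would observe that the restriction functors $\res^{n+1}_n$ and the transition functors $(r_n)^{\ast}$ implicit in the inverse limits preserve both the $1$-generated/extendable conditions and the underlying bounded below conditions; this is essentially formal, since extendability is checked on triples of consecutive integers and the underlying spectrum of $\res^{n+1}_n X$ agrees with that of $X$. Consequently, passing to inverse limits in the second equivalence of Prop.~\ref{prp:EquivalenceOnBoundedBelowAtFiniteLevel} yields an equivalence
\[ \gamma^{\ast}_{\infty}:\Fun^{\cocart}_{/\ZZ_{\geq 0}}(\sd(\ZZ_{\geq 0}),\Sp^{D_{2p^{\infty}}}_{\locus{\phi,\zeta}})_{ubb} \xto{\simeq} \Fun_{/\ZZ_{\geq 0}^{\op}}(\ZZ_{\geq 0}^{\op},\Sp^{h_{C_2}\mu_{p^{\infty}}}_{\mathrm{Tate}})_{ubb}, \]
where both sides are the evident full subcategories on objects whose evaluations at each singleton string have bounded below underlying spectrum. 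By construction $\gamma^{\ast}_{\infty}$ is compatible with the shift endofunctors $\sh$ on either side (these being induced by the successor functor on $\ZZ_{\geq 0}$), so it induces an equivalence on equalizers.

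Second, I would apply $\Theta[\zeta]$ on the left and Lem.~\ref{lem:LaxEqualizerGenericEquivalence} on the right (for $C = \Fun_{C_2}(B^t_{C_2}\mu_{p^{\infty}},\ul{\Sp}^{C_2})$ and $F = t_{C_2}\mu_p$) to rewrite this induced equivalence as the restriction of $\sU_{\RR}$ to objects whose image is underlying bounded below. Specifically, under $\Theta[\zeta]$ the equalizer on the left is identified with the full subcategory of $\RCycSp^{\mathrm{gen}}_p$ on underlying bounded below objects (here underlying bounded below of $X$ is equivalent, by Lem.~\ref{lem:BddBelowEqualsSliceBddBelow} applied to the underlying $\mu_{p^n}$-spectrum of each $\res^n X$ together with the identification of $\phi^{\mu_{p^k}}$ with evaluation, to requiring that each $L[\zeta]_k X$ have bounded below underlying spectrum); under $\chi_0$ of Lem.~\ref{lem:LaxEqualizerGenericEquivalence} the equalizer on the right is identified with the full subcategory of $\RCycSp_p = \LEq_{\id:t_{C_2}\mu_p}(C)$ on those $[X,\varphi]$ with $X$ underlying bounded below. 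A direct diagram chase — tracing an object $X$ through the sequence of pullbacks defining $\chi_0$ and comparing to the formula $\sU_{\RR}(X,X\simeq \Phi^{\mu_p}X) = (X_0, X_0\simeq X_1 \to (X_0)^{t_{C_2}\mu_p})$ — shows that the resulting functor agrees with $\sU_{\RR}$ restricted to underlying bounded below objects.

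The key obstacle is verifying that the finite-level equivalences of Prop.~\ref{prp:EquivalenceOnBoundedBelowAtFiniteLevel} assemble correctly in the inverse limit and that the shift compatibility is honest, i.e., the square witnessing $\gamma^{\ast}_{\infty}\circ\sh\simeq\sh\circ\gamma^{\ast}_{\infty}$ really comes from a single natural equivalence of functors. The former reduces to checking that restriction and the $(-)^{t_{C_2}\mu_p}$ functors intertwine in a strictly commuting way at each finite stage, which follows from the compatibility of the $\Gamma_{\mu_{p^n}}$-recollements with further restriction established in Setup \ref{setup:Dihedral}. The latter follows from Lem.~\ref{lm:GeometricFixedPointsAsShift}, which identifies $\Phi^{\mu_p}$ with $\sh$ under $\Theta[\zeta]$, together with the tautological observation that $\sh$ on extendable sections corresponds to the transition map in the Tate tower. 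Finally, to conclude the theorem, I note that the two notions of bounded below match: a genuine real $p$-cyclotomic spectrum $X$ is underlying bounded below in the sense of having bounded below underlying spectrum if and only if the corresponding object in the left-hand equalizer lands in the underlying bounded below subcategory, and similarly on the right; these identifications are the content of the definitions combined with the fact that the underlying spectrum functor factors through $L[\zeta]_0$.
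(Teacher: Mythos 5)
Your proposal is correct and follows essentially the same route as the paper: take the inverse limit of the finite-level equivalences of Prop.~\ref{prp:EquivalenceOnBoundedBelowAtFiniteLevel}, observe compatibility with the shift endofunctors, pass to equalizers, and identify the result with $\sU$ (resp.\ $\sU_{\RR}$) via Lem.~\ref{lem:LaxEqualizerGenericEquivalence} and the setup preceding the theorem. The paper's own proof is just a terser version of this, leaving implicit the points you spell out (assembly of the finite-level equivalences, shift compatibility via Lem.~\ref{lm:GeometricFixedPointsAsShift}, and the matching of bounded-below conditions using the cyclotomic identification of geometric fixed points).
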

\begin{proof} Let $\widehat \Sp{}^{h C_{p^{\infty}}} \to B \NN$ and $\widehat \Sp {}^{h_{C_2} \mu_{p^{\infty}}} \to B \NN$ be the cartesian fibrations classified by the endofunctors $t C_p$ on $\Fun(B C_{p^{\infty}}, \Sp)$ and $t_{C_2} \mu_p$ on $\Fun_{C_2}(B_{C_2} \mu_{p^{\infty}}, \underline{\Sp}^{C_2})$. By taking the inverse limit of the equivalences of Prop.~\ref{prp:EquivalenceOnBoundedBelowAtFiniteLevel}, we obtain equivalences
\begin{align*} \Fun^{\cocart}_{/\ZZ_{\geq 0}}(\sd(\ZZ_{\geq 0}), \Sp^{C_{p^{\infty}}}_{\locus{\phi}})_{bb} & \xto{\simeq} \Fun_{/B \NN}(\ZZ_{\geq 0}^{\op},\widehat \Sp{}^{h C_{p^{\infty}}})_{bb}, \\
\Fun^{\cocart}_{/\ZZ_{\geq 0}}(\sd(\ZZ_{\geq 0}), \Sp^{D_{2p^{\infty}}}_{\locus{\phi,\zeta}})_{ubb} & \xto{\simeq} \Fun_{/B \NN}(\ZZ_{\geq 0}^{\op},\widehat \Sp {}^{h_{C_2} \mu_{p^{\infty}}})_{ubb}.
\end{align*}
The functors $\sU$ and $\sU_{\RR}$ are induced by these functors through taking equalizers of the identity and shift functors on both sides, so the theorem follows.
\end{proof}

\subsection{\texorpdfstring{$\TCR$}{TCR} of the constant mod \texorpdfstring{$p$}{p} Mackey functor at an odd prime}

Throughout this subsection, we fix an odd prime $p$ and implicitly take all functors to be $p$-typical. We apply the fiber sequence of Cor.~\ref{cor:decategorifiedEasy} to calculate $\TCR^{\mr{gen}}(H\mfp)$. Note that although we have not provided an intrinsic construction of the real topological Hochschild homology $\THR$ of an $E_{\sigma}$-algebra, the model of $\THR$ as a $O(2)$-cyclotomic spectrum given in \cite{Hog16} possesses the same $R$ and $F$ maps as defined in \S\ref{section:genRCycSp}, which suffices to make sense of $\TCR^{\mr{gen}}$ of an $E_{\sigma}$-algebra when \emph{defined} via any of the equivalent fiber sequences of Prop.~\ref{prp:fiberSequenceGenuineRealCyc}, and the formula of Cor.~\ref{cor:decategorifiedEasy} works independently of one's choice of foundations.

\begin{cvn} Let $S^{s,t} = S^{t\sigma} \wedge S^{s-t}$, and for a $C_2$-spectrum $X$, let $\pi_{s,t}^{C_2}(X)$ denote its $RO(C_2)$-graded homotopy groups. 
\end{cvn}

We begin with the identification of the $C_2$-equivariant homotopy type of $\THR(H\mfp)$ by work of Dotto-Moi-Patchkoria-Reeh, which holds for all primes $p$. 

\begin{thm}\cite[Thm.~5.18]{DMPR17}
There is a stable equivalence of $C_2$-equivariant ring spectra
$$T_{H\FF_p}(S^{2,1}) \overset{\simeq}{\to} \THR(H\mfp)$$
where $T_{H\FF_p}(S^{2,1}) = \bigvee_{n=0}^\infty \Sigma^{2n,n} H\FF_p$ is the free associative $H\FF_p$-algebra on $S^{2,1}$. In particular, there is an isomorphism of bigraded rings 
$$H\mfp_{{**}}[\tilde{x}] \cong \pi_{{**}}^{(-)}\THR(H\mfp)$$
where $|\tilde{x}| = (2,1)$. 
\end{thm}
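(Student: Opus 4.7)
The plan is to construct the comparison map using the multiplicative structure of $\THR$, and then verify that it is an equivalence by reducing to the underlying spectrum and the geometric $C_2$-fixed points, where the former is Bökstedt's classical computation and the latter is an equivariant analogue.

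First I would construct the map. Since $\THR(H\mfp)$ is an $E_\sigma$-algebra in $\Sp^{C_2}$ under $H\mfp$, to specify an $H\mfp$-algebra map out of the free associative $H\mfp$-algebra $T_{H\mfp}(S^{2,1}) = \bigvee_n \Sigma^{2n,n}H\mfp$ it suffices to exhibit a class $\tilde{x} \in \pi_{2,1}^{C_2}\THR(H\mfp)$ whose image under the forgetful map recovers Bökstedt's generator $x \in \pi_2\THH(H\FF_p)$. Working in a model of $\THR$ as a dihedral bar construction with reversal involution, one exhibits $\tilde{x}$ as the equivariant lift of Bökstedt's simplicial cycle, noting that the sign representation appears naturally in the bidegree because the involution acts on the relevant simplicial level with the prescribed twist. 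Freeness of $T_{H\mfp}(S^{2,1})$ in $E_1$-$H\mfp$-algebras then produces the desired map $\Phi\colon T_{H\mfp}(S^{2,1}) \to \THR(H\mfp)$.

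Second, to show $\Phi$ is an equivalence I would use joint conservativity of the forgetful functor $\res^{C_2}$ and the geometric fixed points $\Phi^{C_2}$ on $\Sp^{C_2}$. On underlying spectra, $\Phi$ specializes to the classical map $H\FF_p[x] \to \THH(H\FF_p)$, which is Bökstedt's theorem. For the geometric fixed points, one identifies $\Phi^{C_2}\THR(H\mfp)$ via a real/equivariant Bökstedt-type spectral sequence, or equivalently via a dihedral analogue of the Frobenius map, reducing to an algebraic computation over $(H\mfp)^{\Phi C_2}$. At odd $p$ this base identifies with $H\FF_p$, and $\Phi^{C_2}S^{2,1} \simeq S^1$, so after tracking the map one wants to check that $\Phi^{C_2}\Phi$ realizes an equivalence $H\FF_p[y]\xto{\simeq}\Phi^{C_2}\THR(H\mfp)$ with $|y|=1$.

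The main obstacle is this geometric fixed points input: pinning down $\Phi^{C_2}\THR(H\mfp)$ as a ring and verifying that $\tilde{x}$ detects a generator there requires setting up an equivariant Bökstedt spectral sequence (or a dihedral analogue of the Hesselholt--Madsen fundamental cofiber sequence), proving its collapse, and ruling out multiplicative extensions. At odd $p$ the situation simplifies because $C_2$ acts tamely on $p$-local phenomena, and the bigraded coefficient ring $H\mfp_{\ast\ast}$ is concentrated in familiar patterns; nevertheless, the combinatorial bookkeeping is nontrivial, and in practice this delicate calculation is what is imported from \cite{DMPR17}.
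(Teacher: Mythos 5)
The paper does not prove this statement; it imports it verbatim from \cite[Thm.~5.18]{DMPR17}, so there is no ``paper's own proof'' to compare against. Your sketch does correctly mirror the high-level strategy of that reference: exhibit the Bökstedt generator equivariantly in bidegree $(2,1)$, use the free $H\mfp$-algebra property of $T_{H\mfp}(S^{2,1})$ to build the comparison map, and then argue that a map of $C_2$-spectra is an equivalence by checking on the underlying spectrum and on $\Phi^{C_2}$.

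However, as a proof attempt it is not self-contained, and one of the two steps you defer is precisely the hard part. You acknowledge that the identification of $\Phi^{C_2}\THR(H\mfp)$ as a ring ``is what is imported from \cite{DMPR17},'' which makes the argument circular as a proof of the cited theorem. In \cite{DMPR17} that step is \emph{not} handled by a ``real Bökstedt spectral sequence'' as you suggest; rather it comes from their structural identification of $\Phi^{C_2}\THR(R)$ as a relative smash/tensor product of geometric fixed points of $R$ over $R$ (their Thm.~5.8 and its corollaries), together with flatness arguments to control the Künneth spectral sequence. If you want to make your sketch run, the real missing ingredient is that relative-tensor-product description of $\Phi^{C_2}\THR$. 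A second, more minor issue: the theorem you quote holds for \emph{all} primes $p$ (as the surrounding text in the paper notes), but your geometric fixed points reduction assumes $\Phi^{C_2}H\mfp \simeq H\FF_p$, which only holds for $p$ odd; at $p=2$ the geometric fixed points of $H\underline{\FF_2}$ is a polynomial ring and the bookkeeping changes substantially. Since the paper only applies the theorem at odd $p$, this restriction is harmless for the paper's purposes, but it means your outline does not cover the general statement.
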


Recall the parametrized Tate spectral sequence defined in \cite[\S 2.4]{Qui19b} (see also \cite[\S 20]{GreenleesMay} and \cite[\S 3]{mathew2019} for the more general case). This is an $RO(C_2)$-graded Mackey functor spectral sequence of the form
$$\widehat{E}^2_{*,{**}} = \widehat{H}^{*}_{\Gamma_{\mu_{p^\infty}}}(\mu_{p^\infty}; \pi_{{**}}^{(-)}(X)) \Rightarrow \pi_{{**}}^{(-)}X^{t_{C_2}\mu_{p^\infty}}$$
where the left-hand side is Amitsur-Dress-Tate cohomology taken with respect to the $\mu_{p^{\infty}}$-free family $\Gamma_{\mu_{p^\infty}}$ of subgroups of $D_{2p^\infty}$. Similarly, we have the parametrized homotopy fixed point spectral sequence
$$E^2_{*,{**}} = H^{*}_{\Gamma_{\mu_{p^\infty}}}(\mu_{p^\infty}; \pi_{{**}}^{(-)}(X)) \Rightarrow \pi_{{**}}^{(-)}X^{h_{C_2}\mu_{p^\infty}}.$$
The $d_r$-differentials in both spectral sequences change tridegrees by $(r,r-1,\lfloor r/2 \rfloor)$.

When $\mu_{p^\infty}$ acts trivially on $\pi_{**}^{(-)}(X)$, the $E_2$-pages of the parametrized Tate and homotopy fixed point spectral sequences can be computed using the following lemma. 

\begin{lem}
Let $p$ be an odd prime. There are isomorphisms
$$\widehat{H}^{**}_{\Gamma_{\mu_{p^\infty}}}(\mu_{p^\infty};\mfp) \cong (H\mfp)^{**}[[\tilde{t}^{\pm 1}]],$$
$$H^{**}_{\Gamma_{\mu_{p^\infty}}}(\mu_{p^\infty};\mfp) \cong (H\mfp)^{**}[[\tilde{t}]],$$
where $|\tilde{t}| = (-2,-1)$. 
\end{lem}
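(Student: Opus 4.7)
The plan is to identify the parametrized group cohomology appearing on the left-hand side with the $RO(C_2)$-graded Bredon cohomology of the twisted classifying $C_2$-space $B^t_{C_2}\mu_{p^\infty}$ with coefficients in the constant Mackey functor $\mfp$, and then compute the latter by an inductive Gysin-type argument. By the general theory of parametrized fixed point / Tate spectral sequences (unwinding the $E_2$-page in the homotopy-of-mapping-spectra formulation), and using that $\mu_{p^\infty}$ acts trivially on $\pi_{**}^{(-)}(H\mfp)$, we have identifications
\[ H^{**}_{\Gamma_{\mu_{p^\infty}}}(\mu_{p^\infty};\mfp) \cong H\mfp{}^{**}(B^t_{C_2}\mu_{p^\infty}), \qquad \widehat H^{**}_{\Gamma_{\mu_{p^\infty}}}(\mu_{p^\infty};\mfp) \cong \pi^{C_2}_{-**}\bigl((H\mfp)^{t_{C_2}\mu_{p^\infty}}\bigr). \]
Reduction to finite level: since $B^t_{C_2}\mu_{p^\infty}\simeq \mr{colim}_n B^t_{C_2}\mu_{p^n}$ as $C_2$-spaces (Setup~\ref{setup:Dihedral}), a Milnor sequence argument reduces the computation to $\lim_n H\mfp{}^{**}(B^t_{C_2}\mu_{p^n})$ together with a $\lim^1$-vanishing claim at the end.

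At finite level, the key geometric input is the $C_2$-equivariant principal $\mu_p$-bundle $B^t_{C_2}\mu_{p^{n-1}} \to B^t_{C_2}\mu_{p^n}$ associated to the central extension $\mu_p \to \mu_{p^n} \to \mu_{p^{n-1}}$. Under the inversion action, $\mu_p$ is the group of $p$th roots of unity inside the complex $C_2$-representation $\CC_\sigma$ (where $C_2$ acts by complex conjugation), so the associated $C_2$-equivariant complex line bundle has Euler class $\tilde t$ of $RO(C_2)$-degree $(2,1)$ in cohomological grading. The resulting $C_2$-equivariant Gysin cofiber sequence
\[ B^t_{C_2}\mu_{p^{n-1}}{}_+ \longrightarrow B^t_{C_2}\mu_{p^n}{}_+ \longrightarrow S^{2,1}\wedge B^t_{C_2}\mu_{p^{n-1}}{}_+ \]
induces a long exact sequence in $H\mfp{}^{**}$. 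Inductively, I expect this yields a truncated polynomial ring $H\mfp{}^{**}[\tilde t]/\tilde t^{\,p^n+1}$, with the generator $\tilde t$ of degree $(-2,-1)$ dual to the Euler class. The Milnor $\lim^1$-term vanishes because the restriction maps in $\tilde t$ are surjective (each stage is a free module of finite rank over the preceding one), giving $H\mfp{}^{**}(B^t_{C_2}\mu_{p^\infty}) \cong H\mfp{}^{**}[[\tilde t]]$. The Tate version then follows by smashing with $\widetilde{E\Gamma_{\mu_{p^\infty}}}$ and using the parametrized Tate fracture, which has the effect of inverting $\tilde t$.

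The main obstacle I foresee is the verification that the Gysin cofiber sequence splits on cohomology at each stage, i.e.\ that the connecting differential vanishes, yielding a polynomial (rather than polynomial-tensor-exterior) ring. This is exactly where the hypothesis that $p$ is odd enters: for $p=2$ an exterior generator in sign degree would appear (as in the classical computation of $H^*(B\mu_{p^n};\FF_p)$), and correspondingly the answer would carry an additional generator. For $p$ odd, one can see the splitting either multiplicatively (the would-be exterior generator would have to have $\tilde t$ as its Bockstein, but the odd-order cyclic quotient kills the relevant torsion) or, more cleanly, by invoking that $2$ is invertible in $\mfp$, so that the $C_2$-equivariant spectrum $H\mfp\wedge B^t_{C_2}\mu_{p^n}{}_+$ splits as a smash product compatibly with the complex orientation of $H\mfp$ on $\CC_\sigma$. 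Granting this splitting, the claimed ring structure is immediate.
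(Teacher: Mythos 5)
Your opening move — identifying the parametrized group cohomology with $H\mfp^{**}(B^t_{C_2}\mu_{p^\infty})$ — is correct and matches the paper's first step, but the finite-level inductive portion of your argument has errors that prevent it from establishing the lemma.

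The central mistake is the claim that $H\mfp^{**}(B^t_{C_2}\mu_{p^n})$ is a truncated polynomial ring $H\mfp^{**}[\tilde t]/\tilde t^{\,p^n+1}$. Already non-equivariantly $H^*(B\mu_{p^n};\FF_p)\cong\FF_p[t][e]/(e^2)$ with $|t|=2$, $|e|=1$: an infinite polynomial algebra tensored with an exterior class, not a truncated polynomial ring. The remark that the paper records immediately after the lemma gives the equivariant analogue for $\mu_p$, namely $(H\mfp)^{**}[[\tilde t]][u]/(u^2)$ with an exterior generator $u$ of degree $(-1,-1)$. The power-series answer for $\mu_{p^\infty}$ does not arise because the finite stages are truncated; it arises because the exterior class is killed under the restriction maps $B^t_{C_2}\mu_{p^n}\to B^t_{C_2}\mu_{p^{n+1}}$ when one passes to the inverse limit (the inclusion $\mu_{p^n}\subset\mu_{p^{n+1}}$ induces zero on $H^1=\Hom(\mu_{p^{\bullet}},\FF_p)$). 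With the correct finite-level answer, your $\lim^1$-vanishing claim ("each stage is a free module of finite rank over the preceding one") would need to be replaced by an argument that the tower of exterior classes is pro-zero. Your Gysin cofiber sequence is likewise mis-stated: the Thom space should sit over the \emph{base} $B^t_{C_2}\mu_{p^n}$, not the total space, and the map $B^t_{C_2}\mu_{p^{n-1}}\to B^t_{C_2}\mu_{p^n}$ induced by the subgroup inclusion is a $\mu_p$-covering rather than the unit sphere bundle of a complex $C_2$-line bundle, so no Gysin sequence applies to it as written. The Gysin sequence that does exist — over $B^t_{C_2}S^1$ using $\mu_{p^n}\subset S^1$, with mod-$p$-trivial Euler class $p^n\tilde t$ — is exactly what produces the exterior class you were hoping to rule out.

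The role of $p$ being odd is also a bit different from what you suggest. It enters primarily through the coefficient ring: for $p$ odd, $\pi_{**}^{C_2}(H\mfp)\cong\FF_p[\tau^2]$ (a result of Stahn, which the proof cites), which makes the twisted $\mu_{p^\infty}$-action on homotopy trivial for degree reasons and keeps the $RO(C_2)$-graded coefficients tractable; the Real-orientability of $H\mfp$ does not rely on $2$ being invertible. The paper's proof in fact bypasses the finite-level reduction entirely: it identifies $H^{**}_{\Gamma_{\mu_{p^\infty}}}(\mu_{p^\infty};\mfp)$ with $H\mfp^{**}(B^t_{C_2}\mu_{p^\infty})$ as you do, and then reads off $H\mfp^{**}[[\tilde t]]$ directly from the Real-orientability of $H\mfp$ together with Araki's computation, inverting $\tilde t$ for the Tate version as you rightly suggest at the end.
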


\begin{proof}
Recall that for $p$ odd, we have $\pi_{**}^{C_2}(H\mfp) \cong \FF_p[\tau^2]$ with $|\tau^2| = (0,-2)$ \cite[Prop.~1.1]{Sta16}. Since the action of $\mu_{p^\infty}$ is restricted from the action of $S^1$, we see that $\mu_{p^\infty}$ acts trivially on $\pi_{**}^{C_2}(H\mfp)$ for dimension reasons. We therefore have $$H^{**}_{\Gamma_{\mu_{p^\infty}}}(\mu_{p^\infty};\mfp) \cong H\mfp^{**}(B_{C_2}^t \mu_{p^\infty}) \cong H\mfp^{**}[[\tilde{t}]],$$ where the second isomorphism follows from \cite{Ara79} and the fact that $H\mfp$ is Real-oriented. 
\end{proof}

\begin{rem}
This lemma completely determines the parametrized Tate and homotopy fixed point spectral sequences for $H\mfp^{t_{C_2}\mu_{p^\infty}}$ and $H\mfp^{h_{C_2}\mu_{p^\infty}}$. In particular, we see that their $E^2$-pages are given by the respective Amitsur-Dress-Tate cohomology groups and that the spectral sequences collapse for bidegree reasons. 
\end{rem}

\begin{rem}
A similar calculation shows that
$$\widehat{H}^{**}_{\Gamma_{\mu_p}}(\mu_p;\mfp) \cong (H\mfp)^{**}[[\tilde{t}^{\pm 1}]][u]/(u^2),$$
$${H}^{**}_{\Gamma_{\mu_p}}(\mu_p;\mfp) \cong (H\mfp)^{**}[[\tilde{t}]][u]/(u^2),$$
where $|\tilde{t}| = (-2,-1)$ and $|u| = (-1,-1)$ -- here, one may use the fibration 
\[
B_{C_2}\mu_p \to B_{C_2}\mu_{p^\infty} \xto{\cdot p} B_{C_2}\mu_{p^\infty}
\]
to calculate the cohomology of $B_{C_2}\mu_{p^\infty}$. As above, this completely determines that parametrized Tate and homotopy fixed point spectral sequences for $H\mfp^{t_{C_2}\mu_p}$ and $H\mfp^{h_{C_2}\mu_p}$. 
\end{rem}

\begin{dfn} Let $X$ be a $C_2$-spectrum with twisted $\mu_{p^{\infty}}$-action. The \emph{$p$-typical real topological negative cyclic homology} of $X$ is
\[ \TCR^-(X,p) = X^{h_{C_2} \mu_{p^{\infty}}}. \]
The \emph{$p$-typical real topological periodic cyclic homology} of $X$ is
\[ \TPR(X,p) = X^{t_{C_2} \mu_{p^{\infty}}}. \]
\end{dfn}

\begin{prp}
Let $p$ be an odd prime. There are isomorphisms of bigraded rings 
$$\pi_{{**}}(\TCR^-(H\mfp)) \cong H\mfp^{**}[[\tilde{t}]][\tilde{x}],$$
$$\pi_{{**}}(\TPR(H\mfp)) \cong H\mfp^{**}[[\tilde{t},\tilde{t}^{-1}]][\tilde{x}],$$
where $|\tilde{t}| = (-2,-1)$ and $|\tilde{x}| = (2,1)$. 
\end{prp}

\begin{proof}
The $E_2$-page of the parametrized homotopy fixed point spectral sequence has the form
$$E^2_{***} = H^*_{\Gamma_{\mu_{p^\infty}}}(\mu_{p^\infty}; \pi_{**}^{(-)}(\THR(H\mfp))) \cong H^*_{\Gamma_{\mu_{p^\infty}}}(\mu_{p^\infty};\mfp_{**}) \otimes \pi_{**}(\THR(H\mfp)) \cong H\mfp^{**}[[\tilde{t}]] \otimes \mfp_{**}[\tilde{x}]$$
with $|\tilde{t}| = (-2,0,-1)$ and $|\tilde{x}| = (0,2,1)$, where we  again use that the $\mu_{p^\infty}$-action is obtained by restriction from the necessarily trivial $S^1$-action on $\pi_{**}^{(-)}(\THR(H\mfp))$. The projection of the $E^2$-page onto the first two degrees gives the familiar checkerboard pattern, so the spectral sequence collapses. The same argument applies to show that the parametrized Tate spectral sequence collapses at $\widehat{E}^2$. 

To determine the multiplicative structure, we apply the functor $\res^{C_2}: \Sp^{C_2} \to \Sp$. An argument as above using the homotopy fixed point and Tate spectral sequences (c.f. the discussion around \cite[Prop.~IV.4.6]{NS18} and \cite[Cor.~IV.4.8]{NS18}) produces collapsing spectral spectral sequences
$$E^2_{**} \cong E^\infty_{**} \cong  H^*(\mu_{p^\infty};\pi_*(\THH(H\FF_p))) \cong H\FF_p^*[[t]][x]$$
 converging to $\pi_*(\TC^-(H\FF_p))$ and $\pi_*(\TP(H\FF_p))$, with $|t| = -2$ and $|x| = 2$. The functor $\res^{C_2}$ induces multiplicative maps of spectral sequences which are determined by $\tilde{t} \mapsto t$ and $\tilde{x} \mapsto x$. 

By \cite[Prop.~IV.4.6]{NS18}, the class $tu$ detects $ p \in \ZZ_p \cong \pi_0(\TC^-(H\FF_p))$ and $tu$ detects $ p \in \ZZ_p \cong \pi_0(\TP(H\FF_p))$. In particular, $(tu)^i \neq 0$ for all $i \geq 0$. We must therefore have $(\tilde{t} \tilde{u})^i \neq 0$ for all $i \geq 0$, which proves the desired isomorphism. 
\end{proof}

\begin{cor}
For all $i \in \ZZ$ and $k \geq 0$, the map
$$\pi^{C_2}_{2i,i-2k} (\varphi^{h_{C_2}\mu_{p^\infty}}) : \pi^{C_2}_{2i,i-2k}(\TCR^-(H\mfp)) \to \pi^{C_2}_{2i,i-2k}(\TPR(H\mfp))$$
is injective. If $i \geq 0$, it is an isomorphism, while if $i = -j < 0$, the map is given by multiplication by $p^j$.
\end{cor}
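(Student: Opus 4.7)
The plan is to translate the statement into a direct computation using the explicit ring presentations from the preceding proposition. First, I would identify the induced map $\pi^{C_2}_{**}(\varphi^{h_{C_2}\mu_{p^\infty}})$: unwinding the definition (namely, that $\varphi^{h_{C_2}\mu_{p^\infty}}$ is the canonical map $X^{h_{C_2}\mu_{p^\infty}} \simeq (X^{h_{C_2}\mu_p})^{h_{C_2}\mu_{p^\infty}} \to (X^{t_{C_2}\mu_p})^{h_{C_2}\mu_{p^\infty}}$), this is the ring homomorphism $H\mfp^{**}[[\tilde t]][\tilde x] \to H\mfp^{**}[[\tilde t, \tilde t^{-1}]][\tilde x]$ that formally inverts $\tilde t$. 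Injectivity of this inclusion in every bidegree is immediate, settling the first assertion.

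Next, I would enumerate the $\FF_p$-basis of $\pi^{C_2}_{2i,i-2k}$ on each side. A monomial $(\tau^2)^a \tilde t^b \tilde x^c$ sits in tridegree $(a, 2(c-b), (c-b)-2a)$, so matching $(k, 2i, i-2k)$ forces $a = k$ and $c = b + i$. On the source we require $b \geq 0$ and $c \geq 0$, whereas on the target $\tilde t$ is invertible, so we only demand $c \geq 0$ (i.e.\ $b \geq -i$). This pins down the $\FF_p$-bases on both sides explicitly and exhibits the map concretely as an inclusion of basis sets.

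To identify the map precisely, I would restrict to the underlying non-equivariant spectra via $\res^{C_2}$. Restriction intertwines $\varphi^{h_{C_2}\mu_{p^\infty}}$ with the non-equivariant comparison map $\TC^-(H\FF_p) \to \TP(H\FF_p)$, whose behavior on $\pi_{2i}$ is exactly the claimed one by Nikolaus and Scholze's \cite[Cor.~IV.4.8]{NS18}. Since in the tridegree $(2i, i-2k)$ the power $a=k$ of $\tau^2$ is forced, the map in this tridegree is essentially $(\tau^2)^k$ times the non-equivariant map, and the claim reduces to the non-equivariant case.

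The key subtlety is bridging the $\FF_p$-linear ring presentation of the proposition to the $\ZZ_p$-module behavior at the level of individual bidegrees. The relevant input is that $\tilde t \tilde x$ detects $p$ in $\pi^{C_2}_{0,0}$; restricting to the non-equivariant setting this is Nikolaus and Scholze's identity $tx = p$ in $\pi_0(\TC^-(H\FF_p))$ from \cite[Prop.~IV.4.6]{NS18}, which the proposition's proof already invokes. Using this extension to reorganize each $\pi^{C_2}_{2i,i-2k}$ as a cyclic $\ZZ_p$-module, one reads off the isomorphism for $i \geq 0$ and multiplication by $p^{j}$ for $i = -j < 0$ exactly as in the non-equivariant case, with the $(\tau^2)^k$-stratum simply carrying the non-equivariant statement through.
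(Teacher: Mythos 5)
There is a genuine gap at the very first step: you have identified the wrong map. The map in the statement is $\varphi^{h_{C_2}\mu_{p^\infty}}$, obtained by applying $(-)^{h_{C_2}\mu_{p^\infty}}$ to the real cyclotomic Frobenius $\varphi\colon \THR(H\mfp) \to \THR(H\mfp)^{t_{C_2}\mu_p}$ and then using the equivalence $(\THR(H\mfp)^{t_{C_2}\mu_p})^{h_{C_2}\mu_{p^\infty}} \simeq \TPR(H\mfp)$ of Cor.~\ref{cor:TateToFixedPointsEquivalence}. The map you describe --- $X^{h_{C_2}\mu_{p^\infty}} \simeq (X^{h_{C_2}\mu_p})^{h_{C_2}\mu_{p^\infty}} \to (X^{t_{C_2}\mu_p})^{h_{C_2}\mu_{p^\infty}}$, i.e.\ ``formally inverting $\tilde t$'' --- is the \emph{canonical} map $\can_p$, the other leg of the fiber sequence in Prop.~\ref{prp:TCRfiberSequence}. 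This is not a harmless slip: with $\tilde t\tilde x = p$, the localization inverting $\tilde t$ sends $\tilde t^{\,j}(\tau^2)^k \mapsto \tilde t^{\,j}(\tau^2)^k$ (an isomorphism in degrees $i=-j<0$) and $\tilde x^{\,i}(\tau^2)^k \mapsto p^i\tilde t^{-i}(\tau^2)^k$ (multiplication by $p^i$ for $i>0$) --- exactly the opposite of what the corollary asserts. So your step (a) proves the wrong statement, and the basis-matching in step (b) inherits the error.

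Your later paragraphs contain the germ of a correct argument, and in fact are close to what the paper does: the paper uses that $\varphi^{h_{C_2}\mu_{p^\infty}}$ is multiplicative (lax monoidality of $(-)^{t_{C_2}\mu_p}$), so the relation $\tilde t\tilde x = p$ forces injectivity and leaves only two possibilities (isomorphism in positive degrees, or in negative degrees); it then rules out the second case by factoring through the genuine Frobenius $\varphi\colon \THR(H\mfp)\to\THR(H\mfp)^{t_{C_2}\mu_p}$ and using that $\pi^{C_2}_{2i,i-2k}\THR(H\mfp)=0$ for $i<0$ while $\tilde t$ survives to $H\mfp^{t_{C_2}\mu_p}$. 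If you want to run your ``restrict to underlying spectra'' version instead, you must (i) use the correct non-equivariant input, namely the behavior of the non-equivariant Frobenius from \cite[Prop.~IV.4.9]{NS18} rather than of the canonical map, and (ii) justify that $\res^{C_2}\colon \pi^{C_2}_{2i,i-2k}\to\pi_{2i}$ detects the relevant $p$-divisibility on these cyclic $\ZZ_p$-modules; neither point is addressed as written.
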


\begin{proof} We follow the proof of \cite[Prop.~IV.4.9]{NS18}. The map $\pi_{2i,i-2k}^{C_2} \varphi^{h_{C_2}\mu_{p^\infty}}$ is multiplicative using the lax monoidality of the parametrized Tate construction.\footnote{For this computation, we only need the structure of an associative $C_2$-equivariant ring spectrum on $\THR$ \cite[Cor.~4.2]{DMPR17}.} Since $\tilde{t}\tilde{u} = p$, the maps must be injective and they are isomorphisms either in positive or in negative degrees. Assume they are isomorphisms negative degrees and consider the diagram
\[
\begin{tikzcd}
\pi_{2i,i-2k}^{C_2} \THR(H\mfp)^{h_{C_2}\mu_{p^\infty}} \arrow{rrr}{\pi_{2i,i-2k}^{C_2}\varphi^{h_{C_2}\mu_{p^\infty}}} \arrow{d} & && \pi_{2i,i-2k}^{C_2} \THR(H\mfp)^{t_{C_2}\mu_{p^\infty}} \arrow{d} \arrow{r} & \pi_{2i,i-2k}^{C_2}H\mfp^{t_{C_2}\mu_{p^\infty}} \arrow{d} \\
\pi_{2i,i-2k}^{C_2}\THR(\mfp) \arrow{rrr}{\pi_{2i,i-2k}^{C_2}\varphi} &&& \pi_{2i,i-2k}^{C_2} \THR(H\mfp)^{t_{C_2}\mu_p} \arrow{r} &\pi_{2i,i-2k}^{C_2} H\mfp^{t_{C_2}\mu_p}
\end{tikzcd}
\]
where $i<0$. The lower left-hand corner is zero by \cite[Thm.~5.18]{DMPR17}, so if the upper-left arrow is surjective, then the map
$$\ZZ_p \cdot v \tau^{2k} = \pi_{2i,i-2k}^{C_2} \THR(H\mfp)^{t_{C_2}\mu_{p^\infty}} \to \pi_{2i,i-2k}^{C_2} H\mfp^{t_{C_2}\mu_p} \cong \FF_p$$
must be zero. But $\tilde{t}$ maps to a nonzero class in $\pi_{2i,i-2k}^{C_2}H\mfp^{t_{C_2}\mu_{p^\infty}}$, and this class maps to a nonzero element in $\pi_{2i,i-2k}^{C_2}H\mfp^{t_{C_2}\mu_p}$. 
\end{proof}

Using the long exact sequence associated to the fiber sequence in Cor.~\ref{cor:decategorifiedEasy}, we obtain the following calculation. We note that Thm.~\ref{thm:TCROddPrimeComputation} has also been obtained in forthcoming work of Dotto-Moi-Patchkoria, using different methods.

\begin{thm} \label{thm:TCROddPrimeComputation} Let $p$ be an odd prime. The $C_2$-equivariant homotopy groups of $\TCR^{\mr{gen}}(H\mfp,p)$ are given by
$$\pi_{**}^{(-)}(\TCR^{\mr{gen}}(H\mfp),p) \cong \pi_{**}^{(-)}(\underline{\ZZ_p} \vee \Sigma^{-1,0} \underline{\ZZ_p}).$$
\end{thm}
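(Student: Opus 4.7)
The plan is to apply Cor.~\ref{cor:decategorifiedEasy} to $X = \THR(H\mfp)$, which by the Dotto--Moi--Patchkoria--Reeh identification $\THR(H\mfp) \simeq T_{H\FF_p}(S^{2,1})$ is bounded below as a $C_2$-spectrum (its underlying spectrum is $\bigvee_{n \geq 0} \Sigma^{2n} H\FF_p$). This yields the fiber sequence of $C_2$-spectra
\[
\TCR^{\mr{gen}}(H\mfp,p) \longrightarrow \THR(H\mfp)^{h_{C_2}\mu_{p^\infty}} \xtolong{\varphi^{h_{C_2}\mu_{p^\infty}}-\can}{2} (\THR(H\mfp)^{t_{C_2}\mu_p})^{h_{C_2}\mu_{p^\infty}}.
\]
By Cor.~\ref{cor:TateToFixedPointsEquivalence}, the bounded below hypothesis further identifies the target with $\TPR(H\mfp,p) = \THR(H\mfp)^{t_{C_2}\mu_{p^\infty}}$.

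The next step is to extract the associated long exact sequence of $RO(C_2)$-graded Mackey functors and to analyze the comparison map $\varphi^{h_{C_2}\mu_{p^\infty}}-\can$ using the computations $\pi_{**}\TCR^-(H\mfp) \cong H\mfp^{**}[[\tilde t]][\tilde x]$ and $\pi_{**}\TPR(H\mfp) \cong H\mfp^{**}[[\tilde t,\tilde t^{-1}]][\tilde x]$ from the preceding propositions, together with the lax monoidal and multiplicative properties of $\varphi^{h_{C_2}\mu_{p^\infty}}$. The corollary immediately preceding the statement identifies the map on the bidegrees $(2i,i-2k)$: it is an isomorphism for $i \geq 0$ and multiplication by $p^{-i}$ for $i < 0$. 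Combining this with the vanishing behavior of $\can$ on powers of $\tilde x$ (which live in the $\TCR^-$-part but are killed in $\TPR$ by $\tilde t$-periodicity mechanics), I expect the kernel of $\varphi^{h_{C_2}\mu_{p^\infty}}-\can$ to be the Mackey functor $\underline{\ZZ_p}$ concentrated in the $C_2$-bidegree of the unit, and the cokernel to contribute $\Sigma^{-1,0}\underline{\ZZ_p}$ via the boundary map, exactly as in the Nikolaus--Scholze calculation of $\pi_\ast\TC(H\FF_p)$ in \cite[Cor.~IV.4.8]{NS18}.

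To promote the bidegree-by-bidegree computation of abelian groups to an equivalence of Mackey functors, I will check compatibility with restriction to the underlying spectrum and with the Weyl transfer, by chasing naturality of the fiber sequence along $\res^{C_2}:\Sp^{C_2}\to\Sp$. Under restriction, the sequence becomes the classical fiber sequence computing $\TC(H\FF_p)$, whose answer $\ZZ_p \oplus \Sigma^{-1}\ZZ_p$ matches the underlying spectrum of $\underline{\ZZ_p} \vee \Sigma^{-1,0}\underline{\ZZ_p}$, and the multiplicative structure on $\tilde t \tilde u$ (which detects $p$) reduces to the corresponding fact for $tu$. Because $\pi_{**}^{(-)}\underline{\ZZ_p}$ is itself freely generated over $\pi_{**}^{(-)}H\mfp$ in the appropriate sense once one inverts $p^{\text{th}}$ completion issues, a degree-count in each $RO(C_2)$-bidegree should complete the identification.

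The main obstacle I anticipate is the bookkeeping in the full $RO(C_2)$-graded long exact sequence: one must keep track of the two independent gradings $(s,t)$ and verify that the boundary map is an isomorphism onto the desired copy of $\Sigma^{-1,0}\underline{\ZZ_p}$ in every bidegree, not merely on the `checkerboard' $(2i,i-2k)$ where the spectral sequence is concentrated. This amounts to showing that $\varphi^{h_{C_2}\mu_{p^\infty}}-\can$ is injective throughout and computing its cokernel; the injectivity follows because $\varphi^{h_{C_2}\mu_{p^\infty}}$ is already injective on the relevant bidegrees and $\can$ is nilpotent with respect to the $\tilde t$-filtration, while the cokernel computation requires a careful analysis of the $\tilde t$-adic completion versus $\tilde t$-inversion at the level of $\pi_{**}^{C_2}$.
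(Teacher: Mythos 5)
Your overall strategy is exactly the paper's: invoke Cor.~\ref{cor:decategorifiedEasy} to get the fiber sequence, plug in the computations of $\pi_{**}^{C_2}\TCR^-$ and $\pi_{**}^{C_2}\TPR$, use the preceding corollary on $\varphi^{h_{C_2}\mu_{p^\infty}}$, analyze $\varphi^{h_{C_2}\mu_{p^\infty}}-\can$, and deduce the claim from the long exact sequence, checking Mackey-functor structure via $\res^{C_2}$. The paper's own proof is one sentence and defers essentially all of this to the long exact sequence and the computations set up beforehand, so you are in step with the paper.

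However, two of your specific assertions about $\can$ are wrong and would derail the argument if taken literally. The canonical map $\can$ is (by construction, see Prop.~\ref{prp:TCRfiberSequence} and the discussion leading to it) induced by $\tilde t$-localization: on the identified homotopy rings it is the inclusion $H\mfp^{**}[[\tilde t]][\tilde x] \hookrightarrow H\mfp^{**}[[\tilde t,\tilde t^{-1}]][\tilde x]$ sending $\tilde t \mapsto \tilde t$ and $\tilde x \mapsto \tilde x$. In particular $\can$ is injective and is \emph{not} nilpotent in the $\tilde t$-filtration, and it does \emph{not} vanish on powers of $\tilde x$. The correct statement, mirroring the corollary you cite for $\varphi$, is that on the checkerboard bidegrees $(2i,i-2k)$ the map $\can$ is an isomorphism for $i\le 0$ and multiplication by $p^i$ for $i>0$ (this is the elementary count of which monomials $\tau^{2k}\tilde t^a\tilde x^b$ with $b-a=i$ have $a\ge 0$ versus $a\in\ZZ$, combined with $\tilde t\tilde u=p$). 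Then for $i\ne 0$ the map $\varphi^{h_{C_2}\mu_{p^\infty}}-\can$ is the difference of an isomorphism and multiplication by a power of $p$, hence an isomorphism of $\ZZ_p$-modules; for $i=0$ both maps are the identity, so $\varphi^{h_{C_2}\mu_{p^\infty}}-\can=0$, yielding a $\underline{\ZZ_p}$ in the kernel contributing $\pi_{0,\ast}$ of $\TCR^{\mr{gen}}$ and a $\underline{\ZZ_p}$ in the cokernel contributing $\pi_{-1,\ast}$, exactly as in \cite[Cor.~IV.4.8]{NS18}. With that correction your analysis goes through and agrees with the paper.
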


\appendix

\section{Parametrized \texorpdfstring{$\infty$}{infinity}-categories}
\label{section:ParamTerminology}

Let $S$ be an $\infty$-category and let $T = S^{\op}$. In this appendix, we fix terminology and recall a few basic results concerning the theory of $S$-$\infty$-categories and $S$-(co)limits from \cite{Exp1}, \cite{Exp2}, and \cite{Exp4} -- more involved usage of these concepts will be recalled as needed in the main body of the paper.

\begin{dfn}[{\cite{Exp1}}] An \emph{$S$-$\infty$-category} $p: C \to S$ is a cocartesian fibration. We often write only $C$ for the $S$-$\infty$-category, leaving the structure map implicit. Given a morphism $\alpha: V \to W \in T$, we write $\alpha^{\ast}: C_W \to C_V$ for the cocartesian pushforward functor and also refer to this as the \emph{restriction} functor along $\alpha$.

An $S$-$\infty$-category $C$ is an \emph{$S$-space} if the structure map $p$ is a left fibration. The \emph{corepresentable} $S$-spaces are the left fibrations $S^{V/} \to S$ for objects $V \in S$. A \emph{$S$-functor} $F: C \to D$ is a functor over $S$ that preserves cocartesian edges. We write $$\Fun_S(C,D) \coloneq \Fun^{\cocart}_{/S}(C,D)$$ for the $\infty$-category of $S$-functors from $C$ to $D$. We write $\ul{\Fun}_S(C,D)$ for the $S$-$\infty$-category defined by the universal mapping property
$$ \Fun_S(E,\ul{\Fun}_S(C,D)) \simeq \Fun_S(E \times_S C, D).  $$
Let $C_{\underline{V}} = S^{V/} \times_S C$ be notation for the pullback as a $S^{V/}$-$\infty$-category. If we let $E = S^{V/}$, then we compute the fiber of $\ul{\Fun}_S(C,D)$ over $V$ as
$$ \ul{\Fun}_S(C,D)_V \simeq \Fun_{S^{V/}}(C_{\ul{V}}, D_{\ul{V}}). $$

In \cite[\S 9]{Exp1} and \cite[\S 3]{Exp2}, the second author gave an explicit construction of $\ul{\Fun}_S(C,D) \to S$ as a marked simplicial set.

The \emph{$\infty$-category of $S$-$\infty$-categories} is $\Cat^{\cocart}_{\infty/S}$, the subcategory of the overcategory $\Cat_{\infty/S}$ on the $S$-$\infty$-categories and $S$-functors. The straightening correspondence furnishes a canonical equivalence
\[ \Cat^{\cocart}_{\infty/S} \simeq \Fun(S,\Cat_{\infty}) \]
that is natural in $S$ \cite[\S 3.2]{HTT}. The construction $\ul{\Fun}_S(-,-)$ is then the internal hom in $\Cat^{\cocart}_{\infty/S}$.
\end{dfn}

\begin{rem} If $S = \sO^{\op}_G$, then we typically write $G$-$\infty$-category, $G$-functor, etc. instead of $\sO^{\op}_G$-$\infty$-category, $\sO^{\op}_G$-functor etc. This convention also applies to all other constructions discussed in this section, e.g., $G$-(co)limit instead of $S$-(co)limit. Moreover, using the equivalence $\sO^{\op}_H \simeq (\sO^{\op}_G)^{(G/H)/}$ (Rmk.~\ref{rem:sliceCategoryPassage}), we write $H$-$\infty$-category, etc. instead of $(\sO^{\op}_G)^{(G/H)/}$-$\infty$-category, etc. Note that passage to slice categories in the theory of parametrized $\infty$-categories conceptually plays the same role as restriction to subgroups in equivariant homotopy theory.
\end{rem}

\begin{rem} In \cite{Exp4} and \cite{Exp1}, the authors speak of $T$-$\infty$-categories, etc.
\end{rem}

\begin{dfn}[{\cite[\S 4]{Exp2}}] For two $S$-$\infty$-categories $K,L$, the \emph{$S$-join} $$K \star_S L \to S \times \Delta^1$$ is the $S$-$\infty$-category defined as a simplicial set by the universal mapping property
$$\Hom_{/S \times \Delta^1}(A, K \star_S L) \cong \Hom_{/S}(A_0,K) \times \Hom_{/S}(A_1,L).$$
The $S$-join respects base-change in the variable $S$. In particular, for all $V \in S$ we have an isomorphism
$$ K \star_S L \cong K_V \star L_V. $$
\end{dfn}

Let $C$ be a $S$-$\infty$-category.

\begin{dfn}[{\cite[\S 5]{Exp2}}] A $S$-functor $\sigma: S \to C$ is an \emph{$S$-initial object} if for all $V \in S$, $\sigma(V) \in C_V$ is an initial object. Note  that all of the restriction functors then necessarily preserve initial objects.

More generally, let $K$ be a $S$-$\infty$-category, and let $\overline{F}: K \star_S S \to C$ be a $S$-functor that extends $F: K \to C$. Then $\overline{F}$ is a \emph{$S$-colimit diagram} if the corresponding $S$-functor
$$ (\id, \sigma_{\overline{F}}): S \to S \times_{\sigma_F, \ul{\Fun}_S(K,C)} \ul{\Fun}_S(K \star_S S,C)  $$
is an $S$-initial object. Here, we use the equivalence $\Fun_S(S, \ul{\Fun}_S(K,C)) \simeq \Fun_S(K,C)$ to write the cocartesian section. We then say that $\overline{F}|_S$ is a \emph{$S$-colimit} of $F$. If $S$ has an initial object $V_0$ (e.g., $V_0 = G/G$ in $\sO^{\op}_G$), then we also say that $\overline{F}|_S(V_0) \in C_{V_0}$ is a $S$-colimit of $F$ and write $$\mr{colim}{}^S F = \overline{F}|_S(V_0).$$
These definitions dualize in an obvious way, so we may consider $S$-final objects, $S$-limit diagrams $\overline{F}: S \star_S K \to C$, etc.
\end{dfn}

\begin{rem} The concept of an $S$-(co)limit is classically known in category theory as an \emph{indexed (co)limit}. In equivariant homotopy theory, Dotto and Moi have also studied homotopy $G$-(co)limits using model-categorical techniques \cite{dottomoi}.
\end{rem}

\begin{exm} Suppose $K = L \times S$ is a constant $S$-$\infty$-category at $L$. Suppose that for all $V \in S$, $C_V$ admits $L$-indexed colimits, and for all $\alpha: V \to W$ in $T$, the restriction functor $\alpha^{\ast}: C_W \to C_V$ preserves $L$-indexed colimits. Then a $S$-colimit of $F: L \times S \to C$ exists and is computed fiberwise as the colimit of $F_s: L \to C_s$.
\end{exm}

\begin{exm}[Corepresentable $S$-diagrams {\cite[\S 5.9]{Exp2}} {\cite[\S 4]{Exp4}}] Suppose that $T$ admits multipullbacks, i.e., the finite coproduct completion $\FF_T$ of $T$ admits pullbacks. For example, $\sO_G$ satisfies this condition. We call $\FF_T$ the $\infty$-category of \emph{finite $T$-sets} and $T \subset \FF_T$ the \emph{orbits}. For $U \in \FF_T$, let $\ul{U} \to S$ be the corresponding $S$-$\infty$-category of points, i.e., $\ul{U} \simeq \coprod_{i \in I} S^{U_i/}$ for an orbit decomposition $U \simeq \coprod_{i \in I} U_i$, and note that the assignment $\fromto{U}{\ul{U}}$ is covariant in morphisms in $\FF_T$. Let $\alpha: U \to V$ be a morphism in $\FF_T$ such that $V$ is an orbit. Let $x_i \in C_{U_i}$ be a set of objects for all $i \in I$ and write $(x_i): \ul{U} \to C_{\ul{V}}$ for the $S^{V/}$-functor determined by the $x_i$. Then the \emph{$S$-coproduct along $\alpha$} $$\coprod_{\alpha} x_i \in C_V$$ is defined to be the $S^{V/}$-colimit of $(x_i)$. A \emph{finite $S$-coproduct} is any $S^{V/}$-colimit of this form. We have that $C$ admits all finite $S$-coproducts if and only if the following conditions obtain \cite[Prop.~5.11]{Exp2}:
\begin{enumerate}
\item For all $V \in S$, $C_V$ admits finite coproducts, and for all morphisms $\alpha: V \to W$ in $T$, the restriction functor $\alpha^{\ast}: C_W \to C_{V}$ preserves finite coproducts.
\item For all morphisms $\alpha: V \to W$ in $T$, $\alpha^{\ast}$ admits a left adjoint $\alpha_!$.
\item Given $U \in \FF_T$ with orbit decomposition $\coprod_{i \in I} U_i$, let $C_U = \prod_{i \in I} C_{U_i}$ and extend $\alpha^{\ast}$ and $\alpha_!$ to be defined for all morphisms $\alpha$ in $\FF_T$ in the obvious way (e.g., if $\alpha: U \to V$ is a map with $V$ an orbit, then $\alpha_!(x_i) = \coprod_{i \in I}(\alpha_i)_!(x_i)$ for $\alpha_i: U_i \to V$ the restriction of $\alpha$ to $U_i$). Then the \emph{Beck-Chevalley conditions} hold: for every pullback square
\[ \begin{tikzcd}[row sep=4ex, column sep=4ex, text height=1.5ex, text depth=0.25ex]
U' \ar{r}{\alpha'} \ar{d}{\beta'} & V' \ar{d}{\beta} \\
U \ar{r}{\alpha} & V
\end{tikzcd} \]
in $\FF_T$, the canonical map $(\alpha')_! (\beta')^{\ast} \to \beta^{\ast} \alpha_!$ is an equivalence.
\end{enumerate}
In this case, the $S$-coproduct $\coprod_{\alpha} x_i$ above is computed by $\alpha_!(x_i)$.

Dually, $C$ admits all finite $S$-products if and only if the analogous conditions hold with respect to finite products in the fibers and right adjoints $\alpha_{\ast}$.
\end{exm}

\begin{dfn}[{\cite[\S 8]{Exp2}}] Let $C,D$ be $S$-$\infty$-categories and suppose that $\adjunct{F}{C}{D}{G}$ is a relative adjunction over $S$ \cite[\S 7.3.2]{HA}. Then we say that $F \dashv G$ is a \emph{$S$-adjunction} if $F$ and $G$ are $S$-functors.
\end{dfn}

\begin{nul}[{\cite[\S 9-10]{Exp2}}] Let $\pi: K \to S$ denote the structure map and consider the $S$-functor $\pi^{\ast}: C \to \ul{\Fun}_S(K,C)$. Then $C_{\ul{V}}$ admits all $K_{\ul{V}}$-indexed $S^{V/}$-colimits for all $V \in S$ if and only if $\pi^{\ast}$ admits a $S$-left adjoint $\pi_!$ \cite[Cor.~9.16]{Exp2}, so that we have an $S$-adjunction
\[ \adjunct{\pi_!}{\ul{\Fun}_S(K,C)}{C}{\pi^{\ast}}. \]
More generally, if $\pi: K \to L$ is a $S$-functor, we have an existence criterion for the $S$-left adjoint $\pi_!$ to $\pi^{\ast}$ in terms of a pointwise formula for the $S$-left Kan extension of a $S$-functor $F: K \to S$ along $\pi$ \cite[Thm.~10.3]{Exp2}. Dualizing, we have the same for the $S$-right adjoint $\pi_{\ast}$.
\end{nul}

\begin{nul} \label{Scocomplete} $C$ is $S$-cocomplete \cite[Def.~5.12]{Exp2} if and only if $C$ admits finite $S$-coproducts, each fiber $C_V$ admits geometric realizations, and the restriction functors $\alpha^{\ast}: C_W \to C_V$ preserve geometric realizations \cite[Cor.~12.15]{Exp2}. The proof uses a parametrized version of the Bousfield-Kan formula. In particular, the $G$-$\infty$-categories $\ul{\Spc}^G$ and $\ul{\Sp}^G$ are $G$-cocomplete, and dualizing the argument, also $G$-complete. Thus, the parametrized orbits and fixed points functors discussed in this paper exist.
\end{nul}

\begin{nul} Suppose that $S$ has an initial object $V_0$ and $C$ is $S$-cocomplete. Then we may compute the $S$-colimit of a $S$-functor $F: K \to C$ as a colimit over the total category of the dual cartesian fibration $K^{\vee} \to T$ \cite{BGN}.\footnote{The second author thanks Marc Hoyois for pointing out this observation to him -- also see \cite[Rmk.~16.5]{BachmannHoyoisNorms}.} In more detail, note that under our assumption, $C^{\vee} \to T$ is also a \emph{cocartesian} fibration, and let $P: C^{\vee} \to C^{\vee}_{V_0} \simeq C_{V_0}$ be the cocartesian pushforward to the fiber over the terminal object $V_0 \in T$. Let $$F_0^{\vee} = P \circ F^{\vee}: K^{\vee} \to C_{V_0}$$ be the composite and let $x = \colim F_0^{\vee}$. Then if we let $f: T \to C^{\vee}$ be the $p^{\vee}$-relative colimit of $F^{\vee}$ (where $p^{\vee}: C^{\vee} \to T$ is the structure map), $f(V_0) \simeq x$ by \cite[Prop.~4.3.1.10]{HTT} (which also shows existence of $f$). Therefore, if $\pi$ is the structure map of $K$, then $$\pi^{\ast}: \Fun_S(S,C) \simeq C_{V_0} \to \Fun_S(K,C) \simeq \Fun^{\cart}_{/T}(K^{\vee}, C^{\vee})$$ admits a left adjoint computed by $F \mapsto x$. The $S$-completeness of $C$ further implies that $\pi_!$ also computes the $S$-colimit, so $x \simeq \mr{colim}{}^S F$.

Dualizing, we may compute the $S$-limit of a $S$-functor $F: K \to C$ as a limit over the total category $K$, assuming that $C$ is $S$-complete. To do this, let $P': C \to C_{V_0}$ be the \emph{cartesian} pushforward to the fiber over the initial object,  let $F_0 = P' \circ F$, and let $y = \lim F_0$. Then by the same reasoning, we have that $y \simeq \mr{lim}{}^S F$.
\end{nul}

\begin{rem} Not assuming $S$-cocompleteness of $C$ to begin with, the above technique of reduction to the Grothendieck construction can also be used to give another proof of \cite[Cor.~12.15]{Exp2}, where we use the non-parametrized Bousfield-Kan formula for the existence and preservation of fiberwise colimits, and then the existence of finite $S$-coproducts for the requisite compatibility of the general $S$-colimit with restriction.
\end{rem}

\section{Pointwise monoidal structure}

In this appendix, we construct the `pointwise' monoidal structure on the $S$-functor $\infty$-category $\Fun_S(K,C)$, given a cocartesian $S$-family $C^{\otimes} \to S \times \Fin_{\ast}$ of symmetric monoidal $\infty$-categories. Let us first recall how to construct the non-parametrized pointwise monoidal structure on a functor $\infty$-category $\Fun(K,C)$.

\begin{nul} \label{pointwiseMonoidalStructure} Let $p: C^\otimes \to \Fin_{\ast}$ be an $\infty$-operad, and let $K$ be  a simplicial set. We have the cotensor $p^K: (C^\otimes)^K \to \Fin_{\ast}$ defined by $$\Hom_{/\Fin_{\ast}}(A,(C^\otimes)^K) \cong \Hom_{/\Fin_{\ast}}(A \times K,C^\otimes).$$ Then $p^K$ is again an $\infty$-operad: this follows from the observation that for any $\mathfrak{O}$-anodyne morphism $A \to B$ of preoperads (with $\mathfrak{O}$ the defining categorical pattern for the model structure on preoperads), $A \times K \to B \times K$ is again $\mathfrak{O}$-anodyne \cite[Prop.~B.1.9]{HA}. Moreover, if $p$ is in addition a cocartesian fibration, then $p^K$ is also a cocartesian fibration. The fiber of $p^K$ over $\angs{n}$ is $\Fun(K,C^{\times n}) \simeq \prod_{i=1}^n\Fun(K,C)$, and for the unique active map $\angs{n} \to \angs{1}$, if $\phi: C^{\times n} \to C$ is a choice of pushforward functor encoded by $p$, then the postcomposition by $\phi$ functor $\phi_{\ast}: \Fun(K,C^{\times n}) \to \Fun(K,C)$ is a choice of pushforward functor encoded by $p^K$. In other words, $p^K$ is the `pointwise' symmetric monoidal structure on $\Fun(K,C)$.
\end{nul}

\begin{lem} \label{lm:evaluationCocartesianMonoidal} Let $C^\otimes$ be a symmetric monoidal $\infty$-category. Then the functor
\[ e_L: (C^\otimes)^{K \star L} \to (C^\otimes)^L \]
induced by $L \subset K \star L$ is a cocartesian fibration of $\infty$-operads.
\end{lem}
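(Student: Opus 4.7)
Plan: By the discussion in paragraph~\ref{pointwiseMonoidalStructure} applied to both $K \star L$ and $L$, the cotensors $(C^\otimes)^{K \star L} \to \Fin_{\ast}$ and $(C^\otimes)^L \to \Fin_{\ast}$ are symmetric monoidal $\infty$-categories, hence in particular cocartesian fibrations of $\infty$-operads. The map $e_L$ is a map over $\Fin_{\ast}$, being induced by the cofibration $\iota: L \hookrightarrow K \star L$ viewed as a map over $\Fin_{\ast}$ via the trivial structure. Moreover $e_L$ preserves cocartesian edges over $\Fin_{\ast}$: such edges in the pointwise monoidal structure are detected pointwise, and precomposition commutes with pointwise pushforward. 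Hence $e_L$ is symmetric monoidal, and in particular a morphism of $\infty$-operads.

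It therefore remains to verify that $e_L$ is a cocartesian fibration as a functor of $\infty$-categories. I would construct cocartesian lifts explicitly. Unwinding the cotensor, an object of $(C^\otimes)^{K \star L}$ over $\langle n \rangle$ is a functor $\overline F: K \star L \to C^\otimes$ over $\langle n \rangle$, and an edge of $(C^\otimes)^L$ starting from $e_L(\overline F) = \overline F|_L$ is a map $\overline h: \Delta^1 \times L \to C^\otimes$ over some $\alpha: \Delta^1 \to \Fin_{\ast}$ with $\overline h|_{\{0\} \times L} = \overline F|_L$. The task is to extend $\overline F$ and $\overline h$ to a map $\widetilde H: \Delta^1 \times (K \star L) \to C^\otimes$ over $\alpha$, with $\widetilde H|_{\Delta^1 \times K}$ pointwise $p$-cocartesian over $\alpha$ (where $p: C^\otimes \to \Fin_{\ast}$); the resulting edge $\overline F \to \widetilde H|_{\{1\} \times (K \star L)}$ in $(C^\otimes)^{K \star L}$ will then furnish the desired $e_L$-cocartesian lift.

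The construction proceeds in two stages. First, extend $\overline F|_K$ to a map $\Delta^1 \times K \to C^\otimes$ over $\alpha$ whose restriction over each point of $K$ is $p$-cocartesian; this is possible by a marked anodyne extension in the spirit of \cite[Lem.~B.1.10]{HA}, exploiting that $\{0\}^{\flat} \to (\Delta^1)^{\sharp}$ is anodyne for the categorical pattern defining cocartesian fibrations over $\Fin_{\ast}$. Second, extend across the ``bridge'' simplices of $\Delta^1 \times (K \star L)$ that are not already contained in $\Delta^1 \times K$, $\Delta^1 \times L$, or $\{0\} \times (K \star L)$. For this I would use the identification $\Delta^1 \times (K \star L) \cong (\Delta^1 \times K) \star_{\Delta^1} (\Delta^1 \times L)$ (the relative join over $\Delta^1$) and fill in the join-bridge simplices inductively, invoking the universal property of $p$-cocartesian edges on the $K$-side at each step to supply the necessary extensions to $C^\otimes$.

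The principal obstacle is the careful execution of these anodyne extension arguments, most substantially the inductive filling of bridge simplices, which requires packaging the cocartesian universal property into a marked lifting property against the relevant cofibration. This is analogous in character to arguments already deployed in the paper (e.g., in the proof of Prop.~\ref{prp:subdivisionExtension}) and can be carried out using the general machinery of categorical patterns from \cite[App.~B.1]{HA}.
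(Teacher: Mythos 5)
Your approach is genuinely different from the paper's. The paper's proof is a two-step reduction: since $e_L$ is induced by the monomorphism $L \subset K \star L$, it is already a fibration of $\infty$-operads, and then one invokes the criterion of \cite[Rmk.~7.3]{Exp2} (via the inert--active factorization system) which reduces verifying that a fibration of $\infty$-operads over a symmetric monoidal $\infty$-category is a cocartesian fibration to two checks: (1) each fiber $(e_L)_{\angs{n}}$ is a cocartesian fibration, and (2) the cocartesian edge class is preserved by the active tensor pushforward. Point (1) is exactly \cite[Lem.~4.8]{Exp2} applied to the relative join, which also \emph{characterizes} the cocartesian edges as those whose image in $\Fun(K, C^{\times n})$ is an equivalence; point (2) then follows because an $n$-fold tensor product of equivalences is an equivalence. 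Your proposal instead constructs cocartesian lifts over arbitrary edges of $(C^\otimes)^L$ directly, which is in principle valid, but note that this essentially re-derives \cite[Lem.~4.8]{Exp2}: the bridge-filling step you flag as the ``principal obstacle'' is the entire content of that lemma.

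There is also a real gap in the sketch: you assert that the edge obtained by making the $K$-part pointwise $p$-cocartesian over $\alpha$ (and filling the bridge) ``will then furnish the desired $e_L$-cocartesian lift,'' but producing a lift is weaker than producing a \emph{cocartesian} lift, and the universal property is nowhere verified. This would follow automatically if the lifting problems were set up against a left-marked anodyne cofibration in the appropriate categorical pattern model structure (so that the marked extensions one produces are cocartesian by fiat), but your sketch does not specify the markings on the bridge simplices, which is exactly where the delicacy lies. Concretely: for a 2-simplex horn $\Lambda^2_1 \to (C^\otimes)^{K\star L}$ over a $\Delta^2 \to (C^\otimes)^L$ whose initial edge is your constructed lift, the $L$-part of the filler is already prescribed, the $K$-part can be filled by the pointwise cocartesian universal property, but the compatibility of these fillers across the bridge is not automatic and is precisely what the inductive anodyne argument must supply. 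The paper avoids this entirely by citing the result; if you wish to proceed from scratch, the cleanest packaging is to imitate the proof of \cite[Lem.~4.8]{Exp2} (or \cite[Prop.~3.1.2.1]{HTT}, which handles the $K \star L$ with $L = \emptyset$ case) and show the relevant inclusion of marked simplicial sets is marked anodyne, so that cocartesian-ness of the constructed edge comes for free.
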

\begin{proof} Because $e_L$ is induced by the monomorphism $L \subset K \star L$, $e_L$ is a fibration of $\infty$-operads. By \cite[Rmk.~7.3]{Exp2} and using the inert-active factorization system on an $\infty$-operad, it then suffices to prove the following two properties of $e_L$: 
\begin{enumerate} \item For every object $\angs{n} \in \Fin_{\ast}$, $(e_L)_{\angs{n}}$ is a cocartesian fibration;
\item For every active edge $\alpha: \angs{n} \to \angs{1}$ and commutative square in $(C^\otimes)^{K \star L}$
\[ \begin{tikzcd}[row sep=4ex, column sep=4ex, text height=1.5ex, text depth=0.25ex]
f = (f_1,...,f_n) \ar{r} \ar{d}{\theta} & f' = \otimes_{i=1}^n f_i \ar{d}{\theta'} \\
g = (g_1,...,g_n) \ar{r} & g' = \otimes_{i=1}^n g_i
\end{tikzcd} \]

with the horizontal edges as $p^{K \star L}$-cocartesian edges covering $\alpha$, if $\theta$ is $(e_L)_{\angs{n}}$-cocartesian then $\theta'$ is $(e_L)_{\angs{1}}$-cocartesian.
\end{enumerate} 
For (1), by \cite[Lem.~4.8]{Exp2} we have that $(e_L)_{\angs{n}}: \Fun(K \star L,C^{\times n}) \to \Fun(L, C^{\times n})$ is a cocartesian fibration. Moreover, $\theta: f \to g$ is a $(e_L)_{\angs{n}}$-cocartesian edge if and only if its image in $\Fun(K,C^{\times n})$ is an equivalence. This proves (2), since the $n$-fold tensor product of equivalences is always an equivalence.
\end{proof}

\begin{nul} We now elaborate \ref{pointwiseMonoidalStructure} to construct the pointwise monoidal structure on an $S$-functor category $\Fun_S(K,C)$ when $C$ is classified by a functor $S \to \CMon(\Cat_{\infty})$ valued in symmetric monoidal $\infty$-categories and symmetric monoidal functors thereof. In terms of fibrations, such functors correspond to \emph{cocartesian $S$-families of symmetric monoidal $\infty$-categories} $C^\otimes \to S \times \Fin_{\ast}$ \cite[Def.~4.8.3.1]{HA}.\footnote{More precisely, we have an equivalence of $\infty$-categories $(\Cat_{\infty})^{\cocart}_{/S \times \Fin_{\ast}} \simeq \Fun(S, \Fun(\Fin_{\ast}, \Cat_{\infty}))$ under which a cocartesian $S$-family of symmetric monoidal $\infty$-categories corresponds to a functor valued in commutative monoid objects.} Let $\mathfrak{P}_S$ be the categorical pattern $$(\All, \All, \{\lambda_{s,n} : (\angs{n}^{\circ})^{\lhd} \to \{s \} \times \Fin_{\ast} \subset S \times \Fin_{\ast} : s \in S \})$$ on $S \times \Fin_{\ast}$, where $\lambda_{s,n}$ is the usual map appearing in the definition of the model structure on preoperads that sends the cone point $v$ to $\angs{n}$, $i \in \angs{n}^{\circ}$ to $\angs{1}$, and the unique morphism $v \to i$ to the inert morphism $\rho^i: \angs{n} \to \angs{1}$ in $\Fin_{\ast}$ that selects $i \in \angs{n}^{\circ}$. Then cocartesian $S$-families of symmetric monoidal $\infty$-categories are by definition $\mathfrak{P}$-fibered \cite[Def.~B.0.19]{HA} and hence are the fibrant objects for the model structure on $s\Set^+_{/S \times \Fin_{\ast}}$ defined by $\mathfrak{P}$ \cite[Thm.~B.0.20]{HA}. 
\end{nul}

\begin{dfn} \label{dfn:S-PointwiseMonoidal} Suppose $C^{\otimes} \to S \times \Fin_{\ast}$ is a cocartesian $S$-family of symmetric monoidal $\infty$-categories and $q: K \to S$ is an $S$-$\infty$-category. Consider the span of marked simplicial sets
\[ \begin{tikzcd}[row sep=4ex, column sep=4ex, text height=1.5ex, text depth=0.25ex]
(\Fin_{\ast})^{\sharp} & \leftnat{K} \times (\Fin_{\ast})^{\sharp} \ar{r}{q \times \id} \ar{l}[swap]{\pr} & S^{\sharp} \times (\Fin_{\ast})^{\sharp}.
\end{tikzcd} \]
Define the \emph{pointwise monoidal structure} on $\Fun_S(K,C)$ to be
\[ \Fun_S(K,C)^{\otimes} \coloneq \pr_{\ast} (q \times \id)^{\ast} (\leftnat{C}^{\otimes}) \]
regarded as a simplicial set over $\Fin_{\ast}$.
\end{dfn}

Note that the fiber of $\Fun_S(K,C)^{\otimes} \to \Fin_{\ast}$ over $\angs{1}$ is $\Fun_S(K,C)$.

\begin{lem} \label{lm:ShowingPointwiseMonoidal} With respect to the categorical patterns $\mathfrak{P} = \mathfrak{P}_{\ast}$ on $\Fin_{\ast}$ and $\mathfrak{P}_S$ on $S \times \Fin_{\ast}$, the span of marked simplicial sets in Def.~\ref{dfn:S-PointwiseMonoidal} satisfies the hypotheses of \cite[Thm.~B.4.2]{HA}, so $\Fun_S(K,C)^{\otimes}$ is a symmetric monoidal $\infty$-category.
\end{lem}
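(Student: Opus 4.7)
The plan is to verify directly the hypotheses of \cite[Thm.~B.4.2]{HA} for the indicated span, treating the two legs separately and then invoking the theorem to produce a right Quillen functor between the associated model categories of preoperads. This will show that $\pr_{\ast}(q \times \id)^{\ast}$ sends $\mathfrak{P}_S$-fibered objects to $\mathfrak{P}$-fibered objects, and since $\leftnat{C}^{\otimes}$ is $\mathfrak{P}_S$-fibered by the hypothesis that $C^{\otimes}\to S \times \Fin_{\ast}$ is a cocartesian $S$-family of symmetric monoidal $\infty$-categories, this will exhibit $\Fun_S(K,C)^{\otimes}$ as a cocartesian fibration over $\Fin_{\ast}$ whose fibers are products of $\Fun_S(K,C)$; in other words, as a symmetric monoidal $\infty$-category in the sense that the pushforward along $\angs{n}\to\angs{1}$ is given by the pointwise tensor product of $S$-functors.

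First, I would unwind the hypotheses of \cite[Thm.~B.4.2]{HA} in our specific setting. The right leg $q\times \id: \leftnat{K}\times(\Fin_{\ast})^{\sharp}\to S^{\sharp}\times(\Fin_{\ast})^{\sharp}$ is a product map; the marking on the source consists of the locally $q$-cocartesian edges of $K$ in the first coordinate times all edges in $\Fin_{\ast}$, while the target has all edges marked in both coordinates. Because $q$ is a cocartesian fibration, the marked simplicial set $\leftnat{K}$ is fibrant for the cocartesian model structure on $s\Set^+_{/S}$, and the relevant condition on $q\times\id$ (compatibility with the $\lambda_{s,n}$ diagrams in $\mathfrak{P}_S$) reduces to checking that for any $s\in S$, any cocartesian lift of the cone $\lambda_{s,n}$ in $\{s\}\times\Fin_{\ast}$ to $\leftnat{K}\times(\Fin_{\ast})^{\sharp}$ is detected fiberwise, which is immediate since the first coordinate is constant.

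The substantial content is the verification for the left leg $\pr: \leftnat{K}\times(\Fin_{\ast})^{\sharp}\to(\Fin_{\ast})^{\sharp}$. The two main conditions to check are: (i) $\pr$ is a flat categorical fibration; and (ii) $\pr$ is compatible with the categorical pattern $\mathfrak{P}$ on $\Fin_{\ast}$, in the sense that it lifts the marked edges, the marked triangles (all of them), and the distinguished cocartesian diagrams $\lambda_n:(\angs{n}^{\circ})^{\lhd}\to\Fin_{\ast}$. Flatness follows from the observation that $\pr$ is a pullback of $K\to\ast$, so it suffices that $K\to\ast$ be a flat categorical fibration; this holds since $K$ is itself an $\infty$-category (equivalently, $K\to \Delta^0$ is an inner fibration with weakly contractible slices \cite[Cor.~B.3.15]{HA}, applied trivially). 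For (ii), marked edges and triangles in the target lift canonically via the product structure (any edge or $2$-simplex in $(\Fin_{\ast})^{\sharp}$ lifts to $K\times\Fin_{\ast}$ by choosing the degenerate coordinate in $K$), and the distinguished cone diagrams $\lambda_n$ lift by the same product trick. That these lifts are $\pr$-cocartesian with respect to the marked edges in the source follows from the observation that the $\pr$-cocartesian edges are exactly those whose $K$-component is marked (i.e., locally $q$-cocartesian), and any choice of degenerate edge in $K$ is marked.

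The main obstacle I anticipate is the careful bookkeeping in verifying condition (ii): one must show not only that pushforwards exist along inert maps $\rho^i$, but also that the resulting fiber of $\Fun_S(K,C)^{\otimes}$ over $\angs{n}$ decomposes correctly as the $n$-fold product $\Fun_S(K,C)^{\times n}$. Once the Quillen adjunction is in hand, this fiber computation can be extracted by base-changing along the inclusion $\{\angs{n}\}\hookrightarrow(\Fin_{\ast})^{\sharp}$ and noting that $(q\times\id)^{\ast}\leftnat{C}^{\otimes}$ restricted to $\leftnat{K}\times\{\angs{n}\}$ is exactly $\leftnat{K}\times_S\leftnat{C}^{\times n}$, whose pushforward along $\pr$ computes $\Fun_S(K,C)^{\times n}$. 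As a final sanity check, I would identify the pushforward functor along the active map $\angs{n}\to\angs{1}$ as postcomposition by the $n$-fold tensor product $C^{\times n}\to C$, mirroring the non-parametrized description recalled in~\ref{pointwiseMonoidalStructure} and Lem.~\ref{lm:evaluationCocartesianMonoidal}.
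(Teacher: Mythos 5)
Your proposal follows the same route as the paper: verify the hypotheses of \cite[Thm.~B.4.2]{HA} for the two legs of the span and conclude that $\pr_{\ast}(q\times\id)^{\ast}$ is right Quillen, so that $\Fun_S(K,C)^{\otimes}$ is fibrant over $\mathfrak{P}$ and hence a symmetric monoidal $\infty$-category.

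One claim needs correcting, however: you assert that ``the $\pr$-cocartesian edges are exactly those whose $K$-component is marked (i.e., locally $q$-cocartesian).'' For the projection $\pr\colon K\times\Fin_{\ast}\to\Fin_{\ast}$ an edge is $\pr$-cocartesian (equivalently, $\pr$-cartesian) if and only if its $K$-component is an \emph{equivalence} in $K$; the marking on $\leftnat{K}\times(\Fin_{\ast})^{\sharp}$ is in general strictly larger than the class of $\pr$-(co)cartesian edges, since a $q$-cocartesian edge of $K$ need not be invertible. This distinction is not cosmetic: it is precisely because the marking is larger that the fiber of $\pr_{\ast}(q\times\id)^{\ast}(\leftnat{C}^{\otimes})$ over $\angs{1}$ is $\Fun_S(K,C)$ (cocartesian-edge-preserving functors) rather than all functors over $S$. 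Your argument survives because the only implications actually used hold: the lifts you construct have degenerate, hence invertible, $K$-component, and every equivalence in $K$ is $q$-cocartesian, so the $\pr$-cartesian edges of the relevant pullbacks do land in the marking --- which is the direction the verification of the B.4.2 conditions requires. But the biconditional as stated is false and should be replaced by the correct characterization.
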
 
\begin{proof} The projection map $\pr$ is both a cartesian and cocartesian fibrations where an edge $e$ is (co)cartesian if and only if its projection to $K$ is an equivalence. Using also the basic stability property of cocartesian edges in $K$ \cite[Lem.~2.4.2.7]{HTT}, it is then easy to verify conditions (1)-(8) of \cite[Thm.~B.4.2]{HA}. By \cite[Thm.~B.4.2]{HA}, $\pr_{\ast} q^{\ast}: s\Set^+_{/\mathfrak{P}_S} \to s\Set^+_{/\mathfrak{P}}$ is right Quillen, which shows that $\Fun_S(K,C)^{\otimes}$ is a fibrant object in $s\Set^+_{/\mathfrak{P}}$ and hence a symmetric monoidal $\infty$-category.
\end{proof}

\begin{rem} An $S$-functor $f: L \to K$ yields a morphism of spans
\[ \begin{tikzcd}[row sep=4ex, column sep=4ex, text height=1.5ex, text depth=0.25ex]
& \leftnat{L} \times (\Fin_{\ast})^{\sharp} \ar{rd} \ar{ld} \ar{d}{f \times \id} & \\
(\Fin_{\ast})^{\sharp} & \leftnat{K} \times (\Fin_{\ast})^{\sharp} \ar{r} \ar{l} & S^{\sharp} \times (\Fin_{\ast})^{\sharp}
\end{tikzcd} \]
and therefore induces a map $f^{\ast}: \leftnat{\Fun_S(K,C)^{\otimes}} \to \leftnat{\Fun_S(L,C)^{\otimes}}$ of marked simplicial sets over $\Fin_{\ast}$. In other words, restriction along $f$ is a symmetric monoidal functor. 
\end{rem}

\begin{vrn} Consistent with the monoidality of restriction, the hypotheses of \cite[Thm.~B.4.2]{HA} also apply to the span 
\[ \begin{tikzcd}[row sep=4ex, column sep=4ex, text height=1.5ex, text depth=0.25ex]
S^{\sharp} \times (\Fin_{\ast})^{\sharp} & \leftnat{K} \times (\Fin_{\ast})^{\sharp} \ar{r}{q \times \id} \ar{l}[swap]{q \times \id} & S^{\sharp} \times (\Fin_{\ast})^{\sharp}.
\end{tikzcd} \]
We then define $$\underline{\Fun}_S(K,C)^{\otimes} \coloneq (q \times \id)_{\ast} (q \times \id)^{\ast} (\leftnat{C}^{\otimes})$$ as a pointwise monoidal enhancement of $\underline{\Fun}_S(K,C)$.
\end{vrn}

\bibliographystyle{amsalpha}
\bibliography{Gcats}

\end{document}